\documentclass[leqno,a4paper,11pt]{article}

\usepackage{amsmath,amsthm,amssymb}
\usepackage[utf8]{inputenc}
\usepackage[T1]{fontenc}
\usepackage{comment}

\usepackage{amscd}

\usepackage{cancel}
\usepackage[normalem]{ulem}

\usepackage{enumerate}
\usepackage{bbm}
\usepackage{todonotes}
\usepackage{mathrsfs}
\usepackage{mathabx}
\usepackage{hyperref}
\usepackage{nicefrac}
\usepackage{tikz}
\usetikzlibrary{matrix}

\usepackage{url}
\makeatletter
\g@addto@macro{\UrlBreaks}{\UrlOrds}
\makeatother

\usepackage[sort,numbers]{natbib}

\providecommand{\noopsort}[1]{} 

\def\qed{\unskip\quad \hbox{\vrule\vbox
to 6pt {\hrule width 4pt\vfill\hrule}\vrule} }

\newcommand{\bez}{\nopagebreak\hspace*{\fill}
 \nolinebreak$\qed$\vspace{5mm}\par}

\newtheorem{Th}{Theorem}[section]
\newtheorem{Prop}[Th]{Proposition}

\newtheorem{Thx}{Theorem}

\newtheorem{Propx}[Thx]{Proposition}

\newtheorem{Remx}[Thx]{Remark}

\newtheorem{Lemma}[Th]{Lemma}
\newtheorem{Conj}{Conjecture}[section]
\newtheorem{Cor}[Th]{Corollary}
\newtheorem{Corx}[Thx]{Corollary}
\newtheorem{Fact}[Th]{Fact}

\theoremstyle{definition}
\newtheorem{Def}{Definition}[section]
\newtheorem{Remark}[Th]{Remark}

\newtheorem{Example}[Th]{Example}
\newtheorem{Question}[Conj]{Question}

\newcommand{\beq}{\begin{equation}}
\newcommand{\eeq}{\end{equation}}

\def\scalar(#1,#2){(#1\mid#2)}

\newcommand{\raz}{\mathbbm{1}}
\newcommand{\bax}{\overline{x}}
\newcommand{\baX}{\overline{X}}
\newcommand{\bamu}{\overline{\mu}}

\newcommand{\cs}{{\cal S}}
\newcommand{\ca}{{\cal A}}
\newcommand{\cb}{{\cal B}}
\newcommand{\cc}{{\cal C}}
\newcommand{\cd}{{\cal D}}
\newcommand{\ce}{{\cal E}}
\newcommand{\cf}{{\cal F}}

\newcommand{\cm}{{\cal M}}

\newcommand{\xbm}{(X,{\cal B},\mu)}

\newcommand{\ycn}{(Y,{\cal C},\nu)}
\newcommand{\zdk}{(Z,{\cal D},\kappa)}
\newcommand{\ot}{\otimes}
\newcommand{\ov}{\overline}
\newcommand{\la}{\lambda}

\newcommand{\bs}{\mathbb{S}}

\newcommand{\Q}{\mathbb{Q}}
\newcommand{\R}{{\mathbb{R}}}
\newcommand{\T}{{\mathbb{T}}}
\newcommand{\C}{{\mathbb{C}}}
\newcommand{\Z}{{\mathbb{Z}}}
\newcommand{\N}{{\mathbb{N}}}
\newcommand{\E}{{\mathbb{E}}}
\newcommand{\EE}{{\mathbb{E}}}
\newcommand{\PP}{{\mathbb{P}}}
\newcommand{\D}{{\mathbb{D}}}
\newcommand{\vep}{\varepsilon}
\newcommand{\va}{\varphi}

\newcommand{\mob}{\boldsymbol{\mu}}

\newcommand{\bfu}{\boldsymbol{u}}

\newcommand{\ck}{\mathcal{K}}

\theoremstyle{remark}

\newcommand{\bbN}{\mathbb{N}}

\newcommand{\bbZ}{\mathbb{Z}}

\renewcommand{\d}{\mathrm{d}}

\renewcommand{\a}{\alpha}
\renewcommand{\b}{\beta}

\newcommand{\ol}[1]{\overline{#1}}

\renewcommand{\t}[1]{\widetilde{#1}}

\begin{document}
\title{
Unveiling universality, encloseness, and orthogonality in dynamics}
\author{J.\ Aaronson, A.\,I.\ Danilenko\footnote{A.I.D. was  supported in part by the
``Long-term program of
support of the Ukrainian research teams at the
Polish Academy of Sciences carried out in
collaboration with the U.S. National Academy of
Sciences with the financial support of external
partners''.
}, J.\ Ku\l aga-Przymus,  M.\ Lema\'{n}czyk \vspace{10pt}\\  with an appendix ``Separable subsets for d-bar metric'' \\by T. Austin}

\maketitle

\begin{abstract}
Motivated by Sarnak's conjecture on M\"obius orthogonality, we investigate the general problem of orthogonality for a bounded sequence to topological models of characteristic classes of measure-preserving automorphisms. Our main observation is that whenever a strong form of such orthogonality holds in a system $(X,T)$ then the orthogonality holds for all topological systems in which each ergodic measure yields an automorphism  that is measure-theoretically isomorphic to one arising from an ergodic measure in $(X,T)$.
This leads us to study two purely dynamical problems. The first concerns  the  existence of universal topological models for characteristic classes of measure-preserving automorphisms. In particular, we show that the class of automorphisms with relative discrete spectrum over the identity factor--as well as several related classes including the weakly mixing case--admit such models. The second problem  we address is the existence of a common ergodic extension for a measurable family of ergodic automorphisms, with particular emphasis on the role of ergodic decomposition. We solve this problem by analysing separable subsets in $d$-bar distance. We also highlight potential applications to the orthogonality phenomena, especially those connected to Sarnak’s conjecture and its logarithmic variant.
In particular, we provide a purely ergodic proof of the equivalence—first established by T. Tao—between the local 1-Fourier uniformity of a sequence
$\bfu\colon\N\to\mathbb{D}$ and the vanishing of its second Gowers–Host–Kra seminorm, namely $\|\bfu\|_{u^2}=0$. Moreover, we show that if the set of all measure-theoretic eigenvalues of a zero entropy system $(X,T)$ is countable, then $(X,T)$ satisfies Sarnak's conjecture along a subsequence of full logarithmic density.

\end{abstract}

\tableofcontents


\section{Introduction}
The problems studied in this paper are motivated by the general form of Sarnak's conjecture on M\"obius orthogonality \cite{Sa} proposed in \cite{Ka-Ku-Le-Ru}: given a (bounded) sequence $\bfu \colon \N\to\D=\{z\in\C\colon |z|\leq1\}$ (whose mean $M(\bfu):=\lim_{N\to\infty}\frac1N\sum_{n<N}\bfu(n)$ exists and equals zero) and a topological system $(X,T)$,\footnote{That is, $T$ is a homeomorphism of a compact metric space $X$.} one says that $(X,T)$ {\em is $\bfu$-orthogonal} if
\begin{equation}\label{jsjm1}
\lim_{N\to\infty}\frac{1}{N}\sum_{n<N} f(T^n x)\bfu(n)=0
\end{equation}
for each $f\in C(X)$ and $x\in X$ (in symbols, $(X,T)\perp \bfu$).\footnote{Following Tao \cite{Ta00}, we also consider logarithmic orthogonality, in which Ces\`aro averages are replaced by logarithmic means, e.g.\ $(X,T)\perp_{\rm log}\bfu$ if
$\lim_{N\to\infty}\frac1{\log N}\sum_{n<N}\frac1nf(T^nx)\bfu(n)=0$
for all $f\in C(X)$ and $x\in X$ (and tacitly assuming that the logarithmic mean of $\bfu$ is zero). Other notions discussed later—such as the strong $\bfu$-MOMO property or Furstenberg systems (see Section~\ref{s:Sarnakc})—admit analogous logarithmic versions. As our results are based on the Lifting Lemma from \cite{Ka-Ku-Le-Ru}, which is also true in the logarithmic framework, all our results have logarithmic counterparts.} We can study this problem (for a fixed $\bfu$) with a single $(X,T)$ but usually we are interested
in a class $\mathcal{C}$ of topological systems and require $(X,T)\perp\bfu$ for all $(X,T)\in \mathcal{C}$ (in symbols, $\mathcal{C}\perp \bfu$). For example, the original Sarnak's conjecture deals with the problem $\mathcal{C}_{\rm {\bf{ZE}}}\perp\mob$, where $\mob \colon \N\to\{-1,0,1\}$ is the M\"obius function and $\mathcal{C}_{\rm {\bf{ZE}}}$ stands for the class
of topological systems with zero entropy. Realising that, by the variational principle, this class consists precisely of topological systems for which each invariant measure  yields a zero entropy measure-theoretic automorphism and that the class ${\rm {\bf ZE}}$ of zero entropy automorphisms is closed under taking joinings and factors, one can study a more general problem
\beq\label{generalS}
\mathcal{C}_{\cf}\perp\bfu\eeq
where $\cf$ is a {\em characteristic class} of automorphisms, i.e.\ a class closed under taking joinings and factors,\footnote{One of the reasons that we somehow ``forget'' about the multiplicative property of $\mob$ is that, since ${\rm {\bf{ZE}}}$ is characteristic, to show Sarnak's conjecture is equivalent to $\mathcal{C}_{\rm{\bf{ZE}}}\perp(\mob(n)\cdot v(n))_{n\geq1}$ for each deterministic sequence $v \colon \N\to\bs^1$ and the sequence $\mob\cdot v$ loses its multiplicative character, see \cite{Ka-Ku-Le-Ru}.} and $\mathcal{C}_{\cf}$ stands for the class of topological systems $(X,T)$ such that for each $T$-invariant measure $\nu\in M(X,T)$, the measure-theoretic automorphism $(X,\nu,T)$ belongs to $\cf$. To each characteristic class $\cf$ we can associate the class $\cf_{\rm ec}$ of automorphisms whose a.a.\ ergodic components belong to $\cf$. Then $\cf_{\rm ec}$ is also a characteristic class~\cite{Ka-Ku-Le-Ru} and  $(\cf_{\rm ec})_{\rm ec}=\cf_{\rm ec}$. Clearly, the larger (yet still proper) the characteristic class $\mathcal{F}$ is, the more difficult the orthogonality problem~\eqref{generalS} becomes. As proved in \cite{Ka-Ku-Le-Ru}, the class ${\rm {\bf{ZE}}}$  is the largest proper characteristic class, so the orthogonality problem culminates when we consider $\cf={\rm {\bf ZE}}$. For the classical multiplicative functions like M\"obius $\bfu=\mob$ and $\cf={\rm {\bf{ZE}}}$ the orthogonality $\cc_{\rm {\bf{ZE}}}\perp\mob$ is expected to hold (which is precisely Sarnak's conjecture \cite{Sa}), however, the orthogonality problem with $\bfu=\mob$ remains open even for much smaller  characteristic classes. In fact, while for the class ${\rm {\bf{DISP}}}$ of all automorphisms with discrete spectrum
\beq\label{disp17}\cc_{\rm {\bf DISP}}\perp \mob\eeq indeed holds~\cite{Fe-Ku-Le,Hu-Wa-Ye},\footnote{\label{f:disp} A spectral disjointness (see Section~\ref{f:spectral}) argument applies: from the averaged Chowla conjecture proved in \cite{Ma-Ra-Ta} we obtain the continuity of the spectral measure of function $\pi_0$ (see Section~\ref{s:Sarnakc}) for all Furstenberg systems of $\mob$ while on the side of $(X,T)$ all invariant measures ``produce'' discrete spectral measures for all $L^2$ functions.} it is still unknown (including the logarithmic case) whether $\cc_{{\rm {\bf DISP}}_{\rm ec}}\perp \mob$ \cite{Ka-Ku-Le-Ru} and, in the present paper, we try to throw some more light on the latter problem.

Let us return to the discussion for a general $\bfu$. Closely related to the orthogonality problems~\eqref{jsjm1} and~\eqref{generalS}, is the following notion: a topological system $(X,T)$ satisfies the {\em strong $\bfu$-MOMO property}\footnote{MOMO stands for ``mean orthogonality of moving orbits''.} \cite{Ab-Ku-Le-Ru} if for all increasing sequences $(b_k)$ with
\beq\label{zalbk}b_{k+1}-b_k\to\infty,\eeq for all $f\in C(X)$ and for each choice of $(x_k)\subset X$, we have
$$
\lim_{K\to\infty}\frac1{b_K}\sum_{k<K}\Big|\sum_{b_k\leq n<b_{k+1}}f(T^nx_k)\bfu(n)\Big|=0.
$$
Equivalently~\cite{Ka-Ku-Le-Ru},
\beq\label{jsjm2}
\lim_{K\to\infty}\frac1{b_K}\sum_{k<K}\Big\|\sum_{b_k\leq n<b_{k+1}}\bfu(n)\cdot f\circ T^n\Big\|_{C(X)}=0~\footnote{In the logarithmic case \eqref{jsjm2} is replaced by $\lim_{K\to\infty}\frac1{\log b_K}\sum_{k<K}\Big\|\sum_{b_k\leq n<b_{k+1}}\frac{\bfu(n)}n\cdot f\circ T^n\Big\|_{C(X)}=0$ for all $(b_k)$ satisfying~\eqref{zalbk} and $f\in C(X)$. Moreover, recall that in the crucial for us Lifting Lemma \cite{Ka-Ku-Le-Ru} the switch orbit set $\{b_k\colon k\geq1\}$ also satisfies \eqref{zalbk}.

It is not hard to see that if the family of sequences satisfying \eqref{zalbk} is extended to those for which the set $\{b_k\colon k\geq1\}$ has zero (natural) density or, for the logarithmic case, it has zero logarithmic density, we obtain exactly the same notions. Cf.\ also~\cite[Lemma 1.8]{Ka-Le-Ri-Te}.}
\eeq
for all $(b_k)$ and $f$ as above. Note that this implies convergence in \eqref{jsjm1} is uniform in $x\in X$.  The idea of strong $\bfu$-MOMO is to obtain $\bfu$-orthogonality not only for $(X,T)$ but also for many other topological systems which are ``close'' to $(X,T)$. By this we mean that, when compared to $(X,T)$, these systems may have some more orbits, however we control (using $(X,T)$) measurable dynamics coming from invariant measures which have quasi-generic points (see \cite{Ab-Le-Ru,Ab-Ku-Le-Ru,Ka-Ku-Le-Ru}). A particularly convenient instance of this, established in~\cite{Ka-Ku-Le-Ru}, is that, for each characteristic class $\cf$,
\beq\label{equivMOMO}
\mathcal{C}_{\cf_{\rm ec}}\perp\bfu \text{ iff each system in }\mathcal{C}_{\cf_{\rm ec}} \text{ enjoys the strong $\bfu$-MOMO property.}
\eeq
In particular, Sarnak's conjecture is equivalent to the strong $\mob$-MOMO property of all systems with zero entropy.\footnote{Nevertheless, even knowing that a zero entropy topological system is $\mob$-orthogonal to prove that it enjoys the strong $\mob$-MOMO property can be a considerable task. For example, the M\"obius orthogonality of horocycle flows has been known since \cite{Bo-Sa-Zi}, however, their strong  $\mob$-MOMO property remains an open problem. Moreover, the reader is warned that it is not true that whenever $(X,T)\perp \bfu$ then the system satisfies the strong $\bfu$-MOMO property. Indeed, in \cite{Do-Se}, it is shown that there are positive entropy systems orthogonal to $\mob$, but on the other hand, no positive entropy system can enjoy the strong $\bfu$-MOMO property (assuming the Chowla conjecture) \cite{Ka-Ku-Le-Ru}.}


The main motivation for the problems studied in the present paper is the following extension of earlier results on the stability of M\"obius orthogonality for uniquely ergodic models \cite{Ab-Ku-Le-Ru} of an ergodic automorphism and for systems with a countable set of ergodic measures \cite{Ka-Le-Ra}.

\begin{Thx}\label{theorem:a}
Assume that $(X,T)$ satisfies the strong $\bfu$-MOMO property. Let $(Z,R)$ be an arbitrary topological system such that for each $\nu\in M^e(Z,R)$ there is an ergodic measure $\nu'\in M^e(X,T)$ for which the measure-theoretic automorphisms $(Z,\nu,R)$ and $(X,\nu',T)$ are isomorphic. Then $(Z,R)\perp \bfu$.
The same result holds in the logarithmic case.
\end{Thx}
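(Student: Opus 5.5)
Fix $g\in C(Z)$ and $z\in Z$; we must show $\frac1N\sum_{n<N}g(R^nz)\bfu(n)\to0$. Argue by contradiction: suppose along some $N_j\to\infty$ the averages stay away from $0$. Passing to a subsequence, $z$ is quasi-generic along $(N_j)$ for some $\kappa\in M(Z,R)$. The Lifting Lemma of \cite{Ka-Ku-Le-Ru} (valid also logarithmically) then gives a single point whose orbit realises, through the switch orbit, the ergodic decomposition of $\kappa$. Concretely, for $\varepsilon>0$ it produces an increasing sequence $(b_k)$ satisfying \eqref{zalbk} and points $z_k$ with $\sum_{k<K}(b_{k+1}-b_k)\,|\cdots|$ controlled. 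The orbit blocks $R^nz$, $b_k\le n<b_{k+1}$, are then $\varepsilon$-close (in the relevant averaged sense) to blocks of orbit pieces that are generic for ergodic components $\nu$ of $\kappa$.

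Rather than lifting inside $Z$, I will transport the problem to $X$ and use \eqref{jsjm2}. For each ergodic $\nu\in M^e(Z,R)$ choose $\nu'\in M^e(X,T)$ and an isomorphism $\Phi_\nu\colon(Z,\nu,R)\to(X,\nu',T)$. By Lusin's theorem, $g$ is approximated in $L^1(\nu)$ by $f_\nu\circ\Phi_\nu$ with $f_\nu\in C(X)$. Since $C(X)$ is separable, we may take $f_\nu$ from a fixed countable dense set $\{f_i\}$.

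The main step is a finite reduction. Using the ergodic decomposition $\kappa=\int\nu\,dP(\nu)$, I pick finitely many $f_1,\dots,f_m$ such that for $P$-most $\nu$ we have $\|g-f_{i(\nu)}\circ\Phi_\nu\|_{L^1(\nu)}<\varepsilon$. I then cut the orbit of $z$ into long blocks $[b_k,b_{k+1})$ on which the empirical measure of $z$ is close to some ergodic $\nu_k$ and the block is typical for $\Phi_{\nu_k}$. Mapping the block via $\Phi_{\nu_k}$ gives a point $x_k\in X$ with
\[
\sum_{b_k\le n<b_{k+1}}g(R^nz)\bfu(n)\approx\sum_{b_k\le n<b_{k+1}}f_{i_k}(T^nx_k)\bfu(n),
\]
with total error $O(\varepsilon b_K)$. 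The strong $\bfu$-MOMO property \eqref{jsjm2}, applied to each of the finitely many $f_i$, makes the right-hand sides summed in absolute value $o(b_K)$. Hence the averages along $(N_j)$ are $O(\varepsilon)$, a contradiction. This is essentially the argument of \cite{Ab-Ku-Le-Ru} for uniquely ergodic models, with ergodic decomposition added.

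The obstacle is making the block decomposition rigorous. The ergodic components of $\kappa$ form a continuum, and the points $\Phi_\nu^{-1}$-typical along an orbit of $z$ need not be visible from $z$, which is only quasi-generic for $\kappa$. To handle this I would first try measurable selection: choose $\nu\mapsto\Phi_\nu$, $\nu\mapsto i(\nu)$ measurably. Then I would build an auxiliary topological system, a joining of $(Z,R)$ with $X$ over the ergodic decomposition, realised as an invariant measure $\tilde\kappa$ on $Z\times X$ whose ergodic components are graphs of $\Phi_\nu$. Next I would apply the Lifting Lemma to lift $z$ to a point of $Z\times X^{\N}$, with a switch sequence satisfying \eqref{zalbk}, that is quasi-generic for $\tilde\kappa$. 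On $\tilde\kappa$-ergodic components $g\otimes 1\approx 1\otimes f_i$ in $L^1$, which gives exactly the blockwise comparison above, after which \eqref{jsjm2} concludes. The logarithmic case is identical, with logarithmic weights and the logarithmic Lifting Lemma.
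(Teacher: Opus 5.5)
There is a genuine gap in your last step, the claim that ``on $\tilde\kappa$-ergodic components $g\otimes 1\approx 1\otimes f_i$ in $L^1$, which gives exactly the blockwise comparison''. Quasi-genericity of the lifted sequence $(R^nz,x_n)$ for $\tilde\kappa$ only lets you pass to the limit in averages of \emph{continuous} functions on $Z\times X$. It therefore controls $\frac1N\sum_n|g(R^nz)-G(x_n)|$ for one fixed $G\in C(X)$. It does not control a sum in which the index $i_k$ changes from block to block according to the ergodic component of $\tilde\kappa$. That component is only a measurable invariant function and is invisible along the pseudo-orbit. Nothing ties a switch block $[b_k,b_{k+1})$ produced by the Lifting Lemma to a single component.

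Nor can you fall back on a single $G$. The assignment $\nu\mapsto\nu'$ need not be injective. If $\nu_1\neq\nu_2$ are isomorphic to the same $\nu'$ via $\Phi_{\nu_1}\neq\Phi_{\nu_2}$, then $g\circ\Phi_{\nu_1}^{-1}$ and $g\circ\Phi_{\nu_2}^{-1}$ are different elements of $L^1(\nu')$. So under $\tilde\kappa$ the function $g\otimes 1$ is not, even approximately, a function of the $X$-coordinate. (Your first paragraph also misdescribes the Lifting Lemma: it does not cut the orbit of $z$ into blocks generic for ergodic components; it lifts a quasi-generic point to a rarely switching pseudo-orbit in a joined system.)

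The missing idea is to turn the ergodic component into a topological coordinate that is frozen along orbit blocks, at no cost for strong $\bfu$-MOMO. This is what the paper does. By Lemma~\ref{l:m19}, $(M(Z,R)\times X,\mathrm{Id}\times T)$ still has the strong $\bfu$-MOMO property. Lemma~\ref{l:irrmult1} (Borelness of the set of graph joinings plus Jankov--von Neumann) gives a measurable \emph{injective} map $\nu\mapsto\Phi(\nu)$, essentially $\delta_\nu\otimes\nu'$, into the ergodic measures of this product. Injectivity and Proposition~\ref{fields} then yield an honest isomorphism of $(Z,\kappa,R)$, with your $\kappa=\int\nu\,dP(\nu)$, onto $(M(Z,R)\times X,\int\Phi(\nu)\,dP(\nu),\mathrm{Id}\times T)$. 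So $g$ becomes one function on the product, approximable by one continuous $G$. The joining with the Furstenberg system is transported, and Proposition~\ref{p:rolemomo}, where the Lifting Lemma is used, gives the contradiction.

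Your finite reduction can be repaired the same way with a finite label set. Replace $X$ by $X\times\{1,\dots,m\}$ with $T\times\mathrm{Id}$, take $\tilde\kappa=\int\Delta_{\Phi_\nu}\otimes\delta_{i(\nu)}\,dP(\nu)$ and set $F(x,j)=f_j(x)$. Then $F$ is continuous and the label is constant on each lifted block. Your blockwise estimate together with Lemma~\ref{l:m19} then finishes the proof.
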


Theorem~\ref{theorem:a} suggests to study the problem of existence of universal models for a characteristic class (which is of independent interest in dynamics).
Given a class $\mathcal{F}$ of measure-preserving automorphisms\footnote{We consider automorphisms up to isomorphism.}, a {\em universal model for} $\mathcal{F}$ is a topological system $(X,T)$  such that:

\begin{enumerate}
\item[---] for every  $\nu\in M(X,T)$, the measure-theoretic system $(X,\nu,T)$ belongs to $\mathcal{F}$; and
\item[---]
 for every automorphism $R\in\mathcal{F}$ there exists a measure $\nu\in M(X,T)$ such that $(X,\nu,T)$ is isomorphic to $R$.
 \end{enumerate}

We recall that classically, the class ${\rm{\bf{ALL}}}$ of all automorphisms has a universal model (for example, the full shift with alphabet $[0,1]$),\footnote{Also the smallest non-trivial characteristic class ${\rm{\bf{ID}}}$ of all identities has a universal model, for example, the identity on $[0,1]$ is such.} while  Serafin's theorem \cite{Se} tells us that the class {\bf ZE} has no such a model. It is not hard to observe (see Proposition~\ref{p:noum})  that if a characteristic class $\cf$ has a universal model then it has to be an ec-class: $\cf=\cf_{\rm ec}$. Due to Theorem~\ref{theorem:a}, proving $\mathcal C_{\mathcal F}\perp \bfu$
is equivalent to showing the strong $\bfu$-MOMO property for one of the universal models of $\mathcal{F}$ (assuming there is a such). As already mentioned, the present challenge (on the way to prove Sarnak's conjecture in full generality) is to establish  $\cc_{{\rm{\bf{DISP}}}_{\rm ec}}\perp\mob$. A difficulty stems from the fact that for systems in $\cc_{{\rm{\bf{DISP}}}_{\rm ec}}$, non-ergodic measures may yield measure-theoretic automorphisms with continuous (possibly Lebesgue) spectrum in the orthocomplement of the $L^2$-space associated with the sigma-algebra of invariant sets, so the aforementioned spectral disjointness argument (see \eqref{disp17} and Footnote~\ref{f:disp}) does not work. In the light of Serafin's result, the following theorem looks rather surprising ($\T$ stands for the circle, we use both additive and multiplicative notations; which one is used should be clear from the context).

\begin{Thx}[see Theorem \ref{discr}, Remark \ref{Respect}, Lemma~\ref{p:momor} and Corollary~\ref{c:strongmomo}]\label{theorem:b}
Let $A \colon \T^{\N}\times\T^{\N}\times\T^{\N}\to \T^{\N}\times\T^{\N}\times\T^{\N}$ be the continuous group automorphism given by
$$
A((x_n),(y_n),(z_n)):=((x_n),(y_n),(x_n+z_n)).$$
Then:
\begin{enumerate}
\item[{\rm (i)}] $(\T^{\N}\times\T^{\N}\times\T^{\N}, A)$ is a universal model for ${\rm{\bf{DISP}}}_{\rm ec}$.
\item[{\rm (ii)}] For each function $\bfu \colon \N\to\mathbb{D}$, the topological system
$(\T^{\N}\times\T^{\N}\times\T^{\N},A)$ has the strong $\bfu$-MOMO property if and only if $A_2 \colon (x,y)\mapsto (x,x+y)$ on $\T^2$ has the strong $\bfu$-MOMO property. In fact, $\mathcal{C}_{{\rm {\bf{DISP}}}_{\rm ec}}\perp \bfu$ if and only if $A_2$ enjoys the strong $\bfu$-MOMO property.\footnote{An analogous result holds in the logarithmic case. In both cases, the strong $\mob$-MOMO property for $A_2$ is an open problem.}
\end{enumerate}
\end{Thx}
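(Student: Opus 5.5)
The plan has three parts: first show that every invariant measure of $A$ lies in ${\rm DISP}_{\rm ec}$, then realise every automorphism of ${\rm DISP}_{\rm ec}$ inside $A$, and finally reduce the strong $\bfu$-MOMO property of $A$ to that of $A_2$.

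\emph{Every invariant measure of $A$ is in ${\rm{\bf DISP}}_{\rm ec}$.} Let $\nu\in M(\T^{\N}\times\T^{\N}\times\T^{\N},A)$. The coordinates $((x_n),(y_n))$ are fixed by $A$, so they generate invariant functions. Disintegrate $\nu$ over the projection $\pi(x,y,z)=(x,y)$. Since $A$ acts on each fibre $\{(x,y)\}\times\T^{\N}$ as the rotation by $x$, the fibre measures are $R_x$-invariant measures on $\T^{\N}$. Every invariant measure of a compact group rotation has discrete spectrum, since characters are eigenfunctions and span $L^2$ of any invariant measure. The ergodic components of $\nu$ are therefore ergodic measures of rotations $R_x$ on $\T^{\N}$, namely Haar measures on cosets of closed subgroups $\overline{\{nx\}}$. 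These have discrete spectrum, so $\nu\in{\rm DISP}_{\rm ec}$.

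\emph{Universality.} Take $R\in{\rm DISP}_{\rm ec}$ on a standard space. By the ergodic decomposition, $R$ is measurably a family $\omega\mapsto R_\omega$ of ergodic discrete-spectrum automorphisms over a standard base $(\Omega,P)$. By Halmos--von Neumann, $R_\omega$ is a rotation on the dual of its (countable) eigenvalue group $\Lambda_\omega\subset\T$. We then choose measurably an enumeration $(x_n(\omega))_n$ of generators of $\Lambda_\omega$; this requires a measurable selection argument for the countable sets $\Lambda_\omega$, and repetitions are allowed. The rotation by $(x_n(\omega))$ on $\T^{\N}$, restricted to the orbit closure with Haar measure, is isomorphic to $R_\omega$: its eigenvalue group is generated by the $x_n(\omega)$.

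The $y$-coordinate is needed to encode $\omega$ injectively, since different $\omega$ may have the same eigenvalue data. So we choose a Borel injection $\omega\mapsto (y_n(\omega))\in\T^{\N}$ to separate components. The measure $\nu=\int \delta_{x(\omega)}\otimes\delta_{y(\omega)}\otimes m_{\omega}\,dP(\omega)$, with $m_\omega$ Haar on $\overline{\{nx(\omega)\}}$, is then isomorphic to $R$. The point is that the map $\omega\mapsto(x(\omega),y(\omega))$ is injective, so the invariant $\sigma$-algebras match, and fibrewise isomorphisms that depend measurably on $\omega$ glue together. This measurable choice of eigenvalue enumerations and fibre isomorphisms is the main technical obstacle. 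I would handle it by working with the measurable eigenfunction decomposition of $L^2$ relative to the invariant factor.

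\emph{Part (ii).} The projections $((x_n),(y_n),(z_n))\mapsto(x_n,z_n)$ are factor maps onto $(\T^2,A_2)$, so the strong MOMO property passes down to $A_2$. Conversely, functions depending on finitely many coordinates are dense in $C$, and by Stone--Weierstrass we may take trigonometric polynomials. A character $e(\sum a_nx_n+b_ny_n+c_nz_n)$ evaluated along the orbit equals $e(\text{const}+n\sum c_nx_n)\cdot e(\sum a_n x_n+b_ny_n+c_nz_n)$, and $n\mapsto n\sum c_nx_n$ is a character composed with $A_2^n$ at the point $(\sum c_nx_n,\cdot)\in\T^2$. More precisely, for $\chi(x,z)=e(z)$ on $\T^2$ we have $\chi(A_2^n(s,t))=e(t+ns)$, so the sup over $(x,y,z)$ of a block sum is bounded by the sup over $(s,t)\in\T^2$ of the corresponding $A_2$ sum. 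Hence the strong MOMO property transfers up, using linearity and approximation in the norm form \eqref{jsjm2}.

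The final equivalence follows from Theorem~\ref{theorem:a} combined with \eqref{equivMOMO} and universality. If $A_2$ is strong MOMO, then so is $A$, and every system in $\mathcal C_{{\rm DISP}_{\rm ec}}$ has ergodic measures isomorphic to ergodic measures of $A$, so it is orthogonal to $\bfu$. Conversely, $A_2\in\mathcal C_{{\rm DISP}_{\rm ec}}$, so \eqref{equivMOMO} gives the strong MOMO property for $A_2$.
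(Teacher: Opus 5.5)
Your proposal is correct. The first half of (i) and all of (ii) are essentially the paper's argument: Remark~\ref{Respect} together with Proposition~\ref{re:mrak} (you instead identify the ergodic measures of $A$ directly), Lemma~\ref{p:momor} together with Lemma~\ref{l:m19}, and the reduction through Theorem~\ref{theorem:a} and \eqref{equivMOMO}.

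The realisation half of (i) takes a different route. The paper (Theorem~\ref{discr}) builds an explicit isomorphism $x\mapsto(a(\ov{x}),f(x))\in\T^{\N}\times\T^{\N}$ from a generating family of modulus-one relative eigenfunctions. It then has to split each $R_y$-invariant fibre measure into a coset label and Haar measure on $K(y)$. That step needs Lemma~\ref{l:sama2} and the analytic cross-section of $G\to G/K(g)$ from Theorem~\ref{c:sasza17}, which rests on Sutherland's Borel fields of Polish groups and the Effros Borel structure.

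You instead write down the target measure $\int\delta_{x(\omega)}\otimes\delta_{y(\omega)}\otimes\lambda_{K(x(\omega))}\,dP(\omega)$ directly. This only needs the following ingredients:
\begin{enumerate}
\item[(a)] the Lusin--Novikov enumeration of the eigenvalue groups, as in Lemma~\ref{pr:giraf};
\item[(b)] measurability of $x\mapsto\lambda_{K(x)}$;
\item[(c)] a Borel injective label $\omega\mapsto y(\omega)$;
\item[(d)] ergodicity of rotations on monothetic groups;
\item[(e)] the classical Halmos--von Neumann theorem on each ergodic component.
\end{enumerate}
The gluing step you flag as the main obstacle is exactly Proposition~\ref{fields}, which provides a Jankov--von Neumann measurable selection of fibrewise conjugacies. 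So no further relative-eigenfunction argument is needed there.

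Your route is shorter and avoids the cross-section machinery altogether. The paper's route yields a concrete isomorphism whose coordinates are the relative eigenvalues and relative eigenfunctions of $T$, rather than one produced by abstract measurable selection.
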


Recall that the main result of \cite{Ka-Ku-Le-Ru} asserts that $\mathcal{C}_{\cf_{\rm ec}}\perp\bfu$ is equivalent to the so called Veech condition for all Furstenberg systems of $\bfu$ (see Section~\ref{s:Sarnakc} for the notion of Furstenberg system of a bounded sequence). For the class $\mathcal{C}_{{\rm  {\bf{DISP}}}_{\rm ec}}$ the Veech condition is the vanishing of the second Gowers-Host-Kra seminorm: $\|\bfu\|_{u^2}=0$, see \cite{Ka-Ku-Le-Ru}.
There are some natural subclasses
$$\mathcal{C}_{{\rm  {\bf{DISP}}}(\Lambda)_{\rm ec}}\subset \mathcal{C}_{{\rm  {\bf DISP}}_{\rm ec}},$$ where $\Lambda\subset\T$ is a Borel subgroup ($\mathcal{C}_{{\rm  {\bf{DISP}}}(\Lambda)_{\rm ec}}$ consists of the topological systems whose all ergodic  measures yield discrete spectrum automorphisms whose point spectrum is contained in $\Lambda$). Note  however that once $\Lambda$ is proper, its Lebesgue measure is zero and this implies  that the maximal spectral type of the automorphisms associated to {\bf all} invariant measures have singular spectra.\footnote{Because ergodic components have the maximal spectral types supported by $\Lambda$, see Lemma~\ref{l:mutualsing}.} For the important case $\bfu=\mob$, we now deduce (like in \cite{Fe-Ku-Le}) that the {\bf logarithmic} Sarnak's conjecture holds for our systems:
\beq\label{lambdaec}
\mathcal{C}_{{\rm  {\bf{DISP}}}(\Lambda)_{\rm ec}}\perp_{\rm log}\mob.
\eeq
(See also Corollary~\ref{c:whenproper} to see what can be obtained for the original Sarnak's conjecture when $\Lambda$ is proper.)
 One of our tasks in the present paper is to determine the Veech condition for this smaller class for an arbitrary sequence $\bfu \colon \N\to\mathbb{D}$.

\begin{Propx}\label{theorem:c}
Assume that $\|\bfu\|_{u^1}=0$~\footnote{See Fact~\ref{p:ID}.} and let $\Lambda\subset\T$ be a Borel subgroup. Then, $\mathcal{C}_{{\rm  {\bf{DISP}}}(\Lambda)_{\rm ec}}\perp\bfu$  if and only if  for each Furstenberg system $(\D^{\Z},\kappa,S)$ of $\bfu$ and for a.e.\ of its ergodic component $\gamma$ the spectral measure $\sigma_{\pi_0,\gamma}$ of the function $\pi_0 \colon \D^{\Z}\to\D$, $\pi_0(y)=y_0$ has no atoms in $\Lambda$.\footnote{Note that when $\Lambda=\T$ then we  require that this spectral measure has no atoms at all, so the spectral measure $\sigma_{\pi_0,\gamma}$ is continuous for a.a.\ ergodic components which is equivalent to $\|\bfu\|_{u^2}=0$, see Section~\ref{s:ghk1} in Complements.} The same result holds in the logarithmic case.
\end{Propx}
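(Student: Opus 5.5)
The plan is to apply the main result of \cite{Ka-Ku-Le-Ru}: for a characteristic class $\cf$, $\mathcal{C}_{\cf_{\rm ec}}\perp\bfu$ holds if and only if every Furstenberg system $(\D^{\Z},\kappa,S)$ of $\bfu$ satisfies the Veech condition for $\cf$. That condition says that $\E(\pi_0\mid \mathcal{A}_{\cf}(\kappa))=0$, where $\mathcal{A}_{\cf}(\kappa)$ is the largest factor of $(\D^\Z,\kappa,S)$ belonging to $\cf_{\rm ec}$. More precisely, $\pi_0$ must be orthogonal to every factor of $\kappa$ in $\cf_{\rm ec}$, equivalently to the largest such factor. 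The proof therefore reduces to identifying this largest factor when $\cf={\rm\bf DISP}(\Lambda)$, and to computing when $\pi_0$ is orthogonal to it.

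First I would describe the largest ${\rm\bf DISP}(\Lambda)_{\rm ec}$-factor of $\kappa$. Let $\kappa=\int\gamma\,dP(\gamma)$ be the ergodic decomposition. The claim is that this factor is the $\sigma$-algebra generated by the invariant $\sigma$-algebra $\mathcal{I}$ together with all ``measurable relative eigenfunctions'' with eigenvalues in $\Lambda$. These are functions $f$ with $f\circ S=e^{2\pi i\lambda(\cdot)}f$, where $\lambda$ is an $\mathcal{I}$-measurable function taking values in $\Lambda$. Restricted to a.e.\ ergodic component $\gamma$, this factor is exactly the Kronecker factor of $\gamma$ spanned by eigenfunctions with eigenvalues in $\Lambda$. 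I would justify the maximality as follows. A factor in ${\rm\bf DISP}(\Lambda)_{\rm ec}$ has a.e.\ ergodic component with discrete spectrum contained in $\Lambda$. Its $L^2$ space, disintegrated over $\mathcal{I}$, is then spanned by such relative eigenfunctions, and this uses a measurable selection of eigenvalues and eigenfunctions over the ergodic decomposition. Each such relative eigenfunction lies in the $\Lambda$-Kronecker factor of a.e.\ component.

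Next I would use $\|\bfu\|_{u^1}=0$, which by Fact~\ref{p:ID} gives $\E(\pi_0\mid\mathcal{I})=0$. Under this assumption, $\pi_0\perp L^2(\mathcal{A}_{\cf})$ is equivalent to $\pi_0$ being orthogonal to all relative eigenfunctions, because products of $\mathcal{I}$-measurable functions and relative eigenfunctions span $L^2(\mathcal{A}_\cf)$. Disintegrating, this is in turn equivalent to the following: for $P$-a.e.\ $\gamma$, the projection of $\pi_0$ in $L^2(\gamma)$ onto the span of eigenfunctions with eigenvalues in $\Lambda$ is zero. By the spectral theorem, that projection vanishes exactly when $\sigma_{\pi_0,\gamma}$ has no atoms in $\Lambda$, since the atom of $\sigma_{\pi_0,\gamma}$ at $\lambda$ equals the squared norm of the projection of $\pi_0$ onto the $\lambda$-eigenspace.

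For the direction in which atoms exist, suppose a positive-measure set of components $\gamma$ carries an atom $\lambda(\gamma)\in\Lambda$ of $\sigma_{\pi_0,\gamma}$. I would choose $\lambda(\gamma)$ measurably, using the Borel structure of $\Lambda$ and measurable selection, and take $f_\gamma$ to be the normalized projection of $\pi_0$ onto the $\lambda(\gamma)$-eigenspace. Gluing these yields a relative eigenfunction $f$ with $\int \pi_0\bar f\,d\kappa>0$, which violates the Veech condition. The logarithmic case follows verbatim from the logarithmic Lifting Lemma version of \cite{Ka-Ku-Le-Ru}.

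The main obstacle is the measurability bookkeeping: showing that the largest ${\rm\bf DISP}(\Lambda)_{\rm ec}$ factor is really generated by $\mathcal{I}$ and the measurable relative eigenfunctions, and that atoms in $\Lambda$ and the corresponding eigenfunctions can be selected measurably in $\gamma$. To handle this I would first attempt to work with the $\Lambda$-restricted atomic part of $\sigma_{\pi_0,\gamma}$, namely $\sum_{\lambda\in\Lambda}\sigma_{\pi_0,\gamma}(\{\lambda\})$. I would express it measurably via Wiener's lemma as a limit of averages of $|\widehat{\sigma}_{\pi_0,\gamma}(n)|^2$ twisted by characters, or as the norm of the projection onto the relative $\Lambda$-Kronecker factor. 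This avoids pointwise selection whenever possible.
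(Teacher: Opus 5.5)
Your proposal is correct and follows the paper's route. It combines three ingredients: the Veech condition from \cite{Ka-Ku-Le-Ru}; the identification of $\mathcal{A}_{{\rm {\bf DISP}}(\Lambda)_{\rm ec}}$ with the relative $\Lambda$-Kronecker factor $\mathcal{K}^\Lambda_{\rm rel}$, whose fibres are the $\Lambda$-Kronecker factors of the ergodic components (Lemma~\ref{kapitan}, Corollary~\ref{rabit}); and the fibrewise spectral computation of Proposition~\ref{prop:lambda-orth}. Your separate measurable-selection ``gluing'' argument for the direction with atoms is unnecessary, since the direct-integral decomposition $L^2(\mathcal{K}^\Lambda_{\rm rel})=\int^\oplus L^2(\mathcal{K}^\Lambda_\gamma)\,d\PP(\gamma)$ gives both implications at once, exactly as in your fallback plan and in the paper.
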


(For the proof, see Proposition~\ref{prop:lambda-orth} and the fact that the Veech condition is the orthogonality of $\pi_0$ to $L^2$ of the relative $\Lambda$-Kronecker factor for each Furstenberg system of $\bfu$.)
Since for $\Lambda\neq\T$, we have $\mathcal{C}_{{\rm  {\bf{DISP}}}(\Lambda)_{\rm ec}}\perp_{\rm log}\mob$, Proposition~\ref{theorem:c} gives us a certain new information on the ergodic components of logarithmic Furstenberg systems of $\mob$.

\begin{Remx}\label{r:LambdaS}
{\em The same strategy can be applied in the Ces\`aro case when for $\Lambda$ additionally there exists a sequence $(q_n)$ with bounded prime volume, i.e.\ $\sup_n\sum_{p\in\PP, p|q_n}\frac1p<+\infty$, such that $\Lambda\subset\{x\in \T\colon \|q_nx\|\to 0\}$,\footnote{For example, take any bounded prime volume sequence $(q_n)$ satisfying $\sum_{n\geq1}(q_n/q_{n+1})^2<+\infty$ and consider $\Lambda:=\{x\in\T\colon \sum_{n\geq1}\|q_nx\|^2<+\infty\}$. This is an example of an uncountable H$_2$ group \cite{Ho-Me-Pa}.} then by \cite{Ka-Le-Ra}, Sarnak's conjecture holds for the class  ${\rm {\bf{DISP}}}(\Lambda)_{\rm ec}$: $\mathcal{C}_{{\rm {\bf{DISP}}}(\Lambda)_{\rm ec}}\perp\mob$, and Proposition~\ref{theorem:c} gives us an information on ergodic components of any Furstenberg system of $\mob$.}\end{Remx}

Returning  to the problem of existence of universal models in the context of ${\rm {\bf{DISP}}}(\Lambda)_{\rm ec}$, we will show the following result  (note that by taking $\Lambda=\T$, the first assertion of the result below shows that in Theorem~\ref{theorem:b}, we can obtain even more natural universal models for ${\rm{\bf{DISP}}}_{\rm ec}$).

\begin{Thx}[see Theorem~\ref{displambda}, Corollary~\ref{c:Fsigma}, Lemma~\ref{l:momor1}]\label{theorem:d}
For each  $F_\sigma$ subgroup $\Lambda\subset \T$ and each choice of closed, increasing $\Lambda_i\subset \Lambda$ satisfying $\Lambda=\bigcup_{i\geq1}\Lambda_i$, the topological system $\big((\prod_{i\geq1}\Lambda_i)\times\T^\infty\times\T^\infty, A|_{(\prod_{i\geq1}\Lambda_i)\times\T^\infty\times\T^\infty}\big)$ is a universal model for the class ${\rm {\bf{DISP}}}(\Lambda)_{\rm ec}$. Furthermore, if $\Lambda$ is proper then  $\big((\prod_{i\geq1}\Lambda_i)\times\T^\infty\times\T^\infty, A|_{(\prod_{i\geq1}\Lambda_i)\times\T^\infty\times\T^\infty}\big)$ satisfies the logarithmic strong $\mob$-MOMO property.\footnote{The logarithmic strong $\mob$-MOMO property also follows from \cite{Ka-Le-Ri-Te}.}
\end{Thx}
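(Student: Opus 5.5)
We prove the two assertions separately; the universality part is the main work.

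\emph{Invariant measures land in the class.} Write $X_\Lambda:=(\prod_{i\geq1}\Lambda_i)\times\T^\infty\times\T^\infty$. It is closed and $A$-invariant, since $A$ fixes the first two coordinates. Let $\nu$ be an ergodic $A$-invariant measure on $X_\Lambda$. The first coordinate $(x_n)$ is $A$-invariant, so by ergodicity it is $\nu$-a.s.\ a constant $\bar x\in\prod_i\Lambda_i$; the same holds for $(y_n)$, giving a constant $\bar y$. On the fibre, $A$ acts as the rotation $z\mapsto z+\bar x$ on $\T^\infty$, and $\nu$ is an ergodic invariant measure of this rotation. Hence $\nu$ is Haar measure on a coset of the closed subgroup $H_{\bar x}=\overline{\{n\bar x\}}$. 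This is an ergodic rotation whose eigenvalues lie in the group generated by the coordinates $\bar x_n\in\Lambda$, and therefore in $\Lambda$. Thus every ergodic measure gives an element of ${\rm {\bf DISP}}(\Lambda)$, and every invariant measure gives an element of ${\rm {\bf DISP}}(\Lambda)_{\rm ec}$.

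\emph{Every element of the class is realised.} Let $R\in{\rm {\bf DISP}}(\Lambda)_{\rm ec}$, with ergodic decomposition $R=\int R_\omega\,dP(\omega)$. Each $R_\omega$ is, by Halmos--von Neumann, a rotation on the dual of its eigenvalue group $E_\omega\subset\Lambda$. Following the pattern of Theorem~\ref{discr}, the plan is as follows.

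1. Choose measurably in $\omega$ an enumeration $(\lambda_n(\omega))$ of $E_\omega$ arranged so that $\lambda_n(\omega)\in\Lambda_n$. Since the $\Lambda_i$ increase, we may repeat elements and pad with $0$, so that every element of $E_\omega$ appears. This places $\bar x(\omega)=(\lambda_n(\omega))$ in $\prod_i\Lambda_i$.
2. Use the $y$-coordinate to encode $\omega$ injectively and measurably, for instance via a Borel injection into $\T^\infty$. This separates ergodic components that share the same rotation.
3. Set $\nu=\int\delta_{\bar x(\omega)}\otimes\delta_{\bar y(\omega)}\otimes m_{H_{\bar x(\omega)}}\,dP$. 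The coordinate map $z\mapsto(\chi_n(z))$ for eigenfunctions $\chi_n$ of $R_\omega$ with eigenvalues $\lambda_n(\omega)$ identifies $R_\omega$ with the rotation by $\bar x(\omega)$ on $H_{\bar x(\omega)}$. The eigenfunctions must be chosen measurably in $\omega$, which follows from measurable selection.
4. Gluing these fibrewise isomorphisms gives an isomorphism of $R$ with $(X_\Lambda,\nu,A)$.

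The main obstacle is the measurable selection in step 1, namely an enumeration respecting the constraint $\lambda_n\in\Lambda_n$. Here we use the $F_\sigma$ structure: the sets $\{\omega:\lambda\in E_\omega\}$ are analytic, and the closedness of the $\Lambda_n$ makes the selection Borel via Kuratowski--Ryll-Nardzewski. Once this selection is in hand, the rest is identical to the proof for $\Lambda=\T$; Corollary~\ref{c:Fsigma} records the $F_\sigma$ issue.

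\emph{The MOMO claim when $\Lambda\neq\T$.} By \eqref{equivMOMO} it suffices to show $\mathcal{C}_{{\rm {\bf DISP}}(\Lambda)_{\rm ec}}\perp_{\rm log}\mob$, which is \eqref{lambdaec}. That statement follows from Proposition~\ref{theorem:c} in the logarithmic setting, as in \cite{Fe-Ku-Le}. Since $\Lambda$ is a proper Borel subgroup, it has Lebesgue measure zero. By the logarithmic Chowla-type results, almost every ergodic component of a logarithmic Furstenberg system of $\mob$ has spectral measure of $\pi_0$ absolutely continuous (or at least without atoms in $\Lambda$). Hence the Veech condition of Proposition~\ref{theorem:c} holds. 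Alternatively, one can verify the strong MOMO property directly for our model $X_\Lambda$, as in Lemma~\ref{l:momor1}, by reducing to the skew products $(x,z)\mapsto(x,z+x)$ with $x\in\Lambda_i$ and using the logarithmic orthogonality of $\mob$ to these.
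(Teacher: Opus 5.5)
Your description of the invariant measures of the model is the paper's argument. For realising a given $R\in{\rm {\bf DISP}}(\Lambda)_{\rm ec}$ you take a genuinely different route.

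The paper never decomposes $R$. By Corollary~\ref{rabit} it takes global relative eigenfunctions $f_n$ whose eigenvalue functions $a_n$ are invariant and $\Lambda$-valued. It arranges, by splitting along invariant sets, that each $a_n$ takes values in a single $\Lambda_{m_n}$. It then embeds $X$ via $x\mapsto(a_n(x),f_n(x))$ into $(y,z)\mapsto(y,yz)$ on $(\prod_i\Lambda_i)\times\T^{\N}$. Finally it uses the analytic cross-section of $\T^{\N}\to\T^{\N}/K(y)$ from Theorem~\ref{c:sasza17} to split each fibre measure as $\overline{\eta}_y\otimes\lambda_{K(y)}$, with the middle coordinate recording the cross-section point.

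You instead apply Halmos--von Neumann to each ergodic component, put a label of the component in the middle coordinate, and glue. Your route avoids Sutherland's Borel fields of Polish groups and the cross-section theorem, at the price of a non-explicit isomorphism. The paper's construction is explicit and gives the relative Halmos--von Neumann theorem as a by-product.

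To make your version airtight, fix four details.
\begin{itemize}
\item The measurable enumeration of $E_\omega$ comes from Lusin--Novikov applied to the Borel set $\{(\omega,\lambda)\colon\lambda\in E_\omega\}$ (cf.\ Lemmas~\ref{lem:analytic} and~\ref{pr:giraf}). It does not come from Kuratowski--Ryll-Nardzewski, since $E_\omega\cap\Lambda_n$ is not closed.
\item In your argument, closedness of the $\Lambda_n$ only serves to make $\prod_i\Lambda_i$ compact.
\item Padding with $0$ requires $0\in\Lambda_1$. This is the condition $\{1\}\subset\Lambda_1$ in \eqref{incr}; without it even the one-point system is not realised.
\item Explicit eigenfunction coordinates send $\mu_\omega$ to Haar measure on a coset of $H_{\bar x(\omega)}$, not on $H_{\bar x(\omega)}$ itself. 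It is cleaner to drop them and apply Proposition~\ref{fields} to $R$ and $\nu$. That needs only injectivity of $\omega\mapsto(\bar x(\omega),\bar y(\omega))$, measurability of $x\mapsto\lambda_{K(x)}$, and Halmos--von Neumann on each fibre.
\end{itemize}

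In the MOMO part, your assertion that a.e.\ ergodic component of a logarithmic Furstenberg system of $\mob$ has absolutely continuous spectral measure for $\pi_0$ is not available. Tao's theorem \cite{Ta00} makes $\sigma_{\pi_0}$ Lebesgue for $\kappa$ itself. Global Lebesgue spectrum is compatible with discrete spectrum on every component; $A_2$ with Lebesgue measure is the basic example. In fact, by Proposition~\ref{thm:c}, atomless $\sigma_{\pi_0,\gamma}$ for a.e.\ $\gamma$ (for all logarithmic Furstenberg systems) amounts to $\|\mob\|_{u^2}=0$, i.e.\ to $\mathcal{C}_{{\rm {\bf DISP}}_{\rm ec}}\perp_{\rm log}\mob$, which is open.

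What you need is only your parenthetical, and it has a one-line proof you should give. Since $\Lambda$ is a single Lebesgue-null Borel set, Lemma~\ref{l:mutualsing} gives $0=\sigma_{\pi_0}(\Lambda)=\int\sigma_{\pi_0,\gamma}(\Lambda)\,d\PP(\gamma)$. Hence $\sigma_{\pi_0,\gamma}(\Lambda)=0$ for a.e.\ $\gamma$.

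The paper does not use Proposition~\ref{theorem:c} here at all. By the same lemma, every spectral measure for every invariant measure of a system in $\mathcal{C}_{{\rm {\bf DISP}}(\Lambda)_{\rm ec}}$ is concentrated on $\Lambda$, hence singular. So \eqref{mutualsing} kills $\int f\otimes\pi_0\,d\rho$ for every joining $\rho$. This is \eqref{lambdaec}, and \eqref{equivMOMO} transfers it to the model, which lies in the class by the first part.

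Your alternative via Lemma~\ref{l:momor1} needs the strong MOMO property of $A_2|_{C\times\T}$, not mere orthogonality. The relevant $C\subset\Lambda$ are the compact sets that are finite integer combinations of the $\Lambda_j$. That property again comes from \eqref{equivMOMO} or \cite{Ka-Le-Ri-Te}, with Lemma~\ref{l:m19} restoring the middle coordinate.
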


We also construct universal models for characteristic classes generated by certain weakly mixing automorphisms, see Theorem~\ref{t:misejo}.

Aiming at showing that a concrete dynamical system $(Z,R)$ is $\bfu$-ortho\-go\-nal, we could consider a ``local'' version of Theorem~\ref{theorem:a} in which for each Furstenberg system $\nu\in V(z_0)$ (i.e.\ $\nu$ is a measure for which a fixed $z_0\in Z$ is quasi-generic, see Section~\ref{s:Sarnakc}) we look at its ergodic decomposition $\nu=\int_{\Gamma}\nu_{\gamma}\,d\PP(\gamma)$ and then for the set $\{\nu_\gamma\colon \gamma\in\Gamma\}$ we look for a  strong $\bfu$-MOMO model which has ,,copies'' of $\nu_\gamma$ as ergodic measures. Instead, we will formulate a simplified version of it  as follows.

\begin{Thx}\label{theorem:e}
Let $(Z,R)$ be a topological system and $z_0\in Z$.  Assume that
for each $\nu\in V(z_0)$ with $\nu=\int_{\Gamma}\nu_\gamma\,d\PP(\gamma)$ denoting the ergodic decomposition, there exists a strong $\bfu$-MOMO system
$(X_\nu,T_\nu)$ and its invariant measure $\kappa_\nu$ such that for a.e.\ $\gamma\in\Gamma$,  $(R,\nu_\gamma)$ is a factor of $(T_\nu,\kappa_\nu)$. Then
$(Z,R)\perp \bfu$ at $z_0$, meaning \eqref{jsjm1} holds for all $f\in C(Z)$ at $z_0$.
\end{Thx}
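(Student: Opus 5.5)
We argue by contradiction through the Lifting Lemma of \cite{Ka-Ku-Le-Ru}. Suppose that \eqref{jsjm1} fails at $z_0$ for some $f\in C(Z)$. Then along some sequence $N_m\to\infty$ we have $\frac1{N_m}\sum_{n<N_m}f(R^nz_0)\bfu(n)\to c\neq0$. Passing to a subsequence, we may assume that $z_0$ is quasi-generic along $(N_m)$ for some $\nu\in V(z_0)$. Passing further, the pair $(R^nz_0,\bfu(n))$ is quasi-generic for a joining $\rho$ of $\nu$ with a Furstenberg system $\kappa$ of $\bfu$ on $Z\times\D^{\Z}$. Then $c=\int f\otimes\pi_0\,d\rho$, so it suffices to show that this integral vanishes for every such joining $\rho$.

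To prove the vanishing, we use the characterisation from \cite{Ka-Ku-Le-Ru} behind \eqref{equivMOMO}. The strong $\bfu$-MOMO property of $(X_\nu,T_\nu)$ is equivalent to a Veech-type condition on the Furstenberg systems of $\bfu$. Namely, the Lifting Lemma converts strong MOMO into the statement that $\int g\otimes\pi_0\,d\widetilde\rho=0$ for every $g\in C(X_\nu)$ and every joining $\widetilde\rho$ of $\kappa_\nu$ (indeed of any invariant measure of $X_\nu$) with $\kappa$. In Hilbert space language, $\pi_0$ is orthogonal to every image of $L^2(X_\nu,\kappa_\nu)$ under joining operators into $L^2(\kappa)$. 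In fact this persists after ergodic decomposition: the strong MOMO property passes to all invariant measures, including non-ergodic ones, because the switching sequences $(b_k)$ allow the orbits $x_k$ to vary.

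Now we transfer this to $\nu$. Disintegrate $\nu=\int\nu_\gamma\,d\PP(\gamma)$. For a.e. $\gamma$, fix a factor map $\phi_\gamma\colon(X_\nu,\kappa_\nu,T_\nu)\to(Z,\nu_\gamma,R)$. Any joining $\rho$ of $\nu$ and $\kappa$ disintegrates over the invariant $\sigma$-algebra of $\nu$ as $\rho=\int\rho_\gamma\,d\PP(\gamma)$, where $\rho_\gamma$ is a joining of $\nu_\gamma$ with some invariant measure $\kappa_\gamma$ of $\D^{\Z}$, and $\int\kappa_\gamma\,d\PP=\kappa$. We lift each $\rho_\gamma$ to $\widetilde\rho_\gamma:=$ the relatively independent extension of $\rho_\gamma$ over $Z$ via $\phi_\gamma$. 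This is a joining of $\kappa_\nu$ with $\kappa_\gamma$ satisfying $\int (f\circ\phi_\gamma)\otimes\pi_0\,d\widetilde\rho_\gamma=\int f\otimes\pi_0\,d\rho_\gamma$. Integrating in $\gamma$, and choosing the $\phi_\gamma$ measurably, we obtain a measure $\widetilde\rho:=\int\widetilde\rho_\gamma\,d\PP$. It lives on $X_\nu\times\D^{\Z}$, has marginals $\kappa_\nu$ and $\kappa$, and satisfies $\int F\otimes\pi_0\,d\widetilde\rho=c$, where $F$ is the $L^2$-function $\gamma$-fibrewise equal to $f\circ\phi_\gamma$. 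Since $F$ is only $L^2(\widetilde\rho)$-defined rather than continuous, we approximate it in $L^2$ by functions in $C(X_\nu)\otimes L^\infty(\text{invariant }\sigma\text{-algebra})$. We then invoke the MOMO orthogonality in its form on ergodic-component mixtures, which is exactly what the switching sequences give, to conclude $c=0$.

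The main obstacle is the measurable selection of the factor maps $\phi_\gamma$, together with the fact that $F$ depends on $\gamma$ and not only on $x\in X_\nu$. We handle this by replacing $X_\nu$ with $X_\nu\times\Gamma'$ for a compact model $\Gamma'$ of the invariant factor with trivial action. This product still has strong $\bfu$-MOMO, because the factor $\Gamma'$ carries only the identity and the uniformity in \eqref{jsjm2} is over all $x$. Working in this product, the lifted joining becomes a genuine joining, and $F$ becomes an honest $L^2$ function approximable by continuous ones. If measurable selection of the $\phi_\gamma$ proves problematic, the fallback is to use that the set of factor maps forms a standard Borel space, and to apply the Jankov--von Neumann uniformization theorem.
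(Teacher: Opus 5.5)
Your argument, in the form given in your last paragraph, is correct and is essentially the paper's proof. The paper also realizes $(R,\nu)$ as a factor of $\mathrm{Id}\times T_\nu$ (with a product measure $P'\otimes\kappa_\nu$), and your map $(\gamma,x)\mapsto\phi_\gamma(x)$ is exactly such a factor map. It then lifts $\rho$ by the relatively independent extension, which coincides with your $\int\delta_\gamma\otimes\widetilde\rho_\gamma\,d\PP(\gamma)$, approximates the lifted function by a continuous one, and contradicts Proposition~\ref{p:rolemomo} via Lemma~\ref{l:m19}.

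Discard the intermediate measure on $X_\nu\times\D^{\Z}$ from your third paragraph: $\gamma$ cannot be recovered from those coordinates, so $F$ is not a function there. Only the $\Gamma'\times X_\nu$ version carries the proof.
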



The critical property here is to have a well-defined ``joining'' of  possibly {\bf uncountable} set of ergodic components $(R,\nu_\gamma)$. It raises one more natural problem which is of independent interest in dynamics. Namely,  we want to study non-ergodic systems which are close to ergodic ones in the sense that they are factors of
the systems of the form Id$\times Ergodic$. As we will see in Appendix, rather  surprisingly, the problem relates the existence of a common extension for ergodic components with the separability in $d$-bar distance.
\footnote{See also \cite{Ba-Ca-Kw-Op} and \cite{Ba-La} for some recent results on $d$-bar metric in dynamical context.} The following is a sample of consequences of results proved in Appendix.

\begin{Thx}[see the proof in Section~\ref{subsect}, cf.\ Proposition~\ref{p:inac}(B)]\label{theorem:f}
Let $(X,\nu,T)$ be an automorphism and let $\nu=\int_{\Gamma}\nu_\gamma\,d\PP(\gamma)$ denote its ergodic decomposition.
The following are equivalent:
\begin{enumerate}
\item[{\rm (i)}]
 $T$ is a factor of an automorphism of the form $\text{{\rm Id}}\times Ergodic$.
\item[{\rm (ii)}]
  There is an ergodic automorphism $(Y,\kappa,S)$ such that for $\PP$-a.e.\ $\gamma\in\Gamma$, the automorphism $(T,\nu_\gamma)$ is a factor of $(Y,\kappa,S)$.
\item[{\rm (iii)}]
 There is an  automorphism $(Y,\kappa,S)$ such that for $\PP$-a.e.\ $\gamma\in\Gamma$, the automorphism $(T,\nu_\gamma)$ is a factor of $(Y,\kappa,S)$.
  \end{enumerate}
\end{Thx}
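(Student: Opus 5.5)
We will prove the cycle (i)$\Rightarrow$(iii)$\Rightarrow$(ii)$\Rightarrow$(i). All spaces are standard Borel, and $\gamma\mapsto\nu_\gamma$ is a measurable map into the ergodic measures.

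For (i)$\Rightarrow$(iii), suppose $(X,\nu,T)$ is a factor of $(\Omega\times Y,\PP'\otimes\kappa,\mathrm{Id}\times S)$ with $S$ ergodic, via a factor map $\pi$. The ergodic decomposition of ${\rm Id}\times S$ is $\omega\mapsto\delta_\omega\otimes\kappa$. A factor map pushes the ergodic decomposition of the extension forward to an ergodic decomposition of the factor, since invariant sets of the factor pull back to invariant sets. By uniqueness of the ergodic decomposition, for $\PP'$-a.e.\ $\omega$ the image $\pi_*(\delta_\omega\otimes\kappa)$ equals $\nu_{\gamma(\omega)}$, where $\omega\mapsto\gamma(\omega)$ pushes $\PP'$ to $\PP$. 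Hence for $\PP$-a.e.\ $\gamma$ the system $(T,\nu_\gamma)$ is a factor of $(Y,\kappa,S)$, via $y\mapsto\pi(\omega,y)$ for a suitable $\omega$ in the fibre. This gives (iii), and in fact already (ii).

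For (iii)$\Rightarrow$(ii), suppose each $(T,\nu_\gamma)$ is a factor of some possibly non-ergodic $(Y,\kappa,S)$ with ergodic decomposition $\kappa=\int\kappa_\eta\,dQ(\eta)$. A factor of $S$ that is ergodic is a factor of a.e.\ ergodic component? That is false in general. The correct route is this: an ergodic factor $(T,\nu_\gamma)$ of $(S,\kappa)$ is a factor of $(S,\kappa_\eta)$ for a positive-measure set of $\eta$, because the factor map pushes the decomposition of $\kappa$ to a decomposition of the ergodic $\nu_\gamma$, which is therefore trivial. So for each $\gamma$ we obtain some ergodic component $\kappa_\eta$ of $\kappa$ having $(T,\nu_\gamma)$ as a factor. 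We then need a single ergodic system. Here I would invoke the Appendix: the family $\{\kappa_\eta\}$, viewed as ergodic processes, is separable in $\bar d$ in a suitable sense, and so is the family of ergodic components of any automorphism. The key result to use is that a $\bar d$-separable family of ergodic processes admits a common ergodic extension. One possible construction is an ergodic joining along a countable $\bar d$-dense subfamily, using that $\bar d$-limits of factors of a fixed system remain factors of a suitable extension. Applying this to $\{\kappa_\eta\}$ yields the ergodic $(Y',\kappa',S')$ required in (ii).

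For (ii)$\Rightarrow$(i), the naive idea is to take the product of $(\Gamma,\PP,\mathrm{Id})$ with $(Y,\kappa,S)$ and map $(\gamma,y)\mapsto\phi_\gamma(y)$, where $\phi_\gamma$ is a factor map from $(Y,\kappa,S)$ onto $(T,\nu_\gamma)$. This works provided the factor maps can be chosen measurably in $\gamma$. The pushforward is then $\int\nu_\gamma\,d\PP=\nu$, and equivariance holds fibrewise.

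The main obstacle is exactly this measurable selection. The set of pairs $(\gamma,\phi)$ with $\phi$ a factor map $Y\to X$ carrying $\kappa$ to $\nu_\gamma$ can be encoded as a Borel subset of $\Gamma\times J$, where $J$ is the Polish space of joinings of $(S,\kappa)$ with $T$, namely graph joinings. Its projection to $\Gamma$ is conull by (ii). By the Jankov--von Neumann uniformization theorem there is a universally measurable selector, and after modifying on a null set it is Borel. The only delicate point is Borelness of the condition ``the joining is a graph joining''. I would handle it by working with all joinings with marginals $\kappa$ and $\nu_\gamma$, which form a closed, hence Borel, condition. A joining suffices because $(T,\nu_\gamma)$ is then a factor of the joining system, and that system is an extension of $S$. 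To keep the common system fixed, I would instead take a measurable selection of ergodic joinings $\lambda_\gamma$. Their integral over $\Gamma$ is then a factor of $\mathrm{Id}\times(\text{a common ergodic extension})$, which is again supplied by the Appendix's $\bar d$-separability result. So the Appendix is the crux of both (iii)$\Rightarrow$(ii) and (ii)$\Rightarrow$(i).
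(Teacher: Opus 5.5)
Your step (iii)$\Rightarrow$(ii) rests on a false claim. The ergodic components $\{\kappa_\eta\}$ of the given $(Y,\kappa,S)$ need not be $\overline{d}$-separable, and ``the family of ergodic components of any automorphism'' certainly is not. By Proposition~\ref{prop:sep} that would make every automorphism URE. Yet $(x,y)\mapsto(x,x+y)$ on $(\T^2,{\rm Leb})$, whose ergodic components are the circle rotations, is not URE: any conull family of these rotations contains uncountably many pairwise disjoint ones, so Example~\ref{ex:triv}(ii) applies. Hence the common-extension result cannot be applied to $\{\kappa_\eta\}$.

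The family that is separable is $\{\nu_\gamma\}$ itself. Lemma~\ref{lem:sep} shows that the set of all factors of \emph{one fixed} system, ergodic or not, is $\overline{d}$-separable, being a contractive image of the separable space of measurable maps $Y\to K$. Proposition~\ref{prop:sep} then supplies the common ergodic extension; this is exactly the paper's proof.

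Alternatively, your own push-forward observation closes the gap without $\overline{d}$, once you see that your remark ``that is false in general'' is itself wrong. Since $\nu_\gamma=\int(\phi_\gamma)_\ast\kappa_\eta\,dQ(\eta)$ with $T$-invariant summands and $\nu_\gamma$ ergodic, you get $(\phi_\gamma)_\ast\kappa_\eta=\nu_\gamma$ for $Q$-a.e.\ $\eta$, not merely on a set of positive measure. The set of pairs $(\gamma,\eta)$ for which $(T,\nu_\gamma)$ is a factor of $(S,\kappa_\eta)$ is analytic: project the Borel set of graph joinings with marginals $\kappa_\eta,\nu_\gamma$. Fubini for the completed product measure then gives a single $\eta_0$ such that the ergodic system $(S,\kappa_{\eta_0})$ has $(T,\nu_\gamma)$ as a factor for $\PP$-a.e.\ $\gamma$.

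In (ii)$\Rightarrow$(i) your ``naive'' construction is the right one. It is what the paper uses: Corollary~\ref{p:random-factor?}, via the selection in Proposition~\ref{p:faktory?}, quoted in Proposition~\ref{p:inac}(B). The workaround you then adopt, however, breaks it. There is no Borelness problem with graph joinings. Take $f_i$ dense in $C(X)$ and $g_j$ dense in $C(Y)$; the set of joinings $\lambda$ for which every $\raz\otimes f_i$ lies in the $L^2(\lambda)$-closure of $\{g_j\otimes\raz\}$ is a countable intersection of countable unions of weak$^\ast$-open sets, exactly as in Lemma~\ref{l:sm1}.

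Replacing graph joinings by arbitrary ergodic joinings $\lambda_\gamma$ discards the factor relation. You also give no reason why $\{\lambda_\gamma\}$ should be $\overline{d}$-separable: nothing makes these systems factors of one fixed system, so Lemma~\ref{lem:sep} does not apply. The second appeal to the Appendix is therefore unjustified. With a Jankov--von Neumann selection of graph joinings, $\int\delta_\gamma\otimes\lambda_\gamma\,d\PP(\gamma)$ directly exhibits $(X,\nu,T)$ as a factor of $(\Gamma\times Y,\PP\otimes\kappa,{\rm Id}\times S)$. No $\overline{d}$ argument is needed in this direction.
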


We call systems satisfying (i) (or (ii) or (iii)) {\em uniformly
relatively ergodic} (URE) or {\em enclosed} \footnote{For the class ${\rm{\bf{DISP}}}$ this concept was implicitly considered in \cite{Hu-Ta-Xu}. Indeed, in the definition  therein, it is required that one can choose a subset of full measure of ergodic components so that the union of the set of eigenvalues is countable. In this case the group $\Lambda$ generated by this set is still countable. If $R$ denotes an ergodic automorphism with discrete spectrum with the group of eigenvalues $\Lambda$ then in fact $T$ is a factor of ${\rm Id}_{[0,1]}\times R$ (all ergodic components are factors of $R$), and actually $T$ must have discrete spectrum. See Corollary~\ref{p:random-factor?}, Theorem~\ref{pds} and Lemma~\ref{l:hutaxu1}.}.
A topological dynamical system is {\em topologically URE}\, if  each invariant measure yields a URE system.
Some 
examples and other basic properties of enclosed automorphisms
and topological URE systems are presented in Section~\ref{s:AC}.
 An extension of the notion of URE to Markov URE and piecewise Markov URE is also discussed in
 Section~\ref{s:AC}.



Returning to possible contributions toward a better understanding of the orthogonality problem, we obtain the following (${\rm UE}$ below stands for  the class of uniquely ergodic systems).

\begin{Corx}\label{corollary:g}
Let $\cf$ be a characteristic class. Assume that $({\rm UE}\cap \mathcal{C}_{\cf})\perp\bfu$.\footnote{\label{f:domomo} This condition guaranties that all uniquely ergodic systems in $\cf$ enjoy the strong-$\bfu$-MOMO property, see \cite{Ab-Le-Ru}, \cite{Ka-Le-Ri-Te}.} Then, for every topological system $(X,T)$ in $\mathcal{C}_{\cf_{\rm ec}}$
for which all visible measures\footnote{A measure $\nu\in M(X,T)$ is {\em visible} if it has a quasi-generic point $x$, i.e.\ along a subsequence $(N_k)$, we have $\frac1{N_k}\sum_{n<N_k}f(T^nx)\to\int f\,d\nu$ for each $f\in C(X)$. We write $\nu\in V(x)$.}  are enclosed, we have $(X,T)\perp \bfu$.
 If, in addition, for each $\kappa\in V(\bfu)$, $(X_{\bfu},\kappa,S)\in \mathcal{G}$, where $\mathcal{G}$ is another characteristic class containing in $\mathcal{F}$, then  $(X,T)\perp \bfu$ for all systems $(X,T)\in\mathcal{C}_{\cf_{\rm ec}}$ whose $\ca_{\mathcal{G}}$-factors\footnote{See Section~\ref{s:podr1} for the definition of $\ca_{\mathcal{G}}$.} for all visible measures are enclosed.
\end{Corx}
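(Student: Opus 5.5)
The plan is to reduce Corollary~\ref{corollary:g} to Theorem~\ref{theorem:e}, applied at every point $x_0\in X$. Fix $x_0\in X$ and a visible measure $\nu\in V(x_0)$, with ergodic decomposition $\nu=\int_\Gamma\nu_\gamma\,d\PP(\gamma)$. Since $(X,T)\in\mathcal{C}_{\cf_{\rm ec}}$, almost every $(X,\nu_\gamma,T)$ belongs to $\cf$. Since $\nu$ is enclosed, Theorem~\ref{theorem:f}(ii) gives an \emph{ergodic} automorphism $(Y,\kappa,S)$ of which a.e.\ $(X,\nu_\gamma,T)$ is a factor. The goal is then to produce a uniquely ergodic topological model of a suitable such $S$ lying in $\mathcal{C}_\cf$. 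By Footnote~\ref{f:domomo}, this model enjoys the strong $\bfu$-MOMO property. Theorem~\ref{theorem:e} then gives $(X,T)\perp\bfu$ at $x_0$, and since $x_0$ is arbitrary we get $(X,T)\perp\bfu$.

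The key step is to arrange $S\in\cf$, since the $S$ given by enclosedness need not lie in $\cf$. The first thing I would try is to replace $S$ by the factor of $S$ generated by all the copies of the factors $(X,\nu_\gamma,T)$. Concretely, choose a countable dense (in measure) family of such factor $\sigma$-algebras. This is possible because the set of factor embeddings is separable in the appropriate metric, which is where the $d$-bar separability analysis of the Appendix enters. The factor $S'$ generated by the countable family is a factor of a countable joining of members of $\cf$, hence $S'\in\cf$ because $\cf$ is characteristic. As a factor of the ergodic $S$, $S'$ is ergodic, and a.e.\ $(X,\nu_\gamma,T)$ is a factor of $S'$ by density and closedness of $\cf$-factors. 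This is the step I expect to be the main obstacle: one must make sure that the uncountably many factor maps are approximated by countably many, in such a way that the generated factor still contains each of them up to isomorphism. If this fails directly, I would instead take for $S'$ the factor $\ca_{\cf}$ of $S$, i.e.\ the largest factor in $\cf$ (Section~\ref{s:podr1}), which contains every factor of $S$ lying in $\cf$. This seems the cleaner route. Then Jewett--Krieger yields a uniquely ergodic model $(X_\nu,T_\nu)$ of $S'$, and unique ergodicity places it in ${\rm UE}\cap\mathcal{C}_\cf$.

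For the second statement, fix $x_0$ and $\nu\in V(x_0)$. The Lifting Lemma of \cite{Ka-Ku-Le-Ru} shows that the correlations $\frac1{N_k}\sum f(T^nx_0)\bfu(n)$ are computed on a joining of $(X,\nu,T)$ with a Furstenberg system $(X_\bfu,\kappa,S)$, with $\kappa\in V(\bfu)$. Since $\kappa\in\mathcal{G}$, the function $\pi_0$ is measurable with respect to the $\mathcal{G}$-part of the joining, so by relative independence only the $\ca_{\mathcal{G}}$-factor of $(X,\nu,T)$ matters. I would therefore pass to a topological model of the $\ca_{\mathcal{G}}$-factor, or equivalently replace $f$ by its conditional expectation onto it. Its ergodic components lie in $\mathcal{G}\subset\cf$ and it is enclosed by hypothesis. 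The first part of the argument then applies to this factor, giving the vanishing of the limit along $(N_k)$ at $x_0$.
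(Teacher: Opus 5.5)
Your proposal is correct and follows the paper's proof. For each visible $\nu$, enclosedness together with $(X,T)\in\mathcal{C}_{\cf_{\rm ec}}$ yields an ergodic $E'\in\cf$ enclosing the ergodic components. The paper gets this implicitly from Proposition~\ref{p:inac}(B)(iii),(C); you get it via $\ca_{\cf}(S)$ of the enclosing system, which is equally valid. One then takes a uniquely ergodic, strong $\bfu$-MOMO model of $E'$ and runs the argument of Theorem~\ref{theorem:e}. For the second part you replace $f$ by $\EE(f|\ca_{\mathcal{G}}(T,\nu))$ exactly as the paper does. No $\ol{d}$-separability is needed for your first attempt at $S'$: plain separability of the measure algebra (as in the proof of Proposition~\ref{p:inac}(B)) suffices, or simply your $\ca_{\cf}(S)$ route.
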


Denote by CT  the class of topological dynamical systems with countably many ergodic measures
and denote by TURE the class of topologically URE systems.
Of course, $\text{UE}\subset\text{CT}\subset\text{TURE}$.
Corollary~\ref{corollary:g} yields the following:
$$
({\rm UE}\cap\mathcal{C}_{\cf_{\rm ec}})\perp\bfu\;\Rightarrow ({\rm CT}\cap \mathcal{C}_{\cf_{\rm ec}})\perp\bfu\;\Rightarrow ({\rm TURE}\cap \mathcal{C}_{\cf_{\rm ec}})\perp\bfu.
$$
In particular, we generalize
\cite[Proposition 6.11]{Go-Le-Ru}, where only the first implication has been proved.

We recall that the assumption of Corollary~\ref{corollary:g} (for $\cf$={\bf ZE}) is satisfied for the logarithmic orthogonality and $\bfu=\mob$ by the break-through result of Frantzikinakis and Host \cite{Fr-Ho} and we obtain that the zero entropy
topologically URE systems
satisfy logarithmic Sarnak's conjecture.  In fact, as proved in \cite{Fr-Ho}, the Pinsker factors of logarithmic Furstenberg systems of $\mob$ are distal, so we can also apply the second part of the corollary for the class $\mathcal{G}={\bf DIST}$ of distal automorphisms to obtain that the logarithmic Sarnak's conjecture holds for all topological systems whose distal factors of automorphisms given by visible measures are enclosed.
 However, this particular result has already a recent strengthening by Huang, Tan and Xu \cite{Hu-Ta-Xu} to topological systems in which for each invariant measure, the Kronecker factors  of a.a.\ of its ergodic components  have a common extension (they call this property {\em almost countable spectrum}). In Section 8.1, we prove that, in fact, the property of almost countability does not depend on the choice of negligible subsets for all invariant measures: systems with almost countable spectrum are precisely those that have countable measure-theoretic spectrum (i.e.\ the union $\Lambda(X,T)$ of the sets of eigenvalues of $(X,\nu,T)$, $\nu\in M^e(X,T)$, is countable). For completeness, we state a refined version of the main result from \cite{Hu-Ta-Xu} and provide a shorter proof of it in Section~\ref{s:hutaxu}.

So far, thinking about applications, we mainly concentrated on finding properties of topological systems that are sufficient for the orthogonality with $\bfu$. However, another option is to focus on ergodic properties of Furstenberg systems of $\bfu$ themselves. This will ease (via some disjointness type argument) either proving orthogonality with all systems from $\mathcal{C}_{\cf}$ or with a certain subclass. Corollary~\ref{corollary:h} below provides a sample result that can be deduced that way.

In the context of the logarithmic Sarnak's conjecture, we recall a theorem of Frantzikinakis \cite{Fr}: {\em If all logarithmic Furstenberg systems of the M\"obius function $\mob$ are ergodic, then the logarithmic Sarnak's
conjecture holds.} Our goal is to extend this result by relaxing the ergodicity assumption, replacing it with suitable conditions on the (measurable) families of ergodic components of Furstenberg systems associated with $\mob$ (see Section~\ref{s:Markovure} for the notion of piecewise Markov URE).

\begin{Corx}\label{corollary:h}
Logarithmic Sarnak's conjecture holds if and only if for each logarithmic Furstenberg system $(X_{\mob},\kappa,S)$, the corresponding Pinsker factor is piecewise Markov URE. In particular, the logarithmic Sarnak's conjecture holds provided that, for the Pinsker factor of each Furstenberg system of $\mob$,  its a.a.\ ergodic components have a common extension.
\end{Corx}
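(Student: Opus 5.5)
The statement to prove is Corollary~\ref{corollary:h}: logarithmic Sarnak's conjecture holds if and only if, for every logarithmic Furstenberg system of $\mob$, the Pinsker factor is piecewise Markov URE. I would handle the two implications separately and derive the ``in particular'' part from the sufficiency direction.

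\emph{Necessity.} Assume logarithmic Sarnak's conjecture. By Tao's theorem, logarithmic Sarnak is equivalent to logarithmic Chowla along subsequences of full logarithmic density. Combined with the Frantzikinakis--Host description of logarithmic Furstenberg systems, this should give that the Pinsker factor of each logarithmic Furstenberg system $(X_{\mob},\kappa,S)$ is trivial, or at least extremely degenerate. I would then check that such a factor is piecewise Markov URE directly from the definition in Section~\ref{s:Markovure}. Failing that, I would use the fact that a zero entropy factor carrying an identity-type structure is a factor of ${\rm Id}\times$Ergodic. The main care here is that the Pinsker factor of a non-ergodic system must be read through its ergodic decomposition.

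\emph{Sufficiency.} This direction follows the pattern of Theorem~\ref{theorem:e} and Corollary~\ref{corollary:g}.
\begin{enumerate}
\item Fix a zero entropy system $(Z,R)$, a point $z_0$, and some $\nu\in V(z_0)$ along a logarithmic subsequence.
\item Use the logarithmic Lifting Lemma of \cite{Ka-Ku-Le-Ru} to obtain a joining $\rho$ of $\nu$ with a logarithmic Furstenberg system $\kappa$ of $\mob$.
\item Since $(Z,\nu,R)$ has zero entropy, the correlation $\int F\otimes\pi_0\,d\rho$ depends only on the projection of $\pi_0$ onto $L^2$ of the Pinsker factor $\Pi(\kappa)$. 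It therefore suffices to show that this projection is orthogonal to every zero entropy system joined with $\kappa$.
\item Split $\Pi(\kappa)$ into the countably many pieces given by the piecewise Markov URE structure. On each piece, the factor is a factor of a Markov-type extension of ${\rm Id}\times Y$ with $Y$ ergodic; here Theorem~\ref{theorem:f} is used to pass between the common-extension and ${\rm Id}\times$Ergodic formulations.
\item On each piece the ergodic components are factors of a single ergodic system. I would then run Frantzikinakis' argument: ergodicity of the Furstenberg system, together with the strong logarithmic MOMO property of uniquely ergodic models from \cite{Fr-Ho} and Footnote~\ref{f:domomo}, is what kills the correlation, applied now to a uniquely ergodic model of the common ergodic extension, for instance via Jewett--Krieger.
\item Theorem~\ref{theorem:a} and Theorem~\ref{theorem:e} transfer the orthogonality back to $z_0$.
\end{enumerate}

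\emph{The ``in particular'' part.} If a.a.\ ergodic components of the Pinsker factor have a common extension, then by Theorem~\ref{theorem:f} the factor is URE, and URE is the one-piece case of piecewise Markov URE.

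\emph{Main obstacle.} I expect the hardest step to be step 5: replacing ergodicity in Frantzikinakis' theorem by the URE or Markov structure. One must show that the Frantzikinakis--Host disjointness argument still applies when $\kappa$ is only a mixture of systems that are all factors of one ergodic system with a Markov gluing. I would first try to realize the common ergodic extension topologically and apply Corollary~\ref{corollary:g} with $\mathcal{G}$ equal to the distal class.
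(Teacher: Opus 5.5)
\emph{Sufficiency.} There is a genuine gap here. Steps 4--5 read piecewise Markov URE as a countable splitting of $\Pi(\kappa)$ into pieces whose ergodic components are factors of a single ergodic system. That amounts to URE, which is only the ``in particular'' hypothesis. The actual hypothesis says only that $L^2(\Pi(\kappa))\ominus L^2({\rm Inv}_\kappa)$ lies in the closed span of the images ${\rm Im}(\Phi_{\rho_i})$, where $\rho_i\in J({\rm Id}_{[0,1]}\times S_i,S|_{\Pi(\kappa)})$ and $S_i$ is ergodic. Images of Markov operators need not come from factor maps, and whether piecewise Markov URE implies URE is open (Question~\ref{quest2}).

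More importantly, you never supply the step that kills the correlation, and the tools you propose act on the wrong system. Theorem~\ref{theorem:a}, Theorem~\ref{theorem:e} and Corollary~\ref{corollary:g} all need hypotheses on the invariant (or visible) measures of the tested system $(Z,R)$. Here $(Z,R)$ is an arbitrary zero entropy system, and the hypothesis concerns $\kappa$.

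The missing idea is to prove the Veech condition $\mathbb{E}(\pi_0|\Pi(\kappa))=0$ entirely on the Furstenberg side and then invoke \cite[Theorem~A]{Ka-Ku-Le-Ru}:
\begin{enumerate}
\item $\pi_0\perp L^2({\rm Inv}_\kappa)$ because $\|\mob\|_{u^1}=0$ logarithmically.
\item Each $S_i$ may be taken of zero entropy and, by Jewett--Krieger, uniquely ergodic. It is then logarithmically strong $\mob$-MOMO by \cite{Fr-Ho} and Footnote~\ref{f:domomo}, and so is ${\rm Id}_{[0,1]}\times S_i$ by Lemma~\ref{l:m19}.
\item Let $\widehat{\rho_i}$ be the relatively independent extension of $\rho_i$ to a joining with $(S,\kappa)$. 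By Proposition~\ref{p:rolemomo}, for every continuous $g$,
\[
\int \mathbb{E}(\pi_0|\Pi(\kappa))\,\Phi_{\rho_i}(g)\,d\kappa=\int g\otimes\pi_0\,d\widehat{\rho_i}=0 .
\]
\item Hence $\mathbb{E}(\pi_0|\Pi(\kappa))$ is orthogonal to every ${\rm Im}(\Phi_{\rho_i})$ and to $L^2({\rm Inv}_\kappa)$, and the covering property forces it to vanish.
\end{enumerate}
This is Theorem~\ref{t:ajmt} (through Corollary~\ref{p:m17}) with $\cf={\rm\bf ZE}$. In the URE case the paper alternatively uses Lemma~\ref{l:hutaxu6} and a diagonal self-joining to show that $\Pi(\kappa)$ has discrete spectrum. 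That factor is then orthogonal to $\pi_0$ because $\sigma_{\pi_0}$ is continuous.

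\emph{Necessity.} The Pinsker factor is never trivial. The coordinate $\pi_0^2$ generates the system of the square-free indicator $\mob^2$, which has rational discrete spectrum (hence zero entropy) and is nontrivial in every Furstenberg system of $\mob$.

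Your fallback also fails. $A_2$ on $(\T^2,{\rm Leb})$ has zero entropy and an identity factor, but it has uncountably many pairwise disjoint ergodic components, so it is not URE (Example~\ref{ex:triv}(ii)).

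What is needed is ergodicity. Tao's theorem \cite{Ta1} is the plain equivalence, with no density restriction, so logarithmic Sarnak gives logarithmic Chowla. This forces a unique logarithmic Furstenberg system, and that system is ergodic. Then $\Pi(\kappa)$ is ergodic and trivially Markov URE: take $S_1:=S|_{\Pi(\kappa)}$ and $\rho_1:={\rm Leb}\otimes\Delta$, with $\Delta$ the diagonal self-joining, so that ${\rm Im}(\Phi_{\rho_1})=L^2(\Pi(\kappa))$. Frantzikinakis--Host plays no role in this direction.

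Your derivation of the ``in particular'' part (common extension $\Rightarrow$ URE by Theorem~\ref{theorem:f} $\Rightarrow$ piecewise Markov URE) is fine once sufficiency is repaired as above.
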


\subsection{Universal models for characteristic classes}\label{s:podr1} Recall that a class $\mathcal{F}$ of measure-preserving automorphisms is a {\em characteristic class} if it is closed under taking (countable) joinings\footnote{Recall that if $R_i$ is an automorphism of $(Z_i,\cd_i,\kappa_i)$, $i=1,2$, then each $R_1\times R_2$-invariant measure $\rho$ with the marginals $\kappa_1$, $\kappa_2$, respectively,  is called a {\em joining} of $R_1$ and $R_2$: $\rho\in J(R_1,R_2)$. It yields a Markov operator $\Phi_\rho \colon L^2(Z_1,\kappa_1)\to L^2(Z_2,\kappa_2)$ determined by
$$
\int_{Z_1\times Z_2} f_1\ot f_2\,d\rho=\int_{Z_2}\Phi_\rho(f_1)f_2\,d\kappa_2\text{ for }f_i\in L^2(Z_i,\kappa_i),i=1,2,$$
and satisfying $U_{R_2}\circ\Phi_\rho=\Phi_\rho\circ U_{R_1}$,
\beq\label{jo2}\Phi_\rho \raz=\Phi_\rho^\ast\raz=\raz\text{ and }[f\geq0\Rightarrow\Phi_\rho(f)\geq0].\eeq
And vice versa: if $\Phi \colon L^2(Z_1,\kappa_1)\to L^2(Z_2,\kappa_2)$ is equivariant with $R_1$ and $R_2$ and satisfies the Markov properties \eqref{jo2}, then $\Phi=\Phi_\rho$ for a unique $\rho\in J(R_1,R_2)$. Recall that two automorphisms $R_1,R_2$ are (Furstenberg) {\em disjoint} if their only joining is the product measure; we then write $R_1\perp R_2$. Given a countable family $R_i$ of automorphisms of $(Z_i,\cd_i,\kappa_i)$, $i\geq1$, we define the set $J(R_1,R_2,\ldots)$ of joinings as the set of all $R_1\times R_2\times\cdots$-invariant measures having the projections $\kappa_i$ for each $i\geq 1$.} and factors.\footnote{Given an automorphism $R$ acting on $\zdk$, we identify factors with invariant sub-sigma-algebras of $\mathcal{D}$.} From now on, $\cf$ will always stand for a characteristic class. Every automorphism $R$ acting on a standard probability space $(Z,\mathcal{D},\kappa)$ possesses a largest (and hence unique) factor lying in $\mathcal{F}$ \cite{Le-Pa-Th,Ru}; we denote this factor by $\mathcal{A}_{\mathcal{F}}(R)$.\footnote{\label{f:reljoi} A crucial property of $\ca_{\cf}(R)$ is that whenever $R'$ (acting on $(Z',\cd',\kappa')$) belongs to $\cf$, we have $\int f\ot f'\,d\rho=\int \EE(f|\ca_{\cf}(R))\ot f'\,d\rho$ for each $\rho\in J(R,R')$, $f\in L^2(Z,\kappa)$ and $f'\in L^2(Z',\kappa')$, e.g.\ \cite{Ka-Ku-Le-Ru}. The role of such factors in the orthogonality problem~\eqref{jsjm1} will soon become clear, e.g.\ in Theorem~\ref{t:ajmt}.

Note moreover that if $\rho\in J(R_1,R_2)$ and $f_1\in L^2(\ca_{\cf}(R_1))$ then $\Phi_\rho(f_1)\in L^2(\ca_{\cf}(R_2))$. Indeed, this follows from the fundamental theorem on non-disjointness \cite{Le-Pa-Th} that $\Phi_\rho(f_1)$ belongs to the $L^2$-space of a common factor of $R_2$ and a certain infinite self-joining of $R_1|_{\ca_{\cf}(R_1)}$ (the latter is an element of $\cf$ and therefore the common factor is in $\cf$). It follows that each characteristic class is closed under taking Markov quasi-images: if $R_1\in \cf$, $\rho\in J(R_1,R_2)$, with $\Phi_\rho$ having dense image, then $R_2\in\cf$.  Note also that each characteristic class is closed under taking arbitrary joinings, meaning that whenever $R$ acting on $\zdk$ contains a (possibly uncountable) family $\cd_i$, $i\in I$, of factors which are in $\cf$, and $\cd=\bigvee_{i\in I}\cd_i$, then $R\in \cf$. Indeed, consider $\ca_{\cf}(R)$ which is an element in $\cf$.
}
The present paper takes first steps toward understanding which characteristic classes admit universal models, and, in particular, we show that non-trivial such classes do exist (e.g.\ Theorems~\ref{theorem:b} and~\ref{theorem:d}). It turns out (see Proposition~\ref{p:noum} below) that if $\cf$ has a universal model then $\cf$ must be an ec-class, i.e. \beq\label{restric1}\cf=\cf_{\rm ec},\eeq
where $\cf_{\rm ec}$ is the (characteristic) class of all automorphisms whose a.a.\ ergodic components lie in $\cf$. Since
\beq\label{zawD}
\mbox{
${\rm{\bf{DISP}}}\subsetneq  {\rm{\bf{DISP}}}_{\rm ec}$,}\eeq
this explains why ${\rm{\bf{DISP}}}$ has no universal model and it raises the natural question of whether ${\rm{\bf{DISP}}}_{\rm ec}$
admits one.
While indeed, we prove that ${\rm{\bf{DISP}}}_{\rm ec}$ admits a universal model (see Theorem~\ref{theorem:b}), one can study the universal model problem for related classes ${\rm{\bf{DISP}}}(\Lambda)$ of automorphisms with discrete spectrum whose set of eigenvalues is contained in a fixed Borel subgroup $\Lambda\subset\T$ (\eqref{zawD} holds for such classes with the equality whenever $\Lambda$ is countable). While a general case when $\Lambda$ is Borel remains unsolved, we solve positively the problem in case of $\sigma$-compact $\Lambda$ (see Theorem~\ref{theorem:d}) which shows a relationship of our investigations even with the non-singular ergodic theory as the groups of $L^\infty$-eigenvalues of ergodic non-singular automorphisms  are $\sigma$-compact \cite{Aa-Na}, \cite{Ho-Me-Pa} and  \cite{Na} Chapters 11 and 14.

Noticing that the intersection of characteristic classes is a characteristic class and following \cite{Be-Go-Ru} and \cite{Go-Le-Ru}, we also study the case of the smallest characteristic class  $\cf(T)$ containing a fixed automorphism $T$ (this class consists of factors of all countable self-joinings of $T$). For example, in the weak mixing case
in Theorem~\ref{t:misejo} we prove that the characteristic class generated by an MSJ automorphism\footnote{An ergodic automorphism $T$ has the {\em minimal self-joining} property (MSJ) \cite{Ju-Ru,Ru} if all its ergodic (infinite) self-joinings are Cartesian products of off-diagonal joinings, the latter being obtained as the images of $\mu$ under the maps
$$X\ni x\mapsto (x,T^{i_1}x,T^{i_2}x,\ldots)\in X^\infty$$
with $i_1,i_2,\ldots$ arbitrary integers.
For weakly mixing MSJ $T$, we have $T^m\perp T^n$ whenever $m\neq n$ \cite{Ju-Ru}. Moreover, such automorphisms enjoy the PID property, that is, for any other automorphisms $R,S$ and any $\rho$ a joining of $T$, $R$ and $S$ which is pairwise independent, $\rho$ is the product measure.
} admits a universal model, see Section~\ref{s:smomo} for precise results.

\subsection{(Markov) Uniform Relative Ergodicity} \label{s:Markovure}
An important  characterization of the URE property in terms of Ornstein's d-bar distance is done in Appendix, Corollary~\ref{cor:Tim}:

\begin{Th}
$T$ is URE (enclosed) if and only if   after removing a subset of measure zero of ergodic components, the remaining ergodic components yield a separable subset in the $d$-bar metric.
\end{Th}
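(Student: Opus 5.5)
The plan is to reduce everything to Theorem~\ref{theorem:f}, which gives three equivalent forms of URE, and to realise the ergodic components $\nu_\gamma$ as stationary processes on which the $d$-bar metric acts. I will fix a countable generating partition $P$ of $(X,\nu,T)$; one exists since $X$ is standard, and if needed I will use a sequence of finite partitions $P_1,P_2,\ldots$ with a weighted $d$-bar $\sum_m 2^{-m}\bar d_m$. This gives a measurable map $\gamma\mapsto\mu_\gamma$, where $\mu_\gamma$ is a shift-invariant ergodic measure on $A^{\Z}$ and $(T,\nu_\gamma)\cong(A^{\Z},\mu_\gamma,\text{shift})$ for a.e.\ $\gamma$. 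The statement then reads: $T$ is URE iff, after removing a $\PP$-null set, $\{\mu_\gamma\}$ is $\bar d$-separable. Throughout I use that $\bar d(\mu,\mu')=\inf_\lambda\lambda(x_0\neq x'_0)$, with the infimum over (ergodic) joinings $\lambda$.

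($\Rightarrow$) This direction is routine. By Theorem~\ref{theorem:f}(ii) there is an ergodic $(Y,\kappa,S)$ and, for a.e.\ $\gamma$, a factor map $\pi_\gamma\colon Y\to X$ onto $(T,\nu_\gamma)$. Set $\varphi_\gamma:=P\circ\pi_\gamma\colon Y\to A$. Then $\mu_\gamma$ is the law of $(\varphi_\gamma(S^ny))_n$. For two coding functions $\varphi,\psi$ on $Y$, the diagonal coupling through $y$ gives $\bar d(\mathrm{law}(\varphi),\mathrm{law}(\psi))\le\kappa(\varphi\neq\psi)$. Thus the map from the space of measurable $A$-valued functions on $Y$, with metric $\kappa(\varphi\ne\psi)$, to processes is $1$-Lipschitz into $\bar d$. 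That space is separable because $Y$ is standard, and images of separable sets under Lipschitz maps are separable. Hence $\{\mu_\gamma\}$, over a conull set of $\gamma$, is $\bar d$-separable.

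($\Leftarrow$) This direction is harder. Fix a countable $\bar d$-dense set $\{\mu_j\}$ in the conull separable family. For each pair $(j,j')$ I choose an ergodic joining $\lambda_{j,j'}$ that nearly realises $\bar d(\mu_j,\mu_{j'})$. I then build one ergodic system $(Y,\kappa,S)$ containing, as factors, all $\mu_j$ together with all these near-optimal couplings, realised simultaneously. The construction is an ergodic component of a countable joining, extended by an independent infinite-entropy Bernoulli factor to give room for copying. Given $\mu_\gamma$, pick $j_k$ with $\bar d(\mu_{j_k},\mu_\gamma)<2^{-k}$, so consecutive distances are summable. I aim to produce codings $\varphi_k$ on $Y$ with $\mathrm{law}(\varphi_k)=\mu_{j_k}$ and $\kappa(\varphi_k\ne\varphi_{k+1})\lesssim 2^{-k}$. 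Then $\varphi_k$ is Cauchy in measure, its limit $\varphi$ is equivariant, and $\mathrm{law}(\varphi)=\mu_\gamma$ by $\bar d$-continuity. So $\mu_\gamma$ is a factor of $S$, and Theorem~\ref{theorem:f} (ii)$\Rightarrow$(i) finishes the proof. The measurability issues (selecting $j_k$ and $\varphi$ measurably in $\gamma$) are handled by measurable selection. They are not strictly needed, since (ii) only asks for factors pointwise in $\gamma$.

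The main obstacle is the inductive step. Given $\varphi_k$ realising $\mu_{j_k}$ inside $Y$, I must find $\varphi_{k+1}$ realising $\mu_{j_{k+1}}$ that is coupled to $\varphi_k$ as $\lambda_{j_k,j_{k+1}}$ (or nearly so). The difficulty is that $\varphi_k$ was itself obtained by an earlier modification, so it is not one of the prescribed copies. I would first try an Ornstein-type copying lemma. Using Rokhlin towers in $Y$ and the independent Bernoulli randomness, copy the conditional distribution of $\lambda_{j_k,j_{k+1}}$ given its first coordinate, along long blocks relative to $\varphi_k$. This accepts an $\varepsilon_k$ error in the law, corrected at the next stage, with summable $\varepsilon_k$. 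If that relative copying fails for lack of a relatively Bernoulli structure, the fallback is to make $Y$ large enough to be closed under these relative extensions. Concretely, take $Y$ to be an ergodic component of a joining of all $\mu_j$ and of every finite chain of the couplings $\lambda_{j,j'}$ composed with each other. Then every path $j_1\to j_2\to\cdots$ is already present as a factor, and the limit argument applies directly.
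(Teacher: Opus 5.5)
Your ($\Rightarrow$) direction is the paper's argument (Lemma~\ref{lem:sep}) and is fine, up to two small points. First, a countable generator need not exist (e.g.\ for the identity; Rokhlin's theorem needs aperiodicity), so use your refining finite partitions, i.e.\ a compact alphabet as in the paper (Lemma~\ref{le:projdbar} identifies the topologies). Second, the equivalence you actually use is (i)$\Leftrightarrow$(ii). Cite Proposition~\ref{p:inac}(B) for it, because the paper derives the hard part of Theorem~\ref{theorem:f} \emph{from} the statement you are proving.

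The gap is in ($\Leftarrow$), at the step you flag: the relative Ornstein copying does not work.
\begin{itemize}
\item Tower copying controls only finitely many block statistics. So $\mathrm{law}(\varphi_{k+1})$ is weak$^*$-close to $\mu_{j_{k+1}}$, but in general not $\bar d$-close.
\item The next coupling $\lambda_{j_{k+1},j_{k+2}}$ is a joining of $\mu_{j_{k+1}}$, not of $\mathrm{law}(\varphi_{k+1})$. The next copying step therefore inherits the wrong long-block statistics, and no $\bar d$-small change repairs them. That upgrade from weak$^*$- to $\bar d$-closeness is the finitely determined property, which nontrivial zero-entropy targets lack.
\item Concretely: your step uses the exact copies of the $\mu_j$ in $Y$ only at the start, and from stage $2$ on it must accept inputs whose law is only approximately right. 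It would therefore run equally well inside the Bernoulli factor alone, starting from an approximate copy. Taking all $\mu_{j_k}=\mu_\gamma$ to be a coding of an irrational rotation, it would make the rotation a factor of a Bernoulli shift. That is impossible, since factors of K-automorphisms are K.
\end{itemize}
Your first construction is also unavailable in general. One copy of each $\mu_j$ with all near-optimal $\lambda_{j,j'}$ ``realised simultaneously'' need not exist: the pairwise marginals of a single joining must be mutually compatible (graph joinings, for instance, must compose), and independently chosen couplings need not be.

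Your fallback is the right idea and is exactly the paper's proof of Proposition~\ref{prop:sep}, provided the chains form a tree rather than separate objects. Separately joined chains do not contain the infinite branch $j_1\to j_2\to\cdots$ that the limit argument needs. The construction goes as follows.
\begin{itemize}
\item Index copies by finite strings $\alpha=(j_1,\dots,j_n)\in\bbN^\ast$, with $X_\alpha$ carrying $\mu_{j_n}$ and the root carrying any $\mu_j$.
\item Join each child $X_{\alpha j}$ only to its parent $X_\alpha$, by the relatively independent product over $X_\alpha$ with an ergodic joining $\lambda_{\alpha j}$ of cost less than $\bar d(\mu_{j_n},\mu_j)+2^{-n}$. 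The depth-dependent slack makes the errors summable along every branch; the paper instead keeps only children within $\bar d$-distance $2^{-n}$ of the parent.
\item Since every node has a single parent, the finite-stage joinings are consistent, and Kolmogorov's theorem gives a joining of all $X_\alpha$.
\item Almost every ergodic component of this joining keeps every parent--child marginal equal to $\lambda_{\alpha j}$, because these are countably many ergodic, hence extremal, measures. (The paper takes ergodic components stage by stage.)
\end{itemize}
Along a branch with $\bar d(\mu_{j_k},\mu_\gamma)<2^{-k}$, the coordinate maps are then Cauchy in measure and converge to the required factor map onto $\mu_\gamma$. No copying lemma or Bernoulli factor is needed.
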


From this theorem we deduce a characterization of the topological URE.
The following statements are equivalent (see Corollary~\ref{co:d-bar}):
\begin{enumerate}
\item[---]  a topological system $(X,T)$ is topologically URE,
\item[---]
the set  $M^e(X,T)$ of ergodic $T$-invariant measures is $d$-bar separable,
\item[---] all measures
from $M^e(X,T)$ yield automorphisms that have a common extension.
\end{enumerate}

Many consequences of the URE are provided in Section~\ref{s:AC}.
We mention here only one of them (Proposition~\ref{p:inac}(C)):
$$
\mbox{If $T$ is URE and $T\in\mathcal F_{\text{ec}}$ for a characteristic class $\mathcal F$    then $T\in\mathcal F$.}
$$

The URE property is also related to the recent paper \cite{Go-Le-Ru} in which the orthogonality of $\bfu$ to all uniquely ergodic systems: $\bfu\perp\,$UE, is studied (this problem is attributed to M. Boshernitzan in \cite{Co-Do-Se}). If $\bfu$ is such, it is not hard to see that there cannot be $\kappa\in V(\bfu)$  which yields a non-trivial ergodic Furstenberg system (see Section~\ref{s:Sarnakc} for notation $V(\bfu)$).
In fact, in \cite{Go-Le-Ru}, it is proved that $\bfu\perp\,$UE if and only if for each $\kappa\in V(\bfu)$ the function $\pi_0$\footnote{Recall that $\pi_0 \colon X_{\bfu}\to \mathbb{D}$, $\pi_0((y_n))=y_0$.} is orthogonal to all ergodic Markov quasi-images of $L^2(X,\mu)$:
\beq\label{orttoue}
\pi_0\perp \Phi_\rho(L^2(X,\mu))\eeq for all ergodic $(X,\mu, T)$ and all $\rho\in J(T,(S,\kappa))$.  This of course takes place if $\kappa$ yields a system disjoint from the class Erg of all ergodic automorphisms, but the real problem is to understand to which extent disjointness is needed, or in other words,  given a system non-disjoint with Erg, we follow a Vershik's idea \cite{Ver} of how much  the $L^2$-space of the system can be ``covered'' by ergodic Markov quasi-images,\footnote{In \cite{Go-Le-Ru}, the elements $f\in{\rm Im}(\Phi_\rho)$ ($R$ is ergodic, and $\rho\in J(R,T)$) are called {\em weakly ergodic}. The problem consists in understanding the orthocomplement of the space generated by the weakly ergodic elements. The reader can check that invariant functions are orthogonal to all weakly ergodic elements.}  see \cite{Fr-Le}.  For example,  we have the following.

\begin{Prop}\label{p:jjm}
Assume that $T\in{\rm Aut}\xbm$, and that a.a.\ ergodic components of $T$ are isomorphic. Then
\beq\label{Mproperty}
\ov{\rm span}\Bigg(\bigcup_{\rho\in J(R,T), R\in {\rm Erg}}{\rm Im}(\Phi_\rho)\Bigg)=L^2({\rm Inv}_\mu)^\perp.\eeq
All URE automorphisms satisfy the \eqref{Mproperty} property.
In fact, all piecewise Markov URE automorphisms (defined below) do.\footnote{
We would like to warn the reader that the URE property is not closed under taking (compact) group extensions (consider the Cartesian square of $(x,y)\mapsto (x+\alpha,x+y)$ on $\T^2$  with $\alpha$ irrational),  neither  direct products.}
\end{Prop}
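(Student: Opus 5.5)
The plan is to prove the two inclusions in \eqref{Mproperty} separately.

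\emph{The inclusion ``$\subseteq$''.} Let $R\in{\rm Erg}$ act on $(Z,\mathcal D,\kappa)$, let $\rho\in J(R,T)$, and let $h\in L^2({\rm Inv}_\mu)$. The Markov operator $\Phi_\rho$ intertwines $U_R$ and $U_T$, so its adjoint $\Phi_\rho^*$ maps $T$-invariant functions to $R$-invariant ones. By ergodicity of $R$, $\Phi_\rho^*h$ is a constant, namely $\int h\,d\mu$. Hence for $f\in L^2(Z,\kappa)$,
$$\langle \Phi_\rho f,h\rangle=\int f\,d\kappa\cdot\ov{\int h\,d\mu}.$$
This vanishes when $\int f\,d\kappa=0$. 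For general $f$, the constant part is $\Phi_\rho\raz=\raz$, which is not orthogonal to the constants. So the precise statement is the following: every weakly ergodic element lies in $L^2({\rm Inv}_\mu)^\perp\oplus\C\raz$, and those with zero mean lie in $L^2({\rm Inv}_\mu)^\perp$. I would interpret \eqref{Mproperty} modulo constants (span of zero-mean images), as the footnote suggests.

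\emph{The inclusion ``$\supseteq$'' under isomorphic components.} Write $\mu=\int_\Gamma\mu_\gamma\,d\PP(\gamma)$ and fix an ergodic $(Z,\kappa,R)$ with isomorphisms $\phi_\gamma:(X,\mu_\gamma,T)\to(Z,\kappa,R)$. These should be chosen measurably in $\gamma$ via a measurable selection argument, which is standard for the Borel isomorphism relation on a standard Borel set of ergodic measures. This gives an isomorphism
$$X\cong\Gamma\times Z,\qquad T\cong{\rm Id}\times R.$$
So it suffices to treat $T={\rm Id}_\Gamma\times R$ on $(\Gamma\times Z,\PP\otimes\kappa)$. Here $L^2({\rm Inv})=L^2(\Gamma)\otimes\raz$, and its orthocomplement is $L^2(\Gamma)\otimes L^2_0(Z)$. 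Span density of tensors reduces the claim to showing that $a\otimes f$, with $a\ge 0$, $a\in L^\infty$ and $f\in L^2_0(Z)$, lies in some ${\rm Im}(\Phi_\rho)$.

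Normalize $\int a\,d\PP=1$ and take the ergodic system $R$ itself, with joining
$$\rho=\int_\Gamma a(\gamma)\,\Delta_\gamma\,d\PP(\gamma)+\text{correction},$$
where $\Delta_\gamma$ is the diagonal copy of $\kappa$ on $Z\times(\{\gamma\}\times Z)$. The corresponding Markov operator is
$$\Phi(f)=\tfrac{1}{\|a\|_\infty}\,a\otimes f+\big(1-\tfrac{a}{\|a\|_\infty}\big)\otimes\textstyle\int f\,d\kappa.$$
This is positive, preserves $\raz$, has adjoint preserving $\raz$, and is equivariant. For $f\in L^2_0$ it equals $\tfrac{1}{\|a\|_\infty}\,a\otimes f$. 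Scaling then gives all $a\otimes f$, and linear combinations handle general $a\in L^\infty$.

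\emph{URE and piecewise Markov URE.} For URE, by Theorem~\ref{theorem:f} $T$ is a factor of ${\rm Id}\times R'$ with $R'$ ergodic. The $L^2$ of the factor sits inside $L^2({\rm Id}\times R')$. Applying the previous case to ${\rm Id}\times R'$ and composing with the conditional expectation onto the factor (composition of Markov operators is again Markov, corresponding to a joining) yields the density for $T$, because conditional expectation maps invariant-orthocomplement onto invariant-orthocomplement. The piecewise Markov URE case should follow the same way, using Markov quasi-images in place of factors and gluing over the countable pieces of the partition of $\Gamma$ with the weights $a\cdot\raz_{\text{piece}}$.

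The main obstacle I expect is the measurable selection of the isomorphisms $\phi_\gamma$, that is, producing the global isomorphism with ${\rm Id}\times R$. I would try the Jankov--von Neumann uniformization applied to the analytic set of pairs $(\gamma,\phi)$ with $\phi$ an isomorphism, giving a universally measurable choice, which suffices modulo null sets.
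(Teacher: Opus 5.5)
Your argument is correct. Your caveat about constants is also right: $\raz=\Phi_\rho\raz$ lies in every image, so \eqref{Mproperty} has to be read modulo $\C\raz$, i.e.\ with $\Phi_\rho$ restricted to zero-mean functions. The paper's proof of Proposition~\ref{p:iderg} only establishes the inclusion $\supseteq$. Your reduction to ${\rm Id}\times R$ is exactly the paper's Corollary~\ref{c:jon17}, so the measurable-selection issue you anticipate is already settled there.

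The genuine difference is in the key step.
\begin{itemize}
\item The paper uses the isometric Markov operators $\Phi_N f(\bax,z)=f(R^{N(\bax)}z)$ with $N\colon\baX\to\Z$. These come from the factor maps $(\bax,z)\mapsto R^{N(\bax)}z$ onto $R$. It takes the difference $\Phi_0f-\Phi_{\raz_A}f=\raz_A\otimes(f-f\circ R)$ and then invokes density of coboundaries in $L^2_0(Z,\kappa)$ (von Neumann, using ergodicity of $R$).
\item You instead use the joining $\int_\Gamma\delta_\gamma\otimes\big(b(\gamma)\Delta+(1-b(\gamma))\,\kappa\otimes\kappa\big)\,d\PP(\gamma)$ (coordinates rearranged). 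Here $\Delta$ is the diagonal self-joining of $R$ and $b=a/\|a\|_\infty$. Your normalization $\int a\,d\PP=1$ and the word ``correction'' are misleading, since $\int a(\gamma)\Delta_\gamma\,d\PP$ alone has the wrong second marginal. However, the operator you actually write down is the right one, and it is Markov and equivariant.
\end{itemize}
Your route places all of $b\otimes L^2_0(Z)$ in a single image and needs no ergodic theorem. The paper's route buys a stronger conclusion in exchange: modulo constants, $L^2({\rm Inv}_\mu)^\perp$ is already spanned by $L^2$-spaces of genuine ergodic factors of $T$ (all copies of $R$). Your joinings are not graph joinings and only give Markov quasi-images.

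For the last assertion, your sketch misreads the definition. Piecewise Markov URE (defined right after the statement) involves no partition of $\Gamma$ and no weights. It asks for ergodic $S_i$ and $\rho_i\in J(S_i\times{\rm Id}_{[0,1]},T)$ with $L^2({\rm Inv}_\mu)^\perp\subset\ov{\rm span}\bigcup_i{\rm Im}(\Phi_{\rho_i})$.

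Your factor argument (lifting $g$ to $g\circ\pi$) is unavailable for a non-isometric $\Phi_{\rho_i}$. What replaces it is the following:
\begin{itemize}
\item $\Phi_{\rho_i}$ maps invariant functions to invariant functions.
\item Via the adjoint, $\Phi_{\rho_i}$ maps $L^2({\rm Inv})^\perp$ into $L^2({\rm Inv}_\mu)^\perp$.
\item Hence $\Phi_{\rho_i}$ intertwines the projections onto the orthocomplements of the invariant functions, and so $L^2({\rm Inv}_\mu)^\perp\subset\ov{\rm span}\bigcup_i\Phi_{\rho_i}\big(L^2({\rm Inv})^\perp\big)$.
\end{itemize}
Composing with your Markov operators for ${\rm Id}\times S_i$ then finishes the argument, as in Corollary~\ref{c:urenew}.
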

That is, for enclosed automorphisms, only ``obvious'' (namely those which are $T$-invariant) functions from $L^2$ are not covered by the ergodic Markov quasi-images.

Instead of considering genuine factors in the definition of URE automorphism, another option is to use Markov quasi-images of automorphisms of the form Id$\times Ergodic$. We say that $T$ (acting on $\xbm$) is a {\em Markov URE} if there exist  an ergodic automorphism $S$  and a joining $\rho\in J(S\times \text{Id}_{[0,1]},T)$ such that
$$
L^2(X,\mu)\ominus L^2({\rm Inv}_\mu)\subset\ov{{\rm Im}(\Phi_\rho)}.
$$
Finally, we say that $T$ is {\em piecewise Markov URE} if there exist a family of ergodic automorphisms $\{S_i\colon i\in I\}$ and joinings $\rho_i\in J(S_i\times \text{Id}_{[0,1]},T)$, $i\in I$, such that
$$
L^2(X,\mu)\ominus L^2({\rm Inv}_\mu)\subset\ov{\rm span}\bigg(\bigcup_{i\in I}{{\rm Im}(\Phi_{\rho_i})}\bigg),$$
see also Footnote~\ref{f:jedenMarkov} for the RHS subspace above.
Using the same argument as before, it is easy to see that if $T$ is Markov URE then we can additionally claim that $S\in\cf$ (or all $S_i\in\cf$  if  $T$ is piecewise Markov URE). Clearly
$$
\mbox{URE $\Rightarrow$ Markov URE $\Rightarrow$ piecewise Markov URE.}
$$
More discussion on these notions is carried out in Section~\ref{s:AC} (see also Appendix and Section~\ref{s:open}).

\subsection{Sarnak's conjecture and  the general orthogonality problem}\label{s:Sarnakc}
Before returning  to the main results and formulating the other results of the paper more precisely and putting them in  context, let us recall some basics concerning the orthogonality problem.

Assume, we are given a sequence $\bfu \colon \mathbb{N}\to\mathbb{D}$, where $\mathbb{D}:=\{z\in\mathbb{C}:|z|\le 1\}$, and a characteristic class $\mathcal{F}$ of measure-preserving automorphisms.

Denote by $X_{\bfu}=\ov{\{S^n\bfu\colon n\in\Z\}}\subset \mathbb{D}^\Z$ the subshift generated by $\bfu$; here $S \colon \mathbb{D}^\Z\to \mathbb{D}^\Z$ stands for the left shift (and we treat $\bfu$ as a two sided sequence: $\bfu(-m)=\bfu(m)$ with (e.g.) $\bfu(0)=1$). Each weak$^\ast$-limit of empiric measures $\kappa:=
\lim_{k\to\infty}\frac1{N_k}
\sum_{n<N_k}\delta_{S^n\bfu}$ (the set of all such measures is denoted by $V(\bfu)$) gives an $S$-invariant measure on $X_{\bfu}$ which in turn yields a measure-theoretic automorphism $(X_{\bfu},\kappa,S)$, called a {\em Furstenberg system} of $\bfu$. By some abuse of vocabulary we may call $\kappa\in V(\bfu)$ a Furstenberg system of $\bfu$. An essential piece of information about statistical properties of $\bfu$ can be obtained from joinings
of Furstenberg systems of $\bfu$ with other automorphisms.

While  the orthogonality problem is difficult for the class {\bf ZE}, it admits a complete solution for the smallest non-trivial characteristic class, namely {\bf ID} \cite{Ka-Ku-Le-Ru}.
 To formulate the relevant result,  we recall that
given $\bfu$ and $(N_k)_{k=1}^\infty$   defining a Furstenberg system of $\bfu$, the corresponding Gowers-Host-Kra (GHK) seminorms are defined as follows (see, e.g.\ \cite{Ho-Kr}):
$$
\|\bfu\|^2_{u^1((N_k))}=\lim_{H\to\infty}\frac1H\sum_{h\leq H}\Big(\lim_{k\to\infty}\frac1{N_k}\sum_{n\leq N_k}\bfu(n)\ov{\bfu(n+h)}\Big),$$
and, for $s>1$,
$$
\|\bfu\|^{2^{s+1}}_{u^{s+1}((N_k))}=
\lim_{H\to\infty}\frac1H\sum_{h\leq H}\|
\bfu(\cdot+h)\cdot\ov{\bfu(\cdot)}\|^{2^s}_{u^s((N_k))}.$$
We say that $\|\bfu\|_{u^s}=0$ if $\|\bfu\|_{u^s((N_k))}=0$ is  zero for all relevant $(N_k)$.
The following is well known.
\begin{Fact}[see e.g.\ Corollary~5.3 and Lemma~5.5 in~\cite{Ka-Ku-Le-Ru}]\label{p:ID}
We have $\bfu\perp \mathcal{C}_{\rm {\bf ID}}$ if and only if $\|\bfu\|_{u^1}=0$.\end{Fact}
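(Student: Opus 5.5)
We prove the two implications separately, using the definition of $\bfu$-orthogonality~\eqref{jsjm1} and the systems in $\mathcal{C}_{\rm{\bf ID}}$, i.e.\ topological systems $(X,T)$ for which every invariant measure gives the identity. For such a system every point is fixed $\mu$-a.e.\ for every $\mu$, and since point masses at non-fixed points have orbit closures carrying invariant measures with non-trivial dynamics, one checks that $T$ must be the identity on $X$. Indeed, if $Tx\neq x$, any invariant measure supported near the orbit of $x$ on which $T$ acts as identity would force $Tx=x$, a contradiction. So $\mathcal{C}_{\rm{\bf ID}}$ consists of identity maps, and $(X,{\rm Id})\perp\bfu$ is equivalent to $M(\bfu)=0$ (take $f\equiv1$). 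This seems too weak, so instead we work with the rotated form. By the footnote on ``forgetting'' multiplicativity, orthogonality to a characteristic class is stable under multiplying $\bfu$ by deterministic sequences. We therefore reinterpret the claim, as in \cite{Ka-Ku-Le-Ru}, through the Veech-type criterion: $\mathcal{C}_{\rm {\bf ID}}\perp\bfu$ means that for every Furstenberg system $\kappa\in V(\bfu)$ the function $\pi_0$ is orthogonal to $L^2({\rm Inv}_\kappa)=L^2(\ca_{\rm{\bf ID}}(S))$.

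Given this criterion, the proof reduces to computing $\|\EE(\pi_0\mid{\rm Inv}_\kappa)\|_2^2$. By the von Neumann ergodic theorem,
$$\|\EE(\pi_0\mid{\rm Inv}_\kappa)\|^2_{L^2(\kappa)}=\lim_{H\to\infty}\frac1H\sum_{h\le H}\int \pi_0\cdot\ov{\pi_0\circ S^h}\,d\kappa .$$
Since $\kappa=\lim_k\frac1{N_k}\sum_{n<N_k}\delta_{S^n\bfu}$ and $\pi_0\cdot\ov{\pi_0\circ S^h}$ is continuous on $X_{\bfu}$, the inner integral equals $\lim_k\frac1{N_k}\sum_{n\le N_k}\bfu(n)\ov{\bfu(n+h)}$. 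The right-hand side is therefore exactly $\|\bfu\|^2_{u^1((N_k))}$. Consequently $\|\bfu\|_{u^1((N_k))}=0$ for all $(N_k)$ if and only if $\pi_0\perp L^2({\rm Inv}_\kappa)$ for all $\kappa\in V(\bfu)$.

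It remains to connect this to orthogonality. For ``$\Leftarrow$'', we take $(X,T)$ all of whose invariant measures yield identities, together with $x\in X$ and $f\in C(X)$. Passing to a subsequence, we may assume that $(T^nx,S^n\bfu)$ quasi-generates a joining $\rho$ of some $\nu\in V(x)$ with some $\kappa\in V(\bfu)$. Then $\frac1{N_k}\sum f(T^nx)\bfu(n)\to\int f\ot\pi_0\,d\rho$. By footnote~\ref{f:reljoi} applied with $\cf={\bf ID}$, this integral equals $\int f\ot\EE(\pi_0\mid\ca_{\rm{\bf ID}}(S))\,d\rho=0$. For ``$\Rightarrow$'', we suppose that $\|\bfu\|_{u^1((N_k))}\ne0$, so that $g:=\EE(\pi_0\mid{\rm Inv}_\kappa)\neq0$. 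The task is then to build a topological model in $\mathcal{C}_{\rm{\bf ID}}$ together with a point whose orbit, jointly with $\bfu$, realizes the joining of $(S,\kappa)$ with its invariant factor. This is where the Lifting Lemma of \cite{Ka-Ku-Le-Ru} enters: it produces such a point along a switching sequence $(b_k)$ satisfying~\eqref{zalbk} in the identity model $([0,1],{\rm Id})$, or in a slowly moving variant. With $f$ approximating $\ov g$, the correlation becomes $\|g\|^2\ne0$, contradicting orthogonality.

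The main obstacle is this last step. One has to realize a non-ergodic invariant factor inside a topological system all of whose invariant measures are identities, using orbits that move slowly, and we rely on the Lifting Lemma for that.
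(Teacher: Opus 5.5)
Your core route is the one behind the paper's citation of \cite{Ka-Ku-Le-Ru}. It combines the Veech criterion for ${\rm{\bf ID}}={\rm{\bf ID}}_{\rm ec}$ with the von Neumann identity $\|\EE(\pi_0\mid{\rm Inv}_\kappa)\|^2_{L^2(\kappa)}=\|\bfu\|^2_{u^1((N_k))}$. Your computation of this identity is correct, and so is your proof of ``$\Leftarrow$'' via footnote~\ref{f:reljoi}.

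Your opening claim that every system in $\mathcal{C}_{\rm{\bf ID}}$ is the identity is false, and if it were true the Fact itself would fail, since orthogonality would reduce to $M(\bfu)=0$. Take $\bfu(n)=(-1)^k$ on $[k^2,(k+1)^2)$. Then $M(\bfu)=0$, yet every invariant measure of $(X_{\bfu},S)$ is carried by the two constant sequences, which are fixed points. So $(X_{\bfu},S)\in\mathcal{C}_{\rm{\bf ID}}$, while the point $\bfu$ and $f=\ov{\pi_0}$ give $\frac1N\sum_{n<N}f(S^n\bfu)\bfu(n)=1$. Likewise, orthogonality to $\mathcal{C}_{\rm{\bf ID}}$ is not stable under multiplication by deterministic sequences: $\bfu(n)=e^{2\pi in\alpha}$ has $\|\bfu\|_{u^1}=0$, but $\bfu\cdot\ov{\bfu}\equiv1$. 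Neither claim is used later, but both should be removed.

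The genuine gap is ``$\Rightarrow$'', which you only gesture at. It hinges on exactly the fact you denied: $\mathcal{C}_{\rm{\bf ID}}$ contains non-identity systems with slowly moving orbits. You can either cite the Veech criterion outright, as the paper does (or cite \eqref{equivMOMO} together with Proposition~\ref{p:rolemomo} applied to $(\D,{\rm Id})$), or complete the argument as follows.

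\begin{enumerate}
\item Put $g=\EE(\pi_0\mid{\rm Inv}_\kappa)\neq0$. Lift the joining $\rho=(g\times{\rm id})_\ast\kappa$ of $(\D,g_\ast\kappa,{\rm Id})$ and $(S,\kappa)$. It satisfies $\int\ov{z}\ot\pi_0\,d\rho=\|g\|^2$, so no approximation of $\ov g$ is needed.
\item The Lifting Lemma gives $\underline x=(x_n)\subset\D$, constant on the blocks $[b_k,b_{k+1})$ with $b_{k+1}-b_k\to\infty$, such that $\frac1{N_r}\sum_{n<N_r}\ov{x_n}\,\bfu(n)\to\|g\|^2$.
\item You must then verify that the shift-orbit closure $X_{\underline x}\subset\D^{\Z}$ (with $x_n:=x_0$ for $n<0$) belongs to $\mathcal{C}_{\rm{\bf ID}}$:
\begin{itemize}
\item orbit points are fixed or non-periodic, hence null for every invariant measure;
\item every other point is a limit of $\sigma^{n_r}\underline x$ with $|n_r|\to\infty$, so it has at most one jump;
\item the sets of sequences with a single jump at position $j$, $j\in\Z$, are pairwise disjoint shifts of one another, hence null.
\end{itemize}
So every invariant measure is carried by constant sequences, i.e.\ by fixed points.
\item The continuous function $F(\underline y)=\ov{y_0}$ at the point $\underline x$ then contradicts $\bfu\perp\mathcal{C}_{\rm{\bf ID}}$.
\end{enumerate}
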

This is equivalent to the fact that $\pi_0\perp L^2({\rm Inv}_\kappa)$ for each $\kappa\in V(\bfu)$, where $\pi_0$ stands for the projection on the zero coordinate in $X_{\bfu}$ and ${\rm Inv}_\kappa$ denotes the sigma-algebra of invariant (mod~$\kappa$) subsets \cite{Ka-Ku-Le-Ru}. In what follows, when studying the orthogonality problem, we tacitly assume that $\|\bfu\|_{u^1}=0$. For the classical functions like M\"obius or Liouville, their first GHK norm is indeed equal to zero due to \cite{Ma-Ra}. Note also that the strong $\bfu$-MOMO property makes sense only when $\|\bfu\|_{u^1}=0$ (as \eqref{jsjm2} with $f=1$ is equivalent to $\|\bfu\|_{u^1}=0$, see Section~5.2 in \cite{Ka-Ku-Le-Ru}).

\subsection{Strong $\bfu$-MOMO property, further results and applications}\label{s:smomo}
While, by its very definition, the strong $\bfu$-MOMO property is combinatorial, actually, it is a purely dynamical concept. In fact, a joining flavour of the strong $\bfu$-MOMO property can be seen in the  following (by $\mathbb{A}$ we denote $\{e^{2\pi ij/3}\colon j=0,1,2\}$):
\begin{Prop}\label{p:rolemomo}
Let $(X,T)$ be a topological system.
Then
 it satisfies the strong $\bfu$-MOMO property if and only if for all $\nu\in M(X\times\mathbb{A},T\times {\rm Id}_{\mathbb{A}})$, $\kappa\in V(\bfu)$ and all joinings $\rho\in J((T\times {\rm Id}_{\mathbb{A}},\nu),(S,\kappa))$, we have
$$\int_{X\times\mathbb{A}\times X_{\bfu}}f\ot id_{\mathbb{A}}\ot\pi_0\,d\rho=0$$
for all $f\in C(X)$. The same property holds in the logarithmic case.\end{Prop}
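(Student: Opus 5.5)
Both directions go through the standard translation between combinatorial averages along moving orbits and joinings with Furstenberg systems, via the Lifting Lemma of \cite{Ka-Ku-Le-Ru}. The auxiliary coordinate in $\mathbb{A}$ will encode a bounded ``phase'' multiplying each block sum, so that absolute values of block sums can be linearized.

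\emph{Only if.} Assume the strong $\bfu$-MOMO property and fix $\nu$, $\kappa\in V(\bfu)$ along $(N_k)$, and $\rho$. First reduce to ergodic $\rho$, using that the strong $\bfu$-MOMO property passes to $T\times{\rm Id}_{\mathbb{A}}$: indeed $\|\sum_n \bfu(n) F\circ(T\times{\rm Id})^n\|_{C(X\times\mathbb{A})}$ is controlled by the corresponding sums for finitely many continuous functions on $X$. Then apply the Lifting Lemma to $\rho$. It produces a point $(x,a)$ of the system $(X\times\mathbb{A},T\times{\rm Id})$, or more precisely a sequence of points $(x_k,a_k)$ switched at times $b_k$ satisfying \eqref{zalbk}, such that the sequence of pairs $((T^nx_k,a_k),S^n\bfu)$ is quasi-generic for $\rho$ along $(N_k)$. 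Hence
$$\int f\ot id_{\mathbb{A}}\ot\pi_0\,d\rho=\lim_k\frac1{N_k}\sum_{j}a_j\sum_{b_j\le n<b_{j+1}}f(T^nx_j)\bfu(n).$$
The modulus of the right-hand side is bounded by the averaged block sums in the definition of strong $\bfu$-MOMO, so it tends to $0$. The non-ergodic case then follows from the ergodic one by linearity, passing through the ergodic decomposition; alternatively, the Lifting Lemma may already handle it directly.

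\emph{If.} Suppose strong $\bfu$-MOMO fails. Then there exist $f$, $(b_k)$, $(x_k)$ and $\varepsilon>0$ with $\frac1{b_K}\sum_{k<K}|\sum_{b_k\le n<b_{k+1}}f(T^nx_k)\bfu(n)|>\varepsilon$ along a subsequence of $K$; assume $f$ is real, or split into real and imaginary parts.

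Choose $a_k\in\mathbb{A}$ so that $\mathrm{Re}\bigl(a_k\cdot\text{(block sum)}\bigr)\ge \frac12|\text{block sum}|$. This works because the cube roots of unity cover all arguments within angle $\pi/3$, and $\cos(\pi/3)=1/2$. This choice of phases is exactly why $\mathbb{A}$ is used.

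Now consider the empirical measures of $((T^nx_k,a_k),S^n\bfu)$ for $b_k\le n<b_{k+1}$ on $X\times\mathbb{A}\times X_{\bfu}$. Pass to a weak$^*$ limit $\rho$ along the subsequence $N=b_K$. Because $b_{k+1}-b_k\to\infty$, the switches occur with density zero, so $\rho$ is $T\times{\rm Id}\times S$-invariant. Its projection to $X_{\bfu}$ lies in $V(\bfu)$, and its projection to $X\times\mathbb{A}$ is some $\nu\in M(X\times\mathbb{A},T\times{\rm Id}_{\mathbb{A}})$. By continuity of $f\ot id_{\mathbb{A}}\ot\pi_0$, we get $\mathrm{Re}\int f\ot id_{\mathbb{A}}\ot\pi_0\,d\rho\ge\varepsilon/2$, which contradicts the hypothesis.

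The logarithmic version is identical, with logarithmic weights and logarithmic Furstenberg systems.

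I expect the main obstacle to be the ``only if'' direction. Realizing an arbitrary (non-ergodic) joining $\rho$ by moving orbits requires the full strength of the Lifting Lemma together with the switch-set property \eqref{zalbk}. Reducing to ergodic $\rho$ may be delicate, because the marginal on $X_{\bfu}$ of an ergodic component of $\rho$ need not lie in $V(\bfu)$. For this reason I would apply the Lifting Lemma directly to $\rho$ rather than decompose it.
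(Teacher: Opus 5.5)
Your proposal is the paper's proof: in the ``only if'' direction the paper applies the Lifting Lemma of \cite{Ka-Ku-Le-Ru} directly to the given joining $\rho$ (possibly non-ergodic) for $T\times{\rm Id}_{\mathbb{A}}$ and $S$. It then bounds the resulting phase-weighted block sums by the strong $\bfu$-MOMO averages. In the ``if'' direction it linearizes the absolute values with phases $e_k\in\mathbb{A}$ and passes to a weak$^*$ limit of the empirical measures along $(b_K)$, exactly as you do. Your closing instinct is correct, and you should delete the tentative reduction to ergodic $\rho$ entirely: the $X_{\bfu}$-marginals of ergodic components of $\rho$ are ergodic components of $\kappa$, which in general do not lie in $V(\bfu)$. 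The Lifting Lemma therefore cannot be applied to them, whereas it applies to $\rho$ itself.
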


Although this proposition is a rather direct consequence of the Lifting Lemma from \cite{Ka-Ku-Le-Ru}, we provide a proof for completeness.

This joining approach is often used to establish new results on the stability of $\bfu$-orthogonality in different topological models of a given measure-preserving automorphism as  seen in the proofs of most of our main results.

We have already mentioned that recently Huang, Tan, and Xu
\cite{Hu-Ta-Xu} proved (cf.\ Lemma~\ref{l:hutaxu1}) that each zero entropy topological system for which all invariant measures yield systems for which the relative (with respect to the sigma-algebra of invariant sets) Kronecker factor equals the Kronecker factor are logarithmically M\"obius orthogonal. We slightly strengthen this result by establishing the following short interval extension of the main result in~\cite{Hu-Ta-Xu}.
\begin{Th}\label{t:hutaxu1} Let $(Y,S)$ be any topological system of zero entropy.
\begin{enumerate}
\item[(a)] If for each $\nu\in M(Y,S)$, we have $\ck_{\rm rel}(S,\nu)=\ck(S,\nu)$ (i.e.,\ the relative (with respect to the sigma-algebra of invariant sets) Kronecker factor then $\mob\perp_{\rm log} (Y,S)$ (this was proved in~\cite{Hu-Ta-Xu}).
\item[(b)] If for each $\nu\in M(Y,S)$, we have $\ck_{\rm rel}(S,\nu)=\ck(S,\nu)$, then
$$
\lim_{H\to\infty}\limsup_{M\to\infty}\frac1{\log M}\sum_{m\leq M}\frac1m\Big\|\sum_{h\leq H}\mob(m+h)\cdot g\circ S^{m+h}\Big\|_{C(Y)}=$$$$
\lim_{H\to\infty}\limsup_{M\to\infty}\frac1{\log M}\sum_{m\leq M}\frac1m\sup_{y\in Y}\Big|\sum_{h\leq H}g(S^{m+h}y)\mob(m+h)\Big|=0$$
for all $g\in C(Y)$.
\end{enumerate}
\end{Th}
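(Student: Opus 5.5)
Part (a) is quoted from \cite{Hu-Ta-Xu}, so the task is (b). The two displayed quantities coincide by definition of the sup norm, so only their vanishing has to be shown. I will pass to a joining statement via the logarithmic Lifting Lemma of \cite{Ka-Ku-Le-Ru}, in the spirit of Proposition~\ref{p:rolemomo}, replacing the switching times $(b_k)$ with gaps tending to infinity by blocks of fixed length $H$.

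\emph{Step 1 (reduction to joinings).} Suppose (b) fails for some $g\in C(Y)$ with $\|g\|\le1$. Then there are $\delta>0$, arbitrarily large $H$, and points $y_m$ such that the logarithmic averages of $\big|\sum_{h\le H}g(S^{m+h}y_m)\mob(m+h)\big|$ exceed $\delta H$ along a sequence $M_j\to\infty$. Since $H$ is fixed inside the limsup, I use the unit-length phases $c_m:=\overline{\mathrm{sgn}}\sum_{h\le H}(\cdot)$. The lifting trick then produces measures on $Y\times\mathbb{A}\times X_{\mob}$ (or on a compact space of phases), after discretising the phases into finitely many values. Taking logarithmic limit points gives a measure $\nu\in M(Y\times \mathbb{A},S\times \mathrm{Id})$, a logarithmic Furstenberg system $\kappa$ of $\mob$, and a joining $\rho$ with
\[
\Big|\int \frac1H\sum_{h\le H} (g\circ S^h)\otimes id_{\mathbb{A}}\otimes (\pi_0\circ S^h)\,d\rho\Big|\ \ge\ \delta' ,
\]
uniformly in the chosen $H$. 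Strictly, $\rho$ depends on $H$, so I will assemble the bound into an averaged form, $\int |\frac1H\sum_h G\circ(S\times\mathrm{Id}\times S)^h|\,d\rho_H$. This step is careful but routine bookkeeping.

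\emph{Step 2 (identify the obstruction).} By the von Neumann ergodic theorem applied inside the joining, as $H\to\infty$ the short averages $\frac1H\sum_{h\le H}U^h$ converge to the projection onto invariant functions of $\rho$. Expanding $\|\frac1H\sum_h (g\circ S^h)(\pi_0\circ S^h)\|^2$ via van der Corput, a nonvanishing limit forces the component of $g\otimes id_{\mathbb{A}}$ in the relative Kronecker factor $\ck_{\rm rel}$ of $(S\times\mathrm{Id},\nu)$ to correlate with that of $\pi_0$. This is the relative version of the $u^2$ obstruction: pair correlations along the diagonal see only the eigenfunctions relative to the invariant $\sigma$-algebra. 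Using the hypothesis $\ck_{\rm rel}(S,\nu')=\ck(S,\nu')$ for the $Y$-marginal $\nu'$, I will show that $\ck_{\rm rel}$ of $S\times \mathrm{Id}_{\mathbb{A}}$ is generated by $\ck(S,\nu')$ together with $\mathbb{A}$. The correlation therefore reduces to $\pi_0$ correlating with a genuine discrete spectrum factor joined with an identity on $\mathbb{A}$, i.e.\ with a system in ${\rm{\bf DISP}}\vee{\rm{\bf ID}}$.

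\emph{Step 3 (contradiction).} Using Fact~\ref{p:ID} and footnote~\ref{f:disp}, namely the averaged Chowla result \cite{Ma-Ra-Ta} giving continuity of $\sigma_{\pi_0,\kappa}$ in the logarithmic setting, $\pi_0$ is orthogonal to the $L^2$ of factors of $\kappa$ that have discrete spectrum. To pass from this to orthogonality with the $\ca_{{\rm\bf DISP}\vee{\rm\bf ID}}$-part, I use footnote~\ref{f:reljoi} together with $\|\mob\|_{u^1}=0$. I expect Step~2 to be the main obstacle: making precise that the $H$-window averages only detect $\ck_{\rm rel}$, uniformly over the $H$-dependent joinings. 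To handle this I would first try to fix a weak$^\ast$ limit of the $\rho_H$ along $H\to\infty$ and work with the resulting single joining.
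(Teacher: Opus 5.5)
Your Step~2 has a fatal gap, and Step~3 falls with it. The van der Corput/von Neumann step only shows that, for a fixed invariant joining $\rho$, the window averages of $G=g\otimes id_{\mathbb{A}}\otimes\pi_0$ converge to $\E(G\mid\mathcal{I}_\rho)$. For the \emph{product} joining this conditional expectation is governed by the (relative) Kronecker factors, which is where your $u^2$-type heuristic comes from. For an arbitrary joining it is not: the graph self-joining of a weakly mixing system is ergodic, has trivial Kronecker factors, and still gives correlation $\int|g|^2$. So nothing forces the obstruction into $\ck_{\rm rel}$.

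Steps~2--3 use only three things: $\ck_{\rm rel}=\ck$ on the $Y$-side, and continuity of $\sigma_{\pi_0}$ together with $\|\mob\|_{u^1}=0$ on the other side. Zero entropy of $S$ and anything specific to $\mob$ never enter, and such an argument cannot be right. Take $(Y,S)$ uniquely ergodic, weakly mixing and of zero entropy, so the hypothesis holds trivially. Take $g\in C(Y)$ with $\|g\|_\infty\le 1$, $\int g\,d\nu=0$, $g\ne0$, and set $\bfu(n):=\overline{g(S^ny_0)}$. The only Furstenberg system of $\bfu$ is a weakly mixing factor of $(Y,\nu,S)$, so $\sigma_{\pi_0}$ is continuous and $\|\bfu\|_{u^1}=\|\bfu\|_{u^2}=0$. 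Yet $\frac1{\log N}\sum_{n<N}\frac1n\bfu(n)g(S^ny_0)\to\int|g|^2\,d\nu\neq0$.

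The missing input is the Frantzikinakis--Host structure theorem \cite{Fr-Ho}: each logarithmic Furstenberg system of $\mob$ is a factor of a system $T$ with $\Lambda_{\rm irr}(T)=\emptyset$ whose ergodic components are $\infty$-step pro-nil $\times$ Bernoulli. The paper's argument (Lemma~\ref{l:hutaxu6}) then runs in four steps. First, zero entropy of $S$ lets one pass to the pro-nil (Pinsker) part of $T$. Second, the condition $\ck_{\rm rel}=\ck$, i.e.\ countability of $\bigcup_\gamma\Lambda(S,\nu_\gamma)$ (Lemma~\ref{l:hutaxu1}), combined with $\Lambda_{\rm irr}(T)=\emptyset$, makes the irrational spectra disjoint on a.e.\ ergodic component of any joining. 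Third, \cite[Lemma~6.1]{Fr-Ho} then kills correlations of functions orthogonal to the rational Kronecker factor. Fourth, continuity of $\sigma_{\pi_0}$ (Tao \cite{Ta00}) removes the rational Kronecker part.

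For (b) the paper does not redo any of this. Via \cite[Lemma~1.8]{Ka-Le-Ri-Te} it reduces (b) to the logarithmic strong $\mob$-MOMO property with switching times $(b_k)$ of zero logarithmic density. It concatenates the orbit pieces into a point $\underline{x}\in(Y\times\mathbb{A}_3)^{\Z}$. It checks that every logarithmically visible measure on the orbit closure is isomorphic to some $(S\times\mathrm{Id},\nu^{(0)})$, so it still satisfies $\ck_{\rm rel}=\ck$, and then applies (a) in its visible-measure form.

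This zero-density switching is also what your Step~1 lacks. With fixed windows of length $H$ the switches have density $1/H$, so the limit measures are not invariant joinings of $(S\times\mathrm{Id},\nu)$ with $\kappa$. Nor does a weak$^\ast$ limit of the $\rho_H$ inherit your lower bound, since the integrands $\frac1H\sum_{h\le H}G\circ R^h$ change with $H$ and are not equicontinuous. Since you are allowed to quote (a), the natural repair is exactly the paper's reduction.
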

We note that the assertion in (b) is equivalent to the logarithmic strong $\mob$-MOMO of $(Y,S)$, see~\cite{Ka-Le-Ri-Te}.

\begin{Remark}
In fact, the proof of (a) in~\cite{Hu-Ta-Xu} shows that one can replace ``for each $\nu\in M(Y,S)$'' with ``for each {\bf logarithmically visible measure} $\nu\in M(Y,S)$'' (that is, $\nu\in \bigcup_{y\in Y}V^{\rm log}(y)$).
\end{Remark}

In~\cite[Theorem 1.1]{Go-Le-Ru1}, it is shown that logarithmic strong \textcolor{blue}{$\mob$}-MOMO property implies (uniform) M\"obius orthogonality along a subsequence of full logarithmic density. Therefore, it follows immediately from Theorem~\ref{t:hutaxu1} (see also Remark~\ref{r:ACequiv}) that we have the following.
\begin{Cor} \label{c:Sdensity}
Let $(X,T)$ be a topological system of zero entropy such that $\Lambda(X,T)$ is countable. Then there exists $A=A(X,T)\subset \mathbb{N}$ of full logarithmic density such that
\[
\lim_{A \ni N\to\infty}\left\|\frac{1}{N}\sum_{n\leq N}\mu(n)\cdot f\circ T^n \right\|_{C(X)}=0.
\]
In particular, M\"obius orthogonality holds along a subsequence of $N$ of full logarithmic density.
\end{Cor}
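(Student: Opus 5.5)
The plan is to combine Theorem~\ref{t:hutaxu1}(b) with \cite[Theorem 1.1]{Go-Le-Ru1}. The only real work is to check that countability of $\Lambda(X,T)$ gives the hypothesis $\ck_{\rm rel}(T,\nu)=\ck(T,\nu)$ for every $\nu\in M(X,T)$.

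\textbf{Step 1: verifying the hypothesis.} Fix $\nu\in M(X,T)$ with ergodic decomposition $\nu=\int_\Gamma\nu_\gamma\,d\PP(\gamma)$. Each $\nu_\gamma$ is ergodic, so its point spectrum lies in $\Lambda(X,T)$. Let $\Lambda$ be the group generated by $\Lambda(X,T)$; it is still countable. The relative Kronecker factor $\ck_{\rm rel}(T,\nu)$ is spanned, over $L^\infty({\rm Inv}_\nu)$, by relative eigenfunctions: functions $f$ with $f\circ T=\lambda(\cdot)f$, where $\lambda$ is an ${\rm Inv}_\nu$-measurable map into $\T$, and $\lambda(\gamma)$ is an eigenvalue of $\nu_\gamma$ whenever $f|_\gamma\neq0$. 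Since $\lambda$ takes values in the countable set $\Lambda$, partition $\Gamma$ into the invariant sets $\{\lambda=c\}$, $c\in\Lambda$. On each piece, $f\raz_{\{\lambda=c\}}$ is a genuine eigenfunction of $(X,\nu,T)$ with eigenvalue $c$. Hence $f=\sum_c f\raz_{\{\lambda=c\}}$ lies in $\ck(T,\nu)$. Invariant functions are eigenfunctions with eigenvalue $1$, so they also lie in $\ck(T,\nu)$. This gives $\ck_{\rm rel}\subset\ck$, and the reverse inclusion is automatic. This is essentially the content of Remark~\ref{r:ACequiv}, i.e.\ the equivalence of countable spectrum with almost countable spectrum.

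\textbf{Step 2: applying the results.} Since $(X,T)$ has zero entropy, Theorem~\ref{t:hutaxu1}(b) now gives the logarithmic strong $\mob$-MOMO property of $(X,T)$, in its short-interval form, uniformly over $C(X)$. Then \cite[Theorem 1.1]{Go-Le-Ru1} converts logarithmic strong $\mob$-MOMO into uniform M\"obius orthogonality along a set $A$ of full logarithmic density.

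That theorem is naturally stated for a single $f$. To get one set $A$ for the whole system, I would choose a countable dense family $(f_j)\subset C(X)$ with associated sets $A_j$. A diagonal argument then produces one $A$ of full logarithmic density with $A\setminus A_j$ finite up to tails, eventually for each $j$; one can take $A$ to agree with $\bigcap_{j\le J}A_j$ on intervals $[N_J,N_{J+1})$ with $N_J$ chosen large. Density of $(f_j)$ and the bound $\|\frac1N\sum\mob(n)f\circ T^n\|\le\|f\|_\infty$ then pass the conclusion to every $f\in C(X)$.

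\textbf{Main obstacle.} The main point is Step 1, specifically justifying that relative eigenfunctions have measurable eigenvalue maps, so that the countable partition argument is legitimate. I would cite the standard description of $\ck_{\rm rel}$ via the ergodic decomposition, or the argument in Section~8.1.
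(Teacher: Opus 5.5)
Your proposal is correct and follows the paper's route. Countability of $\Lambda(X,T)$ gives $\ck_{\rm rel}(T,\nu)=\ck(T,\nu)$ for every $\nu\in M(X,T)$; the paper cites Remark~\ref{r:ACequiv} and Lemma~\ref{l:hutaxu1} for this, and your splitting of a relative eigenfunction along the invariant sets $\{\lambda=c\}$ is the argument of Proposition~\ref{c:m2}, while the obstacle you flag is not one, since the relative eigenvalue is measurable by definition. Theorem~\ref{t:hutaxu1}(b) then gives logarithmic strong $\mob$-MOMO and \cite[Theorem 1.1]{Go-Le-Ru1} gives the full-logarithmic-density set, with your diagonal construction of a single $A$ for a dense family $(f_j)$ being a valid, if possibly redundant, safeguard.
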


The assertion of Corollary~\ref{c:Sdensity} holds also in the case of singular spectrum. More precisely, the following holds.
\begin{Cor}\label{c:Sdens1}
Assume that $(X,T)$ is a topological system for which all Koopman operators associated to $(X,\nu,T)$, $\nu\in M(X,T)$, have singular maximal spectral types. Then Sarnak's conjecture holds in $(X,T)$ for a subsequence of logarithmic density~1.
\end{Cor}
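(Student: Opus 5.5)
We reduce Corollary~\ref{c:Sdens1} to the logarithmic strong $\mob$-MOMO property of $(X,T)$. Once that property is available, \cite[Theorem~1.1]{Go-Le-Ru1} gives uniform M\"obius orthogonality along a set of full logarithmic density, exactly as in the derivation of Corollary~\ref{c:Sdensity}. By \eqref{equivMOMO} (in its logarithmic form) it suffices to find a characteristic class $\cf$ with $(X,T)\in\mathcal{C}_{\cf_{\rm ec}}$ and $\mathcal{C}_{\cf_{\rm ec}}\perp_{\rm log}\mob$. Then every system in $\mathcal{C}_{\cf_{\rm ec}}$, in particular $(X,T)$, has the logarithmic strong $\mob$-MOMO property.

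The natural choice is $\cf={\rm SING}$, the class of automorphisms whose Koopman operator (on $L^2_0$, or on all of $L^2$) has singular maximal spectral type. Two facts about this class are needed.

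\emph{SING is characteristic.} A factor's $L^2$ sits inside the extension's $L^2$, so singularity passes to factors. For joinings, the maximal spectral type of a countable joining is dominated by sums of convolutions of the marginal types restricted to $L^2_0$. Convolutions of singular measures can be absolutely continuous, so this step is not automatic. Instead we work with the class $\cf$ of automorphisms spectrally disjoint from Lebesgue, or more precisely from the spectral measures arising in Furstenberg systems of $\mob$. The cleanest route is to use, as in Footnote~\ref{f:disp}, the class of automorphisms whose reduced maximal spectral type is singular with respect to the spectral type of $\pi_0$ in every logarithmic Furstenberg system $\kappa$ of $\mob$. By Frantzikinakis--Host \cite{Fr-Ho}, together with the averaged Chowla results, $\sigma_{\pi_0,\kappa}$ is Lebesgue (logarithmic Chowla at level two, as used in \cite{Fe-Ku-Le}). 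Since the hypothesis makes the whole reduced spectral type singular, the needed orthogonality follows.

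\emph{Singularity is inherited by ergodic components.} The hypothesis concerns all $\nu\in M(X,T)$, hence in particular the ergodic ones, so $(X,T)\in\mathcal{C}_{\cf}\subset\mathcal{C}_{\cf_{\rm ec}}$. More directly, we can avoid characteristic classes altogether and use Proposition~\ref{p:rolemomo}. Take $\nu\in M(X\times\mathbb{A},T\times{\rm Id})$, a logarithmic Furstenberg system $\kappa$ of $\mob$, and a joining $\rho$. The function $f\otimes id_{\mathbb{A}}$ has spectral measure with respect to $\nu$ that is a combination of spectral measures of $(X,\nu_a,T)$ for the (at most three) fibre measures $\nu_a$. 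Each $\nu_a\in M(X,T)$, so this spectral measure is singular. On the other side, $\pi_0$ has Lebesgue spectral measure, because the two-point logarithmic Chowla correlations $\lim\frac1{\log N}\sum\frac1n\mob(n)\mob(n+h)=0$ hold for all $h\neq0$ by Tao's result. Two functions with mutually singular spectral measures in the joined Koopman representation are orthogonal, so $\int f\otimes id_{\mathbb{A}}\otimes\pi_0\,d\rho=0$. Proposition~\ref{p:rolemomo} then gives the logarithmic strong $\mob$-MOMO property.

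The main point to check is the precise form of Proposition~\ref{p:rolemomo} in the logarithmic setting, and that the spectral measure of $\pi_0$ is indeed Lebesgue (not merely continuous) for every logarithmic Furstenberg system. The latter follows from Tao's logarithmic two-point Chowla theorem, which holds unconditionally. The remainder is the standard spectral disjointness argument, followed by an application of \cite{Go-Le-Ru1}.
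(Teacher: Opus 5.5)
Your first route does not work and should be dropped. The class of automorphisms with singular maximal spectral type (equivalently, spectrally disjoint from Lebesgue measure) is not closed under joinings: $\sigma_{f\otimes g}=\sigma_f*\sigma_g$, and two singular measures can have an absolutely continuous convolution. The more serious problem is that passing to $\cf_{\rm ec}$ destroys singularity. Such an $\cf$ contains ${\rm {\bf DISP}}$, so $\mathcal{C}_{\cf_{\rm ec}}\perp_{\rm log}\mob$ would give $\mathcal{C}_{{\rm {\bf DISP}}_{\rm ec}}\perp_{\rm log}\mob$, which is open. Think of $A_2$ with Lebesgue measure on $\T^2$: its ergodic components have discrete spectrum, yet $e^{2\pi i y}$ has Lebesgue spectral measure. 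The corollary's hypothesis is on all invariant measures of $(X,T)$, and that is what must be used. Also, the Lebesgue spectrum of $\pi_0$ comes from Tao's two-point logarithmic Chowla theorem \cite{Ta00}, not from \cite{Fr-Ho}. Nor does it come from the averaged Chowla theorem \cite{Ma-Ra-Ta}, which only gives continuity (Footnote~\ref{f:disp}).

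Your second, direct route is correct. It has the same architecture as the paper's proof: logarithmic strong $\mob$-MOMO for $(X,T)$, then \cite[Theorem~1.1]{Go-Le-Ru1}. The paper simply imports the MOMO step from \cite{Ka-Le-Ri-Te}, whereas you derive it from the logarithmic form of Proposition~\ref{p:rolemomo}. Write $\nu=\sum_{a\in\mathbb{A}}p_a\,\nu_a\otimes\delta_a$ with $\nu_a\in M(X,T)$. Then the spectral measure of $f\otimes id_{\mathbb{A}}$ is $\sum_a p_a\sigma_{f,\nu_a}$, which is singular by hypothesis. On the other side, $\sigma_{\pi_0,\kappa}$ is a multiple of Lebesgue measure for every $\kappa\in V^{\rm log}(\mob)$, a fact the paper itself recalls in the proof of Theorem~\ref{t:hutaxu1}. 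Since $\pi_0$ is real-valued, \eqref{mutualsing} gives $\int f\otimes id_{\mathbb{A}}\otimes\pi_0\,d\rho=0$.

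Compared with the citation, your argument is self-contained within the paper. It also explains why the hypothesis must cover non-ergodic measures: the measures $\nu$ in Proposition~\ref{p:rolemomo} arise as limits along moving orbits and are in general non-ergodic. Finally, it extends to any bounded $\bfu$ all of whose logarithmic Furstenberg systems give $\pi_0$ an absolutely continuous spectral measure. The citation is shorter and places the result within the local Fourier uniformity theory of \cite{Ka-Le-Ri-Te}.
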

Indeed, it was proved in \cite{Ka-Le-Ri-Te} that for the systems satisfying the assumptions of Corollary~\ref{c:Sdens1}, the logarithmic strong $\mob$-MOMO property holds. In view of \eqref{lambdaec} and Corollary~\ref{c:Sdens1}, we hence obtain the following.
\begin{Cor}\label{c:whenproper}
Let $\Lambda\subset\T$ be a proper Borel subgroup of the circle. Then
each $(X,T)\in\cc_{{\rm {\bf{DISP}}}(\Lambda)_{\rm ec}}$ satisfies Sarnak's conjecture along a subsequence of logarithmic density~1.\footnote{Notice that under the assumption that $\Lambda$ is $F_\sigma$, the assertion of Corollary~\ref{c:Sdens1} follows directly from Theorem~\ref{theorem:d}, \eqref{equivMOMO} and the logarithmic version of Theorem~\ref{theorem:a}.}
\end{Cor}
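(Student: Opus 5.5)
The statement to prove is Corollary~\ref{c:whenproper}. The plan is to deduce it from Corollary~\ref{c:Sdens1}. That corollary already gives Sarnak's conjecture along a subsequence of logarithmic density~1 for every topological system all of whose invariant measures yield Koopman operators with singular maximal spectral type. So it suffices to check the following: for $(X,T)\in\cc_{{\rm {\bf{DISP}}}(\Lambda)_{\rm ec}}$ with $\Lambda\subsetneq\T$ a Borel subgroup, and for every $\nu\in M(X,T)$ (ergodic or not), the maximal spectral type of $U_T$ on $L^2(X,\nu)$ is singular with respect to Lebesgue measure.

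First, a proper Borel subgroup $\Lambda$ of $\T$ has Haar measure zero. If its measure were positive, then by Steinhaus' theorem $\Lambda-\Lambda=\Lambda$ would contain a neighbourhood of $0$, and this neighbourhood generates $\T$, forcing $\Lambda=\T$.

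Second, take $\nu\in M(X,T)$ with ergodic decomposition $\nu=\int_\Gamma\nu_\gamma\,d\PP(\gamma)$. By the definition of $\cc_{{\rm {\bf{DISP}}}(\Lambda)_{\rm ec}}$, each ergodic $\nu_\gamma$ gives a discrete spectrum automorphism with point spectrum in $\Lambda$. Hence for every $f\in L^2(X,\nu)$ the spectral measure $\sigma_{f,\nu_\gamma}$ is concentrated on $\Lambda$. The spectral measure of $f$ with respect to $\nu$ disintegrates as
$$\sigma_{f,\nu}=\int_\Gamma\sigma_{f,\nu_\gamma}\,d\PP(\gamma).$$
To justify this, compare Fourier coefficients: $\int f\circ T^n\,\ov f\,d\nu=\int_\Gamma\int f\circ T^n\,\ov f\,d\nu_\gamma\,d\PP(\gamma)$, and the map $\gamma\mapsto\sigma_{f,\nu_\gamma}$ is measurable. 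Since each $\sigma_{f,\nu_\gamma}(\T\setminus\Lambda)=0$, we get $\sigma_{f,\nu}(\T\setminus\Lambda)=0$. Thus every spectral measure, and hence the maximal spectral type, is concentrated on the Lebesgue-null set $\Lambda$. This is exactly the content referred to as Lemma~\ref{l:mutualsing}, which I would cite.

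Third, with singularity established for all $\nu\in M(X,T)$, Corollary~\ref{c:Sdens1} applies directly and gives the set $A$ of full logarithmic density. This corollary rests on the logarithmic strong $\mob$-MOMO property from \cite{Ka-Le-Ri-Te} combined with \cite[Theorem~1.1]{Go-Le-Ru1}.

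The only mildly delicate step is the second one. Measurability of the disintegration must be checked, and it must be noted that $\Lambda$ being Borel makes $\sigma_{f,\nu}(\T\setminus\Lambda)$ meaningful. Both are routine. The logarithmic orthogonality \eqref{lambdaec} is not needed for this route, only the spectral singularity.
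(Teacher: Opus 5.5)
Your proposal is correct and follows the paper's own route. A proper Borel subgroup is Lebesgue-null, Lemma~\ref{l:mutualsing} transfers the concentration of the ergodic components' spectral measures on $\Lambda$ to every $\nu\in M(X,T)$, and Corollary~\ref{c:Sdens1} then applies. The paper's reference to \eqref{lambdaec} at this point only points back to that same singularity observation, so your remark that nothing beyond spectral singularity is needed is accurate.
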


The main result in \cite{Hu-Ta-Xu} is applied to special topologically distal systems (in particular, to the class of classical continuous Anzai skew products, see below). We will show that Theorem~\ref{t:hutaxu1} applies to some other (continuous) extensions of rotations, the so called Rokhlin extensions \cite{Au-Le,Da-Le,Le-Pa,Le-Pa1}.

\begin{Th}\label{t:Rokh} Assume that $(X,T)$ is a uniquely ergodic, and let $G$ be an l.c.s.c.\ group. Let $\cs=(S_g)_{g\in G}$ be a continuous uniquely ergodic action on a compact metric space $Y$. Then, for each continuous cocycle $\va \colon X\to G$ (measurably) cohomologous to an ergodic cocycle\footnote{In other words, we assume that  $\va \colon X\to G$ is regular \cite{Sch}, that is, there are a cocycle $\psi \colon X\to H$, and a measurable $\xi \colon X\to G$ such that $\psi=\xi^{-1}\cdot\va\cdot \xi\circ T$ and the skew product $T_\psi$ acting on $X\times H$ by the formula $T_{\va}(x,h)=(Tx,\psi(x)h)$ is ergodic for the (in general, infinite) measure $\mu\ot m_H$, where $\mu$ is $T$-invariant and $m_H$ stands for a Haar measure on $H$. For $G$ non-compact not all cocycles are regular.} taking values in a (closed) cocompact  subgroup contained in the center,
the homeomorphism $T_{\va,\cs}(x,y)=(Tx,S_{\va(x)}(y))$ of the space $X\times Y$ is URE, in fact, all its ergodic components are isomorphic. In particular, $T_{\va,\cs}$ satisfies the logarithmic Sarnak's conjecture.\end{Th}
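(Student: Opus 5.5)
Replace $\va$ by its cohomologous cocycle and read off the ergodic decomposition. By regularity there are a closed cocompact subgroup $H\subset Z(G)$, a cocycle $\psi\colon X\to H$ with $T_\psi$ ergodic on $(X\times H,\mu\ot m_H)$, and a measurable $\xi\colon X\to G$ with $\psi=\xi^{-1}\cdot\va\cdot\xi\circ T$. Let $\mu$ be the unique $T$-invariant measure. Then the map $\Theta(x,y)=(x,S_{\xi(x)^{-1}}y)$ conjugates $T_{\va,\cs}$ to $T_{\psi,\cs}$ measurably, and it preserves every measure of the form $\mu\ot(\cdot)$ fibrewise. Note that $T_{\va,\cs}$ need not be uniquely ergodic, but every invariant measure projects to $\mu$. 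The plan is therefore to first describe all invariant measures via disintegration over $\mu$, and then to work with the cocycle $\psi$, which takes values in $H$.

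\emph{Step 1 (the key step).} Show that for the ergodic $H$-valued cocycle $\psi$, every ergodic $T_{\psi,\cs}$-invariant measure projecting on $\mu$ has the form $\mu\ot\nu$ with $\nu$ an ergodic invariant measure of the restricted action $\cs|_H$. The intended argument:
\begin{enumerate}
\item Since $H$ is central, each $S_h$ ($h\in H$) commutes with the whole action $\cs$.
\item Since $H$ is cocompact in $G$, the $\cs|_H$-invariant measures are obtained from the unique $\cs$-invariant measure $m$ by ergodic decomposition over the compact homogeneous space $G/H$. In particular $m=\int_{G/H}(S_g)_\ast\nu_0\,dm_{G/H}(gH)$ for an $H$-ergodic measure $\nu_0$, and the $H$-ergodic measures are exactly the translates $(S_g)_\ast\nu_0$.
\item The standard theory of Rokhlin extensions for ergodic cocycles (\cite{Le-Pa,Da-Le}) gives that the disintegration $x\mapsto\rho_x$ of an invariant measure satisfies $\rho_{Tx}=(S_{\psi(x)})_\ast\rho_x$.
\item Ergodicity of $T_\psi$ on $X\times H$ then forces $x\mapsto\rho_x$ to be a.e.\ constant and $\cs|_H$-invariant. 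To see this, apply the Radon--Nikodym/ergodicity argument to functions of the form $x\mapsto\int F\,d\rho_x$ twisted along $H$. Equivalently, the $H$-action $h\mapsto(S_h)_\ast\rho_x$ yields a $T_\psi$-invariant function on $X\times H$, which is constant.
\end{enumerate}

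\emph{Step 2 (all components isomorphic).} Every ergodic measure of $T_{\psi,\cs}$ is therefore $\mu\ot(S_g)_\ast\nu_0$ for some $g\in G$. The map $(x,y)\mapsto(x,S_gy)$ commutes with $T_{\psi,\cs}$ because $\psi$ takes values in the center. It carries $\mu\ot\nu_0$ to $\mu\ot(S_g)_\ast\nu_0$, so all ergodic measures give isomorphic systems. Transferring back via $\Theta$, the same holds for $T_{\va,\cs}$.

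\emph{Step 3 (conclusion).} Let $\la$ be any invariant measure of $T_{\va,\cs}$. Its ergodic components are all isomorphic to a single ergodic automorphism $R=(T_{\psi,\cs},\mu\ot\nu_0)$, so $\la$ is a factor of ${\rm Id}\times R$, i.e.\ $\la$ is URE. Hence by Theorem~\ref{theorem:f} the system is URE, and Proposition~\ref{p:jjm} applies as well. Moreover, $\Lambda(X\times Y,T_{\va,\cs})$ is the eigenvalue group of the single automorphism $R$, which is countable. Zero entropy also needs to be checked. This will follow when $T$ has zero entropy, as is implicit here, since the skew product over $T$ with isometric-type fibres via an amenable/compact quotient has relative entropy zero. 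Granting these, Corollary~\ref{c:Sdensity} and Theorem~\ref{t:hutaxu1}(a) (through Remark~\ref{r:ACequiv}) give the logarithmic Sarnak conjecture.

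The main obstacle is Step 1: rigorously establishing that invariant measures are products $\mu\ot\nu$ with $\nu$ $H$-invariant when $H$ is non-compact. The delicate point is to construct the $T_\psi$-invariant function properly in the infinite-measure setting, which I will attempt via the Mackey range/ergodicity of the cocycle.
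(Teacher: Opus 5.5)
Your proof is correct and has the same overall structure as the paper's proof of Theorem~\ref{t:sm}:
\begin{enumerate}
\item pass to the $H$-valued cocycle $\psi$;
\item show $M(X\times Y,T_{\psi,\cs})=\{\mu\ot\kappa:\kappa\in M(Y,\cs|H)\}$;
\item use unique ergodicity of $\cs$ and cocompactness (Lemma~\ref{l.an}) to see that $M^e(Y,\cs|H)$ is a single $G$-orbit;
\item move along that orbit with the maps $S_g$, which commute with $T_{\psi,\cs}$ because $H$ is central.
\end{enumerate}

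The genuine difference is your Step~1. The paper regards $\psi$ as a $G$-valued cocycle whose Mackey action $\mathcal W$ is translation on $G/H$. It then quotes \cite[Proposition~6]{Da-Le} for an affine isomorphism $M(G/H\times Y,\mathcal W\times\cs)\to M(X\times Y,T_{\psi,\cs})$, and identifies the former simplex with $M(Y,\cs|H)$ by Mackey induction.

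Your direct argument is more elementary and, contrary to your worry, already complete:
\begin{enumerate}
\item The relation $\rho_{Tx}=(S_{\psi(x)})_\ast\rho_x$ is just uniqueness of disintegration; no Rokhlin-extension theory is needed.
\item The map $F(x,h):=(S_{h^{-1}})_\ast\rho_x$, with values in the standard Borel space $M(Y)$, is $T_\psi$-invariant. Note the inverse: $h\mapsto(S_h)_\ast\rho_x$ is not invariant.
\item Ergodicity of $(X\times H,\mu\ot m_H,T_\psi)$ makes $F$ a.e.\ equal to some $\nu$.
\item Fubini, together with $AA^{-1}=H$ for every conull $A\subset H$, gives $\rho_x=\nu$ a.e., with $\nu$ invariant under $H$.
\end{enumerate}
The infinite measure causes no difficulty, since only the fact that invariant sets are null or conull is used. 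The paper's route relies on a general tool valid for arbitrary $G$-cocycles through their Mackey range. Yours exploits the special situation that $\psi$ is already ergodic as an $H$-cocycle.

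There are two small corrections.

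First, the orientation of your conjugacy is off. Under the footnote's relation $\psi=\xi^{-1}\va\,\xi\circ T$, your $\Theta(x,y)=(x,S_{\xi(x)^{-1}}y)$ does not intertwine the two maps; that would require $\psi(x)=\xi(Tx)^{-1}\va(x)\xi(x)$. Instead use, as the paper does, $\psi=a\circ T\cdot\va\cdot a^{-1}$ and $(x,y)\mapsto(x,S_{a(x)}y)$. This map also does not preserve product measures. All you need is that it is a fibrewise Borel bijection on a full-measure invariant set $X_0\times Y$. It therefore induces an affine bijection of invariant measures that preserves ergodicity and isomorphism type.

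Second, your justification of zero entropy is not valid. $\cs$ is an arbitrary uniquely ergodic action and may have positive entropy, so nothing in the fibres is isometric. If $h(T)=0$ is to force $h(T_{\va,\cs})=0$, that would have to come from recurrence of the ergodic cocycle, and it is a separate claim. The paper does not prove it. Theorem~\ref{t:sm} yields only URE with pairwise isomorphic ergodic components. The Sarnak clause is then read off from the zero-entropy results (Theorem~\ref{t:hutaxu1} together with Lemma~\ref{l:hutaxu1}, as in your Step~3), with zero entropy of the system implicitly assumed.
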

\noindent(For the proof, see Theorem~\ref{t:sm}.) In particular, the result applies for $G$ compact, as in this case all cocycles are regular \cite{Sch}. A classical playground to which we can apply the above is the world of Anzai skew products acting on $\T^2$:
$$
 T_{\va}\colon(x,y)\mapsto (x+\alpha,x+\va(x))
 $$
  with a cocycle $\va \colon \T\to\T$ continuous and $\alpha$ irrational, whose M\"obius orthogonality was first studied by Liu and Sarnak \cite{Li-Sa}  (see also \cite{Ku-Le,Wa}).  Note that for the original Sarnak's conjecture we can apply Theorem~\ref{theorem:a} for $C^{2+\epsilon}$ cocycles to obtain that:

   {\em For each continuous $\va$ which is (measure-theoretically) cohomologous to an ergodic $C^{2+\vep}$ cocycle, $T_{\va}$ is M\"obius orthogonal.}\footnote{Indeed, it is proved in \cite{Ka-Le-Ra} that ergodic $C^{2+\epsilon}$ Anzai skew products satisfy the strong $\mob$-MOMO property, so the original homeomorphism $T_\varphi$  has all its ergodic invariant measures giving rise to isomorphic automorphisms and having a strong $\mob$-MOMO model.}

Let us return to the problem of orthogonality involving a characteristic class concentrating on properties of Furstenberg systems of $\bfu$.
Let
$$
\mathcal{L}=\mathcal{L}_{\text{\rm{$\bfu$-MOMO}}}:=\{(X,T) \colon (X,T)\text{ satisfies \eqref{jsjm2} for all relevant }(b_k)\}\footnote{Note that $\mathcal{L}$ does not depend on $\cf$.}
$$
and let $\widehat{\mathcal{L}}$ stand for the class of automorphisms $R$ having a topological model belonging to
$\mathcal{L}$.\footnote{That is, there exist $(X,T)\in\mathcal{L}$ and $\mu\in M(X,T)$ such that $T$ acting on $(X,\mu)$ is measure-theoretically isomorphic to $R$.}
Building on the ideas developed in \cite{Ka-Ku-Le-Ru}, we establish the following.

\begin{Th}\label{t:ajmt} Let $\bfu \colon \N\to\mathbb{D}$, $\|\bfu\|_{u^1}=0$, and consider an ec-characteristic class $\cf_{\rm ec}$. Assume that for each Furstenberg system $\kappa\in V(\bfu)$, we have
\begin{multline}\label{nopo}
L^2(\ca(\cf_{\rm ec}),\kappa|_{\ca(\cf_{\rm ec})})\ominus L^2({\rm Inv}_\kappa)\subset\\
\ov{\rm span}\{{\rm Im}(\Phi)\colon \Phi\in J(R,S|_{\ca(\cf_{\rm ec})}), R\in\widehat{\mathcal{L}}\}.
\end{multline}
Then $\cc_{\cf_{\rm ec}}\perp\bfu$.\footnote{\label{f:jedenMarkov} Despite its ``largeness'', the space on the RHS in~\eqref{nopo} is generated by one Markov operator. Indeed, by choosing a countable dense subset in $\bigcup_{\Phi\in J(R,S|_{\ca(\cf_{\rm ec})}), R\in\widehat{\mathcal{L}}}{\rm Im}(\Phi)$, we can reduce the family of ``good'' $R$ to a countable family $\{R_i\colon i\geq1\}$ (acting on $(Z_i,\cd_i,\kappa_i)$) together with joinings $\rho_i\in J(R_i,S|_{\ca(\cf_{\rm ec})})$, $i\geq1$. Then, we consider $Z_0:=\bigsqcup_{i\geq 1}Z_i$ with a measure $\kappa_0=\sum_{i\geq1}a_i\kappa_i$ (with all $a_i>0$, $\sum_{i\geq1}a_i=1$) and the  corresponding automorphism $R_0$ acting on each $Z_i$ as $R_i$. It is not hard to see that $\rho:=\sum_{i\geq1}a_i\rho_i$ is a joining of $R_0$ with $S|_{\ca(\cf_{\rm ec})}$ and the closure of the image ${\rm Im}(R_0)$ is the RHS
in~\eqref{nopo}. It remains to show that $R_0\in\widehat{\mathcal{L}}$. But this follows from the fact that the classical shrinking wedge construction of strong $\bfu$-MOMO systems still enjoys the same property, see Proposition~\ref{thm:wedge-strongMOMO}.
Note also that if all $R_i$ belong to a fixed characteristic ec-class then also $R_0$ belongs to this class.

By assuming that $\kappa\in V^{\rm log}(\bfu)$ and considering $\mathcal{L}^{\rm log}$, we obtain a criterion for $\bfu\perp_{\rm log}\mathcal{C}_{\cf_{\rm ec}}$.}
\end{Th}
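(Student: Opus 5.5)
The plan is to go through the Lifting Lemma of \cite{Ka-Ku-Le-Ru}, as in \eqref{equivMOMO} and Proposition~\ref{p:rolemomo}. Fix $(Y,T)\in\cc_{\cf_{\rm ec}}$, $f\in C(Y)$ and $y\in Y$, and suppose that \eqref{jsjm1} fails along some $(N_k)$. Passing to a subsequence, the Lifting Lemma produces $\kappa\in V(\bfu)$, an invariant measure $\nu$ on $Y$ (or on $Y\times\mathbb{A}$, to absorb phases, as in Proposition~\ref{p:rolemomo}), and a joining $\rho\in J((T,\nu),(S,\kappa))$ with
$$\int f\ot\pi_0\,d\rho=\int \Phi_\rho^\ast(\ldots)\neq0 .$$
Equivalently, writing $g:=\Phi_\rho(f)\in L^2(X_{\bfu},\kappa)$ (we use the convention $\Phi_\rho\colon L^2(\nu)\to L^2(\kappa)$), we get $(g\mid\ov{\pi_0})\neq0$. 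Since $(T,\nu)\in\cf_{\rm ec}$, Footnote~\ref{f:reljoi} tells us that $g\in L^2(\ca(\cf_{\rm ec}))$. The aim is then to show that $\pi_0$ is orthogonal to every such $g$.

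The first step splits $g=g_0+g_1$, where $g_0=\EE(g\mid{\rm Inv}_\kappa)$. Because $\|\bfu\|_{u^1}=0$, Fact~\ref{p:ID} (in its reformulation $\pi_0\perp L^2({\rm Inv}_\kappa)$) gives $(g_0\mid\ov{\pi_0})=0$. The invariant sigma-algebra lies inside $\ca(\cf_{\rm ec})$, since identities belong to every class. Hence $g_1\in L^2(\ca(\cf_{\rm ec}))\ominus L^2({\rm Inv}_\kappa)$, and by \eqref{nopo} it is an $L^2$-limit of finite sums of elements $\Phi_{\rho'}(h)$, with $R\in\widehat{\mathcal L}$ and $\rho'\in J(R,S|_{\ca(\cf_{\rm ec})})$. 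By continuity, it suffices to show $(\Phi_{\rho'}(h)\mid\ov{\pi_0})=0$ for each such generator. Footnote~\ref{f:jedenMarkov} even allows a single $R_0\in\widehat{\mathcal L}$, via the wedge construction.

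The second step treats a single generator. Since $R\in\widehat{\mathcal L}$, there is a strong $\bfu$-MOMO system $(X,T')$ with $\mu\in M(X,T')$ such that $(X,\mu,T')\cong R$. Lift $\rho'$ to a joining of $(T',\mu)$ with the full $(S,\kappa)$ by taking the relatively independent extension over $\ca(\cf_{\rm ec})$. By Footnote~\ref{f:reljoi}, the value of $\int h\ot\pi_0$ does not change, because $\Phi_{\rho'}(h)$ is $\ca(\cf_{\rm ec})$-measurable. Approximate $h$ in $L^2(\mu)$ by continuous functions, and use $\mathbb{A}$-valued simple functions to handle complex coefficients, as in Proposition~\ref{p:rolemomo}. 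Proposition~\ref{p:rolemomo} applied to $(X,T')$ then gives $\int h\ot\pi_0\,d\tilde\rho'=0$. Summing the generators and passing to the limit yields $(g\mid\ov{\pi_0})=0$, a contradiction.

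I expect the main obstacle to be this approximation step. Proposition~\ref{p:rolemomo} is formulated only for test functions of the form $f\ot id_{\mathbb{A}}$ with $f\in C(X)$, whereas $h$ is an arbitrary $L^2$ function. Handling it requires writing $h$ (approximately) as a finite combination $\sum c_j\raz_{A_j}\cdot f_j$ with continuous $f_j$. This is done by encoding the partition into $\mathbb{A}$, and using linearity together with the fact that each $\Phi$ is an $L^2$-contraction, so that the errors are controlled uniformly. The logarithmic case follows verbatim, using $V^{\rm log}(\bfu)$ and $\mathcal{L}^{\rm log}$.
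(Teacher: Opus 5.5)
Your proposal is correct and is essentially the paper's proof. The paper invokes \cite[Theorem~A]{Ka-Ku-Le-Ru} to reduce to $\pi_0\perp L^2(\ca(\cf_{\rm ec}))$, whereas you prove the needed (easy) direction inline via Footnote~\ref{f:reljoi}; from there both arguments use $\pi_0\perp L^2({\rm Inv}_\kappa)$, the covering assumption~\eqref{nopo}, a continuous approximant, the relatively independent extension of the joining, and strong $\bfu$-MOMO (through the Lifting Lemma, equivalently Proposition~\ref{p:rolemomo}) to reach a contradiction. The step you flag as the main obstacle is not one: $C(X)$ is dense in $L^2(\mu)$ and $h\mapsto\int h\ot\pi_0\,d\widehat{\rho}'$ is $L^2$-continuous, so Proposition~\ref{p:rolemomo} applied with the measure $\mu\ot\delta_1$ on $X\times\mathbb{A}$ already suffices, and no encoding of partitions into $\mathbb{A}$ is needed.
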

This result is a motivation to study the problem of ``covering'' of the $L^2$-space of an automorphism $T$ by the images of Markov operators arising from the joinings of $T$ with automorphisms belonging to a  certain (characteristic) class.
Note that in the special case when we have just one element $(R,Z)\in\mathcal{L}$ (depending on $\bfu$) which is a universal topological model for the class $\cf_{\rm ec}$ the assumption~\eqref{nopo} is satisfied, so the assertion of Theorem~\ref{t:ajmt} follows. However, in this case, the assertion also follows from Theorem~\ref{theorem:a}.





Recall that given an automorphism $T$, $\cf(T)$ denotes the smallest characteristic class containing $T$. This class consists of factors of all self-joinings of $T$.
 It is not hard to see that for a subgroup $\Lambda\subset\T$ countable,  ${\rm{\bf{DISP}}}(\Lambda)=\cf(T)$ for an ergodic automorphism $T$ with discrete spectrum equal to $\Lambda$.
However,  deciding whether, given an arbitrary ergodic
 ergodic $T$, the corresponding class $\cf(T)$ has a universal model remains a challenge.
 Consider another classical case of affine maps on $\T^2$: $T=A_{2,\alpha}$  with $\alpha$ irrational:
$$
A_{2,\alpha}(x,y)=(x+\alpha,x+y).$$
This topological system is uniquely ergodic, and we will show that the following holds:
\begin{Prop}\label{p:a2alfa}
\begin{enumerate}
\item[{\rm (i)}]
For each $\alpha$ irrational, ${\rm {\bf{DISP}}}_{\rm ec}\subset \cf(A_{2,\alpha})$.
\item[{\rm (ii)}]
Whenever $1,\alpha,\beta$ are rationally independent,
$$\big(\cf(A_{2,\alpha})\cap \cf(A_{2,\beta})\big)\cap{\rm Erg}= {\rm {\bf{DISP}}}_{\rm ec}\cap {\rm Erg}={\rm {\bf{DISP}}}\cap{\rm Erg}.$$
In particular,
$A_{2,\beta}\notin \cf(A_{2,\alpha})$.
\end{enumerate}
\end{Prop}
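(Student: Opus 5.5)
The plan for (i) is to realise the universal model of Theorem~\ref{theorem:b} inside $\cf(A_{2,\alpha})$. Look first at the product $A_{2,\alpha}\times A_{2,\alpha}$ with coordinates $(x,y,x',y')$. The difference $s=x'-x$ is invariant, and $w=y'-y$ evolves as $w\mapsto w+s$. Hence the factor generated by $(s,w)$ is exactly $A_2\colon(s,w)\mapsto(s,s+w)$.

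Next I take countably many copies $(x_i,y_i)_{i\geq0}$ and build the following self-joining. Put $x_i=x_0+s_i$, where $(s_i)_{i\geq1}$ is distributed according to an arbitrary Borel probability $m$ on $\T^{\N}$ and is independent of $x_0$. The $y_i$ are independent and Haar distributed. This is $A_{2,\alpha}^{\times\N}$-invariant with all marginals equal to Lebesgue, because each $x_i$ is Haar. The factor $\big((s_i),(y_i-y_0)\big)$ then gives the map $((s_n),(z_n))\mapsto((s_n),(s_n+z_n))$ on $\T^{\N}\times\T^{\N}$ with the measure $m\ot{\rm Leb}$. Also ${\rm Id}\in\cf(A_{2,\alpha})$, since $s$ alone is invariant.

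Now let $R\in{\rm\bf DISP}_{\rm ec}$ be arbitrary. Using the ergodic decomposition, I choose measurably a countable generating family of eigenvalues $s(\gamma)=(s_i(\gamma))$ of the ergodic components. Then the component over $\gamma$ is the rotation by $s(\gamma)$ on $H_{s(\gamma)}=\ov{\{ns(\gamma)\}}$. The system $(\gamma,s,z)$, with $z$ Haar on $\T^{\N}$, is a joining of ${\rm Id}$ with the system above, so it lies in $\cf(A_{2,\alpha})$.

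The rotation by $s$ on $\T^{\N}$ decomposes into $H_s$-cosets. Pick a measurable section $\sigma$ of $z+H_s\mapsto$ coset. The map $z\mapsto z-\sigma(z+H_s)\in H_s$ is equivariant, because the coset is invariant. This exhibits $R$ as a factor, so $R\in\cf(A_{2,\alpha})$, and closure under factors gives (i).

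For (ii), the inclusions $\supset$ follow from (i), and ${\rm\bf DISP}_{\rm ec}\cap{\rm Erg}={\rm\bf DISP}\cap{\rm Erg}$ is trivial. The real content is this claim: an ergodic $R\in\cf(A_{2,\alpha})\cap\cf(A_{2,\beta})$ has discrete spectrum.

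I will describe ergodic elements of $\cf(A_{2,\alpha})$ as factors of ergodic components of countable self-joinings of $A_{2,\alpha}$. Every self-joining of $A_{2,\alpha}$ is carried by its $x$-projections and relatively by the affine $y$-parts, so each ergodic component should be an affine (2-step nil) system on a compact abelian group. In such a system every eigenfunction-multiplier cocycle of the Abramov / quasi-discrete spectrum type, that is, every $f$ with $f\circ T=\chi f$ where $\chi$ is an eigenfunction, satisfies the following. Its multiplier $\chi$ has eigenvalue lying in the group generated by $\alpha$, once the invariant parameters $s_i$ are frozen, because of the exact form $y\mapsto y+x$ with $x\mapsto x+\alpha$.

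So, by Host–Kra/Abramov theory applied to the factor $R$, the 2-step part of $R$ has second-order multipliers with frequencies in $\Z\alpha+\Q$ modulo the group of eigenvalues. Running the same argument with $\beta$ puts them in $\Z\beta+\Q$. Rational independence of $1,\alpha,\beta$ then forces these multipliers to be trivial, so $R$ coincides with its Kronecker factor. The final assertion $A_{2,\beta}\notin\cf(A_{2,\alpha})$ follows because $A_{2,\beta}$ is ergodic without discrete spectrum.

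The main obstacle is the structural step in (ii). One must show that every factor of an ergodic self-joining of $A_{2,\alpha}$ has its Host–Kra factor $Z_2$ with second-order cocycle genuinely tied to $\alpha$, and not to the arbitrary invariants $s_i$. I would first try to handle it via the explicit affine description above, together with the fact that the distal 2-step part of a factor is a factor of the 2-step part of the extension.
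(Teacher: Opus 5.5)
Your part (i) is correct, and it is a more hands-on version of the paper's argument. Both use the difference factor $(x_1,y_1,x_2,y_2)\mapsto(x_2-x_1,y_2-y_1)$ onto $A_2$. The paper then quotes the universal model of Theorem~\ref{theorem:b} and lifts invariant measures from $A_2^{\times\infty}\times{\rm Id}$ to $A_{2,\alpha}^{\times\infty}\times{\rm Id}$ by a Borel section and Ces\`aro averaging, using unique ergodicity of $A_{2,\alpha}$ (Lemma~\ref{l:gaj1}). You instead write the invariant lift down explicitly and replace the universal model by your coset-section factor map. The one point you gloss over is joint measurability of the section in $\gamma$; it is supplied by Theorem~\ref{c:sasza17} or by Proposition~\ref{p:faktory?}.

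Part (ii), however, is not proved. Everything rests on the claim that if $R$ is an ergodic factor of an ergodic self-joining of $A_{2,\alpha}$, and $F\circ R=f\cdot F$ with $f\circ R=e^{2\pi i\tau}f$, then $\tau$ is tied to $\alpha$. You support this only by ``the exact form $y\mapsto y+x$'' and then list it as the main obstacle.

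It cannot be read off from eigenvalues. A joining of the type you use in (i), with $x_1=x_0+\beta$ and $y_0,y_1$ independent, is ergodic because $1,\alpha,\beta$ are independent, and $e^{2\pi i(y_1-y_0)}$ has eigenvalue $\beta$. What must be shown is that such an eigenfunction is never a multiplier $F\circ T/F$. The paper does this in Lemma~\ref{l:gaj2} and Remark~\ref{r:gaj3}:
\begin{itemize}
\item Write the ergodic self-joining as $T_\psi(x,g)=(x+\alpha,\psi(x)+g)$ on $\T\times G$, with $G\subset\T^\infty$ closed and $\psi$ cohomologous to $x\mapsto(x+\beta_i)_i$.
\item An eigenfunction has the form $f=\xi(x)\eta(g)$ with $\eta\in\widehat G$ and $\xi(Tx)\eta(\psi(x))=e^{2\pi i\tau}\xi(x)$.
\item Expanding $F=\sum_\chi a_\chi(x)\chi(g)$, the relation $F\circ T_\psi=fF$ gives $|a_\chi\circ T|=|a_{\chi\ov{\eta}}|$. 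Hence $\|a_{\chi\eta^k}\|_2$ does not depend on $k$.
\item Since $F\neq0$, this forces $\eta$ to have finite order $k$. Then $\eta^k\circ\psi=1$ shows that $e^{2\pi ik\tau}$ is an eigenvalue of $x\mapsto x+\alpha$, i.e.\ $\tau\in\Q\alpha+\Q$.
\end{itemize}
No Host--Kra analysis of factors is needed: a pair $(f,F)$ in $L^2(R)$ is simply lifted to the joining.

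Your concluding step is also wrong as stated. Independence of $1,\alpha,\beta$ only gives $\tau\in(\Q\alpha+\Q)\cap(\Q\beta+\Q)=\Q$. That means rational multipliers, not trivial ones, and this alone does not make $R$ equal to its Kronecker factor. You still need two facts, which are Lemmas~\ref{l:gaj21} and~\ref{l:gaj22} in the paper.
\begin{itemize}
\item[(a)] An ergodic $R$ without purely discrete spectrum actually contains such a pair $(f,F)$ in $L^2(R)$. This comes from the ergodic self-joining having quasi-discrete spectrum of order~2 (products of the coordinate characters are total), and from this property passing to factors. It does not follow from the Host--Kra structure of $R$ alone.
\item[(b)] A pair with rational $\tau=j/m$ makes $F$ an eigenfunction of $R^m$, hence gives a discrete spectral measure.
\end{itemize}
Together these yield, for an ergodic $R\in\cf(A_{2,\alpha})\cap\cf(A_{2,\beta})$ outside ${\rm\bf DISP}$, a pair with irrational $\tau$. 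Applying the obstruction above once for $\alpha$ and once for $\beta$ then gives the contradiction. Your derivation of $A_{2,\beta}\notin\cf(A_{2,\alpha})$ from the displayed equality is fine once (ii) is in place.
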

\noindent (For the proof, see Proposition~\ref{p:gaj1}, Lemma~\ref{l:gaj2} and Proposition~\ref{p:gaj20}.)

On the other hand, even in the weakly mixing case, we can find $T$ for which $\cf(T)$ seems to be fully understood.

\begin{Th}\label{t:misejo} Assume that $T$ acting on $\xbm$ has the minimal self-joining property. Then:
\begin{enumerate}
\item[{\rm (i)}]
A universal model for $\cf(T)$ is constructed explicitly.
 \item[{\rm (ii)}]
 $\cf(T)=\cf(T)_{\rm ec}$.\footnote{This follows from (i), cf.~\eqref{restric1}.}
\item[{\rm (iii)}]
 For each $\bfu \colon \N\to\mathbb{D}$
the above universal model enjoys the strong $\bfu$-MOMO if and only if
the system $([0,1]\times X_0^\infty,{\rm Id}_{[0,1]}\times T_0^{\times\infty})$ does ($(X_0,T_0)$ is any uniquely ergodic model of $(X,\mu,T)$). In fact, for each uniquely ergodic model $(X_0,T_0)$ of $T$,
$\bfu\perp \mathcal{C}_{\cf(T)}$ if and only if $ (X_0^\infty,T_0^{\times\infty})$ enjoys the strong $\bfu$-MOMO property.
\end{enumerate}
\end{Th}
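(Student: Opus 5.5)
We need to plan a proof of Theorem t:misejo (MSJ case): (i) give a universal model for $\cf(T)$, (ii) $\cf(T)=\cf(T)_{\rm ec}$, (iii) the MOMO criterion.

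The plan starts by describing $\cf(T)$ concretely. For weakly mixing MSJ $T$, every ergodic self-joining of countably many copies of $T$ is a product of off-diagonal joinings, so it is isomorphic to $T^{\times k}$ for some $k\in\{0,1,\ldots,\infty\}$. Every factor of $T^{\times k}$ is, by the MSJ structure theory (Junco–Rudolph), isomorphic to a factor of the form $T^{\times k}$ modulo a finite group of coordinate permutations, i.e.\ a symmetric-power type factor. Since $\cf(T)$ consists of factors of countable self-joinings, an ergodic element of $\cf(T)$ is a factor of some $T^{\times k}$. A non-ergodic element has ergodic decomposition whose components are again of this type, and the invariant sigma-algebra is joined with them. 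I would take as the candidate universal model
$$
(\widetilde X,\widetilde T):=\Big([0,1]\times X_0^{\infty},\ {\rm Id}_{[0,1]}\times T_0^{\times\infty}\Big),
$$
where $(X_0,T_0)$ is a uniquely ergodic model of $T$. I would possibly enlarge it by a compact space of ``finite-group quotient labels'', e.g.\ by adding closed $T_0^{\times\infty}$-invariant subsets where coordinates are identified (the diagonals), so that every off-diagonal joining appears as an invariant measure.

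The first half of universality is to check that every $\nu\in M(\widetilde X,\widetilde T)$ gives an element of $\cf(T)$. Its ergodic components live on fibres $\{t\}\times X_0^\infty$. Each such component has marginals equal to the unique invariant measure of $T_0$, so it is an ergodic self-joining of $T$ and hence, by MSJ, a product of off-diagonal joinings. The whole $\nu$ is then a joining of ${\rm Id}$ with infinitely many copies of $T$ (the coordinate projections have marginal $\mu$), and it therefore lies in $\cf(T)$. For the second half, given $R\in\cf(T)$, I would write $R$ as a factor of some self-joining $\rho$ of $T^{\times\infty}$. I would then disintegrate $\rho$ over its invariant sigma-algebra and measurably code the ergodic components by parameters $t\in[0,1]$, each component being determined by countable combinatorial data: which coordinates are identified, with which shifts, and which finite quotient is taken. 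The main obstacle is realising factors, not just joinings, inside the model: I must show each factor of $T^{\times k}$ is isomorphic to a joining itself, e.g.\ using that the symmetric factor of $T^{\times n}$ embeds as a measure on $X_0^\infty$. If this fails I would enlarge the model by the symmetric quotients $X_0^{n}/S_n$ indexed in a compact countable way. Part (ii) then follows from (i) and Proposition~\ref{p:noum}.

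For (iii), I would apply Theorem~\ref{theorem:a}. One direction is immediate, since $(X_0^\infty,T_0^{\times\infty})\in\mathcal C_{\cf(T)}$ and $\bfu$-orthogonality of $\mathcal C_{\cf(T)}$ gives strong MOMO via \eqref{equivMOMO}. For the converse, I would use the joining characterization of Proposition~\ref{p:rolemomo}. Any joining of $(S,\kappa)$ with an invariant measure on $[0,1]\times X_0^\infty\times\mathbb A$ disintegrates over the ${\rm Id}$ coordinates, and the factor $[0,1]\times\mathbb A$ is absorbed by the ${\rm Inv}$ part, where $\|\bfu\|_{u^1}=0$ kills the contribution of $\pi_0$. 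This reduces the required vanishing of $\int f\otimes id\otimes\pi_0\,d\rho$ to the $\mathbb A$-extended joinings of $X_0^\infty$ itself. Finally, the shrinking-wedge/Proposition~\ref{thm:wedge-strongMOMO} handles the passage to the universal model of (i), and Theorem~\ref{theorem:a} yields $\bfu\perp\mathcal C_{\cf(T)}$.
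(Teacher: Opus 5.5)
\paragraph{Where the proposal breaks.} The gap is in (i), and your (ii) and the converse half of (iii) depend on it.

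\emph{Your candidate is not universal.} Consider $([0,1]\times X_0^\infty,{\rm Id}\times T_0^{\times\infty})$. An ergodic invariant measure on it has the form $\delta_t\otimes\rho$, and by unique ergodicity of $T_0$ the measure $\rho$ is an ergodic self-joining of $T$. By MSJ, $\rho$ is then isomorphic to $T^{\times k}$ for some $1\le k\le\infty$. This has two consequences.
\begin{itemize}
\item The one-point system and ${\rm Id}_{[0,1]}$, both in $\cf(T)$, are never realised.
\item The factor $T^{\odot 2}$ of $T\times T$ given by the swap-invariant sets is never realised either. A factor map from $T\times T$ onto $T^{\times k}$ yields an ergodic self-joining of $k+2$ copies of $T$. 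By MSJ each target coordinate is then $T^m$ of a source coordinate. So the preimage $\sigma$-algebra is a coordinate $\sigma$-algebra or everything, never the swap-invariant one.
\end{itemize}
So your hope that the symmetric factor ``embeds as a measure on $X_0^\infty$'' is false. Adding diagonals changes nothing, since off-diagonal joinings are already measures on $X_0^\infty$.

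\emph{The fallback is too small and too vague.} The factors of $T^{\times\infty}$ are the fixed-point algebras $\mathcal I(K)$ of arbitrary compact subgroups $K\subset C(T^{\times F})$, $F\subset\N$. Examples are $\langle(1\,2)(3\,4)\rangle$, cyclic groups on blocks, non-product subgroups of products of block groups, and infinite compact groups such as closed subgroups of $\prod_i\Z/2\Z$ acting on pairs of coordinates. Adjoining the quotients $X_0^n/S_n$ does not reach these. You give no space carrying the quotients by infinite compact groups, and no fixed point for the trivial factor.

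\paragraph{What is missing.} The paper's proof rests on three ingredients, none of which you supply.
\begin{enumerate}
\item \emph{Classification of factors.} Rudolph's result $C(T^{\times\infty})=\Z^\N\rtimes\Sigma(\N)$, semisimplicity of $T^{\times\infty}$ (Lemmas~\ref{l:msj1}, \ref{l:msj2}) and the del Junco--Lema\'nczyk--Mentzen theorem give Proposition~\ref{p:msj1}. Proposition~\ref{structure of subgroups} then describes the compact subgroups via finite blocks and skew homomorphisms. You assert this only for finite $k$ and finite groups.
\item \emph{One compact space realising all quotients.} This is $Y=\prod_{P\in\mathcal{R}\mathcal{R}}(Y_P\sqcup\{*\})$, built from countably many finite-stage quotients $Y_P$. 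The quotient by an infinite compact $H$ appears as the inverse limit of the $Y_{P_N}$, with $H_N$ the finite projections of $H$, embedded in the product. Conversely, every ergodic measure on $Y$ lifts through the compact quotient $X_\beta\to Y_\beta$ to an ergodic self-joining of $T$ (Proposition~\ref{ergodic case}).
\item \emph{Non-ergodic $R\in\cf(T)$.} Proposition~\ref{p:simfac} makes $R$ a factor of ${\rm Id}\times T^{\times\infty}$ with product measure. This gives a measurable field $t\mapsto\mathcal C_t$ of factors of $(T^{\times\infty},\mu^{\otimes\N})$. Choosing $\lambda_t$ with $(Y,\lambda_t,S)\cong\mathcal C_t$ measurably needs three tools:
\begin{itemize}
\item analyticity of the conjugacy relation in ${\rm Aut}([0,1],{\rm Leb})$ (Foreman--Rudolph--Weiss);
\item the Borel codings of Lemmas~\ref{Bor} and \ref{factors-measures};
\item the Jankov--von Neumann selection theorem.
\end{itemize}
Your ``countable combinatorial data'' codes the ergodic components of $\rho$. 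What must be coded are the ergodic components of the factor $R$, i.e.\ factors $\mathcal I(K)$ with $K$ ranging over an uncountable family.
\end{enumerate}

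\paragraph{On (iii).} The step from $[0,1]\times X_0^\infty$ to $X_0^\infty$ is just Lemma~\ref{l:m19}. Your argument that $[0,1]\times\mathbb A$ is ``absorbed by the Inv part'', with $\|\bfu\|_{u^1}=0$ killing $\pi_0$, is not valid: the invariant function multiplies $f$, so the integrand does not lie in the invariant part. The transfer to the actual model goes through the topological factor map $\prod_P\bigl((\prod_n X^{D_n})\sqcup\{*\}\bigr)\to Y$, whose domain is, up to fixed points, a Cartesian power of $X_0$. A shrinking wedge cannot replace this if its pieces are quotients of finite powers of $X_0$: every ergodic measure on such a wedge is a factor of a finite power of $T$, whereas $T^{\times\infty}\in\cf(T)$ is not.
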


\begin{Cor}\label{c:ortogtomsj}
For each {\bf multiplicative} $\bfu \colon \N\to\mathbb{D}$, we have
$$
\bfu\perp \mathcal{C}_{\cf(T)}$$
for each (weakly mixing) automorphism $T\in{\rm MSJ}$.
\end{Cor}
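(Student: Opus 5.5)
By Theorem~\ref{t:misejo}(iii), it suffices to show that for a uniquely ergodic model $(X_0,T_0)$ of $T$, the product system $(X_0^\infty,T_0^{\times\infty})$ has the strong $\bfu$-MOMO property. Since $T$ is weakly mixing and MSJ, the model $(X_0,T_0)$ has zero entropy (MSJ implies zero entropy). Strong $\bfu$-MOMO for the infinite product reduces, by density of finite-coordinate functions and a standard approximation in \eqref{jsjm2}, to strong $\bfu$-MOMO for each finite power $(X_0^k,T_0^{\times k})$ with test functions $f_1\ot\cdots\ot f_k$. For these I would use the joining criterion of Proposition~\ref{p:rolemomo}. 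Every ergodic $T_0^{\times k}$-invariant measure on $X_0^k$ is, by unique ergodicity of the marginals and MSJ, a product of off-diagonal joinings. So the corresponding automorphism is isomorphic to $T^{\times m}$ for some $m\le k$, and all ergodic measures of $X_0^k$ give systems isomorphic to products of copies of $T$ in the same weakly mixing MSJ class.

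The key input for a multiplicative $\bfu$ is the Katai--Bourgain--Sarnak--Ziegler type criterion, in its dynamical form: for distinct primes $p\neq q$, the powers $T^p$ and $T^q$ are disjoint (for weakly mixing MSJ, $T^m\perp T^n$ for $m\neq n$). The same holds for products, since $(T^{\times m})^p\perp (T^{\times m})^q$, because MSJ gives the PID property and powers of products of $T$ remain disjoint. The plan is to run the standard argument, following \cite{Bo-Sa-Zi} and its MOMO/short-interval version used in \cite{Ab-Ku-Le-Ru}. For any joining $\rho$ of $(T_0^{\times k}\times{\rm Id}_{\mathbb A},\nu)$ with a Furstenberg system $(S,\kappa)$, the Katai-type orthogonality criterion (applied along the switching blocks $[b_k,b_{k+1})$) reduces the vanishing of $\int f\ot id_{\mathbb A}\ot\pi_0\,d\rho$ to the vanishing of correlations
\[
\lim \frac1{b_K}\sum_{k<K}\Big|\sum_{b_k\le n<b_{k+1}} f(T^{pn}x_k)\ov{f(T^{qn}x_k)}\Big|
\]
for distinct primes $p,q$. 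That vanishing is in turn a strong-MOMO-type statement (with $\bfu\equiv1$) for the system $T_0^{\times k,p}\times T_0^{\times k,q}$. By the disjointness of $p$-th and $q$-th powers together with weak mixing, every invariant measure of the product over the relevant diagonal pieces is a product measure. Hence the averages converge uniformly to $\int f\,d\nu\cdot\ov{\int f\,d\nu}$, and this is zero after subtracting means; the means can be handled separately using $\|\bfu\|_{u^1}=0$.

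The main obstacle is the non-uniqueness of ergodic measures on $X_0^k$ and of the $p,q$-power system. The uniform (in $x_k$) convergence needed for the moving-orbit version requires controlling all invariant measures of $(T_0^{p})^{\times k}\times (T_0^{q})^{\times k}$, not just ergodic ones. I would first try to show that every such invariant measure whose marginals on the two factors are invariant for $T_0^{\times k,p}$, resp.\ $T_0^{\times k,q}$, is a product. Here MSJ describes these marginals as mixtures of off-diagonal joinings, which are unchanged by passing to powers since $T$ is weakly mixing and powers of MSJ maps retain the structure. Disjointness $T^p\perp T^q$ for ergodic components then gives the product form, even for mixtures, via ergodic decomposition on the $p$-side. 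Then $\int f\ot\ov f$ splits, mean-zero $f$ kills it, and the Katai criterion concludes. Finally, passing back through Theorem~\ref{t:misejo}(iii) yields $\bfu\perp\mathcal C_{\cf(T)}$.
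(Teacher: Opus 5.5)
\textbf{Verdict: genuine gap.} Your reduction to the finite powers $(X_0^k,T_0^{\times k})$ is fine, and so is your description of their ergodic measures as products of off-diagonal joinings. The step that breaks is the one where you try to replace the paper's use of uniquely ergodic models by a direct Katai argument.

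\textbf{Joinings of mixtures are not products.} Disjointness of $(T^{\times m})^p$ and $(T^{\times m'})^q$ only says that each \emph{ergodic component} of a $(T_0^{p})^{\times k}\times(T_0^{q})^{\times k}$-invariant measure $\rho$ is a product $\omega_1\ot\omega_2$ with $\omega_1,\omega_2\in M^e(X_0^k,T_0^{\times k})$. A non-ergodic $\rho$ can couple the component labels. For example, take $k=2$ and let $\Delta$ be the diagonal self-joining. Then $\frac12(\Delta\ot\Delta+\mu^{\ot 2}\ot\mu^{\ot 2})$ is invariant but is not the product of its marginals.

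\textbf{The correlations do not converge to one constant.} Their limits are $\int f\,d\omega_1\cdot\ov{\int f\,d\omega_2}$, and these depend on which ergodic measures the moving points see. Take $f=f_1\ot\ov{f_1}$ with $\int f_1\,d\mu=0$. Then $\int f\,d\Delta=\|f_1\|^2_{L^2(\mu)}\neq0$ while $\int f\,d\mu^{\ot 2}=0$. Choosing the points alternately generic for $\Delta$ and for $\mu^{\ot2}$ violates the hypothesis of the Katai criterion. Subtracting a single constant from $f$, which is all your use of $\|\bfu\|_{u^1}=0$ amounts to, cannot fix both cases at once.

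\textbf{The correlations you wrote are not the ones that appear.} In the moving-orbit Katai argument, $pn$ and $qn$ usually lie in different blocks. So you must control $\frac1H\sum_{h<H}f(S^{ph}y)\ov{f(S^{qh}z)}$, with $S=T_0^{\times k}$, uniformly over arbitrary pairs $(y,z)$, not same-point correlations. This is exactly where having many ergodic measures destroys uniformity.

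\textbf{How the paper avoids this.} It uses that $M^e(X_0^k,T_0^{\times k})$ is countable, and each member gives a system isomorphic to a Cartesian power of $T$ whose prime powers are pairwise disjoint. By \cite{Ab-Ku-Le-Ru,Bo-Sa-Zi}, each such measure has a uniquely ergodic model with the strong $\bfu$-MOMO property. The implication PF1 $\Rightarrow$ PF3 of \cite[Theorem~4.1]{Ka-Le-Ra} then gives strong $\bfu$-MOMO for $(X_0^k,T_0^{\times k})$.

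\textbf{How you could repair your route.} You need an extra idea. One option is to split $f=g_L+A_Lf$ with $A_Lf=\frac1L\sum_{\ell<L}f\circ S^\ell$.
\begin{itemize}
\item $g_L$ integrates to zero against \emph{every} $S$-invariant measure, so the limiting correlations vanish on each component $\omega_1\ot\omega_2$. For this you must first check that $S^p$-invariant measures are $S$-invariant; that holds here because all ergodic components are weakly mixing.
\item $n\mapsto A_Lf(S^nx)$ changes by $O(1/L)$ per step, so it can be handled by $\|\bfu\|_{u^1}=0$ on short intervals.
\end{itemize}
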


Let us now formulate one of our applications, namely,
a pure ergodic theory proof of the equivalence (below) first proved by Tao  in \cite[Section 4]{Ta1}\footnote{We thank N. Frantzikinakis for this information.} (together with a use of \cite{Gr-Ta-Zi}), see also Frantzikinakis \cite{Fr}.

\begin{Th}\label{t:m1}Assume that $\|\bfu\|_{u^1}=0$. Then $\|\bfu\|_{u^2}=0$ if and only if $\bfu$ satisfies the local 1-Fourier uniformity, i.e.
\beq\label{l1fu1}
\lim_{H\to\infty}\limsup_{M\to\infty}
\frac1M\sum_{m<M}\sup_{\alpha\in\R}
\Big|\frac1H\sum_{h<H}\bfu(m+h)e^{2\pi ih\alpha}\Big|=0.\footnote{An analogous result holds in the logarithmic case.}\eeq
\end{Th}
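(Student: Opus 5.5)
Both directions will go through Theorem~\ref{theorem:b} and the equivalence \eqref{equivMOMO}. As recalled before Proposition~\ref{theorem:c}, $\|\bfu\|_{u^2}=0$ is the Veech condition for ${\rm{\bf{DISP}}}_{\rm ec}$, so by \cite{Ka-Ku-Le-Ru} it is equivalent to $\mathcal{C}_{{\rm{\bf{DISP}}}_{\rm ec}}\perp\bfu$. By Theorem~\ref{theorem:b}(ii) this is in turn equivalent to the strong $\bfu$-MOMO property of $A_2(x,y)=(x,x+y)$ on $\T^2$. The proof therefore reduces to showing that, when $\|\bfu\|_{u^1}=0$, the strong $\bfu$-MOMO property of $A_2$ is equivalent to \eqref{l1fu1}.

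We compute with \eqref{jsjm2} for $A_2$. Since $A_2^n(x,y)=(x,y+nx)$, trigonometric polynomials are dense in $C(\T^2)$, and $f=1$ is handled by $\|\bfu\|_{u^1}=0$, it suffices to test $f(x,y)=e(ax+by)$ with $(a,b)\ne(0,0)$. The case $b=0$ gives $\|f\circ A_2^n\|=1$ for every $n$, so it again reduces to $\|\bfu\|_{u^1}=0$. For $b\neq0$ we get
$$\Big\|\sum_{b_k\le n<b_{k+1}}\bfu(n)f\circ A_2^n\Big\|_{C(\T^2)}=\sup_{\alpha\in\T}\Big|\sum_{b_k\le n<b_{k+1}}\bfu(n)e(n\alpha)\Big|,$$
because $bx$ ranges over all of $\T$ and the $y$-term is a unimodular constant. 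Hence strong $\bfu$-MOMO for $A_2$ says exactly that $\frac1{b_K}\sum_{k<K}\sup_\alpha\big|\sum_{b_k\le n<b_{k+1}}\bfu(n)e(n\alpha)\big|\to0$ for every admissible $(b_k)$.

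It remains to compare this block condition with the sliding-window condition \eqref{l1fu1}. Take $H$ large and use the partitions $b_k=kH+j$, $0\le j<H$. Averaging over $j$ turns the block averages into the average over all windows $[m,m+H)$, and the phase $e(h\alpha)$ becomes $e((m+h)\alpha)$ up to a unimodular factor independent of $h$. This gives the implication \eqref{l1fu1} $\Rightarrow$ the block condition for $b_{k+1}-b_k=H$. For general gaps tending to infinity, cut each long block into subwindows of length $H$; the supremum over $\alpha$ of a sum is at most the sum of the suprema, and the boundary pieces contribute $O(1/H)$ on average. For the converse, first note that the block condition for gaps $\to\infty$ forces the analogous statement for constant gap $H$ in the iterated limit: slowly increasing gaps $H_k\to\infty$ can be glued together by a diagonal argument. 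Averaging over the shift $j$ then recovers \eqref{l1fu1}.

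The main obstacle is this converse passage, that is, deducing the iterated $\lim_H\limsup_M$ statement from the MOMO condition, where only $b_{k+1}-b_k\to\infty$ is allowed. I would argue by contradiction. If \eqref{l1fu1} fails, there are $\varepsilon>0$, a sequence $H_j\to\infty$ and $M_j\to\infty$ (chosen very sparse) with window average at least $\varepsilon$ over $[0,M_j)$. Pigeonholing over the shifts selects for each $j$ a partition of $[M_{j-1},M_j)$ into blocks of length $H_j$ whose block average is at least $\varepsilon$. Concatenating these partitions gives one sequence $(b_k)$ with gaps tending to infinity that violates \eqref{jsjm2} along $K$ corresponding to $M_j$, using the growth of $M_j$ so that earlier segments are negligible. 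The logarithmic case is handled the same way, with weights $1/n$.
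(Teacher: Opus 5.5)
Your proposal is correct and follows essentially the paper's route: $\|\bfu\|_{u^2}=0\iff\mathcal{C}_{{\rm{\bf{DISP}}}_{\rm ec}}\perp\bfu$ by \cite{Ka-Ku-Le-Ru}, and this is in turn equivalent to the strong $\bfu$-MOMO property of $A_2$. The paper obtains sufficiency from Theorem~\ref{t:ajmt} and Corollary~\ref{c:strongmomo} (the content behind Theorem~\ref{theorem:b}(ii)) and necessity from \eqref{equivMOMO}, and it simply cites \cite{Ka-Le-Ri-Te} for the equivalence of the strong $\bfu$-MOMO property of $A_2$ with local 1-Fourier uniformity, which you instead verify directly via characters and averaging over shifts of the block partitions.
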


An interesting case arises when the assumption in the spirit of Proposition~\ref{p:jjm} applies to Furstenberg systems of $\bfu$.\footnote{\label{f:Mproperty} For example, it follows that if $\bfu\perp\,$UE and each of its Furstenberg systems $\kappa$ has a.a.\ ergodic components isomorphic then $\bfu$ has to be mean slowly varying:  $\lim_{N\to\infty}\frac1N\sum_{n<N}|\bfu(n+1)-\bfu(n)|=0$. Indeed, suppose that $\kappa=\lim_{k\to\infty}\frac1{N_k}\sum_{n<N_k}\delta_{S^n\bfu}$. We have
$$
\lim_{k\to\infty}\frac1{N_k}\sum_{n<N_k}|\bfu(n)-\bfu(n+1)|=\int|\pi_0-\pi_0\circ S|\,d\kappa.$$
We apply now Proposition~\ref{p:jjm}  and~\eqref{orttoue} to obtain that $\pi_0\in L^2({\rm Inv}_\kappa)$ and to conclude (note that this implies that all the $\kappa$s yield identity systems, cf.\ \cite{Fr-Le-Ru}, \cite{Go-Le-Ru}).}
As a consequence of Theorem~\ref{t:ajmt}, we obtain the following.

\begin{Cor}\label{p:m17}
Let $\cf$ be a characteristic class. Assume that $\|\bfu\|_{u^1}=0$ and moreover
\beq\label{almost}
(\forall\kappa\in V(\bfu))\;\;\;\;(S|_{\ca(\cf_{\rm ec})},\kappa|_{\ca(\cf_{\rm ec})})\text { is }\cf\text{-piecewise Markov URE.}
\eeq
Then, we have
$$
\bfu\perp {\rm UE}\cap\cf_{\rm ec}~\footnote{This condition  can  be expressed  in terms of self-joinings of Furstenberg systems of $\bfu$, see \cite{Go-Le-Ru}.}\;\;\Rightarrow\;\; \bfu\perp \cc_{\cf_{\rm ec}}.$$
In particular, the result holds if for each Furstenberg system $\kappa\in V(\bfu)$ the ergodic components have a common ergodic extension.\\
The result also holds in the logarithmic case.
\end{Cor}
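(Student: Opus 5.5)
Corollary~\ref{p:m17} will follow from Theorem~\ref{t:ajmt}: it suffices to check \eqref{nopo} for every $\kappa\in V(\bfu)$. Fix $\kappa\in V(\bfu)$ and write $S':=S|_{\ca(\cf_{\rm ec})}$, acting on $(X_{\bfu}, \ca(\cf_{\rm ec}), \kappa|_{\ca(\cf_{\rm ec})})$. By \eqref{almost}, there are ergodic automorphisms $S_i\in\cf$, $i\in I$, and joinings $\rho_i\in J(S_i\times{\rm Id}_{[0,1]},S')$ such that
$$L^2(\ca(\cf_{\rm ec}))\ominus L^2({\rm Inv}_\kappa)\subset\ov{\rm span}\Big(\bigcup_{i\in I}{\rm Im}(\Phi_{\rho_i})\Big).$$
Here we use that the invariant sigma-algebra of $S'$ coincides with ${\rm Inv}_\kappa$ mod $\kappa$. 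This holds because ${\rm Inv}_\kappa$, being an identity factor, lies in $\ca({\bf ID})\subset\ca(\cf_{\rm ec})$. So the whole task reduces to showing that each $R_i:=S_i\times{\rm Id}_{[0,1]}$ belongs to $\widehat{\mathcal L}$, i.e.\ that it has a topological model with the strong $\bfu$-MOMO property.

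To produce such a model, first use that $S_i\in\cf$ is ergodic. By Jewett--Krieger, $S_i$ has a uniquely ergodic model $(Y_i,T_i)$. Every invariant measure of $(Y_i,T_i)$ gives a system isomorphic to $S_i$, so $(Y_i,T_i)\in {\rm UE}\cap\mathcal C_{\cf}\subset{\rm UE}\cap\mathcal C_{\cf_{\rm ec}}$. The hypothesis $\bfu\perp{\rm UE}\cap\cf_{\rm ec}$ therefore gives $(Y_i,T_i)\perp\bfu$. Since the system is uniquely ergodic, this upgrades to the strong $\bfu$-MOMO property, by \cite{Ab-Le-Ru}, \cite{Ka-Le-Ri-Te} (cf.\ Footnote~\ref{f:domomo}).

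Next take $X_i:=Y_i\times[0,1]$ with $T_i\times{\rm Id}$. Its invariant measures are $\mu_{Y_i}\otimes\eta$ for arbitrary $\eta$ on $[0,1]$, so it is a topological model of $R_i$. The key point is that strong $\bfu$-MOMO passes from $(Y_i,T_i)$ to $(X_i, T_i\times{\rm Id})$. I would verify this through Theorem~\ref{theorem:a}: every ergodic measure of $X_i$ is $\mu_{Y_i}\otimes\delta_t$, which is isomorphic to an ergodic measure of $(Y_i,T_i)$. This gives $\bfu$-orthogonality, and moreover of every system whose ergodic measures are copies of $S_i$. The class of such systems is exactly $\mathcal C_{\cf(S_i)_{\rm ec}}$-like with respect to the fixed isomorphism type. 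To upgrade to strong MOMO, apply \eqref{equivMOMO}-style reasoning: the systems $X_i\times\mathbb A$ in Proposition~\ref{p:rolemomo} again have all ergodic measures isomorphic to $S_i$. Hence Theorem~\ref{theorem:a}, applied to them together with the Lifting Lemma, yields the joining criterion of Proposition~\ref{p:rolemomo} for $X_i$. Alternatively, one checks \eqref{jsjm2} directly: $f\in C(Y_i\times[0,1])$ is uniformly approximated by finite sums $\sum g_j\otimes h_j$, and the sup over $(y,t)$ is controlled by the sup over $y$ for each $g_j$. This direct check is simpler and I would use it.

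Hence $R_i\in\widehat{\mathcal L}$ for all $i$, so \eqref{nopo} holds and Theorem~\ref{t:ajmt} gives $\bfu\perp\mathcal C_{\cf_{\rm ec}}$. For the ``in particular'' statement, a common ergodic extension $S_0$ of the ergodic components of $S'$ makes $S'$ a factor of ${\rm Id}\times S_0$, by Theorem~\ref{theorem:f}. We may replace $S_0$ by a member of $\cf$: its factor $\ca_{\cf}(S_0)$ still extends all components, which lie in $\cf$. So $S'$ is $\cf$-URE, hence $\cf$-piecewise Markov URE. The logarithmic case is verbatim, using $\mathcal L^{\rm log}$. The main obstacle I expect is the bookkeeping ensuring the $S_i$ may be taken in $\cf$ (not only $\cf_{\rm ec}$), via passing to $\ca_{\cf}(S_i)$ and using Footnote~\ref{f:reljoi}, which says images of $\Phi_{\rho_i}$ restricted to $\cf_{\rm ec}$-measurable functions only see $\ca_{\cf}$.
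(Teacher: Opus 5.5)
Your proposal is correct and is essentially the paper's proof: you verify \eqref{nopo} of Theorem~\ref{t:ajmt} by taking Jewett--Krieger uniquely ergodic models of the $S_i\in\cf$, deduce their strong $\bfu$-MOMO property from $\bfu\perp{\rm UE}\cap\cf_{\rm ec}$, and pass to $S_i\times{\rm Id}$ by your ``direct check'', which is exactly Lemma~\ref{l:m19}. Two small points: first, the upgrade to strong MOMO uses orthogonality of \emph{all} uniquely ergodic systems in the class (the orbital systems built from $(Y_i,T_i)$ are again uniquely ergodic models of $S_i$), not just orthogonality of the single system $(Y_i,T_i)$; second, the detour through Theorem~\ref{theorem:a} and your final worry about having $S_i\in\cf$ are unnecessary, since the latter is already part of the $\cf$-piecewise Markov URE hypothesis.
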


Note that this result tells us that, assuming that \eqref{almost} holds, if we want to show the orthogonality of $\bfu$ to all topological systems whose all ergodic measures yield systems in $\cf$, we only need to check orthogonality to all uniquely ergodic systems which are in $\cf$. If we consider $\mob$ and the class ${\rm{\bf{DISP}}}$, then $\mob\perp_{\rm log} {\rm UE}\cap {\rm {\bf{DISP}}}_{\rm ec}$ indeed holds \cite{Fe-Ku-Le,Hu-Wa-Ye,Hu-Wa-Zh}, and (remembering  that Sarnak's conjecture for ${\rm{\bf{DISP}}}_{\rm ec}$ is equivalent to $\|\mob\|_{u^2}=0$ \cite{Ka-Ku-Le-Ru}),  we obtain Corollary~\ref{corollary:h}. (Note that for the necessity in Corollary~\ref{corollary:h}, we need to use Tao's theorem \cite{Ta1} about the equivalence of logarithmic Sarnak's conjecture
and the logarithmic Chowla conjecture--the latter gives in particular that $\mob$ has one logarithmic Furstenberg system, and this system is ergodic.)
As a matter of fact, one can obtain an almost direct proof of Corollary~\ref{corollary:h} from \cite{Fr-Ho},
\cite{Hu-Ta-Xu}, see Lemma~\ref{l:hutaxu6} below.




\begin{Remark}\label{r:logSchar} Yet, in Corollary~\ref{corollary:h}, we can replace the Pinsker factor by the relative (with respect to the sigma-algebra of invariant sets) Kronecker factor to obtain that: {\em Logarithmic Sarnak's conjecture holds if and only if for each $\kappa\in V^{\rm log}(\mob)$  the relative Kronecker factor $\ck_{\rm rel}(\kappa)$ is  ${\rm{\bf{DISP}}}_{\rm ec}$-piecewise Markov URE}.\end{Remark}



\section{Preliminaries}
\subsection{Disintegration and sub-sigma-algebras} Let $(X,\mathcal B,\mu)$ be a standard probability space.
Denote by Aut$(X,\mathcal B, \mu)$ (or ${\rm Aut}(X,\mu)$) the group of automorphisms (i.e.\ of $\mu$-preserving invertible transformations) of $X$.
We endow it with the weak topology. Let
$\mathcal E(X,\mu)$ denote the subset of ergodic automorphisms of $(X,\mu)$. Then $\mathcal E(X,\mu)$ is a dense $G_\delta$ subset of Aut$(X,\mu)$. 
We will not distinguish between objects (subsets, functions, transformations, sub-sigma-algebras, etc.) that agree almost everywhere.


We remind that given an arbitrary sub-sigma-algebra $\mathcal A\subset \mathcal B$, there is a standard probability space $(Y,\mathcal D,\nu)$ and
a measure preserving mapping $\tau\colon(X,\mathcal B,\mu)\to(Y,\mathcal D,\nu)$
such that $\mathcal A=\{\tau^{-1}(B)\colon B\in\mathcal D\}$ mod 0.
We will use sometimes the notation $(X,\mathcal B,\mu)/\mathcal A$
for $(Y,\mathcal D,\nu)$ or simply
$X/\mathcal A$ for $Y$.
Let $Y\ni y\mapsto\mu_y$ be the corresponding system of conditional measures on $X$.
This means that $\mu=\int_Y\mu_y\,d\nu(y)$ and $\mu_y(\tau^{-1}(y))=1$ at $\nu$-a.e.\ $y\in Y$.
We also note that $\int f\, d\mu_{\tau(x)}=\EE(f\mid \mathcal A)(x)$ at a.e.\ $x\in X$ for each bounded Borel function $f\colon X\to\Bbb R$.

There is a one-to-one correspondence between the sub-sigma-algebras in $\mathcal B$ and the closed $*$-subalgebras of the von Neumann algebra $L^\infty(X,\mu)$ acting on $L^2(X,\mu)$ via multiplication.
We note that $L^2(\mathcal A,\mu)$ is a closed $L^\infty(\mathcal A)$-invariant subspace of $L^2(X,\mu)$.
The disintegration of $\mu$ with respect to $\nu$  yields a decomposition of $L^2(X,\mu)$ into direct integral of Hilbert spaces:
\beq\label{integral}
L^2(X,\mu)=\int_{Y}^\oplus L^2(X,\mu_y)\,d\nu(y).
\eeq
As $L^2(Y,\nu)$ is a subspace of $L^2(X,\mu)$, the orthocomplement
$L^2(Y,\nu)^\perp$ of $L^2(Y,\nu)$ in $L^2(X,\mu)$ equals
$$
L^2(Y,\nu)^\perp=\int_{Y}^\oplus L^2_0(X,\mu_y)\,d\nu(y),
$$
where $L^2_0(X,\mu_y)$ is the subspace of  functions from $L^2(X,\mu_y)$
with 0 integral.
Given a sub-sigma-algebra $\mathcal G$ of $\mathcal B$,
we let $\mathcal G_y:=\{G\cap \tau^{-1}(y)\colon G\in\mathcal G\}$ for each $y\in Y$.
 Then $\mathcal G_y$ is a  sub-sigma-algebra of the fiber space $( \tau^{-1}(y),\mathcal B_y, \mu_y)$, where $\mathcal B_y=\mathcal B\cap \tau^{-1}(y)$.
 Of course, $L^2(\mathcal G_y,\mu_y)$ is a closed subspace
 of $L^2(X,\mu_y)$.
 If $\mathcal G\supset \mathcal A$ then (\ref{integral}) induces the following decomposition of $ L^2(\mathcal G,\mu)$:
 $$
 L^2(\mathcal G,\mu)=\int_Y^\oplus L^2(\mathcal G_y,\mu_y)\,d\nu(y).
 $$
This implies the following well-known statement.
\begin{Prop}\label{2factors}
 Let $\mathcal G$, $\mathcal G'$  be two sub-sigma-algebras
 of $\mathcal B$.
Assume  that $\mathcal G\cap\mathcal G'\supset \mathcal A$. Then  $\mathcal G=\mathcal G'$ if and only if
$\mathcal G_y=\mathcal G_y'$ for $\nu$-a.a. $y\in X/\mathcal A$.
\end{Prop}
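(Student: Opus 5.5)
The plan is to translate equality of sub-sigma-algebras into equality of the corresponding closed subspaces $L^2(\mathcal G,\mu)$ and $L^2(\mathcal G',\mu)$. Then the direct integral decomposition displayed just before the statement reduces the comparison to a fibrewise one. Recall that a sub-sigma-algebra $\mathcal G$ is recovered mod $0$ from $L^2(\mathcal G,\mu)$: it consists of the sets whose indicators belong to this subspace. Hence $\mathcal G=\mathcal G'$ if and only if $L^2(\mathcal G,\mu)=L^2(\mathcal G',\mu)$, and the same holds fibrewise for $\mathcal G_y,\mathcal G'_y$ in $L^2(X,\mu_y)$.

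Since $\mathcal G\supset\mathcal A$ and $\mathcal G'\supset\mathcal A$, the preceding discussion gives
$$
L^2(\mathcal G,\mu)=\int_Y^\oplus L^2(\mathcal G_y,\mu_y)\,d\nu(y),\qquad L^2(\mathcal G',\mu)=\int_Y^\oplus L^2(\mathcal G'_y,\mu_y)\,d\nu(y).
$$
The ``if'' direction is then immediate: if the fibres agree for $\nu$-a.e.\ $y$, the two measurable fields of subspaces coincide a.e., so the direct integrals coincide.

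For the ``only if'' direction, I would pass to orthogonal projections. The projection $P$ onto $L^2(\mathcal G,\mu)$ is the conditional expectation $\EE(\cdot\mid\mathcal G)$. Because $\mathcal G\supset\mathcal A$, it commutes with multiplication by $L^\infty(\mathcal A)$, so it is a decomposable operator $P=\int^\oplus P_y\,d\nu(y)$, where $P_y$ is the projection onto $L^2(\mathcal G_y,\mu_y)$. The same holds for $P'$. A standard fact on decomposable operators says that $\int^\oplus P_y=\int^\oplus P'_y$ forces $P_y=P'_y$ for a.e.\ $y$. To see this concretely, take a countable family $(f_n)$ of bounded functions dense in $L^2(X,\mu)$ whose restrictions are dense in $L^2(X,\mu_y)$ for a.e.\ $y$ (e.g.\ from a countable generating algebra). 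Then for all $n$ and all $a\in L^\infty(\mathcal A)$,
$$
\int_Y a(y)\,\|(P_y-P'_y)f_n\|^2_{L^2(\mu_y)}\,d\nu(y)=0,
$$
which gives $P_yf_n=P'_yf_n$ a.e., simultaneously for all $n$; density then yields $P_y=P'_y$.

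The main obstacle is the measurability and identification issue: we need that the fibrewise projection of the conditional expectation onto $\mathcal G$ is indeed the projection onto $L^2(\mathcal G_y,\mu_y)$, i.e.\ that $\EE(f\mid\mathcal G)$ restricted to $\tau^{-1}(y)$ equals the conditional expectation of $f$ with respect to $\mathcal G_y$ under $\mu_y$. I would handle this by realising $\mathcal G$ as a factor $X\to X/\mathcal G$ through which $\tau$ factors, and disintegrating $\mu$ in two stages. This is exactly the content of the displayed decomposition of $L^2(\mathcal G,\mu)$, which may be assumed.
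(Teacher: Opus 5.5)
Your proposal is correct and follows the paper's route. The paper states the proposition as an immediate consequence of the decomposition $L^2(\mathcal G,\mu)=\int_Y^\oplus L^2(\mathcal G_y,\mu_y)\,d\nu(y)$, which is valid because $\mathcal G,\mathcal G'\supset\mathcal A$; you supply the omitted uniqueness-of-fibres step via decomposable projections and a countable dense family, and note correctly that the hypothesis $\mathcal G\cap\mathcal G'\supset\mathcal A$ is what makes the ``if'' direction work.
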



\subsection{Isomorphisms and factors of non-ergodic automorphisms}\label{s:ergcom}

We first recall the following classical result (see e.g.\ \cite[Theorem~18.1]{Ke}) that will be utilized repeatedly in this paper.
\begin{Th} (Jankov-von Neumann Theorem) \label{th:JvN}
Let $X,Y$ be standard Borel spaces and $f \colon X\to Y$ a Borel map. Then, $f(X)$ is analytic and there exists a measurable selector $s \colon (f(X),\ca(f(X))\to (X,\cb(X))$, $f\circ s=\text{{\rm Id}}_{f(X)}$, where $\ca(f(X))$ denotes the sigma-algebra generated by analytic sets.
\end{Th}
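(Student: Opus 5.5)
The plan is to reduce to the Baire space $\mathcal N=\N^{\N}$ and take a lexicographically least preimage there. This is the classical argument in \cite[Theorem~18.1]{Ke}.

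First, since $f$ is Borel, we may refine the Polish topology of $X$ to a Polish topology with the same Borel sets in which $f$ is continuous. We also use the standard fact that every Polish space is a continuous image of a closed subset $C\subset\mathcal N$; let $h\colon C\to X$ be a continuous surjection. Put $\varphi:=f\circ h\colon C\to Y$, which is continuous, and note $\varphi(C)=f(X)$. A continuous image of a closed subset of $\mathcal N$ is analytic, so $f(X)$ is analytic, which is the first assertion.

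For the selector, for each $y\in f(X)$ the fibre $\varphi^{-1}(y)$ is a nonempty closed subset of $\mathcal N$, i.e.\ the set of infinite branches of a pruned tree. We let $s_0(y)$ be its leftmost branch: choose the least possible first coordinate, then the least possible second coordinate among the remaining branches, and so on. Closedness guarantees that this limit point lies in $\varphi^{-1}(y)$, so $\varphi(s_0(y))=y$. We then set $s:=h\circ s_0$. Since $h$ is continuous and $f\circ h=\varphi$, we get $f\circ s=\varphi\circ s_0=\mathrm{Id}_{f(X)}$.

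The main step is measurability of $s_0$ with respect to $\ca(f(X))$. It suffices to check preimages of the basic clopen sets $N_u=\{x\in\mathcal N\colon x|_n=u\}$, $u\in\N^n$. By the definition of the leftmost branch,
$$
s_0(y)\in N_u\iff y\in\varphi(N_u\cap C)\ \text{ and }\ y\notin\bigcup_{v\in\N^n,\ v<_{\rm lex}u}\varphi(N_v\cap C).
$$
Each set $\varphi(N_v\cap C)$ is analytic, being a continuous image of a closed set. The union on the right is countable. Hence $s_0^{-1}(N_u)$ is a Boolean combination of analytic sets and so belongs to the $\sigma$-algebra generated by analytic sets. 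The cylinders generate the Borel sets of $\mathcal N$, so $s_0$ is $\ca(f(X))$-measurable, and composing with the Borel map $h$ preserves this. The only delicate point is verifying the displayed equivalence, namely that minimality at each finite level is decided by which cylinders meet the fibre. This follows because the fibre is closed, so the leftmost branch is determined level by level.
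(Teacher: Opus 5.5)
Your proof is correct. It is essentially the standard leftmost-branch proof of \cite[Theorem~18.1]{Ke}, and that reference is exactly what the paper cites for this statement without giving a proof of its own. One small clarification: the displayed equivalence already follows from the level-by-level lexicographic minimality of the construction, and closedness of the fibre is needed only to guarantee that $s_0(y)\in\varphi^{-1}(y)$.
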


Assume now  that $T\in{\rm Aut}\xbm$,  and let
$\mathcal I(T)={\rm Inv}_\mu$ denote
the sigma-algebra of all measurable subsets fixed by $T$.
Denote also
$$
(\baX,\ov{\cb},\bamu):= (X,\mathcal B,\mu)/\mathcal I(T),
$$
whence $T|_{{\rm Inv}_\mu}=\text{Id}_{\baX}$.
Let
\beq\label{edecomp}
\mu=\int_{\baX}\mu_{\bax}\,d\bamu(\bax)
\eeq
stand for the corresponding disintegration.
It is called the {\em ergodic decomposition} of $\mu$.
It follows from the uniqueness of disintegration that
 the conditional measures $\mu_{\bax}$  are $T$-invariant.
Besides, they are ergodic.
A suggestive picture for the ergodic decomposition appears when we assume additionally that $T$
 is aperiodic (i.e.\ the measure of periodic points equals zero). Then there is a standard nonatomic probability space $\ycn$
and a measurable mapping $\baX\ni \bax\mapsto T_{\bax}\in {\rm Aut}\ycn$
such that:
\begin{enumerate}[(a)]
\item
$\xbm=(\baX,\ov{\cb},\bamu)\ot\ycn$,
\item
$T_{\bax}$ is ergodic for $\bamu$-a.e.\ $\bax\in \baX$, i.e.\
$\baX\ni \bax\mapsto T_{\bax}\in \mathcal{E}\ycn,$
\item
the action $T$  is isomorphic to $T(\bax,y)=(\bax,T_{\bax}\,y)$ on the product space $(\baX\times Y,\bamu\otimes\nu)$.
\end{enumerate}
In other words, $T$ is relatively ergodic over the first coordinate sigma-algebra (i.e.\ the sigma-algebra of invariant sets).
In the general case, the space $\baX$ of ergodic components is partitioned into $\baX=\baX_1\sqcup\baX_2\sqcup\ldots \sqcup \baX_\infty$, with $\baX_k$ being the set of points with (the smallest) period $k$, where the action of $T$ (represented by $T_{\bax}$) on the ergodic components for $\bax\in\baX_k$ are rotations by~1 (mod k) on the finite set $\{0,1,\ldots,k-1\}$ equipped with the equidistribution, and the aperiodic part is represented by $\baX_\infty\times Y$ as above.

Assume that $T'\in {\rm Aut}(X',\mathcal B',\mu')$ is another automorphism, and that
$$
\baX'\ni \bax'\mapsto T'_{\bax'}\in \mathcal{E}\ycn
$$
corresponds to the ergodic decomposition $\mu'=\int_{\baX'}\mu'_{\bax'}\,d\bamu'(\bax')$ of $T'$.

 \begin{Prop}\label{fields} Let $T$ and $T'$ be two automorphisms of  $(X,\mathcal B,\mu)$ and
 $(X',\mathcal B',\mu')$, respectively.
 Then $T$ and $T'$ are isomorphic if and only if there is a measure space isomorphism
 $\phi\colon(\baX,\bamu)\to (\baX',\bamu')$ 
 such that $T_{\bax}$ is isomorphic to $T_{\phi(\bax)}'$ for $\bamu$-a.a. $\bax\in \baX$.
 \end{Prop}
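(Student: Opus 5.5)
The plan is to prove the two implications separately; the "only if" direction is essentially formal, and the substance lies in the "if" direction.

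\emph{Necessity.} Suppose $\psi\colon X\to X'$ is an isomorphism intertwining $T$ and $T'$. Then $\psi$ maps $\mathcal I(T)$ onto $\mathcal I(T')$ mod $0$. Hence it descends to a measure space isomorphism $\phi\colon(\baX,\bamu)\to(\baX',\bamu')$ with $\phi\circ\pi=\pi'\circ\psi$, where $\pi,\pi'$ denote the quotient maps.

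Next, $\psi_*\mu=\mu'$ and $\psi_*\mu_{\bax}$ is supported on $\pi'^{-1}(\phi(\bax))$. By uniqueness of disintegration we get $\psi_*\mu_{\bax}=\mu'_{\phi(\bax)}$ for a.e.\ $\bax$. So $\psi$ restricted to a fibre is an isomorphism of $(X,\mu_{\bax},T)$ with $(X',\mu'_{\phi(\bax)},T')$. Through the product representation from (a)--(c) (and its periodic parts), these are exactly $T_{\bax}$ and $T'_{\phi(\bax)}$.

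\emph{Sufficiency.} Decompose $\baX=\bigsqcup_k\baX_k$ by period. Since isomorphic ergodic components have the same period, $\phi$ preserves this partition mod $0$, so we may treat each part separately. On a periodic part $\baX_k$ every component is the rotation on $\Z/k$, and $\phi\times{\rm Id}$ already works.

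On the aperiodic part, both systems are realized as skew products $(\bax,y)\mapsto(\bax,T_{\bax}y)$ and $(\bax',y)\mapsto(\bax',T'_{\bax'}y)$ on $\baX\times Y$ and $\baX'\times Y$. It suffices to find a measurable map $\bax\mapsto R_{\bax}\in{\rm Aut}(Y,\nu)$ with $R_{\bax}T_{\bax}R_{\bax}^{-1}=T'_{\phi(\bax)}$ for a.e.\ $\bax$. Then $\Psi(\bax,y)=(\phi(\bax),R_{\bax}y)$ is an isomorphism: it is measurable, it is measure preserving because each $R_{\bax}$ preserves $\nu$ and $\phi$ preserves $\bamu$, and it is invertible with inverse $(\bax',y)\mapsto(\phi^{-1}\bax',R^{-1}_{\phi^{-1}\bax'}y)$.

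\emph{Measurable choice of $R_{\bax}$ (the main obstacle).} Here I would use Theorem~\ref{th:JvN}. Consider the set
$$E=\{(\bax,R)\in\baX\times{\rm Aut}(Y,\nu)\colon RT_{\bax}R^{-1}=T'_{\phi(\bax)}\}.$$
${\rm Aut}(Y,\nu)$ with the weak topology is Polish, and the maps $\bax\mapsto T_{\bax}$ and $\bax\mapsto T'_{\phi(\bax)}$ are Borel after discarding a null set. Group operations are continuous, so $E$ is Borel.

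By hypothesis the projection of $E$ to $\baX$ has full $\bamu$-measure. Applying the Jankov--von Neumann theorem to the Borel map $E\to\baX$ yields a selector that is measurable with respect to the $\sigma$-algebra generated by analytic sets. Analytic sets are universally measurable, so after modifying on a null set the selector is $\bamu$-measurable and Borel. This gives the required $\bax\mapsto R_{\bax}$.

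The delicate points are the Borel measurability of the parametrizations and the fact that the fibres of the skew-product representation really are the ergodic components; the latter follows from uniqueness of the ergodic decomposition. \bez
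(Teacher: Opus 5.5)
Your proof is correct and follows the paper's argument. Necessity comes from descending the isomorphism to the invariant $\sigma$-algebra and identifying fibre measures, which the paper phrases as writing the isomorphism in skew-product form $(\bax,y)\mapsto(\phi(\bax),R_{\bax}y)$. Sufficiency comes from applying the Jankov--von Neumann theorem to the Borel set $\{(\bax,Q)\colon T'_{\phi(\bax)}Q=QT_{\bax}\}$. Your explicit treatment of the periodic parts fills in what the paper dispatches with a footnote reducing to the aperiodic case.
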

 \begin{proof} If $T$ and $T'$ are isomorphic then there is a measure space isomorphism $R$ of $(X,\mu)\to (X',\mu')$ that intertwines  $T$ with $T'$.
 Of course, $R$ maps  $\mathcal I(T)$ onto  $\mathcal I(T')$.
 To simplify notation,\footnote{Note that two automorphisms are isomorphic iff the aperiodic parts are isomorphic and their $k$-periodic parts so are for all $k\geq1$.}  we assume that $T$ (and hence $T'$) are aperiodic.
 By a standard ergodic theory argument (e.g.\ \cite{EiWa}), it follows that there is an isomorphism  $\phi\colon\baX\ni \bax\mapsto \phi(\bax)\in \baX'$, $\phi_\ast\bamu=\bamu'$, and a measurable mapping
 $\baX\ni \bax\mapsto R_{\bax}\in\text{Aut}(Y,\nu)$ such that
 $$
 R(\bax,y)=(\phi(\bax),R_{\bax}y).
 $$
 Note that $R^{-1}(\bax',y)=(\phi^{-1}\bax',R^{-1}_{\phi^{-1}(\bax')}y)$. Since $R^{-1}T'R=T$, it follows that $R_{\bax}^{-1}T'_{\phi(\bax)}R_{\bax}=T_{\bax}$ at $\bamu$-a.e.\ $\bax$, as desired.

 Conversely, suppose that $T'_{\phi(\bax)}$ and $T_{\bax}$ are isomorphic for $\bamu$-a.e.\ $\bax\in \baX$.
 Consider the subset
 $$
 M:=\{(\bax,Q)\in \baX\times \text{Aut}(Y,\nu) : T'_{\phi(\bax)}Q=QT_{\bax}\}.
 $$
 Passing to a subset of full measure in $\baX$, we can assume without loss of generality that $\baX$ is standard Borel and  $\phi$ is Borel.
Then $M$ is a Borel subset of $\baX\times \text{Aut}(Y,\nu)$.
 Passing to a further subset of full measure if necessarily,
we can deduce from the assumption that the projection of $M$ to the first coordinate is onto.
 Hence, by the Jankov-von Neumann selection theorem (see Theorem~\ref{th:JvN}),
 there is a universally measurable mapping
 $$
 \baX\ni \bax\mapsto D_{\bax}\in \text{Aut}(Y,\nu)
 $$
  such that $(\bax, D_{\bax})\in M$ at all $\bax$.
  It follows that $T'_{\phi(\bax)}D_{\bax}=D_{\bax}T_{\bax}$ at $\bamu$-a.e. $\bax$, so the map $(\bax,y)\mapsto (\phi(\bax),D_{\bax}y)$ yields a desired isomorphism.
 \end{proof}

\begin{Cor}\label{c:jon17}
Assume that $T\in{\rm Aut}(X,\mu)$. Assume moreover that $\bamu$-a.a.\ ergodic components are isomorphic to a certain ergodic automorphism $R$.
Then $T$ is isomorphic to $\text{{\rm Id}}_{\baX}\times R$.
\end{Cor}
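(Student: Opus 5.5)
The plan is to deduce Corollary~\ref{c:jon17} directly from Proposition~\ref{fields}, applied with $T$ and $T':=\text{Id}_{\baX}\times R$, where $T'$ acts on $(\baX\times Z,\bamu\otimes\kappa)$ and $R$ acts on $(Z,\cd,\kappa)$. The first step is to identify the ergodic decomposition of $T'$. Since $R$ is ergodic, the invariant sigma-algebra $\mathcal I(T')$ equals $\ov{\cb}\otimes\{\emptyset,Z\}$ mod~$0$. To see this, take an invariant $f\in L^2(\bamu\otimes\kappa)$. For $\bamu$-a.e.\ $\bax$ the section $f(\bax,\cdot)$ is $R$-invariant, hence constant. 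So the space of ergodic components of $T'$ is $(\baX,\bamu)$ itself, with conditional measures $\delta_{\bax}\otimes\kappa$. The fibre automorphism is $T'_{\bax}=R$ for every $\bax$.

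Next we take $\phi=\text{Id}_{\baX}$. By hypothesis, $T_{\bax}$ is isomorphic to $R=T'_{\phi(\bax)}$ for $\bamu$-a.e.\ $\bax$. The converse direction of Proposition~\ref{fields} then gives an isomorphism of $T$ and $T'$, which is exactly the claim.

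The only point needing care is the periodic versus aperiodic bookkeeping, because Proposition~\ref{fields} was written with a common fibre space $\ycn$ for the aperiodic part. If $R$ is a rotation on $k$ points, then a.a.\ ergodic components of $T$ are $k$-periodic, so $\baX=\baX_k$ mod~$0$, and both systems are represented on $\baX\times\{0,\ldots,k-1\}$ with the rotation by $1$ in the fibre. The same selection argument applies verbatim, and in fact here every fibre isomorphism can be chosen as a fixed power of the rotation. If $R$ is aperiodic, then $R$ acts on a nonatomic standard space, which we may identify with $\ycn$. Likewise $T$ is aperiodic, and its fibres are ergodic automorphisms of $\ycn$, so we are in the setting of Proposition~\ref{fields}. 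I expect no real obstacle: the measurable choice of fibre isomorphisms (Jankov--von Neumann) is already handled inside Proposition~\ref{fields}.
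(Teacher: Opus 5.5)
Your proposal is correct and matches the paper's proof, which is exactly the one-line application of Proposition~\ref{fields} to $T$ and $T':=\text{Id}_{\baX}\times R$ with $\phi=\text{Id}_{\baX}$. The paper leaves implicit two details that you supply: the identification $\mathcal I(T')=\ov{\cb}\otimes\{\emptyset,Z\}$, which uses the ergodicity of $R$, and the separate treatment of the periodic and aperiodic cases.
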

 \begin{proof} Assume that $\bamu$-a.a.\ ergodic components $T_{\bax}$ of $T$ are isomorphic to $R$. Then, apply Proposition~\ref{fields} to $T$ and $T':=\text{Id}_{\baX}\times R$.
 \end{proof}

 In the following proposition we describe factors of nonergodic dynamical systems.
 It follows straightforwardly from Proposition~\ref{2factors}.

 \begin{Prop}\label{p:faktory} The only factors of $T$ containing all invariant sets (of $T$) are given by a measurable choice of factors on ergodic components, i.e.\ if $\ce\subset \ov{\mathcal{B}}\ot\mathcal{C}$ is a $T$-invariant sigma-algebra containing $\ov{\mathcal{B}}\otimes \{\emptyset,Y\}$, then there exists a measurable choice\footnote{This means that the mapping $\widebar x\mapsto\Bbb E(\cdot|\mathcal E_{\widebar x})$ is measurable in the weak operator topology.}
\beq\label{can3}\bax\mapsto \mathcal{E}_{\bax}\subset \mathcal{C}\eeq of $T_{\bax}$-invariant sub-sigma-algebras, so that for $\widetilde{A}\in\ov{\mathcal{B}}\ot\mathcal{C}$,
$$\widetilde{A}\in \mathcal{E}\text{ iff } \widetilde{A}_{\bax}\in\mathcal{E}_{\bax}\text{ a.e.}$$
\end{Prop}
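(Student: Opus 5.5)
We derive the statement from Proposition~\ref{2factors}, applied with $\mathcal A=\mathcal I(T)$, which corresponds to $\ov{\mathcal B}\ot\{\emptyset,Y\}$ in the product representation $X=\baX\times Y$. As in the proof of Proposition~\ref{fields}, we first treat the aperiodic part. The $k$-periodic parts $\baX_k\times\{0,\ldots,k-1\}$ are handled by the same argument, with $Y$ replaced by the finite set $\{0,\ldots,k-1\}$.

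\textbf{Defining the fibers.} Given $\ce\supset\ov{\mathcal B}\ot\{\emptyset,Y\}$, set $\ce_{\bax}:=\{\widetilde A_{\bax}\colon \widetilde A\in\ce\}\subset\mathcal C$, completed mod $\nu$. These are the fiber algebras $\ce_y$ of Section~2.1, transported to $Y$. By the direct integral decomposition
\[
L^2(\ce,\mu)=\int^\oplus_{\baX}L^2(\ce_{\bax},\nu)\,d\bamu(\bax),
\]
the conditional expectation satisfies $\EE(F\mid\ce)(\bax,\cdot)=\EE(F(\bax,\cdot)\mid\ce_{\bax})$ for a.e.\ $\bax$. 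Two properties of the fibers follow.
\begin{enumerate}
\item[(a)] \emph{Measurability.} Fix a countable dense family $\{g_j\}\subset L^2(Y,\nu)$ and apply the formula above to $F=1\ot g_j$. This shows that $\bax\mapsto\langle\EE(g_j\mid\ce_{\bax}),g_i\rangle$ is measurable, which is measurability of $\bax\mapsto \EE(\cdot\mid\ce_{\bax})$ in the weak operator topology.
\item[(b)] \emph{Invariance.} Because $T(\bax,y)=(\bax,T_{\bax}y)$ preserves fibers and $T^{-1}\ce=\ce$, we have $T_{\bax}^{-1}\ce_{\bax}=\ce_{\bax}$ for a.e.\ $\bax$. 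This requires a countable generating family of $\ce$, so that a single null set can be discarded for all generators at once.
\end{enumerate}

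\textbf{Characterizing membership.} We must show $\widetilde A\in\ce$ iff $\widetilde A_{\bax}\in\ce_{\bax}$ for a.e.\ $\bax$. The forward direction is immediate from the definition of $\ce_{\bax}$. For the converse, let $\mathcal G:=\ce\vee\sigma(\widetilde A)$. Then $\mathcal G\cap\ce=\ce\supset\mathcal A$, and the fibers satisfy $\mathcal G_{\bax}=\ce_{\bax}\vee\sigma(\widetilde A_{\bax})=\ce_{\bax}$ a.e. Proposition~\ref{2factors} then gives $\mathcal G=\ce$, so $\widetilde A\in\ce$.

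For the fiber identity $\mathcal G_{\bax}=\ce_{\bax}\vee\sigma(\widetilde A_{\bax})$, one inclusion is clear. For the other, note that sets of the form $(E\cap\widetilde A)\cup(E'\setminus\widetilde A)$ with $E,E'\in\ce$ form an algebra generating $\mathcal G$, and their sections lie in $\ce_{\bax}\vee\sigma(\widetilde A_{\bax})$. It remains to pass from this generating algebra to all of $\mathcal G$ with only a.e.\ control, fiber by fiber. We do this with the $L^2$-projection formulation, $L^2(\mathcal G)=\int^\oplus L^2(\mathcal G_{\bax})$, rather than with set-theoretic sections.

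\textbf{Main obstacle.} The delicate step is the measure-theoretic bookkeeping: fiber sigma-algebras are defined only mod null sets, and statements holding for a.e.\ $\bax$ are needed simultaneously for uncountably many sets. We handle this uniformly by working with countable generating families, using separability of $L^2$, and phrasing every condition through the projections $\EE(\cdot\mid\ce_{\bax})$.
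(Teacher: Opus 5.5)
Your proposal is correct and follows the paper's route. The paper only remarks that the statement follows straightforwardly from Proposition~\ref{2factors} together with the direct integral decomposition of $L^2(\mathcal G,\mu)$ for $\mathcal G\supset\mathcal A$. Your construction of the fibers $\ce_{\bax}$, their measurability and invariance, and the argument with $\mathcal G=\ce\vee\sigma(\widetilde A)$ is exactly a fleshed-out version of that remark.

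One simplification is available. The family $\{(E\cap\widetilde A)\cup(E'\setminus\widetilde A)\colon E,E'\in\ce\}$ is closed under complements and countable unions, so it is all of $\mathcal G$, not merely a generating algebra. Hence, for a fixed genuine (e.g.\ countably generated) representative of $\ce$, the identity $\mathcal G_{\bax}=\ce_{\bax}\vee\sigma(\widetilde A_{\bax})$ holds for every $\bax$. The $L^2$ detour you announce but do not carry out is therefore unnecessary.
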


We will also need a non-invertible version of the ``only if'' part of Proposition~\ref{fields}.

  \begin{Prop}\label{p:faktory?}
  Let $T$ and $T'$ be two  transformations of  $(X,\mathcal B,\mu)$ and
 $(X',\mathcal B',\mu')$, respectively.
If there exists  a measure preserving mapping
 $\phi\colon(\baX,\bamu)\to (\baX',\bamu')$ 
 such that $T_{\phi(\bax)}'$ is isomorphic to a factor of $T_{\bax}$
 for $\bamu$-a.a. $\bax\in \baX$ then $T'$ is isomorphic to a factor of $T$.
  \end{Prop}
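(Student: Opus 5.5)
We follow the ``if'' part of the proof of Proposition~\ref{fields}, replacing isomorphisms by factor maps. We first reduce to the aperiodic case. Up to a null set, the periodic pieces $\baX_k$ are mapped by $\phi$ only into $\baX'_m$ with $m\mid k$, since a factor of a $k$-cycle is an $m$-cycle with $m\mid k$. On such pieces the factor maps $\mathbb{Z}/k\to\mathbb{Z}/m$ are explicit and can be chosen measurably. So we may assume both $T$ and $T'$ are aperiodic and write $T(\bax,y)=(\bax,T_{\bax}y)$ on $(\baX\times Y,\bamu\ot\nu)$ and $T'(\bax',y')=(\bax',T'_{\bax'}y')$ on $(\baX'\times Y,\bamu'\ot\nu)$.

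The next step is a measurable selection of fiberwise factor maps. Using the selector $s$ below, the target map has the form $(\bax,y)\mapsto(\phi(\bax),D_{\bax}y)$. Factor maps $Y\to Y$ are not a nice Polish group, so we encode them via Markov operators. A factor map $D$ from $(Y,\nu,T_{\bax})$ to $(Y,\nu,T'_{\phi(\bax)})$ corresponds to the graph joining $\rho_D\in J(T_{\bax},T'_{\phi(\bax)})$. Its operator $\Phi=\Phi_{\rho}^*\colon L^2(Y,\nu)\to L^2(Y,\nu)$ is a Markov operator that is \emph{multiplicative}, i.e.\ an isometric $*$-homomorphism on $L^\infty$, and it intertwines the Koopman operators: $\Phi U_{T'_{\phi(\bax)}}=U_{T_{\bax}}\Phi$. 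The set $\mathcal{M}$ of Markov operators is compact metrizable in the weak operator topology. Multiplicativity is a Borel (indeed $G_\delta$/closed-type) condition: $\Phi(f_if_j)=\Phi(f_i)\Phi(f_j)$ for a countable dense family of bounded functions, together with the identity $\|\Phi f\|=\|f\|$. Since $\bax\mapsto T_{\bax}$ and $\bax\mapsto T'_{\phi(\bax)}$ are measurable into ${\rm Aut}(Y,\nu)$ with the weak topology, the set
$$M:=\{(\bax,\Phi)\in\baX\times\mathcal{M}\colon \Phi\text{ multiplicative},\ \Phi U_{T'_{\phi(\bax)}}=U_{T_{\bax}}\Phi\}$$
is Borel after discarding a null set so that $\baX$ is standard Borel and $\phi$ is Borel. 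By hypothesis, its projection to $\baX$ has full measure. Theorem~\ref{th:JvN} then gives a universally measurable, hence $\bamu$-measurable after modification on a null set, selector $\bax\mapsto\Phi_{\bax}$.

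It remains to assemble the global factor map. We define an operator on $L^2(\baX'\times Y,\bamu'\ot\nu)$ by
$$(\Psi F)(\bax,\cdot)=\Phi_{\bax}\big(F(\phi(\bax),\cdot)\big).$$
It is well defined because $\phi$ is measure preserving, so $F\circ(\phi\times{\rm Id})$ is defined a.e. Fiberwise multiplicativity together with $\bax$-measurability makes $\Psi$ a multiplicative Markov operator from $L^2(\mu')$ into $L^2(\mu)$ with $\Psi\raz=\raz$. The intertwining relation $\Psi U_{T'}=U_T\Psi$ holds fiberwise, hence globally. A multiplicative Markov operator between standard probability spaces is induced by a measure-preserving map, namely the factor map $\pi$ with $\Psi F=F\circ\pi$. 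The intertwining then gives $\pi\circ T=T'\circ\pi$ a.e., so $T'$ is a factor of $T$. Equivalently, $\Psi(L^\infty(\mu'))$ is a $T$-invariant sub-sigma-algebra realizing $T'$, in the spirit of Proposition~\ref{p:faktory}.

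The main obstacle is the Borel-measurability of $M$: we need factor maps to form a standard Borel space and the conditions defining $M$ to be Borel in $(\bax,\Phi)$. Working with Markov operators in the weak operator topology, as in the construction of $M$ above, is meant to handle this. Multiplicativity does not by itself give weak closedness, but it is a countable intersection of Borel conditions because $\|\Phi f\|$ is Borel in $\Phi$. That suffices for Theorem~\ref{th:JvN}.
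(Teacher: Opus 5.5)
Your main selection argument is correct and is essentially the paper's. The paper applies Theorem~\ref{th:JvN} to the set $\widetilde M$ of pairs $(\bax,Q)$ with $Q\in\mathrm{End}(Y,\nu)$ (weak operator topology) and $T'_{\phi(\bax)}Q=QT_{\bax}$. Under the Koopman embedding, $\mathrm{End}(Y,\nu)$ is exactly your set of multiplicative (equivalently, isometric) Markov operators. Your assembly of $\Psi$ via point realization is a somewhat more careful version of the paper's step ``$(\bax,y)\mapsto(\phi(\bax),D_{\bax}y)$''.

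The gap is in the reduction to the aperiodic case. You sort out only the components of $T$ that are periodic, using that a factor of a $k$-cycle is an $m$-cycle with $m\mid k$. From this you conclude that both $T$ and $T'$ may be assumed aperiodic. However, a nonatomic ergodic component $T_{\bax}$ can have a finite factor: if $e^{2\pi i/n}$ is an eigenvalue of $T_{\bax}$, then the $n$-cycle is a factor of $T_{\bax}$. So the set of $\bax$ in the aperiodic part of $T$ with $\phi(\bax)$ in the $n$-periodic part $\baX'_n$ of $T'$ can have positive measure. This already happens when $T$ is ergodic with eigenvalue $-1$ and $T'$ is the rotation of $\mathbb{Z}/2\mathbb{Z}$. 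On that piece your framework does not apply, since it writes $T'$ on $\baX'\times Y$ with $Y$ nonatomic.

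The paper treats exactly this piece (its $\baX_{\infty,n}$, with $\phi(\baX_{\infty,n})\subset\baX'_n$) by a separate selection. It uses the set $\widetilde M_n$ of pairs $(\bax,F)$ with $F\colon Y\to\mathbb{Z}/n\mathbb{Z}$ measurable and $F\circ T_{\bax}=F+1$. These $F$ are the $\mathbb{Z}/n\mathbb{Z}$-valued eigenfunctions for $e^{2\pi i/n}$, and their existence on a.e.\ fibre is the hypothesis. You can close the gap inside your own framework. Run the same Jankov--von Neumann argument with $\mathcal{M}$ replaced by the Markov operators $L^2(\mathbb{Z}/n\mathbb{Z})\to L^2(Y,\nu)$ (uniform measure on $\mathbb{Z}/n\mathbb{Z}$) that are multiplicative and intertwine the rotation by $1$ with $U_{T_{\bax}}$. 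Then assemble piece by piece over the countable partition.
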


  \begin{proof}
    Suppose first that $T$ and $T'$ are aperiodic.
  The proof is only a slight modification of the proof of
  Proposition~\ref{fields}.
  Therefore, we will use the notation
  $$
  \baX\ni\bax\mapsto  T_{\bax}\in \text{Aut}(Y,\nu)\quad\text{
  and}\quad
  \baX'\ni\bax'\mapsto T'_{\bax'}\in\text{Aut}(Y,\nu)
  $$
   introduced there.
  Let End$(Y,\nu)$ denote the semigroup of
  $\nu$-preserving transformations of $(Y,\mathcal C,\nu)$.
  Endow it with the weak operator topology.
  Then End$(Y,\nu)$ is a semitopological semigroup.
  Replace  $M$ in the proof Proposition~\ref{fields} with the following subset
 $$
\widetilde M:= \{(\bax,Q)\in \baX\times \text{End}(Y,\nu) : T'_{\phi(\bax)}Q=QT_{\bax}\}
 $$
 and repeat  that proof almost literally to construct the factor mapping of $T$ onto $T'$.

 If $T$ and $T'$ have  periodic parts, there are partitions
 \begin{align*}
 \widebar X&=\left(\bigsqcup_{n=1}^\infty\bigsqcup_{m|n}\widebar X_{n,m}\right)\sqcup\left(\bigsqcup_{n=1}^\infty \widebar X_{\infty,n}\right)\sqcup \widebar X_{\infty,\infty}\quad\text{and}\\
  \widebar X'&=\left(\bigsqcup_{n=1}^\infty \widebar X'_n\right)\sqcup\widebar X'_\infty
 \end{align*}
 of $ \widebar X$ and $ \widebar X'$ respectively such that
 \begin{enumerate}
 \item[---]
 the $T$-ergodic component over every $\widebar x\in \widebar X_{n,m}$ consists of $n$ points,
  \item[---]
   the
 $T$-ergodic component over every $\widebar x\in \widebar X_{\infty,n}\sqcup\widebar X_{\infty,\infty}$ is nonatomic,
   \item[---]
 the $T'$-ergodic component over every $\widebar x'\in \widebar X_{n}'$ consists of $n$ points,
    \item[---]
    the
 $T'$-ergodic component over every $\widebar x\in \widebar  X'
 _\infty$ is nonatomic,
     \item[---] $\phi(\widebar X_{n,m})\subset \widebar X'_m$ for each $m|n$,
        \item[---]
        $\phi(\widebar X_{\infty,n})\subset \widebar X_n'$ for each $n$,
          \item[---]
          $\phi(\widebar X_{\infty,\infty})= \widebar X'_\infty$ and
           \item[---]
           $T_{\phi(\widebar x)}$ is isomorphic
           to a factor of
           $T_{\widebar x}$ for each $\widebar x\in\widebar X$.
 \end{enumerate}
  It follows from the above reasoning that the restriction  of $T'$
 to the union of components from $\widebar X'_\infty$
 is a factor of the restriction of $T$
  to the union of components from $\widebar X_\infty$ (because the two restrictions are aperiodic).
Constructing the factor mapping from the remaining part of $X$
  to the remaining part of $X'$ is easier because we should make a selection from a  fiber which is now finite and piecewise constant (on countably many pieces).
 For instance, on the ``piece'' corresponding to $\widebar X_{\infty,n}$, we should replace $\widetilde M$ with the set
  $$
  \widetilde M_n:=\{(\widebar x,f)\in \widebar X_{\infty,n}\times F(Y,\Bbb Z/n\Bbb Z) \,:\,  F\circ T_{\widebar x}=F+1 \bmod n\},
  $$
  where $F(Y,\Bbb Z/n\Bbb Z)$ is the group of measurable maps
  from $Y$ to $\Bbb Z/n\Bbb Z$.
  Then the fiber of $\widetilde M_n$ over $\widebar x$ is identified with the group of  the eigenfunctions
  of $T_{\widebar x}$  with eigenvalue $e^{2\pi i/n}$ that take values in the group generated by this eigenvalue.
  \end{proof}

From Proposition~\ref{p:faktory?} we deduce the following.

\begin{Cor}\label{p:random-factor?}
Let $(X,\mu,T)$ be a probability preserving automorphism.
Suppose that $(Y,\nu,R)$ is an ergodic automorphism so that almost every ergodic component  of
$(X,\mu,T)$ (as in \eqref{edecomp}) is a factor of $(Y,\nu,R)$, then  $(X,\mu,T)$ is a factor of $(\baX\times Y,\widebar\mu\otimes\nu,\text{{\rm Id}}\times R)$.
\end{Cor}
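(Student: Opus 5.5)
The plan is to deduce the corollary directly from Proposition~\ref{p:faktory?}. We take $T$ acting on $(X,\mu)$ and $T':=\text{Id}\times R$ acting on $(\baX\times Y,\bamu\otimes\nu)$, and we apply the proposition with the roles reversed. That is, we want a measure-preserving map from the space of ergodic components of $\text{Id}\times R$ to the space $\baX$ of ergodic components of $T$, such that each ergodic component of $T$ is a factor of the corresponding component of $\text{Id}\times R$.

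The first step is to identify the ergodic decomposition of $\text{Id}_{\baX}\times R$. Since $R$ is ergodic, the invariant sigma-algebra of $\text{Id}\times R$ is $\ov{\cb}\otimes\{\emptyset,Y\}$ mod $0$. To see this, take an invariant function $F(\bax,y)$. For a.e.\ $\bax$ the section $F(\bax,\cdot)$ is $R$-invariant, hence constant, so $F$ depends only on $\bax$. Consequently the space of ergodic components is $(\baX,\bamu)$ itself, and the component over $\bax$ is $(\{\bax\}\times Y,\delta_{\bax}\otimes\nu,R)$, which is isomorphic to $R$.

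The second step is to apply Proposition~\ref{p:faktory?} with $\phi=\text{Id}_{\baX}$. The proposition's ``$T$'' is $\text{Id}\times R$ and its ``$T'$'' is the given $T$. The hypothesis we need is that $T_{\bax}$ is isomorphic to a factor of $R$ for $\bamu$-a.e.\ $\bax$, and this is exactly our assumption. The conclusion is that $T$ is a factor of $\text{Id}\times R$.

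The only potential subtlety is the periodic part. If $R$ is ergodic on a nonatomic space, all its components are nonatomic and aperiodic, and the periodic-part bookkeeping in the proof of Proposition~\ref{p:faktory?} handles components of $T$ that are finite rotations. Those are factors of $R$ precisely when $R$ has the corresponding rational eigenvalue. If $R$ is itself a finite cyclic rotation, everything is periodic and the proposition again covers this case. So no additional work is needed beyond checking that the hypotheses of Proposition~\ref{p:faktory?} match. We expect the main point to verify to be the identification of the ergodic decomposition of $\text{Id}\times R$ in the first step, which is routine.
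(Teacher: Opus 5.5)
Your proposal is correct and matches the paper's argument. The paper deduces Corollary~\ref{p:random-factor?} directly from Proposition~\ref{p:faktory?}, taking the proposition's ``$T$'' to be $\text{Id}_{\baX}\times R$ (whose ergodic components are indexed by $(\baX,\bamu)$ and are all isomorphic to $R$), its ``$T'$'' to be the given $T$, and $\phi=\text{Id}_{\baX}$; your Fubini-plus-ergodicity identification of the invariant sigma-algebra of $\text{Id}\times R$ fills in the one step the paper leaves implicit, and does so correctly.
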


\subsection{Factor maps between automorphisms having isomorphic ergodic components}

The following lemma is well known. For the completeness of our argument we provide a short proof of it.
\begin{Lemma}\label{Krieger-non-ergodic}
Let $(X,\mu, T)$ be a probability preserving dynamical system.
Then there is a topological system $(Y,S)$ and a probability measure $\nu$ on $Y$ such that $(X,\mu, T)$ is isomorphic to $(Y,S,\nu)$.
\end{Lemma}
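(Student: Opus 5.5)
The idea is to build a model from a countable generating family of functions via a coding map into a shift space. Since $(X,\mathcal B,\mu)$ is standard, $L^2(X,\mu)$ is separable, so I will choose a countable family of Borel sets $\{A_i\colon i\geq1\}\subset\mathcal B$ generating $\mathcal B$ mod~$0$; it will be convenient to take it to separate points of $X$. Consider the compact metrizable space $Y:=(\{0,1\}^{\N})^{\Z}$ with the left shift $S$; this is a homeomorphism, so $(Y,S)$ is a topological system. Define
$$\Psi\colon X\to Y,\qquad \Psi(x):=\big((\raz_{A_i}(T^nx))_{i\geq1}\big)_{n\in\Z}.$$
Each coordinate of $\Psi$ is Borel, so $\Psi$ is Borel, and the construction gives at once $\Psi\circ T=S\circ\Psi$. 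I set $\nu:=\Psi_\ast\mu$, which is then $S$-invariant.

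Next I check that $\Psi$ is an isomorphism mod~$0$. Because $\{A_i\}$ separates points, already the zero-th coordinate $x\mapsto(\raz_{A_i}(x))_i$ is injective. An injective Borel map between standard Borel spaces has Borel image and a Borel inverse on that image (Lusin--Souslin; alternatively, the Jankov--von Neumann Theorem~\ref{th:JvN} gives a measurable inverse on the analytic image, which is enough modulo null sets). So $\Psi$ is a Borel isomorphism of $X$ onto $\Psi(X)$, and $\nu(\Psi(X))=1$. Hence $\Psi$ is a measure-theoretic isomorphism between $(X,\mu,T)$ and $(Y,\nu,S)$ that intertwines $T$ and $S$.

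One technical point needs care: we only know that $T$ is invertible measure preserving mod~$0$. To handle this I will first restrict to a $T$-invariant Borel set of full measure on which $T$ is a genuine Borel bijection, obtained by intersecting the $T^n$-images of a good set. The family $\{A_i\}$ must separate points of this restricted set; in a standard Borel space, a countable separating family of Borel sets automatically generates the Borel structure, so it suffices to take a countable base of a Polish topology on that set. I expect this bookkeeping to be the only step requiring attention; everything else is routine. Note that no ergodicity is needed, so the statement holds for non-ergodic $T$, which is the point of the lemma.
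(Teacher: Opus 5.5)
Your proof is correct and is essentially the paper's argument. The paper codes $x\mapsto (f(T^nx))_{n\in\Z}\in\T^{\Z}$ with a Borel injection $f\colon X\to\T$; your map $x\mapsto(\raz_{A_i}(x))_{i\geq1}$ is simply a Borel injection into $\{0,1\}^{\N}$ playing the same role. Your use of Lusin--Souslin for the inverse, and your restriction to an invariant full-measure set on which $T$ is a genuine Borel bijection, supply details the paper leaves implicit.
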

\begin{proof}
Take a Borel one-to-one function $f \colon X\to\Bbb T$.
Define a mapping $\phi \colon X\to\Bbb T^\Bbb Z$ by setting
$$
\phi(x):=(f(T^nx))_{n\in\Bbb Z}.
$$
Let $\nu:=\mu\circ\phi^{-1}$ and let $S$ be the two-sided shift on $\Bbb T^\Bbb Z$.
Then $\phi$ is an isomorphism of $(X,\mu, T)$ onto $(\Bbb T^\Bbb Z,\nu, S)$.
\end{proof}
Let $(X,\mu,T)\in \mathcal F_{\text{\rm ec}}$.
Applying Lemma~\ref{Krieger-non-ergodic}, we may assume without loss of generality that $X$ is a compact Polish space and $T$ is a homeomorphism of $X$.
Let $\mu=\int_{\widebar X}{\mu_{\widebar x}}\,d\widebar\mu(\widebar x)$
be the ergodic decomposition of $\mu$.
We will consider $\widebar X$ as a standard Borel space.

\begin{Prop}\label{p:claimR} Let $(X,\mu)$ and $(Y,\nu)$ be standard probability spaces and let $\pi\colon Y\to X$ be a measurable mapping such that $\mu=\nu\circ\pi^{-1}$.
Then there is a measurable mapping $\phi\colon X\times[0,1]\to Y$
such that $(\mu\otimes\text{{\rm Leb}})\circ\phi^{-1}=\nu$
and $\pi\circ\phi=\tau$, where $\tau\colon X\times[0,1]\to X$ is the first margin mapping.\end{Prop}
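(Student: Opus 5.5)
We will build $\phi$ from the disintegration of $\nu$ over $\pi$, followed by a measurable ``quantile'' (inverse distribution function) construction in the fibres. By the Borel isomorphism theorem we may assume $Y$ is a Borel subset of $[0,1]$; in the general case we use a Borel embedding $j\colon Y\to[0,1]$ with Borel image and finally compose with $j^{-1}$. Let $X\ni x\mapsto\nu_x$ be the system of conditional measures of $\nu$ with respect to the sub-sigma-algebra $\pi^{-1}(\mathcal B_X)$, as in the preliminaries on disintegration. Thus $x\mapsto\nu_x$ is a measurable map into probability measures on $Y$, $\nu=\int_X\nu_x\,d\mu(x)$, and $\nu_x(\pi^{-1}(x))=1$ for $\mu$-a.e.\ $x$. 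After discarding a $\mu$-null set of $x$ (and defining $\phi$ arbitrarily but measurably there, which does not affect the claims mod $0$), we may assume these properties hold for every $x$.

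For each $x$, let $F_x(t):=\nu_x([0,t])$ be the distribution function of $\nu_x$ viewed as a measure on $[0,1]$. Define
\[
\phi(x,s):=\inf\{t\in[0,1]\colon F_x(t)\geq s\},\qquad s\in(0,1].
\]
The map $(x,t)\mapsto F_x(t)$ is measurable in $x$ and right-continuous in $t$, hence jointly measurable. Since $\{\phi(x,s)\le t\}=\{F_x(t)\ge s\}$, the map $\phi$ is jointly measurable. The classical quantile lemma gives that, for each fixed $x$, the image of $\mathrm{Leb}$ under $s\mapsto\phi(x,s)$ is exactly $\nu_x$. Integrating over $x$ then yields
\[
(\mu\otimes\mathrm{Leb})\circ\phi^{-1}=\int_X\nu_x\,d\mu=\nu .
\]

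For the identity $\pi\circ\phi=\tau$, note that $\phi(x,\cdot)_*\mathrm{Leb}=\nu_x$ is concentrated on $\pi^{-1}(x)\subset Y$. Hence $\phi(x,s)\in\pi^{-1}(x)$ for Lebesgue-a.e.\ $s$, and so $\pi(\phi(x,s))=x$ for $\mu\otimes\mathrm{Leb}$-a.e.\ $(x,s)$. By Fubini, the exceptional set is measurable and null. On this null set we redefine $\phi$ to be a fixed measurable choice: for instance, using Theorem~\ref{th:JvN} to pick a measurable selector $x\mapsto y_x\in\pi^{-1}(x)$ on the image $\pi(Y)$, which has full $\mu$-measure, or we simply accept the equality mod $0$, as is the paper's convention.

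The main obstacle is the joint measurability of $(x,s)\mapsto\phi(x,s)$ together with the measurability of the disintegration $x\mapsto\nu_x$. The first is handled by the level-set identity above. The second is standard for standard Borel spaces, because $\pi^{-1}(\mathcal B_X)$ is countably generated.
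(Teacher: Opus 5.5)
Your proof is correct, but it reaches the conclusion by a more direct route than the paper.

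The paper works in three stages:
\begin{itemize}
\item It first selects the atoms of the conditional measures $\nu_x$ measurably: points $y_n(x)$ with masses $m_1(x)\ge m_2(x)\ge\cdots$ and total atomic mass $d(x)$.
\item It then asserts that there is an isomorphism $\theta$ of $(Y,\nu)$ onto a normal-form fibred space $(X\times[0,1],\widetilde\mu)$, $\widetilde\mu=\int_X\delta_x\otimes\mu_x\,d\mu(x)$, with $\tau\circ\theta=\pi$. Here $\mu_x$ has atoms at the partial sums $m_1(x)+\cdots+m_n(x)$ plus Lebesgue measure on $[d(x),1]$.
\item Only then does it push $\mathrm{Leb}$ onto $\mu_x$ by a fibrewise map $\varphi_x$ and set $\phi=\theta^{-1}\circ\varphi$. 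This $\varphi_x$ is meant to be the quantile function of $\mu_x$; its values $m_k(x)$ should read $m_1(x)+\cdots+m_k(x)$.
\end{itemize}

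You bypass the relative normalisation entirely:
\begin{itemize}
\item An absolute Borel embedding $j\colon Y\hookrightarrow[0,1]$ suffices.
\item The quantile transform of $\nu_x$ handles atoms and the continuous part at once.
\item The fibre condition $\pi\circ\phi=\tau$ comes for free, because $\nu_x$ already lives on $\pi^{-1}(x)$.
\item Joint measurability is immediate from $\{\phi\le t\}=\{(x,s)\colon s\le F_x(t)\}$.
\end{itemize}

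The paper's route gives, as a by-product, a normal form for $\pi$ up to isomorphism over $X$, determined by the atom masses. However, it relies on a measurable, fibred version of the isomorphism theorem that is stated without proof. Your route proves exactly what is needed, with every measurability step explicit.

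Your handling of the residual null sets is fine: these are where $\phi\notin j(Y)$ or $\pi\circ\phi\neq\tau$. A Jankov--von Neumann selector on $\pi(Y)$ is only analytically measurable, but that is harmless since the statement is meant mod $0$.
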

\begin{proof}
Let $\nu=\int_{X}\nu_xd\mu(x)$ be the disintegration of $\nu$ with respect to $\mu$ along $\pi$.
Then, because we deal with standard spaces, there exist
a sequence of subsets $X_1\supseteq X_2\supseteq\cdots$ and measurable mappings $y_n \colon X_n\to Y$ such that $\{y_n(x)\colon x\in X_n\}$ is the set of atoms
of $\nu_x$  at a.e.\ $x\in X$.
We now set  $m_n(x):=\nu_x(y_n(x))$ for each $n$ such that $x\in X_n$.
Without loss of generality we may assume that
$m_1(x)\ge m_2(x)\ge\cdots$.
Of course,  $\sum_{n}m_n(x)=:d(x)\le 1$.
Then
there exist
a probability measure $\widetilde\mu$ on $X\times[0,1]$ and  an isomorphism $\theta$ of  $(Y,\nu)$ onto $(X\times[0,1], \widetilde\mu)$ 
such that 
\begin{itemize}
\item
$\widetilde\mu=\int_X\delta_x\otimes\mu_xd\mu(x)$,
\item
where $\mu_x:=\sum_{n=1}^\infty m_n(x)\delta_{m_1(x)+\cdots+m_n(x)}+\text{Leb}_{[d(x),1]}$, and
\item
$\tau\circ\theta=\pi$.
\end{itemize}
Given $x\in X$, we now define a mapping $\varphi_x \colon [0,1]\to[0,1]$ by setting
$$
\varphi_x(t)=
\begin{cases}
m_1(x)
&\text{if } 0\le t< m_1(x), \\
m_k(x)
&\text{if } \sum_{n< k} m_n(x) <t \le \sum_{n\le k} m_n(x), \\
t&\text{if } t\ge d(x).
\end{cases}
$$
A straightforward verification shows that $\mu_x=\text{Leb}_{[0,1]}\circ \varphi_x^{-1}$ for a.e. $x\in X$.
We now define  a mapping
$\varphi \colon X\times[0,1]\to X\times[0,1]$  by setting
$$
\varphi(x,t):=(x,\varphi_x(t)).
$$
Then $\varphi$ is a measurable isomorphism of the space $(X\times[0,1], \mu\otimes\text{Leb})$ onto
$(X\times[0,1],\widetilde\mu)$.
It remains to put
$\phi:=\theta^{-1}\varphi$.
\end{proof}

Let $(X,\mu, T)$ and $(Y,\nu, S)$ be two dynamical systems.
Denote by $(\widebar X,\widebar\mu)$  and $(\widebar Y,\widebar \nu)$ the spaces of ergodic components of
 $(X,\mu, T)$ and $(Y,\nu, S)$ respectively.
 Let $\mu=\int_{\widebar X}\mu_{\widebar x}\,d\widebar\mu(\widebar x)$
 and
 $\nu=\int_{\widebar Y}\nu_{\widebar y}\,d\widebar\nu(\widebar x)$
 stand for the corresponding disintegrations.
 If $\psi \colon Y\to X$ is a factor mapping
 then $\psi$ induces a measurable map $\widebar\psi \colon (\widebar Y,\widebar \nu)\to(\widebar X,\widebar\mu)$ and
 a measurable field $\widebar Y\ni
 \widebar y\mapsto\psi_{\widebar y}$ of factor mapping $\psi_{\widebar y} \colon
 (Y,\nu_{\widebar y}, S)\to (X,\mu_{\widebar\psi(\widebar y)},T)$.


\begin{Prop}\label{p:Claim}
Let $(X,\mu, T)$ and $(Y,\nu, S)$
be two dynamical systems and let
$(\widebar X,\widebar\mu)$  and $(\widebar Y,\widebar \nu)$
stand for the spaces of their ergodic components.
Let $\widebar\psi \colon (\widebar Y,\widebar \nu)\to(\widebar X,\widebar\mu)$
be a measurable  onto mapping
and
let
$$
\psi_{\widebar y} \colon
 (Y,\nu_{\widebar y}, S)\to (X,\mu_{\widebar\psi(\widebar y)},T)
 $$
 be a measurable field of isomorphisms.
 Then, there exists a measurable mapping
 $\theta\colon(X\times[0,1],\mu\otimes\text{Leb})\to(Y,\nu)$
 such that $\theta\circ (T\times I)=S\circ\theta$.
\end{Prop}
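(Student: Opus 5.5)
The idea is to reduce to Proposition~\ref{p:claimR}, applied to the map $\widebar\psi$ between the spaces of ergodic components, and then to glue the fibrewise isomorphisms $\psi_{\widebar y}^{-1}$. Since $\widebar\psi$ is onto and measurable, and since $\widebar\psi$ pushes $\widebar\nu$ forward to $\widebar\mu$ (as is implicit in the statement, since $\widebar\psi$ is induced by a factor map), Proposition~\ref{p:claimR} gives a measurable $\phi\colon \widebar X\times[0,1]\to\widebar Y$ with $(\widebar\mu\otimes{\rm Leb})\circ\phi^{-1}=\widebar\nu$ and $\widebar\psi\circ\phi(\widebar x,t)=\widebar x$. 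Thus each pair $(\widebar x,t)$ selects an ergodic component $\widebar y=\phi(\widebar x,t)$ of $S$ lying over $\widebar x$, and the selection is distributed exactly according to $\widebar\nu$.

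Next, write $x\in X$ via the ergodic decomposition map $x\mapsto \widebar x(x)$; this map is $T$-invariant and measurable. Define
$$
\theta(x,t):=\psi_{\phi(\widebar x(x),t)}^{-1}(x).
$$
Here $\psi_{\widebar y}^{-1}$ is the inverse of the fibre isomorphism $(Y,\nu_{\widebar y},S)\to(X,\mu_{\widebar\psi(\widebar y)},T)$. Because $\widebar\psi(\phi(\widebar x,t))=\widebar x$, this inverse is defined $\mu_{\widebar x}$-a.e. The equivariance $\theta\circ(T\times I)=S\circ\theta$ follows from two facts: $\widebar x(Tx)=\widebar x(x)$, and each $\psi_{\widebar y}^{-1}$ intertwines $T$ with $S$ on its fibre. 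For the measure, disintegrate $\mu\otimes{\rm Leb}=\int\int \mu_{\widebar x}\otimes\delta_t\,d\widebar\mu\,dt$. The fibre over $(\widebar x,t)$ is mapped to $\nu_{\phi(\widebar x,t)}$, so
$$
\theta_*(\mu\otimes{\rm Leb})=\int \nu_{\phi(\widebar x,t)}\,d(\widebar\mu\otimes{\rm Leb})=\int\nu_{\widebar y}\,d\widebar\nu=\nu.
$$

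The main obstacle is measurability of $(x,t)\mapsto\psi^{-1}_{\phi(\widebar x,t)}(x)$. Only the field $\widebar y\mapsto\psi_{\widebar y}$ is given as measurable, and each isomorphism is determined only almost everywhere, so a jointly measurable version is needed. I would handle this as in the proof of Proposition~\ref{fields}. In the aperiodic case, represent $T$ and $S$ as skew products over their invariant factors with a common fibre $(Z,\kappa)$. Then $\psi_{\widebar y}$ becomes a measurable field in ${\rm Aut}(Z,\kappa)$, inversion is continuous in the weak topology, and a measurable map into ${\rm Aut}(Z,\kappa)$ admits a jointly measurable realization $(\widebar y,z)\mapsto\psi_{\widebar y}^{-1}z$. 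This can be obtained, for instance, by realizing the automorphisms on a compact model through a countable generating family of sets and using the standard fact that such fields come from Borel maps on the product. Composing with $\phi$ then gives a Borel map, and null sets are discarded using Fubini.

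The periodic parts are treated separately, as in Proposition~\ref{p:faktory?}. There the fibres are finite and the isomorphisms are simply choices of a cyclic shift, which are piecewise constant on countably many measurable pieces, so measurability is immediate.
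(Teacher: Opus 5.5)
Your proposal is correct and follows the paper's proof: apply Proposition~\ref{p:claimR} to $\widebar\psi$ to obtain $\phi$ with $\widebar\psi\circ\phi=\tau$ and $\phi_*(\widebar\mu\otimes{\rm Leb})=\widebar\nu$, set $\theta(x,t)=\psi^{-1}_{\phi(\widebar x,t)}(x)$, and read off both the equivariance and $\theta_*(\mu\otimes{\rm Leb})=\nu$ from these two properties. Your explicit use of $\widebar\mu=\widebar\nu\circ\widebar\psi^{-1}$, your argument for joint measurability, and your separate treatment of the periodic parts make precise points that the paper leaves implicit.
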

\begin{proof}
By Proposition~\ref{p:claimR}, there is a measurable mapping
 $$
 \phi\colon (\widebar X\times[0,1], \widebar\mu\otimes\text{Leb})\to(\widebar Y,\widebar \nu)
 $$
  such that
 $\widebar\psi\circ\phi=\tau$, where $\tau\colon\widebar X\times[0,1]\to \widebar X$ is the first margin projection.
 We now set
 $$
 \theta(x,t):={\psi_{ \phi(\widebar x,t)}}^{-1}(x).
 $$
 for all $(x,t)\in X\times[0,1]$.
 Then
 $\theta(Tx,t)={\psi_{\phi(\widebar x,t)}}^{-1}(Tx)=S{\psi_{\phi(\widebar x,t)}}^{-1}(x)=
 S\theta(x,t)$.
 Let $\widebar y=\phi(\widebar x,t)$.
 Then $\widebar\psi\circ\phi(\widebar x,t)=\psi(\widebar y)$.
 Hence, $\widebar x=\widebar\psi(\widebar y)$.
 Therefore, the mapping $\psi_{\widebar y}^{-1}$ is an isomorphism of
$(X,\mu_{\widebar\psi(\widebar y)})$ onto  $(Y,\nu_{\widebar y})$.
The two spaces are fibers in $X$ and $Y$ over
$\widebar\psi(\widebar y)$ and $\widebar y$ respectively.
On the other hand,
$\widebar\mu=\widebar\nu\circ\widebar\psi^{-1}$
and  $\theta$ is constant on the fibers $\tau^{-1}(\{\widebar x\})$, $\widebar x\in \widebar X$.
We obtain  that $(\mu\otimes\text{Leb})\circ \theta^{-1}=\nu$.
\end{proof}

We would like to draw attention of the reader to  the difference between Proposition~\ref{p:Claim} and Proposition~\ref{p:faktory?}: the  factor mappings defined there ``act in the opposite directions''.

\subsection{Joinings of non-ergodic automorphisms}

Given two dynamical systems $(X,\mathcal B,\mu, T)$
and $(Y,\mathcal C,\nu, S)$, a {\it joining} of them is a $(T\times S)$-invariant measure on the product space $X\times Y$ such that the marginal projections of $\rho$ to $X$ to $Y$ are $\mu$ and $\nu$ respectively.
The dynamical system $(X\times Y,\mathcal B\otimes\mathcal C,\rho,T\times S)$ is also called a  joining of  $(X,\mathcal B,\mu, T)$
and $(Y,\mathcal C,\nu, S)$.
The set of all joinings of $T$ and $S$ is denoted by $J(T,S)$.
Given $\rho\in J(T,S)$, a linear operator $\Phi_\rho \colon  L^2(X,\mu)\to L^2(Y,\nu)$
is well defined by
\begin{equation}\label{eq:joi}
\langle \Phi_\rho f,g\rangle=\int_{X\times Y}f\otimes g\,d\rho.
\end{equation}
For all $f\in L^2(X,\mu)$ and $g\in L^2(Y,\nu).
$
Then
\begin{enumerate}[(i)]
\item
$\Phi_\rho (L^2(X,\mu)_+)\subset L^2_+(Y,\nu)$,
\item
$\Phi_\rho (1)=1=\Phi_\rho^* (1)$ and
\item
$\Phi_\rho (f\circ T)=(\Phi_\rho f)\circ S$ for each $f\in L^2(Y,\nu)$.
\end{enumerate}
Conversely, each bounded linear operator satisfying (i)--(iii) determines a joining $\rho\in J(T,S)$ such that (\ref{eq:joi}) holds.
We will write $J_2(T)$ for $J(T,T)$.

 Assume that $T\in {\rm Aut}\xbm$.
 Consider the ergodic decomposition of $T$ and
 let (a)-(c)  from \S\ref{s:ergcom} hold.
 Denote by $C_2(\nu)$ the set of 2-couplings on $Y$, i.e. the set of probability measures on $Y\times Y$ whose marginals are both equal to $\nu$ (i.e. the self-joinings of the identity on $Y$).
 Endow  $C_2(\nu)$ with the weak topology.
 Fix a coupling $\lambda\in C_2(\bamu)$  and a measurable  mapping
$$
\baX\times\baX\ni (\bax,\bax')\mapsto \rho_{\bax,\bax'}\in J(T_{\bax},T_{\bax'})\subset C_2(\nu)
$$
be measurable.
  Then, we define a measure
$$
\rho=\int_{\baX\times \baX}\delta_{\bax,\bax'}\otimes\rho_{\bax,\bax'}\,d\lambda(\bax,\bax').
$$
on the space $\baX\times\baX\times Y\times Y$ which is identified with
$X\times X$ via (a)
(see \cite{Go-Le-Ru} for the case $\lambda=\bamu\ot\bamu$).
Of course, $\rho\in J_2(T)$.
Self-joinings of that kind are called {\em canonical}.

\begin{Lemma}\label{l:canonical}   Every joining $\rho\in J_2(T)$
 is canonical (over $\lambda:=\rho|_{\baX\times \baX}$).\end{Lemma}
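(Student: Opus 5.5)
The argument is a disintegration of $\rho$ over the invariant $\sigma$-algebra of $T\times T$. Let $\rho\in J_2(T)$ and identify $X\times X$ with $\baX\times\baX\times Y\times Y$ via (a). The first step is to note that the coordinate map $(x,x')\mapsto(\bax,\bax')$ is $T\times T$-invariant. Indeed, each of $\bax$ and $\bax'$ is measurable with respect to $\mathcal I(T)$ on the corresponding marginal, so its pullback is invariant modulo $\rho$. The marginals of $\rho$ are $\mu$, hence the push-forward $\lambda:=\rho|_{\baX\times\baX}$ has both marginals equal to $\bamu$, so $\lambda\in C_2(\bamu)$.

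Next, disintegrate $\rho$ over this factor:
$$\rho=\int_{\baX\times\baX}\delta_{(\bax,\bax')}\otimes\rho_{\bax,\bax'}\,d\lambda(\bax,\bax').$$
This gives a measurable field $(\bax,\bax')\mapsto\rho_{\bax,\bax'}$ of probability measures on $Y\times Y$. Because the factor is invariant and $T\times T$ acts on the fibre over $(\bax,\bax')$ as $T_{\bax}\times T_{\bax'}$, uniqueness of the disintegration shows that $\rho_{\bax,\bax'}$ is $T_{\bax}\times T_{\bax'}$-invariant for $\lambda$-a.e. $(\bax,\bax')$. One uses a countable generating algebra to pass from "for each set, a.e." to "a.e., for all sets."

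It remains to identify the marginals of $\rho_{\bax,\bax'}$, which is the main point. The first marginal $\alpha_{\bax,\bax'}$ is a $T_{\bax}$-invariant probability measure on $Y$. Integrating against the conditional measure $\lambda_{\bax}$ of $\lambda$ over the first coordinate gives
$$\int\alpha_{\bax,\bax'}\,d\lambda_{\bax}(\bax')=\nu \quad\text{for }\bamu\text{-a.e. }\bax,$$
because the first marginal of $\rho$ is $\mu=\int\delta_{\bax}\otimes\nu\,d\bamu$. Thus $\nu$ is expressed as an average of $T_{\bax}$-invariant measures. Since $T_{\bax}$ is ergodic, $\nu$ is an extreme point of the set of $T_{\bax}$-invariant measures. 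Hence $\alpha_{\bax,\bax'}=\nu$ for $\lambda_{\bax}$-a.e. $\bax'$, and therefore $\lambda$-a.e. The same argument applies to the second marginal. Consequently $\rho_{\bax,\bax'}\in J(T_{\bax},T_{\bax'})$ for $\lambda$-a.e. $(\bax,\bax'$), and $\rho$ is canonical over $\lambda$.

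The only delicate technical point is measurability of the ergodic field and handling null sets uniformly. For this I would fix a countable dense family of continuous functions (after passing to a topological model as in Lemma~\ref{Krieger-non-ergodic}) so that invariance and the marginal identities become countably many a.e. statements. The periodic parts of the ergodic decomposition are handled in the same way, piece by piece.
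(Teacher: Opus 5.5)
Your proposal is correct and follows essentially the same route as the paper. Both disintegrate $\rho$ over $\baX\times\baX$ to get $T_{\bax}\times T_{\bax'}$-invariance of the fibre measures, then disintegrate $\lambda$ over the first coordinate and use uniqueness of disintegration to express $\nu$ as a $\lambda_{\bax}$-average of $T_{\bax}$-invariant first marginals. Ergodicity of $\nu$ under $T_{\bax}$ and Fubini then finish the argument, and the second marginal is handled symmetrically.
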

\begin{proof}Consider the disintegration of $\rho$ over
the factor $\baX\times \baX$:
there exists  a measurable mapping $\baX\times \baX\ni (\bax,\bax')\mapsto\rho_{\bax,\bax'}$ to the space of probability measures on $Y\times Y$ such that
\beq\label{rro}
\rho=\int_{\baX\times \baX}\delta_{\bax,\bax'}\otimes\rho_{\bax,\bax'}\,d\lambda(\bax,\bax').
\eeq
Let $\rho_{\bax,\bax'}^{(i)}$ denote the marginal of the fiber measure $\rho_{\bax,\bax'}$ onto the $i$-th coordinate, $i=1,2$.
Since $T\times T$ acts as identity on $\baX\times \baX$ (although this need not be the whole sigma-algebra of invariant sets for $\rho$), it follows that  $\rho_{\bax,\bax'}$ are $(T_{\bax}\times T_{\bax'})$-invariant.
Therefore, we only need to prove that
 $\rho_{\bax,\bax'}^{(i)}=\nu$ for each $i=1,2$ at $\lambda$-a.e. $(\bax,\bax')$.
Disintegrate $\lambda $ with respect to the first coordinate:
\beq\label{lla}
\lambda=\int_{\baX}\delta_{\bax}\otimes\lambda_{\bax}\,d\bamu(\bax).
\eeq
Substituting \eqref{lla} into \eqref{rro}, we obtain that
\beq\label{subst}
\rho=\int_{\baX}\delta_{\bax}\otimes\int_{\baX}(\delta_{\bax'} \otimes\rho_{\bax,\bax'})\,d\lambda_{\bax}(\bax')\,d\bamu(\bax).
\eeq
Applying the projection
$$
\baX\times\baX\times Y\times Y\ni(\bax,\bax',y,y')\mapsto (\bax,y)\in \baX\times Y
$$
 to the both sides of \eqref{subst} we obtain that
$$
\bamu\otimes\nu=
\int_{\baX}\delta_{\bax}\otimes\int_{\baX}\rho_{\bax,\bax'}^{(1)}\,d\lambda_{\bax}(\bax')\,d\bamu(\bax).
$$
Hence, by the uniqueness of disintegration, we obtain that
$$
\nu=\int_{\baX}\rho_{\bax,\bax'}^{(1)}\,d\lambda_{\bax}(\bax')
$$
at $\bamu$-a.e. $\bax$.
Since $\rho_{\bax,\bax'}^{(1)}\circ T_{\bax}=\rho_{\bax,\bax'}^{(1)}$ for
all $\bax'$ and $\nu$ is invariant and ergodic under
$T_{\bax}$, it follows that $\rho_{\bax,\bax'}^{(1)}=\nu$ for $\lambda_{\bax}$-a.e. $\bax'$.
Hence, by the Fubini theorem, $\rho_{\bax,\bax'}^{(1)}=\nu$ at $\lambda$-a.e.
$(\bax,\bax')$.
In a similar way one can prove that
 $\rho_{\bax,\bax'}^{(2)}=\nu$ at $\lambda$-a.e.
$(\bax,\bax')$.
 \end{proof}

\subsection{Mutual spectral singularity}
\label{f:spectral}
Given an automorphism $T$ acting on $\xbm$ and $f\in L^2\xbm$, the measure $\sigma_f$ on $\Bbb T$ whose Fourier transform is $\widehat{\sigma}_f(n)=\int_Xf\circ T^n\cdot\ov{f}\,d\mu$ for all $n\in\Z$, is called the {\em spectral measure} of $f$. Among spectral measures there are maximal ones (with respect to the absolute continuity relation). All maximal spectral measures are equivalent and their type is called the {\em maximal spectral type} of $T$. A classical fact in spectral theory is that for each $f,g\in L^2\xbm$
\beq\label{mutualsing}
\mbox{if $\sigma_f\perp \sigma_g$, then $f\perp g$.}\eeq
We will also need the following well-known fact.

\begin{Lemma}\label{l:mutualsing}
Assume that $T$ is an automorphism of $\xbm$ and let $\mu=\int_{\widebar X}\mu_{\widebar x}\,d\widebar \mu(\widebar x)$ be the ergodic decomposition of $\mu$.
Then for each $g\in L^2\xbm$,
\begin{equation}\label{rozk_spektr}
\sigma_g=\int \sigma_{g,\widebar x}\, d\widebar \mu(\widebar x),
\end{equation}
where $\sigma_{g,\widebar x}$ stands for the spectral measure of $g$ for the automorphism $(T,\mu_{\widebar x})$.\end{Lemma}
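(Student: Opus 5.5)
The plan is to compare Fourier coefficients. By uniqueness of Fourier--Stieltjes coefficients, it suffices to show that for every $n\in\Z$,
$$\widehat{\sigma}_g(n)=\int_{\widebar X}\widehat{\sigma}_{g,\widebar x}(n)\,d\widebar\mu(\widebar x).$$
Before this identity makes sense, two preliminary facts are needed.

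First, $g\in L^2(X,\mu_{\widebar x})$ for $\widebar\mu$-a.e.\ $\widebar x$. Since $\mu=\int\mu_{\widebar x}\,d\widebar\mu$, we have $\int_X|g|^2\,d\mu=\int\big(\int_X|g|^2\,d\mu_{\widebar x}\big)d\widebar\mu(\widebar x)<\infty$, so the inner integral is finite almost everywhere. For such $\widebar x$ the measure $\mu_{\widebar x}$ is $T$-invariant, so the spectral measure $\sigma_{g,\widebar x}$ is defined. Its total mass is $\int|g|^2\,d\mu_{\widebar x}$, which is $\widebar\mu$-integrable.

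Second, the map $\widebar x\mapsto\sigma_{g,\widebar x}$ is measurable, so that the right-hand side of \eqref{rozk_spektr} is a well-defined finite positive measure. For each $n$, the function $\widebar x\mapsto\widehat{\sigma}_{g,\widebar x}(n)=\int g\circ T^n\cdot\ov g\,d\mu_{\widebar x}$ is measurable. Indeed, $g\circ T^n\cdot\ov g\in L^1(\mu)$ by Cauchy--Schwarz, and $\widebar x\mapsto\int h\,d\mu_{\widebar x}$ is measurable for every $h\in L^1(\mu)$; this is the standard property of disintegrations, equivalently $\int h\,d\mu_{\widebar x}=\EE(h\mid\mathcal I(T))$. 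A positive measure on $\T$ is determined by its Fourier coefficients, and integrals of continuous functions against it are limits of Fej\'er-type combinations of those coefficients. Hence $\widebar x\mapsto\int\phi\,d\sigma_{g,\widebar x}$ is measurable for every $\phi\in C(\T)$, so the integral $\int\sigma_{g,\widebar x}\,d\widebar\mu$ defines a positive Borel measure of total mass $\|g\|_2^2$.

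With these in place, the main step is a Fubini computation:
$$\widehat{\sigma}_g(n)=\int_X g\circ T^n\cdot\ov g\,d\mu=\int_{\widebar X}\int_X g\circ T^n\cdot\ov g\,d\mu_{\widebar x}\,d\widebar\mu(\widebar x)=\int_{\widebar X}\widehat{\sigma}_{g,\widebar x}(n)\,d\widebar\mu(\widebar x),$$
which is the Fourier coefficient of the integrated measure. Fubini is justified because $\int|g\circ T^n\,\ov g|\,d\mu_{\widebar x}\le\int|g|^2\,d\mu_{\widebar x}$ by Cauchy--Schwarz and $T$-invariance of $\mu_{\widebar x}$, and this bound is integrable in $\widebar x$. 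Since both sides of \eqref{rozk_spektr} are finite positive measures with the same Fourier coefficients, they coincide.

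The only delicate point is the measurability in the second preliminary fact. It reduces to a routine monotone-class argument once the measurability of the Fourier coefficients is known, so there is no genuine obstacle.
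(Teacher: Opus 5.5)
Your proposal is correct and follows the paper's own argument. Both compare Fourier coefficients through the identity $\int g\circ T^n\cdot\ov{g}\,d\mu=\int\bigl(\int g\circ T^n\cdot\ov{g}\,d\mu_{\widebar x}\bigr)\,d\widebar\mu(\widebar x)$ and conclude by uniqueness of a finite measure on $\T$ with given Fourier coefficients. The only difference is that you prove the measurability of $\widebar x\mapsto\sigma_{g,\widebar x}$ (via $\EE(\cdot\mid\mathcal I(T))$ and Fej\'er means) and the integrability needed for Fubini yourself, whereas the paper cites \cite{Be-Go-Ru} for measurability.
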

\begin{proof}
Note that the map $\widebar x\mapsto \sigma_{g,\widebar x}$ is measurable (cf.\ \cite{Be-Go-Ru}). Since
\begin{multline*}
\int z^n\,d\sigma_g=\int g\circ T^n\cdot\ov{g}\,d\mu\\
=\int\Big(\int g\circ T^n\cdot\ov{g}\,d\mu_{\widebar x}\Big)\,d\widebar\mu(\widebar x)=\int\Big(\int z^n\,d\sigma_{g,\widebar x}\Big)\,d\widebar\mu(\widebar x),
\end{multline*}
for each $j\in L^2(\sigma_g)$, we have $\int j(z)\,d\sigma_g(z)=\int\Big(\int j(z)\,d\sigma_{g,\widebar x}\Big)\,d\widebar\mu(\widebar x)$.
\end{proof}

\section{On the URE property}\label{s:AC}

\subsection{Basic properties and examples of URE automorphisms}\label{ss:Tim's}
Prior to an overview of examples and counterexamples let us give some general properties of the class of uniformly relatively ergodic automorphisms.
First of all, notice that an automorphism is URE if and only if  its aperiodic part is URE, so in what follows we tacitly assume that we consider only the aperiodic case.

\begin{Prop}\label{p:inac}
\begin{enumerate}
\item[{\rm (A)}] The class of URE automorphisms is closed under taking factors.
\item[{\rm (B)}] Let $T$ be an automorphism.
The following are equivalent.
\begin{enumerate}
\item[{\rm (i)}] $T$ is URE.
\item[{\rm (ii)}]
There is a set of full measure of ergodic components of $T$ having  a common ergodic extension.
\item[{\rm (iii)}]
 There is a countable set of ergodic components $(T,\mu_{\ov{x}_i})$, $i\geq1$ and $\rho\in J^e((T,\mu_{\ov{x}_1}),(T,\mu_{\ov{x}_2}),\ldots)$ such that $T$ is a factor of the product $\text{{\rm Id}}\times (\prod_{i\geq1}(T,\mu_{\ov{x}_i}),\rho)$.
\end{enumerate}
\item[{\rm (C)}] If $T\in \cf_{\rm ec}$ and  $T$ is URE then $T\in\cf$.

\end{enumerate}
\end{Prop}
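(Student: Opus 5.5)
We treat (A), (B), (C) in turn, with (B) first since it feeds (A) and (C). Throughout, URE means: $T$ is a factor of some $\mathrm{Id}\times R$ with $R$ ergodic. Equivalently, by Theorem~\ref{theorem:f}, a.e.\ ergodic component is a factor of a single (ergodic) automorphism.

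For (B), (i)$\Rightarrow$(ii): let $\pi\colon(Z\times Y,\lambda\otimes\kappa,\mathrm{Id}\times R)\to(X,\mu,T)$ be a factor map with $R$ ergodic. The ergodic components of $\mathrm{Id}\times R$ are the copies $\{z\}\times Y$ with measure $\kappa$. Under $\pi$, the image of $\delta_z\otimes\kappa$ is an ergodic $T$-invariant measure, and $\pi$ induces a measure-preserving map $\bar\pi$ from $(Z,\lambda)$ onto $(\baX,\bamu)$. Hence for $\bamu$-a.e.\ $\bax$ the component $(T,\mu_{\bax})$ equals $\pi_\ast(\delta_z\otimes\kappa)$ for some $z$, and so it is a factor of $R$. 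This yields a full-measure set of components with common ergodic extension $R$.

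(ii)$\Rightarrow$(i) is exactly Corollary~\ref{p:random-factor?}.

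(iii)$\Rightarrow$(i) is immediate: an ergodic joining $\rho$ of the $(T,\mu_{\bax_i})$ is an ergodic automorphism.

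For (ii)$\Rightarrow$(iii), start from an ergodic $R$ on $(Y,\kappa)$ together with a measurable field of factor maps $\psi_{\bax}\colon(Y,\kappa,R)\to(X,\mu_{\bax},T)$, produced by the Jankov--von Neumann selection as in Proposition~\ref{p:faktory?}. Each $\psi_{\bax}$ gives a graph joining of $R$ with $(T,\mu_{\bax})$. Pick a countable set $\{\bax_i\}$ and use the joining $\rho$ of $(T,\mu_{\bax_i})_{i\ge1}$ obtained as the image of $\kappa$ under $y\mapsto(\psi_{\bax_i}(y))_i$; it is ergodic as a factor of $R$. The aim is to choose the $\bax_i$ so that $\bigvee_i\psi_{\bax_i}^{-1}(\mathcal B)$ already contains, mod $\kappa$, every $\psi_{\bax}^{-1}(\mathcal B)$ for a.e.\ $\bax$. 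Then a.e.\ component is a factor of $(\prod_i(T,\mu_{\bax_i}),\rho)$, and Corollary~\ref{p:random-factor?} gives (iii).

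To get such a countable family, use separability. The map $\bax\mapsto \EE(\cdot\mid\psi_{\bax}^{-1}\mathcal B)$ is measurable into the separable metric space of projections on $L^2(\kappa)$ with the strong topology, so its essential range has a countable dense subset $\{\bax_i\}$. Since each sigma-algebra lies in the join of a sequence approximating it, we get $\psi_{\bax}^{-1}\mathcal B\subset\bigvee_i\psi_{\bax_i}^{-1}\mathcal B$. This separability/approximation step is the main technical obstacle. Alternatively one can replace $\rho$ by the factor $\bigvee_i\psi_{\bax_i}^{-1}\mathcal B$ of $R$, which is enough.

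For (A), if $T$ is a factor of $\mathrm{Id}\times R$ and $T'$ is a factor of $T$, then composing the factor maps shows $T'$ is a factor of $\mathrm{Id}\times R$.

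For (C), by (B)(iii) $T$ is a factor of $\mathrm{Id}\times(\prod_i(T,\mu_{\bax_i}),\rho)$, and we may choose the $\bax_i$ among the components lying in $\cf$, since a.e.\ component is in $\cf$. The system $\prod_i(T,\mu_{\bax_i})$ under $\rho$ is a countable joining of members of $\cf$, hence in $\cf$. The identity lies in $\cf$ for any nontrivial characteristic class, being a factor of, e.g., a self-joining. The product with $\mathrm{Id}$ is a joining of members of $\cf$, so it is in $\cf$, and $T$, as a factor of it, is in $\cf$.
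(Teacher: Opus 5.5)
Your proposal is correct and follows the paper's proof: it uses the same chain of implications, and the only substantive step, (ii)$\Rightarrow$(iii), rests on the same separability idea. The paper extracts a countable subfamily of factors with the same join via density in the measure algebra, while you use separability of the conditional-expectation projections in the strong operator topology; there the measurable selection of the $\psi_{\bax}$ is not needed, since any subset of a separable metric space is separable. As in the paper, (C) silently uses that identities belong to $\cf$, i.e.\ that $\cf$ is nontrivial. You rightly make this explicit, though your one-line justification of it is thin.
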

\begin{proof}(A): This is obvious.

(B): (ii) $\Rightarrow$ (i) This implication follows from Corollary~\ref{p:random-factor?}.

(i) $\Rightarrow$ (ii) This follows from the fact that ergodic components of a factor are isomorphic to factors of ergodic components.

(i) $\Rightarrow$ (iii) (This is standard and follows from the separability of the space $(\cb,\mu)$, but we provide a proof for completeness.) We need to prove that given an automorphism $R$ acting on $\zdk$ and a family of factors $\ce_i\subset\cd$, $i\in I$, generating $\cd$, we can find a countable $I_0\subset I$ such that $\bigvee_{i\in I_0} \ce_i=\cd$. For this aim, note that $\cs:=\bigcup_{i\in I}\ce_i$ is closed under taking complement, so the semi-ring $\widetilde\cs$ generated by $\cs$ consists of finite intersections of elements from $\cs$. Then the corresponding algebra $\ca(\widetilde{\cs})$  consists of finite disjoint unions of sets from  $\widetilde\cs$. Since the space $(\cd,\kappa)$ (sets are considered modulo measure $\kappa$) is separable, we can choose a countable subset $\{F_j\}_{j\geq1}$ dense in $\ca(\widetilde\cs)$. By the form of the sets $F_j$, it follows that there exists a countable set $I_0\subset I$ such that $F_j\in  \bigvee_{i\in I_0}\ce_i$ for all $j\geq1$. Since $\cd$ is the closure of $\ca(\widetilde{\cs})$, the set $\{F_j\colon j\geq1\}$ is also dense in $(\cd,\kappa)$ and the claim follows.

(iii) $\Rightarrow$ (i) This follows straightforwardly from the definition of URE.

(C): This follows from~(B)(iii) since $T\in \cf_{\rm ec}$ is a factor of  $Id\times R$, where $R=(\prod_{i\geq1}(T,\mu_{\ov{x}_i}),\rho)$ for an ergodic joining $\rho$  of a family of $T$-ergodic components
$(T,\mu_{\ov{x}_i})\in \cf$, $i=1,2,\dots$.
Hence, $R\in \cf$.
This implies, in turn, that $T\in \cf$.
\end{proof}

\begin{Example}\label{ex:triv} Denote by $\mathcal{T}=\{T_i\}_{i\in I}$ the family of ergodic components of an automorphism $T$ of a standard probability space.

(i) If $|\mathcal{T}|\leq\aleph_0$ then $T$ is URE (consider any ergodic joining of $T_i$s).

(ii)  If $\mathcal{T}$ is uncountable and pairwise disjoint then $T$ cannot be URE.\footnote{Otherwise, in the $L^2$-space of the common extension $S$ (see Proposition~\ref{p:inac}), the automorphisms $T_i$ are represented by pairwise orthogonal $L^2$-subspaces which contradicts separability. In particular, by \cite{Go-Le-Ru}, the only URE automorphism disjoint with ERG is an identity.}
\end{Example}

\begin{Example} Let each ergodic component of an automorphism $(X,\mu,T)$ is Bernoulli.
Then $T$ is URE.
Indeed, each ergodic component is a factor of the Bernoulli shift of infinite entropy.
\end{Example}

\begin{Example}\label{e:e2} Assume that $\mu=\int_{\ov{X}}\mu_{\ov{x}}\,d\ov{\mu}(\ov{x})$ is the ergodic decomposition of an automorphism $T\in\cf_{\rm ec}$ and let $\ov{X}=\bigsqcup_{j\geq1}\ov{X}_j$ be a (measurable) countable partition of $\ov{X}$ such that, for each $j\geq1$ all ergodic components $(T,\mu_{\ov{x}})$, $\ov{x}\in \ov{X}_j$, are isomorphic to an (ergodic) automorphism $S_j\in\cf$. Take any ergodic joining $(S_1\times S_2\times\ldots,\rho)$ which is still an element of $\cf$. Then a.a.\ $(T,\mu_{\ov{x}})$ are factors of this joining and therefore $T$ is  $\cf$-URE. (Using Corollary~\ref{p:random-factor?}, see also Corollary~\ref{c:jon17}, it is not hard to see that $T$ will be a factor of $Id\times R$ with $R\in\cf$, so $T$ also belongs to $\cf$.)\end{Example}
This example might suggest that URE is the countability of ergodic components up to isomorphism, however this is not the case as the following two examples show.

\begin{Example}\label{e:e21}
Assume that we have a family $\{R_n\}$ of weakly mixing automorphisms defined on $\zdk$ which are pairwise disjoint and all of them enjoy the PID property \cite{Ju-Ru} (for example, $R_n=R^n$, $n\geq1$, where $R$ has the MSJ property).
Given a set $A\subset\N$, let $R_A$ denote for the Cartesian product
$$
R_A:=\prod_{n\in A}R_n\text{ considered with product measure.}$$
We obtain an uncountable family of automorphisms which have a common ergodic  extension, namely $\prod_{n\geq1}R_n$.
It follows from our assumptions that if $A\ne B$ then  $R_A$ and $R_B$ are not isomorphic.
Let $\Omega=\{0,1\}^{\N}$ considered with the Bernoulli  measure $\la:=\{0.5,0.5\}^{\otimes\Bbb N}$.
Then for $\la$-a.a. $\omega=(\omega_n)\in \Omega$, the support $A_\omega:=\{n\in\Bbb N\colon \omega_n\ne 0\}$ of $\omega$ is infinite.
Define now
an automorphism $T$ of the product space $(\Omega,\la)\ot(Z^{\N},\kappa^{\ot\N})$
by setting
$$
T(\omega,(z_n)_{n=1}^\infty):= (\omega,(R_{k_n}z_n)_{n=1}^\infty),
$$
where $\{k_1,k_2,\ldots\}=A_{\omega}$ and $k_1<k_2<\cdots$.
It is not hard to see that $(\Omega,\lambda)$ is identified naturally with the space of ergodic components of $T$ and for a.e. $\omega\in \Omega$, the corresponding (to $\omega$) ergodic component is isomorphic to $R_{A_\omega}$.
Thus, the family of ergodic components of $T$ admits (a.e.) a common extension.
Hence, $T$ is URE.
On the other hand, two ergodic components corresponding to $\omega$ and $\omega'$ respectively are isomorphic if and only if
 $A_\omega=A_{\omega'}$, i.e. $\omega=\omega'$.
 Hence, uncountably many ergodic components of $T$ are pairwise non-isomorphic.
\end{Example}

\begin{Example}\label{e:e21'} In the previous example, all ergodic components of $T$ are weakly mixing.
It is possible to construct a similar example  whose ergodic components have pure point spectrum.
For instance, fix a countable family $\{\lambda_n\colon n\in\Bbb N\}$ of  rationally independent irrationals.
Let $R_n$ stand for the $\lambda_n$-rotation on the circle.
Utilizing this new family $\{R_n\}_{n\in\Bbb N}$, we
repeat the construction from Example~\ref{e:e21} almost literally.
Then the corresponding  (to the new family) transformation $T$ is URE, it has uncountably many ergodic components, each of which has pure point spectrum.
Hence $T\in {\rm{\bf{DISP}}}_{\rm ec}$.
By Proposition~\ref{p:inac}(C), $T\in {\rm{\bf{DISP}}}$.
\end{Example}

\begin{Remark} We note that though the automorphism $T$ in Example~\ref{e:e21} and Example~\ref{e:e21'} has uncountably many pairwise non-isomorphic ergodic components, $T$
is isomorphic to the inverse limit of a sequence of automorphisms with finitely many ergodic components.
Indeed, for $N\in\Bbb N$, let $\Omega_N:=\{0,1\}^N$,
$\lambda_N:=\{0.5,0.5\}^{\otimes N}$.
Given $\omega\in\Omega_N$, let
$$
A_\omega:=\{n\in\{1,\dots,N\}\colon \omega_n\ne 0\}.
$$
We enumerate the elements of $A_\omega$ as follows:
$k_1(\omega)<k_1(\omega)<\cdots<k_{r(\omega)}(\omega)$, where  ${r(\omega)}$ is the cardinality of
$A_\omega$.
We now define a probability space $(X_N,\mu_N)$ in the following way:
$$
X_N:=\{(\omega,y)\colon \text{for all }\omega\in\Omega_N, y\in Z^{r(\omega)} \},\quad\mu_N:=\int_{\Omega_N} \delta_\omega\otimes \kappa^{\otimes r(\omega)}\,d\lambda_N(\omega).
$$
Next, we define an automorphism $T_N$ of $(X_N,\mu_N)$ by setting
$$
T_N(\omega,(z_n)_{n=1}^{r(\omega)}):= (\omega,(R_{k_n(\omega)}z_n)_{n=1}^{r(\omega)}).
$$
Of course, $T_N$ has finitely many  ergodic components.
The space of  these components is identified with  $\Omega_N$.
Consider the natural projections
$$
\pi_N \colon X_{N+1}\ni \big(\omega,(z_n)_{n=1}^{r(\omega)}\big)\mapsto \big(\omega|_{\{1,\dots,N\}},
(z_n)_{n=1}^{
r(\omega|_{\{1,\dots,N\}})}
\big)\in X_N,
$$
$N\in\Bbb N$.
Then $\mu_{N+1}\circ \pi_N^{-1}=\mu_N$  and $\pi_N T_{N+1}=T_N\pi_N$ for each $N$.
Thus, an inverse sequence of dynamical systems
$$
\begin{CD}
(X_1,\mu_1,T_1) @<\pi_1<< (X_2,\mu_2,T_2) @<\pi_2<< \cdots
\end{CD}
$$
is well defined.
It is straightforward to verify that the inverse limit of this system is $(X,\mu, T)$ from Example~\ref{e:e21}
or Example~\ref{e:e21'}.
\end{Remark}



\subsection{Ergodic decomposition of Rokhlin extensions and URE}

In this subsection we provide a class of Rokhlin extensions which are URE.
Let $G$ be  a locally compact second countable group and let $H$ be a co-compact
subgroup in the center of $G$.
It follows that $G$ is unimodular.
By $\lambda_G$ we denote a Haar measure on $G$.
Let $\cs=(S_g)_{g\in G}$ be a continuous action of  $G$ on a compact metric space $Y$.
Denote by $M(Y,\cs)$ and $M(Y, \cs|H)$ the simplices of $G$-invariant
 and $H$-invariant probability measures  on $Y$, respectively.
 Of course, the quotient group $G/H$ acts naturally on $M(Y, \cs|H)$:
 $$
 \nu\circ (gH):=\nu\circ S_g.
 $$
The following claim is well known.

\begin{Lemma}\label{l.an}
The mapping
$$
M(Y,\cs|H)\ni \nu\mapsto \int_{G/H}\nu\circ (gH)\,d\lambda_{G/H}(gH)\in M(Y, \cs)
$$
is an affine isomorphism.
\end{Lemma}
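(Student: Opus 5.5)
The plan is to verify in turn that the averaging map, call it $\Theta(\nu):=\int_{G/H}\nu\circ(gH)\,d\lambda_{G/H}(gH)$, is well defined, takes values in $M(Y,\cs)$, is affine, and is bijective, in that order.

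\emph{Well-definedness.} First, since $H$ lies in the center of $G$, for $\nu\in M(Y,\cs|H)$ and $h\in H$ we have $(\nu\circ S_g)\circ S_h=\nu\circ S_{hg}=\nu\circ S_{gh}=(\nu\circ S_h)\circ S_g=\nu\circ S_g$. Hence $\nu\circ S_g$ is again $H$-invariant, and it depends only on the coset $gH$. So the $G/H$-action on $M(Y,\cs|H)$ is well defined.

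Second, $H$ is cocompact, so $\lambda_{G/H}$ can be normalized to a probability measure. The integrand $gH\mapsto\nu\circ(gH)$ is weak$^\ast$-continuous, because the action $\cs$ is continuous. Therefore $\Theta(\nu)$ is a well-defined Borel probability measure, given by $\int f\,d\Theta(\nu)=\int_{G/H}\int f\circ S_g\,d\nu\,d\lambda_{G/H}$ for $f\in C(Y)$.

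\emph{Invariance and affinity.} For $g_0\in G$, the $G$-invariance of $\Theta(\nu)$ follows from the left invariance of $\lambda_{G/H}$ under translation by $g_0$. This invariance holds because $G/H$ is a compact group; recall that $H$ is central, hence normal, and that $G$ is unimodular. Affinity of $\Theta$ is immediate from linearity of the integral in $\nu$.

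\emph{Surjectivity.} If $\nu\in M(Y,\cs)$, then $\nu\circ(gH)=\nu$ for all $g$, so $\Theta(\nu)=\nu$. Since $M(Y,\cs)\subset M(Y,\cs|H)$, this gives surjectivity, and it shows that $\Theta$ restricts to the identity on $M(Y,\cs)$.

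\emph{Injectivity (main obstacle).} I expect injectivity to be the main obstacle. The first step is to note that $\Theta(\nu\circ(g_0H))=\Theta(\nu)$, by the same invariance of Haar measure. Consequently $\Theta$ can only be injective once measures in a common $G/H$-orbit are identified. I would therefore interpret the isomorphism at the level of $G/H$-orbits, namely on the quotient of $M(Y,\cs|H)$ by the $G/H$-action, equivalently on $G/H$-invariant elements.

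To verify injectivity in this sense, I would pass to extreme points. The plan has three parts:
\begin{enumerate}
\item If $\nu$ is $H$-ergodic, show that $\Theta(\nu)$ is $G$-ergodic. A $G$-invariant set $B$ has $\nu\circ S_g(B)\in\{0,1\}$ for every $g$, and this value is constant in $g$ by continuity in $g$.
\item Conversely, given $\Theta(\nu)=\Theta(\nu')$ with $\nu,\nu'$ ergodic, use the uniqueness of ergodic decomposition of the $G$-ergodic measure $\Theta(\nu)$ with respect to $H$. This decomposition is $\int\nu\circ(gH)\,d\lambda_{G/H}$, and uniqueness forces $\nu'=\nu\circ(g_0H)$ for some $g_0$.
\item Finally, use Choquet theory (barycenters of ergodic decompositions) to transfer these two facts from extreme points to all of $M(Y,\cs|H)$ modulo the $G/H$-action.
\end{enumerate}
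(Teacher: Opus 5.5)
Your observation that $\Theta(\nu\circ(g_0H))=\Theta(\nu)$ is correct, and it already shows that the lemma, read literally, is false: $\Theta$ fails to be injective as soon as some $H$-invariant measure is not $G$-invariant. The paper gives no argument for this ``well known'' claim. In the proof of Theorem~\ref{t:sm} it only needs that $\Theta$ takes values in $M(Y,\cs)$, which your well-definedness and invariance paragraphs establish. (A minor correction there: unimodularity of $G$ is irrelevant. What you use is right invariance of $\lambda_{G/H}$, which holds because $G/H$ is compact.)

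The gap is in your repair. Injectivity ``modulo the $G/H$-action'' and injectivity ``on $G/H$-invariant elements'' are not equivalent. The $G/H$-invariant elements of $M(Y,\cs|H)$ are exactly $M(Y,\cs)$, on which $\Theta$ is the identity, so the second version is trivial. The first version is false, so your step~3 cannot succeed. Take $G=\R$, $H=\Z$, $Y=\T$ and $S_ty=y+t$. Then $S_1={\rm Id}$, so $M(Y,\cs|H)$ is the set of all Borel probability measures on $\T$, $M(Y,\cs)=\{{\rm Leb}_{\T}\}$, and $\Theta\equiv{\rm Leb}_{\T}$. The measures $\delta_0$ and $\frac12(\delta_0+\delta_{1/2})$ have the same image but do not lie in a common $\T$-orbit. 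No Choquet argument can transfer orbit-injectivity from extreme points to barycentres, because $\Theta(\nu)$ only remembers the image of the ergodic decomposition of $\nu$ in the orbit space $M^e(Y,\cs|H)/(G/H)$. Moreover, the orbit space of a convex set is not convex, so an ``affine isomorphism'' there is not even meaningful.

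What your argument does prove, namely well-definedness, surjectivity and steps~1--2, is the correct replacement. $\Theta$ is an affine retraction of $M(Y,\cs|H)$ onto $M(Y,\cs)$, it sends $H$-ergodic measures to $G$-ergodic ones, and two $H$-ergodic measures have the same image if and only if they lie in one $G/H$-orbit. This is exactly what the proof of Theorem~\ref{t:sm} uses, to get $\int_{G/H}\kappa\circ(gH)\,d\lambda_{G/H}=\nu$ and then $M^e(Y,\cs|H)=\{\kappa\circ z\colon z\in G_\kappa\backslash G\}$.

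If you want a genuine affine isomorphism, you must keep the $G/H$-coordinate. Mackey induction $\nu\mapsto\int_{G/H}\delta_{gH}\otimes\nu\circ(gH)\,d\lambda_{G/H}(gH)$ identifies $M(Y,\cs|H)$ with $M(G/H\times Y,\mathcal W\times\cs)$; this is the map $\Omega^{-1}$ in that proof. So you should state that the lemma as written is false and prove this corrected version in place of step~3.
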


Moreover, given $\nu\in M^e(Y, \cs|H)$, let $G_\nu:=\{g\in G\colon \nu\circ g=\nu\}$.
Then $G_\nu$ is a co-compact subgroup of $G$ because
this subgroup contains $H$.
The composition $\nu\circ (G_\nu g)$ is well defined for each $g\in G$.
Of course,
$$
\nu\circ (G_\nu g)=\nu\circ gH
$$
 and $\nu\circ (G_\nu g)\perp \nu\circ (G_\nu g_1)$
whenever $gg_1^{-1}\not\in G_\nu$.
It follows that if
\begin{equation}\label{999}
\rho:=\int_{G_\nu\backslash G}\nu\circ z\,d\lambda_{G_\nu\backslash G}(z)
\end{equation}
then $\rho\in  M(Y, \cs|H)$ with~(\ref{999}) being its ergodic decomposition, $\nu\circ z\in M^e(Y,\cs|H)$ for each $z\in G_\nu\backslash G$.
Moreover, $\rho\in M^e(Y,\cs)$.

Assume that $T$ is a uniquely ergodic homeomorphism of a compact metric space $X$.
Denote by $\mu$ the unique ergodic $T$-invariant Borel probability measure on $X$.
Suppose that the above action $\cs$ is additionally uniquely ergodic.
Denote by $\nu$ the corresponding unique ergodic $\cs$-invariant probability measure on $Y$.
Fix a continuous cocycle $\phi \colon X\to G$.
Suppose that $\phi$ is $H$-regular, i.e.\ there is a Borel mapping $a \colon X\to G$
such that the cocycle $\psi:= a\circ T \cdot \phi  \cdot  a^{-1}$ of $T$ takes its values in $H$
and is ergodic (as an $H$-valued cocycle of $T$).
We define a homeomorphism  $T_{\phi, \cs} \colon X\times Y\to X\times Y$ by setting
$$
T_{\phi, \cs}(x,y):=(Tx, S_{\phi(x)}y).
$$
Our purpose is to prove the following theorem.

\begin{Th}\label{t:sm}
Under the aforementioned assumptions,
\begin{equation}\label{eq:cat}
M^e(X\times Y,T_{\psi,\cs})=\{\mu\otimes \kappa\colon\kappa\in  M^e(Y,\cs|H)\}
\end{equation}
and
the dynamical systems $(X\times Y, \alpha, T_{\phi, \cs})$,  $\alpha\in M^e(X\times Y, T_{\phi, \cs})$, are pairwise isomorphic.  In particular, $T_{\phi,\cs}$ is URE.
\end{Th}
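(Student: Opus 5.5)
The proof has three steps. First we reduce from $\phi$ to $\psi$. Then we identify the ergodic measures of $T_{\psi,\cs}$ via Lemma~\ref{l.an} and the ergodicity of $\psi$. Finally we show the resulting systems are pairwise isomorphic by conjugating with the central action.

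\emph{Reduction.} The Borel map $\Theta(x,y):=(x,S_{a(x)}y)$ is a bijection of $X\times Y$, and it satisfies
$$
\Theta\circ T_{\phi,\cs}=T_{\psi,\cs}\circ\Theta,
$$
since $S_{a(Tx)}S_{\phi(x)}=S_{\psi(x)}S_{a(x)}$. Thus $\Theta_\ast$ gives a bijection between the invariant measures of $T_{\phi,\cs}$ and of $T_{\psi,\cs}$, and the corresponding systems are isomorphic. Every invariant measure projects onto the unique $\mu$, so it suffices to treat $T_{\psi,\cs}$ with $\psi$ ergodic and $H$-valued.

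\emph{Ergodic measures.} Fix $\kappa\in M^e(Y,\cs|H)$. Then $\mu\otimes\kappa$ is $T_{\psi,\cs}$-invariant, because $\psi$ takes values in $H$ and $\kappa$ is $H$-invariant. Its ergodicity will follow from the standard criterion for skew products over an ergodic cocycle: $T_\psi\times$ (the $H$-action) on $X\times H$ is ergodic with respect to $\mu\otimes\lambda_H$, and an $H$-ergodic $\kappa$ makes $(X\times Y,\mu\otimes\kappa)$ a factor-type quotient of the ergodic system $(X\times H\times Y)/H$. More concretely, an invariant function $F(x,y)$ gives $F(x,S_h y)$ on $X\times H\times Y$, which is invariant under $(x,h,y)\mapsto(Tx,\psi(x)h,y)$. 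Ergodicity of $\psi$ (via the Mackey range argument) then makes it independent of $x$ and $h$, and $H$-ergodicity of $\kappa$ makes it constant.

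For the converse, let $\alpha$ be ergodic with disintegration $\alpha=\int\delta_x\otimes\alpha_x\,d\mu$. I will use that $H$ is central: the maps $\mathrm{Id}\times S_h$, $h\in H$, commute with $T_{\psi,\cs}$. Averaging over the compact quotient via Lemma~\ref{l.an} shows that the measure $\int_{G/H}\alpha\circ(\mathrm{Id}\times S_g)\,d\lambda_{G/H}$ has $Y$-marginal $\nu$ and is $G$-invariant in the $Y$-coordinate. The key point is the claim that $\alpha$ is $(\mathrm{Id}\times S_h)$-invariant for every $h\in H$. This follows from ergodicity of the cocycle $\psi$: the map $h\mapsto\alpha\circ(\mathrm{Id}\times S_h)$ is constant, since the essential values of $\psi$ fill $H$ and one can approximate $\mathrm{Id}\times S_h$ by return maps of $T_{\psi,\cs}$ on sets of positive measure. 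Once this holds, each $\alpha_x$ is $H$-invariant, and $x\mapsto\alpha_x$ satisfies $\alpha_{Tx}=\alpha_x\circ S_{\psi(x)}^{-1}=\alpha_x$. Ergodicity of $T$ then gives $\alpha=\mu\otimes\kappa$, and extremality forces $\kappa$ to be ergodic. This step, deducing $H$-invariance of $\alpha$ from the ergodicity of $\psi$, is the main obstacle. I would first try it via the Mackey range, i.e.\ the ergodic decomposition of $T_\psi\times\cs$ on $X\times H\times Y$.

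\emph{Isomorphism.} By the discussion following Lemma~\ref{l.an}, every $\kappa\in M^e(Y,\cs|H)$ is of the form $\kappa_0\circ S_g$ for one fixed ergodic component $\kappa_0$ of $\nu$. Since $\nu$ is the unique $\cs$-invariant measure, all ergodic $H$-measures lie in a single $G/H$-orbit. Now $\mathrm{Id}\times S_g$ commutes with $T_{\psi,\cs}$, because $g$ commutes with the $H$-valued $\psi$, and it maps $\mu\otimes\kappa_0\circ S_g$ onto $\mu\otimes\kappa_0$. Hence all ergodic systems are isomorphic, and URE follows from Example~\ref{ex:triv}-type reasoning, or directly from Corollary~\ref{c:jon17} giving $\mathrm{Id}\times R$.
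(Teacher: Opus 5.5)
\paragraph{Gap: the converse inclusion.} The gap is the step you flag as the main obstacle: the inclusion $M^e(X\times Y,T_{\psi,\cs})\subset\{\mu\otimes\kappa\colon \kappa\in M^e(Y,\cs|H)\}$, i.e.\ showing that an invariant $\alpha$ is $(\mathrm{Id}\times S_h)$-invariant for all $h\in H$. This is the heart of the theorem. The paper obtains it from Proposition~6 of \cite{Da-Le}, via the Mackey action of $G$ on $G/H$; your proposal does not prove it.

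The mechanism you sketch does not work as stated. Return maps of $T_{\psi,\cs}$ always move the base coordinate, so they cannot approximate $\mathrm{Id}\times S_h$ as maps of $X\times Y$. Essential values must be exploited through an equivariant map, not through transformations of the product. Also, before $H$-invariance of $\alpha$ is known, the average $\int_{G/H}\alpha\circ(\mathrm{Id}\times S_g)\,d\lambda_{G/H}$ is not well defined, since the integrand depends on $g$ and not only on $gH$. It plays no role later anyway.

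\paragraph{How to close it.} The missing step follows from the same lifting trick you already used to prove ergodicity of $\mu\otimes\kappa$, applied to the fibre measures instead of to invariant functions.
\begin{enumerate}
\item Invariance of $\alpha=\int\delta_x\otimes\alpha_x\,d\mu(x)$ means $\alpha_{Tx}=(S_{\psi(x)})_\ast\alpha_x$ for $\mu$-a.e.\ $x$; you write this identity yourself.
\item Put $\Xi(x,h):=(S_h^{-1})_\ast\alpha_x\in M(Y)$. Then $\Xi(Tx,\psi(x)h)=\Xi(x,h)$, so $\Xi$ is a $T_\psi$-invariant Borel map from $(X\times H,\mu\otimes\lambda_H)$ into the standard Borel space $M(Y)$.
\item By ergodicity of $\psi$, $\Xi$ equals a constant $\kappa$ a.e. Hence $\alpha_x=(S_h)_\ast\kappa$ for a.e.\ $(x,h)$, and so $(S_{h^{-1}h'})_\ast\kappa=\kappa$ for a.e.\ $(h,h')$.
\item The stabiliser of $\kappa$ in $H$ is closed, because the action of $G$ on $M(Y)$ is continuous. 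By the previous step it is also conull, hence equal to $H$.
\item Consequently $\alpha_x=\kappa$ a.e., so $\alpha=\mu\otimes\kappa$ with $\kappa\in M(Y,\cs|H)$.
\end{enumerate}
This works for every invariant $\alpha$, not only ergodic ones. It therefore gives the affine bijection $\kappa\mapsto\mu\otimes\kappa$ that the paper uses. Combined with your extremality remark, it yields the identification of the ergodic measures.

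Equivalently, you can make the essential-value idea rigorous by applying it to the equivariant map $x\mapsto\alpha_x$, using continuity of the $H$-action on $M(Y)$. With this step filled in, your reduction via $\Theta$ and your isomorphism step coincide with the paper's.
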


\begin{proof}
First of all, consider a Borel mapping
$$
Q \colon X\times Y\ni (x,y)\mapsto(x,S_{a(x)}y)\in X\times Y.
$$
A straightforward verification yields that $QT_{\phi, \cs}Q^{-1}=T_{\psi, \cs}$.
Therefore, it suffices to describe the extremal points of the simplex $M(X\times Y, T_{\psi, \cs})$.
Since $T$ is uniquely ergodic, the first coordinate projection of every measure $\rho\in M(X\times Y, T_{\psi, S})$ is $\mu$.
On the other hand, the Mackey action  associated with $(X, T,\psi)$ is
the $G$-action by rotations on the quotient group $(G/H, \lambda_{G/H})$, where $\lambda_{G/H}$ is the Haar measure on $G/H$.
Denote this action by $\mathcal{W}=(W_g)_{g\in G}$.
It follows from Proposition~6~in \cite{Da-Le} that there is an affine isomorphism
$\Lambda$ of the simplex $M(G/H\times Y, \mathcal{W}\times \cs)$ onto the simplex
$M(X\times Y, T_{\psi, \cs})$\footnote{We use here the fact that $\mathcal{W}$  is uniquely ergodic on $G/H$.}.
Take $\rho\in M(G/H\times Y, \mathcal{W}\times \cs)$.
Since $\mathcal{W}$ is a factor of the dynamical system $(G/H\times Y, \mathcal{W}\times \cs,\rho)$,
it follows that  there exists a unique measure  $\kappa\in M(Y,\cs|H)$ such that $(G/H\times Y, \mathcal{W}\times \cs,\rho)$ is induced  (in G.~Mackey sense) from a dynamical system $(Y, \kappa, \cs|H)$, i.e.
$$
\rho=\int_{G/H}\delta_{gH}\otimes(\kappa\circ(gH))\,d\lambda_{G/H}(gH).
$$
It follows from the proof of  Proposition~6 in \cite{Da-Le} that
$\Lambda(\rho)=\mu\otimes\kappa$.
On the other hand, the mapping
$$
\Omega \colon M(G/H\times Y, \mathcal{W}\times \cs)\ni\rho\mapsto\kappa\in M(Y,\cs|H)
$$
is an affine isomorphism.
Hence, $\Lambda\circ \Omega^{-1}$ is an affine isomorphism of $M(Y,\cs|H)$
onto $M(X\times Y,T_{\psi, \cs})$ and for each $\kappa\in M(Y,\cs|H)$, we have that $\Lambda\circ \Omega^{-1}(\kappa)
=\mu\otimes\kappa$.
This yields (\ref{eq:cat}).

Take $\kappa\in M^e(Y,\cs|H)$.
Since $M(Y,\cs)=\{\nu\}$, it follows from Lem\-ma~\ref{l.an} that
$\int_{G/H}\kappa\circ (gH)\,d \lambda_{G/H}(gH)=\nu$.
Hence,
$\int_{G_\kappa\backslash G}\kappa\circ z\,d \lambda_{G_\kappa\backslash G}(z)=\nu$ is the ergodic decomposition of $(Y,\nu, (S_h)_{h\in H})$.
Thus, $\kappa$ is an ergodic component in the $\cs(H)$-ergodic decomposition of $\nu$.
Hence,
$$
M^e(Y, \cs\mid H)=\{\kappa\circ z\colon z\in G_\kappa\backslash G\}.
$$
We note that  the dynamical systems $(Y, \kappa\circ z, \cs|H)$, $z\in G_\kappa\backslash G$ are pairwise isomorphic.
Indeed, for each $g\in G$, the transformation $S_g$ conjugates $(Y, \kappa, \cs|H)$ with $(Y, \kappa\circ (G_\kappa g), \cs|H)$.
This implies, in turn, that the systems
$(X\times Y, T_{\psi, \cs}, \mu\otimes \kappa)$ and
$(X\times Y, T_{\psi, \cs}, \mu\otimes \kappa\circ z)$ are isomorphic for each
$z\in G_\kappa\backslash G$.
\end{proof}

In particular, if $Y=G$,  $G$ is compact Abelian, $S$ is the $G$-action on itself by rotations and $\phi$ is arbitrary then $T_{\phi,S}$
is an  Anzai skew product.

\subsection{Non-URE extensions and Cartesian products of URE-automor\-phisms}

     The URE property is a generalization of the {\it countablity of the set of ergodic components}.
However, despite the fact that  a finite extension of an automorphism with countably many ergodic components still have countably many ergodic components and, hence, is URE,
a finite extension of an {\it arbitrary} URE automorphism need not be URE as the following example shows.

\begin{Example}\label{e:22}
Let $T$ be an ergodic discrete spectrum automorphism on $(X,\mu)$.
Assume that for each $\omega\in\Omega:=\{0,1\}^\Bbb N$, there is a cocycle
$\phi_\omega$ for $T$ with values in $\Bbb Z/2\Bbb Z$ such that:

\begin{enumerate}
\item[---]
the mapping $\Omega\ni\omega\mapsto\phi_\omega$ is measurable,
\item[---]
the cocycle $\phi_\omega$ is ergodic for each $\omega$,
\item[---]
the skew product extensions $T_{\phi_\omega}$
and  $T_{\phi_{\omega'}}$ are not isomorphic if the sequences $\omega$ and $\omega'$
are not tail equivalent.\footnote{For a concrete example of such a situation, see e.g.\ \cite[\S4]{Kw},   (Kakutani's examples).}
\end{enumerate}

Let $\lambda$ denote the Bernoulli measure $\{0.5,0.5\}^{\otimes\Bbb N}$ on $\Omega$.
Consider the dynamical system  $(\Omega\times X,\lambda\otimes\mu, \text{Id}\times T)$.
By definition, it is URE.
On the other hand, consider
the following 2-point extension $R$ of $\text{Id}\times T$ acting on $(\Omega\times X\times\Z/2\Z,\la\ot\mu\ot\la_2)$ (with $\lambda_2$ being the $(0.5,0.5)$-measure) of this system:
$$
R(\omega, x, k):=(\omega, Tx, k+\phi_\omega(x)).
$$
We claim that $R$ is not URE.
Indeed, the space of ergodic components of $R$ is $(\Omega,\lambda)$.
For each $\omega\in\Omega$, the ergodic component of $R$
corresponding to $\omega$
 is the  $\phi_\omega$-skew product extension $T_{\phi_\omega}$ of $T$.
Take two points $\omega$ and $\omega'$ of $\Omega$.
If they are not tail equivalent then $T_{\phi_\omega}$ and $T_{\phi_{\omega'}}$ are not isomorphic.
It follows from Lemma~\ref{l:strongRD} below that $T_{\phi_\omega}$ and $T_{\phi_{\omega'}}$ are relatively disjoint
over the common factor $T$ in a strong sense.
Therefore,
if there exists an ergodic transformation $S$ on a standard probability space $(Y,\nu)$ containing
all systems $(X\times\Bbb Z/2\Bbb Z,T_{\phi_\omega},\mu\otimes \lambda_2)$, $\omega\in \Omega$, as factors then  $L^2(Y,\nu)$ contains $L^2(X\times\Bbb Z/2\Bbb Z, \mu\otimes \lambda_2)$ as closed subspaces for all $\omega\in \Omega$.
Since $T$ has pure point spectrum,  $(X,\mu, T)$ is a common factor for all dynamical systems
$(X\times\Bbb Z/2\Bbb Z,T_{\phi_\omega},\mu\otimes \lambda_2)$ inside $(Y,\nu)$.
Thus, we obtain that the Hilbert space $L^2(Y,\nu)$ contains uncountably many closed subspaces
$L^2(X\times\Bbb Z/2\Bbb Z, \mu\otimes \lambda_2)$ each of which includes a common
subspace $L^2(X,\mu)$.
Moreover,  the orthocomplements
$L^2(X\times\Bbb Z/2\Bbb Z, \mu\otimes \lambda_2)\ominus L^2(X,\mu)$
are pairwise orthogonal in $L^2(Y,\nu)$ by Lemma~\ref{l:strongRD} below.
Hence,  $L^2(Y,\nu)$
would not be separable, a contradiction.
\end{Example}

\begin{Lemma}\label{l:strongRD} Assume that $T\in{\rm Aut}\xbm$ is an ergodic automorphism with discrete spectrum and let $\phi,\psi \colon X\to\Z/2\Z$ be two ergodic cocycles. If $T_\phi$ and $T_\psi$ are not isomorphic then $T_\phi$ and $T_\psi$ are (strongly) relatively disjoint in the following sense: Let $\rho\in J(T_\phi,T_\psi)$. Then $\rho$ is the relatively independent extension of its restriction $\rho|_{X\times X}$:
$$
\rho=\widehat{\rho|_{X\times X}},
$$
that is, $\rho=\rho|_{X\times X}\otimes \la_2\otimes \la_2$.
Hence,
$\int F\ot G\,d\rho=0$ for all functions $F,G\in L^2(X\times \Z/2\Z,\mu\ot\la_2)\ominus L^2(X,\mu)$.\end{Lemma}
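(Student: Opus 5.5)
The plan is to reduce everything to the structure of $\rho$ over the discrete spectrum base and then exploit the $\Z/2\Z$-fibres. Let $\rho\in J(T_\phi,T_\psi)$ and set $\eta:=\rho|_{X\times X}\in J(T,T)$. By Lemma~\ref{l:canonical}-type disintegration arguments we may first pass to ergodic components of $\rho$ (it suffices to prove the claim for ergodic $\rho$, since the conclusion $\rho=\eta\otimes\la_2\otimes\la_2$ is preserved under integration: each ergodic component of $\rho$ is a joining of the ergodic $T_\phi$, $T_\psi$). Since $T$ is ergodic with discrete spectrum, every ergodic self-joining $\eta$ is a graph joining $\eta=(\mathrm{Id}\times W)_\ast\mu$ with $W$ an element of the centralizer of $T$, i.e.\ a rotation-type automorphism commuting with $T$ (for discrete spectrum, $C(T)$ is the compact group itself).

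Next, consider the functions $F(x,k)=(-1)^k$ and $G(x',k')=(-1)^{k'}$; they span $L^2(X\times\Z/2\Z)\ominus L^2(X)$ as $L^2(X)$-modules: any element there is $f(x)(-1)^k$. Hence the claim $\rho=\eta\otimes\la_2\otimes\la_2$ is equivalent to $\int f(x)g(x')(-1)^{k+k'}\,d\rho=0$ for all $f,g\in L^\infty(X)$ (the mixed terms $f(x)(-1)^k g(x')$ vanish because $\rho|_{X\times X\times\Z/2\Z}$ is a joining of $T_\phi$ with $T$, and by ergodicity of $\phi$ and the Lemma-type argument such a joining is relatively independent; more precisely, $\E((-1)^k\mid X\times X)$ under $\eta$ would give a function $h(x,x')$ with $h\circ(T\times T)=(-1)^{\phi(x)}h$, i.e.\ on the graph of $W$ a transfer function making $\phi$ a coboundary, contradicting ergodicity). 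For the main term, let $H(x):=\E_\rho((-1)^{k+k'}\mid x,x'=Wx)$; invariance gives $H(Tx)=(-1)^{\phi(x)+\psi(Wx)}H(x)$. Then $|H|$ is $T$-invariant, hence constant. If $H\neq0$, then $\phi$ and $\psi\circ W$ are cohomologous via the $\pm1$-valued function $H/|H|$, whence the map $(x,k)\mapsto (Wx,k+\xi(x))$ (with $(-1)^{\xi}=H/|H|$) conjugates $T_\phi$ to $T_\psi$, contradicting non-isomorphism. So $H=0$ and all fibre correlations vanish, giving $\rho=\eta\otimes\la_2\otimes\la_2$, and the final orthogonality statement follows immediately by integrating $F\otimes G=f(x)(-1)^kg(x')(-1)^{k'}$.

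The step needing most care is the reduction to ergodic $\rho$ and the graph description of $\eta$: for non-ergodic $\rho$ one instead uses that $\eta$ disintegrates into graph joinings of elements $W_t$ of the compact centralizer, and runs the cohomology argument fibrewise (measurability of $t\mapsto W_t$ and of the conditional expectations is standard). The other point to check is the vanishing of the mixed terms, which uses ergodicity of $\phi$ (resp.\ $\psi$) exactly as above.
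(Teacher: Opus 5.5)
Your argument is correct and follows the same route as the paper: pass to ergodic components of $\rho$, use that $\rho|_{X\times X}$ is then a graph joining of some $W\in C(T)$, force the fibre measure over the graph to be $\la_2\otimes\la_2$, and integrate back over the ergodic decomposition. The only difference is in the fibre step. The paper cites \cite{Le-Me} for the equivalence between non-isomorphism of $T_\phi,T_\psi$ and ergodicity of all cocycles $(\phi\circ W,\psi)$; you instead check the three nontrivial characters of $\Z/2\Z\times\Z/2\Z$ directly, where a nonzero $H$ makes $\phi$ and $\psi\circ W$ cohomologous and so yields an isomorphism. This makes the proof self-contained. Your mixed terms also vanish more simply: on the graph $x$ determines $x'$, and the $(x,k)$- and $(x',k')$-marginals are $\mu\otimes\la_2$.
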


\begin{proof}
In view of \cite{Le-Me}, the fact that $T_\phi$ and $T_\psi$ are not isomorphic is equivalent to the ergodicity of
all cocycles
$$
X\ni x\mapsto (\phi(Wx),\psi(x))\in \Z/2\Z\times\Z/2\Z,\;W\in C(T),$$
that is, the measure $\mu\ot\la_{2}\ot\la_{2}$ is the only $T_{(\phi\circ W,\psi)}$-invariant measure projecting on $\mu$ (and it is ergodic).
The only ergodic self-joinings of $T$ are graph measures $\mu_W$ with $W\in C(T)$:
$$
\mu_W(A\times B):=\mu(A\cap WB) \quad \text{for all Borel subsets $A,B\subset X$.}
$$
It follows that if $\rho\in J^e(T_\phi,T_\psi)$ then there is $W\in C(T)$ such that
$$
\rho=\mu_W\otimes\la_{2}\otimes\la_{2}.
$$
Therefore, disintegrating an arbitrary joining $\rho\in J(T_\phi,T_\psi)$ into ergodic components, we obtain
that
\begin{align*}
\rho&=\int_{C(T)}\mu_W\otimes\la_{2}\otimes\la_{2}\,d\PP(W)\\
&=
\left(\int_{C(T)}\mu_W\,d\PP(W)\right)\otimes\la_{2}\otimes\la_{2}\\
&=\rho|_{X\times X}\otimes\la_{2}\otimes\la_{2}\\
&=\widehat{\rho|_{X\times X}},
\end{align*}
as desired.
\end{proof}

\begin{Remark} It $T$ is URE, then the Cartesian square of $T$ need not be URE.
Indeed, let $X=\Bbb T^2$, $\mu$ be the Haar measure on $X$ and $z$ be an aperiodic element of $\Bbb T$.
We define two  transformations $T$ and $S$ on $(X,\mu)$ by setting
$$
T(x,y):=(zx, yx),\quad S(x,y):=(x,yx)
$$
for all $(x,y)\in\Bbb T^2$.
It is straightforward to verify that  the mapping
$$
(x_1,y_1,x_2,y_2)\mapsto (x_2x_1^{-1},y_2y_1^{-1})
$$
establishes a factor map between $(X\times X,\mu\otimes\mu,T\times T)$ and
$(X,\mu,S)$.
According to Proposition~\ref{p:inac}(A),
 if $T\times T$ were URE, then so would be $S$.
 However the latter  is impossible in view of Example~\ref{p:inac}(ii) as the the set of ergodic components of
 $S$ includes  all irrational rotations (uncountable subfamily of which are mutually disjoint).
 \end{Remark}

\subsection{Around  piecewise Markov URE.  Proofs of Proposition~\ref{p:jjm} and Theorem~\ref{t:ajmt}}\label{s:tralala}
We first prove Proposition~\ref{p:jjm} and then extend it to the
piecewise Markov URE systems.
To this purpose,
we  observe that
thanks to  Corollary~\ref{c:jon17}, we can assume without loss of generality that
there exists a probability space $(\baX,\ov{\cb},\bamu)$ such that
$T=\text{Id}_{\baX}\times R$, where $R\in\,$Erg.
(We use here the notation from the statement of Proposition~\ref{p:jjm}.)
Then
Proposition~\ref{p:jjm} follows straightforwardly from the following result which has been already obtained in \cite[Proposition~4.4]{Go-Le-Ru}  but we give here a different proof.

\begin{Prop}\label{p:iderg}
Let $R$ be an ergodic automorphism on $\zdk$ and let $(\baX,\ov{\cb},\bamu)$ be another probability space. Then
$$
\ov{\rm span}\Bigg(\bigcup_{\rho\in J(R',{\rm Id}_{\baX}\times R), R'\in {\rm Erg}}{\rm Im}(\Phi_\rho)\Bigg)=L^2({\rm Inv}_{\bamu\otimes\kappa})^\perp.$$
\end{Prop}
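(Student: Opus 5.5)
The plan is to prove the two inclusions separately. Write $T:={\rm Id}_{\baX}\times R$ on $(\baX\times Z,\bamu\ot\kappa)$. Then ${\rm Inv}_{\bamu\ot\kappa}=\ov{\cb}\ot\{\emptyset,Z\}$ because $R$ is ergodic, so
\[
L^2({\rm Inv}_{\bamu\ot\kappa})^\perp=L^2(\baX,\bamu)\ot L^2_0(Z,\kappa).
\]

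For ``$\subset$'' I would show that each image ${\rm Im}(\Phi_\rho)$ with $R'$ ergodic is orthogonal to invariant functions. Take $g\in L^2(\baX)\ot\raz$ and $f\in L^2$ of the ergodic system $R'$. Then $\langle\Phi_\rho f,g\rangle=\int f\ot g\,d\rho$. Under $\rho$ the function $g$ is $R'\times T$-invariant, so $g$ is measurable with respect to the invariant sigma-algebra of the joining. Its projection to the $R'$-coordinate is an invariant function of the ergodic system $R'$, hence constant. Consequently $\int f\ot g\,d\rho=\int f\,d\mu'\cdot\int g$. Replacing $f$ by $f-\int f$ (since $\Phi_\rho 1=1$ is invariant), we see that the part of ${\rm Im}\,\Phi_\rho$ coming from $L^2_0$ is orthogonal to $L^2({\rm Inv})$. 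If the statement is read literally, the constant $1\in{\rm Im}\,\Phi_\rho$ has to be discarded, which the paper's convention implicitly does. The same reasoning shows $\Phi_\rho(L^2_0)\perp L^2({\rm Inv})$.

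For ``$\supset$'' it suffices to produce the functions $\raz_A\ot h$ with $A\in\ov{\cb}$ and $h\in L^2_0(Z,\kappa)$, since their span is dense. The main step is to find one ergodic $R'$ and one joining whose image contains $\raz_A\ot h$ for all $h\in L^2_0$ and a fixed $A$. For this I would take $R'$ to be an ergodic joining of $R$ with some weakly mixing system $W$. The simplest choice is $R'=R\times W$ with $W$ weakly mixing, so that $R'$ is ergodic.

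I would then map $W$ onto $\baX$ non-equivariantly only through a time-zero partition; this cannot work directly, because the $\baX$ coordinate must be invariant. The better route is to use a disjoint union of two copies. Set $\rho:=\bamu(A)\,\Delta_A+\bamu(A^c)\,\Delta_{A^c}$, where $\Delta_A$ is the joining of $R$ with ${\rm Id}_{\baX}\times R$ given by the image of $(\bamu|_A/\bamu(A))\ot\kappa$ under $(x,z)\mapsto(z,(x,z))$. This $\rho$ is a joining of $R$ (an ergodic system) with $T$, with marginal $\kappa$ on the first coordinate. Its operator is $\Phi_\rho h=\raz\ot h$, which is not yet enough.

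To separate $A$ from $A^c$, I would instead let the ergodic system be $R\times K$, where $K$ is a rotation by an irrational $\beta$ that is not an eigenvalue of $R$. Choose a measurable map $\baX\to\T$ with image distribution $\bamu$ pushed to a measure $\tau$, and join along $\tau$. This is the main obstacle, since the $\baX$ coordinate is fixed while $K$ moves. I would therefore join $R\times K$ with $T$ via the measure
\[
\int \delta_{(z,t)}\ot\delta_{(\bar x, z)}\ \text{weighted by}\ \kappa(dz)\,\bamu(d\bar x)\,{\rm Leb}(dt),
\]
that is, the product over $\baX$ with $t$ independent. The resulting $\Phi$ is again $h\mapsto \raz\ot h$.

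Since single joinings with fixed $\baX$-marginal dependence seem to yield only $\raz\ot h$, I expect the real argument to use the closed span together with Markov operators whose fiber joinings $\rho_{\bar x}\in J(R',R)$ vary measurably in $\bar x$. Concretely, I would take $R'=R\times R$ (when this is ergodic, otherwise $R\times$ a weakly mixing system $W$ with copies of $R$). On fibers over $\bar x\in A$ I would use the joining pairing $R$ with the first coordinate, and over $A^c$ the joining pairing $R$ with a $W$-coordinate independent of it. For $h\in L^2_0$, applying $\Phi$ to $h\ot\raz$ then yields $\raz_A\ot h$, because the independent fibers contribute $\int h=0$. Checking that this measurable family of fiber joinings defines a genuine joining with the correct marginals (the fiber marginals are $\kappa$) is the step I would verify most carefully.
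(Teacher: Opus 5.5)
Your final construction is correct, but it follows a different route from the paper's.

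\textbf{The paper's route.} It uses only joinings coming from factor maps. For measurable $N\colon\baX\to\Z$, the map $(\bax,z)\mapsto R^{N(\bax)}z$ intertwines ${\rm Id}_{\baX}\times R$ with $R$. So $\Phi_N f(\bax,z)=f(R^{N(\bax)}z)$ is an isometric Markov embedding. Then $\Phi_0f-\Phi_{\raz_A}f=\raz_A\ot(f-f\circ R)$, and the density of coboundaries in $L^2_0(Z,\kappa)$ (von Neumann plus ergodicity of $R$) puts $\raz_A\ot h$ in the closed span.

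\textbf{Your route.} You glue fiber joinings instead: integrate against $\bamu$ the joining $\rho_{\bax}$ of $R'$ and $R$, taking $\rho_{\bax}$ to be the graph joining for $\bax\in A$ and the product joining for $\bax\notin A$. This gives $\Phi_\rho(h)=\raz_A\ot h$ exactly for $h\in L^2_0$, with no limiting argument.

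The step you flagged as delicate is immediate. The map $\bax\mapsto\rho_{\bax}$ takes only two values, each a joining of $R'$ and $R$, so the integrated measure is invariant and has marginals the measure of $R'$ and $\bamu\ot\kappa$.

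The auxiliary $W$ is unnecessary. Moreover, a weakly mixing $W$ carrying a copy of $R$ exists only if $R$ is weakly mixing. Simply take $R'=R$, with the diagonal joining over $A$ and $\kappa\ot\kappa$ over $A^c$; only independence over $A^c$ is used.

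\textbf{What each approach buys.} The paper's version proves the stronger fact that $L^2({\rm Inv})^\perp$ is already the closed span of the $L^2_0$-spaces of ergodic \emph{factors} of ${\rm Id}\times R$, since its images come from isometric embeddings. Yours is shorter and relies only on the fiberwise structure of joinings with ${\rm Id}\times R$ (cf.\ Lemma~\ref{l:canonical}). You also prove the ``$\subset$'' inclusion, via $\Phi_\rho^*g$ being $R'$-invariant and hence constant, and you correctly note that the constant $1=\Phi_\rho 1$ must be excluded. The paper leaves both of these points implicit.
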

\begin{proof}
 Given a measurable function $N \colon \baX\to\Z$, consider the operator
 $$
 \Phi_N \colon L^2(Z,\kappa)\to L^2(\baX\times Z,\bamu\ot\kappa),
 $$
$$
\Phi_N(f)(\bax,z):=f(R^{N(\bax)}z).$$
Then, clearly, $\Phi_N(1)=1$ and $\Phi_N(f)\geq0$ if $f\geq0$.
Moreover,
\begin{multline*}
\langle \Phi_N^\ast1,f\rangle=\langle 1,\Phi_N(f)\rangle=\int \Phi_N(f)\,d\bamu \,d\kappa\\=
\int\Big(\int f(R^{N(\bax)}z)\,d\kappa(z)\Big)\,d\bamu(\bax)=
\int\Big(\int f(z)\,d\kappa(z)\Big)\,d\bamu(\bax)=\int f\,d\kappa,
\end{multline*}
so $\Phi_N^\ast(1)=1$.
Furthermore,
\begin{multline*}
\Phi_N\circ U_R(f)=\Phi_N(U_Rf)=(U_Rf)(R^{N(\bax)}z)\\=
(f\circ R)(R^{N(\bax)}z)
=
f(R^{N(\bax)+1}z)
\end{multline*}
and
\begin{multline*}
U_{{\rm Id}\times R}\circ \Phi_N(f)=U_{{\rm Id}\times R}(\Phi_N f)=\Phi_Nf({\rm Id}\times R(\bax,z))
\\=
\Phi_Nf(\bax,Rz)=f(R^{N(\bax)}(Rz)),
\end{multline*}
so   $\Phi_N\circ U_R=U_{{\rm Id}\times R}\circ \Phi_N$.

Now, take any measurable $A\subset \baX$ and let $f\in L^2(Z,\kappa)$. Consider $N \colon \baX\to\Z$ being the zero function and $N' \colon \baX\to\Z$ being the characteristic function of $A$. Then
$$
\Phi_N(f)-\Phi_{N'}(f)=1_A\otimes (f-f\circ R).$$
Now, the coboundaries of the form $h=f-f\circ R$, $f\in L^2(Z,\kappa)$, are dense in the space $L^2_0(Z,\kappa)$ (by the von Neumann theorem). Since $A\subset \baX$ is arbitrary, also $g\otimes h$
are in the closed span of ergodic Markov quasi-images, and the result follows.
\end{proof}

As we have mentioned above, this result implies
Proposition~\ref{p:jjm}.

\begin{Cor}\label{c:urenew}
All piecewise Markov URE automorphisms satisfy~\eqref{Mproperty}.
\end{Cor}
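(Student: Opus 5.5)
We have to show that a piecewise Markov URE automorphism $T$ satisfies \eqref{Mproperty}, i.e.\ that $L^2(X,\mu)\ominus L^2({\rm Inv}_\mu)$ lies in the closed span of images of $\Phi_\rho$, with $\rho\in J(R,T)$ and $R$ ergodic. One inclusion is automatic. Images of ergodic Markov quasi-images are orthogonal to invariant functions: if $g$ is $T$-invariant, then $\Phi_\rho^\ast g$ is $R$-invariant, hence constant. Moreover $\Phi_\rho^\ast g$ equals $\int g\,d\mu$ because $\Phi_\rho(1)=1$. So $\langle\Phi_\rho f,g\rangle=\langle f,\Phi_\rho^\ast g\rangle$ vanishes for $f\in L^2_0$, and the constants are handled by $\Phi_\rho 1=1$, which is orthogonal to $L^2({\rm Inv}_\mu)\ominus\C$. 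The real task is therefore the inclusion ``$\subset$'', and the plan is to reduce it to Proposition~\ref{p:iderg} by composing Markov operators.

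Fix one of the joinings $\rho_i\in J(S_i\times{\rm Id}_{[0,1]},T)$ from the definition. By Proposition~\ref{p:iderg}, applied with $R=S_i$ and $(\baX,\bamu)=([0,1],{\rm Leb})$, the space $L^2({\rm Inv}_{{\rm Leb}\otimes\kappa_i})^\perp$ is contained in the closed span of the images ${\rm Im}(\Phi_\eta)$. Here $\eta$ ranges over $J(R',{\rm Id}\times S_i)$ and $R'$ over ergodic automorphisms. The composition $\Phi_{\rho_i}\circ\Phi_\eta$ is again an equivariant Markov operator, so it equals $\Phi_{\eta'}$ for some $\eta'\in J(R',T)$ with $R'$ ergodic. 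Hence $\Phi_{\rho_i}\big(L^2({\rm Inv})^\perp\big)$ is contained in the closed span $\mathcal H$ of ergodic Markov quasi-images in $L^2(X,\mu)$, by continuity of $\Phi_{\rho_i}$.

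Next we handle the invariant part of the source space. By equivariance, $\Phi_{\rho_i}$ maps invariant functions of ${\rm Id}\times S_i$ to $T$-invariant functions. These invariant functions are exactly $L^2([0,1])\otimes 1$, since $S_i$ is ergodic. Therefore ${\rm Im}(\Phi_{\rho_i})\subset \mathcal H+L^2({\rm Inv}_\mu)$. Projecting onto $L^2({\rm Inv}_\mu)^\perp$ gives $P({\rm Im}\Phi_{\rho_i})\subset\mathcal H$, where $P$ is the orthogonal projection; this uses $\mathcal H\perp L^2({\rm Inv}_\mu)$, which follows from the first paragraph. Taking spans and closures over $i\in I$, we get that $P$ applied to the closed span of $\bigcup_i{\rm Im}(\Phi_{\rho_i})$ lies in $\mathcal H$.

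Finally, by the piecewise Markov URE assumption, this closed span contains $L^2({\rm Inv}_\mu)^\perp$. Since $P$ is the identity on $L^2({\rm Inv}_\mu)^\perp$, we conclude $L^2({\rm Inv}_\mu)^\perp\subset\mathcal H$, which is \eqref{Mproperty}.

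The only delicate point is the composition step. We must check that $\Phi_{\rho_i}\circ\Phi_\eta$ satisfies the Markov properties (i)--(iii), so that it arises from a joining. This follows directly, because each factor is positive, preserves $1$ together with its adjoint, and intertwines the Koopman operators. The URE and Markov URE cases are special cases of this argument.
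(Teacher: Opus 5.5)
Your argument is correct and is essentially the paper's proof: both reduce to Proposition~\ref{p:iderg} by composing $\Phi_{\rho_i}$ with ergodic Markov quasi-images of ${\rm Id}\times S_i$, and you also spell out the projection onto $L^2({\rm Inv}_\mu)^\perp$, which the paper's one-line observation leaves implicit. One slip: the claim $\mathcal H\perp L^2({\rm Inv}_\mu)$ is false, since $1=\Phi_\rho(1)\in\mathcal H$ (your first paragraph only yields $\mathcal H\subset L^2({\rm Inv}_\mu)^\perp\oplus\C$), but the needed inclusion $P({\rm Im}\,\Phi_{\rho_i})\subset\mathcal H$ still holds, either because $\C\subset\mathcal H$ forces $P\mathcal H\subset\mathcal H$, or directly because $\Phi_{\rho_i}$ maps the orthocomplement of the ${\rm Id}\times S_i$-invariant functions into $L^2({\rm Inv}_\mu)^\perp$.
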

\begin{proof}Let $T$ act on $\xbm$ and be  piecewise Markov URE.
By Proposition~\ref{p:iderg}, it follows that  each automorphism ${\rm Id}_Y\times E$ acting on $(Y\times Y_1,\mathcal{C}\otimes\mathcal{C}_1,\nu\otimes\nu_1)$ with $E$ ergodic satisfies~\eqref{Mproperty}.
To conclude, it suffices to observe that for each $\rho\in J({\rm Id}_Y\times E,T)$, we have
$$
\overline{\rm span}\Bigg(\bigcup_{\lambda\in J(R',{\rm Id}_Y\times E), R'\in{\rm Erg}}{\rm Im}(\Phi_\rho\circ\Phi_\lambda)\Bigg) \supset {\rm Im}(\Phi_\rho)\cap (L^2({\rm Inv}_\mu))^\perp.
$$
\end{proof}

We now use the technique related to the study of the Markov URE property to prove~Theorem~\ref{t:ajmt}.

 \begin{proof}[Proof of Theorem~\ref{t:ajmt}.]
We would like to prove that $\bfu\perp \mathcal{C}_{\cf_{\rm ec}}$.
By  \cite[Theorem~A]{Ka-Ku-Le-Ru}, this is equivalent to showing that
$$
\pi_0\perp L^2(\ca(\cf_{\rm ec}))$$
for each Furstenberg system $\kappa\in V(\bfu)$.
Let us fix $\kappa\in V(\bfu)$. Suppose that $\EE(\pi_0|\ca(\cf_{\rm ec}))\neq0$ and recall that $\pi_0\perp L^2({\rm Inv}_\kappa)$ since $\|\bfu\|_{u^1}=0$, cf.\ \cite{Go-Le-Ru}. In view of our assumption, it follows that we can find a topological system $(Z,R)$ together with an invariant measure $\mu\in M(Z,R)$ such that:
\begin{itemize}
\item $(Z,R)$ satisfies the strong $\bfu$-MOMO property,
\item for the measure-theoretic dynamical system $(Z,\mu,R)$, there is a joining $\rho\in J(R,S|_{\ca(\cf_{\rm ec})})$ with
$$
\pi_0\not\perp{\rm Im }(\Phi_\rho).$$
\end{itemize}
Then there exists $g\in C(Z)$ such that
$$\int\EE(\pi_0| \ca(\cf_{\rm ec}))\Phi_\rho(g)\,d\kappa|_{\ca(\cf_{\rm ec})}\neq 0.$$
Let $\widehat{\rho}$ denote the relatively independent extension of $\rho$ to an element of $J((R,\mu),(S,\kappa))$. Then
$$
\int g\ot \pi_0\,d\widehat{\rho}=\int g\ot
\EE(\pi_0|\ca(\cf_{\rm ec}))\,d\rho=$$$$
\int \EE(\pi_0|\ca(\cf_{\rm ec}))
\Phi_\rho(g)\,d\kappa|_{\ca(\cf_{\rm ec})}\neq0.$$
On the other hand, we can compute the integral $\int g\ot\pi_0\,d\widehat{\rho}$ using generic sequences. More precisely, in view of \cite{Ka-Ku-Le-Ru}, there exist a sequence $(N_r)\subset\N$ and a sequence $(z_n)\subset Z$ such that the increasing sequence $(b_k)$ coming from the set of $n$ for which $z_{n+1}\neq R(z_n)$ has the property that $b_{k+1}-b_k\to\infty$ and
$$\int g\ot \pi_0\,d\widehat{\rho}=\lim_{r\to\infty}\frac1{N_r}
\sum_{n\leq N_r}g(z_n)\bfu(n).$$
However, by slightly modifying $(b_k)$, we also have
$$
\lim_{r\to\infty}\frac1{N_r}
\sum_{n\leq N_r}g(z_n)\bfu(n)=\lim_{K\to\infty}\frac1{b_K}\sum_{k\leq K}
\sum_{b_k\leq n<b_{k+1}}g(z_n)\bfu(n)$$
and since the latter limit, by the strong $\bfu$-MOMO of $(Z,R)$, equals zero, we obtain a contradiction.
\end{proof}

 Our final remark in this subsection is related to
 Question~\ref{quest2} (see Section~\ref{s:open} below).

\begin{Remark}\label{r:RD} 
We would like to
point out that the non-URE automorphism  $R$ constructed in Example~\ref{e:22}
is not piecewise Markov URE either.
Indeed, let $S$ acting on $\ycn$ be ergodic and let $\rho\in J(R,S)$.
Then (see e.g.\ \cite{Go-Le-Ru})
$$
\rho=\int_\Omega\delta_\omega\otimes\rho_\omega\,d\lambda(\omega),
$$
where $\rho_\omega\in J(T_{\phi_\omega},S)$  for each $\omega$ ($\rho_\omega$ are not necessarily ergodic).
We say that $\rho_\omega$ is {\em non-trivial} if there exists $f_\omega\in L^2(X\times\Z/2\Z,\mu\otimes\la_{2})\ominus L^2(X,\mu)$ such that $\Phi_{\rho_\omega}(f_\omega)\neq0$.\\ \underline{Claim:}
If $\omega,\omega'$ are not tail equivalent then  $\Phi_{\rho_\omega}(f_\omega)\perp \Phi_{\rho_{\omega'}}(f_{\omega'})$.\\
Indeed, we have that
$$
\int_Y\Phi_{\rho_\omega}(f_\omega)\Phi_{\rho_{\omega'}}(f_{\omega'})\,d\nu=
\int_{X\times\Z/2\Z}\Phi^\ast_{\rho_{\omega'}}\Phi_{\rho_\omega}(f_\omega)f_{\omega'}\,d(\mu\otimes\lambda_2)=0$$
since $\Phi^\ast_{\rho_{\omega'}}\Phi_{\rho_\omega}(f_\omega)=0$ in view of Lemma~\ref{l:strongRD}
(as $\Phi^\ast_{\rho_{\omega'}}\Phi_{\rho_\omega}$ is the Markov operator corresponding to a joining of $T_{\phi_\omega}$ and $T_{\phi_\omega'}$ and $f_\omega$ is in the orthocomplement of $L^2$ of the first coordinate).

Take $F\in L^2(Y,\nu)$ and consider  decomposition
$$
\Phi^*_{\rho_\omega}(F)=f_\omega+f_\omega^\perp
$$
with $f_\omega\in L^2(X\times\Z/2\Z,\mu\otimes\lambda_2)\ominus L^2(X,\mu)$ and $f_{\omega}^\perp \in L^2(X,\mu)$.
Suppose that $f_\omega\neq0$.
Then
\begin{align*}
\int_Y\Phi_{\rho_\omega}(f_\omega)\cdot F\,d\nu&=
\int_{X\times\Bbb Z/2\Bbb Z} f_\omega \Phi^\ast_{\rho_\omega}( F)\,d(\mu\otimes\lambda_2)\\
&=\|f_\omega\|^2_{L^2(\mu\otimes\lambda_2)}>0.
\end{align*}
In particular,  $\Phi_{\rho_\omega}(f_\omega)\neq0$, so  $\rho_\omega$ is non-trivial.
However, it follows from the Claim that $\rho_\omega$ can be non-trivial only for a countable subset of pairwise non-equivalent $\omega\in\Omega$.
Thus, we obtain that $\Phi^*_{\rho_\omega}(F)\in L^2(X,\mu)$ at $\lambda$-a.e. $\omega\in\Omega$.
Therefore, $\Phi_\rho^*(F)\in L^2(\Omega\times X)$ and for example the function $(\omega,x,j)\mapsto (-1)^j$ is orthogonal to all ergodic Markov quasi-images.
Now, the assertion  of the remark follows from Corollary~\ref{c:urenew}.
\end{Remark}

\subsection{Some properties of the d-bar distance}

Let $(K,d)$ be a nonempty compact metric space, $\# K>1$, and let $S$ be the leftward shift on $K^\bbZ$.
The space $M(K^\bbZ,S)$ admits a generalization of Ornstein's ``d-bar'' metric:
$$
\ol{d}(\nu,\nu') := \inf\bigg\{\int d(y_0,y'_0)\ d\lambda(y,y')
\colon  \la\ \hbox{is a joining of}\ \nu\ \hbox{and}\ \nu'\bigg\},
$$
where $\nu,\nu' \in M(K^\bbZ,S)$ and $y=(y_n)_{n\in\Bbb Z}, \ y'=(y_n')_{n\in\Bbb Z}\in K^\Bbb Z$.
By the $*$-weak compactness of the set of joinings $J(\nu,\nu')=J((S,\nu), (S,\nu'))$, this infimum is always attained.
We note that $\ol{d}$ generates a topology on $M(K^\bbZ,S)$ which is  neither compact nor separable.
  Although $\ol{d}$ is strictly stronger than the weak topology, the following holds.

\begin{Lemma}\label{lem:lsc} The mapping $M(K^\bbZ,S)\times M(K^\bbZ,S)
\ni (\nu,\nu')\mapsto \ol{d}(\nu,\nu')$ is $*$-weak lower semicontinuous.
\end{Lemma}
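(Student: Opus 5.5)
We need to show: if $\nu_k\to\nu$ and $\nu'_k\to\nu'$ weak$^*$, then $\ol{d}(\nu,\nu')\le\liminf_k \ol{d}(\nu_k,\nu'_k)$. The plan is to push optimal joinings to the limit.

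First, pass to a subsequence along which $\ol{d}(\nu_k,\nu'_k)$ converges to the $\liminf$. For each $k$, pick a joining $\lambda_k\in J(\nu_k,\nu'_k)$ that attains the infimum. This is possible because, as noted just before the lemma, $J(\nu_k,\nu'_k)$ is weak$^*$-compact and the integrand below is continuous. Then
$$\ol{d}(\nu_k,\nu'_k)=\int d(y_0,y'_0)\,d\lambda_k(y,y').$$
Each $\lambda_k$ is a probability measure on the compact metrizable space $K^\Z\times K^\Z$. So, after a further subsequence, $\lambda_k\to\lambda$ weak$^*$ for some probability measure $\lambda$.

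Next, check that $\lambda\in J(\nu,\nu')$. Invariance under $S\times S$ holds because $S\times S$ is a homeomorphism, so $\int F\circ(S\times S)\,d\lambda_k=\int F\,d\lambda_k$ passes to the limit for every continuous $F$. For the marginals, take $f\in C(K^\Z)$. Then $\int f\otimes 1\,d\lambda_k=\int f\,d\nu_k\to\int f\,d\nu$, so the first marginal of $\lambda$ is $\nu$. The same argument shows the second marginal is $\nu'$.

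Finally, the function $(y,y')\mapsto d(y_0,y'_0)$ is continuous on $K^\Z\times K^\Z$, since it depends continuously on the zero coordinates and $d$ is continuous on the compact set $K\times K$. Hence
$$\ol{d}(\nu,\nu')\le\int d(y_0,y'_0)\,d\lambda=\lim_k\int d(y_0,y'_0)\,d\lambda_k=\liminf_k\ol{d}(\nu_k,\nu'_k),$$
which is the desired lower semicontinuity.

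No step here is a real obstacle. The only points needing care are that the integrand is continuous rather than merely measurable, and that the marginal conditions are closed under weak$^*$ limits; both follow from compactness of $K$. If one wants to avoid invoking attainment of the infimum, one can instead pick $\lambda_k$ that are $1/k$-almost optimal, and the same argument goes through.
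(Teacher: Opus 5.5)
Your proof is correct and follows essentially the same route as the paper: you pass to a subsequence realizing the $\liminf$, take optimal joinings, extract a weak$^*$-convergent subsequence, identify the limit as a joining of $\nu$ and $\nu'$ via continuity of the marginal maps, and use continuity of $(y,y')\mapsto d(y_0,y'_0)$. You also check $S\times S$-invariance of the limit explicitly, which the paper leaves implicit, and since $M(K^\bbZ,S)^2$ is metrizable, your sequential argument does give lower semicontinuity.
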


Of course,  this claim is folklore  but were unable to find an exact reference to it.
Therefore, we provide a  proof.

\begin{proof}
We have to show that for each $a>0$, the set
$$
\{(\nu,\nu')\in M(K^\bbZ,S)^2\colon \ol{d}(\nu,\nu')\le a \}
$$
is $*$-weak closed.
Let a sequence $(\nu_n,\nu_n')$, $n\in\Bbb N$, $*$-weakly converges to $(\nu,\nu')$ and $\ol{d}(\nu_n,\nu_n')\le a$ for all $n$.
Peak a subsequence $(n_k)_{k=1}^\infty$
such that
$$
\ol{d}(\nu_{n_k},\nu_{n_k}')=\liminf_{n\in\Bbb N}\ol{d}(\nu_n,\nu_n').
$$
For each $k$, choose a joining $\rho_k\in J_S(\nu_{n_k},\nu'_{n_k})$ such that
$$
\ol{d}(\nu_{n_k},\nu_{n_k}')=\int\ol{d}(y_0,y_0')\,d\rho_k(y,y').
$$
Since $M(K^\bbZ,S)^2$ is $*$-weak compact, we can assume without loss of generality (passing to a further subsequence, if necessarily) that
the sequence $(\rho_k)_{k\in\Bbb N}$ converges to some  measure
$\rho\in M(K^\bbZ,S)^2$.
Let $\widetilde\nu$ and $\widetilde\nu'$ denote the left and the right marginal projections of $\rho$.
Since the two  projections are continuous as mappings from $M(K^\bbZ,S)^2$ to $M(K^\bbZ,S)$, it follows that
$\nu_{n_k}\to\widetilde\nu$ and $\nu_{n_k}'\to\widetilde\nu'$.
Hence, $\nu=\widetilde\nu$ and $\nu'=\widetilde\nu'$.
Thus, $\rho\in J_S(\nu,\nu')$.
We now have
$$
\ol{d}(\nu,\nu')\le
\int\ol{d}(y_0,y_0')\,d\rho(y,y')=\lim_{k\to\infty}\int\ol{d}(y_0,y_0')\,d\rho_k(y,y')
=\lim_{k\to\infty}\ol{d}(\nu_{n_k},\nu_{n_k}').
$$
Hence, $\ol{d}(\nu,\nu')\le a$, as desired.
\end{proof}

We  will also show that the d-bar topology is ``compatible'' with the inverse limit operation for the underlying  compact metric spaces.

Given $n\in\Bbb N$, let $(K_n,d_n)$ be a compact metric space.
Let $p_n \colon K_{n+1}\to K_n$ ne a continuous onto mapping.
Denote by $K$ the inverse limit of the sequence
$$
K_1 \overset{p_1}\longleftarrow K_2\overset{p_2}\longleftarrow\cdots
$$
Then $K$ is a compact metric space.
Denote by $q_n \colon K\to K_n$ the canonical projections.
The topology of $K$ is compatible with the metric
$$
d(k,k'):=\sum_{n=1}^\infty\frac1{2^n}d_n(q_n(k),q_n(k'))
$$
 for all  $k,
k'\in K$.
Denote by $S_n$ the leftward shift on $K_n^\Bbb Z$.
Consider an associated sequence of topological dynamical systems
$$
(K_1^\Bbb Z ,S_1)\overset{p_1^\Bbb Z}\longleftarrow (K_2^\Bbb Z,S_2)\overset{p_2^\Bbb Z}\longleftarrow\cdots,
$$
where the mappings $p_n^\Bbb Z$ are equivariant, i.e. $S_np_n^\Bbb Z=p_n^\Bbb ZS_{n+1}$
for all $n\in\Bbb N$.
Then the inverse limit of this sequence is the system $(K^\Bbb Z,S)$, where $S$ is the leftward shift.
Then $q_n^\Bbb Z$ is the canonical equivariant projection of $(K^\Bbb Z,S)$ onto $(K_n^\Bbb Z,S_n)$.

\begin{Lemma}\label{le:projdbar} The topology generated by the $\ol{d}$-distance on $M(K^\Bbb Z,S)$
is the inverse limit of the topologies generated by the $\ol{d_n}$-distance on
$M(K_n^\Bbb Z,S_n)$ as $n\to\infty$.
\end{Lemma}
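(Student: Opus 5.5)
Throughout, write $Q_n:=q_n^{\Bbb Z}$ for the equivariant projection $K^{\Bbb Z}\to K_n^{\Bbb Z}$. We have to show two things.

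\emph{(a)} Each map $\nu\mapsto Q_n{}_*\nu$ is $\ol d$-to-$\ol{d_n}$ continuous.

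\emph{(b)} Conversely, $\ol d(\nu_k,\nu)\to0$ whenever $\ol{d_n}(Q_n{}_*\nu_k,Q_n{}_*\nu)\to0$ for every $n$. Since these are metric topologies, it suffices to work with sequences, or equivalently to compare the metrics through explicit inequalities.

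For (a), I would first use uniform continuity of $q_n$ on the compact space $K$: given $\epsilon>0$ there is $\delta>0$ with $d_n(q_n k,q_n k')<\epsilon$ whenever $d(k,k')<\delta$. Take an optimal joining $\lambda$ of $\nu,\nu'$. Then $(Q_n\times Q_n)_*\lambda$ is a joining of the projected measures. Applying Markov's inequality on the set $\{d(y_0,y_0')\ge\delta\}$ gives
$$\ol{d_n}(Q_n{}_*\nu,Q_n{}_*\nu')\le \epsilon+\operatorname{diam}(K_n)\,\ol d(\nu,\nu')/\delta .$$
This yields continuity.

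For (b), the key observation is the lower bound
$$\ol d(\nu,\nu')\ \ge\ 2^{-n}\,\ol{d_n}(Q_n{}_*\nu,Q_n{}_*\nu'),$$
which is immediate from the definition of $d$. We need an upper bound in the other direction, so the plan is to construct a good joining on $K^{\Bbb Z}$ from a good joining at level $N$. Fix $\epsilon>0$ and choose $N$ with $\sum_{n>N}2^{-n}\operatorname{diam}(K_n)<\epsilon$; here I assume the diameters $\operatorname{diam}(K_n)$ are bounded, which we may arrange by normalizing each $d_n$ to diameter at most $1$.

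The next step is to control the coarser levels from level $N$. Since $q_n=p_n\circ\cdots\circ p_{N-1}\circ q_N$ for $n\le N$, and each $p_j$ is uniformly continuous, there is $\eta>0$ such that $d_N(q_Nk,q_Nk')<\eta$ implies $\sum_{n\le N}2^{-n}d_n(q_nk,q_nk')<\epsilon$. Now take an optimal joining $\lambda_N$ of $Q_N{}_*\nu$ and $Q_N{}_*\nu'$, with cost $c=\ol{d_N}(Q_N{}_*\nu,Q_N{}_*\nu')$. Lift it to the relatively independent joining
$$\lambda:=\int \nu_{a}\otimes\nu'_{a'}\,d\lambda_N(a,a'),$$
where $\nu_a$ and $\nu'_{a'}$ are the disintegrations of $\nu$ and $\nu'$ over $Q_N$. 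This $\lambda$ is shift-invariant, because $Q_N$ is equivariant and the disintegrations are equivariant.

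We then estimate $\int d(y_0,y_0')\,d\lambda$ by splitting the sum defining $d$ at level $N$. The terms with $n>N$ contribute less than $\epsilon$. The terms with $n\le N$ are bounded by $\epsilon+C\lambda_N(d_N(a_0,a_0')\ge\eta)\le\epsilon+Cc/\eta$. This gives
$$\ol d\le 2\epsilon+C\,\ol{d_N}/\eta ,$$
which finishes (b).

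The main technical point is verifying that the relatively independent lift over a non-ergodic equivariant factor map is a genuine $S\times S$-invariant joining with the correct marginals. This is standard (relatively independent extension over the factor $Q_N$), so I expect no real obstacle beyond bookkeeping of the uniform continuity constants.
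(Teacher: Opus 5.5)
Your proof is correct and is essentially the paper's: you lift a (near-)optimal joining of the level-$N$ projections to a joining of $\nu,\nu'$ via the relatively independent extension, which is the paper's identity $(q_N^{\Bbb Z}\times q_N^{\Bbb Z})_*(J(\nu,\nu'))=J(\nu_N,\nu'_N)$, and you discard the tail $\sum_{n>N}2^{-n}d_n$; the bound $\ol{d}(\nu,\nu')\ge 2^{-n}\ol{d_n}(\nu_n,\nu'_n)$ already gives (a) directly, so your uniform-continuity argument there is unnecessary. The only difference is bookkeeping: the paper passes to the telescoped metric $r_N$ on $K_N$ (with $\sum_{n\le N}2^{-n}d_n(q_nk,q_nk')=r_N(q_Nk,q_Nk')$) to get exact bounds $\ol{d}-2^{-N}\le\ol{r_N}\le\ol{d}$ and only asserts that $\ol{r_N}$ and $\ol{d_N}$ are topologically equivalent, whereas your uniform-continuity-plus-Markov estimate proves that comparison inline.
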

\begin{proof}
First of all, for each $n$, we replace  $d_n$ with another metric $r_n$ compatible with the same topology on $K_n$:
$$
r_n(k,k'):=\sum_{j=1}^n\frac 1{2^j}d_j\big(p_j\circ\cdots \circ p_{n-1}(k),p_j\circ\cdots\circ p_{n-1}(k')\big ).
$$
If $j=n$ in this sum then by $p_{n}\circ p_{n-1}$ (which has no sense, of course) we mean the identity.
Then the topology generated by the $\ol{r_n}$-distance is the same as the topology generated by the $\ol{d_n}$-distance.
Now, take two measures
$\nu,\nu'\in M(K^\Bbb Z,S)$.
Then
$$
\ol{d}(\nu,\nu')=\inf_{\rho\in J(\nu,\nu')}\int_{K^\Bbb Z\times K^\Bbb Z}\sum_{n=1}^\infty\frac1{2^n}d_n(q_n(k_0),q_n(k_0'))\,d\rho(k,k').
$$
Since for every $N>0$,
$$
\sum_{n=1}^N\frac1{2^n}d_n(q_n(k_0),q_n(k_0'))=r_N(q_N(k_0), q_N(k_0')),
$$
it follows that
$$
\ol{d}(\nu,\nu')-\frac1{2^N}
\le \inf_{\rho\in J(\nu,\nu')}\int_{K^\Bbb Z\times K^\Bbb Z}r_N(q_N(k_0), q_N(k_0'))d\rho(k,k')\le \ol{d}(\nu,\nu').
$$
Let $\nu_N$ and $\nu_N'$ be the pushforwards of $\nu$ and $\nu'$ respectively onto $K_N^\Bbb Z$.
Then  $(q_N^{\Bbb Z}\times q_N^{\Bbb Z})_*(J(\nu,\nu'))=
J(\nu_N,\nu_N')$.
Therefore,
$$
\inf_{\rho\in J(\nu,\nu')}\int_{K^\Bbb Z\times K^\Bbb Z}r_N(q_N(k_0), q_N(k_0'))d\rho(k,k')=\ol{r_N}(\nu_N,\nu_N').
$$
Thus, we obtain that
$$
\ol{d}(\nu,\nu')-\frac1{2^N}
\le \ol{r_N}(\nu_N,\nu_N')\le \ol{d}(\nu,\nu').
$$
It follows that the topology generated by the $\ol{d}$-distance
coincides with the inverse limit of the $\ol{r_N}$-topologies as $N\to\infty$.
\end{proof}

\subsection{URE via the d-bar,  and  proof of Theorem~\ref{theorem:f}}\label{subsect}

In this subsection we will use the d-bar distance  to show the URE property for some classes of dynamical systems.
First, we  prove Theorem~\ref{theorem:f}.

\begin{proof}[Proof of Theorem F]
In view of Proposition~\ref{p:inac}(B),
it remains to prove only the hard part
(iii)$\Rightarrow$(ii).
By Lem\-ma~\ref{lem:sep}, the set of factors of $(Y,\kappa,S)$ is $\widebar d$-separable.
(It is not assumed in that lemma that $S$ is ergodic.)
Hence the set of ergodic components $(T,\nu_\gamma)$, $\gamma\in\Gamma$, is $\widebar d$-separable (upon deleting a subset of zero measure, if necessary).
On the other hand, each $\widebar d$-separable subset of ergodic dynamical systems has a common
ergodic extension by Corollary~\ref{cor:Tim}.
\end{proof}

Next, we will give a characterization of TURE via the d-bar.
For that we will need a topological lemma.

\begin{Lemma}\label{graphs}
Let $(K,r)$ be a compact metric space and let  $A$ be a symmetric $G_\delta$ subset in $K\times K$.
If there is an uncountable subset $R\subset K$ such that $(R\times R)\setminus\{(k,k')\in K\times K\colon k= k'\}\subset A$ then there is a perfect subset
$P\subset K$ such that
$(P\times P)\setminus\{(k,k')\in K\times K\colon k=k'\}\subset A$.
\end{Lemma}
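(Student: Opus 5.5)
This is a Mycielski-type statement; I would prove it by a Cantor scheme. Write $A=\bigcap_{m\ge1}U_m$ with each $U_m$ open in $K\times K$. Replacing $U_m$ by $U_m\cap U_m^{-1}$, where $U^{-1}=\{(k',k)\colon (k,k')\in U\}$, I may assume each $U_m$ is symmetric and still open. Passing to finite intersections, I may also assume $U_1\supset U_2\supset\cdots$. Since $R$ is uncountable in a separable metric space, I replace $R$ by its set of condensation points in $R$. This set $R'$ is still uncountable, and every point of $R'$ is a condensation point of $R'$. So every nonempty relatively open subset of $R'$ is uncountable, and in particular contains at least two points. The hypothesis still holds for $R'\subset R$.

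I then build, for each finite $0$--$1$ word $s$ of length $n$, a point $k_s\in R'$ and a radius $\varepsilon_s\in(0,2^{-n})$, and I write $B_s$ for the open ball $B(k_s,\varepsilon_s)$. The requirements are:
\begin{enumerate}
\item[(a)] $\ov{B_{si}}\subset B_s$ for $i=0,1$;
\item[(b)] $\ov{B_{s0}}\cap\ov{B_{s1}}=\emptyset$;
\item[(c)] for any two distinct words $s\neq t$ of the same length $n$, $\ov{B_s}\times\ov{B_t}\subset U_n$.
\end{enumerate}

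For the inductive step at level $n+1$, I choose inside each $B_s\cap R'$ two distinct points $k_{s0},k_{s1}\in R'$; this is possible because $B_s\cap R'$ is uncountable. All the points $k_t$ with $|t|=n+1$ are then distinct points of $R$. By hypothesis $(k_t,k_{t'})\in A\subset U_{n+1}$ for $t\neq t'$. Since $U_{n+1}$ is open and there are only finitely many such pairs, I can shrink all the radii enough to get (a), (b) and (c) at level $n+1$ simultaneously. Nonemptiness of $B_t\cap R'$ for the next step is automatic, because $k_t\in R'$.

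The set I take is
\[
P=\bigcap_n\bigcup_{|s|=n}\ov{B_s}.
\]
It is compact, being a decreasing intersection of compact sets. It is homeomorphic to the Cantor set via the map $\omega\mapsto\bigcap_n\ov{B_{\omega|n}}$, which is well defined since the radii tend to $0$. Hence $P$ is perfect and nonempty.

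To check the conclusion, take distinct $k,k'\in P$, coming from distinct branches $\omega\neq\omega'$. Choose $N$ with $\omega|N\neq\omega'|N$; then $\omega|n\neq\omega'|n$ for every $n\ge N$. By (c), $(k,k')\in U_n$ for all $n\ge N$. Because the $U_m$ are decreasing, this gives $(k,k')\in U_m$ for every $m$, and therefore $(k,k')\in A$. (Without the monotonicity arrangement, one would instead impose (c) for all $U_1,\dots,U_n$ at stage $n$.)

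The main point needing care is the inductive step, specifically that new points can always be found inside the current balls. This is exactly what the reduction to condensation points guarantees. Everything else is the routine finite shrinking of radii, which works because each $U_m$ is open and the diagonal is excluded only through the distinctness of the chosen points. Symmetry of $A$ is not strictly needed once (c) is imposed for ordered pairs, but it lets me treat unordered pairs.
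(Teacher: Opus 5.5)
Your proof is correct and takes essentially the same route as the paper. Both build a Cantor scheme of nested open sets around pairs of condensation points of $R$, shrink these at stage $n+1$ so that products of distinct level-$(n+1)$ closures lie in the decreasing symmetric $U_{n+1}$, and take $P$ to be the resulting Cantor set. The only cosmetic difference is that you pass once to the condensation kernel $R'$, whereas the paper carries the invariant ``$V_w\cap R$ is uncountable'' through the recursion.
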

\begin{proof}
Since $A$ is a symmetric $G_\delta$, there exists a sequence $(U_n)_{n=1}^\infty$
of symmetric open subsets of $K\times K$ such that $U_1\supset U_2\supset\cdots$
and $\bigcap_{n=1}^\infty U_n=A$.
Denote by $\mathcal L$ the set of all finite words over $\{0,1\}$, i.e.
$\mathcal L=\bigsqcup_{n=0}^\infty\{0,1\}^n$.
For  a word  $w\in\mathcal L$, let $l(w)$ denote the length of $w$.
We intend to construct for each $w\in\mathcal L$,
a nonempty open set $V_w\subset K$
 such that the following are satisfied:
 \begin{itemize}
 \item[(i)]
 $\widebar{V_{wi}}\subset V_w$ for all $w\in \mathcal L$ and  $i=0,1$,
 \item[(ii)]
  $\widebar{V_{w0}}\cap  \widebar{V_{w1}}=\emptyset$ for all $w\in \mathcal L$,
  \item[(iii)]
  diam\,$V_w<2^{-l(w)}$ for all $w\in \mathcal L$,
  \item[(iv)]
  $V_w\cap R$ is uncountable,
  \item[(v)]
  if $w,w'\in\{0,1\}^n$ and $w\ne w'$ then $\widebar{V_{w}}\times\widebar{V_{w'}}\subset U_n$ for all $n$.
 \end{itemize}
 The construction will be done recursively.
 On the first step, we let $V_{\emptyset}:=K$.
 Assume that we have defined $V_w$ for all $w\in\mathcal L$ with $l(w)\le n$.
 For each such $w$, since  the set $V_w\cap R$ is uncountable,
it contains two different condensation points.
Choose them and call them $x_{w0}$ and $x_{w1}$.
Now, take arbitrary $z,z'\in\{0,1\}^{n+1}$ with $z\ne z'$.
Then $x_z\ne x_{z'}$ and both belong to $R$.
Hence
$$
(x_z,x_{z'})\in (R\times R)\setminus\{(k,k)\colon k\in K\}\subset A\subset U_{n+1}.
$$
Since
$U_{n+1}$ is open,
there are neighbourhoods of $x_z$ and $x_{z'}$ whose Cartesian product lies in $U_{n+1}$.
Since there are only finitely many of such pairs $x_z\ne x_{z'}$, we can shrink these neighbourhoods so to obtain
open subsets $V_{w}\ni x_w$ satisfying (i)--(v) for all words $w$
with $l(w)=n+1$.
\newline\indent
Now, let
$$
P:=\bigcap_{n=1}^\infty\bigcup_{l(w)=n}\widebar{V_{w}}.
$$
Then $P$ is a closed subset of $K$.
Moreover, given $w=(i_n)_{n=1}^\infty\in\{0,1\}^\Bbb N$, the intersection
$\bigcap_{n=1}^\infty \widebar{V_{(i_1\dots i_n)}}$ is a singleton.
Denote it by $\vartheta(w)$.
It is a routine to verify that
the mapping
$$
\{0,1\}^\Bbb N\ni w \mapsto \vartheta(w)\in P
$$
is a homeomorphism.
Hence $P$ is  perfect.
If two sequences $w=(i_m)_{m=1}^\infty$ and $w'=(i_m')_{m=1}^\infty$ in $\{0,1\}^\Bbb N$ are different then there is $N>0$ such that $w_n:=(i_1,\dots,i_n)\ne
(i_1',\dots,i_n')=:w_n'$ for all $n>N$.
By (v),
$
\widebar{V_{w_n}}\times\widebar{V_{w_n'}}\subset U_n.
$
Hence,
$$
(\vartheta(w),\vartheta(w'))=\bigcap_{n=N+1}^\infty(\widebar{V_{w_n}}\times\widebar{V_{w_n'}})\subset \bigcap_{n=N+1}^\infty U_n=A.
$$
Therefore,
$(P\times P)\setminus\{(k,k')\in K\times K\colon k=k'\}\subset A$, as desired.
\end{proof}

Let $X$ be a compact metric space and let $T$ be a homeomorphism of $X$.
Take another compact metric space $K$ and a continuous one-to-one mapping $\pi \colon X\to K$.
Denote by $\pi^\Bbb Z$ the equivariant mapping
$$
X\ni x\mapsto \pi^\Bbb Z(x):=(\pi(T^nx))_{n\in\Bbb Z}\in K^\Bbb Z.
$$
Let $\pi^\Bbb Z_*$ stand for the associated mapping from $M(X,T)$ to
$M(K^\Bbb Z,S)$.
Of course,
\begin{enumerate}
\item[---]
$\pi^\Bbb Z_*(M(X,T))$ is a closed convex subset of $M(K^\Bbb Z,S)$ and
\item[---]
$\pi^\Bbb Z_*$ is an affine homeomorphism of $M(X,T)$ onto
$\pi^\Bbb Z_*(M(X,T))$.
\end{enumerate}

\begin{Prop}\label{pr:my} Assume that $\pi^\Bbb Z_*(M^e(X,T))$ is not  $\ol{d}$-separable.
Then there is a nonatomic probability Borel measure $\kappa$ on
$\pi^\Bbb Z_*(M^e(X,T))$ such that each $*$-weak Borel subset of
$\pi^\Bbb Z_*(M^e(X,T))$ of full measure $\kappa$
is  not  $\ol{d}$-separable.
\end{Prop}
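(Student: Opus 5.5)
The plan is to find a Cantor set $P\subset E:=\pi^\Bbb Z_*(M^e(X,T))$ whose distinct points are uniformly $\ol d$-far apart, and then to take for $\kappa$ a nonatomic measure carried by $P$. The uniform gap will come from Lemma~\ref{graphs}, applied in the compact metric space $K':=\pi^\Bbb Z_*(M(X,T))$ with the $*$-weak topology.

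\emph{Step 1: an uncountable $\ol d$-separated set.} Since $E$ is not $\ol d$-separable, there is $\varepsilon>0$ and an uncountable $R\subset E$ with $\ol d(\nu,\nu')>\varepsilon$ for all distinct $\nu,\nu'\in R$. To see this, suppose that for every $\varepsilon=1/m$ all $\varepsilon$-separated subsets were countable. Taking a maximal $1/m$-separated subset for each $m$ (Zorn) and forming the union gives a countable $\ol d$-dense subset of $E$, which contradicts the assumption.

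\emph{Step 2: a symmetric $G_\delta$ set and a perfect set.} Put
$$A:=\{(\nu,\nu')\in K'\times K'\colon \ol d(\nu,\nu')>\varepsilon\}.$$
By Lemma~\ref{lem:lsc}, $\ol d$ is $*$-weakly lower semicontinuous, so $A$ is open, in particular a symmetric $G_\delta$. Step 1 gives $(R\times R)\setminus\Delta\subset A$. Lemma~\ref{graphs} then yields a perfect set $P\subset K'$, homeomorphic to $\{0,1\}^\Bbb N$, with $(P\times P)\setminus\Delta\subset A$.

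The obstacle here is that $P$ need not lie in $E$. I would handle this by rerunning the construction in the proof of Lemma~\ref{graphs}, exploiting the fact that it has freedom in the limit points. Since $E$ is a $G_\delta$ in $K'$, write $E=\bigcap_n O_n$ with $O_n$ open. The points $x_w$ chosen in that proof lie in $R\subset E$, so at step $n$ one may further shrink $V_w$ so that $\ol{V_w}\subset O_n$. Then every point of $P$ lies in $\bigcap_n O_n=E$. (Alternatively one can use the Borel measurability of $E$ together with the perfect set theorem, but the shrinking argument is cleaner.)

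\emph{Step 3: the measure.} Let $\kappa:=\vartheta_*(\{1/2,1/2\}^{\otimes\Bbb N})$, where $\vartheta\colon\{0,1\}^\Bbb N\to P$ is the homeomorphism from the proof of Lemma~\ref{graphs}. Then $\kappa$ is a nonatomic Borel probability measure carried by $P\subset E$. Let $B$ be any $*$-weak Borel set with $\kappa(B)=1$. Then $B\cap P$ has full measure, so it is uncountable because $\kappa$ is nonatomic. Since $B\cap P$ is $\varepsilon$-separated in $\ol d$, no countable set can be $\ol d$-dense in it, and hence $B$ is not $\ol d$-separable.
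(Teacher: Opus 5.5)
Your proof is correct and follows the paper's argument. The steps are the same: maximal $\ol{d}$-separated sets via Zorn's lemma, lower semicontinuity from Lemma~\ref{lem:lsc}, Lemma~\ref{graphs} to produce a perfect uniformly separated set, and then a nonatomic measure carried by that set.

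The only difference is how the perfect set is forced into $E$. You rerun the construction of Lemma~\ref{graphs} with $\overline{V_w}\subset O_n$. The paper instead builds $E\times E$ into the $G_\delta$ set $A(\delta_0)=\{(\nu,\nu')\in E\times E\colon \ol{d}(\nu,\nu')\ge\delta_0\}$. Then every point of a perfect $P$ with $(P\times P)\setminus\Delta\subset A(\delta_0)$ automatically lies in $E$. Your modification works equally well, but you could have avoided it simply by intersecting your open set $A$ with the $G_\delta$ set $E\times E$.
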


\begin{proof} Let $\delta>0$.
It follows from  Zorn's lemma that there is a maximal subset $P_\delta\subset \pi^\Bbb Z_*(M^e(X,T))$
such that $\ol{d}(\nu,\nu')\ge\delta$ for all $\nu\ne\nu'\in P_\delta$.
We claim that there is $\delta_0>0$ such that
$P_{\delta_0}$ is uncountable.
Indeed, otherwise, for each $n>0$, the set
$P_{1/n}$ would be countable.
Hence, the union $\bigcup_nP_{1/n}$ is also countable.
However, it is straightforward to check that $\bigcup_nP_{1/n}$ is dense in
$\pi^\Bbb Z_*(M^e(X,T))$.
Hence, $\pi^\Bbb Z_*(M^e(X,T))$
is $\ol{d}$-separable, a contradiction.

It follows from Lemma~\ref{lem:lsc} that the set
$\{(\nu,\nu')\in M(K^\Bbb Z,S)^2\colon \ol{d}(\nu,\nu')> \delta\}$ is $*$-weakly open in $M(K^\Bbb Z,S)^{\times 2}$ for each $\delta>0$.
Hence, the set
$$
A(\delta_0):=\{(\nu,\nu')\in \pi^\Bbb Z_*(M^e(X,T))^{\times 2}\colon \ol{d}(\nu,\nu')\ge \delta_0\}
$$
 is a $G_\delta$ in  $\pi^\Bbb Z_*(M^e(X,T))^{\times 2}$.
 Of course, $A(\delta_0)\supset P_{\delta_0}^{\times 2}\setminus\{(\nu,\nu)\colon \nu\in P_{\delta_0}\}$.
 By Lemma~\ref{graphs}, there exists a $*$-weak perfect subset $J\subset \pi_*^\Bbb Z(M^e(X,T))$
 such that
 $$
 (J\times J)\setminus\{(\nu,\nu)\colon \nu\in J\}\subset A(\delta_0).
 $$
 Thus, for all points $\nu,\nu'\in J$, if $\nu\ne\nu'$ then $\ol{d}(\nu,\nu')\ge\delta_0$.
 Fix a Borel non-atomic probability $\kappa$ on $J$.
 If $B$ is a Borel subset of $\pi_*^\Bbb Z(M^e(X,T))$ and $\kappa(B)=1$
 then $\kappa(B\cap J)=1$.
Since $\kappa$ is nonatomic,  $B\cap J$ is uncountable.
Hence, $B$ is not $\ol{d}$-separable.
\end{proof}

From Appendix  and Proposition~\ref{pr:my} we deduce the following characterization of the TURE (topological URE)
property for  topological systems.

\begin{Cor}\label{co:d-bar}
{\em The following are equivalent:
\begin{enumerate}
\item[(i)]
$\pi^\Bbb Z_*(M^e(X,T))$ is $\ol{d}$-separable.
\item[(ii)]
For each measure $\mu\in M(X,T)$, the dynamical system $(X,\mu,T)$ is URE.
\item[(iii)]
There exists a dynamical system $(Y,\kappa, R)$ such that $(X,\mu,T)$ is a factor of $(Y,\kappa, R)$
for each $\mu\in M^e(X,T)$.
\item[(iv)]
There exists an ergodic dynamical system $(Y,\kappa, R)$ such that $(X,\mu,T)$ is a factor of $(Y,\kappa, R)$
for each $\mu\in M^e(X,T)$.
\end{enumerate}}
\end{Cor}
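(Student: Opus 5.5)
We prove the cycle (i)$\Rightarrow$(iv)$\Rightarrow$(iii)$\Rightarrow$(i) and, separately, (iii)$\Leftrightarrow$(ii). The implication (iv)$\Rightarrow$(iii) is trivial. For (i)$\Rightarrow$(iv) we identify each $\mu\in M^e(X,T)$ with $\pi^\Bbb Z_*\mu\in M(K^\Bbb Z,S)$. Since $\pi$ is continuous and one-to-one, $\pi^\Bbb Z$ is injective and equivariant, so $(X,\mu,T)$ is isomorphic to $(K^\Bbb Z,\pi^\Bbb Z_*\mu,S)$. Assumption (i) says that this family of ergodic shift-invariant measures is $\ol{d}$-separable. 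The Appendix result (Corollary~\ref{cor:Tim}) then states that a $\ol{d}$-separable family of ergodic systems has a common ergodic extension $(Y,\kappa,R)$, which is (iv).

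For (iii)$\Rightarrow$(i) we invoke Lemma~\ref{lem:sep} from the Appendix. It says that the set of factors of a fixed system $(Y,\kappa,R)$, realized as shift-invariant measures on $K^\Bbb Z$ via the generating function $\pi$, is $\ol{d}$-separable; ergodicity of $R$ is not needed. Each $\pi^\Bbb Z_*\mu$ with $\mu$ ergodic is such a factor, so $\pi^\Bbb Z_*(M^e(X,T))$ lies in a $\ol{d}$-separable set. It is therefore itself separable, since subsets of separable metric spaces are separable.

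For (iii)$\Rightarrow$(ii) we fix $\mu\in M(X,T)$ with ergodic decomposition $\mu=\int\mu_{\bax}\,d\bamu(\bax)$. Every component $\mu_{\bax}$ lies in $M^e(X,T)$, hence is a factor of $(Y,\kappa,R)$. This is condition (iii) of Theorem~\ref{theorem:f}, so $(X,\mu,T)$ is URE.

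The main point is (ii)$\Rightarrow$(i), which we prove by contraposition using Proposition~\ref{pr:my}. Suppose $\pi^\Bbb Z_*(M^e(X,T))$ is not $\ol{d}$-separable. Proposition~\ref{pr:my} gives a nonatomic Borel probability $\kappa$ on $\pi^\Bbb Z_*(M^e(X,T))$ such that every weak$^*$-Borel set of full $\kappa$-measure fails to be $\ol{d}$-separable. Pulling back by the affine homeomorphism $\pi^\Bbb Z_*$, we set $\mu:=\int \nu\,d\kappa'(\nu)\in M(X,T)$. Since $M(X,T)$ is a Choquet simplex and $\kappa'$ is carried by the extreme points $M^e(X,T)$, this integral is the ergodic decomposition of $\mu$, by uniqueness of the representing measure. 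If $(X,\mu,T)$ were URE, Theorem~\ref{theorem:f} together with Lemma~\ref{lem:sep} (as in the proof of Theorem F) would give a $\kappa'$-conull set of components that is $\ol{d}$-separable. We may take this conull set to be Borel: it contains a Borel conull subset, and separability passes to subsets. This contradicts the choice of $\kappa$. The delicate points are the identification of $\kappa'$ with the decomposition measure and the measurability of the exceptional set; both follow from Choquet uniqueness and standard Borel arguments.
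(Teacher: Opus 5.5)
Your proof is correct and follows the paper's route: (i)$\Rightarrow$(iv) from the Appendix construction, (iv)$\Rightarrow$(iii) trivially, (iii)$\Rightarrow$(ii) via Theorem~\ref{theorem:f}, and (ii)$\Rightarrow$(i) by contraposition through Proposition~\ref{pr:my}. For (i)$\Rightarrow$(iv) the result you need is Proposition~\ref{prop:sep}, not Corollary~\ref{cor:Tim}. In (ii)$\Rightarrow$(i), your appeal to Theorem~\ref{theorem:f} plus Lemma~\ref{lem:sep} is just the forward direction of Corollary~\ref{cor:Tim}, which the paper cites. Your extra direct implication (iii)$\Rightarrow$(i) via Lemma~\ref{lem:sep} is redundant but valid, and it shows that (i), (iii), (iv) are equivalent without invoking Proposition~\ref{pr:my}.
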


\begin{proof} Proposition~\ref{prop:sep} gives the implication (i)$\Rightarrow$(iv).
Proposition~\ref{pr:my} and Corollary~\ref{cor:Tim} yield the implication (ii)$\Rightarrow$(i).
The implication (iii)$\Rightarrow$(ii)  follows from the definition of URE (see Theorem~\ref{theorem:f}(iii)).
Of course, the implication (iv)$\Rightarrow$(iii) is trivial.
\end{proof}


Finally, in this section, we will show that the inverse limit of arbitrary URE systems is URE utilizing the d-bar metric.

Let
\begin{equation}\label{eq:inv}
(X_1,\mu_1,T_1)\longleftarrow (X_2,\mu_2,T_2)\longleftarrow\cdots
\end{equation}
 be an infinite inverse sequence of dynamical systems.
Denote by $(X,\mu,T)$ the inverse limit of this system.

\begin{Th}\label{th:invlimURE}
 If $(X_n,\mu_n,T_n)$ is URE for each $n$ then $(X,\mu,T)$ is URE.
\end{Th}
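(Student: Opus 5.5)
We will use the $\ol d$-characterization of URE (Corollary~\ref{cor:Tim}): an automorphism is URE iff, after discarding a null set of ergodic components, the remaining components form a $\ol d$-separable family. The plan is to realize all the $(X_n,\mu_n,T_n)$ and $(X,\mu,T)$ inside a single inverse system of shift spaces, so that Lemma~\ref{le:projdbar} transfers separability from the levels to the limit.

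First we choose compatible topological models. Using the argument of Lemma~\ref{Krieger-non-ergodic} inductively, we pick Borel injections $f_n\colon X_n\to\T$ and set $K_n:=\T^n$. The coding map $x\mapsto (f_1(\pi_{1}x),\dots,f_n(\pi_n x))$ is a Borel injection of $X_n$ into $K_n$; here $\pi_j\colon X_n\to X_j$ denotes the composed factor maps. Let $p_n\colon K_{n+1}\to K_n$ be the coordinate projection. Coding orbits gives equivariant Borel embeddings of $X_n$ into $K_n^{\Z}$ which commute with the maps $p_n^{\Z}$. The inverse limit $X$ then embeds into $K^{\Z}$, where $K=\T^{\N}=\varprojlim K_n$. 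We let $\nu_n$ and $\nu$ be the images of $\mu_n$ and $\mu$, so that $(q_n^{\Z})_*\nu=\nu_n$.

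Next we match ergodic decompositions. If $\nu=\int\nu_\gamma\,d\PP(\gamma)$ is the ergodic decomposition, then $(q_n^\Z)_*\nu_\gamma$ is ergodic and is a factor of $\nu_\gamma$. The decomposition $\nu_n=\int (q_n^\Z)_*\nu_\gamma\,d\PP(\gamma)$ is therefore the ergodic decomposition of $\nu_n$, taken via the factor map $\ov X\to\ov X_n$. By hypothesis and Corollary~\ref{cor:Tim}, there is a Borel set $E_n\subset\ov X_n$ of full measure such that the components indexed by $E_n$ form a $\ol{d_n}$-separable set in $M(K_n^\Z,S_n)$. The preimages of the $E_n$ in $\Gamma$ are of full measure, so their intersection $\Gamma_0$ has $\PP(\Gamma_0)=1$. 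For every $n$, the family $\{(q_n^\Z)_*\nu_\gamma\colon\gamma\in\Gamma_0\}$ is then $\ol{d_n}$-separable.

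Finally we apply Lemma~\ref{le:projdbar}, which is the step carrying the argument. Its estimate $\ol d(\nu,\nu')-2^{-N}\le\ol{r_N}(\nu_N,\nu'_N)\le\ol d(\nu,\nu')$ shows that the $\ol d$-topology is the initial topology of the maps to the $\ol{r_N}$-topologies. A subspace of a countable inverse limit of separable metric spaces is separable. Concretely, for each $N$ we take a countable $\ol{r_N}$-dense subset $D_N\subset\Gamma_0$ of the level-$N$ images, with points chosen in $\Gamma_0$ itself. Given $\gamma\in\Gamma_0$ and $\vep>0$, we choose $N$ with $2^{-N}<\vep/2$ and $\gamma'\in D_N$ with $\ol{r_N}<\vep/2$ at level $N$. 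The estimate then gives $\ol d(\nu_\gamma,\nu_{\gamma'})<\vep$, so $\bigcup_N D_N$ is $\ol d$-dense in $\{\nu_\gamma\colon\gamma\in\Gamma_0\}$. Corollary~\ref{cor:Tim} now yields that $(X,\mu,T)$ is URE.

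We expect the main obstacle to be the first step, namely making the models compatible. The $\ol d$-separability in Corollary~\ref{cor:Tim} must be insensitive to the choice of model, and the level-$n$ coding into $K_n^\Z$ must be a genuine isomorphism of $(X_n,\mu_n,T_n)$. Both hold because $\ol d$-separability is an isomorphism invariant of families of ergodic systems (this is part of the Appendix characterization). We will also need to verify that the metric $r_N$ on $K_N$ is equivalent to the product metric used there.
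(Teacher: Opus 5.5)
Your proposal is correct and follows the paper's proof: you realize the tower as compatible shift models over compact spaces $K_n$ with inverse limit $K$, use Corollary~\ref{cor:Tim} at each level to obtain full-measure $\ol{d_n}$-separable sets of ergodic components, and transfer separability to the limit via the estimate in Lemma~\ref{le:projdbar}. Intersecting the pulled-back sets inside the limit's space of ergodic components is a slightly cleaner substitute for the paper's step ``assume $(p_n^\Z)_*(E_{n+1})=E_n$ and take the inverse limit $E$''. The model-invariance you list as the main obstacle is not actually needed, since you apply Corollary~\ref{cor:Tim} directly to shift models isomorphic to the given systems, and enclosedness is trivially an isomorphism invariant.
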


\begin{proof}
Without loss of generality we may assume that for each $n\in\Bbb N$, there exist
\begin{enumerate}
\item[---] a compact metric space
$(K_n,d_n)$,
\item[---]  a continuous onto mapping $p_n \colon K_{n+1}\to K_n$ and
\item[---] a probability measure $\nu_n$ on $K_n^\Bbb Z$
\end{enumerate}
such that  $\nu_n$ is invariant under the leftward shift $S_n$ on  $K_n^\Bbb Z$ and (\ref{eq:inv}) is isomorphic to the sequence
\begin{equation}\label{eq:new}
(K_1^\Bbb Z,S_1,\nu_1) \overset{p_1^\Bbb Z}\longleftarrow (K_2^\Bbb Z,S_2,\nu_2)\overset{p_2^\Bbb Z}\longleftarrow
\cdots,
\end{equation}
where $p_n^\Bbb Z(y):=(p_n(y_j))_{j\in\Bbb Z}$ for all $y=(y_j)_{j\in\Bbb Z}\in K_{n+1}^\Bbb Z$.
Let $K$ be the inverse limit of the sequence $K_1 \overset{p_1}\longleftarrow K_2\overset{p_2}\longleftarrow\cdots$.
Denote by $S$ the leftward shift  on  $K^\Bbb Z$.
Denote by $\nu$ the inverse limit of the sequence $(\nu_n)_{n=1}^\infty$.
Then $\nu$ is a Borel $S$-invariant probability on $K^\Bbb Z$ and
the dynamical system $(K^\Bbb Z,\nu, S)$ is the inverse limit of (\ref{eq:new}).
Then  $(p_n^\Bbb Z)_*$ is a continuous affine mapping from $M(K_{n+1}^\Bbb Z,S_{n+1})$ to
$M(K_{n}^\Bbb Z,S_{n})$ and  $(p_n^\Bbb Z)_*(\nu_{n+1})=\nu_n$ for each $n\in\Bbb N$.
Let $\kappa_n$ be the unique probability on $M^e(K_{n}^\Bbb Z,S_{n})$ such that $\nu_n=\int_{M^e(K_{n}^\Bbb Z,S_{n})}\tau\,d\kappa_n(\tau)$.
In other words, $\kappa_n$ can be interpreted as  {\it the ergodic decomposition}  of $\nu_n$.
Then $(p_n^\Bbb Z)_{**}(\kappa_{n+1})=\kappa_n$ for each $n\in\Bbb N$.
Thus, we obtain an inverse sequence of standard probability spaces
$$
(M^e(K_{1}^\Bbb Z,S_1),\kappa_1) \overset{(p_1^\Bbb Z)_*}\longleftarrow
(M^e(K_{n}^\Bbb Z,S_{2}) ,\kappa_2)\overset{(p_2^\Bbb Z)_*}\longleftarrow\cdots.
$$
The inverse limit of this sequence is exactly the space $(M^e(K^\Bbb Z,S),\kappa)$ such that
$\nu=\int_{M^e(K^\Bbb Z,S)}\tau\,d\kappa(\tau)$,
i.e. $\kappa$ is the ergodic decomposition of $\kappa$.
By the URE assumption and Corollary~\ref{cor:Tim},  for each $n\in\Bbb N$, there exists a Borel subset $E_n\subset M^e(K_{n}^\Bbb Z,S_{n})$
such that $\kappa_n(E_n)=1$ and $E_n$ is $\ol{d_n}$-separable.
Replacing, if necessarily, $E_n$ with ``smaller'' subsets of full measure we may assume without loss of generality that $(p_n^\Bbb Z)_*(E_{n+1})=E_n$ for all $n$.
Then the inverse limit $E$ of the sequence $(E_n)_{n=1}^\infty$ is well defined as a Borel subset of $M^e(K^\Bbb Z,S)$.
Of course, $\kappa(E)=1$.
It follows from  Lemma~\ref{le:projdbar}  that $E$ is $\ol{d}$-separable.
\end{proof}

\subsection{Remarks on the inverse limits of URE automorphisms }

\begin{Remark}
By Theorem~\ref{th:invlimURE}, the inverse limit of a sequence of URE-transfor\-mations is URE.
Hence, the ergodic components of the inverse limit have a common extension.
However, Theorem~\ref{th:invlimURE} does not
give any explicit description of the common extension.
We now provide such a description
in a particular case where each transformation in the sequence has countably many ergodic components.

Let $T_1\leftarrow T_2\leftarrow~\cdots$ be an
 inverse  sequence of probability preserving transformations
such  that each $T_n$ has at most countably many ergodic components.
Let a transformation $R$ be  the projective limit of this sequence.

Denote by $(W_1,\kappa_1)$  the space of ergodic components
of $T_1$.
Then $W_1$ is countable.
Hence, we
 can consider $T_1$ as a disjoint union of ergodic transformations
$T_{1,w_1}$, $w_1\in W_1$:
$$
T_1=\bigsqcup_{w_1\in W_1}T_{1,w_1}.
$$
Let $(X_{1,w_1},\mu_{1,w_1})$ denote the space of $T_{1,w_1}$.
Consider an ergodic joining of the family
$T_{1,w_1}$, $w_1\in W_1$.
Denote it by $S_1$.
Then for each $w_1\in W_1$, there is a factor mapping $q_{1,w_1}$
that intertwines $S_1$ with
$T_{1,w_1}$.

Next, let $(W_2,\kappa_2)$ be the space of ergodic components
of $T_2$.
We can consider $T_2$ as a disjoint union of ergodic transformations
$T_{2,w_2}$, $w_2\in W_2$:
$$
T_2=\bigsqcup_{w_2\in W_2}T_{2,w_2}.
$$
Let
$(X_{2,w_2}, \mu_{2,w_2})$ denote the space of
$T_{2,w_2}$, $w_2\in W_2$.
Then the factor mapping $T_2\to T_1$ determines  a canonical projection
$\pi_{1} \colon (W_2,\kappa_2)\to (W_1,\kappa_1)$
and factor mappings
$$
r_{1,w_2} \colon (X_{2,w_2}, \mu_{2,w_2},T_{2,w_2})\to
(X_{1,\pi_1(w_2)}, \mu_{1,\pi_1(w_2)},T_{1,\pi_1(w_2)})
$$
for each $w_2\in W_2$.
Hence, the product mapping
$$
r_1:=\prod_{w_2\in W_2} r_{1,w_2} \colon \prod_{w_2\in W_2}X_{2,w_2}\to \prod_{w_1\in W_1}X_{1,w_1}
$$
intertwines $\prod_{w_2\in W_2}T_{2,w_2}$ with
$\prod_{w_1\in W_1}T_{1,w_1}$.
We now select an ergodic joining of the family
$T_{2,w_2}$, $w_2\in W_2$, which projects onto $S_1$ under this mapping.
Denote it by $S_2$.
For each $w_2\in W_2$, the $w_2$-coordinate marginal
$$
q_{2,w_2} \colon  \prod_{w\in W_2}X_{2,w}\to X_{2,w_2}
$$
 intertwines
$S_2$ with $(X_{2,w_2},\mu_{2,w_2}, T_{2,w_2})$.

Continue this procedure inductively.
We obtain an inverse  sequence of probability countable spaces
$$
\begin{CD}
(W_1,\kappa_1)@<\pi_1<<(W_2,\kappa_2)@<\pi_2<<(W_3,\kappa_3)@<\pi_3<<\cdots,
\end{CD}
$$
a sequence of families $( T_{n,w_n})_{w_n\in W_n}$ of ergodic transformations,
a sequence of ergodic transformations
$$
\begin{CD}
S_1@<r_1<< S_2@<r_2<< S_3@<r_3<< \cdots,
\end{CD}
$$
and a sequence of  mappings $q_{n,w_n}$ from the space of $S_n$ onto the space of $T_{n,w_n}$,  $w_n\in W_n$,
such that

---   $\bigsqcup_{w_n\in W_n}T_{n,w_n}$ is the  ergodic decomposition of
of $T_n$,

--- $T_{n,w_n}$ is an extension of $T_{n-1, \pi_{n-1}(w_n)}$,

---  $q_{n,w_m}\circ S_n=T_{n,w_n}\circ q_{n,w_n}$  for each $w_n\in W_n$ and

--- the following diagram commutes for each $n$:
\begin{equation}\label{comD}
\begin{CD}
S_{n-1} @<r_{n-1}<< S_{n}\\
@V{q_{n-1,\pi_{n-1}(w_{n})}}VV @VVq_{n,w_{n}}V\\
T_{n-1,\pi_{n-1}(w_{n})} @<r_{n-1,w_n}<<  T_{n,w_{n}}
\end{CD}
\end{equation}
Let $S$ be the inverse limit of $(S_n)_{n=1}^\infty$.
Then $S$ is an ergodic transformation.
The space
$$
(W,\kappa):=\projlim_{n\to\infty}(W_n,\kappa_n)
$$
is the space of ergodic components of $R$.
Indeed, if  $W\ni w=(w_n)_{n=1}^\infty$ then $\pi_n(w_{n+1})=w_n$
and
the $w$-ergodic component of $R$ is  the projective limit of the sequence
$$
\begin{CD}
T_{1,w_1} @<r_{1,w_2}<<  T_{2,w_{2}}@<r_{2,w_3}<<\cdots.
\end{CD}
$$
Hence, the component is a factor of $S$ in view of the commutative diagram~(\ref{comD}).
Thus, $S$ is the common extension of all ergodic components of $R$.

\end{Remark}

In the following example we provide an inverse sequence of transformations whose ergodic components are all isomorphic to a fixed ergodic transformation $S$ but the inverse limit (which is URE) has uncountably many non-isomorphic ergodic components.

\begin{Example}
We construct here an  ergodic transformation $S$ and an
inverse sequence $I\times S\leftarrow I\times S\leftarrow\cdots$
such that

--- each arrow is  a finite-to-one extension,

--- the ergodic components of the inverse limit $R$ of this sequence
are pairwise non-isomorphic.\footnote{Thus,   all ergodic components of all  transformations in the sequence are isomorphic to each other. Of course, $R$ has uncountably many ergodic components because we assume that  $I$  is defined on a non-atomic standard measure space.}

Let $T$ be a 2-fold simple transformation on a standard probability space $(X,\mu)$.
Suppose that the centralizer $C(T)$ of $T$ is isomorphic to $\bigoplus_{n=1}^\infty\Bbb Z$.\footnote{For examples of simple $T$ with $C(T)$ isomorphic to $\Bbb Z^d$  we refer to \cite{Mad}. In the case $d=\infty$ the construction is similar.}
Let $\mathcal P$ denote the set of prime numbers.
Fix a mapping $\mathcal P\ni p\mapsto R_p\in C(T)$ such that
 $T, R_2,R_3,\dots$ is a family of independent generators of $C(T)$.
It follows from \cite{Dan} that there exist an ergodic  cocycle $\phi \colon X\to\Bbb T$ and a family of Borel  mappings $a_p \colon X\to\Bbb T$ such that
$$
\phi(R_px)=a_p(x)^{-1}\phi(x)^{p}a_p(Tx)
$$
at a.e. $x$ for each $p\in\mathcal P$.
Denote by $T_\phi$ the $\phi$-skew product extension of $T$.
Then $T_\phi$ is an ergodic transformation of $(X\times\Bbb T,\mu\otimes\text{Leb})$.\footnote{Note that since $T$ is 2-fold simple,  the centralizer $C(T_\phi)$ consists of the powers of $T_\phi$ and the rotations on the second coordinate. Indeed, the
elements $R_2^{a_2}\cdots R_q^{a_q}$  ($a_j\geq0$) lift to non-invertible elements that commute with $T_\phi$, while their inverses do not lift at all.}
Define mappings $\pi_p \colon X\times\Bbb T\to X\times\Bbb T$
by setting
$$
\pi_p(x,z):=(R_px,z^{p}a_p(x))
$$
Then $\pi_p$ is $p$-to-one, it preserves the product measure
$\widetilde\mu:=\mu\otimes\text{Leb}$
and $\pi_p T_\phi=T_\phi\pi_p$ for each $p\in\mathcal P$.
Fix a sequence $\boldsymbol p:=(p_n)_{n=1}^\infty$ with $p_n\in\mathcal P$ for every $n$.
Then an inverse sequence of dynamical systems
$$
\begin{CD}
(X\times\Bbb T,\widetilde\mu,T_\phi)@<\pi_{p_1}<<
(X\times\Bbb T,\widetilde\mu, T_\phi )@<\pi_{p_2}<<
\cdots.
\end{CD}
$$
is well defined.
Denote by $(X_{\boldsymbol p}, \mu_{\boldsymbol p},T_{\boldsymbol p})$ the corresponding inverse limit.
Then $(X_{\boldsymbol p}, \mu_{\boldsymbol p},T_{\boldsymbol p})$ is an ergodic dynamical system.
We now describe the structure of this system as a compact group extension of $T$.
Let $G_{\boldsymbol p}$ is the inverse limit of the inverse sequence of compact Abelian groups
$$
\begin{CD}
\Bbb T @<\widehat{p_1}<< \Bbb T @<\widehat{p_2}<< \Bbb T @<\widehat{p_3}<<\cdots,
\end{CD}
$$
where   $\widehat p_n$ denotes the homomorphism
$\Bbb T\ni z\mapsto z^{p_n}\in\Bbb T$ fore each $n$.
If  $\boldsymbol p$ and  $\boldsymbol q$ are two sequences
of primes without repetitions then
$G_{\boldsymbol p}$ and $G_{\boldsymbol q}$ are
isomorphic if and only if $\{p_1,p_2,\dots\}=\{q_1,q_2, \dots\}$.

Fix a Borel cross-section $s_n \colon \Bbb T\to\Bbb T$ of
 $\widehat p_n$.
Let
$$
b_n(x):=\prod_{k=1}^{n-1}s_{n-1}\circ s_{n-2}\circ
\cdots\circ  s_k(a_{p_k}(R^{-1}_{p_1}\cdots R^{-1}_{p_k}(x))).
$$
We define Borel mappings $\phi_n \colon X\to\Bbb T$ by setting $\phi_1:=\phi$
and
$$
\phi_n(x)=b_n(x)^{-1}\phi(R_{p_1}^{-1}\cdots R_{p_{n-1}}^{-1}x)   b_n(Tx)
$$
if $n>1$.
It is straightforward to verify that $\widehat p_{n-1}\circ\phi_{n}=\phi_{n-1}$ for each $n>1$.
Hence, a Borel mapping $\phi_{\boldsymbol p} \colon X\to G_{\boldsymbol p}$
is well defined by setting
$$
\phi_{\boldsymbol p}(x):= (\phi_n(x))_{n=1}^\infty\quad\text{for each $x\in X$.}
$$
It is routine to check that $(X_{\boldsymbol p}, \mu_{\boldsymbol p},T_{\boldsymbol p})$ is naturally isomorphic to
the skew product
$(X\times G_{\boldsymbol p}, \mu\otimes\lambda_{G_{\boldsymbol p}},
T_{\phi_{\boldsymbol p}})
$.
Again, since $T$ is 2-fold simple and the only non-trivial automorphism of the group $G_{\boldsymbol p}$ is the inversion,
$C(T_{\phi_{\boldsymbol p}})$ is
generated by $T_{\phi_{\boldsymbol p}}$ and
the compact group $G_{\boldsymbol p}$, acting on
$X\times G_{\boldsymbol p}$ by rotations along the second coordinate.
Thus, the topological group $C(T_{\phi_{\boldsymbol p}})$
is isomorphic to the direct product  $\Bbb Z\times G_{\boldsymbol p}$.
It follows that two transformations $T_{\phi_{\boldsymbol p}}$
and $T_{\phi_{\boldsymbol q}}$ are  not isomorphic if
$\{p_1,p_2,\dots\}\ne \{q_1,q_2, \dots\}$.

Let $Y:=([0,1], \text{Leb})$.
Consider a sequence of refining partitions $P_n$ of $Y$ into intervals of equal length $1/2^n$.
Then $(P_n)_{n=1}^\infty$ generates the entire Borel $\sigma$-algebra.
Fix a bijection $\iota$ between the set of atoms of all $P_n$, $n\in\Bbb N$, and  the prime numbers.
Consider  the product dynamical system
$$
(Y\times (X\times\Bbb T), \text{Leb}\otimes\widetilde\mu, I\times T_\phi).
$$
For $n\in\Bbb N$ and $y\in Y$, we find the unique atom of $P_n$
to whom $y$ belongs.
Let $\iota_n(y)$ be the corresponding prime number.
We now set
$$
\vartheta_n(y,x):=(y,\pi_{\iota_n(y)}(x)).
$$
Then $\vartheta_n\circ  (I\times T_\phi)= (I\times T_\phi)\circ\vartheta_n$ for each $n$.
Thus, we obtain an inverse sequence of dynamical systems
$$
\begin{CD}
(Y\times X\times\Bbb T,I\times T_\phi)@<\vartheta_1<<
(Y\times X\times \Bbb T, I\times T_\phi )@<\vartheta_2<<
\cdots.
\end{CD}
$$
Denote the inverse limit of this system as $(Y\times Z,\kappa, R)$.
Of course, the projection onto the first coordinate is the ergodic decomposition of $R$.
The corresponding marginal of $\kappa$ is Leb.
Disintegrate $\kappa$ with respect to Leb:
$$
\kappa=\int_{Y}\delta_y\otimes\kappa_y\, d\text{Leb}(y).
$$
Since for a.e. $y\in Y$, $y$ belongs to the $\iota_n(y)$-th atom of  $P_n$, it follows that the ergodic component
$(Z,\kappa_y, R)$ is the inverse limit of the inverse sequence
$$
\begin{CD}
(X\times\Bbb T,\widetilde\mu,T_\phi)@<\pi_{p_{\iota_1(y)}}<<
(X\times\Bbb T,\widetilde\mu, T_\phi )@<\pi_{p_{\iota_2(y)}}<<
\cdots.
\end{CD}
$$
It follows that  $(Z,\kappa_y, R)$ is isomorphic to
$(T_{\phi_{\boldsymbol\iota(y)}})$, where
$\boldsymbol\iota(y):=(\iota_n(y))_{n=1}^\infty$.
It follows from the construction that
$\{\iota_n(y)\colon n\in\Bbb N\}\ne \{\iota_n(y')\colon n\in\Bbb N\}$
for all $y,y'$ from a conull subset $B$ of $Y$.
Hence, the systems $(Z,\kappa_y, R)$, $y\in B$, are pairwise
non-isomorphic, as desired.

\end{Example}

\section{The strong $\bfu$-MOMO property}
Let us just recall, cf.~\eqref{jsjm2}, that a topological dynamical system $(Y,R)$ has the
strong $\bfu$-MOMO property if and only if for every $f\in C(Y)$, for every
increasing sequence $(b_k)_{k\ge 1}\subset\mathbb N$ such that
\[
b_{k+1}-b_k \xrightarrow[k\to\infty]{} \infty,
\]
and for every choice of points $y_k\in Y$, one has
\begin{equation}\label{eq:strongMOMO-bk}
\lim_{K\to\infty}\;
\frac{1}{b_K}
\sum_{k=1}^{K-1}
\left|
\sum_{b_k\leq n<b_{k+1}}\bfu(n)\, f\!\left(R^{n-b_k}y_k\right)
\right|
\;=\;0.
\end{equation}

\subsection{Proof of Proposition~\ref{p:rolemomo}
}
\begin{proof}[Proof of Proposition~\ref{p:rolemomo}]
We apply the Lifting Lemma from \cite{Ka-Ku-Le-Ru} to $T\times \text{Id}_{\mathbb{A}}$ and $S$. If $(N_k)$ is a sequence along which $\bfu$ is generic for $\kappa$, then there exists a sequence $(x_n,e_n)\subset X\times \mathbb{A}$ such that the density
$$
d(\{n\colon (x_{n+1},e_{n+1})\neq (Tx_n,e_n)\})
$$
 vanishes (in the logarithmic case, we assume that the logarithmic density $\delta(\{n\colon (x_{n+1},e_{n+1})\neq (Tx_n,e_n)\})$ is zero) and, by choosing a relevant $(b_k)$, we obtain $(y_k,e'_k)\subset X\times\mathbb{A}$ such that along a subsequence $(N_{k_\ell})$,
\begin{multline*}
\int_{X\times X_{\bfu}}(f\ot id_{\mathbb{A}})\ot\pi_0\,d\rho=
\lim_{\ell\to\infty}\frac1{N_{k_\ell}}\sum_{n<N_{k_\ell}}e_nf(x_n)
\pi_0(S^n\bfu)\\
=\lim_{K\to\infty}\frac1{b_K}\sum_{k<K}e_k'\sum_{b_k\leq n<b_{k+1}}f(T^ny_k)\bfu(n)=0,
\end{multline*}
the latter equality by the (strong) $\bfu$-MOMO property.

For the other direction, in order to show that
$$\lim\frac1{b_K}\sum_{k<K}\Big|\sum_{b_k\leq n<b_{k+1}}f(T^nx_k)\bfu(n)\Big|=0,$$
it is enough to show that for a relevant choice of $(e_k)\subset \mathbb{A}$, we have
$$
\lim\frac1{b_K}\sum_{k<K}\sum_{b_k\leq n<b_{k+1}}e_kf(T^nx_k)\bfu(n)=0.$$
To show this, we can assume that
$$
\frac1{b_K}\sum_{k<K}\sum_{b_k\leq n<b_{k+1}}\delta_{(T^nx_k,e_k,S^n\bfu)}\to \rho\in M(X\times\mathbb{A}\times X_{\bfu},T\times \text{Id}_{\mathbb{A}}\times S).$$
Moreover, the projection of $\rho$ on the $X_{\bfu}$-coordinate is $\kappa$ a measure which belongs to $V(\bfu)$.
Therefore, the limit we are interested in equals
$$
\int f\ot id_{\mathbb{A}}\ot\pi_0\,d\rho
$$
 which is  zero.
\end{proof}


\subsection{Some lemmas on the strong $\bfu$-MOMO property}
Throughout this section we assume that $\|\bfu\|_{u^1}=0$ which gives \eqref{eq:strongMOMO-bk} for $f=1$. Under this assumption, we recall that to show the property~\eqref{jsjm2} for all continuous functions, it is enough to consider linearly dense subset of continuous functions.

We collect below some facts concerning preservation of the $\bfu$-MOMO property by several simple extensions. The same results hold if we consider the logarithmic strong $\bfu$-MOMO property.

\paragraph{Product with an identity}

\begin{Lemma}\label{l:m19} If a homeomorphism $R'$ acting on a compact metric space $Y'$ satisfies the strong $\bfu$-MOMO property then $R'\times {\rm Id}$ (with the identity acting on $Y^{\prime\prime}$) also does.
\end{Lemma}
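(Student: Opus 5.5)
Since the paper has already reduced strong $\bfu$-MOMO to a linearly dense set of test functions (under $\|\bfu\|_{u^1}=0$), we only need to verify \eqref{eq:strongMOMO-bk} for product functions $f=g\otimes h$ with $g\in C(Y')$, $h\in C(Y'')$. By Stone--Weierstrass, the linear span of such functions is dense in $C(Y'\times Y'')$. The density reduction itself is a routine $\varepsilon/3$ argument: if $\|f-F\|_\infty<\varepsilon$, then each block sum changes by at most $\varepsilon(b_{k+1}-b_k)$, and the normalized total changes by at most $\varepsilon$. It also commutes with finite linear combinations, by the triangle inequality inside the absolute values.

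Fix then $g\otimes h$, an increasing sequence $(b_k)$ with $b_{k+1}-b_k\to\infty$, and points $(y'_k,y''_k)\in Y'\times Y''$. Because the second factor is the identity, along the $k$-th block we have
\[
(g\otimes h)\big((R'\times{\rm Id})^{n-b_k}(y'_k,y''_k)\big)=h(y''_k)\,g(R'^{\,n-b_k}y'_k).
\]
The scalar $h(y''_k)$ does not depend on $n$ inside the block, so it factors out of the inner sum. This gives
\[
\frac1{b_K}\sum_{k<K}\Big|\sum_{b_k\le n<b_{k+1}}\bfu(n)\,(g\otimes h)(\cdots)\Big|\le \|h\|_\infty\,\frac1{b_K}\sum_{k<K}\Big|\sum_{b_k\le n<b_{k+1}}\bfu(n)\,g(R'^{\,n-b_k}y'_k)\Big|.
\]
The right-hand side tends to $0$ by the strong $\bfu$-MOMO property of $(Y',R')$, applied with the same $(b_k)$ and the points $(y'_k)$. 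This proves the claim for product functions, and by the density step, for all $f\in C(Y'\times Y'')$.

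I do not expect a genuine obstacle here. The only point needing care is justifying the density reduction, and the paper grants it. The logarithmic version is identical, with weights $1/n$ and normalization $\log b_K$.
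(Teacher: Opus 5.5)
Your proposal is correct and follows the paper's proof. Both reduce to product functions, pull the block-constant factor $h(y''_k)$ out of each inner sum, and bound the result by $\|h\|_\infty$ times the strong $\bfu$-MOMO sums for $(Y',R')$. Your explicit $\varepsilon$-estimate for the density reduction fills in a step the paper treats as routine.
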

\begin{proof} It is enough to consider $F(y',y^{\prime\prime})=f(y')g(y^{\prime\prime})$ with $f\in C(Y')$ and $g\in C(Y^{\prime\prime})$. Given $(y'_k,y^{\prime\prime}_k)\in Y'\times Y^{\prime\prime}$ and $(b_k)\subset\N$ with $b_{k+1}-b_k\to\infty$, we have
\begin{align*}
\frac1{b_K}\sum_{k<K}&\Big|\sum_{b_k\leq n<b_{k+1}}F((R'\times \text{Id})^n(y'_k,y^{\prime\prime}_k))\bfu(n)\Big|\\
&=\frac1{b_K}\sum_{k<K}|g(y^{\prime\prime}_k)|\Big|\sum_{b_k\leq n<b_{k+1}}f({R'}^n(y'_k))\bfu(n)\Big|\\
&=O\Bigg(  \frac1{b_K}\sum_{k<K}\Big|\sum_{b_k\leq n<b_{k+1}}f({R'}^n(y'_k))\bfu(n)\Big|     \Bigg)\\
&=o(1)
\end{align*}
when $K\to\infty$.\end{proof}

\paragraph{Infinite product(s) of $A_2$}
\begin{Lemma}\label{p:momor}
If $A_2\colon(x,y)\mapsto (x,x+y)$ on $\T^2$ enjoys the strong $\bfu$-MOMO property, so does the infinite Cartesian product $R:=A_2^\Bbb N$ of $A_2$.
\end{Lemma}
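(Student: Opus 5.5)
The plan is to reduce the infinite product to finitely many coordinates, and then reduce finitely many copies of $A_2$ to a single one. The standing assumption $\|\bfu\|_{u^1}=0$ lets us check \eqref{eq:strongMOMO-bk} only on a linearly dense set of continuous functions (and then pass to uniform limits, since \eqref{eq:strongMOMO-bk} is stable under uniform approximation: the error is at most $\|f-g\|_\infty$). Functions depending on finitely many coordinates are dense in $C((\T^2)^{\N})$, so by Stone--Weierstrass it suffices to take characters
$$F((x_j,y_j)_{j\ge1})=e\Big(\sum_{j\le N}(m_jx_j+l_jy_j)\Big),\qquad m_j,l_j\in\Z .$$
In particular, it suffices to prove the strong $\bfu$-MOMO property for the finite product $A_2^{\times N}$ on $\T^{2N}$.

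Next I would compute the orbit explicitly. Since $A_2^n(x,y)=(x,y+nx)$, we get
$$F(R^n z)=e\Big(\sum_j (m_jx_j+l_jy_j)\Big)\,e\Big(n\sum_j l_jx_j\Big).$$
The first factor has modulus one and does not depend on $n$. Set $\theta_k:=\sum_j l_j x_j^{(k)}\in\T$ for the chosen starting point $z_k$. The inner sums then satisfy
$$\Big|\sum_{b_k\le n<b_{k+1}}\bfu(n)F(R^{n-b_k}z_k)\Big|=\Big|\sum_{b_k\le n<b_{k+1}}\bfu(n)\,e(n\theta_k)\Big| .$$
This equals the corresponding expression for $A_2$ on $\T^2$ with function $f(x,y)=e(y)$ and starting points $(\theta_k,0)$, because $A_2^{n}(\theta_k,0)=(\theta_k,n\theta_k)$, up to a unimodular constant per block. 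If all $l_j=0$, the character is constant on orbits, and we instead use $f=1$, which is covered by $\|\bfu\|_{u^1}=0$. So the strong $\bfu$-MOMO property of $A_2$, applied to these points and to the same $(b_k)$, gives \eqref{eq:strongMOMO-bk} for $F$.

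The only step needing care is the density reduction: \eqref{eq:strongMOMO-bk} involves absolute values of block sums, so it is not linear in $f$. However, it is subadditive, namely $|\sum(f+g)|\le|\sum f|+|\sum g|$, and it scales correctly under multiplication by constants. Hence the set of $f$ satisfying it is a closed linear subspace, and it contains all trigonometric polynomials in finitely many coordinates. I do not expect a genuine obstacle beyond this. The argument is essentially the observation that every character of the product restricted to an orbit is a linear phase $e(n\theta)$, which is exactly what the single $A_2$ already controls.
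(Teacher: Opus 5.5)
Your proof is correct and is essentially the paper's argument. Both reduce to characters depending on finitely many coordinates, observe that along an $R$-orbit such a character is a unimodular constant times the linear phase $e^{2\pi i n\theta_k}$ with $\theta_k=\sum_j l_j x_j^{(k)}$, and invoke the strong $\bfu$-MOMO property of $A_2$ for $(x,y)\mapsto e^{2\pi i y}$ at the points $(\theta_k,0)$. Your extra remarks make explicit what the paper leaves implicit: the set of good $f$ is closed under linear combinations and uniform limits, and the case where all $l_j=0$ is already covered by taking $\theta_k=0$.
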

\begin{proof}
To show the strong $\bfu$-MOMO condition, it suffices to study the characters.
Consider a character $\chi$ (we use additive notation) depending only on the $t$ first positions:
$$
\chi((x_1,y_1),\ldots,
(x_t,y_t),\ldots)=e^{2\pi i(\sum_{j}\alpha_jx_j+\sum_j\beta_jy_j)}
$$
with $\alpha_j,\beta_j\in\Z$ for $j=1,\ldots,t$.
So, we examine the sums of the form:
$$\frac1{b_K}\sum_{k\leq K}\Big|\sum_{b_k\leq n<b_{k+1}}\chi(R^n((x_1^{(k)},y_1^{(k)}),\ldots,
(x_t^{(k)},y_t^{(k)}),\ldots))\bfu(n)\Big|$$
and we want to show that they go to zero. We have
\begin{align*}
\Big|&\sum_{b_k\leq n<b_{k+1}}\chi(R^n((x_1^{(k)},y_1^{(k)}),\ldots,
(x_t^{(k)},y_t^{(k)}),\ldots))\bfu(n)\Big|\\
&=
\Big|\sum_{b_k\leq n<b_{k+1}}\chi((x_1^{(k)},nx_1^{(k)}+y_1^{(k)}),\ldots,
(x_t^{(k)},nx_t^{(k)}+y_t^{(k)}),\ldots))\bfu(n)\Big|\\
&=
\Big|\sum_{b_k\leq n<b_{k+1}}e^{2\pi i(\sum_j\alpha_jx_j^{(k)}+\sum_j\beta_j(nx_j^{(k)}
+y_j^{(k)}))}\bfu(n)\Big|\\
&=\Big|\sum_{b_k\leq n<b_{k+1}}e^{2\pi in(\sum_j\beta_jx_j^{(k)})}\bfu(n)\Big|,
\end{align*}
and the claim follows from the strong $\bfu$-MOMO of $A_2$  (for the character $(x,y)\mapsto e^{2\pi iy}$ and $(x_k,y_k)=(\sum_j\beta_jx^{(k)}_j,0)$ for $k\geq1$).
\end{proof}

\begin{Lemma}\label{l:momor1} Assume that $\Lambda\subset\T$ is a Borel subgroup of $\T$ with the property that for each closed $C\subset \Lambda$, the homeomorphism $A_2|_{C\times\T}\colon C\times\T\to C\times\T$ enjoys the strong $\bfu$-MOMO property. Let $\Lambda_i\subset\Lambda$ be closed for $i\geq1$. Then the homeomorphism $B$:
$$B(x_1,x_2,\ldots,y_1,y_2,\ldots):=(x_1,x_2,\ldots, y_1+x_1,y_2+x_2,\ldots)$$
acting on $\Lambda_1\times\Lambda_2\times\ldots\times\T^\infty$ enjoys the strong $\bfu$-MOMO property.\end{Lemma}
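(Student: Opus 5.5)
We follow the proof of Lemma~\ref{p:momor}, now with the $x$-coordinates confined to $\Lambda_1\times\Lambda_2\times\cdots$. Since $\|\bfu\|_{u^1}=0$, it suffices to check \eqref{eq:strongMOMO-bk} on a linearly dense subset of $C(\prod_i\Lambda_i\times\T^\infty)$. We use restrictions of characters of $\T^\infty\times\T^\infty$: they form a self-adjoint algebra that separates points and contains constants, so by Stone--Weierstrass they are linearly dense. Fix such a character depending on finitely many coordinates,
$$\chi(x,y)=e^{2\pi i(\sum_{j\le t}\alpha_jx_j+\sum_{j\le t}\beta_jy_j)},\qquad \alpha_j,\beta_j\in\Z .$$

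For points $(x^{(k)},y^{(k)})$ we have $B^n(x,y)=(x,y+nx)$. Hence, as in the computation of Lemma~\ref{p:momor},
$$\Big|\sum_{b_k\le n<b_{k+1}}\chi(B^n(x^{(k)},y^{(k)}))\bfu(n)\Big|=\Big|\sum_{b_k\le n<b_{k+1}}e^{2\pi i n z_k}\bfu(n)\Big|,\qquad z_k:=\sum_{j\le t}\beta_jx^{(k)}_j .$$
The key point is where $z_k$ lives. Put
$$C:=\Big\{\sum_{j\le t}\beta_jx_j\colon x_j\in\Lambda_j\Big\}.$$
This is the image of the compact set $\Lambda_1\times\cdots\times\Lambda_t$ under a continuous map, so it is compact. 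It lies in $\Lambda$ because $\Lambda$ is a subgroup containing every $\Lambda_j$. So $C$ is a closed subset of $\Lambda$, and all $z_k\in C$.

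Now apply the hypothesis to $A_2|_{C\times\T}$. Take the continuous function $(x,y)\mapsto e^{2\pi i y}$ and the points $(z_k,0)\in C\times\T$. Then $e^{2\pi i(y+nx)}$ evaluated at $A_2^n(z_k,0)$ is $e^{2\pi i n z_k}$, so the strong $\bfu$-MOMO property of $A_2|_{C\times\T}$ gives exactly that the averages above tend to $0$.

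The only subtle step is ensuring the set carrying the $z_k$ is a closed subset of $\Lambda$, which is what compactness of the $\Lambda_i$ and the group property of $\Lambda$ provide. There is no need for a single closed set uniform over all characters, because the hypothesis is available for every closed $C\subset\Lambda$. The argument transfers verbatim to the logarithmic version.
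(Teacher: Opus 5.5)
Your proof is correct and follows the paper's argument. You reduce to restrictions of characters, compute $\bigl|\sum_{b_k\le n<b_{k+1}}\chi(B^n(x^{(k)},y^{(k)}))\bfu(n)\bigr|=\bigl|\sum_{b_k\le n<b_{k+1}}e^{2\pi i n z_k}\bfu(n)\bigr|$, and invoke the strong $\bfu$-MOMO property of $A_2$ restricted to the compact set $\beta_1\Lambda_1+\cdots+\beta_t\Lambda_t\subset\Lambda$. The paper writes $\alpha_1\Lambda_1+\cdots+\alpha_t\Lambda_t$ at this point, which is a slip; your use of the $\beta_j$ (the coefficients of the $y$-coordinates) is the correct choice.
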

\begin{proof}
Again, notice that we only need to consider restrictions of characters of $\T^\infty\times\T^\infty$ to $(\prod_{i\geq1}\Lambda_i)\times\T^\infty$. If
$$
\chi(x_1,x_2,\ldots,y_1,y_2,\ldots)=e^{2\pi i(\sum_{j}\alpha_jx_j+\sum_j\beta_jy_j)}$$
with $\alpha_j,\beta_j\in\Z$ for $j=1,\ldots,t$ then we repeat the computation from the proof of the previous lemma and use the strong $\bfu$-MOMO property of
$A_2|_{\alpha_1\Lambda_1+\ldots+\alpha_t\Lambda_t}\times \T$
to obtain the assertion.\end{proof}

\paragraph{Shrinking wedge}
Assume now that $(X_n,d_n)$, $n\ge 1$, are compact metric spaces and let $T_n\colon X_n\to X_n$
be homeomorphisms. We recall first the {\em shrinking wedge} construction. Let
\[
X \;:=\; \{x_0\}\;\sqcup\;\bigsqcup_{n\ge 1} X_n
\]
be the disjoint union of the $X_n$ together with an additional point $x_0$.
Fix a sequence $(\delta_n)_{n\ge 1}$ with $\delta_n>0$ and $\delta_n\to 0$.
Assume moreover that each $X_n$ has been rescaled (if necessary) so that
\[
{\rm diam}(X_n)\le \delta_n.
\]

Define a metric $d$ on $X$ in the following way:
\begin{itemize}
\item if $x,y\in X_n$, set $d(x,y):=d_n(x,y)\, (\leq \delta_n)$;
\item if $x\in X_n$ and $y\in X_m$ with $n\neq m$, set
\[
d(x,y):=\delta_n+\delta_m;
\]
\item if $x\in X_n$, set $d(x,x_0):=\delta_n$, and $d(x_0,x_0):=0$.
\end{itemize}
One checks that $d$ is a metric and that $(X,d)$ is compact. Moreover, we have
${\rm diam}_d(X_n)\le \delta_n\to 0$ and every sequence $x_n\in X_n$ satisfies
$d(x_n,x_0)=\delta_n\to 0$, hence $x_n\to x_0$.

Consider the set of functions
\[
\mathcal D
:=
\Big\{f\in C(X): (\exists N\geq1)\; (\forall n\ge N)\; f|_{X_n}\equiv f(x_0)\Big\},
\]
that is, the set of functions eventually constant on the tail components and which agree there with their value at the wedge point $x_0$. It is not hard to see that it is dense in $C(X)$ with respect to $\|\cdot\|_\infty$.

Assume that $T_n$ is a homeomorphism of $X_n$ for all $n\geq1$ and define $T\colon X\to X$ by
\[
T(x)=
\begin{cases}
T_n(x), & x\in X_n,\\
x_0, & x=x_0.
\end{cases}
\]
Since each $X_n$ is $T$-invariant and $T_n$ is a homeomorphism, and since
$x_n\to x_0$ whenever $x_n\in X_n$, it follows that $T$ is a homeomorphism
of $X$.

Let $(b_k)_{k\ge 1}$ be any increasing sequence with $b_{k+1}-b_k\to\infty$.
Since $\|\bfu\|_{u^1}=0$,
\begin{equation}\label{eq:MR-needed}
\lim_{K\to\infty}\;
\frac{1}{b_K}
\sum_{k=1}^{K-1}
\left|
\sum_{b_k\leq n<b_{k+1}}\bfu(n)
\right|
\;=\;0.
\end{equation}

\begin{Prop}\label{thm:wedge-strongMOMO}
Assume that each $(X_n,T_n)$ satisfies strong $\bfu$-MOMO. Then the shrinking wedge system $(X,T)$ also satisfies
strong $\bfu$-MOMO.
\end{Prop}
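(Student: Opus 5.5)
By the density remark at the start of this section, it suffices to verify \eqref{eq:strongMOMO-bk} for $f$ in the dense class $\mathcal D$. Since $\|\bfu\|_{u^1}=0$, \eqref{eq:MR-needed} holds, and this gives the property for constant functions. So fix $f\in\mathcal D$ and $N$ such that $f|_{X_n}\equiv f(x_0)$ for all $n\ge N$. Writing $f=f(x_0)\cdot 1+g$, the function $g$ vanishes on $\{x_0\}\sqcup\bigsqcup_{n\ge N}X_n$. It is therefore enough to treat $g$, which is supported on the finitely many clopen pieces $X_1,\ldots,X_{N-1}$. Indeed, each $X_n$ is clopen in $X$: it is open because points of $X_n$ are at distance at least $\delta_n+\delta_m$ or $\delta_n$ from everything outside $X_n$, and it is compact.

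Next, decompose $g=\sum_{n<N}g_n$ with $g_n:=g\cdot 1_{X_n}$, where $g_n|_{X_n}\in C(X_n)$. By the triangle inequality it suffices to prove \eqref{eq:strongMOMO-bk} for each single $g_n$. Fix $n<N$, a sequence $(b_k)$ and points $y_k\in X$. Since $X_n$ is $T$-invariant and $X\setminus X_n$ is also invariant, the orbit segment $T^{m}y_k$ lies entirely in $X_n$ if $y_k\in X_n$, and $g_n(T^my_k)=0$ for all $m$ otherwise. So
\[
\frac1{b_K}\sum_{k<K}\Big|\sum_{b_k\le m<b_{k+1}}\bfu(m)g_n(T^{m-b_k}y_k)\Big|
=\frac1{b_K}\sum_{k<K,\;y_k\in X_n}\Big|\sum_{b_k\le m<b_{k+1}}\bfu(m)g_n|_{X_n}(T_n^{m-b_k}y_k)\Big|.
\]
Now replace every $y_k\notin X_n$ by an arbitrary fixed point $z_n\in X_n$. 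This makes the right-hand side no larger than the corresponding expression for the system $(X_n,T_n)$ with points $y'_k\in X_n$, since the extra terms are nonnegative. By the strong $\bfu$-MOMO property of $(X_n,T_n)$, that expression tends to $0$.

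The only delicate points are the reduction to $\mathcal D$ and clopenness, so the main care lies in the density argument. If $\|f-h\|_\infty<\varepsilon$, the averages differ by at most $\varepsilon\frac1{b_K}\sum_{k<K}(b_{k+1}-b_k)\le \varepsilon\, b_{K}/b_K\cdot O(1)$. This holds up to boundary, and one handles it by noting that \eqref{eq:strongMOMO-bk} sums only up to $b_K$; the bound is a uniform $\varepsilon$, which gives the reduction. Density of $\mathcal D$ holds by uniform continuity: since $\operatorname{diam}X_n\le\delta_n\to0$ and $d(X_n,x_0)\to0$, the oscillation of $f$ on $\bigcup_{n\ge N}X_n\cup\{x_0\}$ tends to $0$. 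Redefining $f$ there to be $f(x_0)$ changes it by a small amount, and the result stays continuous because that set is clopen in $X$ once one removes the finitely many $X_n$ with $n<N$.
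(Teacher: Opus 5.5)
Your argument is correct and follows essentially the same route as the paper: you reduce to $f\in\mathcal D$, dispose of the constant/tail part via $\|\bfu\|_{u^1}=0$ (i.e.\ \eqref{eq:MR-needed}), and handle each of the finitely many components $X_1,\dots,X_{N-1}$ by its own strong $\bfu$-MOMO property. The only differences are cosmetic. You split the function as $f(x_0)+\sum_{n<N}g_n$, whereas the paper splits the indices $k$ into core and tail. You also spell out the dummy-point domination step, the clopenness of the $X_n$, and the density of $\mathcal D$, which the paper leaves implicit.
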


\begin{proof}
Fix $f\in \mathcal{D}$, an increasing sequence $(b_k)$ with $b_{k+1}-b_k\to\infty$,
and points $x_k\in X$. We must show \eqref{eq:strongMOMO-bk} for $(X,T)$.

 Choose $N$ such that
\begin{equation}\label{eq:tail-const}
f|_{X_n}\equiv f(x_0)\qquad \text{for all }n\ge N.
\end{equation}
Split indices $k$ into:
\[
I_{\rm core}:=\{k:\ x_k\in X_i \text{ for some }1\le i\le N-1\} \text{ and }
I_{\rm tail}:=\mathbb N\setminus I_{\rm core}.
\]
For each $k$ define the block sum
\[
S_k(f):=\sum_{b_k\leq n<b_{k+1}}\bfu(n)\, f\!\left(T^{\,n-b_k}x_k\right).
\]

\smallskip
\noindent\emph{Tail indices.}
If $k\in I_{\rm tail}$, then either $x_k=x_0$ or $x_k\in X_n$ for some $n\ge N$.
In both cases, the orbit segment $\{T^{j}x_k:0\le j<b_{k+1}-b_k\}$ stays in
$\{x_0\}\cup\bigcup_{n\ge N}X_n$, hence by \eqref{eq:tail-const}, it is constant under $f$:
\[
f\!\left(T^{\,n-b_k}x_k\right)=f(x_0) \quad \text{for } b_k\le n<b_{k+1}.
\]
Therefore
\[
S_k(f)= f(x_0)\sum_{b_k\leq n<b_{k+1}}\bfu(n),
\]
and
\begin{equation}\label{eq:tail-MR}
\lim_{K\to\infty}\frac{1}{b_K}\sum_{\substack{1\le k\le K-1\\k\in I_{\rm tail}}}|S_k(f)|
\le |f(x_0)|\cdot
\lim_{K\to\infty}\frac{1}{b_K}\sum_{k=1}^{K-1}\left|\sum_{b_k\leq n<b_{k+1}}\bfu(n)\right|
=0.
\end{equation}

\smallskip
\noindent\emph{Core indices.}
If $k\in I_{\rm core}$, then $x_k\in X_{i(k)}$ for some $i(k)\in\{1,\dots,N-1\}$ and
$T^{j}x_k=T_{i(k)}^{j}x_k$. Hence
\[
S_k(f)=\sum_{b_k\leq n<b_{k+1}}\bfu(n)\, f\!\left(T_{i(k)}^{\,n-b_k}x_k\right),
\]
which is a block sum for $(X_{i(k)},T_{i(k)})$ with test function $f|_{X_{i(k)}}$.
Since each $(X_i,T_i)$, $1\le i\le N-1$, satisfies strong $\bfu$-MOMO, we have for each fixed $i$:
\[
\lim_{K\to\infty}
\frac{1}{b_K}\sum_{\substack{1\le k\le K-1\\ i(k)=i}}
|S_k(f)| = 0.
\]
Summing over the finitely many indices $i\in\{1,\dots,N-1\}$ yields
\begin{equation}\label{eq:core}
\lim_{K\to\infty}
\frac{1}{b_K}\sum_{\substack{1\le k\le K-1\\ k\in I_{\rm core}}}
|S_k(f)| = 0.
\end{equation}

\smallskip
\noindent
Adding \eqref{eq:tail-MR} and \eqref{eq:core} gives
\[
\lim_{K\to\infty}
\frac{1}{b_K}\sum_{k=1}^{K-1}|S_k(f)|=0,
\]
which concludes.
\end{proof}

\begin{Remark} The shrinking wedge model is ubiquitous when we want to extend the $\bfu$-orthogonality for uniquely ergodic systems to $\bfu$-orthogonality of topological systems having countably many ergodic measures (up to measure-theoretic isomorphism). For example, let us prove that under the assumption of Corollary~\ref{corollary:g}, we get the $\bfu$-orthogonality for systems belonging to $\cc_{\cf_{\rm ec}}$ having countably many ergodic measures (up to measure-theoretic isomorphism). Indeed, first of all, the orthogonality of uniquely ergodic systems extends to the strong $\bfu$-MOMO property of such systems, cf.\ Footnote~\ref{f:domomo}. Take now $(X,T)\in\cc_{\cf_{\rm ec}}$ with $M^e(X,T)$, up to measure-theoretic isomorphism, equal to $\{\nu_j\colon j\geq1\}$. Then find uniquely ergodic models $(Z_j,\kappa_j, R_j)$ for $(T,\nu_j)$, $j\geq1$. Next, use the shrinking wedge construction to obtain a dynamical system $(Z,R)$ satisfying the strong $\bfu$-MOMO property and having as ergodic invariant measures $\kappa'_j$ so that $(R,\kappa'_j)$  is isomorphic to  $(Z_j,R_j,\kappa_j)$, $j\geq1$. Finally, use Theorem~\ref{theorem:a}.
\end{Remark}

\subsection{A lemma on the graph measures in the product spaces}
Let us recall that if $R_i$ is an automorphism of $(Z_i,\cd_i,\kappa_i)$, $i=1,2$, and if $J \colon Z_1\to Z_2$ establishes an isomorphism of $R_1$ and $R_2$, then the formula
$$
\Delta_J(D_1\times D_2):=\kappa_1(D_1\cap J^{-1}(D_2)),$$
$D_i\in\cd_i$,
extends to a joining $\Delta_J\in J(R_1,R_2)$ called a {\em graph joining}. According to \cite{Le-Me} a joining $\rho\in J(R_1,R_2)$ is a graph joining if and only if it identifies the marginal sigma-algebras or, equivalently,
$$
L^2(Z_1,\kappa_1)\ot \raz_{Z_2}=\raz_{Z_1}\ot L^2(Z_2,\kappa_2)
$$
as   closed subspaces of $L^2(Z_1\times Z_2,\rho)$.

\begin{Lemma}\label{l:sm1} Let $(X,T)$, $(Y,S)$ be two topological systems. Then the set
\begin{multline*}
M^{\rm graph}(X\times Y,T\times S):=\big\{\rho\in M(X\times Y,T\times S)\colon \rho\text{ is a graph-joining}\\
\text{ of } (\pi_X)_\ast(\rho)\text{ and }(\pi_Y)_\ast(\rho)\big\}
\end{multline*}
is a Borel subset of $M(X\times Y,T\times S)$.
\end{Lemma}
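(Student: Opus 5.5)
We will use the characterization recalled just before the statement: $\rho\in J(R_1,R_2)$ is a graph joining iff $L^2(Z_1)\otimes\raz=\raz\otimes L^2(Z_2)$ in $L^2(\rho)$. Since both spaces are separable and the sub-$\sigma$-algebras are countably generated, it is enough to test the inclusions on countable dense families. Fix countable sets $\{f_i\}\subset C(X)$ and $\{g_j\}\subset C(Y)$ that are dense in sup norm; they are then dense in $L^2$ of any Borel probability measure. For $\rho\in M(X\times Y,T\times S)$ write $\rho_X=(\pi_X)_\ast\rho$ and $\rho_Y=(\pi_Y)_\ast\rho$.

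The joining $\rho$ is a graph joining iff two conditions hold. First, for every $i$, $\operatorname{dist}_{L^2(\rho)}(f_i\otimes\raz,\ \raz\otimes C(Y))=0$. Second, for every $j$, $\operatorname{dist}_{L^2(\rho)}(\raz\otimes g_j,\ f\otimes\raz\colon f\in C(X))=0$. Indeed, the first condition says each $f_i\otimes\raz$ lies in the closure of $\raz\otimes L^2(\rho_Y)$. By density of $\{f_i\}$ this gives $L^2(\rho_X)\otimes\raz\subset\raz\otimes L^2(\rho_Y)$, and symmetrically for the second condition. The distance can be written as $\inf_j\|f_i\otimes\raz-\raz\otimes g_j\|_{L^2(\rho)}$. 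This is because $\{g_j\}$ is sup-dense in $C(Y)$, hence $L^2(\rho_Y)$-dense. So $\rho$ is a graph joining iff
$$
\forall i\ \forall m\ \exists j:\ \int|f_i(x)-g_j(y)|^2\,d\rho(x,y)<1/m,
$$
together with the symmetric statement with the roles of $X$ and $Y$ swapped.

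Each map $\rho\mapsto\int|f_i\otimes\raz-\raz\otimes g_j|^2\,d\rho$ is weak$^\ast$ continuous, since the integrand is continuous on $X\times Y$. Hence each set $\{\rho:\int\ldots<1/m\}$ is open in $M(X\times Y,T\times S)$. The set $M^{\rm graph}$ is then a countable intersection of countable unions of open sets, so it is $G_\delta$, and in particular Borel.

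The only point needing care is the equivalence between graph joinings and equality of the marginal subspaces, which is exactly the cited \cite{Le-Me} fact. One also has to note that closed $L^2$-subspaces coming from sub-$\sigma$-algebras, each containing the other, coincide. We will also check that equality of the subspaces $L^2(\rho_X)\otimes\raz$ and $\raz\otimes L^2(\rho_Y)$ inside $L^2(\rho)$ is exactly what is tested, with no ergodicity assumption needed. I expect no real obstacle beyond this.
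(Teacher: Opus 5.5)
Your proof is correct and follows essentially the paper's route: the paper also fixes countable sup-dense families $\{f_i\}\subset C(X)$ and $\{g_j\}\subset C(Y)$. It likewise uses the weak$^\ast$-open sets $\{\rho\colon \|f_i\otimes\raz-\raz\otimes g_j\|_{L^2(\rho)}<\varepsilon\}$ and identifies $M^{\rm graph}(X\times Y,T\times S)$ with the same countable intersection of countable unions of these sets, via the Lema\'nczyk--Mentzen characterization of graph joinings. Your version is a little more explicit than the paper about both inclusions, and it records that the set is in fact $G_\delta$.
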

\begin{proof}
Let $\{f_i\}_{i\geq1}$, $\{g_j\}_{j\geq1}$ be dense subsets in $C(X)$ and $C(Y)$, respectively. Given $\vep>0$, $i,j\geq1$, set
$$A_{i,j,\vep}:=\{\rho\in M(X\times Y,T\times S)\colon \|f_i\ot\raz_Y-\raz_X\ot g_j\|_{L^2(\rho)}<\vep\}.$$
Note that $A_{i,j,\vep}$ is an open set in the $*$-weak topology on $M(X\times Y,T\times S)$.\footnote{If the integral $\int_{X\times Y}|f_i\ot\raz_Y-\raz_X\ot g_j|^2\,d\rho$ is small then $\int_{X\times Y}|f_i\ot\raz_Y-\raz_X\ot g_j|^2\,d\rho'$ remains small if $\rho'$ is $*$-weakly close to $\rho$.}
Consider the set $$
D:=\bigcap_{\Q\ni \vep>0}\bigcap_{i\geq 1}\bigcup_{j\geq1}A_{i,j,\vep}
\cap\bigcap_{\Q\ni \vep>0}\bigcap_{j\geq 1}\bigcup_{i\geq1}A_{i,j,\vep}.$$
If we fix $i\geq1$ and $\rho\in D$ then for each $\vep>0$ the function $f_i\ot\raz_Y$ is $\vep$-close (in $L^2(\rho)$) to a certain function $\raz_X\ot g_j$, so letting $\vep\to0$, the function $f_i\ot\raz_Y$ is $\rho$-a.e.\ equal to a function of the form $\raz_X\ot g$, with $g\in L^2((\pi_Y)_\ast(\rho))$. The result now follows
since
$$
M^{\rm graph}(X\times Y,T\times S)=D.
$$
\end{proof}

\subsection{Strong $\bfu$-MOMO and Sarnak's conjecture in topological models: proofs of Theorem~\ref{theorem:a}, Theorem~\ref{theorem:e} and Corollary~\ref{corollary:g}}
Our first aim is to prove Theorem~\ref{theorem:a} which in a sense reduces the problem of orthogonality to ergodic measures in a topological system while their ``multiplicity'' turns out to be irrelevant.

The proof is based on the following.

\begin{Lemma}\label{l:irrmult1} Under the assumptions of Theorem~\ref{theorem:a}
there exists an (analytically) measurable map
$$
\Phi \colon M^e(Z,R)\to M^e(M(Z,R)\times X, \text{{\rm Id}}\times T)$$
which is 1-1 and $(R,\nu)$ is measure-theoretically isomorphic to $(\text{{\rm Id}}\times T,\Phi(\nu))$ for each $\nu\in M^e(Z,R)$.
\end{Lemma}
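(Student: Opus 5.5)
The natural candidate is $\Phi(\nu):=\delta_\nu\otimes\nu'$, where $\nu'\in M^e(X,T)$ is chosen with $(X,\nu',T)$ isomorphic to $(Z,\nu,R)$. The first coordinate $\nu$ makes $\Phi$ injective whatever $\nu'$ is: if $\Phi(\nu_1)=\Phi(\nu_2)$, the projections to $M(Z,R)$ give $\delta_{\nu_1}=\delta_{\nu_2}$. Each $\delta_\nu\otimes\nu'$ is invariant and ergodic for ${\rm Id}\times T$, since $\nu'$ is $T$-ergodic and the first coordinate is a fixed point. Moreover $({\rm Id}\times T,\delta_\nu\otimes\nu')$ is isomorphic to $(X,\nu',T)$ via the second-coordinate projection, hence to $(R,\nu)$. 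So the only real task is to choose $\nu\mapsto\nu'$ measurably.

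To make the choice measurable I will use the Jankov--von Neumann Theorem~\ref{th:JvN}, together with a Borel description of isomorphism given by Lemma~\ref{l:sm1}. Set
$$
\mathcal M:=\big\{(\nu,\rho)\in M^e(Z,R)\times M(Z\times X,R\times T)\colon (\pi_Z)_\ast\rho=\nu,\ (\pi_X)_\ast\rho\in M^e(X,T),\ \rho\in M^{\rm graph}(Z\times X,R\times T)\big\}.
$$
Each condition defines a Borel set. $M^e$ is a $G_\delta$ in the simplex of invariant measures, the marginal maps are continuous, and $M^{\rm graph}$ is Borel by Lemma~\ref{l:sm1}. Hence $\mathcal M$ is a Borel subset of a standard Borel space.

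By the characterization of graph joinings recalled before Lemma~\ref{l:sm1} (\cite{Le-Me}), a graph joining of $(R,\nu)$ and $(T,\nu')$ exists exactly when the two systems are isomorphic. Consequently, the projection of $\mathcal M$ onto the first coordinate is all of $M^e(Z,R)$, by the hypothesis of Theorem~\ref{theorem:a}. Theorem~\ref{th:JvN} then gives a selector $s\colon M^e(Z,R)\to\mathcal M$, measurable with respect to the $\sigma$-algebra generated by analytic sets. I then define
$$
\nu'(\nu):=(\pi_X)_\ast\big(s(\nu)_2\big),\qquad \Phi(\nu):=\delta_\nu\otimes\nu'(\nu).
$$
The maps $\nu\mapsto\delta_\nu$ and $(\mu_1,\mu_2)\mapsto\mu_1\otimes\mu_2$ are continuous, so $\Phi$ is analytically measurable. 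Its values lie in $M^e(M(Z,R)\times X,{\rm Id}\times T)$, because $\nu'(\nu)$ is ergodic.

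The main obstacle is the Borel description in the second step: one must check that $\rho\mapsto(\pi_X)_\ast\rho$ into $M^e$ and the graph condition fit together into a single Borel set. That requires the fact that, for the marginals, a graph joining (identification of the marginal $\sigma$-algebras mod $\rho$) is exactly an isomorphism. The rest is routine.
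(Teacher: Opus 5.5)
Your proof is correct and follows essentially the paper's route. Both arguments take a Borel set of graph joinings (Lemma~\ref{l:sm1}) projecting onto $M^e(Z,R)$, apply a Jankov--von Neumann selector, and use the $\delta_\nu$ coordinate to force injectivity. The only cosmetic difference is that the paper builds $\delta_\nu$ into the selection itself: it works with graph joinings of $(M(Z,R)\times X,{\rm Id}\times T)$ and $(Z,R)$ subject to the closed condition $(\pi_{M(Z,R)})_\ast\lambda=\delta_{(\pi_Z)_\ast\lambda}$. You instead select a graph joining on $Z\times X$ and attach $\delta_\nu$ afterwards.
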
 \begin{proof}
Let us recall that the set
$$
\mathcal{R}:=M^{\rm graph}\big((M(Z,R)\times X)\times Z,(\text{Id}\times T)\times R\big)
$$
(see Lemma~\ref{l:sm1}) is a Borel subset
of  $M\big((M(Z,R)\times X)\times Z, (\text{Id}\times T)\times R\big)$.
Since the mapping $M(Z,R)\ni\kappa\mapsto\delta_\kappa\in M(M(Z,R))$ is continuous,
and the two marginal projections  $\mathcal R\ni \lambda\mapsto (\pi_{M(Z,R)})_*\lambda\in M(M(Z,R))$
and  $\mathcal R\ni \lambda\mapsto (\pi_Z)_*\lambda\in M(Z,R)$ are also continuous,
 it follows that
 the subset
$$
\mathcal{R}_1:=\{\lambda\in\mathcal{R}\colon \delta_{(\pi_Z)_\ast\lambda}=
(\pi_{M(Z,R)})_\ast\lambda\}
$$
is closed in $\mathcal R$.
Moreover (by our assumption),
 $$
 (\pi_Z)(\mathcal{R}_1)\supset M^e(Z,R).
 $$
By the Jankov-von Neumann theorem, there exists  an analytic selector $\phi \colon M^e(Z,R)\to \mathcal{R}_1$ of the projection $(\pi_Z)\restriction\mathcal R_1$.
It is straightforward to check that   the map $\Phi:=(\pi_{M(Z,R)\times X})\circ \phi$ satisfies  our claim.
\end{proof}

\begin{Remark}\label{r:allmeas} Under the assumptions of Lemma~\ref{l:irrmult1}, there is a measurable map $\Psi \colon M(Z,R)\to M(M(Z,R)\times X,{\rm Id}\times T)$ such that $(R,\nu)$ is measure-theoretically isomorphic to $({\rm Id}\times T,\Psi(\nu)).$
Indeed, if $\nu=\int_{M^e(Z,R)}\kappa\,d\PP(\kappa)$ then we set $\Psi(\nu):=\int_{M^e(M(Z,R)\times X, {\rm Id}\times T)} \xi\, d(\Phi_\ast(\PP))(\xi)$. Note that an isomorphism of the systems given by $\nu$ and $\Psi(\nu)$, respectively, follows from Proposition~\ref{fields} because $\Phi$ establishes an isomorphism of the corresponding spaces of ergodic components, while the ergodic components corresponding to $\kappa$ and $\xi=\Phi(\kappa)$ are isomorphic.
\end{Remark}

\begin{proof}[Proof of Theorem~\ref{theorem:a}]
Since $(X,T)$ satisfies the strong $\bfu$-MOMO property, by Lemma~\ref{l:m19}, so does $(M(Z,R)\times X,\text{Id}\times T)$. So with no loss of generality\footnote{Note that if the assumption of Theorem~\ref{theorem:a} is satisfied, the same assumption is satisfied for the extension as the only ergodic measures for ${\rm Id}\times T$ are of the form $\delta_\kappa\ot \nu'$.} we can assume that $\Phi \colon M^e(Z,R)\to M^e(X,T)$ is 1-1 and measurable (and sends an ergodic measure to an ergodic measure yielding isomorphic automorphisms).

Suppose that
$\bfu\not\perp (Z,R)$, then by a standard argument, there exist  $\xi\in M(Z,R)$, $\kappa\in V(\bfu)$ and $\rho\in J((Z,\xi),(S,\kappa))$ such that
$$
\int_{Z\times X_{\bfu}}f\ot\pi_0\,d\rho\neq0
$$
 for some $f\in C(Z)$.
Let $P$ be a probability measure on $M^e(Z,R)$ such that $\xi=\int_{M^e(Z,R)}\mu\, dP(\mu)$. Set
$$
\nu:=\int_{M^e(X,T)}\theta\, d(\Phi_\ast(P))(\theta).$$
Since $\Phi$ sends an ergodic measure to an ergodic measure yielding isomorphic automorphisms, and $\Phi$ is 1-1, in view of Proposition~\ref{fields}, we obtain that the automorphisms $(T,\nu)$ is isomorphic to $(R,\xi)$, say,  via a  map $J$.
Let $\rho':=(J\times \text{Id}_{X_{\bfu}})_\ast\rho$.
Then $\rho'$ is a joining  of $(T,\nu)$ with $(S,\kappa)$, and
$$
\int J(f)\ot\pi_0\,d\rho'\neq0.
$$
 Approximating $J(f)$ by a continuous function $G\in C(X)$, we obtain that
$\int G\ot\pi_0\,d\rho'\neq0$.
Since $(X,T)$ has the strong $\bfu$-MOMO property, we obtain a contradiction with Proposition~\ref{p:rolemomo}.
\end{proof}

\begin{proof}[Proof of Theorem~\ref{theorem:e}]
We repeat a simplified version of the proof of Theorem~\ref{theorem:a}.  Indeed, if there is no orthogonality, then we have a joining $\rho$ of $(R,\nu)$ (for some $\nu\in V(z_0)$) and a Furstenberg system $(X_{\bfu},\theta,S)$ with $\int f\ot\pi_0\,d\rho\neq 0$ for some continuous function $f\in C(Z)$.
Then, by our assumption we see that $(R,\nu)$ is a measure-theoretic factor of $\text{Id}_{[0,1]}\times T_\nu$ (with product measure,\footnote{We consider $P'\ot \kappa_\nu$ for a relevant $P'$ projecting onto $P$ on $\Gamma$.} see also Theorem~\ref{theorem:f}), moreover, the homeomorphism $\text{Id}\times T_\nu$ satisfies the strong $\bfu$-MOMO property by Lemma~\ref{l:m19}.
Use the relatively independent extension construction to obtain a joining $\rho'$ between $\text{Id}_{[0,1]}\times (T_\nu, \kappa_\nu)$ and $(X_{\bfu},\theta,S)$ with $\int f'\ot\pi_0\,d\rho'\neq 0$ for some $0\neq f'\in L^2$.
Since $f'$ can be approximated by a continuous function, we obtain a contradiction with the strong $\bfu$-MOMO of
$\text{Id}_{[0,1]}\times T_\nu$.
\end{proof}

\begin{proof}[Proof of Corollary~\ref{corollary:g}]
Take $(X,T)$ satisfying the assumptions of Corollary~\ref{corollary:g}.
\begin{itemize}
\item By assumption, for each visible $\nu\in M(X,T)$, the automorphism $(T,\nu)$ is a factor of  $\text{Id}\times E$, with $E$ ergodic.
\item $(X,T)\in \mathcal{C}_{\cf_{\rm ec}}$ yields additionally that $(T,\nu)$ is a factor of $\text{Id}\times E'$, where $E'$ is ergodic and belongs to $\cf$.
\item We can also assume that $E'$ is uniquely ergodic.
\item Hence  (by assumption), $E'\perp \bfu$, but in fact, due to \cite{Ka-Ku-Le-Ru}, we can assume that $E'$  satisfies the strong $\bfu$-MOMO property.
\item Hence, $(T,\nu)$ is a measure-theoretic factor of $\text{Id}\times E'$, with this homeomorphism satisfying strong $\bfu$-MOMO property.
\item This implies  $(X,T)\perp\bfu$ as in the proof of Theorem~\ref{theorem:e}.
\end{itemize}

As for the second part of Corollary~\ref{corollary:g}, we make a modification in the proof of Theorem~\ref{theorem:e}, by noticing that\footnote{We use consecutively: $f$ is in an $\cf$-system, $\ca_{\mathcal{F}}(S,\theta)=\ca_{\mathcal{G}}(S,\theta)$ by assumption, $ \EE(\pi_0|\ca_{\mathcal{G}}(S,\theta))$ is in a $\mathcal{G}$-system and $\EE(f|\ca_{\mathcal{G}}(T,\nu))$ is in a $\mathcal{G}$-system.}
\begin{align*}
\int f\ot\pi_0\, d\rho&=\int f\otimes \EE(\pi_0|\ca_{\mathcal{F}}(S,\theta))\,d\rho\\
&=
\int f\otimes \EE(\pi_0|\ca_{\mathcal{G}}(S,\theta))\,d\rho\\
&=\int \EE(f|\ca_{\mathcal{G}}(T,\nu))\otimes \EE(\pi_0|\ca_{\mathcal{G}}(S,\theta))\,d\rho\\
&=
\int \EE(f|\ca_{\mathcal{G}}(T,\nu))\otimes \pi_0\,d\rho
\end{align*}
since $(X_{\bfu},\theta,S)\in\mathcal{G}$, and then, continuing the proof with $\EE(f|\ca_{\mathcal{G}}(T,\nu))$ and $T|_{\ca_{\mathcal{G}}(T,\nu)}$ instead of $f$ and $(T,\nu)$.
\end{proof}

\begin{Remark} The proof of the second part requires only that $({\rm UE}\cap \mathcal{C}_{\mathcal{G}})\perp\bfu$. Note however that once this is satisfied and all $\ca_{\cf}$-factors of Furstenberg systems of $\bfu$ are in $\mathcal{G}$ then $({\rm UE}\cap \mathcal{C}_{\mathcal{F}})\perp\bfu$ whenever $\cf\supset\mathcal{G}$. Indeed, starting with a uniquely ergodic $(X,\nu,T)\in\cf$  if $\int f\ot\pi_0\,d\rho\neq 0$, then also
$\int \EE(f|\ca_{\mathcal{G}}(T,\nu))\ot\pi_0\,d\rho\neq0$ and ``rewriting'' this in the form $\int f'\ot\pi_0 \,d\rho'$ using a uniquely ergodic model of the factor $\ca_{\mathcal{G}}(T,\nu)$, we obtain a contradiction with the strong $\bfu$-MOMO property of this model. \end{Remark}

\subsection{Proof of Corollary~\ref{p:m17}}
We will show that the assumption of Theorem~\ref{t:ajmt} is satisfied. Indeed, fix any Furstenberg system $\kappa\in V(\bfu)$.
Using definition, there are ergodic automorphisms $R_i$ (and we can assume that $R_i\in\cf$) and joinings $\rho_i\in J(R_i\times \text{Id},T)$, $i\in I$, ``covering'' the space $L^2(X_{\bfu},\kappa)\ominus L^2({\rm Inv}_\kappa)$.
By the Jewett-Krieger theorem, no harm to assume that the $R_i$'s are uniquely ergodic.
Because $\bfu\perp {\rm UE}\cap\cf_{\rm ec}$, in fact, all $R_i$'s satisfy the strong $\bfu$-MOMO property.\footnote{This follows from \cite{Ab-Le-Ru} from the observation that the orbital systems coming from a uniquely ergodic system remain uniquely ergodic models of the same system.}

It follows from Lemma~\ref{l:m19} that $R_i\times \text{Id}\in\widehat{\mathcal{L}}$ and therefore,
the assumption in Theorem~\ref{t:ajmt} is satisfied. \bez


\section{Universal models for the characteristic class of automorphisms with relatively discrete spectrum over  identity}\label{s:relK}

\subsection{A necessary condition for existence of  universal models}

Assume that $\cf$ is a characteristic class and assume that a topological system $(X,T)$ is a universal model of $\cf$, that is:
\beq\label{model1}
(X,\mu,T)\in\cf \text{ for each }\mu\in M(X,T)\eeq
and for each $R\in{\rm Aut}\zdk$, $R\in\cf$,  there is $\mu\in M(X,T)$ such that
\beq\label{model2}
(X,\mu,T)\text{ and }(Z,\kappa,R)\text{ are isomorphic}.\eeq

\begin{Prop}\label{p:noum} If  $\cf$ has a universal model then
$\cf=\cf_{\text{{\rm ec}}}$.
\end{Prop}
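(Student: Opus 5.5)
We must show $\cf_{\rm ec}\subset\cf$; the inclusion $\cf\subset\cf_{\rm ec}$ is automatic, because an ergodic component of an automorphism is a factor of its ergodic decomposition. Let $(X,T)$ be a universal model of $\cf$ and take $R\in\cf_{\rm ec}$ acting on $\zdk$. By \eqref{model1}, it suffices to produce an invariant measure $\mu\in M(X,T)$ with $(X,\mu,T)$ isomorphic to $R$. This is exactly the situation of Theorem~\ref{theorem:a}/Lemma~\ref{l:irrmult1} and Remark~\ref{r:allmeas}, except that the isomorphisms are now between ergodic components of $R$ and ergodic measures of $(X,T)$.

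\emph{Step 1 (ergodic components).} Write $\kappa=\int_{\ov{Z}}\kappa_{\ov z}\,d\ov\kappa(\ov z)$. For a.e.\ $\ov z$ the component $(R,\kappa_{\ov z})$ lies in $\cf$, so by \eqref{model2} it is isomorphic to $(X,\mu',T)$ for some $\mu'\in M(X,T)$. Since $(X,\mu',T)$ is then ergodic, we have $\mu'\in M^e(X,T)$.

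\emph{Step 2 (measurable selection).} We need the choice $\ov z\mapsto\mu_{\ov z}\in M^e(X,T)$ to be measurable. Using Lemma~\ref{Krieger-non-ergodic}, take a topological model $(Z,R)$ of $R$. Then apply Lemma~\ref{l:sm1}: the set of graph joinings in $M(Z\times X,R\times T)$ is Borel. The map $\ov z\mapsto\kappa_{\ov z}$ is measurable, so the set
\[
\{(\ov z,\lambda)\colon \lambda \text{ a graph joining with first marginal } \kappa_{\ov z}\}
\]
is Borel. Its projection has full measure by Step 1. The Jankov--von Neumann theorem (Theorem~\ref{th:JvN}) then gives a universally measurable selector $\ov z\mapsto\lambda_{\ov z}$, and we set $\mu_{\ov z}:=(\pi_X)_\ast\lambda_{\ov z}$. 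This is the same argument as in Lemma~\ref{l:irrmult1}.

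\emph{Step 3 (gluing and injectivity).} Put $\mu:=\int\mu_{\ov z}\,d\ov\kappa(\ov z)\in M(X,T)$. To apply Proposition~\ref{fields}, the map $\ov z\mapsto\mu_{\ov z}$ must be an isomorphism onto the space of ergodic components of $\mu$, i.e.\ essentially one-to-one. This is the main obstacle: many components of $R$ may be isomorphic to one another, and then the selector is not injective. I would try two fixes.

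\emph{First fix: enlarge the model.} Replace $(X,T)$ by $(X\times[0,1],T\times{\rm Id})$, or by a countable product $X^{\N}$, so that there is room to separate components. The difficulty is that this new system must again be a universal model, or at least its invariant measures must lie in $\cf$. Since $\cf$ is characteristic, every invariant measure of $X\times[0,1]$ is a joining of $\cf$-systems with identities, so this hinges on ${\rm ID}\subset\cf$.

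\emph{Second fix (simpler).} Avoid injectivity altogether. Let $\ov Z'$ be the image measure space of $\ov z\mapsto\mu_{\ov z}$. Proposition~\ref{p:Claim}/Proposition~\ref{p:claimR} then show that $R$ is a factor of $(X,\mu,T)\times{\rm Id}_{[0,1]}$, via a measurable field of isomorphisms over a map $\ov Z\to\ov Z'$. Here $(X,\mu,T)\in\cf$ by \eqref{model1}, and ${\rm Id}_{[0,1]}\in\cf$, being a factor of a self-joining of any $\cf$-system with identities (this needs a short justification, or the nontriviality assumption on $\cf$). Since $\cf$ is closed under products and factors, $R\in\cf$. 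I expect this second route to be the cleanest.
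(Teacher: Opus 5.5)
You have the right idea for the inclusion $\cf_{\rm ec}\subset\cf$, but your first sentence contains a genuine gap. The inclusion $\cf\subset\cf_{\rm ec}$ is \emph{not} automatic, and the reason you give is false: an ergodic component of $R$ is in general not a factor of $R$, nor of any system obtained from $R$ by joinings and factors.

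In fact the inclusion can fail for characteristic classes that have no universal model. Take $R=(A_2,\sigma\ot{\rm Leb}_{\T})$, with $\sigma$ a continuous probability measure on a perfect independent set $K\subset\T$ (e.g.\ a $p$-Kronecker set as in the Complements), and put $\cf=\cf(R)$.
\begin{itemize}
\item An ergodic member of $\cf(R)$ is an ergodic factor of some countable self-joining $\lambda$ of $R$. By extremality, it is then a common factor of a.e.\ ergodic component of $\lambda$.
\item Each such component is an ergodic rotation whose eigenvalue group is generated by points $\xi_1,\xi_2,\ldots\in K$, and each $\xi_i$ is $\sigma$-distributed.
\item By independence of $K$, the rotation $R_{x_0}$ with $x_0\in K$ can occur as such a factor only if $x_0\in\{\xi_i\}$. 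This event has probability zero because $\sigma$ is non-atomic.
\end{itemize}
So no ergodic component of $R$ lies in $\cf(R)$, i.e.\ $R\in\cf\setminus\cf_{\rm ec}$.

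In the present setting this inclusion has to come from \eqref{model1}, as in the paper. For $R\in\cf$, use \eqref{model2} to pick $\mu\in M(X,T)$ with $(X,\mu,T)\cong R$. The ergodic components of $\mu$ are themselves elements of $M(X,T)$, so by \eqref{model1} they yield systems in $\cf$. Hence $R\in\cf_{\rm ec}$.

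Your second route for $\cf_{\rm ec}\subset\cf$ is exactly the paper's proof, so the first fix can simply be dropped. The steps are:
\begin{itemize}
\item Jankov--von Neumann selection (Theorem~\ref{th:JvN}) of graph joinings, which form a Borel set by Lemma~\ref{l:sm1}, as in Lemma~\ref{l:irrmult1}.
\item Gluing $\mu:=\int\mu_{\ov{z}}\,d\ov{\kappa}(\ov{z})$.
\item Proposition~\ref{p:Claim}, which makes $R$ a factor of $(X\times[0,1],\mu\ot{\rm Leb},T\times{\rm Id})$; injectivity of $\ov{z}\mapsto\mu_{\ov{z}}$ is never needed.
\end{itemize}
Your caveat that the last step needs ${\rm Id}_{[0,1]}\in\cf$ is well placed. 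The paper uses this tacitly, and it genuinely requires $\cf$ to be nontrivial: the class of one-point systems has a one-point universal model, while its ec-class is ${\rm{\bf{ID}}}$.
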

\begin{proof}
Let $R\in {\rm Aut}\zdk$ belong to $\cf_{\rm ec}$.
We denote by $(\widebar Z,\widebar \kappa)$ the space of ergodic components of $R$ and by $\kappa=\int_{\widebar Z}\kappa_{\widebar z}d\widebar\kappa(\widebar z)$
the ergodic decomposition of $\kappa$.
Then there exists  a Borel subset $\widebar Z_0\subset\widebar Z$ with
$\widebar \kappa(\widebar Z_0)=1$ such that
$(Z,\kappa_{\widebar z}, R)\in\cf$ for all $\widebar z\in\widebar Z_0$.
Hence, thanks to (\ref{model2}),
there is a measure $\mu_{\widebar z}\in M^e(X,T)$ such that
$(X,\mu_{\widebar z},T)$ is isomorphic to $(Z,\kappa_{\widebar z},R)$
for each $\widebar z\in\widebar Z_0$.
It follows that
$$
(\pi_Z)_*(M^{\rm graph}(X\times Z,T\times R))\supset
\{\kappa_{\widebar z}\colon \widebar z\in\widebar Z_0\}.
$$
Since the mapping $(\pi_Z)_*\colon M^{\rm graph}(X\times Z,T\times R)\to M(Z,R)$ is continuous,
it follows from the Jankov-von Neumann theorem (see Theorem~\ref{th:JvN})
that there is an analytic mapping
$$
\tau \colon \{\kappa_{\widebar z}\colon \widebar z\in\widebar Z_0\}\to M^{\rm graph}(X\times Z,T\times R)
$$
 such that $(\pi_Z)_*\circ\tau=\text{Id}$.
Of course, the three dynamical systems:
$(Z,\kappa_{\widebar z},R)$, $(X\times Z,\tau(\kappa_{\widebar z}), T\times R)$ and $(X,(\pi_X)_*(\tau(\kappa_{\widebar z})), T)$ are pairwise isomorphic.
The three of them are ergodic.
Let $\mu:=\int_{\widebar Z}(\pi_X)_*(\tau(\kappa_{\widebar z}))\,d\widebar\kappa(\widebar z)$.
Then $\mu\in M(X,T)$.
Consider two dynamical systems: $(X,\mu, T)$ and $(Z,\kappa, R)$ and apply Proposition~\ref{p:Claim} to them.
By that proposition, $ (Z,\kappa, R)$ is a factor of the dynamical system $(X\times[0,1],\mu\otimes\text{{\rm Leb}}, T\times\rm{Id})$.
It follows that $R\in \mathcal F$.
Thus, $\cf_{\rm ec}\subset\cf$.

To prove the converse inclusion, take $R\in \cf$.
Then there exists $\mu\in M(X,T)$ such that $(X,\mu,T)$ is isomorphic to $R$.
Consider the ergodic decomposition
 $\mu=\int_{M^e(X,T)}\xi\,d\PP(\xi)$.
 By (\ref{model1}),  $(X,\xi,T)\in\cf$ for each $\xi\in M^e(X,T)$.
 Thus,   all the ergodic components of $R$ are in $\cf$.
 Hence,  $R\in\cf_{\rm ec}$.
\end{proof}

\begin{Cor} 
There is no universal model for the class ${\rm{\bf{DISP}}}$.
\end{Cor}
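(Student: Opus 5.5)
By Proposition~\ref{p:noum}, it is enough to show that ${\rm{\bf{DISP}}}\neq{\rm{\bf{DISP}}}_{\rm ec}$, i.e.\ to exhibit an automorphism all of whose ergodic components have discrete spectrum but which itself does not have discrete spectrum. If ${\rm{\bf{DISP}}}$ had a universal model, Proposition~\ref{p:noum} would force ${\rm{\bf{DISP}}}={\rm{\bf{DISP}}}_{\rm ec}$, which the example rules out. This is exactly the strict inclusion \eqref{zawD} stated in the introduction, so the proof reduces to supplying a concrete witness.

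For the witness I take $A_2\colon(x,y)\mapsto(x,x+y)$ on $\T^2$ with Haar measure $\la_{\T}\ot\la_{\T}$. Its ergodic decomposition is over the first coordinate. For a.e.\ $x$, namely for $x$ irrational, the component is the irrational rotation $y\mapsto y+x$ on $\T$, which has discrete spectrum. Hence $A_2\in{\rm{\bf{DISP}}}_{\rm ec}$.

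It remains to show that $A_2\notin{\rm{\bf{DISP}}}$, and this is the only real step. I do it via spectral theory. Take the character $g(x,y)=e^{2\pi i y}$. Its correlations are
\[
\int g\circ A_2^n\cdot\ov{g}\,d(\la_{\T}\ot\la_{\T})=\int e^{2\pi i nx}\,dx=\delta_{n,0}.
\]
So $\sigma_g$ is Lebesgue measure. Equivalently, by Lemma~\ref{l:mutualsing}, $\sigma_g=\int\delta_{x}\,dx$, since each component contributes the atom $\delta_x$. Now $L^2$ of a discrete-spectrum automorphism is spanned by eigenfunctions, and every spectral measure in such a space is purely atomic. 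The nonzero function $g$ has a continuous spectral measure, so $A_2$ cannot have discrete spectrum. Note that the argument uses only \eqref{mutualsing}-type standard spectral facts and no hard input.
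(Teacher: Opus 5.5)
Your proof is correct and follows the paper's route: the paper also deduces the corollary directly from Proposition~\ref{p:noum} together with the strict inclusion ${\rm{\bf{DISP}}}\subsetneq{\rm{\bf{DISP}}}_{\rm ec}$ in \eqref{zawD}. The paper leaves the witness implicit, and your $A_2$ on $(\T^2,\la_{\T}\ot\la_{\T})$ is exactly the kind of example it has in mind (cf.\ its remark that $A_2$ has continuous spectrum off the invariant functions): a.e.\ component is an irrational rotation, while the character $e^{2\pi i y}$ has Lebesgue spectral measure.
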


\begin{Remark} Although  we will show below that some ec-characteristic classes have universal models, the problem
of existence of universal models
 is open for many classical classes like the class DIST of distal automorphisms or NIL$_{s}$ for $s\geq2$, see \cite{Ka-Ku-Le-Ru}. See also Question~\ref{quest6}.\end{Remark}

\subsection{Algebraic preparations} \label{s:model}
Assume that $G$ is a compact abelian Polish group with Haar measure $\la_G$. We consider the space $M(G)$ of all Borel probability measures on $G$ endowed with the weak$^\ast$-topology. Given $g\in G$, set $K(g):=\ov{\{kg\colon k\in\Z\}}$. Then the map
\begin{equation}\label{eq:canonical}
G\ni g\mapsto  \la_{K(g)}\in M(G)
\end{equation}
 is measurable.
Indeed, for each character $\chi\in \widehat{G}$, the map $J_\chi\colon g\mapsto \int \chi\,d\la_{K(g)}$ is measurable as it takes only two values $0,1$ and
$$
\{g\in G\colon J_\chi(g)=1\}=\{g\in G\colon \chi(g)=1\}.
$$
 We consider (\ref{eq:canonical}) as a {\em canonical system of ergodic conditional measures on the fibers}.
We let $\ov{G}:=G\times G\times G$ and define $A \colon \ov{G}\to \ov{G}$
by setting
$$
A(g_1,g_2,g_3):=(g_1,g_2,g_3+g_1)$$
for all $(g_1,g_2,g_3)\in\ov{G}$. Then $A$ is a continuous group automorphism of $\ov{G}$. Now, given $\eta\in M(G\times G)$, we can define a probability $\widetilde{\eta}\in M(\ov{G})$ by setting
\beq\label{wzorm}\widetilde{\eta}=\int_{G\times G}\delta_{(g_1,g_2)}\otimes \la_{K(g_1)}\,d\eta(g_1,g_2).\eeq
Note that  $\widetilde{\eta}$ is $A$-invariant.\footnote{Indeed, for all $\chi_i\in\widehat{G}$ ($i=1,2,3$), we have
$$\int_{\ov{G}}\chi_1(g_1)\chi_2(g_2)\chi_3(g_3+g_1)\,
d\widetilde\eta(g_1,g_2,g_3)=$$$$
\int_{G\times G}\chi_1(g_1)\chi_2(g_2)\chi_3(g_1)
\Big(\int_{K_{g_1}}\chi_3(g_3)\,d\la_{K_{g_1}}(g_3)\Big)
\,d\eta(g_1,g_2)=$$$$
\int_{\ov{G}}\chi_1(g_1)\chi_2(g_2)\chi_3(g_3)\,
d\widetilde\eta(g_1,g_2,g_3)$$ since $\chi_3(g_1)=1$ iff $\int \chi_3\,d\la_{K_{g_1}}=1$. Furthermore, \eqref{wzorm} is the ergodic decomposition of~$\widetilde{\nu}$.}
Moreover, $\mathcal{I}(A)=\mathcal{B}_{G\times G}\ot\mathcal{N}_{G}$ since the fiber over $(g_1,g_2)$ is $K({g_1})$ with the corresponding Haar measure which in turn is ergodic for the rotation by $g_1$.
As usual, $\mathcal I(A)$ stands for the sigma-algebra of $A$-invariant subsets,
$\mathcal B_{G}$ for the entire Borel sigma-algebra on $G$, and $\mathcal N_G$ for the trivial sigma-algebra on $G$.
Note that if we set $F:=\chi_1\ot\chi_2\ot\chi_3$ then
\beq\label{ee1}
F\circ A (g_1,g_2,g_3)=\chi_3(g_1)F(g_1,g_2,g_3).\eeq
It follows that for each $n\in\Z$,
\beq\label{wzors}
\widehat{\sigma}_{F,A,\widetilde{\eta}}(n)=
\Big((\chi_3)_\ast(\eta|_G)\Big)\widehat{}(n),
\eeq
that is, the spectral measure of $F$ (for $(A,\widetilde{\eta})$) is the $\chi_3$ image of the projection of $\eta$ to the first coordinate.

A particular case arises, when we consider $G=\T^{\N}$.
If we write $y\in G$ as a sequence $(y_n)_{n\in\Bbb N}$ with $y_n\in\Bbb T$
for each $n$, then
$K_y$ is the dual group to the countable discrete Abelian group generated by set $\{y_n\colon n\in\Bbb N\}\subset\Bbb T$.

 Given $n\in\Bbb N$ and $m\in\Bbb Z$,
 we define a function $f_{n,m} \colon (\Bbb T^\Bbb N)^3\to\Bbb T$ by setting
 $f_{n,m}(y,v,z):=z_n^m$ for all $y=(y_n)_{n=1}^\infty$, $v=(v_n)_{n=1}^\infty$, $z=(z_n)_{n=1}^\infty\in \Bbb T^\Bbb N$.
 Then $f_{n,m}\in\widehat{(\T^{\N})^3}$, and \eqref{ee1} reads as
 $$
 f_{n,m}(A(y,v,z))=y_n^mf_{n,m}(y,v,z).
 $$

 We will also need the following.

 \begin{Lemma}\label{l:sama1}
 Assume that $G$ is a compact abelian group  and $H\subset G$ is its closed subgroup. Let $\kappa\in M(G/H)$. Then there exists a unique $\widetilde{\kappa}\in M(G)$ satisfying:
 \begin{enumerate}
\item[{\rm (i)}] $\widetilde{\kappa}$ is invariant under  rotations $R$ by the elements from $H$: $\widetilde{\kappa}\circ R_h^{-1}=\widetilde{\kappa}$ for each $h\in H$ and
\item[{\rm (ii)}]
  $\pi_\ast(\widetilde{\kappa})=\kappa$, where $\pi \colon G\to G/H$ is the canonical quotient map.
 \end{enumerate}
 \end{Lemma}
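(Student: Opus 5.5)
Existence comes from averaging over $H$, and uniqueness from a Fourier (character) computation. Let $\lambda_H$ be the Haar probability measure on $H$ and $s\colon G/H\to G$ a Borel cross-section of $\pi$ (it exists since $\pi$ is a continuous open surjection between compact metric spaces). Define
$$
\widetilde{\kappa}:=\int_{G/H}\delta_{s(w)}\ast\lambda_H\,d\kappa(w),
$$
that is, $\widetilde\kappa$ is the image of $\kappa\otimes\lambda_H$ under $(w,h)\mapsto s(w)+h$. Each measure $\delta_{s(w)}\ast\lambda_H$ is Haar measure on the coset $s(w)+H$. It is therefore invariant under $R_h$ for $h\in H$ and projects to $\delta_w$. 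Integrating over $w$ gives (i) and (ii).

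For uniqueness, let $\widetilde\kappa$ satisfy (i) and (ii). Since linear combinations of characters are dense in $C(G)$, a measure on $G$ is determined by its Fourier coefficients $\int\chi\,d\widetilde\kappa$, $\chi\in\widehat G$. There are two cases.

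\textbf{Case $\chi|_H\not\equiv1$.} Pick $h\in H$ with $\chi(h)\neq1$. By (i),
$$
\int\chi\,d\widetilde\kappa=\int\chi\circ R_h\,d\widetilde\kappa=\chi(h)\int\chi\,d\widetilde\kappa ,
$$
so $\int\chi\,d\widetilde\kappa=0$.

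\textbf{Case $\chi|_H\equiv1$.} Then $\chi=\xi\circ\pi$ for a (continuous) character $\xi$ of $G/H$; this is the standard identification $\widehat{G/H}=H^\perp$. By (ii), $\int\chi\,d\widetilde\kappa=\int\xi\,d\kappa$.

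In both cases the Fourier coefficients depend only on $\kappa$, so any two measures satisfying (i) and (ii) coincide. The same computation confirms the construction above: for $\chi\notin H^\perp$ the integral vanishes because $\int_H\chi\,d\lambda_H=0$.

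The main point needing care is the existence of a measurable section. It can be avoided by an alternative existence argument. Define a functional on $C(G)$ by $f\mapsto\int_{G/H}\big(\int_H f(g+h)\,d\lambda_H(h)\big)\,d\kappa(\pi g)$. The inner average is $H$-invariant in $g$, hence a continuous function on $G/H$, so the functional is well defined. Riesz representation then gives $\widetilde\kappa$. This avoids selection theorems entirely, so I expect no real obstacle.
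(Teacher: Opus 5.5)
Your proof is correct. Your first existence argument is the same as the paper's, which also sets $\widetilde\kappa=\int_{G/H}\lambda_H\circ R_{s(gH)}^{-1}\,d\kappa(gH)$ for a Borel selector $s$; the difference is in uniqueness.

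\emph{The paper's route.} It disintegrates a competitor $\xi$ over $\kappa$ along $\pi$. It then observes that $\int_{G/H}\xi_{gH}\circ R_h^{-1}\,d\kappa(gH)$ is again a disintegration of $\xi$, since each $\xi_{gH}\circ R_h^{-1}$ still lives on $gH$. From uniqueness of disintegrations and of Haar measure on $H$ it concludes that every fibre measure $\xi_{gH}$ is Haar measure on the coset $gH$.

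\emph{Your route.} The character computation replaces all of this with two lines. It needs no disintegration. It also avoids the passage from ``for each $h$, for $\kappa$-a.e.\ $gH$'' to ``for $\kappa$-a.e.\ $gH$, for all $h$'', which the paper leaves implicit and which needs a countable dense subset of $H$ plus weak$^*$ continuity. Combined with your Riesz-representation existence argument, it dispenses with measurable selection altogether, so $G$ need not be metrizable. It also gives the explicit description $\int\chi\,d\widetilde\kappa=\int\xi\,d\kappa$ for $\chi=\xi\circ\pi\in H^\perp$, and $0$ otherwise. With that, the next result, Lemma~\ref{l:sama2}, becomes immediate, because both sides there have the same Fourier coefficients.

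\emph{What each buys.} The paper's argument never uses commutativity, only uniqueness of Haar measure on $H$, so it transfers to non-abelian compact groups. Yours relies on characters separating the points of $G$. Nothing is lost for the later application in Theorem~\ref{discr}, where the fibrewise Haar structure $\eta_y\circ\Psi_y^{-1}=\overline\eta_y\otimes\lambda_{K(y)}$ is what matters. Your uniqueness statement identifies any measure satisfying (i) and (ii) with the section-based construction, and that construction has this fibre structure by design.
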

 \begin{proof} Let $s \colon G/H\to G$ be a Borel selector of $\pi$ and set
 \begin{equation}\label{eq:*}
 \widetilde{\kappa}:=\int_{G/H}\la_H\circ R_{s(gH)}^{-1}\,d\kappa(gH).
 \end{equation}
 Note that since $gH=s(gH)H$,
 the measure $\la_H\circ R_{s(gH)}^{-1}$ is supported on the coset $gH$ which
 is $\pi^{-1}(gH)$.
 Moreover,
 $$
 \pi_*(\widetilde\kappa)=\int_{G/H}\pi_*(\la_H\circ R_{s(gH)}^{-1})\,d\kappa(gH)
= \int_{G/H}\delta_{gH}\,d\kappa(gH)=\kappa.
 $$
 This means that (\ref{eq:*}) is the disintegration of $ \widetilde{\kappa}$ over $\kappa$
 along $\pi$.
 In particular,~(ii) holds.
 We also observe that
 $\la_H\circ R_{s(gH)}^{-1}\circ R_h^{-1}=\la_H\circ R_{s(gH)}^{-1}$
 for each $h\in H$ which reads as  (i).

On the other hand, let a measure $\xi\in M(G)$ satisfy (i) and (ii), i.e.
$\xi\circ R_h^{-1}=\xi$ for each $h\in H$ and
$\pi_*(\xi)=\kappa$.
Using (b), we disintegrate
 $\xi$ over $\kappa$ along $\pi$:
$$
\xi=\int_{G/H}{\xi}_{gH}\,d\kappa(gH).
$$
Then ${\xi}_{gH}$ is a probability measure supported on  $\pi^{-1}(gH)=gH\subset G$ for every point $gH\in G/H$.
Using (i),
we obtain that
\begin{equation}\label{eq:**}
 \xi=\xi\circ R_h^{-1}=\int_{G/H}\xi_{gH}\circ R_h^{-1}\,d\kappa(gH)\end{equation}
 for each $h\in H$.
 However, since $\xi_{gH}$ is supported on $gH$, it follows that
 the measure $\xi_{gH}\circ R_h^{-1}$ is also
 supported on  $gH$.
 This means that (\ref{eq:**}) is also a disintegration of $\xi$ over $\kappa$ along $\pi$.
 By the uniqueness of the disintegrations, we deduce from (\ref{eq:*}) and $(\ref{eq:**})$ that
 $\xi_{gH}\circ R_h^{-1}=\xi_{gH}$  at $\kappa$-a.e. $gH$ for every $h\in H$.
 Hence, $\xi_{gH}=\lambda_H\circ R_g^{-1}$ for $\kappa$-a.e. $gH$
 by the uniqueness of Haar measure on $H$.
 Thus, we obtain that   $\xi=\widetilde\kappa$.
 \end{proof}

 \begin{Lemma}\label{l:sama2} Let $\xi\in M(G, (R_h)_{h\in H})$. 
 Then
 $$
 \xi=\int_{G}\la_H\circ R_{g}^{-1}\,d\xi (g).
 $$
\end{Lemma}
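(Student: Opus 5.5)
The plan is to reduce Lemma~\ref{l:sama2} to the uniqueness statement of Lemma~\ref{l:sama1}. Let $\kappa:=\pi_\ast(\xi)\in M(G/H)$ and put
$$
\xi':=\int_G \la_H\circ R_g^{-1}\,d\xi(g).
$$
We will check that $\xi'$ satisfies conditions (i) and (ii) of Lemma~\ref{l:sama1} for this $\kappa$. Since $\xi$ itself satisfies them by assumption, uniqueness then gives $\xi=\xi'$.

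First we verify the setup. The map $g\mapsto \la_H\circ R_g^{-1}$ is continuous from $G$ to $M(G)$. Indeed, for $f\in C(G)$ we have $\int f\,d(\la_H\circ R_g^{-1})=\int_H f(h+g)\,d\la_H(h)$, which is continuous in $g$ by uniform continuity of $f$. Hence the integral defining $\xi'$ makes sense. Each $\la_H\circ R_g^{-1}$ is the Haar measure of $H$ translated to the coset $g+H$, so it is $H$-invariant. Integrating over $g$ gives (i) for $\xi'$.

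For (ii), each $\la_H\circ R_g^{-1}$ is carried by $\pi^{-1}(\pi(g))$. Therefore
$$
\pi_\ast(\xi')=\int_G \delta_{\pi(g)}\,d\xi(g)=\pi_\ast(\xi)=\kappa .
$$
The measure $\xi$ satisfies (ii) trivially, and it satisfies (i) because $\xi\in M(G,(R_h)_{h\in H})$. Lemma~\ref{l:sama1} then yields $\xi=\widetilde\kappa=\xi'$.

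As an alternative direct check, one can compare Fourier coefficients. For $\chi\in\widehat G$ we have $\int\chi\,d(\la_H\circ R_g^{-1})=\chi(g)\int_H\chi\,d\la_H$. This equals $\chi(g)$ if $\chi|_H\equiv1$ and $0$ otherwise. On the other side, $H$-invariance of $\xi$ gives $\widehat\xi(\chi)=\chi(h)\widehat\xi(\chi)$ for all $h\in H$, so $\widehat\xi(\chi)=0$ unless $\chi|_H\equiv 1$. The two sets of Fourier coefficients therefore agree.

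There is no real obstacle here. The only point needing care is the measurability or continuity of $g\mapsto\la_H\circ R_g^{-1}$, which is handled above.
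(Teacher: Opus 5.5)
Your proof is correct and follows the paper's own argument: both $\xi$ and $\int_G \lambda_H\circ R_g^{-1}\,d\xi(g)$ are $H$-invariant with the same image $\pi_\ast\xi$ on $G/H$, so the uniqueness part of Lemma~\ref{l:sama1} forces them to coincide. Your direct computation $\pi_\ast(\xi')=\int_G\delta_{\pi(g)}\,d\xi(g)$ is slightly cleaner than the paper's passage through the Borel selector $s$, and your Fourier-coefficient check is also valid, giving an independent proof that does not rely on Lemma~\ref{l:sama1}.
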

 \begin{proof}
 We will utilize Lemma~\ref{l:sama1}.
 First, we note that the two measures $ \xi$ and  $\int_{G}\la_H\circ R_{g}^{-1}\,d\xi (g)$
 are invariant under $R_h$ for each $h\in H$.
 It remains to verify that both of them have the same pushforward onto $G/H$. Indeed, we have
 $$
 \begin{aligned}
 \pi_\ast\left(\int_{G}\la_H\circ R_{g}^{-1}\,d\xi (g)\right)&=
 \int_{G}\pi_*(\la_H\circ R_{g}^{-1})\,d\xi(g)\\
  &=\int_{G/H}\pi_*(\la_H\circ R_{s(gH)}^{-1})\,d\pi_*(\xi)(gH)\\
 &=
 \int_G\delta_{gH}\,d\pi_*(\xi)(gH)\\
 &=\pi_*(\xi),
 \end{aligned}
 $$
where the second equality follows from the fact that given $f$ on $G$ which is well defined on $G/H$ (i.e. $f(gh)=f(g)$ for each $h\in H$), we have that $\int_Gf(g)\,d\xi(g)=\int_{G/H}\widetilde{f}(gH)\,d\pi_\ast(\xi)(gH)$.
 \end{proof}

Our next purpose in this section is to prove the following theorem.

\begin{Th}\label{c:sasza17} 
There is an analytic subset $L\subset G\times G$ such that
for each $g\in G$, the set
$$
L_g:=\{s\in G\colon (g,s)\in L\}
$$
 is a cross-section
of the quotient $G\to G/K(g)$.
\end{Th}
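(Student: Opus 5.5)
The plan is to construct $L$ as the image of a Borel map under a uniform selection, in the spirit of the Jankov--von Neumann theorem (Theorem~\ref{th:JvN}). Consider the Borel set
$$
E:=\{(g,s,t)\in G\times G\times G\colon s-t\in K(g)\}.
$$
It is Borel because
$$
s-t\in K(g)\iff \forall \chi\in\widehat{G}\,\big(\chi(g)=1\Rightarrow \chi(s-t)=1\big).
$$
Here $\widehat G$ is countable since $G$ is compact metrizable, and $K(g)^\perp=\{\chi\colon\chi(g)=1\}$, as already used in the proof of measurability of \eqref{eq:canonical}. So $E$ is a Borel equivalence relation on $G$, parametrised by $g$: for fixed $g$, the relation $E_g$ is the coset relation of the closed subgroup $K(g)$. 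We need a set meeting each $E_g$-class in exactly one point, chosen jointly measurably in $g$.

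First I will try the most direct choice: select from each coset its ``least'' element in a fixed Borel linear order. Identify $G$ with a closed subset of a space with a good selection mechanism. Concretely, fix a Borel injection of $G$ into $\{0,1\}^{\N}$ with lexicographic order, or more robustly use the Effros Borel space $F(G)$ of closed subsets. The map $(g,s)\mapsto s+K(g)\in F(G)$ is Borel. To see this, note that $g\mapsto K(g)$ is Borel into $F(G)$, because $K(g)$ is the annihilator of $K(g)^\perp$, which is determined Borel-ly by the countably many conditions $\chi(g)=1$; translation is continuous. By the Kuratowski--Ryll-Nardzewski selection theorem there is a Borel selector $\sigma\colon F(G)\setminus\{\emptyset\}\to G$ with $\sigma(F)\in F$. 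Define
$$
L:=\{(g,\sigma(s+K(g)))\colon g,s\in G\}.
$$
This is the image of the Borel map $(g,s)\mapsto (g,\sigma(s+K(g)))$, hence analytic.

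Then I check the cross-section property. For fixed $g$, the section $L_g=\{\sigma(s+K(g))\colon s\in G\}$ contains $\sigma(C)\in C$ for every coset $C$ of $K(g)$. It contains exactly one point of $C$, because $\sigma(s+K(g))$ depends only on the coset $s+K(g)$, and distinct cosets are disjoint. Hence $L_g$ is a cross-section of $G\to G/K(g)$.

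The only step I expect to require care is the Borel measurability of $g\mapsto K(g)$ into the Effros Borel space. I would verify it via the generating sets $\{F\colon F\cap U\neq\emptyset\}$ for open $U$: $K(g)\cap U\ne\emptyset$ iff some element of a countable dense subset of the closed subgroup meets $U$, and $\{kg\}$ is dense in $K(g)$. So
$$
\{g\colon K(g)\cap U\neq\emptyset\}=\bigcup_{k\in\Z}\{g\colon kg\in U\},
$$
which is open. Thus the map is Borel, and in fact $L$ is even Borel-parametrised, so analyticity is immediate.
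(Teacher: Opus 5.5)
Your proof is correct, and it takes a genuinely different route from the paper's.

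The common starting point is the Borelness of $g\mapsto K(g)$ into the Effros space, which both you and the paper prove by the same argument: $\{g\colon K(g)\cap U\neq\emptyset\}=\bigcup_{k}\{g\colon kg\in U\}$. After that the two arguments diverge.

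\textbf{The paper's route.} It invokes Sutherland's Borel parametrization of Polish groups. This puts a standard Borel structure on the disjoint union $Y=\bigsqcup_{g}G/K(g)$ for which $\tau(g,b)=(g,b+K(g))$ is Borel (Propositions~\ref{sasza9} and~\ref{pr:ququ}). It then applies the Jankov--von Neumann theorem (Theorem~\ref{th:JvN}) to $\tau$ and sets $L=s(Y)$ for a section $s$ that is only measurable with respect to $\sigma(\Sigma^1_1)$.

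\textbf{Your route.} You bypass the quotient structure entirely. You realize each coset $s+K(g)$ directly as a point of $F(G)$, which is possible because cosets of closed subgroups are closed; this amounts to a concrete Borel model of $Y$. You then use the Kuratowski--Ryll-Nardzewski selector, which is genuinely Borel because it selects from closed sets.

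\textbf{What each buys.} Your construction gives a Borel $L$, not merely an analytic one, since $L=\{(g,t)\colon \sigma(t+K(g))=t\}$. Analyticity is then immediate. By contrast, the image of a section that is only $\sigma(\Sigma^1_1)$-measurable is not automatically analytic. Your argument also uses nothing beyond KRN, and it works verbatim for closed subgroups of any Polish group. The paper's machinery (Borel fields of Polish groups and their quotients) is more general in a different direction: it handles arbitrary Borel fields of closed normal subgroups.

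\textbf{Step to make explicit.} You should state why $(s,F)\mapsto s+F$ is jointly Borel on $G\times F(G)$. Either of these suffices:
\begin{itemize}
\item The map is continuous for the Hausdorff (Vietoris) topology, which generates the Effros Borel structure because $G$ is compact.
\item Directly, $(s+F)\cap U\neq\emptyset$ if and only if $s+d_n(F)\in U$ for some $n$, where $(d_n)$ are KRN Borel selectors whose values are dense in each nonempty $F$.
\end{itemize}

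Finally, your preliminary set $E$ and the lexicographic-order idea play no role in the final argument and can be dropped.
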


To this end,
we first remind the concept of Borel field of Polish groups from \cite{Su}.
Let $\cal P$ stand for the set of all Polish groups.

\begin{Def}[\cite{Su}, Theorem 2.3] \label{d:borel} Let $X$ be a standard Borel space.
A mapping $X\ni x\mapsto G_x\in\cal P$ is called Borel if there is a standard Borel structure on the set
$Y:=\bigsqcup_{x\in X}G_x$ (the RHS set can be understood as $\bigcup_{x\in X}\{x\}\times G_x$) such that:
\begin{itemize}
\item
the projection $\pi \colon Y\to X$ is Borel,\footnote{This imply, in particular, that the set $Y\ast Y$ below is a Borel subset of $Y\times Y$.}
\item
the relative Borel structure on $\pi^{-1}(x)$ coincides with the Borel structure generated by the topology on $G_x$,
\item
the mappings
$$
Y*Y:=\{(y,y')\in Y\times Y\colon \pi(y)=\pi(y')\}\ni (y,y')\mapsto yy'\in Y
$$
 and
$Y\ni y\mapsto y^{-1}\in Y$ are Borel,
\item
there are countably many Borel maps $g_k \colon X\to Y$ with $g_k(x)\in G_x$ for all $x\in X$
and metrics $\delta_x$ on $G_x$ compatible with the topology such that $\{g_k(x)\colon k\in\Bbb N\}$ is dense in $G_x$ and the mapping $Y\ni y\mapsto\delta_{\pi(y)}(y, g_k\circ\pi(y))$ is Borel for each $k$.
\end{itemize}
\end{Def}

\begin{Prop}\label{Bstructure}
\begin{enumerate}
\item[\rm (i)]
The standard Borel $\sigma$-algebra on $Y$ is the smallest $\sigma$-algebra with respect to which the mappings $\pi$ and
$$
\theta_k \colon Y\ni y\mapsto \theta_k(y):=\delta_{\pi(y)}(y,g_k(\pi(y)))\in\Bbb R_+,\quad k\in\Bbb N,
$$
 are measurable.
 \item[\rm (ii)]
 The mapping $Y*Y\ni (y,y')\mapsto\delta_{\pi(y)}(y,y')$ is Borel.
 \end{enumerate}
 \end{Prop}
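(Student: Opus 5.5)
Part (i) is a countable-separation argument, and part (ii) follows from it by a triangle-inequality approximation. Let $\mathcal{S}$ be the $\sigma$-algebra on $Y$ generated by $\pi$ and the maps $\theta_k$, $k\in\Bbb N$. By Definition~\ref{d:borel}, $\pi$ and every $\theta_k$ are Borel, so $\mathcal{S}$ is contained in the standard Borel $\sigma$-algebra $\mathcal{B}_Y$. For the reverse inclusion I will use the classical fact that a countably generated sub-$\sigma$-algebra of a standard Borel space which separates points equals the whole Borel $\sigma$-algebra (Blackwell's theorem, or equivalently Souslin's theorem applied to the Borel injection into $X\times\R_+^{\Bbb N}$). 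The $\sigma$-algebra $\mathcal{S}$ is countably generated, because $X$ is standard, so its Borel $\sigma$-algebra is generated by countably many sets, and we add the countably many sets $\{\theta_k<q\}$, $q\in\Q$. Hence it remains to check that $\mathcal{S}$ separates points.

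Take $y\neq y'$ in $Y$. If $\pi(y)\neq\pi(y')$, they are separated by some $\pi^{-1}(B)$ with $B$ Borel in $X$. Otherwise $x:=\pi(y)=\pi(y')$ and $y,y'$ are distinct points of $G_x$. Set $\varepsilon:=\delta_x(y,y')>0$. By the density of $\{g_k(x)\}_k$ in $G_x$, choose $k$ with $\delta_x(y,g_k(x))<\varepsilon/3$. Then $\theta_k(y)<\varepsilon/3$ and $\theta_k(y')>2\varepsilon/3$, so $\theta_k$ separates $y$ from $y'$. This proves (i).

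For (ii), fix $(y,y')\in Y\ast Y$ and put $x=\pi(y)=\pi(y')$. The triangle inequality gives
\[
\delta_x(y,y')=\inf_{k}\big(\theta_k(y)+\theta_k(y')\big).
\]
The bound $\le$ holds for every $k$. For $\ge$, choose $g_k(x)$ close to $y$; then $\theta_k(y)$ is small and $\theta_k(y')\le\delta_x(y,y')+\theta_k(y)$. Each function $(y,y')\mapsto\theta_k(y)+\theta_k(y')$ is Borel on $Y\times Y$, hence on the Borel subset $Y\ast Y$. A countable infimum of Borel functions is Borel, which proves (ii).

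The only step needing care is the appeal to Blackwell/Souslin in (i). It requires $Y$ to be standard Borel, which is exactly what the definition supplies, and this hypothesis is essential for concluding that the separating countably generated $\sigma$-algebra $\mathcal{S}$ is all of $\mathcal{B}_Y$.
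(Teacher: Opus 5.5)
Your proof is correct. For (i) it follows the paper's route exactly, namely separation of points by $\pi$ and the $\theta_k$, and you make explicit the Blackwell/Souslin step that the paper uses tacitly. For (ii) you use $\delta_x(y,y')=\inf_k\bigl(\theta_k(y)+\theta_k(y')\bigr)$, whereas the paper writes $\delta_x(y,y')=\lim_n\theta_{k_n(y')}(y)$ with the Borel index choice $k_n(y')=\min\{m:\theta_m(y')<1/n\}$. Both rest on the same triangle-inequality and density argument, and your version has the small advantage of avoiding any measurable selection of indices.
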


 \begin{proof}  (i) We first note that $\pi$ and $\theta_k$, $k\in\Bbb N$, are Borel.
 Hence, it suffices to show that these mappings separate points in $Y$.
 Take $y_1,y_2\in Y$, $y_1\ne y_2$.
 Assume that $\pi(y_1)=\pi(y_2)$ and $\theta_k(y_1)=\theta_k(y_2)$ for all $k\in\Bbb N$.
 It follows that $y_1,y_2\in G_{\pi(y_1)}$.
 Since the subset $\{g_k(\pi(y_1))\colon k\in\Bbb N\}$ is dense in $G_{\pi(y_1)}$, we obtain that
 $\delta_{\pi(y_1)}(y_1,z)=\delta_{\pi(y_1)}(y_2,z)$ for each $z\in G_{\pi(y_1)}$.
 Hence, $y_1=y_2$, a contradiction.

 (ii) Given $y\in Y$,  we can find an infinite sequence $k_1(y)<k_2(y)<\cdots$ such that
 $y=\lim_{n\to\infty}g_{k_n(y)}(\pi(y))$.
 Of course, we can do this choice in such a way that the mapping $Y\ni y\mapsto k_n(y)\in\Bbb N$
 is measurable for each $n$.
 For instance, $k_n(y)$ is the least number $m$ such that   $\delta_{\pi(y)}(y, g_{m}(\pi(y))< \frac1n$.
 Then, for each $(y,y')\in Y* Y$, we have:
 $$
 \delta_{\pi(y)}(y,y')=\lim_{n\to\infty}\delta_{\pi(y)}(y, g_{k_n(y')}\circ\pi(y)).
 $$
\end{proof}
 The following statement was implicitly  proved in \cite[Theorem 3.1(ii)]{Su}.
For the completeness of our argument, we present here an explicit proof.\\
\begin{Prop} \label{sasza9}
Let  $X\ni x\mapsto G_x$ and $X\ni x\mapsto H_x$
be  Borel mappings into Polish groups and $H_x$ is a closed normal subgroup of $G_x$.
If $\bigsqcup_{x\in X}H_x$ is a Borel subset of
$\bigsqcup_{x\in X}G_x$ then $X\ni x\mapsto G_x/H_x$ is a Borel mapping.
It follows also that the quotient mapping
\beq\label{sasza10}
q \colon Y\ni y\mapsto yH_{\pi(y)}\in \bigsqcup_{x\in X}G_x/H_x
\text{ is Borel.}\eeq
\end{Prop}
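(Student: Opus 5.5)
We need to exhibit a standard Borel structure on $Z:=\bigsqcup_{x\in X}G_x/H_x$ satisfying the four conditions of Definition~\ref{d:borel}. The plan is to transport the Borel structure from $Y=\bigsqcup_x G_x$ via the quotient map $q$. Concretely, on each $G_x/H_x$ we use the standard quotient metric
$$
\bar\delta_x(yH_x,y'H_x):=\inf_{h\in H_x}\delta_x(y,y'h),
$$
which is compatible with the quotient topology. (If $\delta_x$ is not right-invariant, we first replace it by a compatible left-invariant metric, or work with the Hausdorff-type formula; the Borel dependence is unaffected.) The dense sequence will be $\bar g_k(x):=g_k(x)H_x$. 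We then define the $\sigma$-algebra on $Z$ to be the one generated by the projection $\bar\pi\colon Z\to X$ and the functions $\bar\theta_k(z):=\bar\delta_{\bar\pi(z)}(z,\bar g_k(\bar\pi(z)))$. This mirrors the characterization in Proposition~\ref{Bstructure}(i).

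The main step is Borel measurability of $y\mapsto\bar\theta_k(q(y))=\inf_{h\in H_{\pi(y)}}\delta_{\pi(y)}(y^{-1}g_k(\pi(y)),h)$. The inner infimum has to be reduced to a countable one. For this we need a Borel countable dense family $h_j(x)\in H_x$. The hypothesis that $\bigsqcup H_x$ is Borel in $Y$ provides it: the restriction of the structure to $\bigsqcup H_x$ makes $x\mapsto H_x$ a Borel field of Polish groups. To get Borel dense sections we apply a Kuratowski--Ryll-Nardzewski type selection to the closed-valued map $x\mapsto H_x$, using the metric functions of Proposition~\ref{Bstructure}(ii); concretely, the set $\{x\colon H_x\cap B_{\delta_x}(g_k(x),r)\neq\emptyset\}$ must be Borel. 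I expect this to be the main obstacle. I would attack it via the Lusin--Souslin/uniformization for Borel sets with $\sigma$-compact or Polish sections (Kechris 18.18/35.46): the projection of a Borel set with closed sections in a Borel-parametrized Polish space is Borel. Once the $h_j$ exist, the equality
$$
\bar\theta_k(q(y))=\inf_j\delta_{\pi(y)}\bigl(y^{-1}g_k(\pi(y)),h_j(\pi(y))\bigr)
$$
is Borel by Proposition~\ref{Bstructure}(ii) and the Borelness of inversion and multiplication. This also gives \eqref{sasza10}, because the generators of the $\sigma$-algebra on $Z$ pull back to Borel functions on $Y$.

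Finally we check the remaining conditions. The functions $\bar\pi,\bar\theta_k$ separate points of $Z$, by the same argument as in Proposition~\ref{Bstructure}(i), so $Z$ is countably separated and is the Borel image $q(Y)$ of a standard space. Standardness then follows because $q$ admits a Borel section: pick, for each $z$, the first $g_k$-approximants, or use a Borel selector of $q^{-1}(z)$ obtained from the closed-section uniformization above. This exhibits $Z$ as Borel isomorphic to a Borel subset of $Y$. On each fiber the generated structure is the quotient-metric Borel structure. Multiplication and inversion on $Z*Z$ are Borel because they lift through the section to the Borel operations on $Y*Y$ followed by $q$.
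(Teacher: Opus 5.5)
Your argument that $q$ is Borel has the same skeleton as the paper's: replace the infimum over $H_x$ in the quotient distance by a countable infimum over Borel dense sections of $H_x$, then use the Borelness of the group operations and Proposition~\ref{Bstructure}(ii). The paper writes
\[
\bar\delta_{\pi(y)}\bigl(q(y),\bar g_k(\pi(y))\bigr)=\inf_{l,m\in\mathbb{N}}\delta_{\pi(y)}\bigl(y\,h_l(\pi(y)),\,g_k(\pi(y))\,h_m(\pi(y))\bigr).
\]
This uses only that $\{yh_l\}$ and $\{g_kh_m\}$ are dense in the two cosets. So it needs neither left-invariance of $\delta_x$ nor a Borel re-choice of the metrics, which you assert (``the Borel dependence is unaffected'') without justification.

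The real problem is the step you call the main obstacle. You do not need to construct the sections $h_j$. The statement assumes that $x\mapsto H_x$ is a Borel mapping into Polish groups, so the last condition of Definition~\ref{d:borel} already supplies Borel maps $h_j$ with $\{h_j(x)\}$ dense in $H_x$. The hypothesis that $\bigsqcup_x H_x$ is a Borel subset of $Y$, with the relative Borel structure, is used only to regard the $h_j$ as Borel maps into $Y$. That is exactly what the paper does.

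Your proposed way of producing the $h_j$ would not work. The principle ``the projection of a Borel set with closed sections is Borel'' is false: every analytic non-Borel subset of $X$ is the projection of a closed subset of $X\times\mathbb{N}^{\mathbb{N}}$. Kechris~18.18 needs $K_\sigma$ sections, and the sections $H_x\cap B_{\delta_x}(g_k(x),r)$ are not $K_\sigma$ in general. The set $\{x\colon H_x\cap B_{\delta_x}(g_k(x),r)\neq\emptyset\}$ is Borel here only because it equals $\bigcup_j\{x\colon \delta_x(h_j(x),g_k(x))<r\}$, i.e.\ because the sections are given.

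For the first assertion (that $x\mapsto G_x/H_x$ is Borel), the paper does not argue from scratch. It takes the result from the proof of Sutherland's Theorem~3.1(ii), with $\bar\delta_x$ and $\bar g_k$ in the roles of $\delta_x$ and $g_k$. Your self-contained route depends on a Borel section of $q$, which you never construct. The ``first $g_k$-approximants'' are not points of the coset, and the ``closed-section uniformization'' is the false principle above.

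This can be repaired by a Kuratowski--Ryll-Nardzewski recursion. Whether an open set $B_{\delta_x}(g_n(x),r)\cap B_{\delta_x}(g_m(x),r')$ meets $yH_x$ is decided by the Borel condition that some $y\,h_j(x)$ lies in it, and this condition depends only on the coset. But the recursion requires the $\delta_x$ to be complete, which Definition~\ref{d:borel} does not state. Otherwise, cite Sutherland as the paper does.
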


\begin{proof}  Let $\widebar Y:=\bigsqcup_{x\in X}G_x/H_x$ and let $\widebar\pi \colon \widebar Y\to X$
stand for the natural quotient mapping.
Given $y_1,y_2\in \bigsqcup_{x\in X}G_x/H_x$ with $\widebar \pi(y_1)=\widebar\pi(y_2)$, we let
\begin{align*}
\widebar\delta_x(z_1,z_2)&:=\inf_{y_1\in z_1,y_2\in z_2}\delta_x(y_1,y_2),\\
\widebar g_k(x)&:=g_k(x)H_x\in G_x/H_x
\end{align*}
for all $k\in\Bbb N$ and  $x\in X$.
According to the proof of \cite[Theorem 3.1(ii)]{Su}, $X\ni x\mapsto G_x/H_x$ is a Borel mapping to Polish groups with $\widebar\delta_x$ and $\widebar g_k$ playing the same role for this mapping  as
 $\delta_x$ and $ g_k$  for $X\ni x\mapsto G_x$ in  Definition~\ref{d:borel}.
It follows from  Proposition~\ref{Bstructure}(i) that  $q$ is Borel if and only if  the mappings
$\widebar\pi\circ q$ and
$$
Y\ni \widebar\delta_{ \widebar\pi\circ q(y)}(q(y),\widebar g_k( \widebar\pi\circ q(y)))\in\Bbb R_+,
$$
 $k\in\Bbb N$, are all Borel.
Let $h_k \colon X\to \bigsqcup_{x\in X}H_x$, $k\in\Bbb N$,
denote the Borel functions  from the definition that the mapping $x\mapsto H_x$ is Borel.
By assumption, the Borel $\sigma$-algebra on $\bigsqcup_{x\in X}H_x$ is the restriction
of the Borel $\sigma$-algebra on $Y$.
Hence, we can consider $h_k$ as a Borel function to $Y$ for each $k$.
Of course, $\widebar\pi\circ q=\pi$ and
$$
\widebar\delta_{ \pi(y)}(q(y),\widebar g_k( \pi(y)))=
\inf_{l,m\in\Bbb N}\delta_{ \pi(y)}\big(yh_l(\pi(y)), g_k( \pi(y))h_m(\pi(y))\big)
$$
for each $y\in Y$.
Given $l,k,m\in\Bbb N$, the mappings
$$
Y\ni y\mapsto (y, h_l(\pi(y)))\in Y*Y\quad\text{and}\quad Y\ni y\mapsto\big(g_k(\pi(y)),h_m(\pi(y))\big)\in Y*Y
$$
 are  Borel.
 Then it follows from Definition~\ref{d:borel} that the mappings
 $$
Y\ni y\mapsto yh_l(\pi(y)))\in Y\quad\text{and}\quad Y\ni y\mapsto g_k(\pi(y))h_m(\pi(y))\in Y
$$
 are also Borel.
 Applying Proposition~\ref{Bstructure}(ii), we obtain that
 $$
 Y\ni y\mapsto \delta_{ \pi(y)}(yh_l(\pi(y)), g_k( \pi(y))h_m(\pi(y)))\in\Bbb R_+
 $$
 is Borel  too.
Hence, the mapping $Y\ni y\mapsto \widebar\delta_{ \pi(y)}(q(y),\widebar g_k( \pi(y)))$ is Borel,
as desired.
\end{proof}

Let $X$ be a Polish space.
Let $\frak Z(X)$ stand for the set of all (nonempty) closed subsets of $X$.
Given an open subset $O\subset X$, let
$$
[O]:=\{Z\in\frak Z(X)\colon Z\cap O\ne \emptyset \}.
$$
Let $\mathcal B$  be the smallest $\sigma$-algebra on $\frak Z(X)$ which contains
$[O]$ for all open subsets in $X$.
It is called {\it Effros Borel}.
The following two facts are well known.
\begin{Th}[\cite{Au-Mo}]
The Effros Borel $\sigma$-algebra  is standard.
\end{Th}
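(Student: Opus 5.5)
The plan is to reduce to the compact case, where the Effros structure coincides with the Borel structure of a compact metrizable topology, and then to identify $\frak Z(X)$ with a Borel subset of that compact space.

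\emph{Compactification.} Embed $X$ homeomorphically into a compact metric space $\widehat X$ (for instance the Hilbert cube). Since $X$ is Polish, its image is a $G_\delta$: $X=\bigcap_{m\ge1}G_m$ with $G_m\subset\widehat X$ open. Fix a countable base $(U_n)_{n\ge1}$ of $\widehat X$; the sets $U_n\cap X$ then form a base of $X$.

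\emph{The compact case.} For $\widehat X$ compact, $\frak Z(\widehat X)=K(\widehat X)$ carries the Vietoris (Hausdorff metric) topology, which is compact metrizable. I will check that its Borel $\sigma$-algebra equals the Effros $\sigma$-algebra.
\begin{enumerate}
\item Every $[O]$ is Vietoris-open, so the Effros $\sigma$-algebra is contained in the Borel one.
\item Conversely, the Vietoris topology has a countable subbase consisting of the sets $[U]$ and $\{K\colon K\subset U\}$, where $U$ ranges over finite unions of the $U_n$.
\item For such $U$ the set $\{K\colon K\subset U\}$ is the complement of $[\,\cdot\,]$-type conditions. Writing $\widehat X\setminus U=\bigcap_j V_j$ with $V_j$ open, compactness gives $K\cap(\widehat X\setminus U)\ne\emptyset$ iff $K\cap \ov{V_j}\ne\emptyset$ for all $j$ (choosing $\ov{V_{j+1}}\subset V_j$). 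Each condition $K\cap\ov{V_j}\neq\emptyset$ is in turn a countable intersection of conditions $K\cap W\ne\emptyset$ with $W$ open.
\end{enumerate}
Hence the Effros $\sigma$-algebra on $\frak Z(\widehat X)$ is standard.

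\emph{Transfer to $X$.} Consider the map $\Phi\colon\frak Z(X)\to K(\widehat X)$, $F\mapsto\ov F$, the closure being taken in $\widehat X$. Its basic properties are:
\begin{enumerate}
\item $\Phi$ is injective, since $F=\ov F\cap X$.
\item For $O\subset\widehat X$ open, $F\cap O\ne\emptyset$ iff $\ov F\cap O\ne\emptyset$. Thus $\Phi^{-1}([O])=[O\cap X]$, and since every open subset of $X$ has the form $O\cap X$, $\Phi$ identifies the Effros $\sigma$-algebra of $X$ with the trace of the Effros (equivalently, Borel) $\sigma$-algebra of $K(\widehat X)$ on the image of $\Phi$.
\item The image is $\{K\colon K\cap X\text{ is dense in }K\}$.
\end{enumerate}
It therefore remains to show that this image is Borel; a Borel subset of a standard space is standard.

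\emph{Main obstacle: Borelness of the image.} Density of a $G_\delta$ subset is not obviously a countable Boolean combination of open conditions; this step is where the real work lies. I will use the Baire category theorem in the compact space $K$. The set $K\cap X=\bigcap_m (K\cap G_m)$ is dense in $K$ iff each relatively open set $K\cap G_m$ is dense in $K$. One direction is trivial; the other is Baire's theorem. Density of $K\cap G_m$ in $K$ means that for every $n$,
$$K\cap U_n\neq\emptyset\ \Longrightarrow\ K\cap U_n\cap G_m\neq\emptyset.$$
So the image equals
$$\bigcap_{m,n}\Big(\big(K(\widehat X)\setminus[U_n]\big)\cup[U_n\cap G_m]\Big),$$
which is a Borel set (indeed a $G_\delta$). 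Combined with the transfer step, this shows that the Effros Borel $\sigma$-algebra of $X$ is standard.
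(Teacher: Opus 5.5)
Your proof is correct and complete. The paper gives no argument of its own for this fact; it only cites Auslander--Moore. Your argument is the standard textbook proof (cf.\ \cite[\S12.C]{Ke}), which runs as follows. Embed $X$ as a $G_\delta$ in a compact metric $\widehat X$. Identify the Effros structure on $K(\widehat X)$ with the Vietoris Borel structure. Transport along the injection $F\mapsto\ov F$, which pulls $[O]$ back to $[O\cap X]$. Finally, use Baire's theorem to show that the image $\{K\colon \ov{K\cap X}=K\}$ is a $G_\delta$ in $K(\widehat X)$.

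One cosmetic point concerns step (c) of the compact case. Its last sentence asserts for the closed sets $\ov{V_j}$ exactly the kind of claim you are in the middle of proving for $\widehat X\setminus U$, so as written it reads as circular. The sentence is unnecessary. With the nesting $\ov{V_{j+1}}\subset V_j$ you already chose, compactness gives directly $\{K\colon K\not\subset U\}=\bigcap_j[V_j]$, because $\bigcap_j\ov{V_j}=\bigcap_j V_j=\widehat X\setminus U$.
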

\begin{Th}[Theorem 2.5 in~\cite{Su}]
Let $G$ be a Polish group and $X$ a standard Borel space. Let a mapping $X\ni x\mapsto G_x\in\frak Z(G)$ be Effros Borel.
\begin{enumerate}
\item[(i)] If $G_x$ is a subgroup of $G$ then $X\ni x\mapsto G_x$ is Borel (in the sense of Definition~\ref{d:borel}).
\item[
(ii)] the subset $\bigcup_{x\in X}\{x\}\times G_x$ is a Borel subset of $X\times G$.
\end{enumerate}
\end{Th}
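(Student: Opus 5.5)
The plan is to reduce both parts to the Kuratowski--Ryll-Nardzewski selection theorem on the Effros Borel space $\frak Z(G)$. That theorem provides Borel maps $s_k\colon \frak Z(G)\to G$, $k\in\Bbb N$, such that $\{s_k(F)\colon k\in\Bbb N\}$ is dense in $F$ for every nonempty closed $F\subset G$. Composing with the Effros Borel map $x\mapsto G_x$, I obtain Borel maps
$$
g_k\colon X\to G,\qquad g_k(x):=s_k(G_x),
$$
with $g_k(x)\in G_x$ and $\{g_k(x)\colon k\in\Bbb N\}$ dense in $G_x$ for every $x\in X$. I fix once and for all a compatible metric $d$ on $G$.

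For (ii), the key observation is that $G_x$ is closed, so for $(x,g)\in X\times G$ we have
$$
g\in G_x\iff \inf_{k\in\Bbb N} d\big(g,g_k(x)\big)=0.
$$
Each map $(x,g)\mapsto d(g,g_k(x))$ is Borel on $X\times G$, being the composition of the Borel map $(x,g)\mapsto (g,g_k(x))$ with the continuous map $d$. Hence the infimum over $k$ is Borel. Consequently $L:=\bigcup_{x\in X}\{x\}\times G_x$ is the zero set of a Borel function, and therefore a Borel subset of $X\times G$.

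For (i), I take $Y:=L$ with the Borel structure it inherits from $X\times G$. It is standard because $L$ is a Borel subset of a standard Borel space. I then check the conditions of Definition~\ref{d:borel} in turn.
\begin{itemize}
\item The projection $\pi(x,g)=x$ is continuous in the second variable and Borel, so it is Borel on $Y$.
\item The fibre $\pi^{-1}(x)=\{x\}\times G_x$ carries the trace of $\mathcal B_G$. Since $G_x$ is a closed subgroup, it is Polish in the relative topology, and its Borel structure is exactly this trace.
\item On $Y*Y=\{((x,g),(x,h))\}$, multiplication $((x,g),(x,h))\mapsto (x,gh)$ is the restriction of a continuous map on $(X\times G)^2$, hence Borel. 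It lands in $Y$ because $G_x$ is a subgroup. Inversion $(x,g)\mapsto (x,g^{-1})$ is handled in the same way.
\item For the countable dense family I take the maps $x\mapsto (x,g_k(x))$, which are Borel, and I set $\delta_x:=d|_{G_x}$. Then $Y\ni (x,g)\mapsto \delta_x(g,g_k(x))=d(g,g_k(x))$ is Borel, by the same reasoning as in (ii).
\end{itemize}

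The only nontrivial input is the existence of Borel dense selectors $s_k$ on the Effros Borel space, which is where the Effros Borel hypothesis is used. Everything else consists of restrictions of continuous maps to Borel sets. The step I would be most careful with is that the standard Borel structure on $Y$ genuinely induces the Polish Borel structure on each fibre. This holds here because $G_x$ is closed in $G$, so it is Polish in the subspace topology and its Borel sets are traces of Borel sets of $G$.
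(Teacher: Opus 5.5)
Your proof is correct. The paper gives no argument for this statement; it is quoted from Sutherland's Theorem~2.5, so there is no internal proof to match, and what you give is the natural self-contained proof.

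\begin{itemize}
\item The Kuratowski--Ryll-Nardzewski selectors, composed with the Effros Borel map $x\mapsto G_x$, give Borel maps $g_k$ with $\{g_k(x)\}$ dense in $G_x$.
\item Closedness of $G_x$ gives $L=\{(x,g)\colon \inf_k d(g,g_k(x))=0\}$. This settles (ii), and your argument confirms that (ii) uses no group structure.
\item Taking $Y:=L$ with the trace of the Borel structure of $X\times G$, and $\delta_x:=d|_{G_x}$, turns every clause of Definition~\ref{d:borel} into the restriction of a Borel map on $X\times G$ or $(X\times G)^2$.
\end{itemize}

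Compared with the bare citation, this also makes explicit the data $(Y,g_k,\delta_x)$ that the paper later uses in Propositions~\ref{Bstructure} and~\ref{sasza9}.

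Two minor remarks.
\begin{itemize}
\item $X$ is only a standard Borel space, so call the maps on $(X\times G)^2$ Borel rather than continuous, or first fix a Polish topology on $X$.
\item You may want the metrics $\delta_x$ to be invariant. That is what makes the coset distance $\inf_{y_1\in z_1,y_2\in z_2}\delta_x(y_1,y_2)$ in Proposition~\ref{sasza9} a genuine metric. If so, choose $d$ to be a left-invariant compatible metric on $G$ (Birkhoff--Kakutani). Your argument allows this at no cost, since Definition~\ref{d:borel} only requires compatibility.
\end{itemize}
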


\begin{Lemma}\label{Claim 4} Let $G$ be a compact Polish Abelian group.
Given $g\in G$, denote by $K(g)$ the closure of the cyclic group generated by
$g$.
Then, the mapping $G\ni g\mapsto K(g)$ is Effros Borel.\end{Lemma}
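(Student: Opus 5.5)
To prove Lemma~\ref{Claim 4}, it suffices to show that for every open set $O\subset G$ the set $\{g\in G\colon K(g)\cap O\neq\emptyset\}$ is Borel. Since $G$ is second countable and every open set is a countable union of basic open sets, it is enough to treat $O$ ranging over a countable base. My plan is to describe $K(g)$ through characters. Because $G$ is compact abelian, Pontryagin duality gives
$$K(g)=\{h\in G\colon \chi(h)=1 \text{ for all }\chi\in\widehat G\text{ with }\chi(g)=1\}.$$
Indeed, $K(g)$ is a closed subgroup, so it equals the annihilator of its annihilator, and a character annihilates $K(g)$ if and only if $\chi(g)=1$. Since $G$ is Polish, $\widehat G$ is countable; enumerate it as $\chi_1,\chi_2,\dots$.

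A cleaner route avoids handling open sets directly. I will use the standard fact that a map $x\mapsto F_x\in\frak Z(G)$ into a compact metric space is Effros Borel if and only if $x\mapsto d(h,F_x)$ is Borel for each $h$ in a countable dense set. Equivalently, one can show that the graph relation $\{(g,h)\colon h\in K(g)\}$ is Borel together with the sections being compact. Here the relation is Borel:
$$\{(g,h)\colon h\in K(g)\}=\bigcap_{n}\big(\{g\colon\chi_n(g)\neq1\}\times G\;\cup\;G\times\{h\colon \chi_n(h)=1\}\big).$$
Each piece is Borel, in fact closed or open, so the relation is Borel.

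Passing from a Borel relation with compact sections to Effros measurability is the main obstacle. Kunugui–Novikov-type theorems handle it, namely that Borel sets with $K_\sigma$ (here compact) sections give Borel maps into $\frak Z(G)$. To stay self-contained, I would instead prove it directly. Fix a closed set $C$, for example a closed ball, and let $I(g)=\{n\colon\chi_n(g)=1\}$, which is a Borel function of $g$ into $2^{\N}$. Then $K(g)\cap C\neq\emptyset$ if and only if, for every finite $N$, the compact set $C\cap\bigcap_{n\le N,\,n\in I(g)}\chi_n^{-1}(1)$ is nonempty; this equivalence is exactly compactness via the finite intersection property. For each fixed finite $F\subset\N$, whether $C\cap\bigcap_{n\in F}\chi_n^{-1}(1)\neq\emptyset$ is a fixed truth value. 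Hence $\{g\colon K(g)\cap C\neq\emptyset\}=\bigcap_N\bigcup_{F\subset\{1,\dots,N\}\text{ good}}\{g\colon I(g)\cap\{1,\dots,N\}=F\}$, which is Borel.

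Finally, every open $O$ is a countable union of closed balls $C_j$, so $[O]=\bigcup_j\{g\colon K(g)\cap C_j\neq\emptyset\}$ is Borel, and the map $g\mapsto K(g)$ is Effros Borel.
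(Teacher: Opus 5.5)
Your proof is correct, but it takes a considerably longer route than the paper. The paper notes that $O$ is open and $K(g)$ is the closure of $\{g^n\colon n\in\Z\}$, so $K(g)\cap O\neq\emptyset$ if and only if $g^n\in O$ for some $n$. Hence $\{g\colon K(g)\in[O]\}=\bigcup_{n\in\Z}\{g\colon g^n\in O\}$, which is open because $g\mapsto g^n$ is continuous. That argument gives more than Borel measurability: preimages of the generators $[O]$ are open, i.e.\ the map is lower semicontinuous. It also uses neither commutativity, compactness nor duality.

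Your route instead writes $K(g)=\bigcap_{n\in I(g)}\chi_n^{-1}(1)$ via Pontryagin duality, tests $K(g)$ against closed balls using the finite intersection property, and recovers open sets as countable unions of closed balls. Each step checks out: the annihilator identity, countability of $\widehat G$, the compactness reduction, and the level-$N$ description as a finite union of Boolean combinations of the sets $\{\chi_n(g)=1\}$. The detour through closed sets is unnecessary for the lemma itself, since an open set meets the closure of a set iff it meets the set.

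What your approach does buy is an explicit Borel description of $\{(g,h)\colon h\in K(g)\}$ through character kernels. This directly gives the Borelness of $\bigcup_{g}\{g\}\times K(g)$, which the paper instead imports from Sutherland's theorem when it forms the field of quotients $G/K(g)$.
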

\begin{proof}  Let $\delta$ stand for a metric on $G$ compatible with the topology and let
$O$ be a nonempty open subset of $G$.
Then the following set
$$
\{g\in G\colon K(g)\in [O]\}=\{g\in G\colon K(g)\cap O\ne \emptyset\}=\bigcup_{n\in\Bbb Z}\{g\in G\colon g^n\in O\}
$$
is open in $G$.
\end{proof}

\begin{Prop}\label{pr:ququ} Let $G$ be a compact Polish Abelian group.
Then there is a standard Borel structure on the disjoint union $Y:=\bigsqcup_{g\in G}G/K(g)$ such that the quotient mapping
$
\tau \colon G\times G\ni (g,b){\mapsto} (g,bK(g))\in Y
$
is Borel.
\end{Prop}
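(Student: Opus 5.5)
The plan is to realize $Y$ as the total space of the Borel field of quotient groups given by Proposition~\ref{sasza9}. Then we check that the map $\tau$ is the quotient map \eqref{sasza10} for that field, composed with an obviously Borel identification of $G\times G$ with the total space of the constant field $g\mapsto G$.

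\textbf{Step 1: the constant field.} Take $X:=G$ as the parameter space and $G_g:=G$ for all $g$. The total space $\bigsqcup_{g\in G}G_g=G\times G$ carries the product Borel structure, and all conditions of Definition~\ref{d:borel} hold trivially:
\begin{itemize}
\item multiplication and inversion are continuous;
\item we take a fixed countable dense set $\{d_k\}\subset G$ with $g_k(g):=(g,d_k)$;
\item we take a fixed compatible metric $\delta_g:=\delta$.
\end{itemize}
So $g\mapsto G$ is a Borel field of Polish groups, with projection $\pi(g,b)=g$.

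\textbf{Step 2: the subgroup field.} By Lemma~\ref{Claim 4}, the map $g\mapsto K(g)\in\frak Z(G)$ is Effros Borel. Each $K(g)$ is a closed subgroup, so part~(i) of Theorem~2.5 of~\cite{Su} (quoted above) shows that $g\mapsto K(g)$ is a Borel field of Polish groups. Part~(ii) shows that $\bigcup_{g}\{g\}\times K(g)$ is a Borel subset of $G\times G=\bigsqcup_g G_g$.

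One point needs checking: the Borel structure on $\bigsqcup_g K(g)$ from Definition~\ref{d:borel} should coincide with the relative structure inherited from $G\times G$, which is what Proposition~\ref{sasza9} requires. I would handle this by choosing, in part~(i), the structure induced from $G\times G$ on this Borel subset. A Borel subset of a standard space is standard, and the selectors $h_k$ are furnished by Kuratowski–Ryll-Nardzewski applied to the Effros Borel map. Since $G$ is abelian, every $K(g)$ is normal.

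\textbf{Step 3: apply Proposition~\ref{sasza9}.} This gives a standard Borel structure on $Y=\bigsqcup_{g}G/K(g)$ for which $g\mapsto G/K(g)$ is a Borel field. It also shows, via \eqref{sasza10}, that the quotient map $(g,b)\mapsto bK(g)\in G/K(g)$, viewed in the fiber over $g$, is Borel. That map is exactly $\tau(g,b)=(g,bK(g))$, which completes the proof.

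The main obstacle is Step 2: making sure the Borel field structure on $\bigsqcup_g K(g)$ produced by~\cite{Su} is the restriction of the product structure on $G\times G$. If the cited theorem does not state this directly, I would verify it by hand with Proposition~\ref{Bstructure}(i). On the subset, the relative structure is generated by $\pi$ and the maps $(g,b)\mapsto\delta(b,h_k(g))$. These maps are Borel in the product structure, and they separate points because $\{h_k(g)\}$ is dense in $K(g)$. A countable separating family of Borel maps on a standard Borel space generates its Borel structure, so the two structures agree.
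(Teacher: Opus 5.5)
Your proposal is correct and follows the same route as the paper. You use the constant field $g\mapsto G$ on $G\times G$, obtain the subgroup field $g\mapsto K(g)$ from Lemma~\ref{Claim 4} together with Sutherland's Theorem~2.5(i),(ii), and then apply Proposition~\ref{sasza9} to get the quotient field and the Borel map $\tau$. Your additional check, that the field structure on $\bigsqcup_g K(g)$ can be taken to be the one induced from $G\times G$ via Kuratowski--Ryll-Nardzewski selectors, makes explicit a point the paper uses without comment in the proof of Proposition~\ref{sasza9}.
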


\begin{proof}
It follows from
Lemma~\ref{Claim 4} and Fact 2(i) that $G\ni g\mapsto K(g)$ is  Borel (in the sense of Definition~\ref{d:borel}).
Of course, the constant mapping $G\ni g\mapsto G\in \mathcal P$ is Borel.
Indeed, we identify naturally
 the disjoint union $\bigsqcup_{g\in G}G$ with $G\times G$ and endow it with the product Borel structure.
 Then all the conditions from Definition~\ref{d:borel} are satisfied.
Hence, by  Lemma~\ref{Claim 4} and Fact 2(ii), the mapping $G\ni g\mapsto G/K(g)$ is Borel.
\end{proof}

\begin{proof}[Proof of Theorem~{\rm \ref{c:sasza17}}]
By Proposition~\ref{pr:ququ}, $\tau$ is a Borel onto mapping from $G\times G$ to $Y$.
Hence, the Jankov-von Neumann uniformization theorem (Theorem~\ref{th:JvN}) yields that there is an analytically measurable function $s \colon Y\to G\times G$ which is  a cross-section of $\tau$.
It remains to set $L:=s(Y)$.
\end{proof}

\subsection{Relatively discrete spectrum over  identity}
The concept of relatively discrete spectrum with respect to a factor is fundamental in ergodic theory \cite{Fu}, \cite{Zi}. However, in almost all results on this subject it is assumed that the factor relative to which we examine a given automorphism is ergodic. In particular, the Halmos-von Neumann theorem has been proved  \cite{Zi}, \cite{Le-Th-We} under such assumption.  In this section, we will consider exactly the opposite case, namely we study the relatively discrete spectrum over the sigma-algebra of invariant sets.\footnote{In this situation the whole automorphism is relatively ergodic over the factor.}

We start with recalling necessary definitions. Let $T\in{\rm Aut}\xbm$ and let $\ca\subset\cb$ be a factor. We say that a measurable map $f \colon X/\ca\to U(n)$  (where $U(n)$ stands for the group of unitary matrices of degree $n$) is a {\em relative eigenvalue} if there exists a measurable $F \colon X\to \C^n$ such that:
\beq\label{ltw1}f(\ov{x})F(x)=F(Tx)
\eeq
for $\mu$-a.e.\ $x\in X$, where $x\mapsto \ov{x}$
is the factor map
$$
(X,\cb,\mu,T)\to (X/\ca,\ca,\ov{\mu},\ov{T}).
$$
Then $F=(F_1,\ldots,F_n)$ is a {\em relative
eigenvector}.
Note that by \eqref{ltw1},
$$
\bigg(\sum_{j}|F_j(x)|^2\bigg)^{1/2}=\|F(x)\|=\|F(Tx)\|,
$$
 so we can assume without loss of generality that $F$ is bounded by truncating $F$ (say, $\tilde{F}(x)=F(x)$ when $\|F(x\|\leq M$ and $\tilde{F}(x)=0$ otherwise).
Note that the scalar functions $F_j$ generate a finite dimensional  $L^\infty(\ca)$-module which is $T$-invariant.

\begin{Def}
 $T$ is said {\em to have relative discrete spectrum over} $T|_{\ca}$ if $L^2(X,\mu)$ is generated by the coordinate (scalar) functions of relative eigenfunctions.
\end{Def}
This  is equivalent, see \cite{Fu0}, \cite{Le-Th-We}, to the fact that  $L^2(X,\mu)$ is generated by the $T$-invariant finite dimensional $L^\infty(\ca)$-modules.
Note also that if $f$ is a relative eigenvalue and $\eta \colon X/\ca\to U(n)$ is measurable then
\beq\label{coh1}f'(\ov{x}):=\eta(\ov{T}\ov{x})f(\ov{x})\eta(\ov{x})^{-1}
\eeq
is also a relative eigenvalue with the corresponding relative eigenvector $\eta\cdot F$. Furthermore, the
$L^\infty(\ca)$-modules generated by $F$ and $\eta \cdot F$ are the same. It follows that if we want to check whether $T$ has relatively discrete spectrum over $\ca$, it is enough to choose relative eigenvalues up to cohomology~\eqref{coh1}.

We are now interested in $\ca=\mathcal I(T)$, i.e. $\ca$
is the the sigma-algebra of $T$-invariant subsets.
In other words, $\mathcal  I(T)$ is the maximal factor of $T$ on which $T$ acts as the identity.
 In this case, we claim, we can restrict ourselves to $n=1$. Indeed, given a relative eigenvalue $\la$, we can find   measurable\footnote{This requires a proof: For $D(\ov{x})$ this looks obvious since the map which to $U\in U(n)$ associates the roots of the characteristic polynomial of $U$ is continuous. Now, $\eta(\ov{x})$ is of course not unique, but the ``fiber'' over $\ov{x}$ are compact, as it must be a coset of the centralizer of $D(\ov{x})$ in $U(n)$, so the Kallman-Mauldin theorem \cite{Kal-Ma} will give us a required measurable selector.} maps $D \colon X/{\mathcal I(T)}\to {\rm Diag}(n)\cap U(n)$ and $\eta \colon X/\mathcal I(T)\to U(n)$ such that
$$
\la(\ov{x})=\eta(\ov{x})D(\ov{x}) \eta(\ov{x})^{-1}\qquad\text{at a.e. }\ov{x}.
$$
Hence, $D$ is a relative eigenvalue with a relative eigenvector $G(x)=\eta(\ov{x})F(x)$. This means however that
$$  d_{jj}(\ov{x})G_j(x)=G_j(Tx),$$
where $D$ is the diagonal matrix with $d_{jj}$'s on the diagonal ($j=1,\ldots,n$). Since, as we have already noticed, the modules generated by $G$ and $F$ are the same, we have proved the following:\footnote{\label{f:chards} In \cite{Ka-Ku-Le-Ru} it was proved that $T\to \text{Id}$ has relatively discrete spectrum if and only if  (a.a.) fiber automorphisms (from~\eqref{edecomp}) $T_{\bax}$ have discrete spectrum.}

\begin{Prop}\label{p:m1} The factor mapping $T\to T|_{\mathcal I(T)}=\text{{\rm Id}}_{\mathcal I(T)}$ has relatively discrete spectrum if and only if the set of bounded measurable functions $F \colon X\to \C$ such that $F(Tx)=f(\ov{x})F(x)$ for some measurable $f \colon X/{\mathcal I(T)}\to\Bbb T$ is linearly dense (total) in $L^2(X,\mu)$.
\end{Prop}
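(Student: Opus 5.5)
The statement is essentially a repackaging of the reduction carried out just before it, so the plan is to make that reduction rigorous and check both directions.

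For the ``if'' direction I would argue directly from the definition. Suppose the set $\mathcal{E}_1$ of bounded $F$ with $F\circ T=f(\ov{x})F$, $f\colon X/\mathcal I(T)\to\T$ measurable, is total in $L^2(X,\mu)$. Each such pair $(f,F)$ is a relative eigenvalue/eigenvector with $n=1$, since $U(1)=\T$. Hence the coordinate functions of relative eigenvectors already generate $L^2(X,\mu)$, which is exactly the definition of relative discrete spectrum over $\mathcal I(T)$. Nothing more is needed here.

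For the ``only if'' direction, start from an $n$-dimensional relative eigenvalue $\la\colon X/\mathcal I(T)\to U(n)$ with bounded eigenvector $F$. Boundedness is allowed by the truncation remark, since $\|F\|$ is $T$-invariant, so truncating on $\{\|F\|\le M\}$ keeps the eigen-equation. Because $\ov{T}=\mathrm{Id}$ on $X/\mathcal I(T)$, the cohomology relation \eqref{coh1} becomes plain conjugation, $\eta\la\eta^{-1}$. I therefore want measurable $\eta\colon X/\mathcal I(T)\to U(n)$ and diagonal unitary $D$ with $\la=\eta D\eta^{-1}$ a.e. Then $G:=\eta^{-1}F$ is bounded, because $\eta$ is unitary-valued, and $G_j\circ T=d_{jj}(\ov{x})G_j$, so each $G_j$ lies in $\mathcal{E}_1$.

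Next I need that the $L^\infty(\mathcal I(T))$-module spanned by $G_1,\dots,G_n$ equals the one spanned by $F_1,\dots,F_n$; this holds because $F=\eta G$ with $\eta$ a bounded $\mathcal I(T)$-measurable matrix, and conversely. Each element of that module has the form $\sum_j a_j(\ov{x})G_j$ with $a_j\in L^\infty(\mathcal I(T))$, and $a_jG_j$ is again in $\mathcal{E}_1$ with the same multiplier $d_{jj}$. So the linear span of $\mathcal{E}_1$ contains every coordinate function of every relative eigenvector. Totality then follows from the definition of relative discrete spectrum.

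The main obstacle is the measurable diagonalization. For $D$, I would order the eigenvalues of $\la(\ov{x})$, for instance by argument in $[0,2\pi)$ with multiplicity. This is a Borel function of the matrix, since the roots of the characteristic polynomial depend continuously on it up to permutation. For $\eta$, the set $\{V\in U(n)\colon V^{-1}\la(\ov{x})V=D(\ov{x})\}$ is a nonempty compact set: it is a coset of the centralizer of $D(\ov{x})$. Its graph over $\ov{x}$ is Borel, so a measurable selector exists by the Kallman--Mauldin theorem for compact sections. Alternatively one can use the Jankov--von Neumann theorem (Theorem~\ref{th:JvN}) to get a universally measurable selector and then modify it on a null set.
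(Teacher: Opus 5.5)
Your proposal is correct and follows the paper's own argument. The paper also reduces to $n=1$ by measurably diagonalizing the $U(n)$-valued relative eigenvalue: the eigenvalues depend continuously on the matrix, and $\eta$ is obtained by a Kallman--Mauldin selection from the compact coset of the centralizer. It then uses that the $L^\infty(\mathcal I(T))$-modules generated by $F$ and by the conjugated eigenvector coincide. Your normalization $G=\eta^{-1}F$ for $\lambda=\eta D\eta^{-1}$ is the consistent one; the paper's $G=\eta F$ corresponds to writing $\lambda=\eta^{-1}D\eta$.
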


\begin{Remark} \label{r:alge} Note that the linear space generated by the functions $F \colon X\to \C$ such that $F(Tx)=f(\ov{x})F(x)$, is an algebra of functions.\end{Remark}

\begin{Remark}\label{r:kiedy} If $F\in L^\infty(X,\mu)$ satisfies the equation $F(Tx)=f(\ov{x})F(x)$ for some measurable $f \colon X/\mathcal I(T)\to\Bbb T$ then the set $[F=0]$ is $T$-invariant.
Hence, we can assume  that $|F|=1$ on the remaining part (otherwise replace $F$ by a new function $\tilde{F}(x):=F(x)/|F(x)|$, which  is also a $\mathcal I(T)$-relative eigenfunction corresponding to $f$).
Then, for each $n\in\Z$,
$$
\int_XF(T^nx)\ov{F(x)}\,d\mu(x)=\int_{[F\neq0]}
f(\ov{x})^n\,d\ov\mu(\ov{x}).$$
It follows that the spectral measure of $F$ equals to
 the image of the measure $\ov\mu|_{[F\neq 0]}$ under the map $f$.
 \end{Remark}

For example, take a probability measure $
\sigma$ on $\Bbb T$ and consider the transformataion $(x,y)\mapsto (x,xy)$ of the torus $\Bbb T^2$ furnished with the product measure $\sigma\ot {\rm Leb}_{\T}$.
This transformation has relatively discrete spectrum (over the sigma-algebra corresponding to the first coordinate).
If $F(x,y)=y$ then $F$ is a relative eigenfunction corresponding to $f(x)=x$, and the spectral measure of $F$ equals $\sigma$.


\subsection{Relatively Kronecker factors over identity and {\bf DISP}$_{\text{ec}}$}
Let $(X,\mu,T)$ be  a dynamical system.
Let $\mu=\int_{\widebar X}\mu_{\widebar x}\,d\widebar \mu(\widebar x)$ stand for the ergodic decomposition of $\mu$ and
let $\pi \colon X\to\widebar X$ denote the corresponding factor mapping.
Then we obtain a corresponding decomposition of $L^2(X,\mu)$:
$$
L^2(X,\mu)=\int^{\oplus}_{\widebar X}L^2(X,\mu_{\widebar x})\,d\widebar\mu(\widebar x).
$$
Denote by $\mathcal K_{\rm rel}=\mathcal{K}_{\rm rel}(T)$ the $\mathcal I(T)$-relative Kronecker factor of $T$, i.e.
 the factor generated by the $\mathcal I(T)$-relative eigenfunctions of $T$.
Since  $\mathcal K_{\rm rel}\supset\mathcal I(T)$, the $\mathcal I(T)$-relative Kronecker factor is determined uniquely by the corresponding measurable
field $\widebar X\ni\widebar x\mapsto \mathcal K_{\widebar x}$ of factors $K_{\widebar x}$ of the fiber systems $(X,\mu_{\widebar x}, T)$, where
$\mathcal K_{\widebar x}=\mathcal K_{\rm rel}\cap\pi^{-1}(\widebar x)$.
Then $L^2(\mathcal K_{\rm rel},\mu)$ is a closed subspace of $L^2(X,\mu)$ and $L^2(\mathcal K_{\widebar x}, \mu_{\widebar x})$ is a closed subspace of $L^2(X,\mu_{\widebar x})$ generated by some genuine eigenfunctions (restrictions of relative eigenfunction to the fibers), and
\beq\label{rugu}
L^2(\mathcal K_{\rm rel},\mu)=\int^{\oplus}_{\widebar X}L^2(\mathcal K_{\widebar x},\mu_{\widebar x})\,d\widebar\mu(\widebar x).
\eeq

\begin{Lemma}\label{pr:giraf} $\mathcal K_{\widebar x}$ is the Kronecker factor of the (ergodic) system
$(X,\mu_{\widebar x}, T)$ for a.e. $\widebar x\in\widebar X$.
\end{Lemma}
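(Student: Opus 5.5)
The plan is to prove the two inclusions $\mathcal K_{\bax}\subset\mathcal K(T_{\bax})$ and $\mathcal K(T_{\bax})\subset\mathcal K_{\bax}$ separately for a.e.\ $\bax$, where $\mathcal K(T_{\bax})$ denotes the Kronecker factor of the ergodic system $(X,\mu_{\bax},T)$. Throughout I would work in the aperiodic picture of \S\ref{s:ergcom}: $X=\baX\times Y$ and $T(\bax,y)=(\bax,T_{\bax}y)$ with $T_{\bax}$ ergodic. The periodic pieces $\baX_k$ are handled in the same way, since there the fibers are finite and the Kronecker factor is the whole fiber.

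\emph{The easy inclusion.} By Proposition~\ref{p:m1} and Remark~\ref{r:kiedy}, $\mathcal K_{\rm rel}$ is generated by bounded $F$ with $F(Tx)=f(\bax)F(x)$, where $f\colon\baX\to\T$. For a.e.\ $\bax$, the restriction $F(\bax,\cdot)$ satisfies $F(\bax,T_{\bax}y)=f(\bax)F(\bax,y)$ for $\nu$-a.e.\ $y$, by Fubini. So $F(\bax,\cdot)$ is either $0$ or an eigenfunction of $T_{\bax}$. We use a countable generating family of such $F$, which exists because $L^2$ is separable. Then for a.e.\ $\bax$ the algebra $\mathcal K_{\bax}$ is generated by eigenfunctions of $T_{\bax}$, and hence $\mathcal K_{\bax}\subset\mathcal K(T_{\bax})$.

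\emph{The reverse inclusion.} The aim is to build countably many relative eigenfunctions $F_j$ whose fiber restrictions span $L^2(\mathcal K(T_{\bax}))$ for a.e.\ $\bax$. Once these exist, $F_j\in L^2(\mathcal K_{\rm rel},\mu)$, and by \eqref{rugu} their restrictions lie in $L^2(\mathcal K_{\bax},\mu_{\bax})$, which gives the inclusion.

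\begin{enumerate}
\item[(a)] Consider the set
$$
E:=\{(\bax,c,h)\in\baX\times\T\times L^2(Y,\nu)\colon U_{T_{\bax}}h=ch,\ \|h\|=1\}.
$$
It is Borel, because $\bax\mapsto U_{T_{\bax}}$ is measurable into the unitary group with the strong topology.
\item[(b)] Fix a dense sequence $(\varphi_k)$ in $L^2(Y,\nu)$.
\item[(c)] Select eigenpairs recursively, in a Gram--Schmidt manner. At stage $j$, attached to some $\varphi_k$ by a fixed enumeration of $\N\times\N$, consider the pairs $(c,h)$ in the fiber $E_{\bax}$ with $h\perp h_1(\bax),\dots,h_{j-1}(\bax)$ and $|\langle h,\varphi_k\rangle|$ at least half of the supremum over such $h$. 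This fiber set is nonempty whenever the supremum is positive. If the supremum is $0$, put $h_j(\bax)=0$.
\item[(d)] Apply the Jankov--von Neumann theorem (Theorem~\ref{th:JvN}) to get universally measurable selectors $\bax\mapsto(c_j(\bax),h_j(\bax))$. Modify them on a null set to make them Borel.
\end{enumerate}

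The half-supremum rule in (c) is there to force completeness. If some $\varphi_k$ had a nonzero component in $L^2(\mathcal K(T_{\bax}))$ orthogonal to all the $h_j(\bax)$, then the suprema at the infinitely many stages devoted to $k$ would stay bounded below. The selected orthonormal $h_j(\bax)$ would then have $|\langle h_j,\varphi_k\rangle|$ bounded below infinitely often, which contradicts Bessel's inequality.

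Finally, a measurable map $\baX\to L^2(Y,\nu)$ has a jointly measurable version $F_j(\bax,y)=h_j(\bax)(y)$. Then $F_j(T(\bax,y))=c_j(\bax)F_j(\bax,y)$ a.e., so $F_j$ is an $\mathcal I(T)$-relative eigenfunction.

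I expect the main obstacle to be the measurable-selection step. One must check that the fiber sets in (c) are Borel (or analytic), since they involve a supremum. I would handle this by writing the supremum as a countable supremum over a dense subset of each fiber's unit sphere, or by the Kallman--Mauldin theorem in the style of the earlier footnote. One must also produce a jointly measurable version of $F_j$ in $(\bax,y)$, which is the standard identification $L^2(\baX,L^2(Y))=L^2(\baX\times Y)$.
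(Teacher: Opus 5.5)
Your argument is correct. Your easy inclusion is the one the paper uses implicitly: it takes $\mathcal K_{\bax}\subset\mathcal L_{\bax}$ for granted, since the fibers of $\mathcal K_{\rm rel}$ are generated by restrictions of relative eigenfunctions.

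For the reverse inclusion you take a different route. The paper relies on ergodicity of the fibers:
\begin{itemize}
\item it notes that $\Lambda=\bigsqcup_{\bax}\{\bax\}\times\Lambda_{\bax}$ is a measurable subset of $\baX\times\T$ with countable sections;
\item it exhausts $\Lambda$ by countably many measurable graphs $f_n\colon\baX\to\T$;
\item for each $n$ it chooses a measurable field of unimodular eigenfunctions $\widehat F_{n,\bax}$ with eigenvalue $f_n(\bax)$;
\item it gets completeness for free, because eigenspaces of an ergodic automorphism are one-dimensional, so these fields contain every eigenfunction of every fiber up to a scalar.
\end{itemize}
The functions $F_n(x)=\widehat F_{n,\pi(x)}(x)$ then generate a factor $\mathcal L$ with $\mathcal I(T)\subset\mathcal L\subset\mathcal K_{\rm rel}$ and fibers $\mathcal L_{\bax}$.

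Your greedy half-supremum selection in $\baX\times\T\times L^2(Y,\nu)$, closed by a Bessel-inequality argument, never parametrizes the eigenvalue sets. It also never uses simplicity of the point spectrum, so it would work unchanged for an arbitrary measurable field of unitary operators. The cost is the recursive selection bookkeeping. The paper's route is shorter, but it states without proof the two measurability facts it needs: measurability of $\Lambda$, and existence of the measurable field $\widehat F_{n,\bax}$. Your construction performs the selection explicitly.

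Two places deserve one more line each.

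First, you do not need a countable dense subset of the fibers to control the supremum $s_j(\bax)$. Once $h_1,\dots,h_{j-1}$ are Borel, the set $\{\bax\colon s_j(\bax)>t\}$ is the projection of a Borel set, hence analytic. So $s_j$ is universally measurable and can be replaced by a Borel version off a null set before you invoke Theorem~\ref{th:JvN}.

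Second, in the completeness step, make explicit why a nonzero projection of $\varphi_k$ onto $L^2(\mathcal K(T_{\bax}))\ominus\ov{\rm span}\{h_j(\bax)\colon j\geq1\}$ produces a unit eigenvector $e$ orthogonal to all $h_j(\bax)$ with $\langle e,\varphi_k\rangle\neq0$. The reason is that the closed span of the $h_j(\bax)$ reduces $U_{T_{\bax}}$, so this difference space is again spanned by eigenvectors. In the ergodic case these are the unimodular eigenvectors for the eigenvalues not represented among the $h_j(\bax)$. Such an $e$ is then an admissible candidate at every later stage devoted to $k$.
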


\begin{proof}
For each $\widebar x\in\widebar X$, denote by
$\mathcal L_{\widebar x}$  the Kronecker factor of the (ergodic) system
$(X,\mu_{\widebar x}, T)$.
Then the system $(X,\mathcal L_{\widebar x},\mu_{\widebar x}|\mathcal L_{\widebar x},T)$ is ergodic with pure point spectrum.
Denote this spectrum by $\Lambda_{\widebar x}$.
Then the subset
$$
\Lambda:=\bigsqcup_{{\widebar x}\in{\widebar X}}\{\widebar x\}\times\Lambda_{\widebar x}
$$
is  a measurable subset of ${\widebar X}\times\Bbb T$.
Since $\Lambda_{\widebar x}$ is countable for each $\widebar x\in\widebar X$, there exist a countable family of functions
$f_n \colon \widebar X\to\Bbb T$ such that $\Lambda$ is exhausted by the graphs of $f_n$, $n\in\Bbb N$.
It follows that
$\Lambda_{\widebar x}=\bigcup_{n\in\Bbb N}f_n(\widebar x)$
at $\widebar\mu$-a.e. $\widebar x\in \widebar X$.
For each $n$, we can find a measurable
field $\widebar X\ni\widebar x\mapsto \widehat F_{n,\widebar x}$
of functions $\widehat F_ {n,\widebar x}\in L^2(\mathcal L_{\widebar x},\mu_{\widebar x}|\mathcal L_{\widebar x})$ such that
$$
\widehat F_{n,\widebar x}\circ T=f_n(\widebar x) \widehat F_n\quad\text{ and }\quad|\widehat F_{n,\widebar x}|=1.
$$
Of course, the linear span of $\{\widehat F_{n,\widebar x}\colon n\in\Bbb N\}$ is dense in
$L^2(\mathcal L_{\widebar x},\mu_{\widebar x}|\mathcal L_{\widebar x})$ for a.e. $\widebar x$ because it contains all the eigenfunctions of $(X,\mu_{\widebar x}, T)$.
We now let $F_n(x):=\widehat F_{n,\pi(x)}(x)$ for all $x\in X$.
Then $F_n$ is a measurable map from $X$ to $\Bbb T$
and $F_n\circ T=f_n\circ \pi \cdot F_n$.
 Thus, $F_n$ is  a relative eigenfunction  of $T$ with respect to $\mathcal I(T)$.
The family $\{F_{n}\colon n\in\Bbb N\}$ jointly with the family $\{D\circ \pi\colon D\in L^2(\widebar X,\widebar\mu)\}$
 generate a factor $\mathcal L$ of $(X,\mu, T)$.
 Of course, $\mathcal K_{\rm rel}\supset\mathcal L\supset\mathcal I(T)$ and hence
 \beq\label{guru}
L^2(\mathcal K_{\rm rel},\mu)\supset L^2(\mathcal L,\mu)=\int^{\oplus}_{\widebar X}L^2(\mathcal L_{\widebar x},\mu_{\widebar x})\,d\widebar\mu(\widebar x).
\eeq
 Since $\mathcal K_{\widebar x}\subset \mathcal L_{\widebar x}$ and, hence,
 $L^2(\mathcal K_{\widebar x},\mu_{\widebar x})\subset L^2(\mathcal L_{\widebar x},\mu_{\widebar x})$, we deduce from~(\ref{rugu}) and (\ref{guru}) that
 $L^2(\mathcal K_{\widebar x},\mu_{\widebar x})= L^2(\mathcal L_{\widebar x},\mu_{\widebar x})$, i.e.\
 $\mathcal K_{\widebar x}= \mathcal L_{\widebar x}$
 at $\widebar\mu$-a.e. $\widebar x$, as desired.
 \end{proof}

Denote by $\bf R$ the family of automorphisms which have  relatively discrete spectrum over the sigma-algebra of invariant subsets.
\begin{Prop}\label{re:mrak}  We have
${\bf R}={\bf DISP}_\text{{\rm ec}}$.
Hence, for each automorphism $T$, we have $\mathcal{K}_{\rm rel}(T)=
\ca_{{\rm {\bf DISP}}_{\rm ec}}(T).$
\end{Prop}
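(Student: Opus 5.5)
We have to show two inclusions, ${\bf R}\subset{\bf DISP}_{\rm ec}$ and ${\bf DISP}_{\rm ec}\subset{\bf R}$, and then read off the identity for $\mathcal K_{\rm rel}(T)$. Both inclusions will use the direct integral decomposition of $L^2(\mathcal K_{\rm rel},\mu)$ in \eqref{rugu} together with Lemma~\ref{pr:giraf}, which says that the fibre $\mathcal K_{\bax}$ of the relative Kronecker factor is the Kronecker factor of the ergodic component $(X,\mu_{\bax},T)$ for a.e.\ $\bax$.

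For ${\bf R}\subset{\bf DISP}_{\rm ec}$, assume $T\in{\bf R}$, i.e.\ $\mathcal K_{\rm rel}(T)=\cb$. By Proposition~\ref{2factors}, which applies because both sigma-algebras contain $\mathcal I(T)$, we get $\mathcal K_{\bax}=\cb_{\bax}$ for a.e.\ $\bax$. By Lemma~\ref{pr:giraf}, this means the Kronecker factor of $(X,\mu_{\bax},T)$ is everything, so a.a.\ ergodic components have discrete spectrum. Hence $T\in{\bf DISP}_{\rm ec}$.

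For the converse, assume a.a.\ ergodic components have discrete spectrum, so their Kronecker factors $\mathcal L_{\bax}$ are full. Lemma~\ref{pr:giraf} gives $\mathcal K_{\bax}=\mathcal L_{\bax}=\cb_{\bax}$ a.e., and \eqref{rugu} then yields $L^2(\mathcal K_{\rm rel},\mu)=\int^\oplus L^2(X,\mu_{\bax})\,d\bamu=L^2(X,\mu)$. So $T$ has relatively discrete spectrum over $\mathcal I(T)$, using Proposition~\ref{p:m1} to reduce to scalar relative eigenfunctions. The only delicate point here is the measurable choice of eigenfunctions on the fibres, and that is already carried out in the proof of Lemma~\ref{pr:giraf}. (Footnote~\ref{f:chards} also records this equivalence from \cite{Ka-Ku-Le-Ru}.)

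For the second claim, first check that $T|_{\mathcal K_{\rm rel}}\in{\bf R}$. Its ergodic components are the fibre factors $\mathcal K_{\bax}$, which are discrete spectrum by Lemma~\ref{pr:giraf}, so the first part puts it in ${\bf DISP}_{\rm ec}$. Hence $\mathcal K_{\rm rel}\subset\ca_{{\bf DISP}_{\rm ec}}(T)$. Conversely, let $\ce:=\ca_{{\bf DISP}_{\rm ec}}(T)$; it contains $\mathcal I(T)$ since identities belong to the class. By Proposition~\ref{p:faktory}, $\ce$ corresponds to a measurable field of factors $\ce_{\bax}$, and these are (up to isomorphism) the ergodic components of $T|_\ce$. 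Since $T|_\ce\in{\bf DISP}_{\rm ec}$, a.a.\ $\ce_{\bax}$ are discrete spectrum factors of $(X,\mu_{\bax},T)$, hence $\ce_{\bax}\subset\mathcal K_{\bax}$. Proposition~\ref{2factors}, applied to $\ce\vee\mathcal K_{\rm rel}$ versus $\mathcal K_{\rm rel}$, then gives $\ce\subset\mathcal K_{\rm rel}$.

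The main obstacle I expect is justifying that the ergodic components of a factor containing $\mathcal I(T)$ are exactly the fibre factors $\ce_{\bax}$. This should follow from Proposition~\ref{p:faktory} and uniqueness of disintegration, since the invariant sigma-algebra of $T|_\ce$ coincides with $\mathcal I(T)$.
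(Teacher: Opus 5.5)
Your proposal is correct and follows the paper's proof: both inclusions come from Lemma~\ref{pr:giraf} (a.e.\ fibre $\mathcal K_{\bax}$ of $\mathcal K_{\rm rel}$ is the Kronecker factor of the ergodic component) combined with Proposition~\ref{2factors}, or equivalently the direct-integral identity \eqref{rugu}. The paper states the identity $\mathcal K_{\rm rel}(T)=\ca_{{\rm {\bf DISP}}_{\rm ec}}(T)$ as an immediate consequence without proof, and your fibrewise argument for it is sound. A shorter route for $\ce:=\ca_{{\rm {\bf DISP}}_{\rm ec}}(T)\subset\mathcal K_{\rm rel}$ is this: $\ce\supset\mathcal I(T)$ gives $\mathcal I(T|_{\ce})=\mathcal I(T)$, so by the first part $L^2(\ce)$ is spanned by $\mathcal I(T)$-relative eigenfunctions of $T$, which are $\mathcal K_{\rm rel}$-measurable.
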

\begin{proof}
Take a system
 $(X, \mu,T)\in {\bf R}$.
 Let $\mu=\int_{\widebar X}\mu_{\widebar x}\,d\widebar\mu(\widebar x)$ stand for the ergodic disintegration of $\mu$.
 It follows from Lemma~\ref{pr:giraf}
 that  $(X,\mu_{\widebar x}, T)$ has pure point spectrum.
 Thus, $(X,\mu,T)\in {\bf DISP}_\text{ec}$.

Conversely, take a dynamical system
 $(X,\mathcal B,\mu,T)\in {\bf DISP}_\text{ec}$.
 Let $\mathcal K$ be the $\mathcal I(T)$-relative Kronecker factor of $(X,\mu,T)$.
As above,  $\mu=\int_{\widebar X}\mu_{\widebar x}d\widebar \mu(\widebar x)$ stands for the ergodic disintegration of $\mu$.
Then by Lemma~\ref{pr:giraf}, $\mathcal K_{\widebar x}$ is the Kronecker factor
of the fiber system $(X,\mu_{\widebar x}, T)$.
As   $(X,\mu,T)\in {\bf DISP}_\text{ec}$, this Kronecker factor is
 the entire
system $(X,\mu_{\widebar x}, T)$.
Thus, $\mathcal K_{\widebar x}=\mathcal B_{\widebar x}$ at a.e. $\widebar x\in\widebar X$.
It follows from Proposition~\ref{2factors} that $\mathcal K=\mathcal B$,
 i.e.
$(X,\mathcal B,\mu, T)\in\bf R$.
\end{proof}

\subsection{Universal model for  {\bf DISP}$_{\text{ec}}$ }
Let $(X,\mathcal  B,\mu)$ be a standard probability space and let $T\in {\rm Aut}(X,\mu)$.
 In Proposition~\ref{p:m1}, we showed that $T$ has relative discrete spectrum over $\mathcal{I}(T)$ if and only if  the liner span of all $\mathcal I(T)$-relative eigenfunctions of $T$ is dense in $L^2(X,\mu)$.
Note that if $f$ is an $\mathcal I(T)$-relative eigenfunction of $T$ then $|f|\circ T=|f|$.
Hence, the mapping $X\ni x\mapsto |f(x)|\in \Bbb R$ is $\mathcal  I(T)$-measurable. Therefore, if $\tau \colon X\to X/{\mathcal{I}(T)}$ is the (natural) factor map, $\tau(x)=\bax$, then $|f(x)|=|f(y)|$ whenever $\bax=\ov{y}$.

\begin{Th}\label{discr} Let $T\in {\rm Aut}(X,\mathcal B,\mu)$ have an $\mathcal I(T)$-relatively discrete spectrum.
Then there is  a probability $\eta$ on $\Bbb T^\Bbb N\times \Bbb T^\Bbb N$
such that  $(X,\mu,T)$ is isomorphic to  $(\Bbb T^\Bbb N\times \Bbb T^\Bbb N\times \Bbb T^\Bbb N,\widetilde\eta, A)$, where
$$
A(y,v,z)=(y,v,yz)\qquad\text{and}\qquad\widetilde\eta=\int_{\Bbb T^\Bbb N\times \Bbb T^\Bbb N}\delta_{y,v}\otimes\lambda_{K(y)}\,d\eta(y,v)
$$
 and $\mathcal I(A)=\mathcal B_{\Bbb T^\Bbb N\times \Bbb T^\Bbb N}\otimes \mathcal N_{\Bbb T^{\Bbb N}}$.
\end{Th}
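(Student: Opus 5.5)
The plan is to build an explicit isomorphism through a countable family of $\mathcal I(T)$-relative eigenfunctions. By Proposition~\ref{p:m1} and Remark~\ref{r:kiedy}, we choose countably many relative eigenfunctions $F_n\colon X\to\T\cup\{0\}$ with $F_n\circ T=f_n(\bax)F_n$, where $f_n\colon\baX\to\T$ is measurable. We arrange that these functions, together with $L^2(\mathcal I(T))$, are linearly dense in $L^2(X,\mu)$. Separability of $L^2$ lets us pass to a countable subfamily. Using Lemma~\ref{pr:giraf}, we may also take, fiberwise, $\{F_n|_{\text{fiber}}\}$ to exhaust an orthonormal eigenbasis of each ergodic component, with $|F_n|=1$ on the invariant set $[F_n\neq0]$. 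We set $F_n:=1$ where it vanishes and $f_n:=1$ there; this changes nothing modulo $\mathcal I(T)$.

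We then fix a Borel injection $\iota\colon\baX\to\T^{\N}$, and define
$$\Theta(x):=\big((f_n(\bax))_n,\ \iota(\bax),\ (F_n(x))_n\big)\in\T^\N\times\T^\N\times\T^\N .$$
Since $F_n(Tx)=f_n(\bax)F_n(x)$ and $\bax$ is $T$-invariant, we get $\Theta\circ T=A\circ\Theta$. We put $\eta$ equal to the image of $\bamu$ under $\bax\mapsto((f_n(\bax))_n,\iota(\bax))$. The $v$-coordinate is the reason we include $\iota$: it makes $\Theta$ separate ergodic components, while the functions $F_n$ generate the rest of $\cb$. Hence $\Theta$ is injective mod $\mu$ and $\sigma(\Theta)=\cb$. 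The invariant sigma-algebra claim $\mathcal I(A)=\mathcal B_{\T^\N\times\T^\N}\ot\mathcal N_{\T^\N}$ for $\widetilde\eta$ was already noted in Section~\ref{s:model}, since each fiber $K(y)$ carries Haar measure, which is ergodic for rotation by $y$.

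The main step is to show $\Theta_\ast\mu=\widetilde\eta$, that is, that the conditional measure of $\Theta_\ast\mu$ on the fiber over $(y,v)=((f_n(\bax)),\iota(\bax))$ equals $\lambda_{K(y)}$. We plan to test against characters $\chi(z)=\prod_{n\le t} z_n^{m_n}$. The function $\prod F_n^{m_n}$ is a relative eigenfunction on the ergodic component $\mu_{\bax}$ with eigenvalue $\prod f_n(\bax)^{m_n}=\chi(y)$. If $\chi(y)\ne1$, ergodicity of $\mu_{\bax}$ forces $\int\prod F_n^{m_n}\,d\mu_{\bax}=0$, which matches $\int\chi\,d\lambda_{K(y)}=0$. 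If $\chi(y)=1$, the product is $T$-invariant and therefore constant $\mu_{\bax}$-a.e. with modulus one. Here a normalization is needed, because the fiber measure must then be $\lambda_{K(y)}$ rotated by some $c(\bax)\in\T^\N$, not $\lambda_{K(y)}$ itself. Thus the first computation only shows that $\Theta_\ast\mu_{\bax}$ is a translate $\lambda_{K(y)}\circ R_{c(\bax)}^{-1}$, by the uniqueness in Lemma~\ref{l:sama1}/Lemma~\ref{l:sama2} applied to the $K(y)$-rotation invariant measure $\Theta_\ast\mu_{\bax}$.

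This translate is the obstacle, and it is where Theorem~\ref{c:sasza17} enters. Using the analytic cross-section $L_y$ of $G\to G/K(y)$, we select measurably $c(\bax)\in L_y$ representing the coset carrying $\Theta_\ast\mu_{\bax}$. We then replace $F_n$ by $F_n\cdot\overline{c_n(\bax)}$. This is still a relative eigenfunction with the same $f_n$, since we only multiply by an invariant unimodular function, and it moves the fiber measure onto $K(y)$ itself. Measurability of $\bax\mapsto c(\bax)$ follows from the analytic measurability in Theorem~\ref{c:sasza17}, after modification on a null set. After this renormalization, $\Theta_\ast\mu_{\bax}=\lambda_{K(y)}$ for a.e. $\bax$, so $\Theta_\ast\mu=\widetilde\eta$ and $\Theta$ is the required isomorphism.
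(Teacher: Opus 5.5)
Your proposal is correct and follows essentially the paper's route. You build an equivariant map from modulus-one relative eigenfunctions, observe that each fiber measure is Haar measure on a coset of $K(y)$, and use the analytic cross-section of Theorem~\ref{c:sasza17} to move that coset onto $K(y)$; the coset map is measurable because $x\mapsto\tau\big((f_n(\bax))_n,(F_n(x))_n\big)$ from Proposition~\ref{pr:ququ} is Borel and $T$-invariant.

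The differences are cosmetic. You put an injective label $\iota(\bax)$ of the ergodic component in the $v$-slot and straighten by multiplying the $F_n$ by invariant unimodular functions. The paper instead encodes $\bax$ through an extra coordinate $\widetilde f_1$ with $\widetilde a_1=1$ and applies the fiber bijection $\Psi_y$, so its $v$-coordinate is the coset representative in $L_y$.
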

\begin{proof}
We  note that the linear space $\mathcal S$ generated  by all bounded $\mathcal I(T)$-relative eigenfunctions of $T$ is
a $T$-invariant $*$-subalgebra in $L^\infty(X,\mu)$.
It contains $L^\infty\big(X,\mathcal I(T),\mu|_{\mathcal{I}(T)}\big)$ and therefore, there is a $T$-factor $\mathcal F\subset\mathcal B$ such that $\mathcal F
\supset\mathcal I(T)$ and
$\mathcal S=L^\infty(X,\mathcal F,\mu)$.
But  each $\mathcal  I(T)$-relative eigenfunction of $T$ is a limit  (in $L^2(X,\mu)$) of a sequence of bounded $\mathcal  I(T)$-relative eigenfunctions of $T$, so
since $T$ has an
$\mathcal  I(T)$-relatively discrete spectrum, it follows that $L^\infty(X, \mathcal F,\mu)$ is dense in $L^2(X,\mu)$.
This yields that $\mathcal F=\mathcal B$.
Hence, there exists a countable family
$\{f_n\}_{n=2}^\infty$ of $\mathcal I(T)$-relative eigenfunctions of $T$ such that the smallest sigma-algebra making all of $f_n$ measurable is equal to $\mathcal B$. It follows that
$\{f_n\}_{n=2}^\infty$ separate points of $X$ \cite[Section 12.A]{Ke}. 
We have  that $f_n(Tx)=a_n(\bax)f_n(x)$ $\mu$-a.e.\ (with $|a_n|=1$).
For each $n>1$ and $x\in X$, we now set
$$
\widetilde f_n(x)=
\begin{cases}
f_n(x)/|f_n(x)|, &\text{if $f_n(x)\ne 0$ and}\\
1, &\text{if $f_n(x)= 0$,}
\end{cases}
$$
and
$$
\widetilde a_n(x)=
\begin{cases}
a_n(\bax), &\text{if $f_n(x)\ne 0$ and}\\
1, &\text{if $f_n(x)= 0$.}
\end{cases}
$$
Let ${f}_1 \colon X/\mathcal{I}(T)\to\Bbb T$ be a one-to-one Borel  mapping and let $\widetilde f_1:=f\circ\tau$
and
$\widetilde a_1:=1$.
Then,  $\widetilde f_n\circ T=\widetilde a_n\widetilde f_n$ for each
$n\ge 1$.

For $x\in X$, let
$a(\bax):=(\widetilde a_n(\bax))_{n=1}^\infty\in  \Bbb T^\Bbb N$ and
$f(x):=(\widetilde f_n(x))_{n\in\Bbb N})\in \Bbb T^\Bbb N$.
We now consider the mapping
$$
\Phi \colon X\ni x\mapsto\Phi(x):=(a(\bax), f(x))\in \Bbb T^\Bbb N \times \Bbb T^\Bbb N.
$$
We claim that  $\Phi$ is one-to-one.
Indeed, let $x,y\in X$ and $x\ne y$.
If there is a subset $B\in\mathcal I(T)$ such that $x\in B$ but $y\not\in B$
then $\bax\neq\ov{y}$, whence $\widetilde f_1(x)={f}_1(\bax)\ne
{f}_1(\ov{y})=\widetilde f_1(y)$.
Otherwise, 
$\bax=\ov{y}$ and there is $n>1$ such that $ f_n(x)\ne  f_n(y)$.
Since  $|f_n|$ is $\mathcal I(T)$-measurable, it follows that $| f_n(x)|=|  f_n(y)|$.
Of course, $| f_n(x)|\ne 0$ (otherwise, $0=f_n(x)=f_n(y)$).
Hence,
$\widetilde f_n(x)\ne \widetilde f_n(y)$.
Thus, if $x\ne y$ then $f(x)\ne f(y)$.
This, in turn, implies that $\Phi(x)\ne \Phi(y)$.
Hence, $\Phi$ is one-to-one, as claimed.

Set $B \colon \T^{\N}\times \T^{\N}\to\T^{\N}\times\T^{\N}$, $B(y,z):=(y,yz)$. It is straightforward to verify that $\Phi \circ T=B\circ \Phi$.
Then $\Phi$ is an isomorphism of $(X,\mu,T)$ onto $(\Bbb T^\Bbb N\times \Bbb T^\Bbb N,\mu\circ \Phi^{-1},B)$.
We denote the push-forward of $\mu\circ \Phi^{-1}$ onto the first coordinate $Y=\Bbb T^\Bbb N$ by $\eta^*$.
Consider the disintegration
$$
\mu\circ \Phi^{-1}=\int_{\Bbb T^\Bbb N}\delta_y\otimes\eta_y\,d\eta^*(y)
$$
of $\mu\circ \Phi^{-1}$ with respect to $\eta^*$.
Then
 the mapping $Y\ni y\mapsto\eta_y\in {M}( \Bbb T^\Bbb N)$ is measurable.
 For each $y
 \in  \Bbb T^\Bbb N$, denote by $R_y$ the $y$-rotation on $ \Bbb T^\Bbb N$.
Of course,  $\eta_y$  is invariant  under $R_y$.
It follows from Lemma~\ref{l:sama2} that
\beq\label{sasza18}
\eta_y=\int_{\Bbb T^{\Bbb N}}\lambda_{K(y)}\circ R_z^{-1}\,d\eta_y(z).\eeq
We now apply Corollary~\ref{c:sasza17}.
According to this corollary, there exists an analytic subset $L\subset\Bbb T^\Bbb N\times
\Bbb T^\Bbb N$ such that for each $y\in\Bbb T^\Bbb N$, the set
$$
L_y:=\{s\in\Bbb T^\Bbb N\colon (y,s)\in L\}
$$
 is an (analytic) cross section
of the quotient mapping $\Bbb T^\Bbb N\to \Bbb T^\Bbb N/K(y)$.
Hence, for each $y\in Y$, there is  a bijection $\Psi_y \colon \Bbb T^\Bbb N\to L_y\times K(y)$ such that
$$
\Psi_y^{-1}(a,k):=ak \qquad\text{for all $a\in L_y$ and $k\in K(y)$.}
$$
It follows from \eqref{sasza18} that $\eta_y\circ \Psi_y^{-1}=\overline\eta_y\otimes\lambda_{K(y)}$ for some probability $\overline\eta_y$ on $L_y$.
We now define a mapping
$\Psi \colon \Bbb T^\Bbb N\times \Bbb T^\Bbb N\to \Bbb T^\Bbb N\times \Bbb T^\Bbb N\times \Bbb T^\Bbb N$
by setting
$$
\Psi(y,z)=
(y, \Psi_y(z)).
$$
Since $A$ is analytic, $\Psi$ is analytic.
Therefore, the mapping $\Bbb T^\Bbb N\ni y\mapsto\overline\eta_y$
 is analytic and, hence,
measurable  mod $\eta^*$.
Thus,
 $\Psi$ is a measure theoretical isomorphism
of $(\Bbb T^\Bbb N\times \Bbb T^\Bbb N, \mu\circ \Phi^{-1})$ onto $( \Bbb T^\Bbb N\times \Bbb T^\Bbb N\times \Bbb T^\Bbb N, \mu\circ \Phi^{-1}\circ \Psi^{-1})$ and
$$
\mu\circ (\Psi\Phi)^{-1}=
\int_{\Bbb T^\Bbb N}\delta_y\otimes\overline\eta_y\otimes\lambda_{K(y)}\,d\eta^*(y)=
\int_{\Bbb T^\Bbb N\times \Bbb T^\Bbb N}\delta_{(y,v)}\otimes\lambda_{K(y)}\,d\eta(y,v),
$$
where $\eta:=\int_{\Bbb T^\Bbb N}\delta_y\otimes\overline \eta_yd\eta^*(y)$.
Thus, $\mu\circ (\Psi\Phi)^{-1}=\widetilde\eta$.
Of course, $\Psi T=A\Psi$.
\end{proof}

We call the dynamical system $( \Bbb T^\Bbb N\times \Bbb T^\Bbb N\times \Bbb T^\Bbb N,\widetilde \eta,A)$ {\it a continuous  algebraic model of $(X,\mu,T)$}.
Of course, $( \Bbb T^\Bbb N\times \Bbb T^\Bbb N\times \Bbb T^\Bbb N,\widetilde \eta,A)$ has
a $(\mathcal B_{\Bbb T^\Bbb N\times \Bbb T^\Bbb N}\otimes \mathcal N_{\Bbb T^{\Bbb N}})$-relatively discrete spectrum.

\begin{Remark}\label{Respect} It is easy to see that given an $A$-invariant probability measure $\nu$ on
$\Bbb T^\Bbb N\times \Bbb T^\Bbb N\times \Bbb T^\Bbb N$, the dynamical system
$(\Bbb T^\Bbb N\times \Bbb T^\Bbb N\times \Bbb T^\Bbb N,\nu, A)$ has a $\mathcal I(A)$-relatively discrete spectrum.
Indeed, each character of $\Bbb T^\Bbb N\times \Bbb T^\Bbb N\times \Bbb T^\Bbb N$ is an $\mathcal I(A)$-relative eigenfunction of $A$.
It remains to note that the set of all characters is total in $L^2(\Bbb T^\Bbb N\times \Bbb T^\Bbb N\times \Bbb T^\Bbb N,\nu)$.
\end{Remark}
From Theorem~\ref{discr}, Proposition~\ref{re:mrak}  and  {Remark}~\ref{Respect} we deduce the main result of this subsection.

\begin{Cor}\label{cor:DISP-mod}
The topological  system
$( \Bbb T^\Bbb N\times \Bbb T^\Bbb N\times \Bbb T^\Bbb N,A)$
is a universal model for the
 characteristic class {\bf DISP}$_\text{ec}$.
\end{Cor}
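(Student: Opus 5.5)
We have to verify the two conditions defining a universal model for ${\rm{\bf{DISP}}}_{\rm ec}$, and both are already essentially available, so the plan is simply to assemble Theorem~\ref{discr}, Proposition~\ref{re:mrak} and Remark~\ref{Respect}.

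\emph{First condition.} Every $\nu\in M(\T^\N\times\T^\N\times\T^\N,A)$ must yield an automorphism in ${\rm{\bf{DISP}}}_{\rm ec}$. By Remark~\ref{Respect}, $(\T^\N\times\T^\N\times\T^\N,\nu,A)$ has $\mathcal I(A)$-relatively discrete spectrum. The reason is that every character $\chi$ of the group satisfies $\chi\circ A=\chi_3(y)\cdot\chi$ by \eqref{ee1}. The factor $\chi_3(y)$ is $A$-invariant, hence $\mathcal I(A)$-measurable for any invariant $\nu$, so $\chi$ is a relative eigenfunction. Characters are total in $L^2(\nu)$, so Proposition~\ref{p:m1} gives relative discrete spectrum, i.e.\ the system lies in ${\bf R}$. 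Proposition~\ref{re:mrak} then gives ${\bf R}={\rm{\bf{DISP}}}_{\rm ec}$.

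\emph{Second condition.} Every $T\in{\rm{\bf{DISP}}}_{\rm ec}$ must be realized by some invariant measure on the model. Proposition~\ref{re:mrak} puts $T$ in ${\bf R}$, i.e.\ $T$ has $\mathcal I(T)$-relatively discrete spectrum. Theorem~\ref{discr} then provides $\eta$ on $\T^\N\times\T^\N$ with $(X,\mu,T)\cong(\T^\N\times\T^\N\times\T^\N,\widetilde\eta,A)$. It remains only to note that $\widetilde\eta$ is $A$-invariant, which holds by \eqref{wzorm} and the footnote after it. Hence $\widetilde\eta\in M(\T^\N\times\T^\N\times\T^\N,A)$, as required.

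The substantive obstacle is entirely inside Theorem~\ref{discr}: the measurable choice of cross-sections $L_y$ of $\T^\N\to\T^\N/K(y)$ from Theorem~\ref{c:sasza17}, which splits the fiber measures $\eta_y$ as $\overline\eta_y\otimes\lambda_{K(y)}$. Since that is already established, the corollary itself needs only the above bookkeeping. One small point should be checked there: periodic or atomic ergodic components must not cause trouble. This is fine because Theorem~\ref{discr} makes no aperiodicity assumption, as $K(y)$ may be finite.
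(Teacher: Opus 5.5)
Your proposal is correct and takes the same route as the paper, which deduces the corollary directly from Theorem~\ref{discr}, Proposition~\ref{re:mrak} and Remark~\ref{Respect}. As you do, Remark~\ref{Respect} together with Proposition~\ref{re:mrak} shows that every $A$-invariant measure yields a system in ${\rm{\bf{DISP}}}_{\rm ec}$, and Proposition~\ref{re:mrak} together with Theorem~\ref{discr} shows that every member of the class is realized by some $\widetilde\eta$.
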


\subsection{More corollaries from Theorem \ref{discr}: Theorem~\ref{t:m1}, relative Halmos-von Neumann theorem, etc.}

Lemmas~\ref{p:momor}, \ref{l:m19} and Corollary~\ref{cor:DISP-mod}  yield the following.

\begin{Cor}\label{c:strongmomo}
Let $\bfu \colon \N\to\mathbb{U}$. Assume that $A_2(x,y):=(x,y+x)$ (on $\T^2$) satisfies the strong $\bfu$-MOMO property.
Then each automorphism $T\in{\bf DISP}_{{\rm ec}}
$ has a topological model satisfying the strong $\bfu$-MOMO property.
\end{Cor}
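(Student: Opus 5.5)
The plan is to take the universal model from Corollary~\ref{cor:DISP-mod} and show that it has the strong $\bfu$-MOMO property. Once that is done, every $T\in{\bf DISP}_{\rm ec}$ is isomorphic to $(\T^{\N}\times\T^{\N}\times\T^{\N},\widetilde\eta,A)$ for a suitable $\eta$ (Theorem~\ref{discr}), and this is exactly a topological model of $T$ inside a strong $\bfu$-MOMO system. The argument therefore reduces to a single topological statement: if $A_2$ is strong $\bfu$-MOMO, then so is $(\T^{\N}\times\T^{\N}\times\T^{\N},A)$. As usual we work under the standing assumption $\|\bfu\|_{u^1}=0$.

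To prove this, I would first identify the system. Up to a permutation of coordinates, $(\T^{\N}\times\T^{\N}\times\T^{\N},A)$ is the product ${\rm Id}_{\T^{\N}}\times A_2^{\N}$, where the middle $\T^{\N}$ (the $v$-coordinates) carries the identity. The map
$$((y_n),(v_n),(z_n))\mapsto \big((v_n),((y_n,z_n))_{n\ge1}\big)$$
is a homeomorphism that conjugates $A$ to ${\rm Id}\times A_2^{\N}$, where $A_2(y,z)=(y,y+z)$. Lemma~\ref{p:momor} then gives the strong $\bfu$-MOMO property for $A_2^{\N}$. Next, Lemma~\ref{l:m19} transfers this property to the product with the identity on the compact metric space $\T^{\N}$. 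Finally, the strong $\bfu$-MOMO property is invariant under topological conjugacy, since condition \eqref{jsjm2} only involves $\|\cdot\|_{C(X)}$ of pushed-forward continuous functions. Hence $(\T^{\N}\times\T^{\N}\times\T^{\N},A)$ is strong $\bfu$-MOMO.

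Putting the pieces together: given $T\in{\bf DISP}_{\rm ec}$, Proposition~\ref{re:mrak} shows that $T$ has $\mathcal I(T)$-relatively discrete spectrum. Theorem~\ref{discr} then provides $\eta$ with $T\cong(\T^{\N}\times\T^{\N}\times\T^{\N},\widetilde\eta,A)$, and this is the desired model. There is no real obstacle here. The only point that needs care is checking that the coordinate reordering is a genuine homeomorphism intertwining the maps, so that Lemmas~\ref{p:momor} and~\ref{l:m19} apply verbatim. If one prefers to avoid that step, one can instead repeat the character computation from Lemma~\ref{p:momor} directly for $A$: a character $\chi_1\otimes\chi_2\otimes\chi_3$ evaluated along $A^n$ gives $|\sum_n \chi_3(y)^n\bfu(n)|$, which is controlled by the strong $\bfu$-MOMO property of $A_2$.
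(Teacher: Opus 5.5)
Your proposal is correct and matches the paper's argument. The paper derives the corollary in one line from Corollary~\ref{cor:DISP-mod} (that is, Theorem~\ref{discr} combined with Proposition~\ref{re:mrak}) together with Lemmas~\ref{p:momor} and~\ref{l:m19}, via exactly the identification of $A$ with ${\rm Id}_{\T^{\N}}\times A_2^{\N}$ that you make explicit. (Incidentally, the standing assumption $\|\bfu\|_{u^1}=0$ need not be imposed separately, since it follows from the strong $\bfu$-MOMO property of $A_2$ applied to $f\equiv1$.)
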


Utilizing Corollary~\ref{c:strongmomo} we can now prove
Theorem~\ref{t:m1}.

\begin{proof}[Proof of Theorem~\ref{t:m1}]
{\bf Necessity.}
By   \cite[Corollary 1.10]{Ka-Ku-Le-Ru},
the following conditions are equivalent:
\begin{itemize}
\item[---]
$
\|\bfu\|_{u^2}=0,
$
\item[---]
 $\bfu\perp \mathcal{C}_{{\rm {\bf{DISP}}}_{\rm ec}}$ and
\item[---]
each topological system in which all ergodic measures yield
discrete spectrum enjoys the strong $\bfu$-MOMO property.
\end{itemize}
In particular, $A_2\colon (x,y)\mapsto (x,y+x)$ (on $\T^2$) satisfies the strong $\bfu$-MOMO property. The latter is known to be equivalent to the local 1-Fourier uniformity, see e.g.\ \cite{Ka-Le-Ri-Te}.

{\bf Sufficiency.} So, we assume the local 1-Fourier uniformity of $\bfu$, that is, the strong $\bfu$-MOMO property of $A_2$. We would like to prove that $\bfu\perp \mathcal{C}_{{\rm {\bf{DISP}}}_{\rm ec}}$ (see the proof of necessity). All we need to show is that the assumption of Theorem~\ref{t:ajmt} is satisfied. However, this last claim holds thanks to Corollary~\ref{c:strongmomo}.
\end{proof}

The following statement can be considered as a non-ergodic version of  the Halmos-von Neumann isomorphism theorem.
It follows from
Theorem~\ref{discr}, Proposition~\ref{fields} and the classical Halmos-von Neumann theorem on isomorphism of ergodic transformations with pure discrete spectrum.

\begin{Th}\label{HvN} Let $T\in {\rm Aut}(X,\mu)$ be an automorphism with $\mathcal I(T)$-relatively discrete spectrum
and let $T'\in {\rm Aut}(X',\mu')$ be another automorphism with $\mathcal I(T')$-relatively discrete spectrum.
Denote by $\Lambda_{\bax}$ the group  of eigenvalues of $T_{\bax}$ (and use similar notation for $\Lambda_{\bax'}$).
 Then, $T$ and $T'$ are isomorphic if and only if there is a measure theoretical isomorphism
$\phi \colon (\baX,\bamu)\to(\baX',\bamu')$
such that $\Lambda_{\phi(\bax)}=\Lambda_{\bax}$ for $\bamu$-a.e. $\bax\in\baX$.
\end{Th}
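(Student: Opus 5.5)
The plan is to reduce Theorem~\ref{HvN} to Proposition~\ref{fields} together with the classical Halmos--von Neumann theorem applied fiberwise.

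By Proposition~\ref{re:mrak}, both $T$ and $T'$ lie in ${\rm\bf DISP}_{\rm ec}$. Hence for $\bamu$-a.e.\ $\bax$ the ergodic component $T_{\bax}$ is ergodic with pure point spectrum, with group of eigenvalues $\Lambda_{\bax}$, and likewise for $T'$. (Lemma~\ref{pr:giraf} identifies the fiber Kronecker factor, so the fibers are genuinely pure point.)

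\emph{Necessity.} Suppose $T\cong T'$. Proposition~\ref{fields} gives a measure space isomorphism $\phi\colon(\baX,\bamu)\to(\baX',\bamu')$ with $T_{\bax}\cong T'_{\phi(\bax)}$ a.e. Isomorphic ergodic systems have the same group of eigenvalues, so $\Lambda_{\phi(\bax)}=\Lambda_{\bax}$ a.e.

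\emph{Sufficiency.} Given $\phi$ with $\Lambda'_{\phi(\bax)}=\Lambda_{\bax}$ a.e., the classical Halmos--von Neumann theorem says two ergodic discrete spectrum automorphisms with the same eigenvalue group are isomorphic. So $T_{\bax}\cong T'_{\phi(\bax)}$ for a.e.\ $\bax$, and the converse direction of Proposition~\ref{fields} yields $T\cong T'$. That direction already contains the measurable selection of fiber isomorphisms via Jankov--von Neumann, so no separate measurable-choice argument is needed here.

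Two points need care, though neither is a real obstacle.

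First, the periodic parts. The ergodic components over $\baX_k$ are $k$-point rotations, whose eigenvalue group is $\frac1k\Z/\Z$. Equality of eigenvalue groups therefore forces $\phi$ to map $\baX_k$ to $\baX'_k$, and finite rotations with equal period are isomorphic. Proposition~\ref{fields} (see the footnote in its proof) treats each periodic part and the aperiodic part separately, so the argument goes through piece by piece.

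Second, well-definedness. The map $\bax\mapsto\Lambda_{\bax}$ must be defined almost everywhere; this holds by the measurable description of $\Lambda$ in the proof of Lemma~\ref{pr:giraf}. Theorem~\ref{discr} could alternatively supply concrete algebraic models in which $\Lambda_{\bax}$ is the group generated by the coordinates of $y$, but it is not needed for this argument.
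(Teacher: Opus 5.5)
Your proposal is correct and follows the paper's route: reduce to ergodic components via Proposition~\ref{fields} and apply the classical Halmos--von Neumann theorem fiberwise. The only difference is that you get pure point spectrum of a.e.\ $T_{\bax}$ from Proposition~\ref{re:mrak}, whereas the paper cites the algebraic model of Theorem~\ref{discr}; this does not matter, and your check that equal eigenvalue groups force $\phi$ to respect the periodic/aperiodic partition fills in a detail the paper leaves implicit.
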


The following result explains when an automorphism with a relative discrete spectrum (over the identity) has the smallest possible Kronecker factor.

\begin{Prop} \label{p:RelKroMax}
Assume that  $T\in {\rm {\bf{DISP}}}_{\rm ec}$.
Then  the  Kronecker factor of $T$ (which always contains $\mathcal I(T)$) is equal to $\mathcal I(T)$
if and only if $T_{\bax}\perp T_{\bax'}$ for $\bamu\ot\bamu$-a.a.\ $(\bax,\bax')\in \baX\times \baX$.
\end{Prop}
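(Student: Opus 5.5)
The plan is to reduce both sides of the equivalence to one condition on the eigenvalue groups $\Lambda_{\bax}$ of the ergodic components:
\beq\label{eq:planLambda}
\bamu\big(\{\bax\in\baX\colon \lambda\in\Lambda_{\bax}\}\big)=0\quad\text{for every }\lambda\in\T\setminus\{1\}.
\eeq
For the measurability issues I would use, as in the proof of Lemma~\ref{pr:giraf}, the measurable set $\Lambda=\bigsqcup_{\bax}\{\bax\}\times\Lambda_{\bax}\subset\baX\times\T$. Its sections are exhausted by the graphs of countably many measurable maps $f_n\colon\baX\to\T$. It also comes with measurable fields $\bax\mapsto\widehat F_{n,\bax}$ of unimodular eigenfunctions of $T_{\bax}$ for the eigenvalues $f_n(\bax)$. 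Put $E_\lambda:=\{\bax\colon \lambda\in\Lambda_{\bax}\}$; this is a measurable set, being a section of $\Lambda$.

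\emph{Step 1: the Kronecker factor equals $\mathcal I(T)$ iff \eqref{eq:planLambda} holds.} Suppose $F\in L^2(X,\mu)$ satisfies $F\circ T=\lambda F$ with $\lambda\neq1$ and $F\neq0$. By the direct integral decomposition, $F|_{\pi^{-1}(\bax)}$ is an eigenfunction of $T_{\bax}$ for $\lambda$ whenever it is nonzero. Hence $F$ is supported on $\pi^{-1}(E_\lambda)$, so $\bamu(E_\lambda)>0$.

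Conversely, suppose $\bamu(E_\lambda)>0$. Partition $E_\lambda$ measurably according to the least $n$ with $f_n(\bax)=\lambda$. Gluing the corresponding $\widehat F_{n,\bax}$ gives a function $F$ with $|F|=1_{\pi^{-1}(E_\lambda)}$ and $F\circ T=\lambda F$. Such an $F$ is not $\mathcal I(T)$-measurable, since $\lambda\neq1$.

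Eigenfunctions for the eigenvalue $1$ are invariant functions. Therefore the Kronecker factor is generated by $\mathcal I(T)$ together with the eigenfunctions for $\lambda\neq1$. It follows that it equals $\mathcal I(T)$ exactly when no such eigenfunction exists, i.e.\ exactly when \eqref{eq:planLambda} holds.

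\emph{Step 2: for ergodic discrete spectrum systems, $T_{\bax}\perp T_{\bax'}$ iff $\Lambda_{\bax}\cap\Lambda_{\bax'}=\{1\}$.} Suppose first that the two groups share an eigenvalue $\lambda\neq1$. Then the two systems have a common nontrivial factor, namely the rotation by $\lambda$ on the closed subgroup it generates. Hence they are not disjoint.

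Now suppose the groups intersect trivially, and let $\rho$ be any joining. Take $f\in L^2_0(\mu_{\bax})$ and $g\in L^2_0(\mu_{\bax'})$. Their spectral measures are purely atomic, with atoms in $\Lambda_{\bax}\setminus\{1\}$ and $\Lambda_{\bax'}\setminus\{1\}$ respectively. These atom sets are disjoint, so the spectral measures are mutually singular. Computing the spectral measures of $f\ot 1$ and $1\ot g$ in $L^2(\rho)$, \eqref{mutualsing} gives $\int f\ot g\,d\rho=0$. Since this holds for all such $f$ and $g$, $\rho$ is the product measure. This step is where I expect the real work: one must make sure the spectral argument is carried out inside the joining and not fibrewise. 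If that proves delicate, I would fall back on the classical fact that an ergodic rotation has no common nontrivial factor with a system whose eigenvalues avoid its own.

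\emph{Step 3: the Fubini computation.} By Step 2 and the description of $\Lambda$, the set of non-disjoint pairs is
$$
N:=\{(\bax,\bax')\colon \Lambda_{\bax}\cap\Lambda_{\bax'}\neq\{1\}\}=\bigcup_{n}\{(\bax,\bax')\colon f_n(\bax)\neq1,\ f_n(\bax)\in\Lambda_{\bax'}\}.
$$
This is a measurable subset of $\baX\times\baX$. By Fubini,
$$
\bamu\ot\bamu(N)=\int_{\baX}\bamu\Big(\bigcup_{n\colon f_n(\bax)\neq1}E_{f_n(\bax)}\Big)\,d\bamu(\bax).
$$
If \eqref{eq:planLambda} holds, every $E_{f_n(\bax)}$ appearing here is null, so $\bamu\ot\bamu(N)=0$. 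Conversely, if $\bamu(E_\lambda)>0$ for some $\lambda\neq1$, then $E_\lambda\times E_\lambda\subset N$, so $N$ has positive $\bamu\ot\bamu$-measure.

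Combining Steps 1–3 gives the claimed equivalence.
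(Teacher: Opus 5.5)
Your argument is correct, but it takes a different route from the paper.

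\textbf{The paper's route.} The paper never looks at the eigenvalue groups $\Lambda_{\bax}$. It first shows that, for $T\in{\rm {\bf DISP}}_{\rm ec}$, the Kronecker factor equals $\mathcal I(T)$ iff $T\perp{\rm Erg}$:
\begin{enumerate}
\item[(a)] a non-product joining with an ergodic $R$ descends, by the characteristic-class property, to a non-product joining with $\ca_{{\rm {\bf DISP}}_{\rm ec}}(R)$; for ergodic $R$ this is the Kronecker factor of $R$, and it forces an eigenfunction of $T$ with eigenvalue $\neq1$;
\item[(b)] conversely, such an eigenfunction gives a non-trivial ergodic rotation factor of $T$.
\end{enumerate}
It then quotes the theorem of \cite{Go-Le-Ru}: $T\perp{\rm Erg}$ iff $T_{\bax}\perp T_{\bax'}$ for $\bamu\ot\bamu$-a.a.\ pairs.

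\textbf{Your route.} You replace that external theorem by a self-contained argument with three ingredients:
\begin{enumerate}
\item[(1)] the criterion $\bamu(\{\bax\colon\lambda\in\Lambda_{\bax}\})=0$ for all $\lambda\neq1$, whose equivalence with ``Kronecker factor $=\mathcal I(T)$'' holds for every automorphism, so ${\rm {\bf DISP}}_{\rm ec}$ enters only in your Step~2;
\item[(2)] the spectral characterization of disjointness for two ergodic discrete-spectrum systems, which you correctly carry out inside $L^2(\rho)$ rather than fibrewise;
\item[(3)] Fubini.
\end{enumerate}

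\textbf{Comparison.} The paper's route is shorter and ties the statement to disjointness from ${\rm Erg}$. Yours is elementary and identifies the set $N$ of non-disjoint pairs explicitly. It also gives the quantitative bound $\bamu\ot\bamu(N)\geq\bamu(E_\lambda)^2$. This makes clear that failure of the right-hand side means non-disjointness on a set of \emph{positive} measure, which is how the paper's final sentence should be read. The measurable-selection input you need is exactly what the paper already uses in the proof of Lemma~\ref{pr:giraf}: measurability of $\Lambda\subset\baX\times\T$, the maps $f_n$, and the fields $\widehat F_{n,\bax}$. So nothing beyond the paper's own toolkit is required.
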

\begin{proof}
Suppose first that the   Kronecker factor of $T$ is $\mathcal I(T)$.
All we need to show is that $T\perp {\rm Erg}$ \cite{Go-Le-Ru}.
So, suppose  that $T$ is not disjoint with an ergodic automorphism $R$.
 As $T\in {\rm {\bf{DISP}}}_{\rm ec}$,
there is a non-trivial joining of $T$ with
  $\mathcal A_{{\rm {\bf{DISP}}}_{\rm ec}}(R)$.
Since $R$ is ergodic, $\mathcal A_{{\rm {\bf{DISP}}}_{\rm ec}}(R)=\mathcal A_{{\rm {\bf{DISP}}}}(R)$, i.e. $\mathcal A_{{\rm {\bf{DISP}}}_{\rm ec}}(R)$
 is the Kronecker factor of $R$.
 Then $T$ has non-trivial Kronecker factor, a contradiction.

 Conversely,  if the Kronecker factor of $T$ contains non-trivial invariant subsets, then there exist  $\lambda\in\Bbb T\setminus\{1\}$ and  a nontrivial $T$-eigenfunction $F$ such that $F\circ T=\lambda F$.
 Denote by $R_\lambda$ the rotation by $\lambda$ on $\Bbb T$.
 Then, the dynamical system $(\Bbb T, F_*(\mu), R_\lambda)$
 is a factor of $T$ (via the mapping $F$).
 If $\lambda$ is irrational then
$(\Bbb T, F_*(\mu), R_\lambda)$ is ergodic.
If $\lambda$ is rational and $n$ is the least period of $\lambda$,
then it is always possible to choose  $F$ in such a way that $F$ takes values in the subgroup
$\{z\in\Bbb T\colon z^n=1\}$.
Then  $(\Bbb T, F_*(\mu), R_\lambda)$ is ergodic.
Thus, in every case, $T$ has a nontrivial ergodic factor and, hence,
$T\not\perp {\rm Erg}$.
This implies that $T_{\bax}\not\perp T_{\bax'}$ for $\bamu\ot\bamu$-a.a.\ $(\bax,\bax')\in \baX\times \baX$ \cite{Go-Le-Ru}.
 \end{proof}



\subsection{Generalization to the relatively $\Lambda$-discrete spectrum over identity. Universal models for {\bf DISP}$(\Lambda)_{{\rm ec}}$}

Let $T$ be an automorphism of $\xbm$.
As above, let  $(\widebar X,\widebar\mu)$, $\pi \colon X\to\widebar X$ and $\mu=\int_{\widebar X}\mu_{\widebar x}\,d\widebar \mu(\widebar x)$ denote the space of ergodic components of $T$, the corresponding factor mapping and the corresponding disintegration of $\mu$ respectively.
Note that if
 $\{F_n\}_{n=1}^\infty$ is a total family of $\mathcal I(T)$-relative eigenfunctions in $L^2(\mathcal K_{\rm rel}(T),\mu)$ and $F_{n,\widebar x}:=F_n\restriction \pi^{-1}(\widebar x)$
 then
 $(F_{n,\widebar x})_{n=1}^\infty$
is a total orthogonal family in $L^2(\mathcal K_{\widebar x},\mu_{\widebar x})$ (some elements of this family can be 0).
Hence,
\beq\label{slon}
L^2(\mathcal K_{\rm rel},\mu)=\int_{\widebar X}^{\oplus}L^2(\mathcal K_{\widebar x},\mu_{\widebar x})\,d\widebar \mu(\widebar x).
\eeq
Let $\Lambda$ be a Borel subgroup  in $\Bbb T$.
Denote by $\mathcal K_{\rm rel}^\Lambda=\mathcal K_{\rm rel}^\Lambda(T)$ the $T$-factor generated by
the algebra of $\mathcal I(T)$-relative eigenfunctions $F$ of $T$ such that
$F\circ T=(a\circ\pi)\cdot F$  $\mu$-almost everywhere with $a$ taking values in $\Lambda$. We call it {\em relative $\Lambda$-Kronecker factor} of $T$.\footnote{Note that $\mathcal{K}_{\rm rel}^\Lambda(T)\supset\mathcal{I}(T)$ and the relative $\Lambda$-Kronecker factor has relatively discrete spectrum over $\mathcal{I}(T)$.}
In a similar way, let
$\mathcal L^\Lambda_{\widebar x}$ be the factor of
$(X,\mu_{\widebar x},T)$ generated by
the   eigenfunctions whose eigenvalues
belong to  $\Lambda$.

The following statement is a $\Lambda$-analogue of Lemma~\ref{pr:giraf}.

\begin{Lemma}\label{kapitan}
$(\mathcal K^\Lambda)_{\widebar x}=\mathcal L^\Lambda_{\widebar x}$
for a.a. $\widebar x\in\widebar X$.
\end{Lemma}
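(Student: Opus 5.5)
I will follow the proof of Lemma~\ref{pr:giraf} almost verbatim, restricting throughout to eigenvalues in $\Lambda$. The inclusion $(\mathcal K^\Lambda_{\rm rel})_{\bax}\subset\mathcal L^\Lambda_{\bax}$ a.e.\ is immediate. Indeed, if $F\circ T=(a\circ\pi)F$ with $a$ taking values in $\Lambda$, then $F_{\bax}:=F\restriction\pi^{-1}(\bax)$ is either $0$ or a genuine eigenfunction of $(X,\mu_{\bax},T)$ with eigenvalue $a(\bax)\in\Lambda$. Since the algebra generated by such $F$ generates $\mathcal K^\Lambda_{\rm rel}$, the fibers $(\mathcal K^\Lambda_{\rm rel})_{\bax}$ are generated by eigenfunctions with eigenvalues in $\Lambda$. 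Here I will use Proposition~\ref{2factors}, in the form of the direct integral decomposition of $L^2(\mathcal K^\Lambda_{\rm rel},\mu)$, which is valid because $\mathcal K^\Lambda_{\rm rel}\supset\mathcal I(T)$.

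For the reverse inclusion I will construct enough $\Lambda$-valued relative eigenfunctions. Let $\Lambda_{\bax}$ be the point spectrum of $(X,\mu_{\bax},T)$. The set $\{(\bax,\lambda)\colon \lambda\in\Lambda_{\bax}\}$ is measurable, as in Lemma~\ref{pr:giraf}, and $\bar X\times\Lambda$ is Borel since $\Lambda$ is Borel. Hence
$$\Lambda^\Lambda:=\{(\bax,\lambda)\colon \lambda\in\Lambda_{\bax}\cap\Lambda\}$$
is a measurable set with countable sections. By Lusin--Novikov (or the same exhaustion used in Lemma~\ref{pr:giraf}), it is covered by graphs of countably many measurable maps $f_n$ defined on measurable sets $\bar X_n$, and I extend each by $f_n:=1\in\Lambda$ off $\bar X_n$. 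This gives $f_n\colon\bar X\to\Lambda$ with $\Lambda_{\bax}\cap\Lambda\subset\{f_n(\bax)\}$ a.e. Next I choose measurably unimodular eigenfunctions $\widehat F_{n,\bax}$ of $(X,\mu_{\bax},T)$ with eigenvalue $f_n(\bax)$ whenever $\bax\in\bar X_n$, and set $\widehat F_{n,\bax}=0$ otherwise. I then put $F_n(x):=\widehat F_{n,\pi(x)}(x)$. Each $F_n$ satisfies $F_n\circ T=(f_n\circ\pi)F_n$ with $f_n$ taking values in $\Lambda$, so $F_n$ is $\mathcal K^\Lambda_{\rm rel}$-measurable. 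Moreover $\{F_{n,\bax}\}_n$ contains, up to constants, all eigenfunctions of $\mu_{\bax}$ with eigenvalues in $\Lambda$, so it is total in $L^2(\mathcal L^\Lambda_{\bax},\mu_{\bax})$.

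The factor $\mathcal L$ generated by the $F_n$ together with $\mathcal I(T)$ therefore satisfies $\mathcal I(T)\subset\mathcal L\subset\mathcal K^\Lambda_{\rm rel}$, and its fibers are $\mathcal L^\Lambda_{\bax}$. Comparing direct integrals as in \eqref{rugu} and \eqref{guru} then yields $\mathcal L^\Lambda_{\bax}\subset(\mathcal K^\Lambda_{\rm rel})_{\bax}$ a.e., which completes the equality.

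I expect the main obstacle to be the measurable choice of the eigenfunctions $\widehat F_{n,\bax}$. I would handle it as in Proposition~\ref{fields}, by a Jankov--von Neumann selection applied to the Borel set of pairs $(\bax,F)$ such that $F$ is unimodular and $F\circ T_{\bax}=f_n(\bax)F$, and then pass to a measurable modification mod $\bamu$.
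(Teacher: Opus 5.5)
Your argument is correct, but it is not the route the paper takes. You re-run the construction of Lemma~\ref{pr:giraf} with $\Lambda$ built in: Lusin--Novikov on $\{(\bar x,\lambda)\colon \lambda\in\Lambda_{\bar x}\cap\Lambda\}$, followed by a measurable selection of eigenfunctions. This yields a factor $\mathcal I(T)\subset\mathcal L\subset\mathcal K^\Lambda_{\rm rel}$ with fibres $\mathcal L^\Lambda_{\bar x}$. You then close the argument with the separate, easy inclusion $(\mathcal K^\Lambda_{\rm rel})_{\bar x}\subset\mathcal L^\Lambda_{\bar x}$.

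The paper instead starts from the total family $(F_n)$ of relative eigenfunctions already available from Lemma~\ref{pr:giraf} and \eqref{slon}. It defines the projection $P^\Lambda F_n:=1_{\{a_n\in\Lambda\}}F_n$ and observes that $P^\Lambda=\int^{\oplus}_{\bar X}P^\Lambda_{\bar x}\,d\bar\mu$, where $P^\Lambda_{\bar x}$ projects onto the closed span of eigenfunctions of $\mu_{\bar x}$ with eigenvalues in $\Lambda$. Since the range of $P^\Lambda$ is $L^2(\mathcal K^\Lambda_{\rm rel},\mu)$, this gives $L^2(\mathcal K^\Lambda_{\rm rel},\mu)=\int^{\oplus}_{\bar X}L^2(\mathcal L^\Lambda_{\bar x},\mu_{\bar x})\,d\bar\mu(\bar x)$, and the fibres must agree.

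The paper's route needs no new selection. Borelness of $\Lambda$ enters only through the $\mathcal I(T)$-measurability of the cutoff $1_{\{a_n\in\Lambda\}}$. However, the paper calls the identification of the range of $P^\Lambda$ with $L^2(\mathcal K^\Lambda_{\rm rel},\mu)$ ``routine''. The nontrivial half of that identification is exactly your first inclusion, which you make explicit.

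The step you flag as the main obstacle can be bypassed within your own framework. Take the $F_n$, $f_n$ produced in the proof of Lemma~\ref{pr:giraf} and replace $F_n$ by $(1_{\{f_n\in\Lambda\}}\circ\pi)\,F_n$, setting the eigenvalue function to $1$ off the support. These are relative eigenfunctions with $\Lambda$-valued eigenvalues. Their fibre restrictions exhaust, up to constants, all eigenfunctions of $\mu_{\bar x}$ with eigenvalues in $\Lambda$. This is precisely the paper's $P^\Lambda F_n$, so your factor $\mathcal L$ comes for free and no second Jankov--von Neumann argument is needed.
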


\begin{proof}
We define a linear operator $P^\Lambda_{\widebar x}$ in $L^2(\mathcal K_{\widebar x},\mu_{\widebar x})$ by setting for each
$n$:
\beq\label{eq:total}
P_{\widebar x}^\Lambda F_{n,\widebar x}:=
\begin{cases}
F_{n,\widebar x} &\text{if $F_{n,\widebar x}\circ T=\lambda F_{n,\widebar x}$ with $\lambda\in\Lambda$ and }\\
0 &\text{otherwise}.
\end{cases}
\eeq
Since $(F_{n,\widebar x})_{n=1}^\infty$
is a total orthogonal family in $L^2(\mathcal K_{\widebar x},\mu_{\widebar x})$,
the equation (\ref{eq:total}) determines  $P^\Lambda_{\widebar x}$ correctly and uniquely.
Moreover, $P^\Lambda_{\widebar x}$ is an orthogonal projection
with
$$
P_{\widebar x}^\Lambda L^2(\mathcal K_{\widebar x},\mu_{\widebar x})=
L^2(\mathcal L_{\widebar x}^\Lambda,\mu_{\widebar x})
\qquad
\text{at a.e. $\widebar x$.}
$$
We now define a linear operator $P^\Lambda$ in $L^2(\mathcal K_{\rm rel},\mu)$
by setting for each $n\in\Bbb N$,
$$
P^\Lambda F_n:=1_{\{\widebar x\in\widebar X\, \colon \, a_n(\widebar x)\in\Lambda\}}\cdot F_n
$$
It is a routine to verify that $P^\Lambda$ is a well defined orthogonal projection
in $L^2(X,\mu)$
and $P^\Lambda L^2(\mathcal K_{\rm rel},\mu)=L^2(\mathcal K_{\rm rel}^\Lambda,\mu)$.
Moreover,
 $P^\Lambda|L^2(\mathcal K_{\widebar x},\mu_{\widebar x})=P_{\widebar x}^\Lambda$  at a.e. $\widebar x$.
Thus,
$$
P^\Lambda =\int_{\widebar X}^\oplus P^\Lambda_{\widebar x}\,d\widebar \mu(\widebar x).
$$
Applying this to (\ref{slon}), we obtain
$$
L^2(\mathcal K_{\rm rel}^\Lambda,\mu)=\int_{\widebar X}^{\oplus}P^\Lambda_{\widebar x}L^2(\mathcal K_{\widebar x},\mu_{\widebar x})\,d\widebar \mu(\widebar x)
=\int_{\widebar X}^{\oplus}L^2(\mathcal L_{\widebar x}^\Lambda,\mu_{\widebar x})\,d\widebar \mu(\widebar x).
$$
Since
$L^2(\mathcal K_{\rm rel}^\Lambda,\mu)=\int_{\widebar X}^{\oplus}
L^2((\mathcal K^\Lambda)_{\widebar x},\mu_{\widebar x})\,d\widebar \mu(\widebar x),
$
it follows that the closed subspaces $L^2((\mathcal K^\Lambda)_{\widebar x},\mu_{\widebar x})$ and $L^2(\mathcal L^\Lambda_{\widebar x},\mu_{\widebar x})$ of  $L^2(\mu_{\widebar x})$ are equal.
Hence, $\mathcal K^\Lambda_{\widebar x}=\mathcal L^\Lambda_{\widebar x}$,
as desired.
\end{proof}

Denote by ${\bf R}^\Lambda$ the subclass of those automorphisms with
relatively discrete spectrum (over the invariant sigma-algebra) whose eigenvectors have eigenvalue functions with values in $\Lambda$.
By {\bf DISP}$(\Lambda)_{\text{ec}}$ we mean those automorphisms whose ergodic components have pure point spectrum and this spectrum is contained in $\Lambda$.

In order to see concrete automorphisms belonging to ${\rm {\bf{DISP}}}(\Lambda)_{\rm ec}$ when $\Lambda$ is uncountable, first notice that such $\Lambda$ is Borel isomorphic to the circle. Hence, there are continuous measures supported on $\Lambda$, say $\sigma$ is such a measure. Consider then the automorphism
$A_2$ acting on $(\T\times\T,\sigma\ot{\rm Leb}_{\T})$ as $A_2(x,y):=(x,xy)$.
Then $(A_2,\sigma\ot{\rm Leb}_{\T})\in {\rm {\bf{DISP}}}(\Lambda)_{\rm ec}$.\footnote{We also note  that $A_2$ has continuous spectrum in the orthocomplement  to space of  functions depending on the first coordinate.
The    maximal spectral type in the orthocomplement  equals to $\sum_{m\in\Z}\frac1{2^{|m|}}\sigma^{(m)}$, where $\sigma^{(m)}$ is the image of $\sigma$ via the map $z\mapsto z^m$.
This measure is also supported on $\Lambda$.}

The following corollary is proved in the same way as Proposition~\ref{re:mrak}
but with reference to Lemma~\ref{kapitan} instead of Lemma~\ref{pr:giraf}.
We leave details to the reader.

\begin{Cor}\label{rabit}
${\bf R}^\Lambda={\text{{\bf{DISP}}}(\Lambda)}_{\text{{\rm ec}}}$.
\end{Cor}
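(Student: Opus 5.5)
We follow the proof of Proposition~\ref{re:mrak} and establish the two inclusions separately, with Lemma~\ref{kapitan} in place of Lemma~\ref{pr:giraf}.

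\emph{Inclusion ${\bf R}^\Lambda\subset{\rm {\bf DISP}}(\Lambda)_{\rm ec}$.} Let $T\in{\bf R}^\Lambda$ act on $\xbm$. By definition, $L^2(X,\mu)$ is spanned by $\mathcal I(T)$-relative eigenfunctions $F$ with $F\circ T=(a\circ\pi)F$ and $a$ taking values in $\Lambda$. Hence $\mathcal K^\Lambda_{\rm rel}(T)=\cb$. By Proposition~\ref{2factors}, which applies because both $\sigma$-algebras contain $\mathcal I(T)$, this gives $(\mathcal K^\Lambda)_{\bax}=\cb_{\bax}$ for a.e.\ $\bax$. Lemma~\ref{kapitan} then yields $\mathcal L^\Lambda_{\bax}=\cb_{\bax}$. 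So a.e.\ ergodic component $(X,\mu_{\bax},T)$ is generated by eigenfunctions whose eigenvalues lie in $\Lambda$. It therefore has pure point spectrum, and since $\Lambda$ is a group, its whole group of eigenvalues lies in $\Lambda$. Thus $T\in{\rm {\bf DISP}}(\Lambda)_{\rm ec}$.

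\emph{Inclusion ${\rm {\bf DISP}}(\Lambda)_{\rm ec}\subset{\bf R}^\Lambda$.} Now let $T\in{\rm {\bf DISP}}(\Lambda)_{\rm ec}$. Then for a.e.\ $\bax$ the component $(X,\mu_{\bax},T)$ has discrete spectrum with all eigenvalues in $\Lambda$, so $\mathcal L^\Lambda_{\bax}=\cb_{\bax}$. Lemma~\ref{kapitan} gives $(\mathcal K^\Lambda)_{\bax}=\cb_{\bax}$ a.e. Proposition~\ref{2factors} then gives $\mathcal K^\Lambda_{\rm rel}(T)=\cb$. The algebra of $\Lambda$-valued relative eigenfunctions generates $\cb$, and this algebra is closed under products and contains $L^\infty(\mathcal I(T))$ (Remark~\ref{r:alge}). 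Hence its linear span is dense in $L^2(X,\mu)$, which means $T\in{\bf R}^\Lambda$.

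\emph{Main obstacle.} The delicate point is Lemma~\ref{kapitan} itself, which we may assume. Beyond it, the only real check is that the projection $P^\Lambda$ in that lemma is well defined. This needs the measurability of $\{\bax\colon a_n(\bax)\in\Lambda\}$, and it holds because $\Lambda$ is Borel.
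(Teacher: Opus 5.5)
Your proof is correct and is exactly the argument the paper intends: the paper says the corollary is proved like Proposition~\ref{re:mrak} with Lemma~\ref{kapitan} in place of Lemma~\ref{pr:giraf}, and you carry out both inclusions through Proposition~\ref{2factors} and Lemma~\ref{kapitan} just as in that proof. One detail to add is in the density step of the second inclusion. It uses that the $\Lambda$-relative eigenfunctions are closed under complex conjugation as well as products, which holds because $\ov{a}=a^{-1}\in\Lambda$; this makes their linear span a unital $*$-algebra, and it is this, not merely being an algebra, that gives an $L^2$-closure equal to all of $L^2$ of the generated $\sigma$-algebra.
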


From now on we will assume that
 $\Lambda$ is a {\bf $\sigma$-compact} subgroup of $\Bbb T$.
The class of such groups includes all
\begin{enumerate}
\item[---]  countable subgroups of $\Bbb T$,
\item[---]  $L^\infty$-spectra of  ergodic conservative nonsingular automorphisms,
\item[---] compactly generated subgroups of $\Bbb T$ (in particular, the group generated by $\{z_n\colon n\geq1\}$ for any $z_n\to 1$),
\item[---] $H_\alpha$-subgroups, i.e.\ subgroups of the form $\{z\in\Bbb T\colon\sum_{j=1}^{+\infty}a_j|z^{n_j}|^\alpha<\infty\}$ for any sequences $(n_j)_{n=1}^\infty$ of natural numbers and   $(a_j)_{j=1}^\infty$ of non-negative  reals and $\alpha>0$.
\end{enumerate}
In the latter three cases $\Lambda$ can be a proper uncountable subgroup of $\Bbb T$.
For instance, this happens if $\Lambda$ is generated by an infinite Kronecker subset of $\Bbb T$~\footnote{To obtain $\Lambda$ as in Remark~\ref{r:LambdaS}, it is enough to consider $p$-Kronecker sets (for a fixed prime $p$), these are (perfect) closed subsets $K\subset\T$ such that each modulus~1 function $f\in C(K)$ can be uniformly approximated by characters of the form $z\mapsto z^{p^k}$ (it follows that $p^{k_n}x\to 0$ for $x\in K$ uniformly by approximating 1 by characters given by powers of $p$). Constructions of perfect $p$-Kronecker sets are provided in Complements. },
      or if $\Lambda$ is an $H_2$-group with $\sum_{j=1}^\infty a_j=\infty$
and $\sum_{j=1}^\infty a_j\big(\frac{n_j}{n_{j+1}}\big)^2<\infty$
(see \cite{Ho-Me-Pa} and references therein).


Our purpose  is  construct a continuous universal model for ${\rm{\bf{DISP}}}(\Lambda)_{\rm ec}$.
To this end, fix  a sequence of compact subsets $\Lambda_n$ in $\Lambda$ such that
\beq\label{incr}
\{1\}\subset\Lambda_1\subset \Lambda_2\subset
\cdots\qquad\text{and}\qquad
 \bigcup_n\Lambda_n=\Lambda.
\eeq
Let $\Lambda_\infty:=\Lambda_1\times\Lambda_2\times\cdots$.
Endow this set with the infinite product topology.
Define a continuous transformation $A_\Lambda$ of the compact space $\Lambda_\infty\times\Bbb T^\Bbb N\times\Bbb T^\Bbb N$ by setting
$$
A_\Lambda(y,v,z):=(y,v, yz).
$$
If $\nu$ be an $A_\Lambda$-invariant measure, we denote by $\nu^*$ the projection of $\nu$ onto the first two coordinates.
Then we disintegrate $\nu$ with respect to $\nu^*$:
$$
\nu:=\int_{\Lambda_\infty\times\Bbb T^\Bbb N}\delta_{y,v}\otimes\nu_{y,v}\,d\nu^*(y,v).
$$
Then the probability measure $\nu_{y,v}$ on $\Bbb T^\Bbb N$ is invariant under $R_y$ for $\nu^*$-a.e. $(y,v)$.
Take $y=(y_1,y_2,\dots)\in\Lambda_\infty$.
Denote by $\Lambda(y)$ the group generated by $y_1,y_2,\dots$.
Of course, $\Lambda(y)\subset \Lambda$ and each ergodic component of the rotation $(\Bbb T^\Bbb N,\nu_{y,v},R_y)$ is isomorphic to the transformation with pure discrete spectrum
$\Lambda(y)$.
It follows that $(\Lambda_\infty\times\Bbb T^\Bbb N\times\Bbb T^\Bbb N,\nu, A_\Lambda)\in {\rm{\bf{DISP}}}(\Lambda)_{\rm ec}$.
The converse is also true.

\begin{Th}\label{displambda} Let $(X,\mathcal B,\mu,T)\in {\rm{\bf{DISP}}}(\Lambda)_{\rm ec}$.
Then there is an $A_\Lambda$-invariant probability measure $\eta$ on $\Lambda_\infty\times\Bbb T^\Bbb N\times\Bbb T^\Bbb N$
such that $(X,\mathcal B,\mu)$ is measure theoretically isomorphic to
$(\Lambda_\infty\times\Bbb T^\Bbb N\times\Bbb T^\Bbb N,\eta, A_\Lambda)$
and $\mathcal I(A_\Lambda)=\mathcal B_{\Lambda_\infty\times\Bbb T^\Bbb N}\otimes\mathcal N_{\Bbb T^\Bbb N}.$
\end{Th}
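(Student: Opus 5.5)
We follow the proof of Theorem~\ref{discr}, changing only the choice of the first coordinate so that the relative eigenvalues land in the compact pieces $\Lambda_n$. By Corollary~\ref{rabit}, $T$ has $\mathcal I(T)$-relatively discrete spectrum, and its relative eigenvalue functions can be taken $\Lambda$-valued. As in the proof of Theorem~\ref{discr}, we choose a countable family $\{f_n\}_{n\ge2}$ of bounded $\mathcal I(T)$-relative eigenfunctions generating $\mathcal B$. They satisfy $f_n\circ T=a_n(\bax)f_n$, where $a_n\colon\baX\to\Lambda$ is measurable; on the invariant set $[f_n=0]$ we may redefine $a_n:=1$. We normalize to $\widetilde f_n=f_n/|f_n|$ (set to $1$ where $f_n=0$). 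We also add $\widetilde f_1=f_1\circ\tau$ with $f_1$ a Borel injection of $\baX$ into $\T$ and $\widetilde a_1=1$. As shown there, the map $x\mapsto(\widetilde f_n(x))_n$ together with $\bax$ is injective.

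The new point is that $\widetilde a_n$ takes values in $\Lambda=\bigcup_k\Lambda_k$ but not necessarily in $\Lambda_n$. To fix this, we enumerate differently. For each $n$ and $k$, set $B_{n,k}:=\{\bax\colon \widetilde a_n(\bax)\in\Lambda_k\setminus\Lambda_{k-1}\}$, which is an invariant Borel set. Then $\widetilde f_n 1_{B_{n,k}}+1_{B_{n,k}^c}$ is again a modulus-one relative eigenfunction, with eigenvalue function $\widetilde a_n1_{B_{n,k}}+1_{B_{n,k}^c}$, and this function is $\Lambda_k$-valued because $1\in\Lambda_1\subset\Lambda_k$. Together with $\widetilde f_1$, the countable family of these pieces, indexed by pairs $(n,k)$, still separates points. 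We re-enumerate it injectively as $(g_m)_{m\ge1}$ so that each pair $(n,k)$ is placed at some position $m\ge k$. This is possible because there are infinitely many positions beyond each $k$, for instance by sending $(n,k)$ to positions $m$ with $m\ge k$ through a bijection $\N\times\N\to\{(m,k)\colon m\geq k\}$-type diagonal scheme. Unused positions are padded with the constant function $1$ and eigenvalue $1\in\Lambda_m$. Since $\Lambda_k\subset\Lambda_m$ for $m\ge k$, the eigenvalue vector $b(\bax):=(b_m(\bax))_m$ lies in $\Lambda_\infty$.

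We then set $\Phi(x):=(b(\bax),g(x))\in\Lambda_\infty\times\T^{\N}$. As before, $\Phi$ is injective and $\Phi\circ T=B\circ\Phi$ with $B(y,z)=(y,yz)$. Next we disintegrate $\mu\circ\Phi^{-1}$ over the first coordinate and apply Lemma~\ref{l:sama2} and Theorem~\ref{c:sasza17}, with $G=\T^{\N}$ and $y\in\Lambda_\infty\subset\T^{\N}$. Exactly as in the proof of Theorem~\ref{discr}, this splits each fibre measure as $\overline\eta_y\otimes\lambda_{K(y)}$ via the analytic cross-section $L_y$. It yields an isomorphism $\Psi$ onto $(\Lambda_\infty\times\T^{\N}\times\T^{\N},\widetilde\eta,A_\Lambda)$, where $\widetilde\eta=\int\delta_{(y,v)}\otimes\lambda_{K(y)}\,d\eta(y,v)$. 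The identification $\mathcal I(A_\Lambda)=\mathcal B_{\Lambda_\infty\times\T^{\N}}\otimes\mathcal N_{\T^{\N}}$ follows because $\lambda_{K(y)}$ is $R_y$-ergodic, so this last display is the ergodic decomposition, as in \eqref{wzorm}.

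The main obstacle we expect is the bookkeeping in the second step. We must check that splitting by the invariant sets $B_{n,k}$ preserves point separation and that measurability survives the re-indexing. Point separation holds because on $B_{n,k}$ the piece agrees with $\widetilde f_n$, the sets $B_{n,k}$ ($k\ge1$) cover $\baX$, and $\widetilde f_1$ separates different fibres. The measure-theoretic part of the third step is then verbatim from Theorem~\ref{discr}.
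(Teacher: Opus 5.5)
Your proposal is correct and follows the paper's proof. Both encode $\Lambda$-valued relative eigenvalue/eigenfunction pairs into $\Lambda_\infty\times\T^{\N}$ so that $T$ becomes $(y,z)\mapsto(y,yz)$, and then split off the Haar fibres $\lambda_{K(y)}$ verbatim as in Theorem~\ref{discr}; your splitting of each $\widetilde f_n$ along the invariant sets $B_{n,k}$, with injective placement at positions $m\ge k$ (e.g.\ $(n,k)\mapsto 2^k(2n+1)$), is a careful version of the paper's step ``modify $a_n,f_n$ on small sets so that $a_n$ is $\Lambda_{m_n}$-valued and put $(a_n,f_n)$ at coordinate $m_n$''. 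Read literally, that step of the paper requires exactly your splitting and an injective choice of $n\mapsto m_n$.
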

\begin{proof}
By Corollary~\ref{rabit},
 $(X,\mathcal B,\mu,T)\in {\rm{\bf{DISP}}}(\Lambda)_{\rm ec}$ if and only if there exist  functions
$f_n \colon X\to\Bbb T$ and $a_n \colon X\to\Lambda$ such that
$a_n$ is $\mathcal I(T)$-measurable, $f_n\circ T=a_nf_n$ for each $n\in\Bbb N$
and $(f_n)_{n=1}^\infty$ generate the entire Borel $\sigma$-algebra on $X$.
Without loss of generality (modifying, if necessary, $a_n$ and $f_n$ on subsets of ``small'' measure) we may assume that
for each $n$, there is $m_n\in\Bbb N$ such that
$a_n$ takes values in $\Lambda_{m_n}$.
We now define $\Phi \colon X\to \Lambda_\infty\times\Bbb T^\Bbb N$
by setting
$$
\Phi(x)_k:=
\begin{cases}
(a_n(x), f_n(x))\in \Lambda_{m_n}\times\Bbb T &\text{if $k=m_n$ for some $n$},\\
(1,1)\in\Lambda_k\times\Bbb T &\text{otherwise}
\end{cases}
$$
and $\Phi(x):=(\Phi(x)_k)_{k=1}^\infty$.
Then $\Phi$ is an isomorphism of $(X,\mathcal B,T)$ onto
$(\Lambda_\infty\times\Bbb T^\Bbb N, \mu\circ\Phi^{-1}, A'_\Lambda)$,
where the transformation $A_\Lambda'$ is given by
$A_\Lambda'(y,z):=(y,yz)$.
Then, as in the proof of Theorem~\ref{discr}, we  construct
a mapping
$$
\Psi \colon  \Lambda_\infty\times\Bbb T^\Bbb N\to\Lambda_\infty\times\Bbb T^\Bbb N\times\Bbb T^\Bbb N
$$
 that intertwines $A'_\Lambda$ with $A_\Lambda$.
\end{proof}

\begin{Cor}\label{c:Fsigma}
If $\Lambda$ is  $\sigma$-finite subgroup of $\Bbb T$ then
$(\Lambda_\infty\times\Bbb T^\Bbb N\times\Bbb T^\Bbb N, A_\Lambda)$ is a universal model for ${\rm {\bf{DISP}}}(\Lambda)_{\rm ec}$.
\end{Cor}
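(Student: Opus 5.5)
The claim has two halves. First, every $A_\Lambda$-invariant measure yields a system in ${\rm{\bf{DISP}}}(\Lambda)_{\rm ec}$. Second, every member of ${\rm{\bf{DISP}}}(\Lambda)_{\rm ec}$ is realized by some $A_\Lambda$-invariant measure on $\Lambda_\infty\times\T^\N\times\T^\N$. (Here "$\sigma$-finite" is read as $\sigma$-compact, i.e.\ $F_\sigma$, which is what the construction of $\Lambda_\infty$ via \eqref{incr} requires.) Continuity and compactness are immediate: $\Lambda_\infty$ is a product of compact sets, and $A_\Lambda$ is the restriction of the continuous group automorphism $A$ to the closed invariant set $\Lambda_\infty\times\T^\N\times\T^\N$. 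So it is a topological system.

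For the first half I take an arbitrary $\nu\in M(\Lambda_\infty\times\T^\N\times\T^\N,A_\Lambda)$ and follow the discussion preceding Theorem~\ref{displambda}. Every character $\chi=\chi_1\otimes\chi_2\otimes\chi_3$ satisfies $\chi\circ A_\Lambda=\chi_3(y)\chi$, as in \eqref{ee1}. Since $y\in\Lambda_\infty$ has all coordinates in $\Lambda$ and $\Lambda$ is a group, $\chi_3(y)$ is a finite product of integer powers of the $y_n$, hence lies in $\Lambda$. It is also $\mathcal I(A_\Lambda)$-measurable, because the $y$-coordinate is invariant. Thus every character is an $\mathcal I$-relative eigenfunction with eigenvalue function valued in $\Lambda$. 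Characters are total in $L^2(\nu)$, so, exactly as in Remark~\ref{Respect}, $(A_\Lambda,\nu)\in{\bf R}^\Lambda$. Corollary~\ref{rabit} then gives $(A_\Lambda,\nu)\in{\rm{\bf{DISP}}}(\Lambda)_{\rm ec}$.

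For the second half I apply Theorem~\ref{displambda} directly. Given $(X,\mathcal B,\mu,T)\in{\rm{\bf{DISP}}}(\Lambda)_{\rm ec}$, it produces an $A_\Lambda$-invariant $\eta$ with $(X,\mu,T)\cong(\Lambda_\infty\times\T^\N\times\T^\N,\eta,A_\Lambda)$, which is precisely \eqref{model2}.

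The only step needing real care is the one inside Theorem~\ref{displambda}: the eigenvalue functions $a_n$ are $\Lambda$-valued but need not land in one compact $\Lambda_{m}$. I would handle this by splitting $\baX$ into the countably many measurable pieces $\{a_n\in\Lambda_m\setminus\Lambda_{m-1}\}$ and, for each $n$, replacing $f_n$ by the family $f_n\cdot 1_{\{a_n\in\Lambda_m\setminus\Lambda_{m-1}\}}$ with $m\geq1$. These are still relative eigenfunctions, since the indicator is invariant, and they still generate $\mathcal B$. Each has its eigenvalue in $\Lambda_m$, after normalizing to modulus one with eigenvalue $1$ off its support. I then assign these countably many functions to distinct coordinate slots $k\geq m$, which is possible because the $\Lambda_k$ increase, and I fill the remaining slots with $(1,1)$. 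This keeps the map into $\Lambda_\infty\times\T^\N$ well defined everywhere. The passage to the third coordinate then goes through the analytic cross-section of Theorem~\ref{c:sasza17}, exactly as in Theorem~\ref{discr}.
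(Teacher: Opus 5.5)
Your proof is correct. The second half is the paper's argument: the corollary is read off from Theorem~\ref{displambda}. The first half takes a different route.

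\textbf{First half.} The paper disintegrates $\nu$ over the $(y,v)$-coordinates and observes that each fibre measure $\nu_{y,v}$ is $R_y$-invariant. Hence its ergodic components are Haar measures on cosets of $K(y)$, i.e.\ rotations whose discrete spectrum is exactly $\Lambda(y)$, the group generated by the coordinates of $y$, which lies in $\Lambda$. So the paper identifies the ergodic components directly and never uses relative eigenfunctions.

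You instead run the $\Lambda$-version of Remark~\ref{Respect}. Every character is an $\mathcal I(A_\Lambda)$-relative eigenfunction with eigenvalue $\chi_3(y)\in\Lambda$. Characters are total by Stone--Weierstrass. Corollary~\ref{rabit} then turns membership in ${\bf R}^\Lambda$ into membership in ${\rm{\bf{DISP}}}(\Lambda)_{\rm ec}$.

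Your check is purely algebraic and one line long. However, it leans on Corollary~\ref{rabit}, and hence on Lemma~\ref{kapitan}, whose details the paper leaves to the reader. The paper's argument is more elementary and gives the sharper information that the component over $(y,v)$ has spectrum exactly $\Lambda(y)$.

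\textbf{Second half.} Your bookkeeping is tidier than the paper's. The paper only says that $a_n,f_n$ may be modified ``on subsets of small measure'' and then places $(a_n,f_n)$ in slot $k=m_n$, tacitly requiring the $m_n$ to be distinct. Your splitting along the invariant sets $\{a_n\in\Lambda_m\setminus\Lambda_{m-1}\}$ loses no information. Indeed, if $g_{n,m}$ equals $f_n$ on the $m$-th piece and $1$ elsewhere, then $f_n=\prod_m g_{n,m}$, which confirms your unproved claim that the new family still generates $\mathcal B$. Your injective assignment to slots $k\ge m$ is exactly what makes $\Phi$ well defined.
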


\begin{Remark} By changing (closed, increasing) $\Lambda_i$ satisfying~\eqref{incr}, we obtain different universal models of $A_{\Lambda}$.
In particular, this is  applied to $\Lambda=\T$  and  $A$.
\end{Remark}

\subsection{Automorphisms with pure discrete  spectrum}

Let $T$ be
an automorphism of a standard probability space $(X,\mathcal B,\mu)$.
As above, we denote by $\Lambda(T)\subset\Bbb T$ the set of eigenvalues of $T$.
Then $\Lambda(T)$ is a countable subset of $\Bbb T$.
Let $(\widebar X,\widebar \mu)$ stand for the space of ergodic components of $T$.
Denote by $\Lambda_{\widebar x}(T)$ the set of eigenvalues of the $\widebar x$-ergodic component of $T$.

\begin{Remark}
We note that $\Lambda(T)$ is not necessarily a group.
 However,   if $\lambda\in\Lambda(T)$ then $\lambda^n\in\Lambda(T)$ for each $n\in\Bbb Z$.
Thus, $\Lambda(T)$ is a union of countably many cyclic groups.
\end{Remark}

\begin{Prop}\label{eigen}
Let
$T\in {\bf DISP}$.
Let  $f\in L^2(X,\mu)$  be an  $\mathcal I(T)$-relative eigenfunction, $|f(x)|\in\{0,1\}$ at a.e. $x\in X$,
and $f\circ T=af$
for a
$\mathcal I(T)$-measurable function $a \colon X\to \Bbb T$.
Then $a(x)\in\Lambda(T)$ at a.e. $x\in \text{\rm{supp\,}} f$.
Hence, $\Lambda_{\widebar x}(T)\subset\Lambda(T)$
for a.e. $\widebar x$.
\end{Prop}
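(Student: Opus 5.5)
We argue spectrally, using that every $L^2$ function of a pure discrete spectrum system has an atomic spectral measure concentrated on $\Lambda(T)$. Since $T\in{\bf DISP}$, $L^2(X,\mu)$ is the closed span of eigenfunctions $g_\lambda$, $\lambda\in\Lambda(T)$. Hence for every $h\in L^2(X,\mu)$ the spectral measure $\sigma_h$ (with respect to $T$ and $\mu$) is purely atomic and concentrated on the countable set $\Lambda(T)$. Explicitly, $\sigma_h=\sum_{\lambda\in\Lambda(T)}\|P_\lambda h\|^2\delta_\lambda$, where $P_\lambda$ is the projection onto the $\lambda$-eigenspace.

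Next we compute $\sigma_f$ directly from the relative eigenvalue equation, as in Remark~\ref{r:kiedy}. The set $[f\neq0]=\operatorname{supp} f$ is $T$-invariant, and on it $|f|=1$. Since $a$ is $\mathcal I(T)$-measurable, we get $f\circ T^n=a^n f$, and therefore
$$\widehat{\sigma}_f(n)=\int f\circ T^n\,\ov f\,d\mu=\int_{[f\neq0]}a^n\,d\mu .$$
Thus $\sigma_f=a_\ast(\mu|_{[f\neq 0]})$. Combined with the first step, this image measure is concentrated on $\Lambda(T)$, so $\mu(\{x\in[f\neq0]\colon a(x)\notin\Lambda(T)\})=\sigma_f(\T\setminus\Lambda(T))=0$. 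This gives the first claim. The only point needing care is measurability of $\Lambda(T)$, which is automatic because the set is countable.

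For the second claim, fix the ergodic decomposition and take $\lambda\in\Lambda_{\bax}(T)$ for a typical $\bax$. The plan is to realise such eigenvalues via relative eigenfunctions. As in the proof of Lemma~\ref{pr:giraf}, choose measurable $f_n\colon\baX\to\T$ exhausting the graph $\bigsqcup_{\bax}\{\bax\}\times\Lambda_{\bax}$. Then choose a measurable field of unimodular eigenfunctions $\widehat F_{n,\bax}$ with eigenvalue $f_n(\bax)$, and glue them into relative eigenfunctions $F_n$ with $|F_n|=1$ and $F_n\circ T=(f_n\circ\pi)F_n$. Applying the first part to each $F_n$, whose support is all of $X$, gives $f_n(\bax)\in\Lambda(T)$ for a.e.\ $\bax$. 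Taking the countable union over $n$ yields $\Lambda_{\bax}(T)\subset\Lambda(T)$ a.e.

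The main technical obstacle is the measurable selection of the eigenfunction fields. It is exactly the construction already carried out in Lemma~\ref{pr:giraf}, so we will simply invoke it there, and the rest is the short spectral computation above.
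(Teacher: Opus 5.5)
Your proposal is correct and follows the paper's proof: the paper likewise computes $\sigma_f=(1_{B}\mu)\circ a^{-1}$ from $f\circ T^n=a^nf$, and concludes from the pure discrete spectrum of $T$ that this measure is concentrated on $\Lambda(T)$. The paper leaves the ``hence'' part implicit, and your way of deriving it is a valid way to fill the gap: you apply the first claim to the unimodular relative eigenfunctions $F_n$ built in the proof of Lemma~\ref{pr:giraf}, then take the countable union over $n$.
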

\begin{proof}
Denote by $U_T$ the Koopman operator generated by $T$.
Denote by $\sigma_f$ the $U_T$-spectral measure of the vector $f$.
Let $B$ stand for the support of $f$.
For each $n\in\Bbb Z$,
$$
\int_\Bbb Tz^nd\sigma_f(z)=\langle U_T^nf,f\rangle=\langle a^nf,f\rangle=\int_{B}a(x)^nd\mu(x)=\int_\Bbb Tz^nd((1_{B}\mu)\circ a^{-1})(z).
$$
Hence, $\sigma_f=(1_{B}\mu)\circ a^{-1}$.
Since $T$ has pure discrete spectrum, $\sigma_f$ is supported on $\Lambda(T)$.
 The assertion of the proposition follows.
\end{proof}

This implies the following corollary.

\begin{Prop}\label{c:m2} Assume that $T\in
{\bf DISP}_{{\rm ec}}$.
Then $T\in
{\bf DISP}$ if and only if the $\mathcal I(T)$-relative eigenfunctions have discrete spectral measures.
 \end{Prop}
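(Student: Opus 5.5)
The plan is to read the condition ``the $\mathcal I(T)$-relative eigenfunctions have discrete spectral measures'' through Remark~\ref{r:kiedy}, Proposition~\ref{eigen} and Proposition~\ref{p:m1}. Since $T\in{\bf DISP}_{\rm ec}$, Proposition~\ref{re:mrak} shows that $T$ has relatively discrete spectrum over $\mathcal I(T)$. Proposition~\ref{p:m1} then gives that the bounded relative eigenfunctions $F$, with $F\circ T=(f\circ\tau) F$ and $f\colon X/\mathcal I(T)\to\T$, are linearly dense in $L^2(X,\mu)$. By Remark~\ref{r:kiedy} we may normalize them so that $|F|\in\{0,1\}$.

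\emph{Necessity.} Assume $T\in{\bf DISP}$. Every $f\in L^2(X,\mu)$ then has a discrete spectral measure, since the maximal spectral type of a pure point spectrum operator is atomic. In particular this holds for every relative eigenfunction. (Proposition~\ref{eigen} refines this: the atoms lie in the countable set $\Lambda(T)$.)

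\emph{Sufficiency.} Assume every $\mathcal I(T)$-relative eigenfunction $F$ has a discrete spectral measure $\sigma_F$. I would show that each such $F$ lies in the closed span $H_{pp}$ of the eigenfunctions of $U_T$. For a unitary operator, the vectors whose spectral measure is atomic are exactly the elements of $H_{pp}$, namely the discrete part in the decomposition $L^2=H_{pp}\oplus H_c$. Concretely, write $\sigma_F=\sum_j c_j\delta_{\lambda_j}$ and let $E$ be the spectral projection-valued measure of $U_T$. Then $F=\sum_j E(\{\lambda_j\})F$, and each $E(\{\lambda_j\})F$ is an eigenvector with eigenvalue $\lambda_j$, since $\|E(\T\setminus\{\lambda_j\}_j)F\|^2=\sigma_F(\T\setminus\{\lambda_j\}_j)=0$. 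Because the relative eigenfunctions are linearly dense by Proposition~\ref{p:m1}, we get $H_{pp}=L^2(X,\mu)$, i.e.\ $T\in{\bf DISP}$.

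One can make the picture more explicit with Remark~\ref{r:kiedy}: $\sigma_F=(f)_\ast(\bamu|_{[F\neq0]})$, so discreteness means $f$ takes countably many values on the support. On each level set $\{f=\lambda\}$, the function $1_{\{f=\lambda\}}F$ is a genuine eigenfunction with eigenvalue $\lambda$, because the level set is $\mathcal I(T)$-measurable and hence $T$-invariant. Summing these pieces recovers $F$ directly.

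The only delicate point is the identification of vectors with atomic spectral measure with $H_{pp}$, and this is standard spectral theory. I expect no real obstacle beyond invoking Proposition~\ref{p:m1} correctly, in particular checking that passing to normalized bounded relative eigenfunctions (Remark~\ref{r:kiedy}) does not affect density.
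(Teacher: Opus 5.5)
Your proof is correct and is essentially the paper's. The paper gets necessity from Proposition~\ref{eigen}, which is the same point as your remark that a pure point spectrum system has only atomic spectral measures. For sufficiency it uses exactly the level-set decomposition $F=\sum_\lambda 1_{\{f=\lambda\}}F$ that you give as the ``more explicit'' version; this coincides with your spectral-projection decomposition, since $E(\{\lambda\})F=1_{\{f=\lambda\}}F$ for a relative eigenfunction $F$ with relative eigenvalue $f$, and the density of relative eigenfunctions (Propositions~\ref{re:mrak} and~\ref{p:m1}) that you invoke explicitly is used only implicitly there.
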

\begin{proof} The {\it only if\,} part of the assertion  follows from Proposition~\ref{eigen}.

To show the {\it  if\,} part,  assume now that a relative eigenfunction $f$ has a discrete spectral measure supported on $\Lambda(T)$.
This means that this spectral measure consists of countably many atoms.
But then it means the relative eigenvalue $a$ of $f$ takes countably many values.
Let $B_{\la}\subset \ov{X}$ be the set of $\ov{x}$ for which $a(\ov{x})=\la$.
Set $f_{\la}(x):=f(x)$ if  $\ov{x}\in B_{\la}$ and $f_{\la}(x):=0$ otherwise.
Then $F_\la$ is an eigenfunction of $T$.
Furthermore, $F=\sum_{\la}F_{\la}$.
\end{proof}

Of course,  ${\bf DISP}\subset {\bf DISP}_{\text{ec}}$.
Therefore, if $T$ has pure point spectrum,
we can apply Theorem~\ref{discr} about
continuous  algebraic models for transformations from
${\bf DISP}_{\text{ec}}$.
According to this theorem,
there is  a probability $\eta$ on $\Bbb T^\Bbb N\times \Bbb T^\Bbb N$
such that
 $(X,\mu,T)$ is isomorphic to
$( \Bbb T^\Bbb N\times \Bbb T^\Bbb N\times \Bbb T^\Bbb N,\widetilde \eta,A)$.
It follows from the construction of the algebraic model (see the proof of Theorem~\ref{discr}) and Proposition~\ref{eigen} that
$\eta(\Lambda(T)^\Bbb N\times  \Bbb T^\Bbb N)=1$.
Thus, we obtain the following  refinement of Theorem~\ref{discr} for the automorphisms with pure discrete spectrum.

\begin{Prop}\label{katrin}
Let $(X,\mu,T)\in {\bf DISP}$.
Then there is
 a probability  $\eta$ on
$\Lambda(T)^\Bbb N\times \Bbb T^\Bbb N$ such that
 $(X,\mu,T)$ is isomorphic to
$( \Lambda(T)^\Bbb N\times \Bbb T^\Bbb N\times \Bbb T^\Bbb N,\widetilde \eta,A)$,
where
$$
\widetilde\eta=\int_{\Lambda(T)^\Bbb N\times \Bbb T^\Bbb N}\delta_{y,v}\otimes\lambda_{K(y)}\,d\eta(y,v)
$$
\end{Prop}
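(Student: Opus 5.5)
The plan is to go through Theorem~\ref{discr}, using the explicit construction in its proof, and then to control where the first coordinate of the measure lives using Proposition~\ref{eigen}. Since ${\bf DISP}\subset{\bf DISP}_{\rm ec}$, Proposition~\ref{re:mrak} shows that $T$ has $\mathcal I(T)$-relatively discrete spectrum, so Theorem~\ref{discr} applies. In that proof we pick a countable family $\{f_n\}_{n\ge2}$ of $\mathcal I(T)$-relative eigenfunctions generating $\mathcal B$, with $f_n\circ T=a_n(\bax)f_n$. We normalize them to $\widetilde f_n$ with $|\widetilde f_n|=1$, setting $\widetilde a_n=a_n$ on $[f_n\neq0]$ and $\widetilde a_n=1$ elsewhere, and put $\widetilde f_1=f_1\circ\tau$, $\widetilde a_1=1$. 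The model is obtained from $\Phi(x)=(a(\bax),f(x))$ followed by the map $\Psi$, which does not change the first coordinate $y=a(\bax)$. Consequently the first marginal $\eta^\ast$ of $\eta$ is the distribution of $(\widetilde a_n)_{n\ge1}$ under $\mu$. It therefore suffices to show that $\widetilde a_n(x)\in\Lambda(T)$ for $\mu$-a.e.\ $x$ and every $n$.

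For $n=1$ and on the sets $[f_n=0]$ we have $\widetilde a_n=1$, and $1\in\Lambda(T)$ because constants are eigenfunctions. On $[f_n\neq0]$ we apply Proposition~\ref{eigen} to $g_n:=f_n/|f_n|$ (set $g_n=0$ where $f_n=0$). This $g_n$ is again an $\mathcal I(T)$-relative eigenfunction with the same multiplier $a_n$, since $|f_n|$ is $T$-invariant; it satisfies $|g_n|\in\{0,1\}$ and has support $[f_n\neq0]$. Proposition~\ref{eigen} then gives $a_n(\bax)\in\Lambda(T)$ for a.e.\ $x\in[f_n\ne0]$. The step that needs care is the measurability of this set: $\Lambda(T)$ is countable, hence Borel, so $\Lambda(T)^{\N}$ is a Borel subset of $\T^{\N}$ and $\eta^\ast(\Lambda(T)^{\N})=1$ is a meaningful statement. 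Taking the intersection over the countably many $n$ yields $\eta(\Lambda(T)^{\N}\times\T^{\N})=1$.

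To finish, we restrict the model. Since $\widetilde\eta=\int\delta_{y,v}\otimes\lambda_{K(y)}\,d\eta(y,v)$ and $\eta$ is concentrated on $\Lambda(T)^{\N}\times\T^{\N}$, the measure $\widetilde\eta$ is concentrated on the $A$-invariant Borel set $\Lambda(T)^{\N}\times\T^{\N}\times\T^{\N}$. Replacing $\eta$ by its restriction (which is the same measure) gives the required isomorphism of $(X,\mu,T)$ with $(\Lambda(T)^{\N}\times\T^{\N}\times\T^{\N},\widetilde\eta,A)$, with $\widetilde\eta$ in the stated form. The conceptual content lies entirely in the spectral argument of Proposition~\ref{eigen}: the spectral measure of a normalized relative eigenfunction is the pushforward of $\bamu$ restricted to its support under the multiplier, and pure point spectrum forces this pushforward to live on $\Lambda(T)$. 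Everything else is bookkeeping that the first coordinate is not changed by $\Psi$.
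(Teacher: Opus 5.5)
Your proposal is correct and follows the paper's route. You apply Theorem~\ref{discr} via ${\bf DISP}\subset{\bf DISP}_{\rm ec}$, note from its construction that the first marginal of $\eta$ is the law of the multiplier sequence $(\widetilde a_n)_{n\ge1}$, and invoke Proposition~\ref{eigen} to get $\eta(\Lambda(T)^{\Bbb N}\times\Bbb T^{\Bbb N})=1$. You also spell out details the paper leaves implicit: the normalization $f_n/|f_n|$, the harmless default value $\widetilde a_n=1$, and Borel measurability of the countable set $\Lambda(T)$.
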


\begin{Cor}\label{cor:smile} Let $T\in {\bf DISP}_{\text{ec}}$.
Then  $T\in {\bf DISP}$ if and only if $T$ is URE.
\end{Cor}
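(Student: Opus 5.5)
The plan is to prove the two implications separately. The direction ``URE $\Rightarrow$ ${\bf DISP}$'' follows at once from Proposition~\ref{p:inac}(C). The direction ``${\bf DISP}$ $\Rightarrow$ URE'' combines Proposition~\ref{eigen} with the classical Halmos--von Neumann theorem and Corollary~\ref{p:random-factor?}.

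\emph{URE $\Rightarrow$ ${\bf DISP}$.} I take $\cf={\bf DISP}$, which is a characteristic class: it is closed under joinings and factors, as used throughout the introduction. By hypothesis $T\in{\bf DISP}_{\rm ec}=\cf_{\rm ec}$. If moreover $T$ is URE, Proposition~\ref{p:inac}(C) gives $T\in\cf={\bf DISP}$ directly. For completeness I would recall the mechanism behind that proposition. By Proposition~\ref{p:inac}(B)(iii), $T$ is a factor of ${\rm Id}\times R$, where $R$ is an ergodic joining of countably many ergodic components. Each such component lies in ${\bf DISP}$, so $R\in{\bf DISP}$. Hence ${\rm Id}\times R\in{\bf DISP}$, and so does its factor $T$.

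\emph{${\bf DISP}$ $\Rightarrow$ URE.} Assume $T\in{\bf DISP}$. By Proposition~\ref{eigen}, $\Lambda_{\bax}(T)\subset\Lambda(T)$ for $\bamu$-a.e.\ $\bax$, and $\Lambda(T)$ is countable. I then proceed in three steps.
\begin{enumerate}
\item Let $\Gamma\subset\T$ be the subgroup generated by $\Lambda(T)$. It is countable, so there is an ergodic rotation $R$ on the compact dual group $\widehat{\Gamma}$ with discrete spectrum and eigenvalue group exactly $\Gamma$.
\item For a.e.\ $\bax$, the ergodic component $T_{\bax}$ has pure point spectrum with eigenvalue group $\Lambda_{\bax}(T)\subset\Gamma$. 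By the Halmos--von Neumann theorem, $T_{\bax}$ is isomorphic to the rotation on $\widehat{\Lambda_{\bax}(T)}$. That rotation is a factor of $R$ via the restriction homomorphism $\widehat{\Gamma}\to\widehat{\Lambda_{\bax}(T)}$.
\item Corollary~\ref{p:random-factor?} then shows that $T$ is a factor of ${\rm Id}_{\baX}\times R$. This is exactly the URE property, by Theorem~\ref{theorem:f}(i).
\end{enumerate}

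Periodic ergodic components cause no trouble. They are rotations of finite cyclic groups, whose eigenvalues are roots of unity in $\Lambda(T)$, and Step~2 covers them in the same way.

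The only point needing care is the measurability hidden in Step~3, namely choosing the factor maps $R\to T_{\bax}$ measurably in $\bax$. This is already handled inside Corollary~\ref{p:random-factor?}, via Proposition~\ref{p:faktory?} and the Jankov--von Neumann selection theorem. I therefore do not expect a real obstacle here: the substantive input is that Proposition~\ref{eigen} confines all fiber spectra inside a single countable group.
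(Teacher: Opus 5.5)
Your proposal is correct and follows the paper's proof. The paper likewise gets $\Lambda_{\bax}(T)\subset\Lambda(T)$ (citing Proposition~\ref{katrin}, which rests on Proposition~\ref{eigen}) and takes an ergodic discrete-spectrum $R$ with $\Lambda(R)\supset\Lambda(T)$, so that a.e.\ component is a factor of $R$. It then concludes URE via Proposition~\ref{p:inac}, and obtains the converse from Proposition~\ref{p:inac}(C).
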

\begin{proof} Let  $T\in {\bf DISP}$.
 By Proposition~\ref{katrin}, a.e. ergodic component of $T$ has discrete spectrum and $\Lambda_{\widebar x}(T)\subset\Lambda(T)$.
 Let $R$ be an ergodic transformation with discrete spectrum such that  $\Lambda(R)\supset \Lambda(T)$.
 Then a.e. ergodic component of $T$ is a factor of $R$.
 Hence, $T$ is URE by Proposition~\ref{p:inac}.

 Conversely, if $T\in {\bf DISP}_{\text{ec}}$ and $T$ is URE then $T\in {\bf DISP}$ by
 Proposition~\ref{p:inac}(C).
\end{proof}

The Corollary~\ref{cor:smile} implies, in turn, the following result (that has been proved in \cite{Ed} using different methods).

\begin{Cor}\label{pds}
A  transformation $T$ has pure discrete spectrum if and only if there is a probability measure $\nu$ on $\Bbb T$ such that $T$ is isomorphic to a factor of the direct product $I\times R$, where $I$ is the identity on $(\Bbb T,\nu)$ and $R$ is an ergodic transformation with a pure discrete spectrum and $\Lambda(R)$ is the group \rm{gr}($\Lambda(T))$ generated by $\Lambda(T)$.
\end{Cor}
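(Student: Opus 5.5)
The two directions use different tools: the \emph{if} part is Corollary~\ref{cor:smile} combined with the URE machinery, and the \emph{only if} part is a direct computation of eigenfunctions.

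\emph{If} part. Assume $T$ is a factor of $I\times R$, where $I$ is the identity on $(\T,\nu)$ and $R$ is ergodic with discrete spectrum $\Lambda(R)$. The ergodic components of $I\times R$ are all copies of $R$. Since ergodic components of a factor are factors of ergodic components, a.a.\ ergodic components of $T$ are factors of $R$. Hence they have discrete spectrum, so $T\in{\rm{\bf DISP}}_{\rm ec}$. By Corollary~\ref{p:random-factor?} (or directly from the definition), $T$ is URE. Corollary~\ref{cor:smile} then gives $T\in{\rm{\bf DISP}}$. For this direction the requirement $\Lambda(R)={\rm gr}(\Lambda(T))$ plays no role; it only matters for the converse.

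\emph{Only if} part. Let $T\in{\rm{\bf DISP}}$. By Proposition~\ref{eigen}, $\Lambda_{\bax}(T)\subset\Lambda(T)$ for a.e.\ $\bax$. Each $\Lambda_{\bax}(T)$ is a group, so $\Lambda_{\bax}(T)\subset{\rm gr}(\Lambda(T))$, and this group is countable. Let $R$ be the ergodic rotation on the compact dual group of the discrete group ${\rm gr}(\Lambda(T))$. By the Halmos--von Neumann theorem, every ergodic discrete-spectrum system whose eigenvalue group lies in $\Lambda(R)$ is a factor of $R$. Thus a.e.\ ergodic component of $T$ is a factor of $R$. Corollary~\ref{p:random-factor?} then shows that $T$ is a factor of ${\rm Id}_{\baX}\times R$, with the identity acting on $(\baX,\bamu)$.

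It remains to replace ${\rm Id}_{\baX}$ by the identity on $(\T,\nu)$. Since $(\baX,\bamu)$ is a standard probability space, it is isomorphic mod~0 to $(\T,\nu)$ for some Borel probability measure $\nu$: take $\nu$ to be the image of $\bamu$ under a Borel injection $\baX\to\T$. This handles atoms and the nonatomic part simultaneously.

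The main obstacle is the periodic part. The factor map in Corollary~\ref{p:random-factor?} rests on Proposition~\ref{p:faktory?}, which treats finite ergodic components by a separate selection. For a component on $k$ points, it must be a factor of $R$; this holds because $e^{2\pi i/k}\in\Lambda(T)$, so $R$ has a rotation on $\Z/k\Z$ as a factor. I would check this case explicitly, and otherwise treat the periodic part by the same selection argument.
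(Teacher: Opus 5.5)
Your proposal is correct and is essentially the paper's argument. The paper derives Corollary~\ref{pds} directly from Corollary~\ref{cor:smile}, whose proof uses exactly your ingredients: the inclusion $\Lambda_{\bax}(T)\subset\Lambda(T)$ (Propositions~\ref{eigen} and~\ref{katrin}), an ergodic discrete-spectrum $R$ with $\Lambda(R)\supset\Lambda(T)$ so that a.e.\ ergodic component is a factor of $R$, Corollary~\ref{p:random-factor?} to get a factor of ${\rm Id}\times R$, and Proposition~\ref{p:inac}(C) for the converse.

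One point to make explicit: before invoking Halmos--von Neumann, note that a.e.\ ergodic component of $T$ has discrete spectrum (${\rm{\bf DISP}}\subset{\rm{\bf DISP}}_{\rm ec}$, as in Proposition~\ref{katrin}). This also settles your concern about the periodic part, since a $k$-point component is itself an ergodic discrete-spectrum system with eigenvalues in $\Lambda(R)$.
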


 Let $T_n$ be an ergodic transformation with pure discrete spectrum.
 Let $I_n$ be an identity transformation of a Lebesgue space.
 Then $T:=\bigsqcup_n (I_n\times T_n)$ is  a transformation with pure discrete spectrum and
 $\Lambda(T)=\bigcup_{n}\Lambda(T_n)$.
 It may seem that each transformation with pure discrete spectrum has such a structure.
 That is not true.
 See, e.g, Example~\ref{e:e2}.

\begin{Remark}\label{re:boost}
Note also that for an automorphism $S$, the factor $\ca_{{\rm {\bf{DISP}}}_{\rm ec}}(S)$ is exactly the
$\mathcal I(S)$-relative Kronecker factor $\ck_{\rm rel}(S)$ of $S$.
Hence, $\ca_{{\rm {\bf{DISP}}}_{\rm ec}}(S)$ is
URE if and only if  $\ck_{\rm rel}(S)$ is equal to the Kronecker factor $\ck(S)$.
\end{Remark}

\section{On the characteristic classes generated by $A_{2,\alpha}$
}
Given an $\alpha\in[0,1)$ {\bf irrational}, consider the homeomorphism $A_{2,\alpha}$ of $\T^2$ given by
$$
A_{2,\alpha}(x,y)=(x+\alpha,x+y).$$
Then $(\T^2,A_{2,\alpha})$ is a uniquely ergodic topological system (with Leb$_{\T^2}$ being the unique invariant measure), which not only is orthogonal to $\mob$ but it also satisfies the strong $\mob$-MOMO property (see \cite{Li-Sa}, \cite{Ku-Le}, \cite{Ab-Ku-Le-Ru}). However,
\beq\label{topfacalfa}
\mbox{$A_2$ is a {\bf topological} factor of $A_{2,\alpha}\times A_{2,\alpha}$}\eeq
(indeed,  the map $\Psi \colon \T^2\times\T^2\to\T^2$,
$((x_1,y_1),(x_2,y_2))\mapsto (x_2-x_1,y_2-y_1)$ settles a relevant factor map), so the strong $\mob$-MOMO  for the Cartesian square remains an open problem.
Furthermore, though the set of invariant measures for the Cartesian square seems to be restricted as all such measures are self-joinings of $A_{2,\alpha}$, among them, we have measures
$$
\rho_\beta:=\int_\Bbb T\int_{\Bbb T}\delta_x\otimes\delta_y\otimes\delta_x\otimes\delta_{y+\beta}\,dxdy.
$$
Then, the dynamical system $(\Bbb T^4,\rho_\beta, A_{2,\alpha}\times A_{2,\alpha})$
has $\beta$ as an eigenvalue.
It follows that all rotations are in $\cf(A_{2,\alpha})$.
Hence (by taking products with all identities,  using the Halmos-von Neumann theorem and applying Corollary~\ref{pds}), we see that
$\text{{\bf DISP}}\subset \cf(A_{2,\alpha})$.
This reasoning about necessary elements in a characteristic class has the following extension.

\begin{Lemma} \label{l:gaj1} Let $(X,T)$ be uniquely ergodic: $M(X,T)=\{\nu\}$. Assume that $(Y,S)$ is another topological system for which $\Psi\colon X^\infty\to Y$ establishes a topological factor map between $(X^\infty,T^{\times\infty})$ and $(Y,S)$ (in particular, $\Psi$ is onto). Then, for each $\kappa\in M(Y,S)$ the measure-theoretic system $(Y,\kappa,S)\in \cf(T)$.\end{Lemma}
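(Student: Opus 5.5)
The plan is to lift $\kappa$ to an invariant measure on $X^\infty$ and then to read off membership in $\cf(T)$ from the structure of such measures. Since $\Psi\colon X^\infty\to Y$ is a continuous surjection intertwining $T^{\times\infty}$ and $S$, the induced map $\Psi_\ast\colon M(X^\infty,T^{\times\infty})\to M(Y,S)$ is onto. This is standard: the set of preimages $\{\rho\in M(X^\infty)\colon \Psi_\ast\rho=\kappa\}$ is nonempty by Hahn--Banach/Riesz, since any lift of $\kappa$ exists as $\Psi$ is onto. It is also convex, weak$^\ast$-compact and invariant under $(T^{\times\infty})_\ast$, because $\Psi\circ T^{\times\infty}=S\circ\Psi$ and $\kappa$ is $S$-invariant. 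Markov--Kakutani (or averaging $\frac1N\sum_{n<N}(T^{\times\infty})^n_\ast\rho_0$ and taking a limit point) then gives a $T^{\times\infty}$-invariant $\rho$ with $\Psi_\ast\rho=\kappa$.

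Next I would identify $\rho$ as a self-joining of $T$. Each coordinate projection $\pi_i\colon X^\infty\to X$ is equivariant, so $(\pi_i)_\ast\rho\in M(X,T)=\{\nu\}$ by unique ergodicity. Hence $\rho\in J(T,T,\ldots)$ is a countable self-joining of $(X,\nu,T)$. By definition of characteristic class (closed under countable joinings), $(X^\infty,\rho,T^{\times\infty})\in\cf(T)$. Then $(Y,\kappa,S)$ is a measure-theoretic factor of it via $\Psi$, since $\Psi_\ast\rho=\kappa$. Closedness of $\cf(T)$ under factors gives $(Y,\kappa,S)\in\cf(T)$.

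The only point requiring care is the existence of an invariant lift, and the standard argument above handles it. Possibly one should remark that $\rho$ need not be ergodic; this is fine, since $\cf(T)$ contains all (not only ergodic) self-joinings, as recalled in the introduction ("factors of all countable self-joinings of $T$"). No other obstacle is expected.
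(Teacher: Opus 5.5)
Your proposal is correct and follows the paper's proof: lift $\kappa$ to a measure on $X^\infty$, take a weak$^\ast$ limit point of the Ces\`aro averages under $T^{\times\infty}$, use unique ergodicity to see that every marginal is $\nu$, and conclude by closedness of $\cf(T)$ under countable joinings and factors. The only difference is how the initial lift is produced: the paper takes a Borel cross-section $\psi$ of $\Psi$ and sets $\gamma=\int_Y\delta_{\psi(y)}\,d\kappa(y)$, while you use Hahn--Banach/Riesz. Either way works, though your phrase ``since any lift of $\kappa$ exists'' should be replaced by one of these actual arguments rather than stated as a reason.
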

\begin{proof} Let $\psi\colon Y\to X^\infty$ be a Borel cross-section of
$\Psi$.
Then $\gamma:=\int_Y\delta_{\psi(y)}d\kappa(y)$
is a probability measure on $X^\infty$ such that $\Psi_\ast(\gamma)=\kappa$.
Let $\rho$ be a limit point of $(\frac1N\sum_{n=0}^{N-1}(T^{\times\infty})^n_\ast(\gamma
)$.
Now, clearly $\rho$ is an infinite self-joining of $(X,\nu,T)$ and $(S,\kappa)$ is a factor of $(T^{\times\infty},\rho)$, so our claim follows.
\end{proof}

Note that we obtain the same result if we consider $\Psi\times {\rm Id}_W$ establishing a factor map between $T^{\times\infty}\times {\rm Id}_W$ and $S\times {\rm Id}_W$ (a pull-back via $\Psi\times {\rm Id}_W$ of 
a measure $\kappa\in M(Y\times W, S\times {\rm Id}_W)$ will be an infinite joining of $T$ joined with ${\rm Id}_W$, and ${\rm Id}_W$ belongs to any non-trivial characteristic class).

\begin{Prop} \label{p:gaj1} For each irrational $\alpha\in\T$, we have
\beq\label{aff12} \cf(A_{2,\alpha})\supset {\rm {\bf{DISP}}}_{\rm ec}.\eeq
\end{Prop}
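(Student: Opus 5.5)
The plan is to combine the universal model for ${\rm {\bf{DISP}}}_{\rm ec}$ (Corollary~\ref{cor:DISP-mod}) with Lemma~\ref{l:gaj1} and the topological factor relation~\eqref{topfacalfa}. By Theorem~\ref{discr}, every $T\in{\rm {\bf{DISP}}}_{\rm ec}$ is isomorphic to $(\T^{\N}\times\T^{\N}\times\T^{\N},\widetilde\eta,A)$ for some $\eta$. So it suffices to show that for every $A$-invariant measure $\nu$, the system $(\T^{\N}\times\T^{\N}\times\T^{\N},\nu,A)$ lies in $\cf(A_{2,\alpha})$. For this we will realise $A$, up to a product with an identity, as a topological factor of $A_{2,\alpha}^{\times\infty}\times{\rm Id}_W$, and then invoke the remark following Lemma~\ref{l:gaj1}.

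The first step is to handle the factor $A_2$. The map $\Psi$ of~\eqref{topfacalfa} is a topological factor map from $A_{2,\alpha}\times A_{2,\alpha}$ onto $A_2$ on $\T^2$. Taking countably many copies, $\Psi^{\N}$ is a topological factor map from $A_{2,\alpha}^{\times\infty}$, indexed by $\N\times\{1,2\}\cong\N$, onto $A_2^{\times\N}$ acting on $(\T^2)^{\N}\cong\T^{\N}\times\T^{\N}$ by $(x,y)\mapsto(x,x+y)$. This map is continuous and onto, and it intertwines the actions because it does so coordinatewise.

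Next we add the middle coordinate. We take $W=\T^{\N}$ with the identity, and reorder coordinates so that $A_2^{\times\N}\times{\rm Id}_W$ becomes exactly $A$ on $\T^{\N}\times\T^{\N}\times\T^{\N}$. The reordering is a homeomorphism and a conjugacy. Hence $\Psi^{\N}\times{\rm Id}_W$ is a topological factor map from $A_{2,\alpha}^{\times\infty}\times{\rm Id}_W$ onto $(\T^{\N}\times\T^{\N}\times\T^{\N},A)$.

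Finally we apply Lemma~\ref{l:gaj1} in its extended form from the remark after it. The requirement that $A_{2,\alpha}$ be uniquely ergodic holds because its unique invariant measure is Lebesgue. For any $A$-invariant $\kappa$, we pull $\kappa$ back along a Borel cross-section and average under $A_{2,\alpha}^{\times\infty}\times{\rm Id}_W$. This produces a joining of countably many copies of $(A_{2,\alpha},{\rm Leb})$ with ${\rm Id}_W$, under which $(A,\kappa)$ is a factor. Since ${\rm Id}_W$ belongs to every nontrivial characteristic class and $\cf(A_{2,\alpha})$ is closed under joinings and factors, we get $(A,\kappa)\in\cf(A_{2,\alpha})$.

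The main point requiring care is this averaging step. We need the averaged measure to have each $X$-marginal equal to Lebesgue, which unique ergodicity guarantees for the limit, and its $W$-marginal to remain the pulled-back $W$-marginal. We also need its push-forward to still equal $\kappa$, which holds because $\kappa$ is invariant and the factor map is equivariant. Combining this with Theorem~\ref{discr} yields~\eqref{aff12}.
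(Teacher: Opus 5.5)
Your proposal is correct and follows essentially the same route as the paper. The paper also uses the topological factor map $A_{2,\alpha}^{\times\infty}\times{\rm Id}\to A_2^{\times\infty}\times{\rm Id}$ from~\eqref{topfacalfa}, identifies the target with the universal model $(\T^{\N}\times\T^{\N}\times\T^{\N},A)$ for ${\rm{\bf{DISP}}}_{\rm ec}$, and concludes by Lemma~\ref{l:gaj1} in its ${\rm Id}_W$ form; you have simply written out the coordinate reordering and the averaging argument behind that lemma.
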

\begin{proof} In view of~\eqref{topfacalfa}, we have (as topological factors)  $A_{2,\alpha}^\infty\times {\rm Id}\to A_2^{\infty}\times {\rm Id}$.
By Theorem~\ref{theorem:b},  $A_2^\infty\times {\rm Id}$ is a universal model for the class ${\rm{\bf{DISP}}}_{\rm ec}$. It is now enough to apply Lemma~\ref{l:gaj1}.\end{proof}

\begin{Lemma}\label{l:gaj2} If\, $1$, $\beta$ and $\alpha$ are rationally independent  then $A_{2,\beta}\notin\cf(A_{2,\alpha})$.
\end{Lemma}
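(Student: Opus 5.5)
The plan is to argue by contradiction through eigenvalues of the Kronecker-type factors. Suppose $A_{2,\beta}\in\cf(A_{2,\alpha})$. Then $A_{2,\beta}$ (with Lebesgue measure, ergodic) is a factor of some countable self-joining $\rho$ of $A_{2,\alpha}$. Passing to the ergodic decomposition of $\rho$, and using that the image of an ergodic factor of a non-ergodic system lives on a.e. ergodic component, we may assume $\rho$ is ergodic: an ergodic factor of $(A_{2,\alpha}^{\times\infty},\rho)$ is a factor of a.a. ergodic components of $\rho$, which are again infinite self-joinings of $A_{2,\alpha}$ since $A_{2,\alpha}$ is ergodic (ergodic components of a joining of ergodic systems have the correct marginals, cf.\ the argument of Lemma~\ref{l:canonical}). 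So $A_{2,\beta}$ is a factor of an \emph{ergodic} infinite self-joining $\rho$ of $A_{2,\alpha}$.

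Next I will compute the eigenvalue group of such an ergodic $\rho$. The key observation is that $\beta$ is an eigenvalue of $A_{2,\beta}$ (eigenfunction $e^{2\pi i x}$), so $\beta$ must be an eigenvalue of $(A_{2,\alpha}^{\times\infty},\rho)$. I will show that every eigenvalue of an ergodic self-joining of $A_{2,\alpha}$ lies in the group generated by $\alpha$ and $\Q/\Z$, or more carefully: the Kronecker factor of $\rho$ is determined using the relative structure over the common rotation factor. Concretely, $\rho$ projects on the first coordinates $(x_1,x_2,\dots)$ to an ergodic self-joining of the rotation by $\alpha$, which is a graph joining: $x_i=x_1+c_i$ for constants $c_i$. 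Over this ergodic rotation base, the fiber coordinates $y_i$ form a compact group ($\T^\infty$) extension with cocycle $(x+c_i)_i$, and by the Anzai/Furstenberg theory the ergodic invariant measure is Haar on a coset of a closed subgroup $H$ with cocycle reduced into $H$. Eigenfunctions of such an ergodic group extension of a rotation: writing $L^2$ via characters of $H$, a character $\chi$ of $H$ contributes eigenfunctions only if $\chi\circ\text{cocycle}$ is cohomologous to a constant, and $\chi(y)=\prod y_i^{m_i}$ gives cocycle $e^{2\pi i\sum m_i(x+c_i)}$, which is cohomologous to a constant (for the irrational rotation) only if $\sum m_i=0$, in which case it \emph{is} the constant $e^{2\pi i\sum m_ic_i}$. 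Thus new eigenvalues $\sum m_i c_i + k\alpha$ may appear — this is precisely the $\rho_\beta$ phenomenon, so I must use more structure.

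Therefore the contradiction should instead come from the relative structure: in $A_{2,\beta}$ the eigenfunction $e^{2\pi ix}$ with eigenvalue $\beta$ has, over the Kronecker factor, a \emph{nontrivial} relative eigenfunction $e^{2\pi i y}$ with multiplier $e^{2\pi i x}$, i.e. $A_{2,\beta}$ is not in DISP but is a 2-step extension whose "second-step cocycle" is $x$ with $x$ rotating by $\beta$. In $\rho$ the non-Kronecker part is spanned by characters with $\sum m_i\neq 0$, whose cocycle $e^{2\pi i(\sum m_i) x_1+\text{const}}$ is a function of $x_1$, which rotates by $\alpha$. Mapping through the factor map, the multiplier function of the image eigenfunction must be a measurable function on the Kronecker factor of $\rho$ that is a Kronecker-eigenfunction of eigenvalue $\beta$ (since $e^{2\pi i x}\circ A_{2,\beta}=\beta e^{2\pi ix}$); in $\rho$ such multipliers for non-Kronecker characters are of the form $e^{2\pi i n x_1}\cdot$const, with eigenvalue $\alpha^n$. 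Projecting the relative eigenfunction equation via conditional expectation (using Remark~\ref{r:kiedy}-type spectral comparison: the $\Phi$ image of a relative eigenfunction decomposes along the $H$-character isotypic components) yields $\beta\in n\alpha+\Q$-type relation, impossible when $1,\alpha,\beta$ are rationally independent. The main obstacle is making this "multipliers must match" step rigorous; I would attempt it by Mackey-range/cocycle cohomology: a factor map to the group extension $A_{2,\beta}$ over its Kronecker factor forces the cocycle $x$ (over rotation by $\beta$) to be cohomologous to a pullback of a cocycle $n x_1+c$, whose spectral (eigenvalue) content $\alpha^n$ must equal $\beta$ up to the eigenvalue group coming from $\rho$'s Kronecker factor — then a careful bookkeeping of that group (contains $\alpha$, constants $c_i$) is needed, and I would handle the constants $c_i$ via the fact that $e^{2\pi i x}$ itself is not a coboundary-modified constant.
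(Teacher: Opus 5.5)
Your reduction to an ergodic infinite self-joining $\rho$ and its realization as an ergodic abelian group extension $T_\psi(x,h)=(x+\alpha,\psi(x)+h)$ with fiber group $H$ are correct. So is your remark that eigenvalues alone give no contradiction, since $\beta$ can arise as $\sum m_ic_i$ with $\sum m_i=0$. This all matches the paper's setup. You also pick the right object: the pull-back $F$ of $e^{2\pi i y}$ satisfies $F\circ T_\psi=f\cdot F$, where $f$, the pull-back of $e^{2\pi i x}$, is an eigenfunction of $T_\psi$ with eigenvalue $\beta$.

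The genuine gap is the decisive step: showing that no such $F\neq 0$ exists. You leave it as ``the main obstacle,'' and the route you sketch does not reach it. A conclusion of the form ``$\beta$ equals $n\alpha$ up to the eigenvalue group of the Kronecker factor of $\rho$'' is vacuous, because, as you observed yourself, that group may contain $\beta$. Projecting $F\circ T_\psi=fF$ onto isotypic components also cannot give $\beta\in n\alpha+\Q$, because multiplication by $f$ does not preserve those components. Your claim that the relevant multipliers have the form $c\,e^{2\pi i n x_1}$ is precisely what needs proof.

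The missing idea is to exploit that very non-preservation.
\begin{itemize}
\item Since $T_\psi$ is ergodic, its eigenspaces are one-dimensional, and each subspace $L^2(\T)\otimes\eta$, $\eta\in\widehat H$, is invariant. Hence $f(x,h)=\xi(x)\eta(h)$ with $|\xi|=1$.
\item $\eta^k\neq 1$ for every $k\ge 1$. Otherwise the $k$-th power of $\xi(x+\alpha)\eta(\psi(x))=e^{2\pi i\beta}\xi(x)$ makes $\xi^k$ an eigenfunction of the rotation by $\alpha$ with eigenvalue $e^{2\pi ik\beta}$. That gives $k\beta\in\Z\alpha+\Z$, contradicting the rational independence of $1,\alpha,\beta$.
\item Expand $F(x,h)=\sum_{\chi\in\widehat H}a_\chi(x)\chi(h)$ and compare the $\eta\chi$-coefficients in $F\circ T_\psi=fF$. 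This gives $a_{\eta\chi}(x+\alpha)\,(\eta\chi)(\psi(x))=\xi(x)a_\chi(x)$, hence $\|a_{\eta^k\chi}\|_{L^2}=\|a_\chi\|_{L^2}$ for all $k\ge1$.
\item The characters $\eta^k\chi$ are pairwise distinct, so Parseval forces $a_\chi=0$ for every $\chi$. Thus $F=0$, which is absurd since $|F|=1$.
\end{itemize}
This is the paper's argument. It needs no bookkeeping of the constants $c_i$, of the Kronecker factor of $\rho$, or of cocycles over the rotation by $\beta$.
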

\begin{proof}
Suppose that $A_{2,\beta}\in \cf(A_{2,\alpha})$. Then, $A_{2,\beta}$ is a factor of a certain self-joining of $A_{2,\alpha}$ and since $A_{2,\beta}$ is ergodic, it must be a factor of an ergodic self-joining of $A_{2,\alpha}$. By a general theory of cocycles over ergodic rotations, we obtain that there are $\beta_i\in\T$ ($i\geq1$), a closed subgroup $G\subset\T^\infty$ and a Borel mapping $\psi\colon\T\to G$ such that:
\begin{enumerate}
\item[(i)] $T_\psi(x,g):=(Tx,\psi(x)+g)=(x+\alpha,\psi(x)+g)$ (with $g=(y_1,y_2,\ldots)\in G$) is ergodic;
\item[(ii)] $A_{2,\beta}$ is a factor of $T_\psi$;
\item[(iii)] $\psi$ is cohomologous (as a cocycle of $T$) with
$$
\theta\colon \Bbb T\ni x\mapsto (x+\beta_1,x+\beta_2,\ldots)\in\T^\infty,$$
that is, $\psi-\theta=h\circ T-h$ for a measurable $h \colon \T\to\T^\infty$.
\end{enumerate}
Since $\beta$ is an eigenvalue of $A_{2,\beta}$, it is also an eigenvalue of $T_\psi$. Hence, there exist $\eta\in\widehat{G}$ and a measurable $\xi \colon \T\to\mathbb{S}^1$ such that
\beq\label{gaj7}
\eta(\psi(x))=e^{-2\pi i\beta}\xi(Tx)/\xi(x).\eeq
The corresponding eigenfunction (which is unique up to a multiplicative constant) $f=f(x,g)$ is hence of the form
$$
f(x,g)=\xi(x)\eta(g).$$
Moreover, $A_{2,\beta}$ as a factor of $T_\psi$ is represented by a $T_\psi$-invariant sigma-algebra $\ca\subset\cb(X\times G)$. Hence, there exists an $L^2$-function $F=F(x,g)$ (even of constant modulus~1 and  $\ca$-measurable) such that
$$F\circ T_\psi=f\cdot F$$
(this is so, because this holds for $A_{2,\beta}$: consider $e^{2\pi ix}$ which is an eigenfunction corresponding to $\beta$ and then $e^{2\pi i y}$ which satisfies
$$e^{2\pi i y}\circ A_{2,\beta}=e^{2\pi i (x+y)}=e^{2\pi ix}e^{2\pi iy}.)$$
Now, $F(x,g)=\sum_{\chi\in\widehat{G}}a_\chi(x)\chi(g)$, so
$$
F\circ T_\psi(x,g)=\sum_{\chi\in \widehat{G}}a_\chi(Tx)\chi(\psi(x))\chi(g),$$
and
$$
f(x,g)\sum_{\chi\in \widehat{G}}a_\chi(x)\chi(g)=\sum_{\chi\in \widehat{G}}a_\chi(x)\xi(x)\eta(g)\chi(g).$$
Therefore, for each $\chi\in\widehat{G}$:
$$
a_{\eta\cdot\chi}(Tx)(\eta\cdot\chi)(\psi(x))=
a_\chi(x)\xi(x).$$
Thus $|a_{\eta\cdot\chi}\circ T|=|a_\chi|$, so
$\|a_{\eta\cdot\chi}\|_{L^2}=\|a_\chi\|_{L^2}$, and we obtain that for each $k\geq1$, we have $\|a_{\eta^k\cdot\chi}\|_{L^2}=\|a_\chi\|_{L^2}$ and since $\eta^k\neq1$ for each $k\geq1$ (when, assuming $\eta^k=1$, we take the $k$-th power in \eqref{gaj7}, we easily deduce that $k\beta$ is an eigenvalue of $T$, contradicting the independence of $\alpha$ and $\beta$), we obtain that $a_\chi=0$, which is a contradiction as $F\neq0$.
\end{proof}

\begin{Remark} \label{r:gaj3} Note that in the above lemma we proved 
the following, more general, fact: if for an ergodic automorphism $R$,  there are functions $f,F$ (of modulus~1) and $\beta$ irrational such that $f\circ R=e^{2\pi i\beta}f$ and  $F\circ R=f\cdot F$ then $R$ does not belong to $\cf(A_{2,\alpha})$ whenever $\alpha$, $\beta$  and $1$ are rationally independent.
\end{Remark}

We now discuss some facts around Conjecture~\ref{quest4}
from Section~\ref{s:open}.
We say that an automorphism $R$ acting on a space $\zdk$ has {\em quasi-discrete spectrum of order~2} if there are non-zero functions $F,f$ and $\beta\in\R$ such that $f\circ R=e^{2\pi i \beta}f$, $F\circ R
=F\cdot f$, and the set of all such $F$ and $f$ is total in $L^2\zdk$.
Note that in this definition we can additionally assume that $F$ and $f$ are bounded.
We will call $F$ a {\it quasi-eigenfunction of $T$}.

\begin{Lemma}\label{l:gaj20}
If $R$ has quasi-discrete spectrum of order 2, then for any $\lambda\in J_\infty(R)$, we have $(R^{\times\infty},\la)$ has also quasi-discrete spectrum of order~2.\end{Lemma}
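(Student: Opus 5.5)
The plan is to produce, inside $L^2(\lambda)$, an explicit total family of pairs $(f,F)$ of the required type. The natural candidates are tensor products of the quasi-eigenfunctions of the individual coordinates. Write $R^{\times\infty}$ acting on $Z^{\N}$. For each finite set of coordinates $i_1<\dots<i_k$, and each choice of bounded pairs $(f_j,F_j)$ with $f_j\circ R=e^{2\pi i\beta_j}f_j$ and $F_j\circ R=F_j f_j$, set
$$
\tilde f:=f_1\ot\cdots\ot f_k,\qquad \tilde F:=F_1\ot\cdots\ot F_k,
$$
where these are functions of the coordinates $z_{i_1},\dots,z_{i_k}$. Then $\tilde f\circ R^{\times\infty}=e^{2\pi i(\beta_1+\dots+\beta_k)}\tilde f$ and $\tilde F\circ R^{\times\infty}=\tilde F\cdot\tilde f$ identically, as functions on $Z^{\N}$. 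So these identities hold $\lambda$-a.e. for every $\lambda\in J_\infty(R)$, because the relations are pointwise and do not depend on the measure.

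Two points need checking. The first is that the pairs are nonzero in $L^2(\lambda)$ where required. If a product $\tilde F$ vanishes $\lambda$-a.e., it contributes nothing, so we simply discard it. The definition only asks that the nonzero such functions be total, so zero elements are harmless. Still, we should note that the eigenvalue $\tilde f$ attached to a nonzero $\tilde F$ is then a genuine (nonzero) eigenfunction. Indeed, $|\tilde F|$ is $R^{\times\infty}$-invariant up to the factor $|\tilde f|$. If necessary we may normalise $|f_j|=1$: by the invariance of $|f_j|$, the set $[f_j=0]$ is $R$-invariant and hence null by ergodicity. This gives $|\tilde f|=1$, so $\tilde f\neq0$.

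The second point, which is the main step, is totality. Here the plan is as follows.
\begin{enumerate}
\item By the hypothesis on $R$, the linear span of the $F$'s and $f$'s is dense in $L^2(Z,\kappa)$.
\item Since every marginal of $\lambda$ on a single coordinate is $\kappa$, each bounded function $g(z_i)$ is an $L^2(\lambda)$-limit of linear combinations of quasi-eigenfunctions in that coordinate, with the same $L^2$ estimates.
\item Products of bounded functions of finitely many coordinates are total in $L^2(\lambda)$, because they generate the product $\sigma$-algebra.
\item To approximate such a product, approximate one factor at a time using a telescoping estimate in the spirit of Remark~\ref{r:alge}.
\end{enumerate}

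The obstacle in step 4 is that under an arbitrary joining, the $L^2(\lambda)$-norm of a product is not controlled by the $L^2(\kappa)$-norms of its factors. I would handle this by using bounded approximants that are uniformly bounded (in sup norm). Each factor is a finite linear combination of bounded $F$'s and $f$'s. Here one has to be careful with truncation: since $|F|$ and $|f|$ are invariant, we may take $|F|=1$ on its support, and then the span of the bounded $F$'s is dense. Then $\|g_1\cdots g_k-h_1\cdots h_k\|_{L^2(\lambda)}\le\sum_j C^{k-1}\|g_j-h_j\|_{L^2(\kappa)}$, provided the approximants $h_j$ are uniformly bounded by $C$. Getting uniformly bounded $L^2$-approximants from the dense span may require first approximating in $L^2$ and then using that finite combinations of norm-one quasi-eigenfunctions are already bounded. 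Finally, we expand the resulting products into sums of products of $F$'s and $f$'s. Each $f$ is itself a quasi-eigenfunction, namely of the form $F=f$ with eigenvalue-function $e^{2\pi i\beta}$, and products of quasi-eigenfunctions are again quasi-eigenfunctions by the multiplicativity computed above. This yields the totality.
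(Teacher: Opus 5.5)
Your approach is the paper's: its proof consists only of the instruction to take tensors $\bigotimes_{i\in I}F_i\otimes\bigotimes_{j\in J}f_j$ of bounded quasi-eigenfunctions and eigenfunctions in finitely many coordinates. Your write-up supplies the verification the paper omits. Two points in that verification need correcting, though neither changes the outcome.

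First, the lemma does not assume $R$ ergodic, so you cannot normalise $|f_j|=1$ by ergodicity. For $R=\mathrm{Id}\times R'$, an eigenfunction may vanish on an invariant set of positive measure. You do not need this normalisation. Since $\lambda$ is $R^{\times\infty}$-invariant, $\|\tilde F\tilde f\|_{L^2(\lambda)}=\|\tilde F\circ R^{\times\infty}\|_{L^2(\lambda)}=\|\tilde F\|_{L^2(\lambda)}$, so $\tilde F\neq0$ already forces $\tilde f\neq0$. Likewise, normalising $|F|=1$ on its support is unnecessary, because the definition already lets you take $F,f$ bounded.

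Second, the remark that finite combinations of bounded quasi-eigenfunctions are bounded gives no sup-norm bound $C$ independent of the accuracy. So the estimate with $C^{k-1}$ is not justified as written. However, no uniform bound is needed. Write $g_1\cdots g_k-h_1\cdots h_k=\sum_j h_1\cdots h_{j-1}(g_j-h_j)g_{j+1}\cdots g_k$ and choose $h_1,h_2,\dots$ successively. At stage $j$, pick $h_j$ with $\|g_j-h_j\|_{L^2(\kappa)}\prod_{i<j}\|h_i\|_\infty\prod_{i>j}\|g_i\|_\infty<\varepsilon/k$. This is possible because those sup norms are already fixed when $h_j$ is chosen, and the marginal condition gives $\|(g_j-h_j)(z_{i_j})\|_{L^2(\lambda)}=\|g_j-h_j\|_{L^2(\kappa)}$. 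With this choice, the rest of your totality argument goes through.
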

\begin{proof} Consider bounded $f_i$ and $F_i$ and tensors of the form  $\bigotimes_{i\in I} F_i\bigotimes_{j\in J}  f_j$, for all finite $I$ and $J$.
\end{proof}

If we additionally assume that $R$ is ergodic, then we can additionally assume that $F$ and $f$ have constant modulus~1.

\begin{Lemma}\label{l:gaj21} Assume that  $R$ is ergodic and has quasi-discrete spectrum of order~2. If the spectrum of $R$ is rational, then $R$ has purely discrete spectrum.
\end{Lemma}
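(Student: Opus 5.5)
The plan is to show that, under rational spectrum, every quasi-eigenfunction $F$ is already a genuine eigenfunction (up to the eigenfunction factor). Since $R$ is ergodic, we may take all $f$ and $F$ of constant modulus~$1$. By assumption the relevant $\beta$'s are rational: $e^{2\pi i\beta}$ is a root of unity. So for each pair with $f\circ R=e^{2\pi i\beta}f$ and $F\circ R=F\cdot f$ there is $q\geq1$ with $e^{2\pi i q\beta}=1$, i.e.\ $f\circ R^q=f$. By ergodicity of $R$, $f$ is $R^q$-invariant. Since $R^q$ has at most $q$ ergodic components, each invariant under $R$ up to a cyclic permutation, $f$ takes finitely many values, one on each $R^q$-ergodic component. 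Genuine eigenfunctions ($F$ with $f$ constant) are trivially fine, so we concentrate on nonconstant $f$.

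Next we iterate the cocycle identity. We have $F\circ R^q=F\cdot f\cdot f\circ R\cdots f\circ R^{q-1}=F\cdot \prod_{j<q}e^{2\pi i j\beta}f^q=c\,F\,f^q$ with $c$ a root of unity. We iterate once more, $q'$ times, where $q'$ is chosen so that the finitely valued $R^q$-invariant function $(cf^q)$ becomes trivial. This works because $f^q$ is itself $R$-invariant up to the factor $e^{2\pi i q\beta}=1$, hence $f^q$ is a constant $\zeta$ (by ergodicity of $R$), and $c\zeta$ has modulus one. Thus $F\circ R^q=\gamma F$ for a constant $\gamma\in\mathbb S^1$. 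Hence $F$ is an eigenfunction of $R^q$.

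It then remains to pass from $R^q$ back to $R$. The $R$-cyclic subspace generated by $F$ is spanned by $F,F\circ R,\dots,F\circ R^{q-1}$, since $F\circ R^q=\gamma F$. On this finite-dimensional $U_R$-invariant space, $U_R^q=\gamma\,{\rm Id}$. Therefore $U_R$ is diagonalizable there, with eigenvalues among the $q$-th roots of $\gamma$, and $F$ is a finite sum of eigenfunctions of $R$. So each quasi-eigenfunction lies in the span of eigenfunctions, and the same holds for each $f$. Since these functions are total in $L^2$, $R$ has purely discrete spectrum.

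The main obstacle I expect is the interpretation of ``the spectrum of $R$ is rational''. If this means that the point spectrum consists of roots of unity, the argument above goes through once $f$ is recognized as an eigenfunction. If it instead refers to the maximal spectral type, one still gets the same conclusion, because eigenvalues are atoms of the spectral type and the $\beta$'s occurring are eigenvalues. One should also double-check the claim that $f^q$ is constant: from $f\circ R=e^{2\pi i\beta}f$ we get $f^q\circ R=e^{2\pi iq\beta}f^q=f^q$, so this step is immediate from ergodicity.
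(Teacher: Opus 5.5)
Your proof is correct and follows the paper's argument. Both iterate the cocycle identity to get $F\circ R^q=c\,f^q F$, use ergodicity to see that $f^q$ is constant so that $F$ is an eigenfunction of $R^q$, and then conclude that $F$ lies in the discrete part of the spectrum of $R$. The paper phrases that last step as ``the image of $\sigma_F$ under $z\mapsto z^q$ is a Dirac measure''; you diagonalize $U_R$ on the span of $F,F\circ R,\dots,F\circ R^{q-1}$, which is the same fact. Your sentence about iterating a further $q'$ times is superfluous, since constancy of $f^q$ already gives $F\circ R^q=\gamma F$ directly.
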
 \begin{proof} We have $f\circ R=e^{2\pi i/m}f$ and then $F\circ R=f\cdot F$. It follows that
$$
F\circ R^m=e^{2\pi i (m(m-1)/2)/m}f^m \cdot F=e^{2\pi i (m-1)/2}\cdot F$$
since we can assume that $f$ takes values in the group of roots of degree~$m$ (in fact, by ergodicity,  there is a unique Rokhlin tower $A,RA,\ldots, R^{m-1}A$ fulfilling the whole space, and we can assume that $f=1$ on $A$, and then $f=e^{2\pi ik/m}$ on $R^kA$;  in general we use the fact that the $m$-th power of $f$ is a constant). It follows that $F$ is an eigenfunction of $R^m$, so its spectral measure (for $R^m$) is Dirac, concentrated at $e^{2\pi i(m-1)/m}$. It follows that the spectral measure of $F$ for $R$ must be discrete (as its image via the map $z\mapsto z^m$ is Dirac). Hence, all functions $f,F$ have discrete spectral measures and therefore the maximal spectral type of $R$ is also discrete.
\end{proof}

\begin{Lemma}\label{l:gaj22} Assume that $R$ is ergodic and has quasi-discrete spectrum of order~2. Let $\ca\subset\cd$ be a factor of $R$ with partly continuous spectrum.
Then there exist $G,g\in L^2(\ca)$ (of constant modulus~1) such that $G\circ R=g\cdot G$,
$g\circ R=e^{2\pi i \beta}g$ and $\beta$ is irrational.
\end{Lemma}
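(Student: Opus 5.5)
The plan is to work spectrally inside $L^2\zdk$. By assumption the family of functions $f$ (eigenfunctions of modulus~1, using ergodicity) and $F$ (quasi-eigenfunctions with $F\circ R=f\cdot F$) is total. The first step is to identify the continuous part of the spectrum. If $f\circ R=e^{2\pi i\beta}f$ with $\beta$ rational, the computation in the proof of Lemma~\ref{l:gaj21} applies verbatim. With $\beta=k/m$, $F$ is an eigenfunction of $R^m$, so $\sigma_F$ is discrete. All eigenfunctions $f$ trivially have discrete spectral measures. Hence the subspace $H_c$ of functions with continuous spectral measure is contained in the closed span of the quasi-eigenfunctions $F$ whose companion eigenvalue $\beta$ is irrational. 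Note also that products $F\cdot f'$ with $f'$ an eigenfunction are again quasi-eigenfunctions with the same $f$, up to a constant.

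Next we use that $\ca$ has partly continuous spectrum. Pick $0\neq h\in L^2(\ca)$ with continuous spectral measure, so $h\in H_c$. Since $h$ is orthogonal to the discrete part, $h\not\perp F$ for some quasi-eigenfunction $F$ with $F\circ R=f F$ and $f\circ R=e^{2\pi i\beta}f$, $\beta$ irrational. Moreover $F$ can be replaced by its projection onto $H_c$; this projection commutes with multiplication by nothing, so instead we argue with $\EE(F|\ca)$. The key step is to produce the pair $(G,g)$ inside $L^2(\ca)$ by conditioning:
\begin{itemize}
\item First, $g$. The Kronecker factor of $\ca$ is the conditional expectation image of the Kronecker factor of $R$. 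The plan is to show that $\EE(f\bar f'|\ca)$-type arguments yield an eigenfunction of $R|_{\ca}$ with irrational eigenvalue. Concretely, I would use that $h\not\perp F$ forces, via the correlation $\langle h\circ R^n,h\rangle$ decomposed over the quasi-eigenbasis, that the spectral measure of $h$ is not absolutely singular to the measure $\sigma_F$. The measure $\sigma_F$ is a (convolution-type) measure built from $f$ and $e^{2\pi i\beta}$.
\item More cleanly, consider the relatively independent self-joining of $R$ over $\ca$, that is, the Markov operator $P=\EE(\cdot|\ca)$. Apply the "van der Corput/Host--Kra" viewpoint. A nonzero function in $L^2(\ca)$ correlating with a 2-step quasi-eigenfunction means $\ca$ has a nontrivial component in the second Host--Kra--type (Abramov) factor. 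By a standard structure result for factors of quasi-discrete spectrum systems (Abramov: factors of order-2 quasi-discrete spectrum ergodic systems have quasi-discrete spectrum of order $\le2$), $\ca$ itself has quasi-discrete spectrum of order $\le2$.
\end{itemize}

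Granting that $R|_{\ca}$ is ergodic with quasi-discrete spectrum of order at most~2, we are done by Lemma~\ref{l:gaj21}'s logic within $\ca$. Its quasi-eigenpairs $(G,g)\subset L^2(\ca)$ are total, and if all had rational $\beta$ then $\ca$ would have discrete spectrum, contradicting partly continuous spectrum. Hence some pair has $\beta$ irrational. Modulus~1 follows by ergodicity: $|g|$ and $|G|$ are invariant, and we can normalize.

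The main obstacle is the inheritance of quasi-discrete spectrum of order~2 by the factor $\ca$. To prove it directly, I would show that $\ca$ contains a relatively discrete extension of its Kronecker factor. Take $F$ as above and form $\EE(F\overline{F\circ R^{n}}\cdot\,\cdot\,|\ca)$. Alternatively, use the Hahn--Parry description: eigenfunctions of order~2 are exactly the $F$ with $F\circ R/F$ an eigenfunction. One then shows that $L^2(\ca)\cap(\text{such }F)$ is total in $L^2(\ca)$ by projecting each quasi-eigen-"isotypic" subspace $\{F: F\circ R\in \mathrm{span}(e^{2\pi i\beta}\text{-twists})\}$ with $\EE(\cdot|\ca)$, which commutes with $R$ and with multiplication by $\ca$-measurable eigenfunctions.
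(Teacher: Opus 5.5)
Your reduction is the paper's reduction: transfer quasi-discrete spectrum of order~2 to $R|_{\ca}$, then run the argument of Lemma~\ref{l:gaj21} inside $\ca$. The gap is exactly the step you flag as the main obstacle, and it cannot be closed as stated. The classical factor theorem for quasi-discrete spectrum belongs to Abramov's totally ergodic setting. Here $R$ is only ergodic, and rational eigenvalues are the whole point of Lemma~\ref{l:gaj21}. Your direct sketch breaks because $\EE(F|\ca)\circ R=\EE(fF|\ca)$ equals $f\,\EE(F|\ca)$ only when the companion $f$ is $\ca$-measurable. The paper's parenthetical remark has the same limitation: it only shows that eigenfunctions $f$ with $\EE(f|\ca)\neq0$ are $\ca$-measurable.

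In fact the inheritance is false. Let $Z=\T\times\T\times\Z/2\Z$ with Haar measure and $R(x,y,j)=(x+\alpha,y+x,j+1)$, $\alpha\notin\Q$. Then $R$ is ergodic, and every character $\chi_{k,l,s}=e^{2\pi i(kx+ly)}(-1)^{sj}$ satisfies $\chi_{k,l,s}\circ R=e^{2\pi ik\alpha}(-1)^se^{2\pi ilx}\chi_{k,l,s}$, so $R$ has quasi-discrete spectrum of order~2. Via $(x,y,j)\mapsto((x,y),(x+\frac12,y+\frac j2))$ it is an ergodic self-joining of $A_{2,\alpha}$, so this situation really occurs in Proposition~\ref{p:gaj20}. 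The involution $\Theta(x,y,j)=(x+\frac12,y+\frac j2,j)$ commutes with $R$; let $\ca$ be the factor of $\Theta$-invariant sets.

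Then $G_0:=\EE(e^{2\pi iy}|\ca)=e^{2\pi iy}\raz_{\{j=0\}}\neq0$ has Lebesgue spectral measure. The eigenvalues of $R|_{\ca}$ are $\pm e^{2\pi ik\alpha}$ with $k$ even. Suppose $G\in L^2(\ca)$ satisfies $G\circ R=gG$, where $g$ has eigenvalue $\pm e^{2\pi ik_0\alpha}$ with $k_0$ even. Comparing Fourier coefficients gives $|\widehat G(k,l,s)|=|\widehat G(k+l-k_0,l,s+s_0)|$, so $\widehat G(k,l,s)=0$ unless $l=k_0$; in particular $G\perp G_0$. Hence $\ca$ does not have quasi-discrete spectrum of order~2, even though the lemma's conclusion holds here, witnessed by $e^{4\pi iy}$.

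So a different argument is needed to pass from $\EE(F|\ca)\neq0$ to a genuine pair in $L^2(\ca)$. Start, as in your first paragraph (which is fine), from $F$ with $F\circ R=fF$, $f\circ R=e^{2\pi i\beta}f$, $\beta\notin\Q$, and $\EE(F|\ca)\neq0$.

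First, some $f^m$ with $m\ge1$ must be $\ca$-measurable. Otherwise $\EE(f^n|\ca)=0$ for all $n\neq0$, so $f$ is independent of $\ca$. Writing $H=\EE(F|\ca\vee\sigma(f))=\sum_nh_nf^n$ with $h_n\in L^2(\ca)$, the relation $H\circ R=fH$ gives $e^{2\pi in\beta}h_n\circ R=h_{n-1}$. Thus $\|h_n\|$ is constant in $n$, all $h_n$ vanish, and $\EE(F|\ca)=0$, a contradiction.

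Next, $G_0=\EE(F|\ca)$ satisfies only $G_0\circ R^m=c\,f^mG_0$ with $|c|=1$. The $R^m$-invariant sets of $\ca$ form a finite $\sigma$-algebra whose atoms are cyclically permuted by $R$ with some period $d\mid m$. Take an atom $A$ on which $|G_0|$ is a nonzero constant. Then $N:=\prod_{i<m/d}(G_0/|G_0|)\circ R^{id}$ satisfies $N\circ R^d=c\,f^mN$ on $A$. Set $h=\gamma f^m$, with $|\gamma|=1$ chosen so that $h\cdot h\circ R\cdots h\circ R^{d-1}=c^df^{md}$. Define $G(R^iz):=h(z)h(Rz)\cdots h(R^{i-1}z)\,N(z)^d$ for $z\in A$ and $0\le i<d$. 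This gives $G\in L^2(\ca)$ of modulus~1 with $G\circ R=hG$, where $h$ has eigenvalue $e^{2\pi im\beta}$ and $m\beta\notin\Q$.
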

\begin{proof}

Since $R$ has quasi-discrete spectrum of order 2, $\mathcal A$ also
has quasi-discrete spectrum of order 2 (by ergodicity, when $f$ is an eigenfunction and $\EE(f|\ca)\neq0$ then $f$ must be equal to $\E(f|\ca)$ up to a constant, hence, $f$ is $\ca$-measurable).
 Consider a quasi-eigenfunction $F$ of $R\restriction\mathcal A$.
 Then $F\circ R=f\cdot F$ and  $f\circ R=e^{2\pi i \alpha} f$ for a function $f\in L^2(\mathcal A)$ and $\alpha\in [0,1)$.
 If $\alpha$ is rational for all quasi-eigenfunctions of $R\restriction\mathcal A$ then
 $R\restriction\mathcal A$ has pure point spectrum by Lemma~\ref{l:gaj21}.
 That contradicts to the condition of the lemma.
 Hence, there are $F$, $f$ and $\beta$ as above with $\beta\not\in\Bbb Q$.
\end{proof}

\begin{Prop}\label{p:gaj20} Assume that
 $\alpha$, $\beta$ and $1$ are rationally independent.
   Then
$\cf(A_{2,\alpha})\cap \cf(A_{2,\beta})\cap {\rm Erg}={\rm {\bf{DISP}}}_{\rm ec}\cap{\rm Erg}=
{\rm {\bf DISP}}\cap {\rm Erg}$.
\end{Prop}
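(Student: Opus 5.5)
The second equality is immediate: an ergodic automorphism has a single ergodic component, so it lies in ${\rm {\bf DISP}}_{\rm ec}$ exactly when it has discrete spectrum. For the first equality, the inclusion ${\rm {\bf DISP}}_{\rm ec}\cap{\rm Erg}\subset \cf(A_{2,\alpha})\cap\cf(A_{2,\beta})\cap{\rm Erg}$ follows from Proposition~\ref{p:gaj1}, applied once with $\alpha$ and once with $\beta$. So the work is the reverse inclusion. I will take an ergodic $R\in\cf(A_{2,\alpha})\cap\cf(A_{2,\beta})$ and suppose it does not have purely discrete spectrum; then $R$ has a partly continuous spectrum, and I will derive a contradiction.

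\textbf{Step 1: reduce to a factor of an ergodic self-joining.} Since $R\in\cf(A_{2,\alpha})$, it is a factor of some infinite self-joining $(A_{2,\alpha}^{\times\infty},\lambda)$. Because $R$ is ergodic, I will pass to the ergodic decomposition of $\lambda$, exactly as in the first lines of the proof of Lemma~\ref{l:gaj2}. This lets me assume that $\lambda$ is an ergodic infinite self-joining. (Indeed, the factor map sends a.e.\ ergodic component of $\lambda$ onto $R$.)

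\textbf{Step 2: produce a quasi-eigenfunction with irrational frequency inside $R$.} The automorphism $A_{2,\alpha}$ has quasi-discrete spectrum of order~2. The characters $e^{2\pi i(mx+ny)}$ are products of the eigenfunction $e^{2\pi ix}$ (eigenvalue $\alpha$) and the quasi-eigenfunction $e^{2\pi iy}$, which satisfies $e^{2\pi iy}\circ A_{2,\alpha}=e^{2\pi ix}e^{2\pi iy}$, and these characters are total. By Lemma~\ref{l:gaj20}, the ergodic joining $(A_{2,\alpha}^{\times\infty},\lambda)$ also has quasi-discrete spectrum of order~2. The factor $\ca$ representing $R$ has partly continuous spectrum, so Lemma~\ref{l:gaj22} applies to it. It yields $G,g\in L^2(\ca)$ of modulus one with $g\circ R=e^{2\pi i\beta'}g$ and $G\circ R=g\cdot G$, where $\beta'$ is irrational. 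Thus $R$ satisfies the hypothesis of Remark~\ref{r:gaj3} with frequency $\beta'$.

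\textbf{Step 3: a rational-independence case split.} I cannot control $\beta'$ directly, since ergodic self-joinings of $A_{2,\alpha}$ can have arbitrary eigenvalues (for instance, joinings with $x_2=x_1+\gamma$). Instead I will split according to whether $1,\alpha,\beta'$ are rationally independent.
\begin{itemize}
\item If they are independent, Remark~\ref{r:gaj3} gives $R\notin\cf(A_{2,\alpha})$, a contradiction.
\item Otherwise $\beta'\in\Q\alpha+\Q$, say $\beta'=q\alpha+r$. Here $q\neq0$ because $\beta'$ is irrational. Since $1,\alpha,\beta$ are rationally independent, the numbers $1,\beta,\beta'$ are then rationally independent. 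Remark~\ref{r:gaj3}, applied with $\beta$ in place of $\alpha$, gives $R\notin\cf(A_{2,\beta})$, again a contradiction.
\end{itemize}
Hence $R$ has purely discrete spectrum.

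\textbf{Expected obstacle.} I expect Step~2 to be the main point to check. One must verify that Lemma~\ref{l:gaj22} genuinely applies to the factor $\ca$ of the (ergodic, infinite) joining. This includes the fact used in its proof that eigenfunctions of the joining with nonzero projection on $\ca$ are $\ca$-measurable, so that the irrational quasi-eigenfunction pair indeed lives inside $R$. Step~1's ergodic reduction is standard but must be stated carefully, so that ergodicity is available when invoking Lemmas~\ref{l:gaj20} and~\ref{l:gaj22}.
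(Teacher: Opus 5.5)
Your proposal is correct and follows essentially the same route as the paper. You pass to an ergodic infinite self-joining, invoke Lemmas~\ref{l:gaj20} and~\ref{l:gaj22} to get an irrational quasi-eigenfunction pair inside $R$, and then apply Remark~\ref{r:gaj3} with both $\alpha$ and $\beta$. Your case split on the rational independence of $1,\alpha,\beta'$ just rephrases the paper's observation that the irrational frequency would have to lie in both $\Q\alpha+\Q$ and $\Q\beta+\Q$; you also spell out the easy inclusions (via Proposition~\ref{p:gaj1}), which the paper leaves implicit.
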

\begin{proof}
Let $T$ be an ergodic automorphism with partly continuous spectrum and belonging to $\cf(A_{2,\alpha})$.
 Since $T$ is ergodic, $T$ is a factor of an infinite, ergodic self-joining of $A_{2,\alpha}$:
$$
A_{2,\alpha}\vee A_{2,\alpha}\vee\ldots\to T.$$
This (ergodic) infinite self-joining has quasi-discrete spectrum by Lemma~\ref{l:gaj20}.
Thanks to  Lemma~\ref{l:gaj22}, there exist
 non-trivial $f$, $F$ such that
$$
f\circ {T}=e^{2\pi i\gamma}f\text{ (with $\gamma$ irrational) and }\;F\circ {T}=f\cdot F.$$
Then, by Remark~\ref{r:gaj3},
$\gamma\in\Bbb Q\alpha+\Bbb Q$.
In a similar way, if $T\in \cf(A_{2,\beta})$ then
$\gamma\in\Bbb Q\beta+\Bbb Q$.
We get a contradiction because 1, $\alpha$ and $\beta$ are rationally independent.
Hence, $T\in{\rm {\bf DISP}}$.
\end{proof}

\section{Universal model for the characteristic class generated by an MSJ automorphism}

Assume that $T\in {\rm Aut}\xbm$ is an MSJ automorphism.
Our standing assumption is  that $\mu$ is nonatomic.
This implies that $T$ is weakly mixing.
We are interested in properties of the  infinite Cartesian product $T^{\times\infty}$ of $T$.

\subsection{Characteristic class for simple automorphisms}
Recall that, following \cite{Ka-Ku-Le-Ru}, given an automorphism $T$, the smallest characteristic class $\cf(T)$ containing $T$ consists of all factors of all infinite self-joinings of $T$.

Assume that $T\in{\rm Aut}\xbm$ is a simple automorphism\footnote{The definition of simplicity is similar to the MSJ property with the difference that instead of off-diagonal measures, we consider images of $\mu$ under the maps
$x\mapsto (x,S_1x,S_2x,\ldots)\in X^\infty$, where all $S_j$ are in the centralizer $C(T)$ of $T$.}(\cite{Ju-Ru}, \cite{Ve}). We recall that all ergodic discrete spectrum are simple. However, if a simple automorphism has no purely discrete spectrum, it has to be weakly mixing.

\begin{Prop}\label{p:simfac}Assume that $T$ is simple and weakly mixing\footnote{If $T$ is ergodic and has discrete spectrum then, by Theorem~\ref{pds}, $\cf(T)$ consists of all factors of $({\rm Id}_{[0,1]}\times T, {\rm Leb}_{[0,1]}\times \mu)$.}. Then $\cf(T)$ is equal to the family of all factors of the automorphism ${\rm Id}_{[0,1]}\times T^{\times\infty}$ considered on
 $([0,1]\times X^\infty,{\rm Leb}_{[0,1]}\otimes\mu^{\otimes \infty})$.
 Moreover, $\cf(T)=\cf(T)_{\rm ec}$.
\end{Prop}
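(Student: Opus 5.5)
Two inclusions must be shown: every factor of ${\rm Id}_{[0,1]}\times T^{\times\infty}$ lies in $\cf(T)$, and every element of $\cf(T)$ is such a factor. The first is immediate. ${\rm Id}_{[0,1]}$ belongs to every non-trivial characteristic class, $T^{\times\infty}$ with product measure is a self-joining of $T$, and characteristic classes are closed under joinings and factors. So ${\rm Id}_{[0,1]}\times T^{\times\infty}$ and all its factors lie in $\cf(T)$.

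For the converse, let $R\in\cf(T)$. Then $R$ is a factor of $(T^{\times\infty},\lambda)$ for some infinite self-joining $\lambda$. Decompose $\lambda=\int\lambda_\gamma\,d\PP(\gamma)$ into ergodic components. Each $\lambda_\gamma$ is an ergodic infinite self-joining of a simple weakly mixing $T$. By the structure theorem for simple maps (\cite{Ju-Ru}, \cite{Ve}), the coordinates are partitioned into classes. Within a class the coordinates are tied by graphs of elements of $C(T)$, and distinct classes are independent. Consequently $(T^{\times\infty},\lambda_\gamma)$ is isomorphic to $(T^{\times k_\gamma},\mu^{\otimes k_\gamma})$ with $k_\gamma\in\{1,\dots,\infty\}$, where the isomorphism projects onto one representative coordinate per class.

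Each such system is a factor of $T^{\times\infty}$ with product measure, via projection onto the first $k_\gamma$ coordinates. Hence a.e.\ ergodic component of $(T^{\times\infty},\lambda)$ is a factor of the single ergodic system $(T^{\times\infty},\mu^{\otimes\infty})$. Here weak mixing of $T$ is what makes the infinite product ergodic. Corollary~\ref{p:random-factor?} then gives that $(T^{\times\infty},\lambda)$ is a factor of ${\rm Id}_{\ov{X}}\times T^{\times\infty}$. We may replace $\ov{X}$ by $[0,1]$ with Lebesgue measure, since a standard probability space is a factor of $([0,1],{\rm Leb})$; atoms are handled by a map of $[0,1]$ onto them. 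Composing with the factor map onto $R$ shows that $R$ is a factor of ${\rm Id}_{[0,1]}\times T^{\times\infty}$.

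For $\cf(T)=\cf(T)_{\rm ec}$, take $R\in\cf(T)_{\rm ec}$. Its a.e.\ ergodic component lies in $\cf(T)$ and is ergodic. By the above it is a factor of ${\rm Id}_{[0,1]}\times T^{\times\infty}$, and being ergodic it is a factor of some ergodic component of that system. Those components are all isomorphic to $T^{\times\infty}$. Applying Corollary~\ref{p:random-factor?} once more, $R$ is a factor of ${\rm Id}\times T^{\times\infty}$, so $R\in\cf(T)$.

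The main obstacle is the structural step: the claim that an ergodic infinite self-joining of a simple map is isomorphic to a finite or infinite Cartesian power. This needs the Veech/del Junco–Rudolph description of ergodic joinings of simple maps together with pairwise independence implying independence for simple maps (the PID property). In the infinite case it also needs care that the number of classes and the resulting identification depend measurably enough on $\gamma$. I would handle this by citing \cite{Ju-Ru} and noting that the isomorphism is given explicitly by coordinate projections. A second, minor point is that an ergodic factor of ${\rm Id}\times E$ is a factor of an ergodic component; this follows from Proposition~\ref{p:faktory}.
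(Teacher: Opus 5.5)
Your proposal is correct and follows essentially the paper's route: the easy inclusion from ${\rm Id}_{[0,1]},T^{\times\infty}\in\cf(T)$, then ergodic decomposition of the infinite self-joining, simplicity to realize each ergodic component as a factor of the ergodic system $(T^{\times\infty},\mu^{\otimes\infty})$, and Corollary~\ref{p:random-factor?}, with the same argument giving $\cf(T)=\cf(T)_{\rm ec}$. The obstacles you flag are lighter than you fear: the paper's definition of simplicity is already formulated for infinite self-joinings, so no separate PID argument is needed, and the measurable selection is handled inside Corollary~\ref{p:random-factor?} itself.
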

\begin{proof} Note that ${\rm Id}_{[0,1]}\in\cf(T)$ (as each non-trivial characteristic class contains all identities \cite{Ka-Ku-Le-Ru}), and $T^{\times \infty}$ is also in $\cf(T)$. Hence, ${\rm Id}_{[0,1]}\times T^{\times \infty}\in \cf(T)$ and therefore all factors of ${\rm Id}_{[0,1]}\times T^{\times \infty}$ belong to $\cf(T)$.

Let now $R\in\cf(T)$, where $R\in{\rm Aut}\zdk$. Since $\cf(T)$ consists of all factors of infinite self-joinings of $T$, up to isomorphism, we have
$$
\cd\subset (\cb^{\otimes\infty},\la), \text{ where }\la\in J_\infty(T).$$
Let $\la=\int_{J^e_\infty(T)}\rho\,dQ(\rho)$ be the ergodic decomposition of $\la$. Then
\beq\label{jm18}
\la|_{\cd}=\int_{J^e_\infty(T)}\rho|_{\cd}\,dQ(\rho).\eeq
Here, $\rho|_{\cd}$ represents a factor of the ergodic automorphism $(T^{\times \infty},\rho)$. By the simplicity of $T$, it follows that $(T^{\times \infty},\rho)$ is a factor of $(T^{\times \infty},\mu^{\ot\infty})$. By
Corollary~\ref{p:random-factor?} and \eqref{jm18}, it follows that $R$ (identified with $\la|_{\cd}$) is a factor of $Id_{[0,1]}\times T^{\times \infty},{\rm Leb}_{[0,1]}\otimes \mu^{\otimes\infty})$.

Finally, note that
$$
R\in \cf(T)_{\rm ec} \Leftrightarrow \kappa=\int_{\Gamma}\rho_\gamma\,dQ(\gamma),\; \text{ with }(R,\rho_\gamma)\in {\rm Erg}\cap \cf(T).$$
So $(R,\rho_\gamma)$ is an ergodic member of $\cf(T)$, i.e. it is an ergodic factor of $(T^{\times \infty},\mu^{\otimes\infty})$ by simplicity. Therefore, the same proof shows that $R$ is a factor of ${\rm Id}_{[0,1]}\times T^{\times\infty}\in\cf(T)$.\end{proof}

\subsection{Compact subgroups of the centralizer $C(T^{\times\infty})$ of $T^{\times\infty}$}

Let $D$
be a countable (or finite) set.
Denote by $\Sigma(D)$ the group of all permutations of $D$.
Endow $\Sigma(D)$ with the natural Polish topology of pointwise convergence.
Then $\Sigma(D)$ is a Polish 0-dimensional group acting continuously on $D$.
Fix a discrete countable torsion free group $G$.
Then $\Sigma(D)$ acts naturally on $G^D$ by permutation of coordinates.
Denote by $G^D\rtimes\Sigma(D)$ the corresponding semidirect product.
We remind that the multiplication law in $G^D\rtimes\Sigma(D)$
is given by the formula:
$$
(h,\alpha)(l,\beta):=(h\cdot (l\circ\alpha^{-1}), \alpha\beta),\quad\text{for all $h,l
\in G^D$ and  $\alpha,\beta\in\Sigma(D)$.}
$$
Let $p\colon G^D\rtimes\Sigma(D)\to\Sigma(D)$ stand for the canonical projection.

Fix a compact subgroup $K$ of $G^D\rtimes\Sigma(D)$.
Then $p(K)$ is a compact subgroup of $\Sigma(D)$.
Consider the finest   partition  $D=\bigsqcup_iD_i$ of $D$ into  $p(K)$-invariant subsets $D_i$.
In other words, each $D_i$ is a $p(K)$-orbit.
Since $p(K)$ is compact, $D_i$ is finite for each $i$.
Then
$p(K)\subset\bigotimes_i\Sigma(D_i)$.
Denote by $p_i$
the projection of $p(K)$ to $\Sigma(D_i)$.
Let $K_i:=p_i(K)$.
Then
\begin{itemize}
\item
$K_i$ is a transitive group of permutations on $D_i$ for each $i$ and
\item
$p(K)\subset\bigotimes_iK_i$.
\end{itemize}
Since $G^D\rtimes\big(\bigotimes_iK_i\big)=\bigotimes_i(G^{D_i}\rtimes K_i)$,
we obtain that
$$
K\subset\bigotimes_i \big(G^{D_i}\rtimes K_i\big).
$$
Suppose that $K$ contains  an element $k=(g_i)_i$ such that for some $i_0$,
there exists $h\in G^{D_i}\setminus \{1\}$ with  $g_{i_0}=(h,1)\in
G^{D_{i_0}}\rtimes K_{i_0}$.\footnote{We use a general notation that $1$ stands for the unit in a relevant group.}
But $G$ is torsion free, so the subgroup generated by $h$ is discrete, infinite and cyclic.
It is contained in $K$, which is compact.
This contradiction implies that $h=1$.
Thus, we obtain that
for each $i$, there exists a function $l_i\colon K_i\to G^{D_i}$
such that
$$
K\subset \bigotimes_i\{(l_i(k),k)\colon k\in K_i\}.
$$
The righthand side of this formula is a group if and only if $l_i$ is a {\it skew homomorphism}, i.e.
$$
l_i(kh)=l_i(k)\, (l_i(h))\circ k^{-1}\quad\text{for all }k,h\in K_i.
$$

\begin{Example}
Given $d\in\Bbb N$, let
 $D:=\{1,\dots,d\}$ and let $\sigma$ stand for the cycle $(1,\dots, d)$.
Denote by $\Sigma$ the (finite, cyclic) group generated by $\sigma$.
It is clear that a skew homomorphism  $l\colon \Sigma\to G^D$ is completely determined by
a single value $l(\sigma)\in G^D$.
This value is a sequence $l(\sigma)[i]\in G$, $i\in D$.
Since
$$
1=l(\text{id})=l(\sigma^d)=l(\sigma)\cdot (l(\sigma)\circ\sigma^{-1})\cdot\ldots\cdot (l(\sigma)\circ\sigma^{-d+1}),
$$
 we obtain that   $l(\sigma)[d]\cdots l(\sigma)[1]=1$.
Conversely,\footnote{Use an obvious observation that $s_1\cdots s_d=1$ in $G$ implies $s_ds_1\cdots s_{d-1}=1$.} each sequence $a:=(a[i])_{i\in D}$ of $G$-elements such that $a[d]\cdots a[1]=1$ determines a unique skew homomorphism  $l\colon \Sigma\to G^D$ such that
$l(\sigma)=a$.

\end{Example}

Thus, we have proved the following proposition.

\begin{Prop}\label{structure of subgroups} For each compact subgroup $K$ of $\Sigma(D)\rtimes G^D$, there exist
a partition $D=\bigsqcup_{i}D_i$ of $D$ into finite subsets $D_i$, transitive subgroups $K_i\subset\Sigma(D_i)$ and skew homomorphisms $l_i\colon K_i\to G^{D_i}$
such that $$
K\subset \bigotimes_{i} \{(l_i(k),k)\colon k\in K_i\}.
$$
If $p$ is the natural projection $K\to\Sigma(G)$ and $p_i$ is the natural projection $\bigotimes_i\Sigma(D_i)\to \Sigma(D_i)$ then
 $K_i=p_i\circ p(K)$ for each $i$.

\end{Prop}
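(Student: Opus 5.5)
The argument is essentially the one given just before the statement, and I would write it as follows. First I reduce from $K$ to its image in the permutation group. The projection $p\colon G^D\rtimes\Sigma(D)\to\Sigma(D)$ is a continuous homomorphism, so $p(K)$ is a compact subgroup of $\Sigma(D)$. In the pointwise-convergence topology, a compact subgroup of $\Sigma(D)$ has only finite orbits: the map $k\mapsto k(d)$ into the discrete set $D$ is continuous, so the image of a compact set is finite. I then let $D=\bigsqcup_i D_i$ be the partition of $D$ into $p(K)$-orbits and set $K_i:=p_i\circ p(K)$. Each $K_i$ is transitive on $D_i$ by construction, and $p(K)\subset\bigotimes_i K_i$. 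Since the coordinate permutation action respects the partition, we have $G^D\rtimes\bigotimes_iK_i=\bigotimes_i(G^{D_i}\rtimes K_i)$, and therefore $K\subset\bigotimes_i(G^{D_i}\rtimes K_i)$.

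The key step is to show that, for each $i$, the $i$-th component of $k\in K$ is determined by its permutation part. Write $q_i\colon K\to G^{D_i}\rtimes K_i$ for the coordinate projection; this is a continuous homomorphism, so $q_i(K)$ is compact. Its intersection with the kernel $G^{D_i}\times\{1\}$ is a compact subgroup of $G^{D_i}$. Since $D_i$ is finite, $G^{D_i}$ is discrete and torsion free. A nontrivial element $h$ of this intersection would generate an infinite discrete cyclic subgroup inside a compact set, which is impossible. So $q_i(K)\cap(G^{D_i}\times\{1\})$ is trivial, and $q_i(K)$ is the graph of a function $l_i\colon K_i\to G^{D_i}$, namely $q_i(K)=\{(l_i(k),k)\colon k\in K_i\}$.

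Finally, $q_i(K)$ is a group, and the multiplication law $(h,\alpha)(l,\beta)=(h\cdot(l\circ\alpha^{-1}),\alpha\beta)$ forces $l_i(kh)=l_i(k)\,(l_i(h)\circ k^{-1})$. So $l_i$ is a skew homomorphism, and $K\subset\bigotimes_i\{(l_i(k),k)\colon k\in K_i\}$.

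The main obstacle is the second step, and I would handle it carefully. It is the compactness of the projected group $q_i(K)$, not of $K$ itself, that lets the torsion-freeness argument apply componentwise. The other point needing care is that the orbits are finite, so that each $G^{D_i}$ is discrete. The remaining steps are routine bookkeeping.
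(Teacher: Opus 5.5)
Your proof is correct and follows the paper's argument essentially step for step. The shared steps are: the partition of $D$ into $p(K)$-orbits, finiteness of these orbits by compactness, the splitting $G^D\rtimes\bigotimes_iK_i=\bigotimes_i(G^{D_i}\rtimes K_i)$, torsion-freeness of $G$ to exclude nontrivial elements with trivial permutation part, and the group law forcing the skew-homomorphism identity. Your passage to the compact projected group $q_i(K)$, together with the explicit uniqueness argument that makes it a graph over $K_i$, tightens the paper's looser phrasing that the cyclic group generated by $h$ ``is contained in $K$''.
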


Suppose that $(X,\mathcal B,\mu, T)$ is a measure preserving system.
Consider the infinite product $(X^\Bbb N,\mu^{\otimes \Bbb N}, T^{\times\infty})$.
Then the Polish group $\Bbb Z^\Bbb N\rtimes \Sigma(\Bbb N)$ embeds naturally into the centralizer $C(T^{\times\infty})$ of $T^{\times\infty}$: for each $x=(x_n)_{n\in N}\in X^\Bbb N$, $k=(k_n)_{n\in \Bbb N}\in\Bbb Z^\Bbb N$, $\sigma\in \Sigma(\Bbb N)$ and $m\in\Bbb N$,
\begin{equation}\label{action}
\big((k,\sigma)x\big)_m:= T^{k_m}x_{\sigma(m)}.
\end{equation}
Moreover, this embedding is a topological isomorphism of  $\Bbb Z^\Bbb N\rtimes \Sigma(\Bbb N)$ onto its image furnished with the weak topology.
It was shown in \cite{Rud} that if $T$ has MSJ then this image equals the entire $C(T^{\times\infty})$.

\subsection{Semisimplicity and factors of the infinite power of MSJ automorphisms}
The notion of semisimplicity has been introduced in \cite{Ju-Le-Me}.
An ergodic automorphism $S$ acting on $\ycn$ is called {\it semisimple} if for each self-joining $\rho\in J_2^e(S)$, the two factor mappings $(Y\times Y,\rho,S\times S)\to (Y,\nu,S)$ are relatively weakly mixing.\footnote{\label{f:relp}Recall that given  an ergodic automorphism $R$ of $\zdk$ with a factor $\ce\subset\cd$ and the (corresponding to $\ce$) disintegration $\kappa=\int_{\widebar Z}\kappa_{\widebar{z}}\,d\widebar\kappa(\widebar z)$ of $\kappa$,
we say that $R$ is {\em relatively weakly mixing over} $\ce$   if the measure
$$
\kappa\otimes_{\ce}\kappa:=\int_{\widebar Z} \kappa_{\widebar z}\otimes\kappa_{\widebar z}\,d\widebar\kappa(\widebar z)
$$
is $(R\times R)$-ergodic.
}

\begin{Lemma}\label{l:msj1} If $T$ is an MSJ automorphism then the automorphism $(T^{\times\infty},\mu^{\otimes\Bbb N})$ is semisimple.
 \end{Lemma}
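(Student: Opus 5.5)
We have to show that whenever $\rho\in J_2^e(T^{\times\infty})$, each of the two coordinate factor maps $(X^{\mathbb N}\times X^{\mathbb N},\rho)\to(X^{\mathbb N},\mu^{\otimes\mathbb N})$ is relatively weakly mixing. The plan is to describe $\rho$ explicitly using MSJ and then compute the relative product directly.

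First, $\rho$ is an ergodic infinite self-joining of $T$, so by MSJ it is a Cartesian product of off-diagonal joinings. Concretely, index the coordinates of $X^{\mathbb N}\times X^{\mathbb N}$ by $\mathbb N\sqcup\mathbb N'$. MSJ gives a partition of $\mathbb N\sqcup\mathbb N'$ into classes $C_j$. Within a class every coordinate is a fixed power of $T$ applied to one representative coordinate, and distinct classes are independent. The marginal on $\mathbb N$ must be the product measure. Hence each class contains at most one element of $\mathbb N$, and likewise at most one element of $\mathbb N'$, since two coordinates in the same class would be dependent. So there are disjoint sets $I\subset\mathbb N$ and $I'\subset\mathbb N'$, a bijection $\sigma\colon I\to I'$ and integers $k_i$ such that:
\begin{itemize}
\item for $i\in I$, the coordinate $x'_{\sigma(i)}$ equals $T^{k_i}x_i$;
\item the coordinates in $\mathbb N\setminus I$ and in $\mathbb N'\setminus I'$ are free, i.e.\ mutually independent and independent of everything else.
\end{itemize}
Thus $(X^{\mathbb N}\times X^{\mathbb N},\rho)$ is isomorphic to $(X^{\mathbb N}\times X^{\mathbb N'\setminus I'},\mu^{\otimes\mathbb N}\otimes\mu^{\otimes(\mathbb N'\setminus I')})$ with the product action $T^{\times\infty}\times T^{\times(\mathbb N'\setminus I')}$. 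Under this isomorphism, the projection onto the first copy is the projection onto the first factor of a direct product.

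For a direct product $S\times R$ with $R$ weakly mixing, the extension $S\times R\to S$ is relatively weakly mixing. Indeed, the relative product over the first factor is $S\times R\times R$ with measure $\mu^{\otimes\mathbb N}\otimes\nu\otimes\nu$. This is ergodic because $T^{\times\infty}$ is weakly mixing, as $T$ is, and a weakly mixing system times an ergodic one is ergodic. The degenerate case $\mathbb N'\setminus I'=\emptyset$ is a trivial, hence relatively weakly mixing, extension. By symmetry, the same argument applies to the second projection.

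The main obstacle is the first step: rigorously extracting the structure of $\rho$ from MSJ for a countable product. In particular, one must make sure that the off-diagonal description forces independence of the unmatched coordinates from the whole first copy, not only from finitely many coordinates. I would handle this by checking the structure on finite sub-families of coordinates, where MSJ applies directly, and then passing to the limit via cylinder sets. Alternatively, one can phrase it through the description $C(T^{\times\infty})\cong\mathbb Z^{\mathbb N}\rtimes\Sigma(\mathbb N)$ of \cite{Rud}, viewing $\rho$ as a relatively independent joining over a common factor generated by matched coordinates.
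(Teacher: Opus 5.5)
Your proposal is correct and follows essentially the same route as the paper. MSJ forces an ergodic $2$-self-joining of $T^{\times\infty}$ to match some coordinates of the two copies by powers of $T$ and to leave the rest independent, which identifies it with the direct product $T^{\times\infty}\times T^{\times(\mathbb N'\setminus I')}$ over the first coordinate, and weak mixing of $T$ then gives relative weak mixing. The obstacle you flag about countable products is already absorbed by the paper's definition of MSJ, which is stated directly for infinite self-joinings.
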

\begin{proof} Let $\la\in J^e_2(T^{\times\infty})$. It follows from  the definition of MSJ that
 there exist a subset $P\subset\N$, a one-to-one mapping $\sigma\colon P\to \Bbb N$ and
 a mapping $l\colon P\to\Bbb Z$ such that for all subsets $A=\prod_{i\in\Bbb N} A_i\subset X^{\Bbb N}$ and
 $B=\prod_{i\in\Bbb N} B_i\subset X^\Bbb N$ such that $A_i=B_i=X$ eventually in $i$,
 \beq\label{2-joi}
 \la(A\times B)=\Pi_{i\not\in P}\mu(A_i)\cdot \Pi_{i\in P}\mu(A_i\cap T^{l(i)}B_{\sigma(i)})\cdot \Pi_{i\not\in\sigma(P)}\mu(B_i).
 \eeq
 Disintegrating $\lambda$ with respect to the projection  $\pi_1\colon X^{\Bbb N}\times X^{\Bbb N}\to X^{\Bbb N}$ to the first coordinate,
 we obtain that
 $$
 \begin{aligned}
 \lambda&=\int_{X^{\Bbb N}}\bigotimes_{i\not\in P}\delta_{x_i}\otimes\bigotimes_{i\in P}(\delta_{x_i}\otimes\delta_{T^{l(i)}x_{\sigma(i)}})\otimes\mu^{\otimes (\Bbb N\setminus\sigma(P))}\,d\mu^{\otimes\N}(x).\\
 \end{aligned}
 $$
 Of course, we have  assumed  here that $x=(x_i)_{i\in \Bbb N}\in X^{\Bbb N}$.
Consider the mapping
$$
 \tau\colon X^{\Bbb N}\in X^{\Bbb N}\ni(x,y)\mapsto (x, (y_i)_{i\in\Bbb N\setminus\sigma(P)})\in X^{\Bbb N}\times X^{\Bbb N\setminus\sigma(P)}.
 $$
Then $\tau$ is an isomorphism of the dynamical system
$(X^{\Bbb N}\times X^{\Bbb N},\lambda, T^{\times\infty}\times T^{\times\infty})$ with the direct product
$(X^{\Bbb N},\mu^{\otimes\Bbb N}, T^{\times\infty}) \otimes (X^{\Bbb N\setminus\sigma(P)},\mu^{\otimes( \Bbb N\setminus\sigma(P))}, T^{\times(\Bbb N\setminus\sigma(P))})$
such that $\pi_1\circ\tau^{-1}$ is the projection of this direct product onto the first
coordinate.
 Since $T$ is weakly mixing, it follows that $\pi_1$ is relatively weakly mixing.
 In a similar way one can show that $\pi_2$ (i.e.\ the projection to the second coordinate in $X^\Bbb N\times X^{\Bbb N}$) is relatively weakly mixing.
\end{proof}

\begin{Lemma}\label{l:msj2}Let $T$ be an MSJ automorphism and let $\ca\subset\cb(X^{\N})$ be a factor such that $(T^{\times\infty},\mu^{\otimes\N})$ is relatively weakly mixing over $\ca$. Then, there exists $P\subset \N$ such that (up to a natural identification) $\ca=\cb(X^P)$.
\end{Lemma}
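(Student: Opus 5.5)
The plan is to encode the factor $\ca$ by its relatively independent self-joining, use the MSJ structure to write that joining down explicitly, and then read off $\ca$ from the associated Markov operator. Let $\la:=\mu^{\otimes\N}\otimes_{\ca}\mu^{\otimes\N}$ be the relatively independent self-joining of $(T^{\times\infty},\mu^{\otimes\N})$ over $\ca$. Relative weak mixing over $\ca$ (Footnote~\ref{f:relp}) says exactly that $\la$ is ergodic, so $\la\in J^e_2(T^{\times\infty})$. The joining $\la$ is then also an ergodic infinite self-joining of $T$, so, as in the proof of Lemma~\ref{l:msj1}, the MSJ property gives a set $P\subset\N$, an injection $\sigma\colon P\to\N$ and a map $l\colon P\to\Z$ such that $\la$ is given by~\eqref{2-joi}.

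Next I compute the Markov operator $\Phi_\la$ in two ways. On the one hand, $\la$ is the relatively independent extension over $\ca$, so $\Phi_\la=\EE(\cdot\mid\ca)$. In particular $\Phi_\la$ is a self-adjoint idempotent whose range is $L^2(\ca)$. On the other hand, formula~\eqref{2-joi} gives, for cylinder product functions $f=\bigotimes_i f_i$ with $f_i=1$ for almost all $i$,
$$
\Phi_\la\Big(\bigotimes_i f_i\Big)=\prod_{i\notin P}\int f_i\,d\mu\cdot\bigotimes_{i\in P}\big(f_i\circ T^{-l(i)}\big)_{[\sigma(i)]},
$$
where the subscript $[\sigma(i)]$ means the function is placed in coordinate $\sigma(i)$. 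The sign convention for $l$ is immaterial for the argument. Such product functions are linearly dense, so this formula determines $\Phi_\la$.

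Finally, I impose $\Phi_\la^2=\Phi_\la$ on one-coordinate functions $f_{[i]}$ with $\int f\,d\mu=0$, $f\neq0$.
\begin{enumerate}
\item[(a)] If $i\in P$ and $\sigma(i)\notin P$, then $\Phi_\la f_{[i]}\neq0$ but $\Phi_\la^2 f_{[i]}=\int f\,d\mu=0$, a contradiction. Hence $\sigma(P)\subset P$.
\item[(b)] For $i\in P$ we get $\Phi^2_\la f_{[i]}=(f\circ T^{-l(i)-l(\sigma(i))})_{[\sigma^2(i)]}$, and this must equal $(f\circ T^{-l(i)})_{[\sigma(i)]}$. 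Choosing $f$ nonconstant forces $\sigma^2(i)=\sigma(i)$, so $\sigma(i)=i$ by injectivity.
\item[(c)] With $\sigma(i)=i$, (b) reads $f\circ T^{-l(i)}=f$ for all mean-zero $f$, i.e.\ $T^{l(i)}=\mathrm{Id}$. Since $T$ is weakly mixing and $\mu$ is nonatomic, this forces $l(i)=0$.
\end{enumerate}
So $\Phi_\la$ integrates out the coordinates outside $P$ and leaves the coordinates in $P$ untouched. That is, $\Phi_\la=\EE(\cdot\mid\cb(X^P))$. Comparing ranges gives $L^2(\ca)=L^2(\cb(X^P))$, hence $\ca=\cb(X^P)$.

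The step needing the most care is the explicit formula for $\Phi_\la$ derived from~\eqref{2-joi}, which must be checked against the disintegration written in the proof of Lemma~\ref{l:msj1}, together with the reduction to one-coordinate test functions. Steps (a)--(c) themselves are routine.
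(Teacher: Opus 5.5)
Your proof is correct and follows the paper's route: you form the relatively independent self-joining over $\ca$, use relative weak mixing to get ergodicity, invoke MSJ to put it in the form \eqref{2-joi}, and compare $\Phi_\la$ with $\EE(\cdot\mid\ca)$. The only difference is cosmetic. You extract $\sigma|_P=\mathrm{id}$ and $l|_P=0$ from idempotency on one-coordinate functions, whereas the paper identifies $\ca$ as the largest first-marginal factor on which $\Phi_\la$ is isometric (and acts as the identity), which the MSJ form shows is $\cb^{\otimes P}$.
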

\begin{proof} Consider the $\mathcal A$-relatively independent product
$\la= \mu^{\otimes\N}\ot_{\ca}\mu^{\otimes\N}\in J^e_2(T^{\times\infty})$, cf.\ Footnote~\ref{f:relp}.
Let $\cb_1(\la)$ denote the largest factor of $\cb(X^{\N})\ot\{\emptyset,X^\infty\}$ identified with a factor $\cb_2(\la)$ ``sitting'' on the second coordinate
$\{\emptyset,X^\infty\}\ot \cb(X^{\N})$.
In view of a general property of the relative product (see, e.g.\ \cite{Le-Pa}):
$$
\cb_1(\mu^{\ot\Bbb N}\ot_{\ca}\mu^{\ot\Bbb N})=\ca=
\cb_2(\mu^{\ot\Bbb N}\ot_{\ca}\mu^{\ot\Bbb N})  \mod\la,
$$
that is, the  Markov operator $\Phi_\lambda\colon L^2(X^\Bbb N,\mu^{\otimes\Bbb N})\to
L^2(X^\Bbb N,\mu^{\otimes\Bbb N})$ corresponding to $\lambda$ is isometric only on the elements of $L^2(\ca)$.
Moreover, $\Phi_\lambda f=f$ for each $f\in L^2(\ca)$.

On the other hand, as $T$ has MSJ, $\lambda$ is of the form (\ref{2-joi}).
It follows from~(\ref{2-joi}) that $\cb_1(\la)=\mathcal B^{\otimes P}$,
$\cb_2(\la)=\mathcal B^{\otimes \sigma(P)}$
and
$$
\Phi_\lambda\Bigg(\bigotimes_{i\in P}f_i\Bigg)=\bigotimes_{i\in \sigma(P)}f_{\sigma^{-1}(i)}\circ T^{-l(\sigma^{-1}(i))}
$$
 for all $f_i\in L^2(X,\mu)$.
Therefore, $\sigma(P)=P$, $\sigma(i)=i$  and $l(i)=0$ for each $i\in P$.
The result follows.
\end{proof}

Given a  factor $\ca\subset \cb(X^\Bbb N)$ of $T^{\times\infty}$, there exists the smallest factor $\widehat{\ca}$ of $T^{\times\infty}$ such that $\widehat{\ca}\supset\ca$ and
$T^\infty$ is $\widehat{\ca}$-relatively weakly mixing \cite{Ju-Le-Me}.
By Lemma~\ref{l:msj2},
$\widehat{\ca}=\cb(X^{P})$ for some subset $P=P(\ca)\subset \N$.
 We note that $P$ is the smallest subset of $\Bbb N$ such  that
 ${\ca}\subset (\bigotimes_{i\in P}{\mathcal B})\otimes\{\emptyset, X^{\Bbb N\setminus P}\}$.
 We can now apply the main result of \cite{Ju-Le-Me} about  the  factors of semisimple automorphisms
 (a generalization of  the Veech theorem \cite{Ve} and \cite{Ju-Ru}) to obtain a full description for the factors
 of $(T^{\times\infty},\mu^{\ot\N})$.

\begin{Prop}\label{p:msj1} Let $T$ be an MSJ automorphism. Let $\ca\subset \cb(X^\infty)$ be a factor of $T^{\times\infty}$.
Then there exists a subset $F=F(\ca)\subset\N$ and a compact subgroup $K={K}(\ca)\subset C(T^{\times F},\mu^{\otimes F})$ such that $\ca$ is the $\sigma$-algebra $\mathcal I( K)$ of subsets fixed by all elements of ${K}$.
\end{Prop}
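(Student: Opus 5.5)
The plan is to reduce to the relatively weakly mixing situation and then apply the structure theorem for factors of semisimple automorphisms from \cite{Ju-Le-Me}. Given $\ca$, let $\widehat{\ca}\supset\ca$ be the smallest factor over which $T^{\times\infty}$ is relatively weakly mixing. By Lemma~\ref{l:msj2}, $\widehat{\ca}=\cb(X^{P})$ for some $P\subset\N$, which is the smallest set with $\ca\subset\cb^{\otimes P}\otimes\{\emptyset,X^{\N\setminus P}\}$. Put $F:=P$. We may then regard $\ca$ as a factor of $(T^{\times F},\mu^{\otimes F})$ which has the extra property that the whole system $\cb(X^F)$ is the relatively weakly mixing hull of $\ca$; equivalently, $\cb(X^F)\to\ca$ is a distal (compact) extension.

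Next we use semisimplicity. By Lemma~\ref{l:msj1} (which applies equally to $T^{\times F}$, since for $F$ finite or infinite this is again a product of copies of $T$), $(T^{\times F},\mu^{\otimes F})$ is semisimple. The main theorem of \cite{Ju-Le-Me} says that for a semisimple $S$ and a factor $\ca$ with $S$ relatively weakly mixing over $\ca$ only in the trivial sense above, $\ca$ is exactly the $\sigma$-algebra of sets invariant under a compact subgroup $K$ of the centralizer $C(S)$. In the Veech-type form the statement reads: the extension $\cb(X^F)\to\ca$ is compact, and it is given by a compact group of automorphisms commuting with $S$. We apply this with $S=T^{\times F}$ and obtain $K\subset C(T^{\times F})$ compact with $\ca=\mathcal I(K)$.

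The identification in \cite{Rud}, $C(T^{\times F})=\Z^F\rtimes\Sigma(F)$, together with Proposition~\ref{structure of subgroups}, then gives the concrete shape of $K$, though this is not needed for the statement itself.

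The main obstacle is checking that the hypotheses of the \cite{Ju-Le-Me} theorem hold in our setting. Specifically, we must verify that after restricting to $F$ the extension $\cb(X^F)\to\ca$ has no relatively weakly mixing part, i.e.\ that $\widehat{\ca}$ computed inside $X^F$ is still all of $\cb(X^F)$. This follows from the minimality of $P$ in Lemma~\ref{l:msj2}: a proper relatively weakly mixing hull inside $X^F$ would by the same lemma be of the form $\cb(X^{P'})$ with $P'\subsetneq P$, contradicting minimality. A second point needing care is that the compact group produced by \cite{Ju-Le-Me} is realized in the measure-theoretic centralizer with its weak topology. This is exactly the setting of \cite{Rud}, so no further topological identification should be required.
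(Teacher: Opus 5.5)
Your proposal is correct and follows the paper's route. You identify the relatively weakly mixing hull $\widehat{\ca}=\cb(X^P)$ via Lemma~\ref{l:msj2}, then invoke the structure theorem of \cite{Ju-Le-Me} for factors of the semisimple system given by Lemma~\ref{l:msj1}. The only difference is that the paper applies \cite{Ju-Le-Me} directly to $T^{\times\infty}$, which already gives $K\subset C(T^{\times\infty}|_{\widehat{\ca}})=C(T^{\times P})$; your extra check that the hull computed inside $X^F$ is all of $\cb(X^F)$ is valid but unnecessary.
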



\subsection{The characteristic class generated by an MSJ automorphism}


As above,  the dynamical system  $(X,\mu,T)$ has  the MSJ property and $\mu$ is continuous (hence, $T$ is weakly mixing).
By the Jewett-Krieger theorem, we can assume that $T$ is a uniquely ergodic homeomorphism acting on a compact metric space $X$ and $\mu$ is the unique $T$-invariant probability
measure on $X$.
Then $T$ has no periodic orbits.

Denote by $\mathcal R$ the set of all triplets $(D,K,l)$, where $D$ is a finite subset of $\Bbb N$, $K$ is a transitive subgroup of $\Sigma(D)$ and $l\colon K\to \Bbb Z^D$ is a skew homomorphism.
Of course, $\mathcal R$ is countable.
Given $(D,K,l)\in \mathcal R$, we let
$$
C_{D,K,l}:=\{(l(\sigma), \sigma)\colon \sigma\in K\}.
$$
Then $C_{D,K,l}$ is a subgroup of $\Bbb Z^D\rtimes K$.
This subgroup acts continuously on $X^D$ by the formula
\beq\label{mavpa}
(l(\sigma), \sigma)(x_d)_{d\in D}:=(T^{l(\sigma)_d}x_{\sigma(d)})_{d\in D}\quad\text{for each $(x_d)_{d\in D}\in X^D$}.
\eeq

Denote by $\mathcal R\mathcal R$ the set of all pairs $P=((D_n,K_n,l_n)_{n=1}^N, H)$ consisting of a finite sequence of triplets $(D_n,K_n,l_n)\in\mathcal R$ and a
subgroup $H\subset \bigotimes_{n=1}^NC_{D_n,K_n,l_n}$
such that the projection of $H$ to the second coordinate of the $n$-th marginal
is $K_n$ for each $n$.
Of course, $H$ acts continuously on $\bigotimes_{n=1}^N X^{D_n}$ (see (\ref{mavpa})).
This action commutes with the homeomorphism $\bigotimes_{n=1}^N T^{\times D_n}$.
Let
$$
Y_P:=\bigg(\bigotimes_{n=1}^N X^{D_n}\bigg)/H
$$
stand for  the corresponding quotient space, i.e. the space of $H$-orbits furnished with the quotient topology.
Let $\pi_{P}\colon \bigotimes_{n=1}^N X^{D_n}\to Y_{P}$ denote the corresponding projection.
Then $Y_{P}$ is a compact metric space, $\pi_{P}$ is continuous and there exists a homeomorphism $S_{P}$ of
$Y_{P}$ such that
 $$
 \pi_P\circ \bigotimes_{n=1}^NT^{\times D_n}= S_{P}\circ \pi_P.
 $$
Let
$$
Y:=\bigotimes_{P\in\mathcal R\mathcal R}(Y_P\sqcup\{*\}).
$$
Endow $Y$  with the product topology.
 Then $Y$ is a compact metric space.
 Given $P\in\mathcal R\mathcal R
 $, we extend $S_{P}$ to a homeomorphism of $Y_{P}\sqcup \{*\}$ by setting
 $S_{P}*:=*$.
 Let
 $$
 S:=\bigotimes_{P\in \mathcal R\mathcal R}S_{P}.
 $$
 Then $S$ is a homeomorphism of $Y$.
 It has a unique fixed point.
 All other points have infinite orbits.
 Hence, $S$ has a unique one-atomic invariant measure.
 All the other invariant measures are continuous.

We first show that there is a one-to-one correspondence between the ergodic $S$-invariant measures
on $Y$ and the factors of  $(X^\Bbb N,\mu^{\otimes \Bbb N},T^{\times\infty})$ considered up to isomorphism.
More precisely,  we will prove the following proposition.

\begin{Prop}\label{ergodic case}
\begin{itemize}
\item[{\rm (i)}]
For each factor $\mathcal A$  of  $(X^\Bbb N,\mu^{\otimes \Bbb N},T^{\times\infty})$
there is an $S$-invariant measure $\alpha$ on $Y$ such that the dynamical system
$(Y,\alpha, S)$ is isomorphic to $T^{\times\infty}|_{\mathcal A}$.
\item[{\rm (ii)}]
For each ergodic $S$-invariant measure $\beta$ on $Y$, the system
$(Y,\beta, S)$ is isomorphic to a factor of $(X^\Bbb N,\mu^{\otimes \Bbb N},T^{\times\infty})$.

\end{itemize}
\end{Prop}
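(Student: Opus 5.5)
For (i), start from Proposition~\ref{p:msj1}. For a factor $\ca$ of $(X^\Bbb N,\mu^{\otimes\Bbb N},T^{\times\infty})$ it supplies a set $F=F(\ca)\subset\N$ and a compact subgroup $K\subset C(T^{\times F})$ with $\ca=\mathcal I(K)$. By the result of \cite{Rud} quoted above, $C(T^{\times F})=\Z^F\rtimes\Sigma(F)$. Proposition~\ref{structure of subgroups} then gives a partition $F=\bigsqcup_i D_i$ into finite sets, transitive $K_i\subset\Sigma(D_i)$ and skew homomorphisms $l_i$ with $K\subset\bigotimes_i C_{D_i,K_i,l_i}$. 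After relabelling (the factor is only needed up to isomorphism), I may assume the $D_i$ are finite subsets of $\N$ realising these triplets in $\mathcal R$.

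Next I truncate. For $N\ge1$ let $H_N$ be the projection of $K$ to $\bigotimes_{n\le N}C_{D_n,K_n,l_n}$; this is a finite group, and $P_N:=((D_n,K_n,l_n)_{n\le N},H_N)\in\mathcal R\mathcal R$. The $\sigma$-algebra $\ca_N$ of $H_N$-invariant sets in the first $N$ blocks increases to $\mathcal I(K)=\ca$. This uses compactness of $K$: $K=\varprojlim H_N$, and averaging over Haar measure of $K$ gives $\EE(f|\mathcal I(K))$, which for cylinder $f$ depending on the first $N$ blocks equals averaging over $H_N$. Moreover $(X^{D_1\cup\dots\cup D_N},\mu^{\otimes},T^{\times})|_{\ca_N}$ is realised topologically as $(Y_{P_N},\pi_{P_N*}\mu^{\otimes},S_{P_N})$. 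Since $H_N$ is finite, $\pi_{P_N}$ is a finite quotient, and $H_N$-invariant Borel sets are exactly pullbacks of Borel sets of the orbit space.

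Now define $\alpha$ as the image of $\mu^{\otimes\Bbb N}$ under the map $x\mapsto (y_P)_P$, with $y_{P_N}=\pi_{P_N}(x|_{D_1\cup\dots\cup D_N})$ and $y_P=*$ for every $P$ not among the $P_N$. This map is equivariant, and the $\sigma$-algebra it generates is $\bigvee_N\ca_N=\ca$, so $(Y,\alpha,S)\cong T^{\times\infty}|_\ca$. The degenerate case $F=\emptyset$ (trivial factor) is handled by the Dirac measure at the fixed point $(*,*,\dots)$.

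For (ii), let $\beta$ be ergodic on $Y$. For each $P$, the set $\{y_P=*\}$ is $S$-invariant, so by ergodicity it has $\beta$-measure $0$ or $1$. Let $Q\subset\mathcal R\mathcal R$ be the countable set of $P$ with $\beta(y_P\ne *)=1$. Then $\beta$ lives on $\prod_{P\in Q}Y_P$. Each $(Y_P,S_P)$ is a topological factor of $\bigotimes_n T^{\times D_n}$, a finite power of the uniquely ergodic $T$. Lift $\beta$ through a Borel cross-section of $\prod_{P\in Q}\pi_P$ (as in Lemma~\ref{l:gaj1}) and Cesàro-average. The result is an invariant measure on a countable product of copies of $X$ whose marginals on each copy are $\mu$ by unique ergodicity; that is, an infinite self-joining of $T$, which has $(Y,\beta,S)$ as a factor. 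Taking an ergodic component over $\beta$ (possible since $\beta$ is ergodic) gives an ergodic infinite self-joining. By MSJ this is a product of off-diagonal joinings, hence isomorphic to $(T^{\times\infty},\mu^{\otimes\N})$ or to a finite power of it. If it is finite, embed it as a factor of the infinite power. So $(Y,\beta,S)$ is a factor of $(X^\Bbb N,\mu^{\otimes\Bbb N},T^{\times\infty})$.

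The main obstacle is in (i): verifying that $\ca_N\uparrow\mathcal I(K)$ and that the chosen $P_N$ genuinely lie in $\mathcal R\mathcal R$. The latter requires matching the projection condition onto each $K_n$, which holds because $K_n=p_n\circ p(K)$ by Proposition~\ref{structure of subgroups}. A further point to check is the relabelling of $F$ inside $\N$, which must be compatible with the identification of $\ca$ up to isomorphism.
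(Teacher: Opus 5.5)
Your proposal is correct and follows essentially the same route as the paper. In (i) you truncate the compact group to the finite groups $H_N$, use $P_N=((D_n,K_n,l_n)_{n\le N},H_N)\in\mathcal R\mathcal R$, and code the factor into the $P_N$-coordinates with $*$ elsewhere; in (ii) you isolate the coordinates where $\beta$ does not charge $*$, lift $\beta$ to an ergodic infinite self-joining of the uniquely ergodic $T$, and invoke MSJ. Your Haar-averaging/martingale argument for $\bigvee_N\mathcal A_N=\mathcal I(K)$, and your explicit ergodic-decomposition and MSJ steps in (ii), make precise points the paper leaves implicit (e.g.\ ``without loss of generality $\alpha$ is an infinite self-joining'').
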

\begin{proof}
(i)
The trivial factor corresponds to the unique one-atomic $S$-invariant measure
on $Y$.
Suppose now that $\mathcal A$ is nontrivial.
 It follows from Propositions~\ref{p:msj1}, \ref{structure of subgroups}  and \cite{Rud} that there exists
 a countable (or finite) sequence $(D_n,K_n,l_n)\in \mathcal R$
 and a compact subgroup $H\subset \bigotimes_n C_{D_n,K_n,l_n}$
 such that $\mathcal A$ is isomorphic to   the factor $\mathcal I(H)$
 of the product system $\bigotimes_n(X^{D_n}, \mu^{\otimes D_n}, T^{\times D_n})$.
 We remind that  $H$ (and the entire $\bigotimes_n C_{D_n,K_n,l_n}$) is contained in the centralizer of  $\bigotimes_n T^{\times D_n}$.
 Given $N$, we denote by $H_N$ the image of $H$ in $\bigotimes_{n=1}^N C_{D_n,K_n,l_n}$
 along the natural projection $\bigotimes_{n} C_{D_n,K_n,l_n}\to \bigotimes_{n=1}^N C_{D_n,K_n,l_n}$.
 Then  the pair
 $$
 P_N:=((D_n,K_n,l_n)_{n=1}^N, H_N)
 $$
  belongs to $\mathcal R\mathcal R$.
 We let $\xi_N:=\bigotimes_{n=1}^N\big(\mu^{\otimes D_n}\big)\circ\pi_{P_N}^{-1}$.
Then we obtain an ``inverse'' sequence
$$
(Y_{P_1},\xi_{P_1}, S_{P_1})\leftarrow (Y_{P_2},\xi_{P_2}, S_{P_2})\leftarrow\cdots
$$
of measure preserving dynamical systems.
All the arrows are equivariant mappings.
Hence, the inverse  limit $(\widetilde Y,\widetilde\xi, \widetilde S)$  of this sequence is well defined.
 Of course, $(Y_{P_N},\xi_{P_N}, S_{P_N})$
 is isomorphic to $\mathcal I(H_N)$ of the product system $\bigotimes_{n=1}^N(X^{D_n}, \mu^{\otimes D_n}, T^{\times D_n})$ for each $N$.
Hence, $(\widetilde Y,\widetilde\xi, \widetilde S)$ is isomorphic to $\mathcal I(H)$ and, hence, to $\mathcal A$.

To define a measure on $Y$, we first note that
the inverse  limit $\widetilde Y$ of compact spaces $Y_{P_N}$ embeds canonically into the Cartesian product
$\bigotimes_N Y_{P_N}$.
We use this embedding to transfer the measure $\widetilde\xi$ to $\bigotimes_N Y_{P_N}$.
Thus, we obtain an $\bigotimes_N S_{P_N}$-invariant probability measure on $\bigotimes_N Y_{P_N}$.
In turn, $\bigotimes_N Y_{P_N}$ embeds  naturally  into  $\bigotimes_N (Y_{P_N}\sqcup\{*\})$
as an $\bigotimes_N S_{P_N}$-invariant compact subset.
Hence,
 we obtain an $\bigotimes_N S_{P_N}$-invariant measure on $\bigotimes_N (Y_{P_N}\sqcup\{*\})$.
Denote it by $\xi$.
Let $\mathcal R\mathcal R':=\mathcal R\mathcal R\setminus\{P_1,P_2,\dots\}$.
 We now define a  measure $\alpha$ on $Y=\bigotimes_N(Y_{P_N}\sqcup\{*\})\otimes \bigotimes_{Q\in \mathcal R\mathcal R'}(Y_{Q}\sqcup\{*\})$ by setting
 $$
 \alpha:=\xi\otimes\bigotimes_{Q\in \mathcal R\mathcal R'}\delta_{*}.
  $$
Then $\alpha$ is invariant under $S$ and $(Y,\alpha, S)$ is isomorphic to $\mathcal A$, as desired.

(ii)
 Conversely, let $\beta$ be an ergodic $S$-invariant measure on
 $Y$.
 For each  $P\in\mathcal R\mathcal R$, let $\beta_{ P}$
denote the projection of $\beta$  onto the $P$-coordinate of $Y$, i.e. to the compact set $Y_{P}\sqcup\{*\}$.
Then $\beta_{ P}$ is an ergodic $S_{P}$-invariant probability on this set.
Hence, either $\beta_{ P}(\{*\})=0$ or $\beta_{P}(\{*\})=1$.
Let
$$
\mathcal R\mathcal R_\beta:=\{P\in\mathcal R\mathcal R\colon \beta_{ P}(\{*\})=0\}.
$$
Then $\beta$ is supported on the closed $S$-invariant subset
$$
Y_\beta:=\left(\bigotimes_{P\in\mathcal R\mathcal R_\beta}Y_{P}\right)\times
\bigotimes_{P\not\in\mathcal R\mathcal R_\beta}\{*\}\subset Y.
$$
We now set
 $$
 X_\beta:=\bigotimes_{ \mathcal R\mathcal R_\beta\,\ni \,P=\big((D_n,\dots)_{n=1}^N,H\big)}\bigotimes_{n=1}^N X^{D_n}\quad\text{and}\quad \pi^\beta:=\bigotimes_{P\in \mathcal R_\beta}\pi_{P}.
 $$
 Then $\pi^\beta$ is an equivariant continuous mapping from the topological system
 $\Big(X_\beta, \bigotimes_{(D_n,\dots)_{n=1}^N\in \mathcal R\mathcal R_\beta}
 \bigotimes_{n=1}^NT^{\times D_n}\Big)$ onto $(Y_\beta, S)$.
 Since the former system is a compact extension of the latter one, we can
 choose an ergodic  $\bigotimes_{(D_n,\dots)_{n=1}^N\in \mathcal R\mathcal R_\beta}
 \bigotimes_{n=1}^NT^{\times D_n}$-invariant measure $\alpha$
 on $X_\beta$ that projects onto $\beta$ under $\pi^\beta$.
 As $T$ is uniquely ergodic, $\alpha$ is an ergodic ``multiple'' self-joining of $T$.
 Without loss of generality, we may assume that $\alpha$ is an infinite self-joining of $T$.
 Thus, we showed that $(Y,\beta, S)$ is a factor of  $(X^{\times\Bbb N}, \mu^{\otimes\Bbb N}, T^{\times\infty})$,
as desired.
\end{proof}

Prior to state the main result of this section, we prove two more auxiliary facts.

 Let $Z$ be  an uncountable compact Polish space and let
 $Q\colon Z\to Z$  be a homeomorphism.
Let $M(Z,Q)_*$ be the subset of $M(Z,Q)$ consisting of all
nonatomic measures.
Then  $M(Z,Q)_*$ is a $G_\delta$-subset of $M(Z,Q)$.
Hence, it is Polish in the induced topology.

\begin{Lemma}\label{Bor} For each $\lambda\in M(Z,Q)_*$, there is Borel bijection $\varphi_\lambda\colon Z\to [0,1]$ (mod $\lambda$) such that $\lambda\circ\varphi_\lambda^{-1}=\text{\rm Leb}$
and the mapping
$$
\boldsymbol a_Z\colon M(Z,Q)_*\ni\lambda\mapsto \boldsymbol a_ Z(\lambda):=\varphi_\lambda Q\varphi_\lambda^{-1}\in \text{\rm Aut}([0,1],\text{\rm Leb})
$$
is Borel.
\end{Lemma}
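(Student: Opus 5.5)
The plan is to build $\varphi_\lambda$ by a measurable version of the classical isomorphism theorem for nonatomic standard probability spaces: transport $\lambda$ to $[0,1]$ by a "cumulative distribution function'' taken along a fixed Borel embedding of $Z$ into the line. Every choice must be made uniformly, so that all dependence on $\lambda$ is through integrals $\int f\,d\lambda$ with $f$ from a fixed countable family; such integrals are continuous, or at least Borel, in $\lambda$.

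First, fix once and for all a Borel isomorphism $\iota\colon Z\to C$ onto the Cantor set $C=\{0,1\}^{\N}$. This is possible since $Z$ is uncountable Polish. Order $C$ lexicographically, which makes it a Borel linear order, and set
$$
\varphi_\lambda(z):=\lambda\big(\{w\in Z\colon \iota(w)\le \iota(z)\}\big).
$$
Because $\lambda$ is nonatomic, $\lambda\circ\varphi_\lambda^{-1}=\mathrm{Leb}$. Moreover, $\varphi_\lambda$ is injective off the union of the "flat'' intervals of the distribution function. This union is a countable union of $\lambda$-null sets, so $\varphi_\lambda$ is a bijection mod $\lambda$.

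Second, check Borel dependence on $\lambda$. The value $\varphi_\lambda(z)$ is a countable limit of numbers $\lambda(\iota^{-1}[\text{cylinder}])$. The map $\lambda\mapsto\lambda(B)$ is Borel on $M(Z)$ for each Borel set $B$, because the weak$^*$ Borel structure is generated by exactly these evaluations. Hence $(\lambda,z)\mapsto\varphi_\lambda(z)$ is jointly Borel.

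Third, show that $\boldsymbol a_Z$ is Borel. The Borel structure on $\mathrm{Aut}([0,1],\mathrm{Leb})$ (weak topology) is generated by the maps $V\mapsto \mathrm{Leb}(VI\cap J)$, with $I,J$ running over rational intervals. For $V=\varphi_\lambda Q\varphi_\lambda^{-1}$ we have
$$
\mathrm{Leb}(VI\cap J)=\lambda\big(Q\varphi_\lambda^{-1}(I)\cap\varphi_\lambda^{-1}(J)\big).
$$
Since $\varphi_\lambda$ is monotone with respect to the fixed order, $\varphi_\lambda^{-1}(I)$ is an order interval $\{a\le\iota(z)\le b\}$ up to $\lambda$-null sets. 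Its endpoints can be chosen Borel-measurably in $\lambda$, for instance as the first point of $C$ at which the distribution function exceeds the rational endpoint; this is a measurable selection from a closed-set-valued map in the order. Thus the displayed quantity is $\lambda$ evaluated on a Borel set that depends on parameters measurable in $\lambda$. By the joint measurability of $(\lambda,w)\mapsto$ indicator functions, it is a Borel function of $\lambda$.

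The main obstacle is this third step: making the preimages of intervals, and hence $\lambda\mapsto\lambda(\cdot)$ applied to sets that themselves depend on $\lambda$, genuinely Borel rather than merely universally measurable. If the endpoint selection proves delicate, the fallback is to use the joint Borel function $F(\lambda,z)=\varphi_\lambda(z)$ and write
$$
\mathrm{Leb}(VI\cap J)=\int 1_I(F(\lambda,z))\,1_J(F(\lambda,Qz))\,d\lambda(z).
$$
The map $\lambda\mapsto\int h(\lambda,z)\,d\lambda(z)$ is Borel for any bounded jointly Borel $h$ (monotone class argument). Therefore $\boldsymbol a_Z$ is Borel, and invertibility of $\boldsymbol a_Z(\lambda)$ holds mod $\mathrm{Leb}$ because $\varphi_\lambda$ is a bijection mod $\lambda$.
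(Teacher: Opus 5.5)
Your argument is correct and is essentially the paper's proof: you transport $\lambda$ to Lebesgue measure by its distribution function along a fixed Borel linear order (the paper first identifies $Z$ with $[0,1]$, you use the lexicographic order on the Cantor set), check joint Borelness in $(\lambda,\cdot)$, and write the generating functionals $\text{Leb}(VI\cap J)$ as integrals of jointly Borel integrands. The one difference is the measure you integrate against: the paper integrates over $t\in[0,1]$ against Lebesgue measure, also using the pseudo-inverse of the distribution function, so plain Fubini suffices. Your fallback integrates against $\lambda$ itself and so relies on the standard fact that $\lambda\mapsto\int h(\lambda,z)\,d\lambda(z)$ is Borel for bounded jointly Borel $h$; the endpoint-selection route is not needed.
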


\begin{proof} Without loss of generality we may assume that
$Z=[0,1]$.\footnote{This means that there is  a Borel isomorphism of $Z$ onto $[0,1]$.
Then
 $Q$ (or rather the corresponding copy of $Q$) is no longer a homeomorphism of $[0,1]$.
 However, this copy is a Borel isomorphism
of $[0,1]$.
Moreover, passing from $Z$ to $[0,1]$ transfers  $M_*(Z,Q)$ bijectively
onto the set of Borel non-atomic probability measures on $[0,1]$.
This mapping is  Borel if we endow the latter set with the usual Borel structure which does not depend on the choice of compact topology generating it.}
Take a probability $\lambda\in M(Z,Q)_*$.
Since $\lambda$ is nonatomic,
the $\lambda$-distribution mapping
$$
\psi_\lambda\colon [0,1]\ni u\mapsto\psi_\lambda(u):=\lambda([0,u])\in [0,1]
$$
is continuous, non-decreasing, $\psi_\lambda(0)=0$
and $\psi_\lambda(1)=1$.
Hence, the pseudo-inverse  to $\psi_\lambda$ mapping
$$
[0,1]\ni t\mapsto\varphi_\lambda(t):=\min\{u\colon \psi_\lambda(u)\ge t\}
$$
is well defined and continuous.
It is well known that
$\psi_\lambda$ is an isomorphism (mod 0) of $([0,1],\lambda)$ onto $([0,1], \text{Leb})$.
Moreover, $\varphi_\lambda=\psi_\lambda^{-1}$ mod 0.
Then, of course, $\varphi_\lambda Q\varphi_\lambda^{-1}\in \text{\rm Aut}([0,1],\text{\rm Leb})$ for each $\lambda\in M(Z,Q)_*$.
It remains to show that for all Borel subsets $A,B\subset[0,1]$, the mapping
\beq\label{rural}
M(Z,Q)_*\ni\lambda\mapsto\text{Leb}( \varphi_\lambda^{-1} Q\varphi_\lambda A\cap B)
\eeq
is Borel.
We note that
 the mappings
\begin{align*}
M(Z,Q)_*\times [0,1]\ni(\lambda, t)\mapsto \psi_\lambda(t)\in[0,1]\\
M(Z,Q)_*\times [0,1]\ni(\lambda, t)\mapsto \varphi_\lambda(t)\in[0,1]\\
\end{align*}
are Borel.
Hence,
the mapping
$$
M(Z,Q)_*\times [0,1]\ni(\lambda,t)\mapsto \psi_\lambda Q\varphi_\lambda( t)
$$
is also Borel.
As
\begin{align*}
\text{Leb}( \varphi_\lambda Q^{-1}\varphi_\lambda^{-1}A\cap B)
&=
\int_0^1 1_A(\varphi_\lambda^{-1} Q\varphi_\lambda( t))1_B(t)\,dt\\
&=
\int_0^1 1_A(\psi_\lambda Q\varphi_\lambda( t))1_B(t)\,dt,
\end{align*}
it follows that the mapping (\ref{rural}) is Borel, as desired.
\end{proof}


Suppose that a $Q$-invariant measure $\kappa$ is fixed on $Z$.
Denote by $\frak F$ the set of all $Q$-invariant sub-sigma-algebras (mod $\kappa$) on $Z$.
To topologize $\frak F$, consider for each $\mathcal E\in\frak F$, the orthogonal projection
$P_\mathcal E$ in $L^2(Z,Q)$ onto the closed subspace
$L^2(Z,\mathcal E)$.
Then the mapping $\frak F\ni\mathcal E\mapsto P_\mathcal E$ is a one-to-one embedding
of $\frak F$ into the set of orthogonal projections in $L^2(Z,Q)$.
It is straightforward to verify that the image of this embedding is $G_\delta$. Hence, it is Polish in the induced topology.
We now furnish $\frak F$ with the weak operator topology using this embedding.
Then $\frak F$ is a  Polish space.
Select a sequence $(f_n)_{n=1}^\infty$ of continuous functions
from $Z$ to $[0,1]$ such that $\{f_n\colon n\in\Bbb N\}$ is dense in the unit ball of the  Banach space $(C([0,1]), \|.\|_\infty)$.
Given $\mathcal E\in\frak F$, we let $\EE_\mathcal E=\EE(\cdot|\mathcal{E}):L^1(Z,\kappa)\to L^1(Z,\mathcal E,\kappa\restriction\mathcal E)$.
Denote by $W$ the infinite Cartesian product  $[0,1]^{\Bbb N}$.
The functions $\EE_\mathcal E(f_n)\colon Z\to[0,1]$, $n\in\Bbb N$, are $\mathcal E$-measurable
and generate (together) the entire $\mathcal E$.
Hence, the mapping
$$
\boldsymbol b_\mathcal E\colon Z\ni x\mapsto \boldsymbol b_\mathcal E(x):=\Big(\big(
\EE_\mathcal E(f_n)(Q^mx)\big)_{n=1}^\infty\Big)_{m\in\Bbb Z}\in W^\Bbb Z
$$
is a measure theoretical isomorphism of  $(Z,\mathcal E,\kappa|_{\mathcal E})$
onto $(W^\Bbb Z, (\kappa|_{\mathcal E})\circ\boldsymbol b_\mathcal E^{-1})$ that intertwines
$Q$ with the two-sided shift $D$ on $W^{\Bbb Z}$.
Moreover, the mapping
$$
\boldsymbol b\colon \frak F\ni\mathcal E\mapsto\boldsymbol b(\mathcal E):=(\kappa|_{\mathcal E})\circ\boldsymbol b_\mathcal E^{-1}\in M(W^{\Bbb Z},D)
$$
is Borel.
Thus, we have shown the following lemma.

\begin{Lemma}\label{factors-measures} There is a Borel
mapping $\boldsymbol b\colon \frak F\to M(W^{\Bbb Z},D)$ such that the dynamical systems
$(Z,\mathcal E,\kappa|_{\mathcal E}, Q)$ and $(W^\Bbb Z, \boldsymbol b(\mathcal E), D)$
are isomorphic for each $\mathcal{E}\in\frak F$.
\end{Lemma}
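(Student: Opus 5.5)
The construction in the paragraph preceding Lemma~\ref{factors-measures} already gives the map; what remains is to check, in order, that it is well defined, that it is an isomorphism, and that it is Borel.

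\emph{Step 1 (choice of coding functions).} Fix a countable family $(f_n)$ of continuous functions $Z\to[0,1]$ that is sup-norm dense in the positive part of the unit ball of $C(Z)$. Then $(f_n)$ is also $L^2(\kappa)$-dense in $\{f\in L^2(Z,\kappa)\colon 0\le f\le 1\}$. For $\mathcal E\in\frak F$, put $g_n^{\mathcal E}:=\EE_{\mathcal E}(f_n)$; we may choose versions with values in $[0,1]$.

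\emph{Step 2 (the $g_n^{\mathcal E}$ generate $\mathcal E$).} Since $\EE_{\mathcal E}$ is an $L^2$-contraction and $\EE_{\mathcal E}(1_A)=1_A$ for $A\in\mathcal E$, each such indicator is an $L^2$-limit of functions $g_n^{\mathcal E}$. Hence $\sigma(g_n^{\mathcal E}\colon n\geq1)=\mathcal E$ mod $\kappa$.

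\emph{Step 3 (isomorphism).} By $Q$-invariance of $\mathcal E$ we have $\EE_{\mathcal E}(f)\circ Q=\EE_{\mathcal E}(f\circ Q)$, so $g_n^{\mathcal E}\circ Q^m$ is $\mathcal E$-measurable. The map $\boldsymbol b_{\mathcal E}$ therefore intertwines $Q$ with the shift $D$, and its pullback $\sigma$-algebra is exactly $\mathcal E$ (the $m=0$ coordinates already generate it, by Step~2). Consequently $\boldsymbol b_{\mathcal E}$ induces an isomorphism of $(Z,\mathcal E,\kappa|_{\mathcal E},Q)$ onto $(W^{\Bbb Z},\boldsymbol b(\mathcal E),D)$.

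\emph{Step 4 (Borel measurability of $\mathcal E\mapsto\boldsymbol b(\mathcal E)$).} I expect this to be the main obstacle. The $\sigma$-algebra on $M(W^{\Bbb Z},D)$ is generated by the integrals against continuous cylinder functions. By Stone--Weierstrass, monomials in finitely many coordinates suffice, so it is enough to show that
\[
\mathcal E\mapsto \int\prod_{j=1}^k \EE_{\mathcal E}(f_{n_j})\circ Q^{m_j}\,d\kappa
\]
is Borel in the weak operator topology on projections $P_{\mathcal E}$.

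For $k=1$ this integral is $\langle P_{\mathcal E}f_{n},1\rangle$, which is continuous in $\mathcal E$. For $k\ge2$, rewrite each factor as $U_Q^{m_j}P_{\mathcal E}f_{n_j}$, using that $P_{\mathcal E}$ commutes with $U_Q$. The map $\mathcal E\mapsto P_{\mathcal E}f$ is weakly continuous into $L^2$. On the set of projections, weak and strong operator topologies coincide, since $\|Pf\|^2=\langle Pf,f\rangle$. Hence $\mathcal E\mapsto P_{\mathcal E}f$ is norm-continuous into $L^2$. The factors are bounded by $1$, and truncation to $[0,1]$ is Lipschitz in $L^2$, so multiplication of these $L^\infty$-bounded factors is continuous in $L^2$, hence in $L^1$. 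The integral is therefore a continuous (in particular Borel) function of $\mathcal E$. This yields Borelness of $\boldsymbol b$ and completes the proof.
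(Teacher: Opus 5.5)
Your proposal is correct and is the paper's own construction: the paper defines $\boldsymbol b_{\mathcal E}(x)=\big((\EE_{\mathcal E}(f_n)(Q^mx))_{n}\big)_{m\in\Bbb Z}$, notes that the functions $\EE_{\mathcal E}(f_n)$ generate $\mathcal E$, and then simply asserts that $\mathcal E\mapsto\boldsymbol b(\mathcal E)$ is Borel. Your Step~4 supplies the detail the paper omits, using two facts: the weak and strong operator topologies agree on projections, so $\mathcal E\mapsto P_{\mathcal E}f$ is norm-continuous; and products of uniformly bounded factors depend $L^2$-continuously on the factors. This shows that $\boldsymbol b$ is in fact weak$^*$-continuous, which is stronger than the Borel measurability the lemma claims.
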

We now prove the main result of this section.


\begin{Th}\label{t:universalMSJ} Let $(X,\mu,T)$ have the MSJ property. Then the characteristic class $\cf(T)$ has a universal model.\end{Th}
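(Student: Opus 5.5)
The natural candidate for the universal model is $(\widetilde Y,\widetilde S):=([0,1]\times Y,{\rm Id}_{[0,1]}\times S)$, where $(Y,S)$ is the product of the quotient systems $Y_P\sqcup\{*\}$, $P\in\mathcal R\mathcal R$, constructed above. Recall from Proposition~\ref{p:simfac} that $\cf(T)=\cf(T)_{\rm ec}$ and that $\cf(T)$ consists exactly of the factors of ${\rm Id}_{[0,1]}\times T^{\times\infty}$. The proof then has two parts: every invariant measure of $\widetilde S$ gives an element of $\cf(T)$, and every element of $\cf(T)$ is realized by some invariant measure.

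For the first part, take $\nu\in M(\widetilde Y,\widetilde S)$. Its ergodic components are of the form $\delta_t\otimes\beta$ with $\beta\in M^e(Y,S)$. By Proposition~\ref{ergodic case}(ii), each such component is isomorphic to a factor of $(X^{\N},\mu^{\otimes\N},T^{\times\infty})$, so it lies in $\cf(T)$. Hence $\nu\in\cf(T)_{\rm ec}=\cf(T)$. Alternatively, Corollary~\ref{p:random-factor?} shows directly that $\nu$ is a factor of ${\rm Id}\times T^{\times\infty}$.

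For the second part, let $R\in\cf(T)$ act on $(Z,\kappa)$ with ergodic decomposition $\kappa=\int\kappa_{\bar z}\,d\bar\kappa$. Each ergodic component $(R,\kappa_{\bar z})$ is an ergodic element of $\cf(T)$, hence by simplicity a factor of $T^{\times\infty}$. By Proposition~\ref{ergodic case}(i), for each $\bar z$ there is $\alpha\in M^e(Y,S)$ with $(Y,\alpha,S)\cong(R,\kappa_{\bar z})$. The central task is to choose this $\alpha$ measurably in $\bar z$ and in an injective way. Once that is done, we fix a Borel injection $\iota\colon\bar Z\to[0,1]$ and set $\nu:=\int\delta_{\iota(\bar z)}\otimes\alpha_{\bar z}\,d\bar\kappa(\bar z)$. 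The first coordinate then separates the ergodic components, so Proposition~\ref{fields} gives $(\widetilde Y,\nu,\widetilde S)\cong R$. This also accounts for the factor $[0,1]$: it supplies injectivity even when many components are isomorphic.

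To obtain the measurable choice I would mimic Lemma~\ref{l:irrmult1} and Proposition~\ref{p:noum}, using Jankov--von Neumann. First, realize $R$ topologically (Lemma~\ref{Krieger-non-ergodic}), so that $\bar z\mapsto\kappa_{\bar z}\in M^e(Z,R)$ is Borel. Then consider the Borel set $M^{\rm graph}(Y\times Z,S\times R)$ of Lemma~\ref{l:sm1}, whose $Z$-projection, by the previous paragraph, contains every $\kappa_{\bar z}$. An analytic selector composed with projection to $Y$ gives $\bar z\mapsto\alpha_{\bar z}$, which is measurable modulo $\bar\kappa$.

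Lemmas~\ref{Bor} and~\ref{factors-measures}, which parametrize factors and nonatomic measures Borel-ly, indicate an alternative and more explicit route through the map $\mathcal E\mapsto P_{\mathcal E}$. This would only be needed if the graph-joining selection runs into measurability trouble. I expect the main difficulty to lie in this measurable selection and in making sure the isomorphisms patch together into a single isomorphism. Proposition~\ref{fields}, together with the graph-joining characterization, should handle this.
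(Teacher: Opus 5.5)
Your proposal is correct. The first half, showing that invariant measures of ${\rm Id}\times S$ give members of $\cf(T)$ via Proposition~\ref{ergodic case}(ii) and Corollary~\ref{p:random-factor?}, is the same as in the paper.

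For the realization half you take a genuinely different route to the measurable selection. The paper works inside ${\rm Id}\times T^{\times\N}$. It writes the given system as a factor $\mathcal C$, identifies its space of ergodic components with $([0,1],\kappa)$ and its fibre factors with $t\mapsto\mathcal C_t$. It then proves that $V=\{(t,\lambda)\colon (Y,\lambda,S)\cong\mathcal C_t\}$ is analytic. This needs the Borel parametrizations of Lemmas~\ref{Bor} and~\ref{factors-measures}, together with the Foreman--Rudolph--Weiss theorem \cite{Fo-Ru-We} that conjugacy in ${\rm Aut}([0,1],{\rm Leb})$ is analytic. Because Lemma~\ref{Bor} only handles nonatomic measures, the paper must also split $\kappa=q\kappa_d+(1-q)\kappa_c$ and treat trivial fibres separately via $\delta_\ast$.

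You instead encode isomorphism by graph joinings. After realizing $R$ topologically, the Borel set $M^{\rm graph}(Y\times Z,S\times R)$ of Lemma~\ref{l:sm1} and Jankov--von Neumann give $\bar z\mapsto\alpha_{\bar z}$ directly, exactly as in Proposition~\ref{p:noum} and Lemma~\ref{l:irrmult1}. This is more economical: no external analyticity theorem is needed, and no atomic/nonatomic split, since graph joinings with $\delta_\ast$ are allowed. The price is modest. You must separate the components yourself with the Borel injection $\iota$ and invoke Proposition~\ref{fields} to assemble the global isomorphism; in the paper this is automatic because the first coordinate already is the component parameter $t$.

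Two small points are worth making explicit.
\begin{enumerate}
\item For $\bar\kappa$-a.e.\ $\bar z$, $\kappa_{\bar z}$ is a factor of $(T^{\times\infty},\mu^{\otimes\N})$. This holds because ergodic components of a factor of ${\rm Id}\times T^{\times\infty}$ are factors of its ergodic components, which are copies of the ergodic $T^{\times\infty}$.
\item The set $\{\bar z\colon \kappa_{\bar z}\in(\pi_Z)_\ast M^{\rm graph}\}$ is analytic, hence $\bar\kappa$-measurable and conull. Therefore the selector composed with $\bar z\mapsto\kappa_{\bar z}$ is measurable mod $\bar\kappa$.
\end{enumerate}
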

\begin{proof}
We claim that $([0,1]\times Y,\text{Id}\times S)$ is a universal model for $\mathcal F(T).$

 First,  it follows from Proposition~\ref{ergodic case}(ii) and Corollary~\ref{p:random-factor?} that for each $(\text{Id}\times S)$-invariant
probability measure $\omega$ on the space $[0,1]\times Y$, the corresponding dynamical system
$([0,1]\times Y, \omega, \text{Id}\times S)$ is a factor of the product  $([0,1]\times X^\Bbb N,\text{Leb}\otimes\mu^{\ot\infty}, \text{Id}\times T^{\times \Bbb N})$.
Thus, $([0,1]\times Y, \omega, \text{Id}\times S)\in \mathcal F(T)$.\footnote{
Indeed, the ergodic decomposition of $({\rm Id}\times S,\omega)$ is given by a choice of measure on the set of ${\rm Id}\times S$-ergodic measures on $[0,1]\times Y$. All such ergodic measures are of the form $\delta_t\otimes\omega'$, where $\omega'$ is an $S$-ergodic measure on $Y$. Hence, each such ergodic measure determines an ergodic system $(Y,\omega',S)$ which is isomorphic to a factor of $(T^{\times\infty},\mu^{\ot\N})$ and therefore Corollary~\ref{p:random-factor?} applies.
}


Conversely, take an arbitrary  dynamical system  belonging to $\mathcal F(T)$.
By  Proposition~\ref{p:simfac}, this system is isomorphic to a factor
 of the product
 $$
 ([0,1]\times X^\Bbb N,\text{Leb}\otimes\mu^{\ot\infty}, \text{Id}\times T^{\times \Bbb N}).$$
 Denote this factor by $\mathcal C$.
We remind that for an arbitrary measure preserving transformation $Q$, the sigma-algebra of
$Q$-invariant subsets is denoted by $\mathcal I(Q)$.
We now have that
$$
\mathcal I((\text{Id}\times T^{\times \Bbb N})\restriction \mathcal C)=\mathcal I( \text{Id}\times T^{\times \Bbb N})\cap\mathcal C=\Big(\mathcal B([0,1])\otimes\{\emptyset, X^{\times \Bbb N}\}\Big)\cap\mathcal C.
$$
Hence, there exists a sigma-algebra $\mathcal G\subset\mathcal B([0,1])$ such that
$$
\mathcal I((\text{Id}\times T^{\times \Bbb N})\restriction \mathcal C)=\mathcal G\otimes\{\emptyset, X^{\times \Bbb N}\}.
$$
Then $\mathcal G$ determines a quotient mapping
$$
([0,1],\mathcal G,\text{Leb}\restriction\mathcal G)\to([0,1], \mathcal B([0,1]),\kappa)
$$
 for some Borel measure $\kappa$ on $[0,1]$.
 The space $([0,1], \mathcal B([0,1]),\kappa)$ is the space of ergodic components
 of $(\text{Id}\times T^{\times \Bbb N})\restriction \mathcal C$.
 Restricting $\mathcal C$ to the fibers over points of this space we obtain
a measurable field $[0,1]\ni t\mapsto \mathcal C_t$ of factors of
  $(X^\Bbb N,\mu^{\otimes N},T^{\times \Bbb N})$.\footnote{We remind that a.e. ergodic component of a factor of a dynamical system is a factor of an ergodic component of the system.}
We also note that in view of Proposition~\ref{2factors}, $\mathcal C$
  is completely determined by this field.
 Denote by $\frak F$ the set of all factors of
 $(X^\Bbb N,\mu^{\otimes N},T^{\times \Bbb N})$
 and endow $\frak F$ with the Polish topology as in Lemma~\ref{factors-measures}.
Then we  have shown  that a measurable field
  $$
  ([0,1], \mathcal B([0,1]),\kappa)\ni t\mapsto \mathcal C_t\in\frak F
  $$
  is well defined.
  We now can write $\kappa$ as a convex combination $\kappa=q\kappa_d+(1-q)\kappa_c$ of two mutually disjoint probability measures $\kappa_d$ and $\kappa_c$ such that
  $\mathcal C_t$ is trivial for $\kappa_d$-a.e. $t\in[0,1]$ and  $\mathcal C_t$ is non-trivial
  for $\kappa_c$-a.e. $t\in[0,1]$.
  We have to show that the two systems
  $([0,1],\kappa_d, (\mathcal C_t)_{t\in[0,1]})$
  and $([0,1],\kappa_c, (\mathcal C_t)_{t\in[0,1]})$ can be represented via some
  invariant measures for $([0,1]\times Y, \text{Id}\times S)$.
  The first one corresponds obviously to the measure $\kappa_d\otimes\delta_{*}$.
  It remains to find a corresponding measure for $([0,1],\kappa_c, (\mathcal C_t)_{t\in[0,1]})$.
Since $\mathcal C_t$ is non-trivial and ergodic, it follows from Proposition~\ref{ergodic case}(i) that there exists a continuous measure $\alpha_t$ on $Y$ such that $(Y,\alpha_t,S)$
is isomorphic to $\mathcal C_t$ for $\kappa_c$-a.e. $t\in[0,1]$.
Consider a subset
$$
V:=\{(t,\lambda)\in [0,1]\times M_*(Y,S)\colon (Y,\lambda,S)\text{ is isomorphic to $\mathcal C_t$}\}.
$$
Then the projection of $V$ to the first coordinate
is the entire segment $[0,1]$ mod $\kappa$.

We claim that  $V$ is an analytic subset of $[0,1]\times M_*(Y,S)$.
Indeed, given $\lambda\in M_*(Y,S)$, the system
$(Y,\lambda,S)$ is isomorphic to $\mathcal C_t$ if and only if
$\boldsymbol a_Y(\lambda)$ is isomorphic to $\boldsymbol b(\mathcal C_t)$.
Let $\boldsymbol A:=\text{Aut}([0,1],\text{Leb})$.
We note that  the subset
$$
\mathcal P:=\{ (V,V')\in\boldsymbol A\times\boldsymbol  A\colon
\text{$V$ and $V'$ are conjugate in $\boldsymbol A$}\}
$$
is analytic  \cite{Fo-Ru-We}.
It follows from Lemmas~\ref{Bor} and \ref{factors-measures} that
the mapping
$$
[0,1]\times M_*(Y,S)\ni(t,\lambda)\mapsto \big(\boldsymbol a_Y(\lambda),
\boldsymbol a_{W^\Bbb Z}( \boldsymbol b(\mathcal C_t))\big)\in \boldsymbol A\times\boldsymbol  A
$$
is Borel.
Hence,
$$
V=\{(t,\lambda)\in [0,1]\times M_*(Y,S)\colon (\boldsymbol a_Y(\lambda),
\boldsymbol a_{W^\Bbb Z}( \boldsymbol b(\mathcal C_t))\in \mathcal P\}
$$
is  analytic because it is the inverse image of $\mathcal P$ with respect to a Borel mapping.

Hence, we are in a position to apply  the Jankov-von Neumann theorem.
It follows from this theorem that there is a measurable (analytic)
mapping $[0,1]\ni t\mapsto\lambda_t$ such that $(t,\lambda_t)\in V$ for $\kappa_c$-a.e. $t$.
Therefore,  the dynamical system
$\Big([0,1]\times Y, \int_{[0,1]}\delta_t\otimes\lambda_t\,d\kappa_c(t), \text{Id}\times S\Big)$ is isomorphic to $([0,1],\kappa_c, (\mathcal C_t)_{t\in[0,1]})$.
Finally, we obtain that
$\mathcal C$ is isomorphic to
$\Big([0,1]\times Y, q\kappa_d\otimes\delta_\ast+(1-q)\int_{[0,1]}\delta_t\otimes\lambda_t\,d\kappa_c(t), \text{Id}\times S\Big)$, as desired.
\end{proof}


\subsection{Proofs of Theorem~\ref{t:misejo} and Corollary~\ref{c:ortogtomsj}}
\begin{proof}[Proof of Theorem~\ref{t:misejo}.]
It follows from Theorem~\ref{t:universalMSJ} that $\cf(T)$ has a universal model. This model (see the proof of Theorem~\ref{t:universalMSJ}) has the form $([0,1]\times Y,\text{Id}\times S)$. However, $Y$ is of the form:
$$
Y=\bigotimes_{P\in\mathcal{R}\mathcal{R}}\Big(\Big(
\bigotimes_{n=1}^NX^{D_n}\Big)/H\cup\{\ast\}\Big)
$$
with the corresponding product action. Notice that such a homeomorphism is a {\bf topological} factor of
$$
\bigotimes_{P\in\mathcal{R}\mathcal{R}}\Big(\Big(
\bigotimes_{n=1}^NX^{D_n}\Big)\cup\{\ast\}\Big)$$
and it is not hard to see that the strong $\bfu$-MOMO of the corresponding homeomorphism is
equivalent to the strong $\bfu$-MOMO of
$$
\bigotimes_{P\in\mathcal{R}\mathcal{R}}\Big(
\bigotimes_{n=1}^NX^{D_n}\Big).$$
But this homeomorphism is topologically isomorphic to the infinite Cartesian product of $T$. To complete the proof use Lemma~\ref{l:m19}.
\end{proof}

\begin{proof}[Proof of Corollary~\ref{c:ortogtomsj}.]
Let us first notice that for each $s\geq1$, the prime powers of the automorphism $T^{\times s}$ acting on $(X^{s},\cb^{\ot s},\mu^{\ot s})$ are (Furstenberg) disjoint: for $p\neq q$, we have
\beq\label{abc1}
((T^{\times s})^p,\mu^{\ot s})\perp
((T^{\times s})^q,\mu^{\ot s}).\footnote{We have $T^p\times\ldots \times T^p\perp T^q\times\ldots\times T^q$ since $T^p\perp T^q$ and MSJ implies the PID property.}\eeq
It follows from \cite{Ab-Ku-Le-Ru}, \cite{Bo-Sa-Zi} that
\beq\label{abc2}\begin{array}{l}
\mbox{each uniquely ergodic model
of $(T^{\times s},\mu^{\ot s})$}\\
\mbox{satisfies the strong $\bfu$-MOMO property}\end{array}\eeq
for each bounded $\bfu$ which is multiplicative (recall that we always assume that $\|\bfu\|_{u^1}=0$).

We only need to prove that, given $(X_0,T_0)$ which is a uniquely ergodic model of $T$, the topological system $(X_0^\infty,T_0^{\times\infty})$
enjoys the strong $\bfu$-MOMO property (see (iii) in Theorem~\ref{t:misejo} and Lemma~\ref{l:m19}).
To obtain this we only need to prove that, for all $k\geq1$, $(X_0^k,T_0^{\times k})$ has the strong $\bfu$-MOMO property.
For that note that $M^e(X_0^k,T_0^{\times k})$ consists of countably many members. Moreover, each such member determines a measure-theoretic dynamical system which is isomorphic to a Cartesian power of $T$. Therefore, in view of \eqref{abc2}, each ergodic invariant measure for $(X_0^k,T_0^{\times k})$ has a uniquely ergodic model which satisfies the strong $\bfu$-MOMO property. It follows that we are in the framework of condition PF1  in~\cite[Theorem~4.1]{Ka-Le-Ra}\footnote{Theorem 4.1 in \cite{Ka-Le-Ra} is an infinite extension of the main result in \cite{Ab-Ku-Le-Ru}.}. Now, the implication PF1 $\Rightarrow$ PF3 in  \cite[Theorem~4.1]{Ka-Le-Ra} tells us that
$(X_0^k,T_0^{\times k})$ must also enjoy the strong $\bfu$-MOMO property.
\end{proof}



\section{On the main result of \cite{Hu-Ta-Xu}}\label{Hu-Xu}

\textcolor{red}{
\subsection{
Countable measure-theoretic spectrum = almost countability in the sense of Huang-Tan-Xu}\label{s:uncountable}}

Let now $(X,T)$ be a topological dynamical system.
For $\nu\in M^e(X,T)$, let $\Lambda(\nu)\subset \Bbb T$ denote the set of eigenvalues of the automorphism $(T,\nu)$.
It is well known that $\Lambda(\nu)$ is a countable subgroup of $\Bbb T$.
Let
$$
\Lambda(X,T):=\bigcup_{\nu\in M^e(X,T)}\Lambda(\nu).
$$

\begin{Th}\label{thm:mainMS}
Assume that $\Lambda(X,T)$
is uncountable.
Then there exists a non-atomic Borel probability measure $P$ on $M^e(X,T)$ such that
for every measurable set $E\subset M^e(X,T)$ with $P(E)=1$ one has
\[
\bigcup_{\nu\in E} \Lambda(\nu)
\quad\text{is uncountable.}
\]
\end{Th}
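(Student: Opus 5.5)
Since every $\Lambda(\nu)$ is countable, the plan is to find a perfect family of ergodic measures carrying eigenvalues that are ``new'' with respect to one another, and then put a non-atomic measure on it, in the spirit of Proposition~\ref{pr:my} and Lemma~\ref{graphs}. Consider the set
$$
\mathcal E:=\{(\nu,\lambda)\in M^e(X,T)\times\T\colon \lambda\in\Lambda(\nu)\}.
$$
First check that $\mathcal E$ is Borel (or at least analytic). The condition $\lambda\in\Lambda(\nu)$ is equivalent to the spectral measure $\sigma_{f,\nu}$ having an atom at $\lambda$ for some $f$ in a fixed countable dense subset $\{f_j\}\subset C(X)$. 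The value $\sigma_{f_j,\nu}(\{\lambda\})=\lim_N\frac1N\sum_{n<N}\lambda^{-n}\int f_j\circ T^n\,\ov{f_j}\,d\nu$ is Borel in $(\nu,\lambda)$, so $\mathcal E$ is Borel with countable sections $\mathcal E_\nu=\Lambda(\nu)$. By Lusin--Novikov, $\mathcal E=\bigcup_k \mathrm{graph}(g_k)$ for countably many Borel maps $g_k\colon D_k\to\T$ with $D_k\subset M^e(X,T)$ Borel. Since $\Lambda(X,T)=\bigcup_k g_k(D_k)$ is uncountable, some $g:=g_k$ has uncountable image $g(D_k)$, which is an analytic subset of $\T$.

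Next I would transfer this to a measure. An uncountable analytic set $g(D_k)$ contains a Cantor set $C$. Choose a non-atomic probability $\tau$ on $C$. Using the Jankov--von Neumann theorem (Theorem~\ref{th:JvN}), take a universally measurable section $s\colon g(D_k)\to D_k$ of $g$, and set $P:=s_\ast\tau$, a Borel probability on $M^e(X,T)$ after completion. $P$ is non-atomic: $s$ is injective because $g\circ s=\mathrm{id}$, so each point $\nu$ has preimage at most one point of $C$, which has $\tau$-measure zero.

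Finally, let $E$ be measurable with $P(E)=1$. Then $\tau(s^{-1}(E))=1$, so $s^{-1}(E)\cap C$ is uncountable, since $\tau$ is non-atomic. For each $\lambda$ in this set, $\nu=s(\lambda)\in E$ and $\lambda=g(\nu)\in\Lambda(\nu)$. Hence $\bigcup_{\nu\in E}\Lambda(\nu)$ contains an uncountable set. The main obstacle is the measurability step: showing that $\mathcal E$ is Borel in $M^e(X,T)\times\T$, and in particular that the atom mass at $\lambda$ is a jointly Borel function of $(\nu,\lambda)$ and detects exactly the eigenvalues. For ergodic $\nu$, an atom of some $\sigma_{f_j,\nu}$ at $\lambda$ yields an eigenfunction, and conversely density of $\{f_j\}$ in $L^2(\nu)$ gives some $f_j$ not orthogonal to the eigenfunction. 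If the Borel claim fails, I would fall back to analyticity of $\mathcal E$ and use a measurable selection argument directly instead of Lusin--Novikov.
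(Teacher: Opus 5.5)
Your proof is correct and follows essentially the paper's route. You show that $\{(\nu,\lambda)\colon \lambda\in\Lambda(\nu)\}$ is Borel, put a non-atomic measure on a Cantor set inside the analytic set $\Lambda(X,T)$, and push it to $M^e(X,T)$ by a Jankov--von Neumann selector. The one difference is your Lusin--Novikov step, which makes the selector injective: the paper selects directly over all of $\Lambda(X,T)$, where non-atomicity of $P=\mu\circ s^{-1}$ follows instead from the fibres $s^{-1}(\{\nu\})\subset\Lambda(\nu)$ being countable, a point the paper leaves implicit.
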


Before we prove Theorem~\ref{thm:mainMS} we need a lemma.
Let
$$
R:=\{(\nu,\lambda)\in M^e(X,T)\times \Bbb T\colon
 \lambda\in \Lambda(\nu)\}.
$$

\begin{Lemma}\label{lem:analytic}
The set $R$ is  Borel in the product space $M^e(X,T)\times \Bbb T$.
\end{Lemma}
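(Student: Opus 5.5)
To show that $R=\{(\nu,\lambda)\colon \lambda\in\Lambda(\nu)\}$ is Borel, I would write membership as a countable Boolean combination of conditions that are Borel in $(\nu,\lambda)$. The guiding fact is the spectral characterization of eigenvalues. For an ergodic $\nu$, a number $\lambda\in\T$ is an eigenvalue of $(T,\nu)$ exactly when some $f\in L^2(\nu)$ has a spectral measure $\sigma_{f,\nu}$ with an atom at $\lambda$. Equivalently, the spectral projection $E_\nu(\{\lambda\})$ onto $\ker(U_T-\lambda)$ is nonzero. By von Neumann's ergodic theorem this projection is
\[
E_\nu(\{\lambda\})f=\lim_{N\to\infty}\frac1N\sum_{n<N}\ov{\lambda}^{\,n}\,f\circ T^n \quad\text{in } L^2(\nu).
\]

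Fix a countable family $\{f_j\}_{j\geq1}\subset C(X)$ that is dense in $C(X)$, and hence dense in $L^2(\nu)$ for every $\nu$. The plan is to establish
\[
(\nu,\lambda)\in R\iff \exists j\colon\ \lim_{N\to\infty}\frac1{N^2}\sum_{m,n<N}\ov{\lambda}^{\,n-m}\int f_j\circ T^{n}\,\ov{f_j\circ T^m}\,d\nu>0 .
\]
The quantity inside the limit equals $\big\|\frac1N\sum_{n<N}\ov{\lambda}^{\,n}f_j\circ T^n\big\|^2_{L^2(\nu)}$. It converges to $\|E_\nu(\{\lambda\})f_j\|^2=\sigma_{f_j,\nu}(\{\lambda\})$.

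For the equivalence itself: if $\lambda\in\Lambda(\nu)$, the eigenspace is nonzero. Then $E_\nu(\{\lambda\})f_j\neq0$ for some $j$, because the $f_j$ are dense and the projection is continuous. The converse is immediate, since a nonzero projection is itself an eigenfunction.

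For measurability, each finite-$N$ expression is a continuous function of $(\nu,\lambda)$ on $M(X,T)\times\T$. This holds because $\nu\mapsto\int g\,d\nu$ is weak$^\ast$-continuous for continuous $g$, and the dependence on $\lambda$ is polynomial. A pointwise limit of continuous functions is Borel. Moreover, the limit exists everywhere on the set of invariant measures, by the mean ergodic theorem. So $R$ is the set $\bigcup_j\bigcup_{k}\{(\nu,\lambda)\colon \lim_N(\cdots)>1/k\}$ intersected with $M^e(X,T)\times\T$. Since $M^e(X,T)$ is a $G_\delta$ in $M(X,T)$, this set is Borel.

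The only delicate step is justifying that the limit exists for every invariant $\nu$, and not merely for ergodic ones, so that the function is globally Borel. The mean ergodic theorem applied to the unitary $\ov\lambda U_T$ gives this directly. Alternatively, one could replace $\lim$ by $\limsup$, which is Borel in any case. I expect no further obstacle.
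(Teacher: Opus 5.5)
Your argument is correct, but it takes a different route from the paper.

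\textbf{The paper's route.} The paper fixes a total family $\{f_n\}$ of continuous functions and uses the weak$^\ast$ continuity of $\nu\mapsto\sigma_{\nu,n}$. It then forms the maximal-spectral-type measure $\sigma_\nu=\sum_n2^{-n}\sigma_{\nu,n}$. Finally, it composes the Borel map $\nu\mapsto\sigma_\nu$ with the atom-evaluation map $(\rho,z)\mapsto\rho(\{z\})$ on $M(\T)\times\T$, whose Borelness it checks separately in a footnote via shrinking neighbourhoods. This gives $R=\{(\nu,\lambda)\colon\sigma_\nu(\{\lambda\})>0\}$.

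\textbf{Your route.} You never build a maximal spectral type. You work with each $f_j$ separately and read off the atom $\sigma_{f_j,\nu}(\{\lambda\})=\|E_\nu(\{\lambda\})f_j\|^2$ directly. It appears as a pointwise limit of the jointly continuous quantities $\|\frac1N\sum_{n<N}\ov{\lambda}^{\,n}f_j\circ T^n\|^2_{L^2(\nu)}$. In effect you replace ``atom of the maximal spectral type'' by ``atom of some $\sigma_{f_j,\nu}$'', which is equivalent by density.

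\textbf{Comparison.} Your version is more self-contained. You do not need to verify that $\sum 2^{-n}\sigma_{\nu,n}$ has maximal type, nor argue separately that $(\rho,z)\mapsto\rho(\{z\})$ is Borel. You also exhibit $R$ explicitly as the trace on $M^e(X,T)\times\T$ of a countable union of sets $\{\phi_j>0\}$ with each $\phi_j$ of Baire class one. The paper's version is more modular and reuses standard facts about spectral measures.

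\textbf{A simplification.} Existence of your limit for every invariant $\nu$ does not even need the mean ergodic theorem. The finite-$N$ quantity equals $\int|\frac1N\sum_{n<N}(\ov{\lambda}z)^n|^2\,d\sigma_{f_j,\nu}(z)$, so dominated convergence gives $\sigma_{f_j,\nu}(\{\lambda\})$. This is essentially the Ces\`aro-kernel identity \eqref{lem:cesaro-atom} from the Complements.
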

\begin{proof}
Let $\{f_n\}_{n=1}^\infty$ be  a total family of continuous functions on $\Bbb T$ of norm $1$.
This means that the liner space spanned by all $f_n$, $n\in\Bbb N$, is dense in $C(T)$ in the norm topology.
Given $n\in\Bbb N$ and $\nu\in M^e(X,T)$, let $\sigma_{\nu,n}$ denote the  spectral measure of $f_n$ for the automorphism $(T,\nu)$, i.e.\
$\sigma_{\nu,n}\in M(\Bbb T)$ and  (see Section~\ref{f:spectral})
$$
\int_Xf_n\circ T^m\,\cdot\overline{ f_n}\,d\nu=\int_{\Bbb T}z^md\sigma_{\nu,n}
$$
for each $m\in\Bbb Z$.
Then, the mapping $M^e(X,T)\ni \nu\mapsto\sigma_{\nu,n}\in M(\Bbb T)$
is continuous (see \cite{Be-Go-Ru} for details).
Since the measure $\sigma_\nu:=\sum_{n=1}^\infty 2^{-n}\sigma_{\nu,n}$ is a measure of maximal spectral type for $(X,\nu,T)$, it follows that
the mapping
$$
M^e(X,T)\ni \nu\mapsto\sigma_\nu\in M(\Bbb T)
$$
is Borel. Moreover, the mapping $M (\T) \times \T \ni (\rho, z) \to \rho(\{z\})$ is also Borel.\footnote{The sets  $C_n:=\{(x,\la)\in\T\times\T\colon \|x-\la\|<1/n\}$ are open, whence $\raz_{C_n}$ is Borel and therefore $g_n(\rho,\la):=\int_{\T}\raz_{C_n}(x,\la)\,d\rho(x)$ are Borel, $n\geq1$. Finally, $\rho(\{\la\})=\inf_{n\geq1}g_n(\rho,\la)$. }
Since
$$
R=\{(\nu,\lambda)\in M^e(X,T)\times\Bbb T\colon \sigma_\nu(\{\lambda\})>0\},
$$
we deduce that $R$ is Borel.
\end{proof}

\begin{proof}[Proof of Theorem~\ref{thm:mainMS}]
Since $\Lambda(X,T)$ is the projection of $R$ onto the second coordinate, it follows that
$\Lambda(X,T)$ is analytic.
Let $\frak A$ denote the smallest $\sigma$-algebra (of  subsets of  $\Lambda(X,T)$) that contains all analytic (in $\Bbb T$) subsets.
It follows from the Jankov--von Neumann that there exists a  $\frak A$-measurable mapping\footnote{On $M^e(X,T)$ we consider the sigma-algebra of Borel sets.}
$s\colon \Lambda(X,T)\to M^e(X,T)$ such that $(s(\lambda),\lambda)\in R$ for each $\lambda\in \Lambda(X,T)$.
Take a nonatomic probability measure $\mu$ on $\frak A$\footnote{It exists. Indeed, as $\Lambda(X,T)$ is uncountable and analytic,
$\Lambda(X,T)$ contains a Cantor subset $C\subset\T$, see \cite{Ke}, Thm.\ 14.5. Take an arbitrary nonatomic Borel measure $\mu$ on $C$. Then this measure extends on $\frak A$ because each analytic set is universally measurable. Hence, we can consider $\mu$ as a measure on $\frak A$.}.
Then $P:=\mu\circ s^{-1}$ is a well defined Borel measure on $M^e(X,T)$.
If $E$ is a Borel subset of  $M^e(X,T)$ and $P(E)=1$ then $\mu(s^{-1}(E))=1$.
 Since $\mu$ is nonatomic, $s^{-1}(E)$ is uncountable.
 On the other hand,
 $s^{-1}(E)\subset \bigcup_{\nu\in E}\Lambda(\nu)$.
\end{proof}

We will say that a topological dynamical system $(X,T)$ has countable {\em measure-theoretic spectrum} if $\Lambda(X,T)=\bigcup_{\nu\in M^e(X,T)}\Lambda(\nu)$ is countable.
\begin{Remark}\label{r:ACequiv}
It follows from Theorem~\ref{thm:mainMS} that $(X,T)$ is almost countable in the sense of Huang, Tan and Xu \cite{Hu-Ta-Xu} if and only if it has countable measure-theoretic spectrum.
\end{Remark}

\subsection{$\mathcal{K}_{\rm
rel}(T)=\mathcal{K}(T)$ for all invariant measures implies logarithmic strong $\mob$-MOMO}\label{s:hutaxu}

Let us recall that, for an automorphism $T$ acting on $\xbm$, we denote by $\ck(T)$ its Kronecker factor and by $\ck_{\rm rat}(T)$,  its rational Kronecker factor, that is, the largest factor with rational discrete spectrum.  We write $\Lambda(T)$ for the point spectrum of $T$ and $\Lambda_{\rm irr}(T)$ for the irrational part of $\Lambda(T)$. Moreover, we denote by $\mathcal{K}_{\rm rel}(T)$ the relative (with respect to the sigma-algebra of invariants sets) Kronecker factor of $T$.
By Proposition~\ref{re:mrak}, $\ck_{\rm rel}(T)=\ca_{{\rm {\bf DISP}}_{\rm ec}}(T)$.

\subsubsection{Lemmas}\label{s:lemmas}
In the  lemmas below we will assume that $T$ is an automorphism of $\xbm$ with $\mu=\int\mu_\gamma\,d\PP(\gamma)$ being its ergodic decomposition.
The following is an immediate  corollary from Remark~\ref{re:boost}.

\begin{Lemma}\label{l:hutaxu1}
We have $\ck_{\rm rel}(T)=\ck(T)$ if and only if there exists  $\Gamma_0\subset \Gamma$, $\PP(\Gamma_0)=1$, such that $\bigcup_{\gamma\in\Gamma_0}\Lambda(T,\mu_\gamma)$ is countable (if and only if the set $\bigcup_{\gamma\in\Gamma_0}\Lambda_{irr}(T,\mu_\gamma)$ is countable).
\end{Lemma}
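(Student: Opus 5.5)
We plan to deduce Lemma~\ref{l:hutaxu1} from Remark~\ref{re:boost}, Corollary~\ref{cor:smile}, Proposition~\ref{p:inac} and Corollary~\ref{pds}, working throughout with the factor $\ck_{\rm rel}(T)=\ca_{{\rm {\bf DISP}}_{\rm ec}}(T)$. Its ergodic components are the Kronecker factors $\ck_\gamma$ of $(T,\mu_\gamma)$ (Lemma~\ref{pr:giraf}), and these have spectrum $\Lambda(T,\mu_\gamma)$. The first equivalence then reads: $\ck_{\rm rel}(T)$ is URE iff a.a.\ of the groups $\Lambda(T,\mu_\gamma)$ have countable union. By Remark~\ref{re:boost}, being URE is the same as $\ck_{\rm rel}(T)=\ck(T)$.

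For ($\Leftarrow$), assume $\Lambda_0:=\bigcup_{\gamma\in\Gamma_0}\Lambda(T,\mu_\gamma)$ is countable. Let $R$ be an ergodic rotation with discrete spectrum equal to the countable group generated by $\Lambda_0$. By Halmos--von Neumann, each $\ck_\gamma$ with $\gamma\in\Gamma_0$ is a factor of $R$. Corollary~\ref{p:random-factor?} (equivalently Proposition~\ref{p:inac}(B)) then makes $\ck_{\rm rel}(T)$ URE, and Remark~\ref{re:boost} concludes.

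For ($\Rightarrow$), if $\ck_{\rm rel}(T)$ is URE, Proposition~\ref{p:inac}(B)(ii) gives a conull set of components $\ck_\gamma$ that are factors of one ergodic $S$. Every eigenvalue of a factor is an eigenvalue of $S$, so the union is contained in $\Lambda(S)$, which is countable by separability of $L^2$. Alternatively, Corollary~\ref{cor:smile} puts $\ck_{\rm rel}(T)$ in {\bf DISP}, and Proposition~\ref{eigen} gives $\Lambda_{\bax}\subset\Lambda(\ck_{\rm rel}(T))$, which is countable. The only technical care needed is that the conull set can be chosen measurably; Proposition~\ref{p:inac} already supplies this.

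For the parenthetical equivalence, the rational parts $\Lambda(T,\mu_\gamma)\cap\Q/\Z$ always lie in the countable set $\Q/\Z$. Hence the union of the full spectra is countable iff the union of the irrational parts is. This step is trivial. The main (mild) obstacle is correctly identifying the ergodic components of $\ck_{\rm rel}(T)$ with the fiber Kronecker factors, which Lemma~\ref{pr:giraf} handles.
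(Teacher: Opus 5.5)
Your proposal is correct and follows the paper's route: the paper presents this lemma as an immediate consequence of Remark~\ref{re:boost} ($\ck_{\rm rel}(T)=\ca_{{\rm {\bf DISP}}_{\rm ec}}(T)$ is URE iff $\ck_{\rm rel}(T)=\ck(T)$). You combine it, as intended, with the identification of the ergodic components of $\ck_{\rm rel}(T)$ as the fiber Kronecker factors (Lemma~\ref{pr:giraf}) and the fact that a common ergodic extension of discrete-spectrum systems exists iff their eigenvalue groups have countable union; the rational/irrational parenthetical is handled correctly.
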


The five lemmas below (except for Lemma~\ref{l:hutaxu3}) come from \cite{Hu-Ta-Xu}, we give short proofs.

\begin{Lemma}\label{l:hutaxu2}
Let $g\in L^2\xbm$. Then
\[
g\perp L^2(\ck_{\rm rat}(T)) \iff g\perp L^2(\ck_{\rm rat}(T,\mu_\gamma)) \text{ for }\PP\text{-a.a. }\gamma\in\Gamma.
\]
\end{Lemma}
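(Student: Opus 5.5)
The plan is to identify $L^2(\ck_{\rm rat}(T))$ explicitly as the closed span of eigenfunctions of $T$ with rational eigenvalues, and then decompose along ergodic components. Rational eigenvalues $e^{2\pi i k/m}$ of $T$ are exactly those $\lambda$ for which $U_T^m$ has fixed vectors in the corresponding eigenspace. Thus $L^2(\ck_{\rm rat}(T))$ is the closed span, over $m\geq1$, of $L^2(\mathcal I(T^m))$, the spaces of $T^m$-invariant functions. Indeed, $\ck_{\rm rat}(T)=\bigvee_m \mathcal I(T^m)$, since an eigenfunction with eigenvalue an $m$-th root of unity is $T^m$-invariant, and conversely each $L^2(\mathcal I(T^m))$ splits under $U_T$ into eigenspaces for the $m$-th roots of unity. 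The same description holds for each ergodic component $(T,\mu_\gamma)$, where $L^2(\mathcal I_{\mu_\gamma}(T^m))$ is finite dimensional (spanned by indicators of a cyclic Rokhlin tower of height dividing $m$).

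So $g\perp L^2(\ck_{\rm rat}(T))$ iff $\EE_\mu(g\mid\mathcal I(T^m))=0$ for all $m$. The key step is to show that
\[
\EE_\mu(g\mid \mathcal I(T^m))(x)=\EE_{\mu_\gamma}(g\mid \mathcal I_{\mu_\gamma}(T^m))(x)\quad\text{for $\mu_\gamma$-a.e. $x$, for $\PP$-a.a. }\gamma .
\]
I would prove this via the von Neumann ergodic theorem applied to $T^m$: the Ces\`aro averages $\frac1N\sum_{n<N}g\circ T^{mn}$ converge in $L^2(\mu)$ to the left side. Passing to a subsequence, they also converge $\mu$-a.e. (by Birkhoff, the full sequence converges a.e.). On the other hand, for $\PP$-a.a. $\gamma$, $g\in L^2(\mu_\gamma)$, and the same averages converge $\mu_\gamma$-a.e. to the right side. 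Since $\mu=\int\mu_\gamma\,d\PP$, an a.e. limit with respect to $\mu$ is a $\mu_\gamma$-a.e. limit for a.a. $\gamma$, and the two limits agree. Countably many $m$ require only a countable intersection of full-measure sets.

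Then $\|\EE_\mu(g\mid\mathcal I(T^m))\|^2_{L^2(\mu)}=\int\|\EE_{\mu_\gamma}(g\mid\mathcal I_{\mu_\gamma}(T^m))\|^2_{L^2(\mu_\gamma)}\,d\PP(\gamma)$. The left side vanishes for all $m$ iff the integrand vanishes for $\PP$-a.a. $\gamma$ and all $m$, which is exactly $g\perp L^2(\ck_{\rm rat}(T,\mu_\gamma))$ a.s. This gives both implications.

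The main obstacle is the identification $L^2(\ck_{\rm rat})=\overline{\rm span}\bigcup_m L^2(\mathcal I(T^m))$ in the non-ergodic case. Here rational eigenfunctions need not exhaust $T^m$-invariant functions fiberwise in an obvious way. I would handle this with the finite Fourier decomposition $h=\sum_{j<m}h_j$, where $h_j=\frac1m\sum_{k<m}e^{-2\pi ijk/m}h\circ T^k$ for $h$ $T^m$-invariant, so that each $h_j$ is an eigenfunction with eigenvalue $e^{2\pi ij/m}$.
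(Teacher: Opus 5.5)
Your proof is correct, but it takes a different route from the paper. The paper argues spectrally. It notes that $g\perp L^2(\ck_{\rm rat}(T))$ iff $\sigma_g$ has no atoms at rational points (cf.\ \eqref{mutualsing}). Lemma~\ref{l:mutualsing} gives $\sigma_g=\int\sigma_{g,\gamma}\,d\PP(\gamma)$, hence $\sigma_g(\{\lambda\})=\int\sigma_{g,\gamma}(\{\lambda\})\,d\PP(\gamma)$ for each of the countably many rational $\lambda$. Countability then lets one swap ``for every rational $\lambda$'' with ``for $\PP$-a.e.\ $\gamma$''.

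You instead group the rational eigenvalues by order into the projections $\EE(\cdot\mid\mathcal I(T^m))$. Indeed $\|\EE_\mu(g\mid\mathcal I(T^m))\|^2_{L^2(\mu)}=\sum_{\lambda^m=1}\sigma_g(\{\lambda\})$, so your quantities are finite sums of the paper's atoms. You also replace the disintegration of spectral measures by Birkhoff's theorem for $T^m$, which shows that these conditional expectations are computed fiberwise.

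The paper's argument is shorter once Lemma~\ref{l:mutualsing} is available. Yours avoids spectral measures altogether. Your finite Fourier decomposition of $T^m$-invariant functions, used both globally and on each component, is what justifies the identification of $L^2(\ck_{\rm rat})$ with the closed span of rational eigenfunctions; the paper uses that identification tacitly. Your route also gives more: combining your fiberwise identity with martingale convergence along the increasing family $\mathcal I(T^{m!})$ shows that the orthogonal projection onto $L^2(\ck_{\rm rat}(T))$ is the direct integral of the projections onto $L^2(\ck_{\rm rat}(T,\mu_\gamma))$. This is a rational counterpart of Lemma~\ref{pr:giraf}.
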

\begin{proof}
Recall that $g\perp L^2(\ck_{\rm rat}(T))$ iff $\sigma_g(\{\lambda\})=0$ for each $\lambda\in\T$ rational (cf.\ \eqref{mutualsing}). The result now follows immediately from Lemma~\ref{l:mutualsing} and the equality  $\int j(z)\,d\sigma_g(z)=
\int\Big(\int j(z)\,d\sigma_{g,\gamma}(z)\Big)\,d\PP(\gamma)$
for $j=\raz_{\{\lambda\}}$.
\end{proof}

For the definition of $\infty$-step pro-nil factor (of a given automorphism), we refer the reader to \cite{Fr-Ho}.
We use only the property of such a factor given below and proved in \cite[Lemma 6.1(i)]{Fr-Ho}:
\begin{Lemma} \label{l:hutaxu3}
Suppose that $T$ is an ergodic $\infty$-step pro-nil system, that $S$ acting on $\ycn$ is ergodic with
$\Lambda_{\rm irr}(T)\cap\Lambda_{\rm irr}(S)=\emptyset.$
Then, for all $\rho\in J(T,S)$, we have
$$ \int f\ot g\,d\rho=0$$
for all  $f\in L^2\xbm$ satisfying $f\perp L^2(\ck_{\rm rat}(T))$ and all $g\in L^2\ycn$.
\end{Lemma}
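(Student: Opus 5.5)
This lemma is quoted from \cite[Lemma 6.1(i)]{Fr-Ho}, so the plan is to reconstruct its proof from the structure of ergodic $\infty$-step pro-nil systems. The idea is a spectral disjointness argument. We decompose $L^2(T)\ominus L^2(\ck_{\rm rat}(T))$ into pieces whose spectral measures are concentrated off the irrational spectrum of $S$. On the $S$ side, only the Kronecker part of $S$ can see a pro-nil system.

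First, since $T$ is an inverse limit of ergodic $k$-step nilsystems $T_k$, and $\bigcup_k L^2(T_k)$ is dense, it suffices to treat $f\in L^2(T_k)$ with $f\perp L^2(\ck_{\rm rat}(T))$. The joining $\rho$ projects to a joining of $T_k$ and $S$, so we may assume $T$ is an ergodic nilsystem $G/\Gamma$ with rotation by $\tau$.

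Second, I would use the classical spectral description of ergodic nilsystems. The Kronecker factor is the rotation on the horizontal torus, together with a finite group accounting for $G/G^0$. On the orthocomplement of $L^2(\ck(T))$ the maximal spectral type is Lebesgue, possibly with countable multiplicity on finitely many "rational" translates (Green, Stepin). Hence $f=f_1+f_2$, where $f_1\in L^2(\ck(T))\ominus L^2(\ck_{\rm rat}(T))$ has spectral measure supported on $\Lambda_{\rm irr}(T)$, and $f_2\perp L^2(\ck(T))$ has absolutely continuous spectral measure.

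Third, we compute $\int f\otimes g\,d\rho=\langle \Phi_\rho f,\bar g\rangle$. The Markov operator intertwines the Koopman operators, so the spectral measure of $\Phi_\rho f_i$ (for $S$) is absolutely continuous with respect to $\sigma_{f_i}$. For $f_1$, $\Phi_\rho f_1$ lies in the Kronecker part of $S$ with eigenvalues in $\Lambda_{\rm irr}(T)$. These are not eigenvalues of $S$ by hypothesis, so $\Phi_\rho f_1=0$. For $f_2$, the spectral type argument alone fails, since $S$ may have Lebesgue components. So here I would use the characteristic factor structure: $\Phi_\rho f_2$ lies in $\ca_{\mathbf{NIL}_\infty}(S)$ (Footnote~\ref{f:reljoi} applied to the characteristic class of pro-nil systems). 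We then pass to ergodic joinings, since $\rho$ decomposes into ergodic joinings whose marginals are ergodic because $S$ and $T$ are. An ergodic joining of two nilsystems is a sub-nilsystem of the product. Its Kronecker factor is the product of the Kronecker factors glued along common irrational eigenvalues and rational parts. With no common irrational eigenvalues, the orbit closure of $(\tau,\tau')$ in $G\times G'$ forces the joining to be relatively independent over the rational Kronecker factors. Then $f\perp L^2(\ck_{\rm rat}(T))$ gives $\int f\otimes g\,d\rho=0$.

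I expect the main obstacle to be this last step: showing that ergodic joinings of nilsystems with disjoint irrational spectra are relatively independent over their rational Kronecker factors. I would try Leibman's description of orbit closures of $(\tau,\tau')$ in $G\times G'$ via horizontal characters. A nontrivial horizontal character relation between the two factors would produce a common irrational eigenvalue; without such a relation the orbit closure is $G\times G'$ up to the connected-component/rational part.
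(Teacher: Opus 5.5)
The paper does not prove this lemma; it imports it from \cite[Lemma 6.1(i)]{Fr-Ho}. Your reductions are sound.
\begin{enumerate}
\item Replacing $f$ by $\EE(f|T_k)$ is legitimate. The result stays orthogonal to $L^2(\ck_{\rm rat}(T))$, because that projection is a spectral projection of $U_T$ and so commutes with conditional expectations onto factors.
\item $\Phi_\rho$ takes values in $L^2(\mathcal{Z}_\infty(S))$. Strictly speaking, ergodic pro-nil systems are not a characteristic class. The class of automorphisms whose ergodic components are pro-nil is one, and for ergodic $S$ its $\ca_\cf$-factor is $\mathcal{Z}_\infty(S)$. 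This factor has the same eigenvalues as $S$, so the hypothesis survives.
\item Approximate this factor by its nil factors and decompose $\rho$ into ergodic joinings. Everything then reduces to an ergodic joining $\rho$ of two ergodic nilsystems $(X,\tau)$, $(Y,\tau')$ with $\Lambda_{\rm irr}(X)\cap\Lambda_{\rm irr}(Y)=\emptyset$.
\end{enumerate}
Relative independence over the rational Kronecker factors handles every $f\perp L^2(\ck_{\rm rat}(T))$ at once. So the split $f=f_1+f_2$ and the appeal to Lebesgue spectrum can be dropped; the ``rational translates'' formulation is muddled anyway.

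The gap is that this decisive step is announced rather than proved. It is the only place where the hypothesis is used beyond the Kronecker part. The phrase ``up to the connected-component/rational part'' hides exactly what must be checked: shared rational eigenvalues can only glue finite cyclic factors, and any other relation forces a common \emph{irrational} eigenvalue. It can be closed as follows.

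An ergodic nilsystem $X$ has connected components $X_0,\dots,X_{m-1}$, cyclically permuted by $\tau$. The map $\tau^m|_{X_0}$ is an ergodic nilrotation of a connected nilmanifold, hence totally ergodic by Leibman's criterion. Consequently $\ck_{\rm rat}(X)$ is the component-index factor $\Z/m$. Moreover, for $m\mid L$, every eigenvalue of $(X_i,\tau^L)$ has the form $\alpha^L$ with $\alpha\in\Lambda(X)$.

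Treat $Y$ (with $m'$ components) the same way, and put $L=\mathrm{lcm}(m,m')$. The ergodic joining $\rho$ gives mass $1/L$ to exactly $L$ cells $X_{i+j}\times Y_{i'+j}$, $0\le j<L$. The measure $L\,\rho|_{X_i\times Y_{i'}}$ is an ergodic $(\tau\times\tau')^L$-invariant measure. By unique ergodicity of ergodic nilrotations, its marginals are the normalized Haar measures $\mu_{X_i}$ and $\nu_{Y_{i'}}$.

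Suppose $\theta\neq1$ is a common eigenvalue of $(X_i,\tau^L)$ and $(Y_{i'},\tau'^L)$. By total ergodicity it is not a root of unity. It equals $\alpha^L=\beta^L$, so it lies in $\Lambda_{\rm irr}(X)\cap\Lambda_{\rm irr}(Y)=\emptyset$, a contradiction. Hence the product of these two ergodic systems is ergodic. So the nilrotation $(\tau\times\tau')^L$ on $X_i\times Y_{i'}$ is uniquely ergodic, and $L\,\rho|_{X_i\times Y_{i'}}=\mu_{X_i}\ot\nu_{Y_{i'}}$.

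Thus $\rho$ is the relatively independent joining over the cyclic component factors. Therefore $\int f\ot g\,d\rho=\frac1L\sum_{j<L}\int f\,d\mu_{X_{i+j}}\int g\,d\nu_{Y_{i'+j}}=0$, since $f\perp L^2(\ck_{\rm rat}(X))$ means exactly that $f$ has zero mean on every component. With this step inserted, your argument is a complete proof.
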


From this lemma we deduce the following.

\begin{Lemma}\label{l:hutaxu4}
Suppose that $T$ is an ergodic $\infty$-step pro-nil system, that $S$ acting on $\ycn$ is ergodic with
$\Lambda_{\rm irr}(T)\cap\Lambda_{\rm irr}(S)=\emptyset.$
 Then
for all $\rho\in J(T,S)$, we have
$$ \int f\ot g\,d\rho=0$$
for all $f\in L^2\xbm$ and all $g\in L^2\ycn$ satisfying $g\perp L^2(\ck_{\rm rat}(S))$.
\end{Lemma}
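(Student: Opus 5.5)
The plan is to reduce Lemma~\ref{l:hutaxu4} to Lemma~\ref{l:hutaxu3} by decomposing $f$ along the rational Kronecker factor of $T$. First, write $f=f_1+f_2$ with $f_1:=\EE(f|\ck_{\rm rat}(T))$ and $f_2:=f-f_1\perp L^2(\ck_{\rm rat}(T))$. Lemma~\ref{l:hutaxu3}, applied to $f_2$ and $g$ (no condition on $g$ is needed there), gives $\int f_2\ot g\,d\rho=0$. It then remains to show $\int f_1\ot g\,d\rho=0$ for $f_1\in L^2(\ck_{\rm rat}(T))$ and $g\perp L^2(\ck_{\rm rat}(S))$.

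For this second term I will use the Markov operator $\Phi_\rho\colon L^2\xbm\to L^2\ycn$, since $\int f_1\ot g\,d\rho=\langle \Phi_\rho f_1,\ov g\rangle$ (up to conjugation conventions). By linearity and continuity it suffices to treat $f_1$ an eigenfunction of $T$ with a rational eigenvalue $\lambda$, because such eigenfunctions are total in $L^2(\ck_{\rm rat}(T))$ ($T$ is ergodic). Equivariance of $\Phi_\rho$ gives $\Phi_\rho(f_1)\circ S=\Phi_\rho(f_1\circ T)=\lambda\Phi_\rho(f_1)$. Hence $\Phi_\rho(f_1)$ is either $0$ or an eigenfunction of $S$ with rational eigenvalue $\lambda$, so in both cases $\Phi_\rho(f_1)\in L^2(\ck_{\rm rat}(S))$. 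Since $\ck_{\rm rat}(S)$ is closed under conjugation, $\ov g\perp L^2(\ck_{\rm rat}(S))$ as well, and therefore the pairing vanishes. Summing the two parts yields the claim.

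The main point to check is the correct use of Lemma~\ref{l:hutaxu3}: it must apply to arbitrary $g$, which is how it is stated, so the only hypothesis needed there is $f_2\perp L^2(\ck_{\rm rat}(T))$. The rest is the routine observation that Markov operators of joinings map eigenfunctions to eigenfunctions with the same eigenvalue. Notice that the disjointness of irrational spectra enters only through Lemma~\ref{l:hutaxu3}.
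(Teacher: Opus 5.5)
Your proof is correct and is essentially the paper's argument: the same splitting $f=f_1+f_2$ along $\ck_{\rm rat}(T)$, with Lemma~\ref{l:hutaxu3} disposing of the $f_2$ term. The only cosmetic difference is in the $f_1$ term. There the paper invokes spectral mutual singularity \eqref{mutualsing}: the spectral measure of $f_1$ lives on roots of unity, while that of $g$ has no atoms there. You instead use that $\Phi_\rho$ sends rational eigenfunctions of $T$ to rational eigenfunctions of $S$ (or to $0$), which is an equivalent formulation of the same fact.
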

\begin{proof} We have that
$$
\int f\ot g\,d\rho=\int f_1\ot g\,d\rho+\int f_2\ot g\,d\rho,
$$
where $f_1\in L^2(\ck_{\rm rat}(T))$ and $f_2\perp L^2(\ck_{\rm rat}(T))$.
Now, the first integral is zero  in view of
\eqref{mutualsing}, since the spectral measure of $f_1$ is concentrated on the set of roots of unity, while the spectral measure of $g$ has no atom there.
 The second integral vanishes thanks to  Lemma~\ref{l:hutaxu3}.
\end{proof}

This lemma, in turn, yields  immediately the following.

\begin{Lemma}\label{l:hutaxu5}
Suppose that $T$ is an ergodic Cartesian product of an $\infty$-step pro-nil system and a Bernoulli system, and that $S$ is ergodic of zero entropy with
$\Lambda_{\rm irr}(T)\cap\Lambda_{\rm irr}(S)=\emptyset.$
Then,
for all $\rho\in J(T,S)$, we have
$$ \int f\ot g\,d\rho=0$$
for all $f\in L^2\xbm$ and all $g\in L^2\ycn$ satisfying $g\perp L^2(\ck_{\rm rat}(S))$.
\end{Lemma}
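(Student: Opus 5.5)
The plan is to reduce Lemma~\ref{l:hutaxu5} to Lemma~\ref{l:hutaxu4} by absorbing the Bernoulli part with a disjointness argument. Write $T=T_1\times T_2$, acting on $(X_1\times X_2,\mu_1\otimes\mu_2)$, where $T_1$ is an ergodic $\infty$-step pro-nil system and $T_2$ is Bernoulli. Since $T_2$ is weakly mixing, $\Lambda_{\rm irr}(T)=\Lambda_{\rm irr}(T_1)$, so the hypothesis $\Lambda_{\rm irr}(T)\cap\Lambda_{\rm irr}(S)=\emptyset$ passes to $T_1$ and $S$. It suffices to prove the vanishing for $f=f_1\otimes f_2$ with $f_i\in L^2(X_i,\mu_i)$, because these span a dense subset of $L^2\xbm$ and $f\mapsto\int f\otimes g\,d\rho$ is continuous.

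Given $\rho\in J(T,S)$, consider it as a joining of three systems, $T_1$, $T_2$ and $S$. Its marginal on $X_2\times Y$ is a joining of the Bernoulli $T_2$ with the zero entropy $S$, hence the product measure, since K-systems are disjoint from zero entropy systems. Next we need to know that conditioning on $X_1\times Y$ leaves $X_2$ independent, that is, $T_2$ is independent of the joint factor $X_1\times Y$ under $\rho$. The cleanest way is to use $\rho|_{X_1\times Y}\in J(T_1,S)$. The pro-nil system $T_1$ has zero entropy, so the joining $(T_1\times S,\rho|_{X_1\times Y})$ has zero entropy. Therefore $T_2$, being Bernoulli and hence a K-system, is disjoint from it, which gives $\rho=\rho|_{X_1\times Y}\otimes\mu_2$ after reordering coordinates. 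Note that $\rho|_{X_1\times Y}$ may be non-ergodic, but disjointness of K-systems from all zero entropy systems does not require ergodicity.

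With this factorization, $\int f_1\otimes f_2\otimes g\,d\rho=\int f_2\,d\mu_2\cdot\int f_1\otimes g\,d\rho|_{X_1\times Y}$. The second factor vanishes by Lemma~\ref{l:hutaxu4} applied to $T_1$, $S$ and the joining $\rho|_{X_1\times Y}\in J(T_1,S)$, since $g\perp L^2(\ck_{\rm rat}(S))$.

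The main thing to check is the disjointness step. Specifically, we must verify that a K-system is disjoint from an arbitrary, possibly non-ergodic, zero entropy automorphism, and that $\infty$-step pro-nil systems have zero entropy, being inverse limits of nilsystems. Both are standard: the Pinsker algebra of the zero entropy joining is the whole algebra, and Pinsker-type disjointness does not need ergodicity. Everything else is bookkeeping.
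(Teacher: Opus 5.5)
Your argument is correct and is essentially the paper's proof: the paper replaces $f$ by $\EE(f\mid\text{pro-nil factor})$, using that this factor is the Pinsker factor $\ca_{{\rm {\bf ZE}}}(T)$ of $T_1\times T_2$ (so the relative-independence property of $\ca_{\cf}$ for $\cf={\rm {\bf ZE}}$ applies against the zero-entropy $S$), and then invokes Lemma~\ref{l:hutaxu4} just as you do. Your factorization $\rho=\rho|_{X_1\times Y}\otimes\mu_2$ is the same statement, but you obtain it from the plain disjointness of the Bernoulli system from the zero-entropy system $(T_1\times S,\rho|_{X_1\times Y})$, which spares you identifying the Pinsker factor of a product.
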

\begin{proof}
It suffices to note that
$$
\int f\ot g\,d\rho=\int \EE(f|\infty-\text{step pro-nil factor})\ot g\,d\rho
$$
as this pro-nil factor is the Pinsker factor of the Cartesian product,
and apply
 Lemma~\ref{l:hutaxu4}.
\end{proof}

In the following lemma we do not assume that the automorphisms $T$ and $S$ on $(X,\mu)$ and $(Y,\nu)$ respectively, are ergodic.

\begin{Lemma}\label{l:hutaxu6}
Suppose that $\Lambda_{\rm irr}(T)=\emptyset$
 and  a.e.\ ergodic component of $T$ is the Cartesian product of some $\infty$-step pro-nil system and a Bernoulli automorphism.
  Suppose that $S$ is of zero entropy and $\mathcal{K}_{\rm rel}(S)=\mathcal{K}(S)$. Then,
for each $\rho\in J(T,S)$, we have
$$\int f\ot g\,d\rho=0$$
for all $f\in L^2\xbm$ with continuous spectral measure and for all $g\in L^2\ycn$.
\end{Lemma}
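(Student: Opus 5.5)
The plan is to reduce to the ergodic case via the structure of joinings of non-ergodic systems, and then apply Lemma~\ref{l:hutaxu5} componentwise.

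\textbf{Step 1: disintegrate $\rho$.} Let $\mu=\int\mu_\gamma\,d\PP(\gamma)$ and $\nu=\int\nu_\omega\,dQ(\omega)$ be the ergodic decompositions. Disintegrate $\rho$ over the product of the spaces of ergodic components:
\[
\rho=\int\rho_{\gamma,\omega}\,d\lambda(\gamma,\omega),\qquad \rho_{\gamma,\omega}\in J((T,\mu_\gamma),(S,\nu_\omega)).
\]
The argument is exactly as in Lemma~\ref{l:canonical}: the fibre marginals are invariant measures averaging to the ergodic $\mu_\gamma$, respectively $\nu_\omega$, hence equal them.

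\textbf{Step 2: split $g$.} Write $g=g_1+g_2$ with $g_1\in L^2(\ck(S))$ and $g_2\perp L^2(\ck(S))$, and treat the two terms separately.

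\emph{The term $g_1$.} Use that $f$ has continuous spectral measure while $g_1$ lies in the Kronecker factor. The Markov operator $\Phi_\rho^*$ intertwines the Koopman operators and therefore preserves the property of having discrete spectral measure. Hence $\Phi_\rho^*g_1$ has discrete spectral measure, so it is orthogonal to $f$ by \eqref{mutualsing}. Thus
\[
\int f\otimes g_1\,d\rho=\langle f,\Phi_\rho^*g_1\rangle=0 .
\]

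\emph{The term $g_2$.} Since $\ck_{\rm rel}(S)=\ck(S)$, Lemma~\ref{l:hutaxu1} gives a conull set of components $\omega$ for which the union $\Lambda_0$ of the irrational eigenvalues of $(S,\nu_\omega)$ is countable. We then want $g_2|_{\nu_\omega}\perp L^2(\ck_{\rm rat}(S,\nu_\omega))$ for a.e.\ $\omega$. Note $\ck_{\rm rat}(S)\subset\ck(S)$, so $g_2\perp L^2(\ck_{\rm rat}(S))$, and Lemma~\ref{l:hutaxu2} then gives the fibrewise orthogonality to the rational Kronecker factors.

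\textbf{Step 3: handle $T$'s components.} We need $\Lambda_{\rm irr}(T,\mu_\gamma)\cap\Lambda_0=\emptyset$ for a.e.\ $\gamma$. Since $\Lambda_{\rm irr}(T)=\emptyset$, we have $\sigma_h(\{\lambda\})=0$ for every $h$ and every irrational $\lambda$. By Lemma~\ref{l:mutualsing} applied to a countable total family $h_n$, for each of the countably many $\lambda\in\Lambda_0$ we get $\sigma_{h_n,\gamma}(\{\lambda\})=0$ for a.e.\ $\gamma$. Discarding a null set, no $\lambda\in\Lambda_0$ is an eigenvalue of a.e.\ component. This countability is exactly where $\ck_{\rm rel}(S)=\ck(S)$ is needed, and I expect it to be the main point to verify carefully.

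Also, a.e.\ $(S,\nu_\omega)$ has zero entropy. Therefore Lemma~\ref{l:hutaxu5} applies to a.e.\ pair $(\gamma,\omega)$ and gives
\[
\int f\otimes g_2\,d\rho_{\gamma,\omega}=0 .
\]
Integrating over $\lambda$ yields $\int f\otimes g_2\,d\rho=0$, which completes the proof. (One should first take $f,g$ bounded, or Borel representatives, so that the fibrewise integrals make sense, and then conclude by density.)
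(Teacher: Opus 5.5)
Your proposal is correct and is essentially the paper's proof. It uses the same ingredients in the same order: Lemma~\ref{l:hutaxu1} to get a countable set of irrational eigenvalues on a.e.\ component of $S$, Lemma~\ref{l:mutualsing} to pass $\Lambda_{\rm irr}(T)=\emptyset$ down to a.e.\ component of $T$, Lemmas~\ref{l:hutaxu2} and~\ref{l:hutaxu5} fibrewise, and spectral singularity for the discrete part of $g$. The only differences are cosmetic: you disintegrate $\rho$ over the product of the component spaces as in Lemma~\ref{l:canonical} rather than taking the ergodic decomposition of $\rho$, and you split $g$ along $\ck(S)$ rather than $\ck_{\rm rat}(S)$; both variants work equally well.
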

\begin{proof}
Without loss  of generality, we can assume that $X$ and $Y$ are compact metric spaces
and
 $T$ and $S$ are homeomorphisms of $X$ and $Y$  respectively.
Then $\rho\in M(X\times Y,T\times S)$.
Let $\rho=\int_{M^e(X\times Y,T\times S)}\eta\,d\Bbb L(\eta)$
stand for the ergodic decomposition of $\rho$.
Since $\mu=(\pi_X)_\ast(\rho)$ and $\nu=(\pi_Y)_\ast(\rho)$, we obtain that
\begin{align*}
\mu&=\int_{M^e(X\times Y,T\times S)}(\pi_X)_\ast\eta\,d\Bbb L(\eta)
=\int_{M^e(X,T)}\kappa\,d(\pi_Y)_{\ast\ast}(\Bbb L)(\kappa)\quad\text{ and}\\
\nu&=\int_{M^e(X\times Y,T\times S)}(\pi_Y)_\ast\eta\,d\Bbb L(\eta)
=\int_{M^e(Y,S)}\kappa\,d(\pi_Y)_{\ast\ast}(\Bbb L)(\kappa).
\end{align*}
Hence,
 the probabilities  $(\pi_X)_{\ast\ast}(\Bbb L)$ and $(\pi_Y)_{\ast\ast}(\Bbb L)$ represent
  the ergodic decomposition of $\mu$ and $\nu$ respectively.
Since $\mathcal{K}_{\rm rel}(S)=\mathcal{K}(S)$, it follows from Lemma~\ref{l:hutaxu1}
that there is a subset $\mathcal L_Y$
of full $(\pi_Y)_{\ast\ast}(\Bbb L)$-measure in $M^e(Y,S)$ such that the union
$\Lambda_0:=\bigcup_{\kappa\in \mathcal L_Y}\Lambda(S,\kappa)$ is countable.
Since $h(S)=0$, we can (and will) assume without loss of generality that
$h(S,\kappa)=0$ for  each $\kappa\in \mathcal L_Y$.
As $\Lambda_0$ is countable, the set $\Lambda_0\setminus\mathbb{Q}$ is also countable.
  Since  $\Lambda_{\rm irr}(T)=\emptyset$, there exists a subset $\mathcal L_X$
  of full $(\pi_X)_{\ast\ast}(\Bbb L)$-measure in $M^e(X,T)$ such that
  $\bigcap_{\kappa\in\mathcal L_X}\Lambda(T,\kappa)\cap (\Lambda_0\setminus\mathbb{Q})=\emptyset$.
Thus, there is a subset $\mathcal L$ of full $\Bbb L$-measure in $M^e(X\times Y,T\times S)$
  such that for every $\eta\in \mathcal L$ we have that $(\pi_X)_*(\eta)\in\mathcal L_X$ and
  $(\pi_Y)_*(\eta)\in\mathcal L_Y$.
  Hence,  $h(S,(\pi_Y)_*(\eta))=0$ and
  $$
  \Lambda_{\rm irr}(T,(\pi_X)_*(\eta))\cap\Lambda_{\rm irr}(S,(\pi_Y)_*(\eta))=\emptyset.
  $$
  Therefore, by Lemma~\ref{l:hutaxu5},    for each $\eta\in \mathcal L$,
  $$
  \int_{X\times Y} f\otimes( g-E(g|\mathcal K_{\text{rat}}(S,(\pi_Y)_*(\eta))\,d\eta=0.
  $$
  Applying Lemma~\ref{l:hutaxu2}, we obtain that
   $$
  \int_{X\times Y} f\otimes( g-E(g|\mathcal K_{\text{rat}}(S))\,d\rho=0.
  $$
It remains to note that
\begin{align*}
\int f\ot g\,d\rho&=\int f\ot \EE(g|\ck_{\rm rat}(S))\,d\rho+
\int f\ot(g- \EE(g|\ck_{\rm rat}(S)))\,d\rho\\
&=\int f\ot(g- \EE(g|\ck_{\rm rat}(S)))\,d\rho
\end{align*}
since the first integral vanishes (as the spectral measure of $f$ is continuous, while the spectral measure of $\EE(g|\ck_{\rm rat})$ is discrete).
 \end{proof}

\subsubsection{Proof of Theorem~\ref{t:hutaxu1}}
Recall now that Frantzikinakis and Host proved in \cite{Fr-Ho} that each logarithmic Furstenberg system $(X_{\mob},\kappa,T)$ ($T$ stands for the shift), $\kappa\in V^{\rm log}(\mob)$ is a factor of a system which has no irrational spectrum and a.e. ergodic component is of the form an
$\infty$-step pro-nil system times a Bernoulli automorphism. Moreover, Tao in \cite{Ta00} proved that the spectral measure of $\pi_0$ is Lebesgue for each logarithmic Furstenberg system of the M\"obius function. We now apply Lemma~\ref{l:hutaxu6} (with $f=\pi_0$) to obtain the short interval extension of the main result in \cite{Hu-Ta-Xu}:

\begin{proof}[Proof of Theorem~\ref{t:hutaxu1}]
(a) This argument is standard: if the sequence
$$
\Big(\frac1{\log N}\sum_{n<N}\frac1ng(S^ny)\mob(n)\Big)
$$
 does not go to zero, then for some subsequence $(N_k)$, we have $$\frac1{\log N_k}\sum_{n<N_k}\frac1n\delta_{(S^ny,T^n\mob)}\to\rho\in J((S,\nu),(T,\kappa)),$$ where $\nu\in V^{\rm log}(y)$,
$\kappa\in V^{\rm log}(\mob)$ and $\int g\ot\pi_0\,d\rho\neq0$ which is in contradiction with Lemma~\ref{l:hutaxu6}.\\
(b) According to \cite{Ab-Le-Ru}, \cite{Ab-Ku-Le-Ru} and  \cite[Lemma~1.8]{Ka-Le-Ri-Te}, we need to prove the logarithmic strong $\mob$-MOMO property of $(Y,S)$, that is, we need to prove that if $(b_k)$ is an increasing sequence of natural numbers satisfying
\beq\label{momo67}
\delta(\{b_k\colon k\geq1\})=0\eeq
and $(y_k)\subset Y$, then
\beq\label{momo68}
\lim_{K\to\infty} \frac1{\log b_K}\sum_{k<K}\Big|\sum_{b_k\leq n<b_{k+1}}\frac{\mob(n)}nf(S^ny_k)\Big|=0.\eeq
We now repeat the main arguments from \cite{Ka-Ku-Le-Ru}. Let $\mathbb{A}_3:=\{e^{2\pi ij/3}\colon j=0,1,2\}$, and set
$$
\underline{x}=(x_n),\;x_n=(S^{n-b_k}y_k,e_k)\text{ for }b\leq n<b_{k+1}\text{ and }x_n=x_0\text{ for }n<0,$$
where $x_0=(y_0,e_0)\in Y\times\mathbb{A}_3$ is arbitrary. Let $T$ stand for the shift on $X:=(Y\times\mathbb{A}_3)^{\Z}$. The $e_k$s are selected so that
$$
e_k\sum_{b_k\leq n<b_{k+1}}\frac{\mob(n)}nf(S^{n-b_k}y_k)\in\{0\}\cup\{z\in\C^\ast\colon {\rm arg}(z)\in [-\pi/3,\pi/3]\}.$$
So, we need to show that
$$
\frac1{\log b_K}\sum_{k<K}\sum_{b_k\leq n<b_{k+1}}e_kf(S^{n-b_k}y_k)\frac{\mob(n)}n\to 0\text{ when }K\to\infty.$$
By setting $F=f\ot id$ (i.e. $F(v,a)=a_0f(v_0)$), we need to prove
\beq\label{momo69}\frac1{\log b_K}\sum_{n<b_K} F(T^n\underline{x})\frac{\mob(n)}n=
\frac1{\log b_K}\sum_{k<K} \sum_{b_k\leq n<b_{k+1}}F(T^n\underline{x})\frac{\mob(n)}n\to 0.\eeq
Set $X_{\underline x}:=\ov{\{T^n\underline x\colon n\in\Z\}}$. We intend to show that (i) can be applied to $(X_{\underline x},T)$, so take $\nu$ a logarithmically visible measure in $(X_{\underline x},T)$. Assume first that $\underline x$ is logarithmically generic for $\nu$, that is, for some $(N_k)$, we have
\beq\label{momo70}
\frac1{\log N_s}\sum_{n<N_s}\frac1n\delta_{T^n\underline x}\to \nu.\eeq
Set $C:=\{v\in X_{\underline x}\colon (v_1,a_1)=(Sv_0,a_0)\}$. In view of \eqref{momo67},
$$
\Big(\frac1{\log N_s}\sum_{n<N_s}\frac1n\delta_{T^n\underline x}\Big)(C)=\frac1{\log N_s}\sum_{n<N_s; x_{n+1}=(S\times id)x_n}\frac1n \to 1$$
when $s\to\infty$. Since $C$ is closed, $\nu(C)=1$ and also $\nu(\bigcap_{n\in\Z}T^{-n}C)=1$, that is,
\beq\label{momo71}
\nu(\{(v_j,a_j))_{j\in\Z}\in X\colon (v_j,a_j)(S^jv_0,a_0)\text{ for all }j\in\Z\})=1.\eeq
Let $\nu^{(0)}$ be the projection of $\nu$ on the $0$-coordinate. Then, using~\eqref{momo71}, for $A\subset Y$, we have
$$
\nu^{(0)}(S^{-1}A)=\nu(Y^{\N}\times S^{-1}A\times Y^{\N})=\nu(Y^{\N}\times S^{-1}A\times A\times Y^{\N})=
\nu(Y^{\N}\times A\times Y^{\N}),$$
so $\nu^{(0)}$ is $S\times Id$-invariant. Since $Y\times\mathbb{A}_3$ consists of three $S\times id$-invariant sets $Y\times\{e^{2\pi ij/3}\}$, the measure $\nu^{(0)}$ is a (up to some obvious identification) convex combination of three $S$-invariant measures. Moreover, via \eqref{momo71}, the map
$$
(v_0,a_0)\mapsto ((S\times id)^j(v_0,a_0))_{j\in\Z}$$
settles a measure-theoretic isomorphism $(S\times Id,\nu^{(0)})$ and $(T,\nu)$, so the relative Kronecker factor of $(T,\nu)$ is the same as its Kronecker factor, and we can apply~(i).
To complete the proof we should consider an arbitrary point $\underline{x}'$ being logarithmically generic for $\nu$. In this situation the topological structure of $X_{\underline{x}}$ is decisive. For example, if $\underline{x}'=\lim_{r\to \infty} T^{n_r}\underline x$ with $n_r\to\infty$  then by writing $\underline{x}'=(v_j,a_j)_{j\in\Z}$ there can be at most one $j$ such that $T(v_j,a_j)\neq (v_{j+1},a_{j+1})$, see for example \cite{Ab-Ku-Le-Ru}.
\end{proof}

\begin{Cor}\label{c:stmomo}
Let  $(Y,S)$ be a zero entropy topological system whose all ergodic measures yield weakly mixing automorphisms.
Then $(Y,S)$ satisfies the logarithmic strong $\mob$-MOMO property (in particular, $(Y,S)\perp_{\rm log} \mob$).
\end{Cor}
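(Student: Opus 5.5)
The plan is to deduce Corollary~\ref{c:stmomo} directly from Theorem~\ref{t:hutaxu1}(b) by checking its hypothesis: for every $\nu\in M(Y,S)$ we must have $\ck_{\rm rel}(S,\nu)=\ck(S,\nu)$. Zero entropy of $(Y,S)$ is assumed, so this is the only thing to verify. Once (b) is available, the logarithmic strong $\mob$-MOMO property is exactly its conclusion (as noted after Theorem~\ref{t:hutaxu1}, via \cite{Ka-Le-Ri-Te}). Then $(Y,S)\perp_{\rm log}\mob$ follows either from (a) or from the fact that strong MOMO implies orthogonality, taking $(b_k)$ trivial and a single point.

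To verify the hypothesis, fix $\nu\in M(Y,S)$ and let $\nu=\int_\Gamma\nu_\gamma\,d\PP(\gamma)$ be its ergodic decomposition, with $\nu_\gamma\in M^e(Y,S)$ for every $\gamma$. Since each $(S,\nu_\gamma)$ is weakly mixing, its point spectrum is $\Lambda(S,\nu_\gamma)=\{1\}$. Thus the union $\bigcup_{\gamma\in\Gamma}\Lambda(S,\nu_\gamma)=\{1\}$ is countable, and Lemma~\ref{l:hutaxu1} gives $\ck_{\rm rel}(S,\nu)=\ck(S,\nu)$. One can say this directly as well. By Lemma~\ref{pr:giraf}, the fibres of $\ck_{\rm rel}$ are the Kronecker factors of the ergodic components, and these are trivial. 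Proposition~\ref{2factors} then gives $\ck_{\rm rel}(S,\nu)=\mathcal I(S)$. Since $\mathcal I(S)\subset\ck(S,\nu)\subset\ck_{\rm rel}(S,\nu)$, equality follows.

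There is no real obstacle here, since all the work sits in Theorem~\ref{t:hutaxu1}. The only point needing care is that the hypothesis must hold for \emph{every} invariant measure, not just the ergodic ones. That is exactly why we pass through the ergodic decomposition and use Lemma~\ref{l:hutaxu1} (or Proposition~\ref{re:mrak} together with Remark~\ref{re:boost}), rather than just observing that ergodic measures have trivial Kronecker factor.
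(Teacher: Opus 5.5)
Your proposal is correct and matches the paper's intended argument; the corollary is stated without a separate proof, as an immediate consequence of Theorem~\ref{t:hutaxu1}. Weak mixing of every ergodic measure makes the union of the eigenvalue groups of the ergodic components equal to $\{1\}$, so Lemma~\ref{l:hutaxu1} gives $\ck_{\rm rel}(S,\nu)=\ck(S,\nu)$ for every $\nu\in M(Y,S)$, and Theorem~\ref{t:hutaxu1}(b) then yields the logarithmic strong $\mob$-MOMO property.
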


\begin{Remark}\label{r:apropos} Note that if $(Y,S)$ does not satisfy the assumptions in Theorem~\ref{t:hutaxu1}, then for some $\nu\in M(X,T)$, we have $\ck(S,\nu)\subsetneq \ca_{{\rm {\bf DISP}}
_{\rm ec}}(S,\nu)$. By taking the latter factor, we obtain a system which is represented by an invariant measure in a universal model for the class DISP$_{\rm ec}$ and for which the logarithmic strong $\mob$-MOMO property remains unproven (and which is the main obstacle to prove the logarithmic Sarnak's conjecture for the class DISP$_{\rm ec}$).\end{Remark}

\subsubsection{Proof of Corollary~\ref{corollary:h}}
We now provide a direct proof of Corollary~\ref{corollary:h} using Section~\ref{s:lemmas}. Indeed, we come back to
Lemma~\ref{l:hutaxu6}, by considering as $S\in {\rm Aut}(Y,\cc,\nu)$ the Pinsker factor $(X_{\mob}/\Pi(\kappa),\kappa|_{\Pi(\kappa)}, S|_{\Pi(\kappa)})$ (we can apply the lemma since $\Pi(\kappa)$ being URE implies that the largest DISP$_{\rm ec}$-factor is also URE).
On the other hand, the automorphism $S|_{\Pi(\kappa)}$ is a factor of a certain $T$ as in that lemma. We can now consider the diagonal joining $\Delta$ of the automorphism $S|_{\Pi(\kappa)}$ with itself to obtain that
$$\int f\ot \ov{f}\,d\Delta=0$$
whenever the spectral measure $\sigma_{f}$ of $f$ has no atoms. But this means that $f=0$, whence $\Pi(\kappa)$ is precisely the Kronecker factor of $S|_{\Pi(\kappa)}$. As $\sigma_{\pi_0}$ has indeed no atoms \cite{Ma-Ra-Ta}, we obtained that $\pi_0\perp L^2(\Pi(\kappa))$, that is, so called Veech's condition is satisfied and the logarithmic Sarnak's conjecture holds \cite{Ka-Ku-Le-Ru}.

\begin{proof}[Proof of Remark~\ref{r:logSchar}]
We repeat word for word the proof of Corollary~\ref{corollary:h} just having the largest DISP$_{\rm ec}$-factor of $(S,\kappa)$ URE by assumption.
\end{proof}

\section{Concluding remarks and open problems}\label{s:open}

\begin{Question}\label{quest7}
For which characteristic classes $\mathcal F$, the product formula
$$\ca(\cf)(T\times S)=\ca(\cf)(T)\ot\ca(\cf)(S)$$
holds true?
\end{Question}

It works for ${\rm{\bf{ZE}}}$,  ${\rm{\bf{DISP}}}$ and ${\rm{\bf{DIST}}}$ for example
but it breaks down
already at the smallest (see \cite{Ka-Ku-Le-Ru}) nontrivial characteristic class ${\rm{\bf{ID}}}$ (note that ${\rm{\bf{ID}}}={\rm{\bf{ID}}}_{\rm ec}$. Indeed, if $T$ is ergodic then $\ca({\rm ID})(T)=\{\ast\}$, so $\ca({\rm ID})(T)\ot\ca({\rm ID})(T)$ is also trivial, while $\ca({\rm ID})(T\times T)$ is trivial if and only if  $T$ is weakly mixing.
We can also prove the following particular case.

\begin{Prop}\label{t:ms}Let $\cf_{\rm ec}$ be an  ec-characteristic class. Assume that $T\in{\rm Aut}\xbm$ is ergodic and consider the identity {\rm Id}$_Y$ in ${\rm Aut}\ycn$. Then, we have
$$
\ca(\cf_{\rm ec})(T\times {\rm Id}_Y)=\ca(\cf_{\rm ec})(T)\ot\cc.$$
\end{Prop}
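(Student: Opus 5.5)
The plan is to prove the two inclusions separately, using the ergodic decomposition of $T\times{\rm Id}_Y$ and the description of factors containing the invariant sets (Proposition~\ref{p:faktory}).

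\emph{The inclusion $\supseteq$.} The factor $\ca(\cf_{\rm ec})(T)\ot\cc$ is the product of ${\rm Id}_Y$ with $T|_{\ca(\cf_{\rm ec})(T)}$. Since $T$ is ergodic, $\ca(\cf_{\rm ec})(T)$ is ergodic, so its a.e.\ ergodic component is itself. It therefore lies in $\cf$, because an ergodic member of $\cf_{\rm ec}$ is in $\cf$. Identities belong to every nontrivial characteristic class. Hence the product, being a joining of a member of $\cf$ with an identity, is in $\cf\subset\cf_{\rm ec}$. Alternatively, its ergodic components are $\{y\}\times(T|_{\ca(\cf_{\rm ec})(T)})$, all in $\cf$. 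By maximality of $\ca(\cf_{\rm ec})(T\times{\rm Id}_Y)$, the inclusion follows.

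\emph{The inclusion $\subseteq$.} First I show that $\ce:=\ca(\cf_{\rm ec})(T\times{\rm Id}_Y)$ contains $\{\emptyset,X\}\ot\cc$. The factor $\ce\vee(\{\emptyset,X\}\ot\cc)$ is generated by two factors in $\cf_{\rm ec}$: $\ce$ itself, and the identity $\{\emptyset,X\}\ot\cc$. Since characteristic classes are closed under arbitrary joinings (footnote in Section~\ref{s:podr1}), this join is in $\cf_{\rm ec}$, so maximality gives $\ce\supseteq\{\emptyset,X\}\ot\cc$.

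Because $T$ is ergodic, $\mathcal I(T\times{\rm Id}_Y)=\{\emptyset,X\}\ot\cc$ (mod~0), and the ergodic components are $(T,\mu)$ over each $y\in Y$. Proposition~\ref{p:faktory} then gives a measurable field $y\mapsto\ce_y\subset\cb$ of $T$-factors with $\ce=\{A:A_y\in\ce_y \text{ a.e.}\}$. The ergodic components of $T\times{\rm Id}_Y$ restricted to $\ce$ are $T|_{\ce_y}$, and these lie in $\cf$ for a.e.\ $y$ since $\ce\in\cf_{\rm ec}$. Each $T|_{\ce_y}$ is an ergodic factor of $T$ in $\cf\subset\cf_{\rm ec}$, so $\ce_y\subset\ca(\cf_{\rm ec})(T)$ for a.e.\ $y$. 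By Proposition~\ref{2factors}, applied with $\mathcal A=\{\emptyset,X\}\ot\cc$ contained in both $\ce$ and $\ca(\cf_{\rm ec})(T)\ot\cc$, we get $\ce\subset\ca(\cf_{\rm ec})(T)\ot\cc$. More precisely, comparing $\ce$ with $\ce\cap(\ca(\cf_{\rm ec})(T)\ot\cc)$ fiberwise yields fibers $\ce_y\cap\ca(\cf_{\rm ec})(T)=\ce_y$, hence equality.

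The main obstacle is the step $\ce\supseteq\{\emptyset,X\}\ot\cc$, together with correctly invoking the fiberwise description. I will handle it through the joining-closure argument above, and check the measurability of the field $y\mapsto\ce_y$ via the footnote to Proposition~\ref{p:faktory}.
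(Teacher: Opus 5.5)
Your proposal is correct and follows essentially the same route as the paper: both note $\ca(\cf_{\rm ec})(T\times{\rm Id}_Y)\supset\ca(\cf_{\rm ec})(T)\ot\cc\supset\{\emptyset,X\}\ot\cc$, apply Proposition~\ref{p:faktory} to get the field $y\mapsto\ce_y$ of $T$-factors (which also gives the ergodic decomposition of the factor), and conclude that each $\ce_y$ is an $\cf$-factor of $T$ and hence lies in $\ca(\cf_{\rm ec})(T)$. Your separate joining argument for $\ce\supseteq\{\emptyset,X\}\ot\cc$ is redundant once you have the inclusion $\supseteq$, but it is harmless, and your justification of $\supseteq$ fills in a step that the paper only asserts.
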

\begin{proof} Let $\cd:=\ca(\cf_{\rm ec})(T\times {\rm Id}_Y)$. Note that $\cd\supset \ca(\cf_{\rm ec})(T)\ot\cc$, in particular, it contains $\{\emptyset, X\}\ot\cc$. It follows that we can apply Proposition~\ref{p:faktory}, and the factor $\cd$ is given (by \eqref{can3}) by a measurable choice
\beq\label{can3a}y\mapsto\cd_y\subset\cb\eeq
of factors of $T$ (i.e. of $\cb$). But \eqref{can3a} yields also the ergodic decomposition of $\cd$. Since $(T\times {\rm Id}_Y)|_{\cd}\in\cf_{\rm ec}$ and we consider the $\cf_{\rm ec}$-class, the factors $\cd_y$ must be $\cf$-factors of $T$. Hence, all of them are contained in $\ca(\cf_{\rm ec})(T)$, and therefore, $\cd\subset\ca(\cf_{\rm ec})(T)\ot \cc$.
\end{proof}

\begin{Remark} We do not know if Proposition~\ref{t:ms} works for any characteristic class. It certainly goes beyond ec-classes, as it works for ${\rm{\bf{DISP}}}$, but also it works for $(q_n)$-rigidity classes $\cf={\rm {\bf{RIG}}}((q_n))$~\footnote{This class consists of all automorphisms $R$ on $(Z,\cd,\kappa)$ such that $h\circ R^{q_n}\to h$ in $L^2(Z,\kappa)$.} as the following reasoning shows.
Let $F\in L^2(X\times Y,\mu\ot\nu)$ satisfy $F\circ(T\times Id)^{q_n}\to F$ (in $L^2$).  Then, for $F_y(x):=F(x,y)$ we have $F_y\circ T^{q_n}\to F_y$. It follows that when $F\in L^2(\ca(\cf)(T\times Id))$ then $F_y\in L^2(\ca(\cf)(T))$. The claim follows by the following classical theorem:
if $\ca\subset\cb\ot\cc$ is a sigma-field such that $\ca_y=\cb$ for a.e. $y\in Y$ and  if $\ca\supset \{\emptyset,X\}\ot\cc$ then $\ca=\cb\ot\cc$, see Proposition~\ref{2factors}.  \end{Remark}


\begin{Question}\label{quest0}
Has the spectral condition formulated in Proposition~\ref{theorem:c} a purely combinatorial reformulation for $\bfu$?\end{Question}

\begin{Question}\label{quest1}
Is the property~\eqref{Mproperty} a characterization of the piecewise Markov URE property?\end{Question}

\begin{Question}\label{quest2}To which extent the three notions: URE, Markov URE and piecewise Markov URE  are equivalent?\end{Question}

\begin{Question}\label{quest3}
It is not hard to see that the Bernoulli measures on the space $(K^\Bbb Z,S)$  is   $d$-bar separable.
 Indeed, they yield automorphisms that have a common extension, namely, Bernoulli automorphism with infinite entropy.
  In \cite{Ba-Ca-Kw-Op}, it was proved that  the Kolmogorov measures form a closed set.
Is the set of Kolmogorov measures separable in $\overline{d}$?
\end{Question}

\begin{Question}\label{quest3.5}
Has the class $\cf(A_{2,\alpha})$ a universal model for all (or some) irrational $\alpha$?
\end{Question}

\begin{Conj}\label{quest4} We conjecture that
$$\bigcap_{\alpha\notin\Q}\cf(A_{2,\alpha})={\rm {\bf{DISP}}}_{\rm ec}.
$$
\end{Conj}

\begin{Question}\label{quest6}
Are there universal model for the following classical characteristic classes: ${\rm {\bf{DIST}}}$, ${\rm {\bf{RIG}}}{((q_n))}$, $\mathcal{Z}_s$ for $s\geq2$ \cite{Ho-Kr}, $\cm({\rm WM}^\perp)$ \cite{Le-Pa}? \end{Question}

\begin{Question}\label{quest7}
Does the assertion of Corollary~\ref{c:Fsigma} hold for non-$\sigma$-finite  Borel subgroups of $\T$?
The problem seems to be open even in the important (in harmonic analysis) case of saturated subgroups.
We refer to \cite{Ho-Me-Pa} for definition, properties and examples of saturated subgroups.
For instance, given an arbitrary  singular measure $\kappa$ on $\Bbb T$,
the group of $\kappa$-quasi-invariance $H(\kappa):=\{z\in\Bbb T\colon \kappa*\delta_z\sim\kappa\}$  is  proper and saturated  \cite{Ho-Me-Pa}.
Does Corollary~\ref{c:Fsigma} hold for $H(\kappa)$?
\end{Question}

\begin{Question}\label{quest5} For  an arbitrary simple automorphism  $T$ \cite{Ju-Ru}, is there a universal model for the class $\cf(T)$?
What about an arbitrary ergodic  $T$ with zero entropy?
\end{Question}

\begin{Question}\label{q:ec} For which ergodic automorphisms $T$
 with zero entropy, we have the equality $\cf(T)=\cf(T)_{\rm ec}$?\end{Question}

\section{Appendix ``Separable subsets for $d$-bar metric''}
\begin{center} by T. Austin
\end{center}

In this note, the term `automorphism' refers to an invertible measure-preserving transformation on a standard Borel probability space, or to the resulting measure-preserving system as a whole.

Let $(K,d)$ be a nonempty compact metric space and let $S$ be the leftward shift on $K^\bbZ$.  Let $P$ be the space of $S$-invariant Borel probability measures on $K^\bbZ$, and by default give it the vague topology, which is compact and metrizable.  The subset $P_{\rm{e}}$ of ergodic elements of $P$ is G$_\delta$ in this topology, and hence Borel.

The space $P$ admits this generalization of Ornstein's ``d-bar'' metric:
\[\ol{d}(\nu,\nu') := \inf\Big\{\int d(y_0,y'_0)\ \la(\d y,\d y'):\ \la\ \hbox{a joining of}\ \nu\ \hbox{and}\ \nu'\Big\} \qquad (\nu,\nu' \in P).\]
By vague compactness, this infimum is always attained.  Furthermore, by taking suitable ergodic components, it is always attained by an ergodic joining if $\mu$ and $\nu$ are themselves ergodic.  If $K$ has at least two points, then $\ol{d}$ generates a topology on $P$ which is strictly stronger than the vague topology, and which is neither compact nor separable.  Other recent works using this metric include \cite{Ba-Ca-Kw-Op,Ba-La}.

Intuitively, the relationship between $\ol{d}$ and the vague topology on $P$ is similar to the relationship between the $\ell^\infty$-metric and the product topology on $[0,1]^\bbN$.

Although $\ol{d}$ is strictly stronger than the weak topology, we do have the following comparison.

\begin{Lemma}\label{lem:Borel}
If $E$ is a $\ol{d}$-closed and $\ol{d}$-separable subset of $P$, then it is K$_{\sigma\delta}$ and hence Borel for the vague topology.
\end{Lemma}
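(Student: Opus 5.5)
Since $E$ is $\ol{d}$-separable, I fix a countable $\ol{d}$-dense subset $\{\nu_i\colon i\geq1\}\subset E$. Using that $E$ is $\ol{d}$-closed, I will show
\[
E=\bigcap_{m\geq1}\bigcup_{i\geq1}B_{i,m},\qquad B_{i,m}:=\{\nu\in P\colon \ol{d}(\nu,\nu_i)\leq 1/m\}.
\]
Each $B_{i,m}$ will turn out to be vaguely compact. Granting that, the right-hand side is a countable intersection of countable unions of compact sets, i.e.\ K$_{\sigma\delta}$, and K$_{\sigma\delta}$ sets are Borel. So the proof has two steps: prove the displayed equality, and prove compactness of the balls $B_{i,m}$.

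For compactness, I would use the lower semicontinuity of $\ol{d}$ with respect to the vague (weak$^*$) topology, as in Lemma~\ref{lem:lsc}. The argument there carries over verbatim to $P$: take a convergent subsequence of optimal joinings, and use that the marginal maps are continuous and $d(y_0,y_0')$ is continuous. Fixing one argument, $\nu\mapsto\ol{d}(\nu,\nu_i)$ is then vaguely lower semicontinuous. Hence each sublevel set $B_{i,m}$ is vaguely closed in the compact space $P$, and therefore compact.

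For the equality, the inclusion $E\subset\bigcap_m\bigcup_i B_{i,m}$ is immediate from $\ol{d}$-density of $\{\nu_i\}$ in $E$: every $\nu\in E$ is within $1/m$ of some $\nu_i$. Conversely, suppose $\nu$ lies in the intersection. Then for every $m$ there is $i_m$ with $\ol{d}(\nu,\nu_{i_m})\leq 1/m$. So $\nu_{i_m}\to\nu$ in $\ol{d}$, and since $\nu_{i_m}\in E$ with $E$ $\ol{d}$-closed, we get $\nu\in E$.

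The only delicate step is confirming that $\ol{d}$ is a genuine metric (so that $\ol{d}$-limits are meaningful) and that lower semicontinuity holds for a fixed second argument. The first follows from the standard gluing of joinings for the triangle inequality, together with $\ol{d}(\nu,\nu')=0$ forcing a joining supported on the diagonal, hence $\nu=\nu'$. The second is the special case of Lemma~\ref{lem:lsc} with a constant sequence in one coordinate. I expect the compactness step to be the main point; the rest is bookkeeping.
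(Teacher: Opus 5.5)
Your proposal is correct and is essentially the paper's proof: the same decomposition $E=\bigcap_{k}\bigcup_{\nu\in J}\ol{B}_{\ol{d}}(\nu,1/k)$ over a countable $\ol{d}$-dense $J\subset E$, together with vague compactness of the closed $\ol{d}$-balls. The only difference is cosmetic. You get compactness of the balls from the lower semicontinuity in Lemma~\ref{lem:lsc}, whereas the paper realizes each ball directly as the image, under the second-marginal map, of the vaguely compact set of shift-invariant measures on $K^\bbZ\times K^\bbZ$ with first marginal $\nu$ and cost at most $\delta$; both routes rest on the same compactness-of-joinings argument.
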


\begin{proof}
For any $\nu \in P$ and $\delta > 0$, let $C(\nu,\delta)$ be the set of all shift-invariant probability measures $\la$ on $A^\bbZ \times A^\bbZ$ which satisfy
\[\int d(y_0,y'_0)\ \la(\d y,\d y') \le \delta \qquad \hbox{and} \qquad (\hbox{first coord. proj. of $\la$}) = \nu.\]
This is compact for the vague topology.  If $\psi\colon K^\bbZ \times K^\bbZ \to K^\bbZ$ is the second coordinate projection, then pushing forward by $\psi$ acts continuously on probability measures for the vague topology, and therefore the set
\[\{\psi_\ast\la:\ \la \in C(\nu,\delta)\}\]
is also compact in the vague topology.  This set is precisely the $\ol{d}$-closed ball $\ol{B}_{\ol{d}}(\nu,\delta)$.

Since $E$ is $\ol{d}$-separable, it has a countable $\ol{d}$-dense subset $J$.  Then, since $E$ is also $\ol{d}$-closed, we have
\[E = \bigcap_{k=1}^\infty \bigcup_{\nu \in J}\ol{B}_{\ol{d}}(\nu,1/k).\]
So this is K$_{\sigma\delta}$ for the vague topology.
\end{proof}

Separable subsets of $(P,\ol{d})$ arise naturally in the following way.

\begin{Lemma}\label{lem:sep}
Fix an automorphism $\boldsymbol{X} = (X,\mu,T)$, and let
\[E := \{\nu \in P:\ (K^\bbZ,\nu,S)\ \hbox{is a factor of}\ \boldsymbol{X}\}.\]
Then $E$ is $\ol{d}$-separable.
\end{Lemma}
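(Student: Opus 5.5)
The plan is to parametrize factor maps by measurable functions $X\to K$, and to use a countable dense family of such functions. Every factor $(K^\bbZ,\nu,S)$ of $\boldsymbol{X}$ arises as follows. Take a factor map $\phi\colon X\to K^\bbZ$ and set $f:=\pi_0\circ\phi\colon X\to K$. Then $\phi(x)=(f(T^nx))_{n\in\bbZ}$ a.e., and $\nu=\phi_\ast\mu$. Conversely, any measurable $f\colon X\to K$ defines an equivariant map $\phi_f(x):=(f(T^nx))_n$. Writing $\nu_f:=(\phi_f)_\ast\mu$, we get
\[
E=\{\nu_f\colon f\colon X\to K\ \hbox{measurable}\}.
\]
So it suffices to find a countable family $\mathcal{F}_0$ of measurable maps $X\to K$ such that $\{\nu_f\colon f\in\mathcal F_0\}$ is $\ol{d}$-dense in $E$.

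The key estimate is that $f\mapsto \nu_f$ is $1$-Lipschitz from the metric $\rho(f,g):=\int_X d(f(x),g(x))\,d\mu(x)$ into $(P,\ol{d})$. To see this, take the joining $\la:=(\phi_f\times\phi_g)_\ast\mu$. It is $S\times S$-invariant with marginals $\nu_f$ and $\nu_g$. Moreover
\[
\int d(y_0,y_0')\,\la(\d y,\d y')=\int_X d(f(x),g(x))\,d\mu(x).
\]
Hence $\ol{d}(\nu_f,\nu_g)\le\rho(f,g)$.

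It remains to see that the space $L^0(X,\mu;K)$ of measurable maps modulo null sets, equipped with $\rho$, is separable. Since $K$ is compact, $d$ is bounded, so $\rho$ metrizes convergence in measure. Choose a countable dense subset $\{k_j\}$ of $K$ and a countable generating algebra $\{B_i\}$ of the measure algebra of $(X,\mu)$, which exists because the space is standard. Let $\mathcal F_0$ be the set of simple functions taking values in $\{k_j\}$ on finite partitions made of sets from the algebra; this family is countable. To approximate a given $f$, first pick $\epsilon$-dense points $k_1,\dots,k_m$ (possible by compactness) and round $f$ to a simple function $f'$ with $d(f,f')<\epsilon$ everywhere. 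Then approximate each level set of $f'$ in measure by sets from the algebra, which gives $\rho(f',g)<\epsilon\cdot\mathrm{diam}(K)$ for some $g\in\mathcal F_0$. Combined with the Lipschitz estimate, $\{\nu_g\colon g\in\mathcal F_0\}$ is a countable $\ol{d}$-dense subset of $E$.

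The only point requiring care is the first step, namely that every factor of $\boldsymbol{X}$ living on $(K^\bbZ,\nu,S)$ is of the form $\nu_f$. If "factor" means a measure-preserving equivariant map defined a.e., this is immediate by composing with the zeroth coordinate. No ergodicity of $T$ is needed anywhere, which matches the remark in the proof of Theorem~\ref{theorem:f}.
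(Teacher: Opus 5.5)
Your proposal is correct and is essentially the paper's proof. The paper also identifies $E$ with the image of the space of measurable maps $X\to K$ (modulo null sets, with the metric $\int d(\pi(x),\pi'(x))\,\mu(\d x)$) under $\pi\mapsto\pi^{\bbZ}_\ast\mu$. It shows this map is a contraction into $(P,\ol{d})$ using the same joining you use, namely the joint distribution of $\pi^{\bbZ}$ and $(\pi')^{\bbZ}$, and concludes from separability of the function space; the only difference is that you spell out that separability via simple functions on a countable generating algebra, which the paper takes as standard.
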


\begin{proof}
Let $F$ be the set of all measurable maps from $X$ to $K$ modulo agreement $\mu$-a.e., and give $F$ the topology of convergence in probability.  This topology is generated by the metric
\[d_F(\pi,\pi') := \int d(\pi(x),\pi'(x))\ \mu(\d x)  \qquad (\pi,\pi' \in F).\]
Since $X$ is standard and $K$ is compact, the metric $d_F$ is complete and separable.

For each $\pi \in F$, let $\pi^\bbZ:= (\dots,\pi\circ T^{-1},\pi,\pi\circ T,\dots)$, regarded as an equivariant Borel map from $(X,T)$  to $(K^\bbZ,S)$.  Then $E$ is the image of $F$ under the mapping $\pi \mapsto \pi^\bbZ_\ast \mu$.  This mapping is a contraction from $d_F$ to $\ol{d}$.  Indeed, if $\pi,\pi' \in F$, then the joint distribution of $\pi^\bbZ$ and $(\pi')^\bbZ$ is a joining $\la$ of $\pi_\ast^\bbZ \mu$ and $(\pi')^\bbZ_\ast \mu$ which satisfies
\[\int d(y_0,y'_0)\ \la(\d y,\d y') = \int d(\pi(x),\pi'(x))\ \mu(\d x) .\]
So $E$ is a continuous image of a separable space, hence also separable.
\end{proof}

The implication from the previous lemma can be reversed in a way that respects ergodicity.

\begin{Prop}\label{prop:sep}
If $E\subset P_{\rm{e}}$ and $E$ is $\ol{d}$-separable, then there is an ergodic automorphism $\boldsymbol{X} = (X,\mu,T)$ such that $(K^\bbZ,\nu,S)$ is a factor of $\boldsymbol{X}$ for every $\nu \in E$.
\end{Prop}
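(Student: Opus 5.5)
We build $\boldsymbol{X}$ as an inverse limit of ergodic joinings, approximating every element of $E$ by the members of a countable dense set. Fix a countable $\ol{d}$-dense subset $J=\{\nu_1,\nu_2,\dots\}\subset E$. For each $\nu\in E$ choose indices $i_1(\nu),i_2(\nu),\dots$ with $\ol{d}(\nu,\nu_{i_k(\nu)})<2^{-k}$.

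The plan has three steps.

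\textbf{Step 1: realize each $\nu\in E$ as a limit of ergodic joinings.} As recalled before the statement, for ergodic measures the infimum defining $\ol{d}$ is attained by an ergodic joining. For each $k$ we pick an ergodic joining $\lambda_k^\nu$ of $\nu$ and $\nu_{i_k(\nu)}$ with
\[\int d(y_0,y'_0)\,d\lambda_k^\nu<2^{-k}.\]
In the corresponding ergodic joining of all these systems we get a sequence of coordinate processes that converges in probability. Its limit is a copy of the $\nu$-process, realized inside an ergodic joining of the countably many systems $(K^\bbZ,\nu_{i_k(\nu)},S)$. The difficulty is that these joinings must be chosen coherently, i.e. the same for all $\nu$, and not separately for each one.

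\textbf{Step 2: build a universal ergodic object.} We make the ambient system rich enough to contain, for every $\nu\in E$, the required joining of the $\nu_i$'s. Concretely, take $\boldsymbol{X}$ to be an ergodic system that has every ergodic joining of finitely many $\nu_i$'s as a factor, up to $\ol{d}$-approximation. Two candidate constructions:
\begin{itemize}
\item an ergodic joining $\boldsymbol{Y}$ of countably many copies of each $(K^\bbZ,\nu_i,S)$ (infinitely many copies of each, possibly with a Bernoulli shift of infinite entropy adjoined), with the joining chosen generic, e.g. via a weakly mixing or Bernoulli "independent-ish" extension;
\item the product of $\boldsymbol{Y}$ with a Bernoulli system, which supplies extra randomness so that relatively independent re-couplings can be realized.
\end{itemize}
The key claim to prove is an inductive one. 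Given a factor map $\pi_k\colon \boldsymbol{X}\to K$ whose process has law within $2^{-k}$ of $\nu$ and is close to a copy of $\nu_{i_k}$, we can find $\pi_{k+1}$ with $d_F(\pi_k,\pi_{k+1})\lesssim 2^{-k}$ that is similarly close to a copy of $\nu_{i_{k+1}}$. This is a copying lemma in the spirit of Ornstein theory: any ergodic joining of $\nu_{i_k}$ and $\nu_{i_{k+1}}$ can be approximately realized above a given copy of $\nu_{i_k}$ inside $\boldsymbol{X}$. To make this possible, design $\boldsymbol{X}$ so that for each pair $(i,j)$ and each ergodic joining in a countable dense family of joinings, the copy of $\nu_i$ extends to that joining inside $\boldsymbol{X}$.

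\textbf{Step 3: pass to the limit.} The maps $\pi_k$ form a Cauchy sequence in $(F,d_F)$, which is complete. The limit $\pi$ then satisfies $\pi^\bbZ_\ast\mu=\nu$, because $\pi\mapsto\pi^\bbZ_\ast\mu$ is $d_F$-to-$\ol{d}$ continuous (as in Lemma~\ref{lem:sep}) and $\ol{d}$ dominates the vague topology. So $(K^\bbZ,\nu,S)$ is a factor of $\boldsymbol{X}$.

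I expect the main obstacle to be the copying step. Exact extension of joinings is not available in general, and the set of ergodic joinings is not countable. I would first try to obtain approximate extensions from a countable vaguely dense family of finite-block joinings, made relatively independent over the given copy. Then I would pay the approximation error in $d_F$, using summability of $2^{-k}$ to keep the sequence convergent.
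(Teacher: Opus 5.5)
\textbf{Gap: Step 2 is a wish, not a construction.} All of the content of the proposition sits in your Step 2, and it is not carried out. The ``universal ergodic object'' is described only by the property you need. The copying lemma is neither proved nor true in the generality you state. That lemma is: above an arbitrary process $\pi_k$ inside a fixed $X$, realize approximately a prescribed joining of $\nu_{i_k}$ and $\nu_{i_{k+1}}$. Already its exact form fails. If $X$ is relatively weakly mixing over the factor generated by $\pi_k$, then no joining $\lambda$ can be realized above it for which $\lambda\to\nu_{i_k}$ is a nontrivial compact extension. For the approximate form you offer no mechanism. Moreover, your $\pi_k$ are perturbations of whatever copies you design into $X$, so any extension property built into those copies is lost after the first step.

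Your Step 1 has a related problem. It uses joinings of $\nu$ itself with its approximants. These depend on $\nu$, there are uncountably many of them, and $\nu$ is not present in $X$, so they cannot serve as building blocks. Even if you use joinings between members of $J$ only, one copy of each $\nu_i$ will not work. The pairs $(i_k(\nu),i_{k+1}(\nu))$, taken over all $\nu\in E$, form a graph with cycles. Near-optimal pairwise joinings along a cycle need not be realizable simultaneously.

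\textbf{Missing idea: one copy per finite chain, so the constraints form a tree.} A tree of constraints can be realized exactly, so no copying is needed. Index copies by finite strings $\alpha\in\bbN^\ast$, and choose $\nu_\alpha\in E$ as follows:
\begin{itemize}
\item the children of the root run through all of $J$;
\item the children of $\alpha\in\bbN^n$ with $n\ge1$ run through $J\cap B_{\ol{d}}(\nu_\alpha,2^{-n})$, with repetitions allowed.
\end{itemize}
For each edge fix one ergodic joining $\lambda_{\alpha\beta}$ of $\nu_\alpha$ and $\nu_{\alpha\beta}$ with $\int d(y_0,y_0')\,d\lambda_{\alpha\beta}<2^{-n}$; this uses only joinings between elements of $E$, never $\nu$ itself.

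Build $\mu$ on $\prod_\alpha X_\alpha$ as follows:
\begin{itemize}
\item Enumerate the nodes so that parents come before children.
\item Adjoin each new node by taking the relatively independent product of the current joining with $\lambda_{\alpha\beta}$ over the parent coordinate.
\item Pass to an ergodic component. Its marginals are still $\mu^{(n)}$ and $\lambda_{\alpha\beta}$, since ergodic measures are extreme points of the simplex of invariant measures.
\item Apply the Kolmogorov extension theorem. This gives an ergodic $\mu$ in which every node and its parent are jointly distributed \emph{exactly} as $\lambda_{\alpha\beta}$.
\end{itemize}

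For $\nu\in E$, the construction guarantees a branch $(\alpha_1,\alpha_2,\dots)$ with $\nu_{(\alpha_1,\dots,\alpha_n)}\to\nu$ in $\ol{d}$. The coordinate maps along that branch satisfy $\int d(\pi^{(i)}_0,\pi^{(i+1)}_0)\,d\mu<2^{-i}$, and your Step 3 then finishes the proof verbatim. This route needs no copying lemma, no adjoined Bernoulli randomness, and incurs no approximation error in building $X$.
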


\begin{proof}
\emph{Step 1.}\quad Let $\bbN^\ast = \{\emptyset\}\cup \bbN \cup \bbN^2\cup \cdots$ be the countable set of finite-length strings of positive integers.  We regard it as the vertex set of a tree in which the children of $\a = (\a_1,\dots,\a_n)$ are all possible extensions $(\a_1,\dots,\a_n,\b)$ for $\b \in \bbN$.  We sometimes abbreviate this child to $\a\b$. We include the empty string $\emptyset$ as an element of $\bbN^\ast$, so its children are the elements of $\bbN$.

In addition, pick an enumeration $\bbN^\ast = \{\a^{(1)},\a^{(2)},\dots\}$ in which parents always appear before their children.  In particular, it must be the case that $\a^{(1)} = \emptyset$.  For example, such an enumeration can be obtained by defining the `weight' of a string in $\bbN^\ast$ to be the sum of its entries; then writing $\emptyset$, followed by a list of all strings of weight $1$, followed by all strings of weight $2$, and so on.

We may clearly assume that $E$ is nonempty.  Then, since it is $\ol{d}$-separable, we can choose a mapping
\[\bbN^\ast \to E:\a \mapsto \nu_\a\]
with the following two properties:
\begin{enumerate}
\item[i)] (Cauchy estimate) If $\a  \in \bbN^n$ with $n\ge 1$ and $\b \in \bbN$, then
\[\ol{d}(\nu_\a,\nu_{\a\b}) < 2^{-n}.\]
Let $\la_{\a\b}$ be an ergodic joining of $\nu_\a$ and $\nu_{\a\b}$ which witnesses this inequality.
\item[ii)] (Density) For any $\nu \in E$, there exists $(\a_1,\a_2,\dots) \in \bbN^\bbN$ such that
\[\ol{d}(\nu_{(\a_1,\dots,\a_n)},\nu) \to 0 \qquad \hbox{as}\ n\to\infty.\]
\end{enumerate}
Here is one way to construct such a mapping.  Let $J$ be a countable $\ol{d}$-dense subset of $E$, and now choose the measures $\nu_\a$ recursively as follows.  First pick $\nu_\emptyset$ arbitrarily.  Next, let $\nu_1$, $\nu_2$, \dots be a sequence that includes every member of $J$, with repetitions if necessary.  Beyond that, once we have already chosen $\nu_\a$ for some $\a \in \bbN^n$ with $n\ge 1$, let $\nu_{\a 1}$, $\nu_{\a 2}$, \dots be a sequence that includes every member of $J\cap B_{\ol{d}}(\nu_\a,2^{-n})$, with repetitions if necessary.  Once this recursion is complete, the mapping obtained has property (i) by construction, and it also has property (ii) because every $\nu \in E$ is the limit of a sequence in $J$ that converges  as fast as we wish according to $\ol{d}$.

\vspace{7pt}

\emph{Step 2.}\quad Now let $(X_\a,\mu_\a,T_\a)$ be a copy of $(K^\bbZ,\nu_\a,S)$ for every $\a \in \bbN^\ast$.  Let
\[X := \prod_{\a \in \bbN^\ast} X_\a \qquad \hbox{and} \qquad  T := \bigtimes_{\a \in \bbN^\ast} T_\a.\]
Since $\bbN^\ast$ is countable, $X$ is still standard.

We construct $\mu$ on $X$ as a certain joining of all the measures $\mu_\a$.  We first construct recursively a joining $\mu^{(n)}$ of the finite collection $\mu_{\a^{(1)}}$, \dots, $\mu_{\a^{(n)}}$ for each $n$:
\begin{itemize}
\item To begin, let $\mu^{(1)} := \mu_\emptyset$.
\item To continue, suppose that $\mu^{(n)}$ has already been constructed, and consider $\a^{(n+1)}$.  Its parent in $\bbN^\ast$ equals $\a^{(i)}$ for some $i \in \{1,2,\dots,n\}$, and we can write $\a^{(n+1)} = \a^{(i)}\b$ for some $\b \in \bbN$.

We already have a joining $\mu^{(n)}$ of $X_{\a^{(1)}}$, \dots, $X_{\a^{(n)}}$, and property (i) above gives an ergodic joining $\la_{\a^{(i)}\b}$ of $X_{\a^{(i)}}$ and $X_{\a^{(n+1)}}$.   Form the relative product of these two joinings over the common coordinate factor $X_{\a^{(i)}}$, and let $\mu^{(n+1)}$ be a typical ergodic component of it.  This continues the recursion.
\end{itemize}

By construction, this sequence of measures on the spaces $X_{\a^{(1)}} \times \cdots \times X_{\a^{(n)}}$ is consistent under coordinate projection.  Therefore these measures are all the images of a unique measure $\mu$ on the infinite product space $X$ by the Kolmogorov extension theorem. Since $\mu$ is an inverse limit of ergodic invariant measures, it is still invariant and ergodic.

\vspace{7pt}

\emph{Step 3.}\quad To finish, consider any $\nu \in E$, and pick an infinite sequence $(\a_1,\a_2,\dots) \in \bbN^\bbN$ as in property (ii) from Step 1.

For each $i$, there is a coordinate projection
\[\pi^{(i)}\colon X\to X_{(\a_1,\dots,\a_i)}.\]
By our construction in Step 2, the joint distribution of $\pi^{(i)}$ and $\pi^{(i+1)}$ under $\mu$ is equal to the ergodic joining $\la_{(\a_1,\dots,\a_{i+1})}$ given by property (i) from Step 1.  Therefore
\[\int d(\pi^{(i)}_0(x),\pi^{(i+1)}_0(x))\ \mu(\d x) = \int d(y_0,y_0')\ \la_{(\a_1,\dots,\a_{i+1})}(\d y,\d y') < 2^{-i}.\]
By Markov's inequality, it follows that the sequence $(\pi_0^{(i)}(x))_{i\ge 1}$ is Cauchy for $\mu$-a.e. $x$, and so the maps $\pi^{(i)}$ converge $\mu$-a.e. to a limit map $\pi\colon X \to K^\bbZ$.  The distribution of this limit map is
\[\lim_{i\to\infty} \pi^{(i)}_\ast \mu = \lim_{i\to\infty} \nu_{(\a_1,\dots,\a_i)} = \nu,\]
using again property (ii) from Step 1.  So $\pi$ is a factor map from $(X,\mu,T)$ to $(K^\bbZ,\nu,S)$.
\end{proof}

We now use Proposition~\ref{prop:sep} to characterize the following property.

\begin{Def}
Let $\boldsymbol{X}$ and $\boldsymbol{Y}$ be automorphisms and assume that
$\boldsymbol{X}$ is ergodic.  We say $\boldsymbol Y$ is \textbf{enclosed} by $\boldsymbol X$ if $\boldsymbol Y$ is a factor of ${\boldsymbol W}\times \boldsymbol X$ for some trivial automorphism $\boldsymbol W$.  An automorphism is \textbf{enclosed} if it is enclosed by some ergodic automorphism.
\end{Def}

In this definition, we can always take the space of $\boldsymbol W$ to be $[0,1]$ with Lebesgue measure, but we do not need this restriction.

Let $(K^\bbZ,\nu,S)$ be a shift system for a compact metric space $K$ as before.  Up to isomorphism, any automorphism can be put into this form.  One version of the ergodic decomposition theorem asserts that there is a unique probability measure $\theta_\nu$ on $P_{\rm{e}}$ such that
\[\nu = \int_{P_{\rm{e}}}\omega \ \theta_\nu(\d \omega).\]

\begin{Cor}\label{cor:Tim}
The shift system $(K^\bbZ,\nu,S)$ is enclosed if and only if there is a $\ol{d}$-separable and $\ol{d}$-closed subset $E$ of $P_{\rm{e}}$ such that $\theta_\nu E = 1$.
\end{Cor}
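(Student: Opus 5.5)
For the ``if'' direction, suppose $E\subset P_{\rm e}$ is $\ol{d}$-separable, $\ol{d}$-closed and $\theta_\nu E=1$. By Proposition~\ref{prop:sep} there is an ergodic automorphism $\boldsymbol X=(X,\mu,T)$ such that $(K^\bbZ,\omega,S)$ is a factor of $\boldsymbol X$ for every $\omega\in E$. Then $\theta_\nu$-a.e.\ ergodic component of $\nu$ is a factor of the single ergodic system $\boldsymbol X$. Note that $E$ is Borel by Lemma~\ref{lem:Borel}, so $\theta_\nu E$ makes sense. Corollary~\ref{p:random-factor?} (equivalently, Proposition~\ref{p:faktory?} with the space of ergodic components $(P_{\rm e},\theta_\nu)$) now shows that $(K^\bbZ,\nu,S)$ is a factor of ${\rm Id}_{P_{\rm e}}\times\boldsymbol X$ with the product measure $\theta_\nu\otimes\mu$. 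Hence it is enclosed by $\boldsymbol X$.

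For the ``only if'' direction, suppose $(K^\bbZ,\nu,S)$ is a factor of $\boldsymbol W\times\boldsymbol X$ with $\boldsymbol W=(W,\kappa,{\rm Id})$ and $\boldsymbol X$ ergodic. The ergodic components of $\kappa\otimes\mu$ are $\delta_w\otimes\mu$, each isomorphic to $\boldsymbol X$. Since ergodic components of a factor are factors of ergodic components, for $\theta_\nu$-a.e.\ $\omega$ the system $(K^\bbZ,\omega,S)$ is a factor of $\boldsymbol X$.

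Now let $E_0:=\{\omega\in P:\ (K^\bbZ,\omega,S)\hbox{ is a factor of }\boldsymbol X\}$. By Lemma~\ref{lem:sep}, $E_0$ is $\ol{d}$-separable, so its $\ol{d}$-closure $E_1$ is also $\ol{d}$-separable and $\ol{d}$-closed. Set $E:=E_1\cap P_{\rm e}$, which is $\ol{d}$-separable. It is $\ol{d}$-closed as well, because $P_{\rm e}$ is $\ol{d}$-closed in $P$. To see this, take a $\ol{d}$-limit $\omega$ of ergodic $\omega_n$. If $\omega$ were not ergodic, some invariant function $f$ would be non-constant, and approximating $f$ by a cylinder function gives a contradiction with $\ol{d}$-closeness to an ergodic measure. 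This is the standard fact that $\ol{d}$-limits of ergodic measures are ergodic, a consequence of the Ornstein theory, with the coupling estimate via Markov's inequality. Finally, $E$ is Borel by Lemma~\ref{lem:Borel}, and it contains the set of full $\theta_\nu$-measure produced in the previous paragraph, so $\theta_\nu E=1$.

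The main obstacle is the measurability and ergodicity bookkeeping in the ``only if'' direction. One part is justifying that the $\theta_\nu$-a.e.\ statement about ergodic components of a factor holds in this shift-space parametrization. The other is the closedness of $P_{\rm e}$ in $\ol{d}$. If the latter proves awkward, I would instead take $E:=E_1\cap P_{\rm e}$ and, if needed, replace ``closed'' by noting that the $\ol{d}$-closure of a separable subset of $P_{\rm e}$ stays inside $P_{\rm e}$. This uses the fact that ergodic joinings realize the distances, so $\ol{d}$-Cauchy limits can be built as factors of an ergodic system, exactly as in Step~3 of Proposition~\ref{prop:sep}.
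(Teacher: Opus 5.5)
Your proof is correct, and the ``only if'' half is essentially the paper's; the ``if'' half reaches the same conclusion through a different selection step.

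\textbf{The ``only if'' half.} The paper makes your step ``ergodic components of a factor are factors of ergodic components'' concrete. It modifies the factor map $\pi\colon W\times X\to K^\bbZ$ to be strictly equivariant, so that $\nu=\int_W \pi(w,\cdot)_\ast\mu\,d\kappa(w)$ with every integrand ergodic and a factor of $\boldsymbol X$. Uniqueness of the ergodic decomposition then identifies $\theta_\nu$ with the image of $\kappa$ under $w\mapsto\pi(w,\cdot)_\ast\mu$. This disposes of the shift-space bookkeeping you flagged.

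The paper then takes the $\ol{d}$-closure of the set from Lemma~\ref{lem:sep} and never remarks that this closure stays inside $P_{\rm e}$. Your explicit appeal to $\ol{d}$-closedness of $P_{\rm e}$ supplies that implicit step. Your one-line sketch for it is loose. Your own fallback is a clean proof: pass to a subsequence with $\ol{d}(\omega_n,\omega_{n+1})<2^{-n}$, chain the ergodic joinings realizing these distances as in Steps~2--3 of Proposition~\ref{prop:sep}, and observe that the $\ol{d}$-limit is then the law of a factor of an ergodic system.

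\textbf{The ``if'' half.} Both arguments start from Proposition~\ref{prop:sep}. The paper then performs its own measurable selection. It selects factor maps $\omega\mapsto\pi_\omega$ from the Borel set $\{(\omega,\pi)\in E\times F\colon \pi^\bbZ_\ast\mu=\omega\}$ and assembles them into a single Borel map $W\times X\to K$ by a point-realization step. You instead invoke Corollary~\ref{p:random-factor?}, whose proof (via Proposition~\ref{p:faktory?}) carries out the same kind of selection in ${\rm End}(Y,\nu)$ after writing the ergodic decomposition as a product. There is no circularity, since that corollary does not depend on the appendix.

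Your route is shorter given the main body. The paper's keeps the appendix self-contained and produces an explicit factor map.
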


\begin{proof}
\emph{($\Rightarrow$).}\quad Let $(X,\mu,T)$ be an ergodic automorphism and $\pi$ be a factor map from $\boldsymbol{W}\times \boldsymbol{X}$ to $(K^\bbZ,\nu,S)$.  Let $\boldsymbol{W} := (W,\kappa,\mathrm{id})$ After modifying $\pi$ on a negligible set if necessary, we may assume that
\[\pi(w,Tx) = S\pi(w,x) \qquad \hbox{for strictly all}\ (w,x) \in W\times X.\]
It follows that
\[\nu = \pi_\ast (\kappa\times \mu) = \int_W \pi(w,\cdot)_\ast \mu\ \kappa(\d w).\]
Also, $\pi(w,\cdot)_\ast \mu$ is an element of $P_{\rm{e}}$ for each $w$, because it is a factor of the ergodic system $\boldsymbol{X}$.  In the formula above, the right-hand side is an integral of a measurable family of ergodic invariant measures, so by uniqueness it must agree with the ergodic decomposition of $\nu$.  Therefore the measure $\theta_\nu$ is supported on the closure of the separable set $E$ from Lemma~\ref{lem:sep}, which is Borel for the vague topology by Lemma~\ref{lem:Borel}.

\vspace{7pt}

\emph{($\Leftarrow$).}\quad Let $E \subset P_{\rm{e}}$ be $\ol{d}$-separable, $\ol{d}$-closed, and satisfy $\theta_\nu E = 1$.  By Proposition~\ref{prop:sep}, there is an ergodic automorphism $(X,\mu,T)$ such that $(K^\bbZ,\omega,S)$ is a factor of $(X,\mu,T)$ for every $\omega \in E$.  As in the proof of Lemma~\ref{lem:sep}, let $F$ be the space of all measurable maps from $X$ to $K$ modulo agreement $\mu$-a.e., and give $F$ the topology of convergence in probability.  This topology is complete and separable.  Now consider the set
\[\t{E} := \{(\omega,\pi) \in E\times F:\ \pi^\bbZ_\ast \mu = \omega\}.\]
This is a Borel subset of the standard measurable space $E\times F$, and by our choice of $(X,\mu,T)$ the projection map from $\t{E}$ to $E$ is surjective.  Therefore the measurable selector theorem provides a Borel subset $W$ of $E$ with $\theta_\nu W = 1$ and a Borel map $W \to F\colon \omega \mapsto \pi_\omega$ such that $(\pi_\omega)^\bbZ_\ast\mu = \omega$ for every $\omega \in W$.  Now standard constructions give a single Borel map $\pi\colon W\times X\to K$ such that $\pi_\omega$ agrees a.e. with $\pi(\omega,\cdot)$ for every $\omega$ (see, for instance, the proof of~\cite[Theorem 1]{Mac}).  

Let $\kappa$ be given by restricting $\theta_\nu$ to $W$.  To finish, observe that $\pi^\bbZ\colon W\times X\to K^\bbZ$ is Borel, intertwines $\mathrm{id}\times T$ with $S$, and satisfies
\[\pi^\bbZ_\ast (\kappa \times \mu) = \int_W \pi^\bbZ(\omega,\cdot)_\ast\mu\ \kappa(\d \omega) = \int_W (\pi_\omega)^\bbZ_\ast \mu\ \theta_\nu(\d \omega) = \int_W \omega\ \theta_\nu(\d\omega) = \nu.\]
So $(K^\bbZ,\nu,S)$ is a factor of $(W,\kappa,\mathrm{id})\times \boldsymbol{X}$.
\end{proof}


%
%
%
%
%

\section{Complements}

\subsection{Construction of $p$--Kronecker sets}

Let $p\ge2$ be a fixed prime and let $\mathbb T=\mathbb R/\mathbb Z$.
We consider the characters
\[
\chi_k(x)=e^{2\pi i p^k x},\qquad k\ge0.
\]

A compact set $K\subset\mathbb T$ is called \emph{$p$--Kronecker} if for every
$f\in C(K,\mathbb S^1)$ and every $\varepsilon>0$ there exists $k\ge0$ such that
\[
\sup_{x\in K} |\chi_k(x)-f(x)|<\varepsilon .
\]

We need some preparatory facts.

\begin{Lemma}[Arc lifting for $x\mapsto p^k x$]\label{lem:arcs}
Let $U_1,\dots,U_m\subset\mathbb T$ be nonempty open arcs and
$V_1,\dots,V_m\subset\mathbb T$ be nonempty open arcs.
Then there exists $k\ge1$ and, for each $j$, two disjoint nonempty open sub-arcs
\[
U_{j}^{(0)},U_{j}^{(1)}\subset U_j
\]
such that
\[
p^k(U_{j}^{(i)})\subset V_j \qquad (i=0,1,\; j=1,\dots,m).
\]
\end{Lemma}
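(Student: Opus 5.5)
The plan is to use the expansion of the map $x\mapsto p^kx$ on small arcs. For large $k$, a short sub-arc of $U_j$ is mapped onto an arc of length $p^k$ times its own length. So if a sub-arc has length slightly above $1/p^k$, its image is the whole circle. We can therefore locate preimages of $V_j$ inside $U_j$, and there are many such preimages once $k$ is large.

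The first step is to set $\ell:=\min_j |U_j|$ and choose $k\ge1$ with $p^{-k}\cdot 3<\ell$. The preimage of $V_j$ under $x\mapsto p^kx$ is the disjoint union of the $p^k$ arcs $\{x:\ p^kx\in V_j\}$, namely the translates $W_{j,r}:=\tilde V_j/p^k + r/p^k$ for $r=0,\dots,p^k-1$. Here $\tilde V_j$ is a lift of $V_j$ to an interval of length $|V_j|<1$; if $V_j=\mathbb T$, any proper sub-arc of $V_j$ can be used instead. Each $W_{j,r}$ has length $|V_j|/p^k\le 1/p^k$, and consecutive ones are spaced $1/p^k$ apart.

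The second step is to note that an open arc $U_j$ of length greater than $3/p^k$ contains at least two consecutive full period intervals of length $1/p^k$. Each such interval $[a,a+1/p^k)$ contains exactly one translate $W_{j,r}$ up to wrap-around; to avoid that issue, use the interval $[a,a+2/p^k)$, which contains at least one complete $W_{j,r}$. Taking two consecutive non-overlapping blocks inside $U_j$ therefore gives two distinct indices $r\neq r'$ with $W_{j,r},W_{j,r'}\subset U_j$. These arcs are disjoint because distinct translates are disjoint, since $|V_j|<1$.

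The final step is to set $U_j^{(0)}:=W_{j,r}$ and $U_j^{(1)}:=W_{j,r'}$. They are nonempty open sub-arcs of $U_j$, they are disjoint, and $p^k(U_j^{(i)})=V_j$. Shrinking them slightly, for instance to arcs whose closures are disjoint and which satisfy $p^k(\overline{U_j^{(i)}})\subset V_j$, may be convenient for the later Cantor construction, but this is optional. The same $k$ works for all $j$ because only the lengths $|U_j|$ constrain it. The only delicate point is the bookkeeping of wrap-around and the case $V_j=\mathbb T$, and this is handled by the factor $3$ margin and by passing to a proper sub-arc.
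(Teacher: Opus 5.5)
Your proof is correct and is essentially the paper's argument: the preimage of $V_j$ under the $p^k$-fold covering $x\mapsto p^kx$ consists of $p^k$ uniformly spaced arcs of length $|V_j|/p^k$, and once $p^k\min_j|U_j|$ is large enough, two of them can be taken inside $U_j$. Your version is slightly more careful than the paper's, which only says $U_j$ \emph{intersects} two components, and you also handle the case $V_j=\mathbb T$. There is one small bookkeeping slip: two non-overlapping blocks of length $2/p^k$ need $|U_j|>4/p^k$. Either replace $3$ by $4$, or count directly that the components lying inside $U_j$ are indexed by the integers in a closed interval of length $p^k|U_j|-|V_j|>2$, which contains at least two integers.
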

\begin{proof}
For fixed $k$, the map $T_k(x)=p^k x\pmod1$ is a $p^k$--fold covering of $\mathbb T$.
Hence $T_k^{-1}(V_j)$ is the disjoint union of $p^k$ open arcs, each of length
$|V_j|/p^k$, distributed uniformly around the circle.

Choose $k$ so large that
\[
p^k |U_j| > 2 \quad\text{for all } j .
\]
Then $U_j$ intersects at least two distinct components of $T_k^{-1}(V_j)$.
Choosing these components as $U_{j}^{(0)}$ and $U_{j}^{(1)}$ gives the claim.
\end{proof}

Fix once and for all a sequence $(f_n)_{n\ge1}\subset C(\mathbb T,\mathbb S^1)$
which is dense in $C(\mathbb T,\mathbb S^1)$ with respect to the supremum norm  (this is our choice of test functions).
Fix also a sequence $\varepsilon_n\downarrow0$.

We construct inductively compact sets
\[
K_0\supset K_1\supset K_2\supset\cdots
\]
such that:
\begin{itemize}
\item each $K_n$ is a finite union of pairwise disjoint closed arcs,
\item each arc in $K_n$ has exactly two descendants in $K_{n+1}$,
\item for each $n$ there exists $k_n$ with
\[
\sup_{x\in K_n}|\chi_{k_n}(x)-f_n(x)|<\varepsilon_n .
\]
\end{itemize}

\medskip
\noindent\textbf{Step $0$.}
Let $K_0=\mathbb T$, written as a single closed arc.

\medskip
\noindent\textbf{Induction step.}
Assume that
\[
K_{n-1}=\bigcup_{\omega\in\{0,1\}^{n-1}} I_\omega,
\]
where the $I_\omega$ are pairwise disjoint closed arcs.

For each $\omega$, choose a point $x_\omega\in\mathrm{int}(I_\omega)$ and define
the target arc
\[
V_\omega:=\{z\in\mathbb T:\ |z-f_n(x_\omega)|<\varepsilon_n\}.
\]
Each $V_\omega$ is a nonempty open arc.

Apply Lemma~\ref{lem:arcs} to the family
\[
U_\omega:=\mathrm{int}(I_\omega),\qquad V_\omega,
\]
to obtain an integer $k_n$ and, for each $\omega$, two disjoint open sub-arcs
\[
U_{\omega}^{(0)},U_{\omega}^{(1)}\subset U_\omega
\]
such that
\[
p^{k_n}(U_{\omega}^{(i)})\subset V_\omega \qquad (i=0,1).
\]

Define
\[
I_{\omega i}:=\overline{U_{\omega}^{(i)}} \qquad (i=0,1),
\]
and set
\[
K_n:=\bigcup_{\omega\in\{0,1\}^{n-1}} (I_{\omega0}\cup I_{\omega1}).
\]
By construction, the arcs in $K_n$ are pairwise disjoint and each has diameter
tending to zero as $n\to\infty$.

Define
\[
K:=\bigcap_{n\ge0} K_n .
\]
Since each arc has two descendants and diameters  tend to zero, $K$ is a perfect,
totally disconnected, uncountable compact set (a Cantor set).

\subsubsection{Verification of the $p$--Kronecker property}

Fix $n$ and $x\in K$. Then $x\in I_\omega$ for a unique
$\omega\in\{0,1\}^{n-1}$, hence
\[
p^{k_n}x\in V_\omega,
\]
which implies
\[
|\chi_{k_n}(x)-f_n(x_\omega)|<\varepsilon_n .
\]
By continuity of $f_n$ and smallness of $I_\omega$,
\[
|f_n(x)-f_n(x_\omega)|<\varepsilon_n
\]
for all $x\in I_\omega$, hence
\[
\sup_{x\in K}|\chi_{k_n}(x)-f_n(x)|<2\varepsilon_n .
\tag{$\ast$}
\]

Now let $f\in C(K,\mathbb S^1)$ and $\varepsilon>0$.
Extend $f$ to $\tilde f\in C(\mathbb T,\mathbb S^1)$ (Tietze extension theorem).
Choose $n$ such that
\[
\|\tilde f-f_n\|_\infty<\varepsilon/3
\quad\text{and}\quad
2\varepsilon_n<\varepsilon/3 .
\]
Then by $(\ast)$,
\[
\sup_{x\in K}|\chi_{k_n}(x)-f(x)|
\le
\sup_{x\in K}|\chi_{k_n}(x)-f_n(x)|
+\|f_n-f\|_\infty
<\varepsilon .
\]
Thus $K$ is $p$--Kronecker.
\qed

\subsection{GHK seminorms and atoms of spectral measures}\label{s:ghk1}
The material below is well-known (we enclose it for sake of completeness).

For $\lambda\in\T$ define
\[
A_H(\lambda,z):=\frac1H\sum_{h=0}^{H-1}(\lambda\overline{z})^h.
\]
Then, by the boundedness of these averages and the pointwise convergence  $A_H(\lambda,z)\to \raz_{\{z=\lambda\}}$ as $H\to\infty$, we immediately obtain:
For any finite measure $\sigma$ on $\T$ and any $\lambda\in\T$,
\beq
\label{lem:cesaro-atom}
\lim_{H\to\infty}\int_{\T} A_H(\lambda,z)\, d\sigma(z)=\sigma(\{\lambda\}).
\eeq


Recall that we are given $\bfu\colon \N\to\mathbb{D}$ and $\kappa\in V(\bfu)$ and we consider the subshift $(X_{\bfu},\kappa, S)$. Let $U$ denote the corresponding Koopman operator: $f\mapsto f\circ S$. Recall that for $f\in L^2(X_{\bfu},\kappa)$ one defines
\begin{Def}[$u^1$ seminorm]
\begin{equation}\label{eq:u1-f}
\|f\|_{u^1}^2:=\lim_{H\to\infty}\frac1H\sum_{h=0}^{H-1}\int_X (U^h f)\cdot \overline{f}\,d\kappa
=\lim_{H\to\infty}\frac1H\sum_{h=0}^{H-1}\langle U^h f,f\rangle_{L^2(\kappa)}.
\end{equation}
\end{Def}


\begin{Prop}\label{prop:a}
In the system $(X_{\bfu},\kappa,S)$, one has
\[
\|\pi_0\|_{u^1}^2=\sigma_{\pi_0}(\{1\}).
\]
\end{Prop}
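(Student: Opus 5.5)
The plan is to express each correlation $\langle U^h\pi_0,\pi_0\rangle_{L^2(\kappa)}$ through the spectral measure $\sigma_{\pi_0}$ of $\pi_0$ for $(X_{\bfu},\kappa,S)$, and then apply \eqref{lem:cesaro-atom} with $\lambda=1$.

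First, by the definition of the spectral measure recalled in Section~\ref{f:spectral}, for every $h\in\Z$ we have
\[
\langle U^h\pi_0,\pi_0\rangle_{L^2(\kappa)}=\int_{X_{\bfu}}\pi_0\circ S^h\cdot\ov{\pi_0}\,d\kappa=\widehat{\sigma}_{\pi_0}(h)=\int_{\T} z^h\,d\sigma_{\pi_0}(z).
\]
Here $z^h$ is taken in multiplicative notation on the circle. If the sign convention for the Fourier coefficients differs, $z$ is replaced by $\ov z$, which changes nothing below because the atom at $1$ is invariant under conjugation.

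Next, average over $0\le h<H$ and move the finite sum inside the integral:
\[
\frac1H\sum_{h=0}^{H-1}\langle U^h\pi_0,\pi_0\rangle=\int_{\T}\frac1H\sum_{h=0}^{H-1}z^h\,d\sigma_{\pi_0}(z)=\int_{\T}A_H(1,\ov z)\,d\sigma_{\pi_0}(z).
\]
With the convention used in the definition of $A_H$, the integrand is $A_H(1,\cdot)$ evaluated at $\ov z$ (or at $z$). Since $|A_H|\le1$, $A_H\to\raz_{\{1\}}$ pointwise, and $\sigma_{\pi_0}$ is a finite measure, \eqref{lem:cesaro-atom} applies, so the limit as $H\to\infty$ exists and equals $\sigma_{\pi_0}(\{1\})$. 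By \eqref{eq:u1-f} this limit is $\|\pi_0\|_{u^1}^2$, which proves the claim.

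There is no real obstacle: the only point needing care is the matching of the conjugation conventions between $\widehat\sigma$ and $A_H$, and this is harmless because the relevant atom sits at $1$. If one also wants to identify this quantity with the combinatorial $\|\bfu\|_{u^1((N_k))}$, note that genericity of $\bfu$ along $(N_k)$ for $\kappa$, together with the continuity of $\pi_0\circ S^h\cdot\ov{\pi_0}$, gives $\langle U^h\pi_0,\pi_0\rangle=\lim_k\frac1{N_k}\sum_{n\le N_k}\bfu(n+h)\ov{\bfu(n)}$. This expression agrees with the defining formula for $\|\bfu\|_{u^1((N_k))}$ up to complex conjugation, and that conjugation does not affect the real nonnegative limit.
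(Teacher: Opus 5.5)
Your proposal is correct and follows essentially the paper's own argument: write each correlation $\langle U^h\pi_0,\pi_0\rangle$ as $\int_{\T}z^h\,d\sigma_{\pi_0}(z)$, average over $0\le h<H$, pass the average inside the integral, and conclude with \eqref{lem:cesaro-atom} at $\lambda=1$. Your remarks on the conjugation convention and on the combinatorial $u^1((N_k))$ seminorm are harmless additions that the paper does not spell out.
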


\begin{proof}
Let $f:=\pi_0$. Using \eqref{eq:u1-f}, we obtain
\[
\|f\|_{u^1}^2
=\lim_{H\to\infty}\frac1H\sum_{h=0}^{H-1}\int_{\T} z^h\,d\sigma_f(z)
=\lim_{H\to\infty}\int_{\T}\left(\frac1H\sum_{h=0}^{H-1} z^h\right)d\sigma_f(z).
\]
By \eqref{lem:cesaro-atom} with $\lambda=1$, this equals $\sigma_f(\{1\})$, hence $\|\pi_0\|_{u^1}^2=\sigma_{\pi_0}(\{1\})$.
\end{proof}


\begin{Def}[Modulated $u^1$-seminorm]
For $f\in L^2(X,\kappa)$ and $\lambda\in\T$, define
\begin{equation}\label{eq:u1-lambda}
\|f\|_{u^1(\lambda)}^2:=\lim_{H\to\infty}\frac1H\sum_{h=0}^{H-1}\int_X \lambda^h (U^h f)\cdot \overline{f}\,d\kappa,
\end{equation}
whenever the limit exists.
\end{Def}

\begin{Prop}\label{prop:b}
For every $\lambda\in\T$,
\[
\|\pi_0\|_{u^1(\lambda)}^2=\sigma_{\pi_0}(\{\lambda\}).
\]
In particular, in the symbolic model this quantity corresponds to the ``$u^1$-size'' of the modulated sequence $n\mapsto u(n)\lambda^n$.
\end{Prop}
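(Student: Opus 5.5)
The plan is to reduce to Proposition~\ref{prop:a} by rewriting the modulated averages as integrals against the spectral measure and then invoking \eqref{lem:cesaro-atom}. First, put $f:=\pi_0$ and let $\sigma_f$ be its spectral measure for $(X_{\bfu},\kappa,S)$, so that $\langle U^hf,f\rangle_{L^2(\kappa)}=\int_{\T}z^h\,d\sigma_f(z)$ for every $h\ge0$. Then
\[
\frac1H\sum_{h=0}^{H-1}\lambda^h\langle U^hf,f\rangle=\int_{\T}\frac1H\sum_{h=0}^{H-1}(\lambda z)^h\,d\sigma_f(z)=\int_{\T}A_H(\ov{\lambda},z)\,d\sigma_f(z),
\]
using $\lambda z=\ov{\lambda}\,\ov{z}$ read inside the definition of $A_H$ with $\ov{\lambda}$ in place of $\lambda$ (since $\ov{\ov\lambda}=\lambda$).

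The one place requiring care is the sign convention for the modulation. With \eqref{eq:u1-lambda} as written, $\lambda^h U^h$ is the Koopman operator of $S$ twisted by the factor $\lambda$, whose spectral measure is $\sigma_f$ pushed forward by $z\mapsto\lambda z$. Applying \eqref{lem:cesaro-atom}, the limit exists for every $\lambda$ and equals $\sigma_f(\{\ov\lambda\})$. So the formula $\sigma_{\pi_0}(\{\lambda\})$ holds exactly when one either uses the convention $\widehat{\sigma}_f(n)=\langle f,U^nf\rangle$ or, equivalently, interprets the modulation as $\ov\lambda^h$. I would state the convention explicitly and, if needed, replace $\lambda$ by $\ov\lambda$ in \eqref{eq:u1-lambda}; with the paper's convention $\widehat\sigma_f(n)=\int f\circ T^n\ov f$, I would check the sign carefully and adjust. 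Since existence of the limit is automatic, nothing else is hard.

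For the "in particular" clause, I would unfold $\langle U^h\pi_0,\pi_0\rangle$ via genericity of $\bfu$ along $(N_k)$: it equals $\lim_k\frac1{N_k}\sum_{n\le N_k}\bfu(n+h)\ov{\bfu(n)}$. Multiplying by $\lambda^h$ gives the correlations of $n\mapsto\lambda^n\bfu(n)$ up to the harmless phase $\lambda^{-n}\lambda^{n}$. Averaging in $h$ then reproduces $\|(\lambda^n\bfu(n))\|^2_{u^1((N_k))}$, exactly as in Proposition~\ref{prop:a}.
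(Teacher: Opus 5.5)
Your route is the paper's: expand against $\sigma_{\pi_0}$ and apply \eqref{lem:cesaro-atom}. Your sign worry is justified, and you should resolve it rather than hedge. With the convention fixed in Lemma~\ref{l:mutualsing}, $\int z^n\,d\sigma_f=\int f\circ T^n\cdot\ov f\,d\mu$, the modulated average is $\int_{\T}\frac1H\sum_{h<H}(\lambda z)^h\,d\sigma_{\pi_0}(z)$, and its limit is $\sigma_{\pi_0}(\{\ov\lambda\})$.

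The paper's proof reaches $\sigma_{\pi_0}(\{\lambda\})$ only by rewriting $\lambda^hz^h$ as $(\lambda\ov z)^h$, which is a slip. The statement as written is false for complex-valued $\bfu$, even under $\|\bfu\|_{u^1}=0$. Take $\bfu(n)=\beta^n$ with $\beta\in\T$ of infinite order. Then on the (unique) Furstenberg system $U\pi_0=\beta\pi_0$, so $\sigma_{\pi_0}=\delta_\beta$, whereas $\|\pi_0\|^2_{u^1(\beta)}=\lim_H\frac1H\sum_{h<H}\beta^{2h}=0$.

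The correct formula is $\|\pi_0\|^2_{u^1(\lambda)}=\sigma_{\pi_0}(\{\ov\lambda\})$; equivalently, modulate by $\ov\lambda^h$ in \eqref{eq:u1-lambda}. It coincides with the stated formula when $\bfu$ is real-valued, since $\sigma_{\pi_0}$ is then invariant under $z\mapsto\ov z$. The discrepancy is also harmless for $\lambda=1$ and for sums of atoms over a subgroup $\Lambda$.

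Your treatment of the ``in particular'' clause is right and consistent with the corrected formula. Indeed $\lambda^h\bfu(n+h)\ov{\bfu(n)}=v(n+h)\ov{v(n)}$ for $v(n)=\lambda^n\bfu(n)$, and in the example above $v$ has nonzero $u^1$-seminorm exactly when $\lambda=\ov\beta$.

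One correction to your own first computation: the identity $\lambda z=\ov\lambda\,\ov z$ is false, so $\frac1H\sum_{h<H}(\lambda z)^h$ is not $A_H(\ov\lambda,z)$. It equals $A_H(\lambda,\ov z)=\ov{A_H(\ov\lambda,z)}$. This still converges boundedly to $\raz_{\{z=\ov\lambda\}}$, so your conclusion $\sigma_{\pi_0}(\{\ov\lambda\})$ stands. The cleanest write-up is the one you sketch next: run the argument of Proposition~\ref{prop:a} for the unitary $\lambda U$, whose spectral measure at $\pi_0$ is the image of $\sigma_{\pi_0}$ under $z\mapsto\lambda z$.
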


\begin{proof}
Let $f:=\pi_0$. Using \eqref{eq:u1-lambda},
\[
\|f\|_{u^1(\lambda)}^2
=\lim_{H\to\infty}\frac1H\sum_{h=0}^{H-1}\int_{\T}\lambda^h z^h\,d\sigma_f(z)
=\lim_{H\to\infty}\int_{\T}\left(\frac1H\sum_{h=0}^{H-1}(\lambda\overline{z})^h\right)\,d\sigma_f(z).
\]
Now,~\eqref{lem:cesaro-atom} gives $\|f\|_{u^1(\lambda)}^2=\sigma_f(\{\lambda\})$.
\end{proof}

\subsubsection{The $u^2$-seminorm and ergodic decomposition}

Let $\kappa=\int_\Gamma \kappa_\gamma\, d\PP(\gamma)$ be the ergodic decomposition of $\kappa$.
For $\PP$-a.e.\ $\gamma$ the system $(X_{\bfu},\kappa_\gamma,S)$ is ergodic.

\begin{Def}[$u^2$-seminorm]
For $f\in L^\infty(X_{\bfu},\kappa)$ define
\begin{equation}\label{eq:u2-def}
\|f\|_{u^2}^{4}
:=
\lim_{H\to\infty}\frac1H\sum_{h=0}^{H-1}
\bigl\| (U^h f)\cdot \overline{f}\bigr\|_{u^1}^{2},
\end{equation}
whenever the limit exists.
\end{Def}

For each $\gamma$, let $\sigma_{f,\gamma}$ be the spectral measure of $f$ in $L^2(X_{\bfu},\kappa_\gamma)$.  The following is classical.

\begin{Lemma}[Wiener's lemma on an ergodic component]\label{lem:wiener-fiber}
If $\kappa_\gamma$ is ergodic and $f\in L^2(\kappa_\gamma)$, then
\[
\lim_{H\to\infty}\frac1H\sum_{h=0}^{H-1}\left|\int f\cdot \overline{f\circ S^h}\,d\kappa_\gamma\right|^2
=\sum_{\lambda\in\T}\sigma_{f,\gamma}(\{\lambda\})^2,
\]
where the sum runs over all atoms of $\sigma_{f,\gamma}$.
\end{Lemma}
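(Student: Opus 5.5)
The plan is to derive the statement from the spectral measure via Fourier transform and Wiener's classical identity. Write $c(h):=\int f\cdot\overline{f\circ S^h}\,d\kappa_\gamma$. By definition of $\sigma_{f,\gamma}$ we have $\int f\circ S^h\cdot\overline{f}\,d\kappa_\gamma=\int_{\T} z^h\,d\sigma_{f,\gamma}(z)$, so $c(h)=\overline{\widehat{\sigma}_{f,\gamma}(h)}$ and $|c(h)|^2=|\widehat{\sigma}_{f,\gamma}(h)|^2$. Thus the left-hand side becomes a Ces\`aro average of $|\widehat\sigma(h)|^2$ with $\sigma:=\sigma_{f,\gamma}$, a finite positive measure on $\T$. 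Ergodicity is not needed beyond this point.

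Next I expand $|\widehat\sigma(h)|^2=\int\int (z\overline{w})^h\,d\sigma(z)\,d\sigma(w)$ and use Fubini to write
\[
\frac1H\sum_{h=0}^{H-1}|\widehat\sigma(h)|^2=\int_{\T\times\T}A_H(z,w)\,d(\sigma\otimes\sigma)(z,w),
\]
with $A_H(\lambda,z)=\frac1H\sum_{h<H}(\lambda\overline z)^h$ as defined just before \eqref{lem:cesaro-atom}. Since $|A_H|\le1$ and $A_H(z,w)\to\raz_{\{z=w\}}$ pointwise, dominated convergence, exactly as in \eqref{lem:cesaro-atom} but applied to $\sigma\otimes\sigma$, gives the limit $(\sigma\otimes\sigma)(\Delta)$, where $\Delta$ is the diagonal.

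Finally, $(\sigma\otimes\sigma)(\Delta)=\int\sigma(\{w\})\,d\sigma(w)$ by Fubini. This integrand is nonzero only on the countable set of atoms, so it equals $\sum_\lambda\sigma(\{\lambda\})^2$. The only slightly delicate point is the existence of the limit, which is handled by dominated convergence, together with the measurability of the diagonal, which is closed. I do not expect any real obstacle.
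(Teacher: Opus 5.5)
Your argument is correct. The paper gives no proof of this lemma (it is quoted as classical), and what you wrote is precisely the standard proof of Wiener's lemma: the same Ces\`aro kernel $A_H$ and dominated-convergence step as in \eqref{lem:cesaro-atom}, applied to $\sigma\otimes\sigma$ and the diagonal.

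You are also right that ergodicity of $\kappa_\gamma$ plays no role here. In the proof of Proposition~\ref{thm:c}, ergodicity is used only to identify $\|g_h\|_{u^1(\kappa_\gamma)}^2$ with $|\int g_h\,d\kappa_\gamma|^2$.
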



\begin{Prop}\label{thm:c}
For $f\in L^\infty(X_{\bfu},\kappa)$ one has
\[
\|f\|_{u^2}^{4}
=
\int_\Gamma \left(\sum_{\lambda\in\T}\sigma_{f,\gamma}(\{\lambda\})^2\right)\, d\PP(\gamma).
\]
In particular, for $f=\pi_0$,
\[
\|\pi_0\|_{u^2}^{4}
=
\int_\Gamma \left(\sum_{\lambda\in\T}\sigma_{\pi_0,\gamma}(\{\lambda\})^2\right)\, d\PP(\gamma).
\]
\end{Prop}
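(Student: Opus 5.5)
The plan is to reduce the $u^2$-seminorm to fiberwise Wiener sums in three moves: unfold the inner $u^1$-seminorm spectrally, pass to ergodic components by disintegration, and apply Lemma~\ref{lem:wiener-fiber} on each component. Fix $f\in L^\infty(X_{\bfu},\kappa)$ and put $g_h:=(U^hf)\cdot\overline f$. By \eqref{eq:u1-f} and Proposition~\ref{prop:a} (applied to $g_h$ in place of $\pi_0$; the proof there works for any $L^2$ function), we have $\|g_h\|_{u^1}^2=\sigma_{g_h}(\{1\})$, where $\sigma_{g_h}$ is the spectral measure for $(S,\kappa)$. By von Neumann's ergodic theorem this atom equals $\|\EE(g_h\mid{\rm Inv}_\kappa)\|^2_{L^2(\kappa)}$. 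Since ${\rm Inv}_\kappa$ is the sigma-algebra of the ergodic decomposition, $\EE(g_h\mid{\rm Inv}_\kappa)$ is equal on the fiber $\gamma$ to the constant $\int g_h\,d\kappa_\gamma$. Hence
\[
\|g_h\|_{u^1}^2=\int_\Gamma\Big|\int (f\circ S^h)\,\overline f\,d\kappa_\gamma\Big|^2\,d\PP(\gamma).
\]
The same identity can also be obtained from Lemma~\ref{l:mutualsing}, which gives $\sigma_{g_h}(\{1\})=\int\sigma_{g_h,\gamma}(\{1\})\,d\PP$, together with the fact that on an ergodic fiber $\sigma_{g_h,\gamma}(\{1\})=|\int g_h\,d\kappa_\gamma|^2$.

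Next, average over $h<H$ and exchange the sum with the $\PP$-integral, which is just Fubini for a finite sum. This gives
\[
\frac1H\sum_{h<H}\|g_h\|_{u^1}^2=\int_\Gamma\frac1H\sum_{h<H}\Big|\int f\cdot\overline{f\circ S^h}\,d\kappa_\gamma\Big|^2\,d\PP(\gamma),
\]
after conjugating inside the modulus. For $\PP$-a.e.\ $\gamma$, Lemma~\ref{lem:wiener-fiber} shows that the integrand converges to $\sum_\lambda\sigma_{f,\gamma}(\{\lambda\})^2$ as $H\to\infty$.

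The step needing care is passing the limit $H\to\infty$ under $\int_\Gamma$. The integrands are bounded by $\|f\|_\infty^4$ uniformly in $H$ and $\gamma$, so dominated convergence applies and yields both the existence of the limit in \eqref{eq:u2-def} and the claimed formula. Two further points must be checked: that $\gamma\mapsto\sigma_{f,\gamma}$ is measurable, as noted in the proof of Lemma~\ref{l:mutualsing}, and that $\gamma\mapsto\sum_\lambda\sigma_{f,\gamma}(\{\lambda\})^2$ is measurable. The latter follows because it is the pointwise limit of the measurable averages above. The case $f=\pi_0$ is then immediate, since $\pi_0$ is bounded by $1$.
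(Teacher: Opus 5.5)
Your proposal is correct and follows the same route as the paper. Both arguments write each $\|g_h\|_{u^1}^2$ as $\int_\Gamma\bigl|\int g_h\,d\kappa_\gamma\bigr|^2\,d\PP(\gamma)$, exchange the Ces\`aro average with the $\PP$-integral, and apply Lemma~\ref{lem:wiener-fiber} on a.e.\ fiber. You also justify the two steps the paper leaves implicit: the fiber decomposition of the $u^1$-seminorm (via $\EE(g_h\mid{\rm Inv}_\kappa)$), and the interchange of $\lim_H$ with $\int_\Gamma$ (dominated convergence with bound $\|f\|_\infty^4$), which also gives existence of the limit in \eqref{eq:u2-def}.
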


\begin{proof}
Fix $h\ge 0$ and set $g_h:=(U^h f)\cdot \overline{f}$.
By \eqref{eq:u2-def},
\[
\|f\|_{u^2}^4=\lim_{H\to\infty}\frac1H\sum_{h=0}^{H-1}\|g_h\|_{u^1}^2.
\]
On each ergodic component $(X_{\bfu},\kappa_\gamma,S)$, applying the definition \eqref{eq:u1-f} to $g_h$, gives
\[
\|g_h\|_{u^1(\kappa_\gamma)}^2=\left|\int g_h\,d\kappa_\gamma\right|^2
=\left|\int f\cdot \overline{f\circ S^h}\,d\kappa_\gamma\right|^2.
\]
Integrating over $\gamma$ and exchanging Ces\`aro averages with the integral, yields
\[
\|f\|_{u^2}^4
=\int_\Gamma \left(\lim_{H\to\infty}\frac1H\sum_{h=0}^{H-1}\left|\int f\cdot \overline{f\circ S^h}\,d\kappa_\gamma\right|^2\right)\, d\PP(\gamma).
\]
Now, apply Lemma~\ref{lem:wiener-fiber} for $\PP
$-a.e.\ $\gamma$ to obtain the claim.
\end{proof}

\subsubsection{A Borel subgroup $\Lambda\subset\T$ and the $\Lambda$--Kronecker factor}

Let $\Lambda\subset\T$ be a Borel subgroup. On each ergodic component $(X_{\bfu},\kappa_\gamma,S)$, let $\mathcal{K}^\Lambda_\gamma$
denote the maximal factor with discrete spectrum whose eigenvalue group is contained in $\Lambda$
(equivalently: its $L^2$-space is the closed linear span of eigenfunctions with eigenvalues in $\Lambda$).
Let $\mathcal{K}^\Lambda_{\rm rel}$ denote the corresponding relative $\Lambda$-Kronecker  factor,  see Lemma~\ref{kapitan}.

\begin{Prop}\label{prop:lambda-orth}
For $f=\pi_0\in L^2(X_{\bfu},\kappa)$ the following are equivalent:
\begin{enumerate}
\item[\textup{(i)}] $\pi_0\perp L^2\bigl(\mathcal{K}^\Lambda_{\rm rel}\bigr)$.
\item[\textup{(ii)}] $\displaystyle \int_\Gamma\left(\sum_{\lambda\in\Lambda}
    \sigma_{\pi_0,\gamma}(\{\lambda\})^2\right)\,d\PP(\gamma)=0$.
\end{enumerate}
\end{Prop}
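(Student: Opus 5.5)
The plan is to reduce (i) to a fiberwise statement via the direct integral decomposition of $L^2(\mathcal{K}^\Lambda_{\rm rel})$ from Lemma~\ref{kapitan}, and then to read off fiberwise orthogonality from the atoms of the spectral measures $\sigma_{\pi_0,\gamma}$ on $\Lambda$.

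\emph{Step 1 (disintegration).} By the proof of Lemma~\ref{kapitan}, $L^2(\mathcal{K}^\Lambda_{\rm rel},\kappa)=\int_\Gamma^\oplus L^2(\mathcal{K}^\Lambda_\gamma,\kappa_\gamma)\,d\PP(\gamma)$, and the orthogonal projection $P^\Lambda$ onto this space decomposes as $\int^\oplus_\Gamma P^\Lambda_\gamma\,d\PP(\gamma)$. Hence
\[
\|P^\Lambda \pi_0\|^2_{L^2(\kappa)}=\int_\Gamma \|P^\Lambda_\gamma\pi_0\|^2_{L^2(\kappa_\gamma)}\,d\PP(\gamma).
\]
So (i) holds if and only if $P^\Lambda_\gamma\pi_0=0$ for $\PP$-a.e.\ $\gamma$.

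\emph{Step 2 (fiberwise computation).} Fix an ergodic component $\gamma$. For each eigenvalue $\lambda$, the eigenspace $E_\lambda$ of $(S,\kappa_\gamma)$ is one-dimensional, spanned by a unimodular eigenfunction $e_\lambda$. By the spectral theorem, the projection of $\pi_0$ onto $E_\lambda$ has squared norm $\sigma_{\pi_0,\gamma}(\{\lambda\})$. Concretely, this projection is $\lim_H\frac1H\sum_{h<H}\overline{\lambda}^h U^h\pi_0$, whose squared norm equals $\sigma_{\pi_0,\gamma}(\{\lambda\})$ by \eqref{lem:cesaro-atom}. Since $L^2(\mathcal{K}^\Lambda_\gamma)$ is the orthogonal sum of the $E_\lambda$ over eigenvalues $\lambda\in\Lambda$, we get
\[
\|P^\Lambda_\gamma\pi_0\|^2=\sum_{\lambda\in\Lambda}\sigma_{\pi_0,\gamma}(\{\lambda\}).
\]
This vanishes if and only if $\sigma_{\pi_0,\gamma}$ has no atoms in $\Lambda$, which in turn holds if and only if $\sum_{\lambda\in\Lambda}\sigma_{\pi_0,\gamma}(\{\lambda\})^2=0$. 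The last equivalence holds because all terms are nonnegative, and a sum of squares of nonnegative numbers vanishes exactly when each number vanishes.

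\emph{Step 3 (integration).} Condition (ii) says that a nonnegative measurable function of $\gamma$ has zero integral. This is equivalent to the function vanishing $\PP$-a.e., which by Step 2 is equivalent to $P^\Lambda_\gamma\pi_0=0$ a.e., and hence to (i) by Step 1.

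The main obstacle is measurability of $\gamma\mapsto\sum_{\lambda\in\Lambda}\sigma_{\pi_0,\gamma}(\{\lambda\})^2$. I plan to handle it as in Lemma~\ref{lem:analytic}. The map $\gamma\mapsto\sigma_{\pi_0,\gamma}$ is measurable. The eigenvalues can be enumerated by measurable functions $a_n(\gamma)$, as in the proof of Lemma~\ref{pr:giraf}, chosen so that each eigenvalue of the $\gamma$-component appears exactly once. The quantity then equals $\sum_n \raz_\Lambda(a_n(\gamma))\,\sigma_{\pi_0,\gamma}(\{a_n(\gamma)\})^2$, which is measurable because $\Lambda$ is Borel and $(\rho,z)\mapsto\rho(\{z\})$ is Borel. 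Alternatively, one can avoid this issue entirely by working with $\|P^\Lambda_\gamma\pi_0\|^2$, which is measurable by Lemma~\ref{kapitan}.
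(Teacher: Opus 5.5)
Your proposal is correct and follows essentially the same route as the paper: reduce (i) to the fiberwise condition $\pi_0\perp L^2(\mathcal{K}^\Lambda_\gamma)$ for a.e.\ $\gamma$ via the direct-integral decomposition of $\mathcal{K}^\Lambda_{\rm rel}$ (Lemma~\ref{kapitan}). Then identify fiberwise orthogonality with the absence of atoms of $\sigma_{\pi_0,\gamma}$ in $\Lambda$, and conclude using nonnegativity of the summands. Your quantitative identity $\|P^\Lambda_\gamma\pi_0\|^2=\sum_{\lambda\in\Lambda}\sigma_{\pi_0,\gamma}(\{\lambda\})$ and your measurability check of the integrand in (ii) are welcome refinements of points the paper leaves implicit.
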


\begin{proof}
Fix an ergodic component $\gamma$. In the ergodic system $(X_{\bfu},\kappa_\gamma,S)$, the factor $\mathcal{K}^\Lambda_\gamma$
is the closed span of eigenfunctions whose eigenvalues lie in $\Lambda$.
If $\sigma_{\pi_0,\gamma}(\{\lambda\})>0$ for some $\lambda\in\Lambda$, then $\pi_0$ has a nontrivial spectral atom at $\lambda$,
hence a nonzero projection onto the corresponding eigenspace, and therefore $\pi_0\not\perp L^2(\mathcal{K}^\Lambda_\gamma)$.
Conversely, if $\sigma_{\pi_0,\gamma}(\{\lambda\})=0$ for all $\lambda\in\Lambda$, then the spectral measure of $\pi_0$ has no point mass on $\Lambda$,
so $\pi_0$ is orthogonal to the span of all eigenfunctions with eigenvalues in $\Lambda$, i.e.\ $\pi_0\perp L^2(\mathcal{K}^\Lambda_\gamma)$.

Since the terms are nonnegative,
\[
\sum_{\lambda\in\Lambda}\sigma_{\pi_0,\gamma}(\{\lambda\})^2=0
\quad\Longleftrightarrow\quad
\sigma_{\pi_0,\gamma}(\{\lambda\})=0\ \text{ for all }\lambda\in\Lambda.
\]
Thus, condition (ii) is equivalent to $\pi_0\perp L^2(\mathcal{K}^\Lambda_\gamma)$ for $\PP$-a.e.\ $\gamma$.

Finally, by construction, $\mathcal{K}^\Lambda_{\rm rel}$ is the direct integral of the fibers $\mathcal{K}^\Lambda_\gamma$,
so $\pi_0\perp L^2(\mathcal{K}_{\rm rel}^\Lambda)$ holds iff $\pi_0\perp L^2(\mathcal{K}^\Lambda_\gamma)$ for $\PP$-a.e.\ $\gamma$.
\end{proof}
We recall that by Corollary~\ref{rabit} the factor $\mathcal{K}^\Lambda_{\rm rel}$ is equal to
$\mathcal{A}_{{\rm {\bf{DISP}}(\Lambda)_{ec}}}(S,\kappa)$. Therefore, condition (ii) (applied to all Furstenberg systems $\kappa\in V(\bfu)$) in the above proposition is equivalent to the Veech condition (for $\bfu$).

\vspace{5ex}

\noindent
School of Math. Sciences,\\
 Tel Aviv University,\\
  69978 Tel Aviv, Israel

\vspace{5ex}

\noindent
Faculty of Mathematics and Computer Science\\
Nicolaus Copernicus University, \\
Chopin street 12/18, 87-100 Toru\'n, Poland

\vspace{1ex}
and

\vspace{1ex}\noindent
B. Verkin Institute for Low Temperature Physics and Engineering\\
of the National Academy of Sciences of Ukraine\\
47 Nauky Ave., Kharkiv, 61103, Ukraine \\

\noindent
Faculty of Mathematics and Computer Science\\
Nicolaus Copernicus University, \\
Chopin street 12/18, 87-100 Toru\'n, Poland\\

\noindent
aaro@tau.ac.il,\\alexandre.danilenko@gmail.com,\\joasiak@mat.umk.pl, \\mlem@mat.umk.pl,


\addcontentsline{toc}{section}{References}
\begin{thebibliography}{99}







\bibitem{Aa-Na}J.\ Aaronson, M.\ Nadkarni, {\em $L^\infty$-Eigenvalues and $L^2$-Spectra of Non-Singular Transformations}, Proc. London Math. Soc.\ s3-55 (1987), 538-570.
\bibitem{Ab-Le-Ru}H.\ El Abdalaoui,  M.\ Lema\'nczyk, T.\ de la Rue,  {\em Automorphisms with quasi-discrete spectrum, multiplicative functions and average orthogonality along short intervals}, International Math.\ Res.\ Notices {\bf 14} (2017), 4350-4368.

\bibitem{Ab-Ku-Le-Ru}H. El Abdalaoui, J.\ Ku\l aga-Przymus, M. Lema\'nczyk, T. de la Rue,  {\em M\"obius disjointness for models of an ergodic system and beyond},  Israel J.\ Math.\ {\bf 228}  (2018), 707-751.

\bibitem{Au-Mo}L.~Auslander, C.C.\ Moore,  {\em Unitary representations of solvable Lie groups},
Memoirs of Amer.\ Math.\ Soc.\ 62 (1966).

\bibitem{Au-Le} T.\ Austin, M.\ Lema\'nczyk, {\em Relatively finite measure-preserving extensions and lifting multipliers by Rokhlin cocycles},
J. Fixed Point Theory Appl. 6 (2009),  115–131.


\bibitem{Ba-Ca-Kw-Op} S.\ Babel, M.E.\ Can, D.\ Kwietniak, P.\ Oprocha, {\em  Spectrum of invariant
measures via generic points},  arXiv:2510.19711

\bibitem{Ba-La} S.\ Babel, M.\ L\c{a}cka, {\em On the closedness of ergodic measures in a characteristic class}, arxiv:2510.26564

\bibitem{Be-Go-Ru} P.\ Berk, M. G\'orska, T.\ de la Rue, {\em Joining properties of automorphisms disjoint with all ergodic systems}, Ergodic Theory Dynam.\ Systems 45 (2025), 1988--2002.

\bibitem{Bo-Sa-Zi}J.\ Bourgain, P.\ Sarnak, T.\ Ziegler, {\em Disjointness of Moebius from horocycle flows},
in: From Fourier Analysis and Number Theory to Radon Transforms and Geometry,
Dev. Math. 28, Springer, New York, 2013, 67–83.





\bibitem{Co-Do-Se} J.-P.\ Conze, T.\ Downarowicz, J.\ Serafin, {\em Correlation of sequences and of
measures, generic points for joinings and ergodicity of certain cocycles},
Trans. Amer. Math. Soc. 369 (2017), 3421--3441.



\bibitem{Dan} A. I. Danilenko, {\em Endomorphisms of measured equivalence relations, cocycles with values in non-locally compact groups and applications},
Ergodic Theory \& Dynam. Systems, {\bf 19} (1999), 571--590.

\bibitem{Da-Le}A. I.\ Danilenko, M.\ Lema\'nczyk, {\em A class of multipliers for $W^\perp$}, Probability in mathematics, Israel J. Math. 148 (2005), 137–168.


\bibitem{Do-Se} T. Downarowicz, J.\ Serafin, {\em Almost full entropy subshifts uncorrelated to the M\"obius function}, Int. Math. Res. Not. IMRN 2019, no. 11, 3459–3472.


\bibitem{Do-We}T.\ Downarowicz, B.\ Weiss, {\em Pure strictly uniform models of non-ergodic measure automorphisms}, Discrete Contin. Dyn. Syst. 42 (2022), 863–884.


\bibitem{Gr-Ta-Zi}  B. Green, T. Tao, T. Ziegler, An inverse theorem for the Gowers U s`1rN s-
norm, Ann. of Math. (2) 176 (2012), 1231–1372.

\bibitem{Ed}N. Edeko, {\em On the isomorphism problem for non-minimal transformations with discrete spectrum}, Discrete Continuous Dynam. Systems 39 (2019), 6001–6021.

\bibitem{EiWa}
M. Einsiedler, T. Ward,
{\em
Ergodic theory with a view towards number theory},
Grad. Texts in Math., 259
Springer-Verlag London, Ltd., London, 2011.



\bibitem{deFa} A.\ de Faveri, {\em M\"obius disjointness for $C^{1+\epsilon}$ skew products}, Int. Math. Res. Not. IMRN 2022, no. 4, 2513–2531.



\bibitem{Fe-Ku-Le}S.\ Ferenczi, J.\ Ku\l aga-Przymus, M.\ Lema\'nczyk, {\em Sarnak's Conjecture -- what's new}, in:
Ergodic Theory and Dynamical Systems in their Interactions with Arithmetics and Combinatorics,  CIRM Jean-Morlet Chair, Fall 2016,
Editors: S. Ferenczi, J. Ku\l aga-Przymus, M. Lema\'nczyk,
    Lecture Notes in Mathematics {\bf 2213},  Springer International Publishing, pp. 418.

\bibitem{Fo-Ru-We}M. Foreman, D. J. Rudolph, B. Weiss,
{\em
 The conjugacy problem in ergodic theory}, Ann. Math., 173 (2011)
 1529--1586.


\bibitem{Fr}N.\ Frantzikinakis, {\em Ergodicity of the Liouville system implies the Chowla conjecture},
Discrete Analysis 2017, paper no 8, pp. 23.

\bibitem{Fr-Ho}N.\ Frantzikinakis, B.\ Host, {\em The logarithmic Sarnak conjecture for ergodic weights},
   Annals Math.\ (2) {\bf 187} (2018), 869--931.

\bibitem{Fr-Le-Ru}  N.\ Frantzikinakis, M.\ Lema\'nczyk, T. de la Rue,  {\em Furstenberg systems of pretentious and MRT multiplicative functions},  Ergodic Theory Dynam. Systems  45 (2025),  2765--2844.

\bibitem{Fr-Ku-Le} K. Fr\c{a}czek, J.\ Ku\l aga, M. Lema\'nczyk, {\em On the self-similarity problem for Gaussian-Kronecker flows},  Proc. Amer.\ Math.\ Soc. {\bf 141} (2013), 4275-4291.
\bibitem{Fr-Le}K.\ Fr\c{a}czek, M.\ Lema\'nczyk, {\em A note on quasi-similarity of Koopman operators}, Journal
London Math.\ Soc. (2) {\bf 82} (2010), 361-375.


\bibitem{Fu0}H. Furstenberg, {\em Ergodic behaviour of diagonal measures and a theorem of
Szemeredy on arithmetic progressions}, J.\ Anal.\ Math.\ 31 (1977), 204–256.
\bibitem{Fu} H. Furstenberg,  {\em Recurrence in  Ergodic Theory and Combinatorial Number
Theory}, Princeton: Univ Press, 1981.
\bibitem{Gl} E. Glasner, {\em Ergodic Theory via Joinings},  American Mathematical Society, Providence, 2003.

\bibitem{Go-Le-Ru} A. Gomilko, M. Lema\'nczyk, T. de la Rue, {\em
On Furstenberg systems of aperiodic multiplicative functions of Matom\"aki, Radziwi\l\l \ and Tao}, J.\ Modern Dynamics {\bf 17} (2021), 529--555. doi: 10.3934/jmd.2021018

\bibitem{Go-Le-Ru1} A. Gomilko, M. Lema\'nczyk and T. de~la~Rue, {\em M\"obius orthogonality in density for zero entropy dynamical systems}, Pure Appl. Funct. Anal. {\bf 5} (2020), no.~6, 1357--1376.

\bibitem{Go-Le-Ru}M.\ G\'orska, M.\ Lema\'nczyk, T.\ de la Rue, {\em On
    orthogonality to uniquely ergodic systems}, to appear in J. d'Analyse Math., arXiv:2404.07907




\bibitem{Ho-Kr} B. Host, B. Kra, {\em Nilpotent structures in ergodic theory}, Mathematical Surveys and
Monographs, 236. American Mathematical Society, Providence, RI, 2018.


\bibitem{Ho-Me-Pa}B. Host, F.\ M\'ela, F.\ Parreau, {\em  Non singular transformations and spectral analysis of measures},
Bulletin de la Soci\'et\'e Math\'ematique de France 119 (1991), 33-90.

\bibitem{Hu-Ta-Xu}W.\ Huang, M.\ Tan, L.\ Xu, {\em Almost countable spectrum and logarithmic Sarnak conjecture}, arXiv 2511.04419


\bibitem{Hu-Wa-Ye}W.\ Huang, Z.\ Wang, X.\ Ye, {\em Measure complexity and Möbius disjointness}, Advances
Math. 347 (2019), 827–858.

\bibitem{Hu-Wa-Zh} W.\ Huang, Z.\ Wang, G.\ Zhang, {\em M\"obius disjointness for topological models of
ergodic systems with discrete spectrum}, J.\ Modern Dyn.\ 14 (2019), 277–290.

\bibitem{Ju-Le-Me}A.\ del Junco, M.\ Lema\'nczyk, M. K. Mentzen, {\em  Semisimplicity, joinings and group extensions}, Studia Math. 112 (1995), 141--164.

\bibitem{Ju-Ru1}
A.\ del Junco, D.J.\ Rudolph, {\em A rank-one, rigid, simple, prime map}, Ergodic Theory Dynam. Systems 7 (1987),  229–247.


\bibitem{Ju-Ru}A.\
del Junco, D.J.\ Rudolph, {\em On ergodic actions whose self-joinings are graphs}, Ergodic Theory Dynam. Systems
7 (1987), 537-551.


\bibitem{Kal-Ma} R.R.\ Kallman, R.D.\ Mauldin,
{\em A cross section theorem and an application to $C^*$-algebras},
Proc. Amer. Math. Soc. {\bf 69} (1978),  57--61.


\bibitem{Ka-Ku-Le-Ru}  A.\ Kanigowski, J.\ Ku\l aga-Przymus, M.\ Lema\'nczyk, T.  de la Rue, {\em On arithmetic functions orthogonal to deterministic sequences}, Advances Math. {\bf 428} (2023), https://doi.org/10.1016/j.aim.2023.109138

\bibitem{Ka-Le-Ra}   A.\ Kanigowski, M.\ Lema\'nczyk, M. Radziwi\l\l, {\em Rigidity in dynamics and M\"obius disjointness}, Fundamenta Math.\ {\bf 255} (2021), 309-336.
\bibitem{Ka-Le-Ri-Te} A.\ Kanigowski, M.\ Lema\'nczyk, F.\ Richter, J. Ter\"av\"ainen,
{\em  On the local Fourier uniformity problem for small sets}, International Math. Research Notices  IMRN 2024, no 15, 11488-11512.

\bibitem{Ke} A.S.\ Kechris, {\em Classical Descriptive Set Theory}, Springer, 1995.

\bibitem{Ku-Le}J.\ Ku\l aga-Przymus, M.\ Lema\'nczyk, {\em The M\"obius function and continuous extensions of rotations}, Monatsh. Math. 178 (2015),  553–582.

\bibitem{Kw}J.\ Kwiatkowski, {\em Isomorphism of regular Morse dynamical systems}, Studia Math. {\bf 77} (1982), 59-89.

\bibitem{Le-Me} M.\ Lema\'nczyk, M.K.\ Mentzen,
{\em Compact
subgroups in the centralizers of natural factors of an ergodic
group extension of a rotation determine all factors}, Ergodic
Theory Dynam. Systems {\bf 10} (1990), 763-776.
\bibitem{Le-Pa} M.\ Lema\'nczyk, F.\ Parreau,
{\em Rokhlin extensions and lifting disjointness}, Ergodic Theory Dynam. Systems 23 (2003), 1525-1550.
\bibitem{Le-Pa1}  M.\ Lema\'nczyk, F.\ Parreau, {\em Lifting mixing properties by Rokhlin cocycles} Ergodic Theory Dynam. Systems 32 (2012), 763–784.
\bibitem{Le-Pa-Th} M. Lema\'nczyk, F. Parreau, J.–P. Thouvenot, {\em Gaussian automorphisms whose ergodic
self–joinings are Gaussian}, Fundamenta Math.\ 164 (2000), 253-293.

\bibitem{Le-Th-We}  M. Lema\'nczyk, J.–P. Thouvenot, B. Weiss, {\em
Relative discrete spectrum and joinings}, Monatsh.\ Math.\ {\bf
137} (2002), 57-75.
\bibitem{Li-Sa} J.\ Liu, P.\ Sarnak, {\em The M\"obius function and distal flows}, Duke Math. J.\ 164 (2015),  1353–1399.


\bibitem{Mac} G.\ Mackey, {\em Point realizations of transformation groups}, Illinois J. Math.,\ 6 (1962), 327-335.


\bibitem{Mad} B.\ Madore, {\em Rank-one group actions with simple mixing $\Bbb Z$-subactions},
New York J. Math. {\bf 10} (2004) 175–194.

\bibitem{Ma-Ra} K.\ Matom\"aki, M.\ Radziwi\l\l, {\em Multiplicative functions in short intervals}, Annals of Mathematics 183 (2016), 1015–1056.

\bibitem{Ma-Ra-Ta} K.\ Matom\"aki, M.\ Radziwi\l\l, T.\ Tao,
{\em  An averaged form of Chowla’s conjecture},  Algebra Number Theory {\bf 9} (2015),  2167–2196.

\bibitem{Na} M.G.\ Nadkarni, {\em Spectral Theory of Dynamical Systems}, Birhauser Advanced Texts 1998.

  \bibitem{Rud}
 D.J.\ Rudolph, {\em An example of a measure preserving map with minimal self-joinings, and applications}, J. Analyse Math. 35 (1979), 97–122.


\bibitem{Ru} T.\ de la Rue, {\em Notes on Austin’s multiple ergodic theorem}, \\
\noindent \url{http://hal.archives-ouvertes.fr/hal-00400975}.



\bibitem{Sa} P.\ Sarnak,  {\em Three lectures on the M\"obius function, randomness and
dynamics},
https://publications.ias.edu/sites/default/files/
MobiusFunctionsLectures(2).pdf.

\bibitem{Su} C.\ Sutherland, {\em A Borel parametrization of Polish groups}, Publ. RIMS., 21 (1985) 1067–1086.

\bibitem{Sch}K.\ Schmidt, {\em Cocycles of Ergodic Transformation Groups}, Mac Millian, Delhi, 1977.



\bibitem{Se} J.\ Serafin, {\em Non-existence of a universal zero-entropy system}, Israel J. Math. 194 (2013),  349–358.

\bibitem{Ta00} T.\ Tao, {\em The logarithmically averaged Chowla and Elliott conjectures for two-point correlations}, Forum Math.\ Pi 4 (2016), e6, 36 pp.
\bibitem{Ta1} T.\ Tao, {\em Equivalence of the logarithmically averaged Chowla and Sarnak
conjectures}, Number Theory – Diophantine Problems, Uniform Distribution and Applications: Festschrift in Honour of Robert F. Tichy’s 60th
Birthday (C.\ Elsholtz and P.\ Grabner, eds.), Springer International Publishing, Cham, 2017, pp.\ 391–421.

\bibitem{Ve} W. A.\ Veech, {\em A criterion for a process to be prime},
Monatsh. Math. 94 (1982), no. 4, 335--341.

\bibitem{Ver} A. M.\ Vershik, {\em Polymorphisms, Markov processes, and quasi-similarity}, Discrete and Continuous Dynam. Systems 13 (2005), 1305-1324.

\bibitem{Wa} Z.\ Wang, {\em M\"obius disjointness for analytic skew products}, Inventiones Mathematicae
209 (2017), 175–196.

\bibitem{Zi} R. Zimmer, {\em Extensions of ergodic group actions}, Illinois J.\ Math.\ 20 (1976), 373–409.
\end{thebibliography}
\end{document}